\DeclareSymbolFontAlphabet{\mathbb}{AMSb}
\DeclareSymbolFontAlphabet{\mathbbl}{bbold}
\newcommand{\prism}{{\mathlarger{\mathbbl{\Delta}}}}
\theoremstyle{plain}
\newtheorem*{thm}{Theorem}
\newtheorem{theorem}{Theorem}[subsection]
\newtheorem{proposition}[theorem]{Proposition}
\newtheorem{lemma}[theorem]{Lemma}
\newtheorem{corollary}[theorem]{Corollary}
\newtheorem{question}[theorem]{Question}
\theoremstyle{definition}
\newtheorem{definition}[theorem]{Definition}
\newtheorem{example}[theorem]{Example}
\newtheorem{remark}[theorem]{Remark}
\newtheorem{notation}[theorem]{Notation}
\newcommand{\crys}{\mathrm{crys}}
\newcommand{\dR}{\mathrm{dR}}
\newcommand{\F}{\mathbb{F}}
\newcommand{\Fil}{\mathrm{Fil}}
\newcommand{\Gm}{{\mathbb{G}_m}}
\newcommand{\Id}{\textrm{Id}}
\renewcommand{\inf}{\mathrm{inf}}
\newcommand{\N}{\mathbb{N}}
\newcommand{\pris}{\mathrm{pris}}
\newcommand{\Q}{\mathbb{Q}}
\newcommand{\Spa}{\mathrm{Spa}}
\newcommand{\Spec}{\mathrm{Spec}}
\newcommand{\Spf}{\mathrm{Spf}}
\newcommand{\tilxi}{\tilde{\xi}}
\newcommand{\Z}{\mathbb{Z}}
\newcommand{\Win}{\mathrm{Win}}
\newcommand{\Ext}{\mathcal{E}xt}
\newcommand{\BT}{\mathrm{BT}}
\newcommand{\zp}{\mathbb{Z}_p}
\newcommand{\DM}{\mathrm{DM}}
\newcommand{\DF}{\mathrm{DF}}
\newcommand{\purple}[1]{{\color{black} #1}}
\newcommand{\red}[1]{{\color{black} #1}}
\newcommand{\green}[1]{{\color{black}#1}}
\newcommand{\orange}[1]{{\color{black}#1}}
\newcommand{\blue}[1]{{\color{black}#1}}
\begin{document}

\title{Prismatic Dieudonn\'e theory}

\author{Johannes Ansch\"{u}tz}
\address{Mathematisches Institut, Universit\"at Bonn, Endenicher Allee 60, 53115 Bonn, Deutschland}
\email{ja@math.uni-bonn.de}
\thanks{During this project, J.A.\ was partially supported by the ERC 742608, GeoLocLang.}

\author{Arthur-C\'esar Le Bras}
\address{Universit\'e Sorbonne Paris Nord, LAGA, C.N.R.S., UMR 7539, 93430 Villetaneuse,
France}
 \email{lebras@math.univ-paris13.fr}

\begin{abstract}
We define, for each quasi-syntomic ring $R$ (in the sense of Bhatt-Morrow-Scholze), a category \red{$\DM^{\rm adm}(R)$ of \textit{admissible prismatic Dieudonn\'e crystals over $R$} and a functor from $p$-divisible groups over $R$ to $\DM^{\rm adm}(R)$}. We prove that this functor is an antiequivalence. Our main cohomological tool is the prismatic formalism recently developed by Bhatt and Scholze.
\end{abstract}

\maketitle
\tableofcontents

\newpage

\section{Introduction}
\label{sec:introduction}

Let $p$ be a prime number. The goal of the present paper is to establish classification theorems for $p$-divisible groups over \textit{quasi-syntomic rings}. This class of rings is a non-Noetherian generalization of the class of $p$-complete locally complete intersection rings and contains also big rings, such as perfectoid rings. Our main theorem is as follows.

\begin{thm}
Let $R$ be a quasi-syntomic ring. There is a natural functor from the category of $p$-divisible groups over $R$ to the category \red{$\DM^{\rm adm}(R)$ of admissible} prismatic Dieudonn\'e crystals over $R$, \purple{which is an antiequivalence}.
\end{thm}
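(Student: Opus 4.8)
The plan is to establish the antiequivalence by a series of reductions, ultimately bootstrapping from the known perfectoid and (quasi)regular semiperfectoid cases via descent along the quasi-syntomic topology. The first step is to reduce to the case of \emph{quasiregular semiperfectoid} rings $R$: such rings form a basis for the quasi-syntomic site, the construction of the filtered prismatic Dieudonné crystal is functorial and compatible with base change, and both $p$-divisible groups and the category $\DF(R)$ satisfy fpqc (hence quasi-syntomic) descent. So it suffices to prove full faithfulness and essential surjectivity after passing to a quasi-syntomic cover, and then to a simplicial resolution by quasiregular semiperfectoid rings; a spectral sequence / totalization argument then upgrades the statement from the cover to $R$ itself, provided the relevant cohomology of the Dieudonné crystal vanishes in the appropriate range.

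For a quasiregular semiperfectoid ring $R$, the prismatic site has a final object, the ``prism'' $({\mathlarger{\mathbbl{\Delta}}}_R, I)$, so a prismatic Dieudonné crystal is just a finite projective ${\mathlarger{\mathbbl{\Delta}}}_R$-module $M$ with a Frobenius-linear map $\varphi_M$ whose cokernel (and that of the linearization) is killed by $I$, together with a filtration (the Nygaard/Hodge filtration) on $M \otimes_{{\mathlarger{\mathbbl{\Delta}}}_R} R$ lifting compatibly. I would then construct the functor $G \mapsto M({\mathlarger{\mathbbl{\Delta}}}_G)$ using the prismatic cohomology of (the $p$-divisible group, or rather its truncations / associated formal scheme), identifying it with the evaluation of a suitable prismatic crystal $\mathbb{D}(G)$ on $({\mathlarger{\mathbbl{\Delta}}}_R, I)$, and record its Hodge filtration coming from the Hodge–Tate comparison. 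The key comparison isomorphisms — crystalline comparison when $R$ has characteristic $p$, and the étale/de Rham comparisons in the perfectoid case — let me match this with classical Dieudonné theory (Grothendieck–Messing, Scholze–Weinstein) and hence verify that the functor is an antiequivalence in the perfectoid case and, more generally, that it is compatible with all the structures.

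The heart of the argument is then full faithfulness and essential surjectivity over a general quasiregular semiperfectoid $R$. For full faithfulness, I would deform: write $R$ as a quotient of a perfectoid ring (or build a quasi-syntomic cover that is a colimit of perfectoids), use the known case there, and control the difference by a Grothendieck–Messing-type deformation theory on both sides — on the group side via the crystalline nature of $\mathbb{D}$, on the linear-algebra side via the explicit description of $\DF$ in terms of the Nygaard filtration and the fact that lifts of $p$-divisible groups correspond to lifts of the Hodge filtration. For essential surjectivity, given an object of $\DF(R)$ I would first descend its underlying Frobenius module to the perfectoid cover, produce a $p$-divisible group there by the perfectoid case, and then show the descent datum on the Dieudonné side gives a descent datum on the group side (using full faithfulness, already established, to transport morphisms), so the group descends to $R$.

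The main obstacle I expect is precisely this last bootstrapping: controlling the deformation theory and the descent simultaneously, i.e.\ showing that the obstruction-, lift-, and automorphism-torsors for $p$-divisible groups are \emph{pro-(étale locally)} the same as those computed on the Dieudonné side, uniformly enough to survive the passage to the quasi-syntomic cover. Concretely, this requires a careful analysis of the cotangent complex / the module $\mathrm{Fil}^1 M$ and of the vanishing of higher prismatic cohomology of $\mathbb{D}(G)$ on the site of $R$ — the input that makes the descent spectral sequence degenerate into an equivalence rather than merely an exact triangle. Once that vanishing and the compatibility of Grothendieck–Messing with the Nygaard filtration are in hand, assembling the global statement over an arbitrary quasi-syntomic ring is formal.
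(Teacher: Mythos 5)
Your overall skeleton (reduce by quasi-syntomic descent to quasi-regular semiperfectoid rings, where $\DF(R)$ becomes filtered prismatic Dieudonn\'e modules over the initial prism, and anchor everything in the perfectoid case) is indeed the paper's skeleton, but the two central steps are not proved by your proposed mechanisms, and as stated they would fail. For full faithfulness, there is no Grothendieck--Messing deformation theory available for the filtered prismatic Dieudonn\'e functor (the paper explicitly leaves this as an open problem), and even if there were, a general quasi-regular semiperfectoid ring $R$ is a \emph{quotient} of a perfectoid ring $S$, not an infinitesimal or divided-power thickening of one, so the kernel of $\prism_S\to\prism_R$ is not controlled by any classical deformation-theoretic device; ``deform from the perfectoid case'' has no purchase here. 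The actual proof has to go through the special case $\mathrm{Hom}_R(\Q_p/\Z_p,\mu_{p^\infty})\cong\mathrm{Hom}_{\DM(R)}(M_\prism(\mu_{p^\infty}),M_\prism(\Q_p/\Z_p))$, i.e.\ the bijectivity of the $q$-logarithm $\Z_p(1)(R)\to\prism_R^{\varphi=\tilde\xi}$, which rests on the identification of $\widehat{\prism}_R^{\varphi=\tilde\xi}$ with $\pi_2\mathrm{TC}(R;\Z_p)$ and the Clausen--Mathew--Morrow results on the cyclotomic trace; the reduction of general $\mathrm{Hom}(G_1,G_2)$ to this case then uses the universal trivializing ring $R_{G_1,G_2}$, a retraction $\alpha_R$, and --- to show $\ker\alpha_R$ is harmless --- the $\mu$-torsion-freeness of the Nygaard completion $\widehat{\prism}_{R\widehat\otimes_{\Z_p}\mathcal{O}_C}$, Raynaud/henselian lifting of $p$-divisible groups, and almost purity. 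None of these ingredients appear in your sketch, and they are exactly where the difficulty lives.

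For essential surjectivity your plan is to ``descend the Frobenius module to the perfectoid cover,'' but a general quasi-regular semiperfectoid ring admits no perfectoid quasi-syntomic cover (e.g.\ for $R=\mathcal{O}_C/p$ any $p$-completely faithfully flat $R$-algebra is non-reduced, hence not perfect(oid)), so there is nothing to descend along. The paper instead goes \emph{upward}: after using Andr\'e's lemma and quasi-syntomic descent (where, as you correctly note, full faithfulness over $R'\widehat\otimes_R R'$ makes the group-side descent datum effective) to put $R$ in the special form $A/I\langle X_j^{1/p^\infty}\rangle/(X_j)$, one lifts a filtered prismatic Dieudonn\'e module along the surjections $\prism_{\tilde S}\to\prism_S\to\prism_R$ coming from a perfectoid ring $\tilde S$ surjecting onto $R$. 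This lifting is not formal: it requires normal decompositions, the fact that $\prism_R$ is henselian along the relevant kernels, and a frame/window rigidity lemma saying that base change along a surjection whose kernel lies in $\mathcal{N}^{\geq 1}$ and on which the divided Frobenius $\varphi_1$ is topologically nilpotent is an equivalence (verified by an explicit computation with the elements $x^{p^n}/(\varphi^n(d)\cdots d^{p^n})$ in the prismatic envelope). So both halves of your argument need to be replaced by these specific inputs; the descent formalism alone does not carry the weight you assign to it.
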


A more precise version of this statement and a detailed explanation will be given later in this introduction. For now, let us just say that the category \red{$\DM^{\rm adm}(R)$} is formed by objects of semi-linear algebraic nature. 

The problem of classifying $p$-divisible groups and finite locally free group schemes by semi-linear algebraic structures has a long history, going back to the work of Dieudonn\'e on formal groups over characteristic $p$ perfect fields. In characteristic $p$, as envisionned by Grothendieck, and later developed by Messing (\cite{messing_the_crystals_associated_to_barsotti_tate_groups}), Mazur-Messing (\cite{mazur_messing_universal_extensions_and_one_dimensional_crystalline_cohomology}), Berthelot-Breen-Messing (\cite{berthelot_breen_messing_theorie_de_dieudonne_cristalline_II}, \cite{berthelot_messing_theorie_de_dieudonne_cristalline_III}), the formalism of crystalline cohomology provides a natural way to attach such invariants to $p$-divisible groups. This theory goes by the name of \textit{crystalline Dieudonn\'e theory} and leads to classification theorems for $p$-divisible groups over a characteristic $p$ base in a wide variety of situations, which we will not try to survey but for which we refer the reader, for instance, to \cite{lau_divided_dieudonne_crystals}. In mixed characteristic, the existing results have been more limited. Fontaine (\cite{fontaine_groupes_p_divisibles_sur_les_corps_locaux}) obtained complete results when the base is the ring of integers of a finite totally ramified extension $K$ of the ring of Witt vectors $W(k)$ of a perfect field $k$ of characteristic $p$, with ramification index $e<p-1$. This ramification hypothesis was later removed by Breuil (\cite{breuil_groupes_p_divisibles_groupes_finis_et_modules_filtres}) for $p>2$, who also conjectured an alternative reformulation of his classification in \cite{breuil_schemas_en_groupes_et_corps_de_normes}, simpler and likely to hold even for $p=2$, which was proved by Kisin (\cite{kisin_crystalline_representations_and_f_crystals}),  for odd $p$, and extended by Kim (\cite{kim_the_classification_of_p_divisible_groups_over_2_adic_discrete_valuation_rings}), Lau (\cite{lau_relations_between_dieudonne_displays_and_crystalline_dieudonne_theory}) and Liu (\cite{liu_the_correspondence_between_barsotti_tate_groups_and_kisin_modules_when_p_equal_2}) to all $p$. Zink, and then Lau, gave a classification of \textit{formal} $p$-divisible groups over very general bases using his theory of \textit{displays} (\cite{zink_the_display_of_a_formal_p_divisible_group}). More recently, $p$-divisible groups have been classified over perfectoid rings (\cite{lau_dieudonne_theory_over_semiperfect_rings_and_perfectoid_rings}, \cite[Appendix to Lecture XVII]{scholze2020berkeley}). 

The main interest of our approach is that it gives a uniform and geometric construction of the classifying functor on quasi-syntomic rings. This is made possible by the recent spectacular work of Bhatt-Scholze on \textit{prisms} and \textit{prismatic cohomology} (\cite{bhatt_scholze_prisms_and_prismatic_cohomology}, \cite{bhatt_lectures_on_prismatic_cohomology}). So far, such a cohomological construction of the functor had been available only in characteristic $p$, using the crystalline theory. This led in practice to some restrictions, when trying to study $p$-divisible groups in mixed characteristic by reduction to characteristic $p$, of which Breuil-Kisin theory is a prototypical example : there, no direct definition of the functor was available when $p=2$! Replacing the crystalline formalism by the prismatic formalism, we give a definition of the classifying functor very close in spirit to the one used by Berthelot-Breen-Messing (\cite{berthelot_breen_messing_theorie_de_dieudonne_cristalline_II}) and which now makes sense without the limitation to characteristic $p$. Over a quasi-syntomic ring $R$, our functor takes values in the category of \textit{\red{admissible} prismatic Dieudonn\'e crystals} over $R$.
As the name suggests, \textit{prismatic Dieudonn\'e crystals} are prismatic analogues of the classical notion of a Dieudonn\'e crystal on the crystalline site. 

Before stating precisely the main results of this paper and explaining the techniques involved, let us note that several natural questions are not addressed in this paper. 
\begin{enumerate}
\item It would be interesting to go beyond quasi-syntomic rings. By analogy with the characteristic $p$ story, one would expect that the prismatic theory should also shed light on more general rings. In the general case, \red{admissible} prismatic Dieudonn\'e crystals will not be the right objects to work with. One should instead define analogues of the \textit{divided} Dieudonn\'e crystals introduced recently by Lau \cite{lau_divided_dieudonne_crystals} in characteristic $p$.
\item Even for quasi-syntomic rings, our classification is explicit for the so called \textit{quasi-regular semiperfectoid} rings or for complete regular local rings with perfect residue field of characteristic $p$ (cf.\ \Cref{sec:comp-case-mathc}), as will be explained below, but quite abstract in general. Classical Dieudonn\'e crystals can be described as modules over the $p$-completion of the PD-envelope of a smooth presentation, together with a Frobenius and a connection satisfying various conditions. Is there an analogous concrete description of \red{(admissible)} prismatic Dieudonn\'e crystals?
\item Finally, it would also be interesting and useful to study deformation theory (in the spirit of Grothendieck-Messing theory) for the prismatic Dieudonn\'e functor. 
\end{enumerate}

We now discuss in more detail the content of this paper. 

\subsection{Quasi-syntomic rings}
\label{sec:quasi-syntomic-rings}
Let us first define the class of rings over which we study $p$-divisible groups. 

\begin{definition}[cf.\ \Cref{sec:quasi-syn-rings-definition-quasi-syn-rings}]
\label{sec:quasi-syntomic-rings-1-definition-introduction-quasi-syntomic-rings}
A ring $R$ is \textit{quasi-syntomic} if $R$ is $p$-complete with bounded
$p^{\infty}$-torsion and if the cotangent complex $L_{R/\zp}$ has $p$-complete
Tor-amplitude in $[-1,0]$\footnote{This means that the complex $M=L_{R/\zp}
\otimes_R^{\mathbb{L}} R/p \in D(R/p)$ is such that $M \otimes_R^{\mathbb{L}} N \in
D^{[-1,0]}(R/p)$ for any $R/p$-module $N$.}. The category of all quasi-syntomic
rings is denoted by $\mathrm{QSyn}$. 

Similarly, a map $R \to R'$ of $p$-complete rings with bounded $p^{\infty}$-torsion
is a \textit{quasi-syntomic morphism} if
$R'$ is $p$-completely flat over $R$ and
$L_{R'/R} \in D(R')$ has $p$-complete Tor-amplitude in $[-1,0]$. 
\end{definition}

\begin{remark}
This definition is due to Bhatt-Morrow-Scholze \cite{bhatt_morrow_scholze_topological_hochschild_homology} and extends (in the $p$-complete world) the usual notion of l.c.i.\
rings and syntomic morphisms (flat and l.c.i.) to the
non-Noetherian, non finite-type setting. The interest of this
definition, apart from being more general, is that it more clearly shows why this
category of rings is relevant : the key property of (quasi-)syntomic rings is that
they have a well-behaved ($p$-completed) cotangent complex. The work of
Avramov shows that the cotangent complex is very badly behaved for all other rings,
at least in the Noetherian setting: it is left unbounded (cf.\ \cite{avramov_locally_complete_intersection_homomorphisms_and_a_conjecture_of_quillen_on_the_vanishing_of_cotangent_homology}).  
\end{remark}

\begin{example}
Any $p$-complete l.c.i.\ Noetherian ring is in $\mathrm{QSyn}$. But there are also big rings in $\mathrm{QSyn}$ : for example, any (integral)
perfectoid ring  is in $\mathrm{QSyn}$ (cf.\ \Cref{sec:quasi-syntomic-rings-example-quasi-syntomic-rings}). As a consequence of this, the $p$-completion of a smooth algebra over a
perfectoid ring is also quasi-syntomic, as well as any bounded $p^{\infty}$-torsion $p$-complete ring which can be presented as the quotient of an integral
perfectoid ring by a finite regular sequence. For example, the rings
$$ \mathcal{O}_{\mathbb{C}_p}\langle T \rangle \quad ; \quad \mathcal{O}_{\mathbb{C}_p}/p \quad ; \quad \mathbb{F}_p[T^{1/p^{\infty}}]/(T-1)$$ are quasi-syntomic. 
\end{example} 

The category of quasi-syntomic rings is endowed with a natural topology : the Grothendieck topology for which covers are given by \textit{quasi-syntomic covers}, i.e., morphisms $R \to R'$ of $p$-complete rings which are quasi-syntomic and $p$-completely faithfully flat.

An important property of the quasi-syntomic topology is that \textit{quasi-regular semiperfectoid rings} form a basis of the topology (cf.\ \Cref{sec:quasi-syn-rings-proposition-quasi-regular semiperfectoid-basis}).

\begin{definition}[cf.\ \Cref{sec:quasi-syn-rings-definition-quasi-regular semiperfectoid}]
  \label{sec:quasi-syntomic-rings-2-definition-qrsp-introduction}
A ring $R$ is \textit{quasi-regular semiperfectoid} if $R \in \mathrm{QSyn}$ and
there exists a perfectoid ring $S$ mapping surjectively to $R$.
\end{definition}

As an example, any perfectoid ring, or any $p$-complete bounded $p^{\infty}$-torsion quotient of a perfectoid ring by a finite regular
sequence, is quasi-regular semiperfectoid.

\subsection{Prisms and prismatic cohomology (after Bhatt-Scholze)}
Our main tool for studying $p$-divisible groups over quasi-syntomic rings is the recent prismatic theory of Bhatt-Scholze \cite{bhatt_scholze_prisms_and_prismatic_cohomology}, \cite{bhatt_lectures_on_prismatic_cohomology}. This theory relies on the seemingly simple notions of \textit{$\delta$-rings} and \textit{prisms}. In what follows, all the rings considered are assumed to be $\mathbb{Z}_{(p)}$-algebras.

A \textit{$\delta$-ring} is a commutative ring $A$, together with a map of sets $\delta : A\to A$, with $\delta(0)=0$, $\delta(1)=0$, and satisfying the following identities :
\[ \delta(xy)= x^p \delta(y) + y^p \delta(x)+ p\delta(x)\delta(y) \quad ; \quad \delta(x+y)=\delta(x)+\delta(y)+\frac{x^p+y^p-(x+y)^p}{p}, \]
for all $x, y \in A$. For any $\delta$-ring $(A,\delta)$, denote by $\varphi$ the map defined by $$\varphi(x)=x^p+p\delta(x).$$ The identities satisfied by $\delta$ are made to make $\varphi$ a ring endomorphism lifting Frobenius modulo $p$. Conversely, a $p$-torsion free ring equipped with a lift of Frobenius gives rise to a $\delta$-ring. A pair $(A,I)$ formed by a $\delta$-ring $A$ and an ideal $I \subset A$ is a \textit{prism} if $I$ defines a Cartier divisor on $\mathrm{Spec}(A)$, if $A$ is (derived) $(p,I)$-complete and if $I$ is pro-Zariski locally generated\footnote{In practice, the ideal $I$ is always principal.} by a distinguished element, i.e., an element $d$ such that $\delta(d)$ is a unit.

\begin{example}
\begin{enumerate}
\item For any $p$-complete $p$-torsion free $\delta$-ring $A$, the pair $(A,(p)$) is a prism.
\item Say that a prism is \textit{perfect} if the Frobenius $\varphi$ on the \blue{underlying} $\delta$-ring is an isomorphism. Then the category of perfect prisms is equivalent to the category of (integral) perfectoid rings : in one direction, one maps a perfectoid ring $R$ to the pair $(A_{\inf}(R):=W(R^{\flat}), \mathrm{ker}(\theta))$ (here $\theta : A_{\rm inf}(R) \to R$ is Fontaine's theta map) ; in the other direction, one maps $(A,I)$ to $A/I$. Therefore, one sees that, in the words of the authors of \cite{bhatt_scholze_prisms_and_prismatic_cohomology}, prisms are some kind of \textit{"deperfection" of perfectoid rings}.
\end{enumerate}
\end{example}

The crucial definition for us is the following. We stick to the affine case for simplicity, but it admits an immediate extension to $p$-adic formal schemes. 
\begin{definition}
Let $R$ be a $p$-complete ring. The \textit{(absolute) prismatic site $(R)_{\prism}$ of $R$} is the opposite of the category of bounded\footnote{A prism $(A,I)$ is \textit{bounded} if $A/I$ has bounded $p^{\infty}$-torsion.} prisms $(A,I)$ together with a map $R \to A/I$, endowed with the Grothendieck topology for which covers are morphisms of prisms $(A, I) \to (B,J)$, such that the underlying ring map $A\to B$ is $(p,I)$-completely faithfully flat. 
\end{definition}

Bhatt and Scholze prove that the functor $\mathcal{O}_{\prism}$ (resp. $\overline{\mathcal{O}}_{\prism}$) on the prismatic site valued in $(p,I)$-complete $\delta$-rings (resp. in $p$-complete $R$-algebras), sending $(A,I) \in (R)_{\prism}$ to $A$ (resp. $A/I$), is a sheaf. The sheaf $\mathcal{O}_{\prism}$ (resp. $\overline{\mathcal{O}}_{\prism}$) is called the \textit{prismatic structure sheaf} (resp. the \textit{reduced prismatic structure sheaf}).  

From this, one easily deduces that the presheaves $I_{\prism}$ (resp. $\mathcal{N}^{\geq 1} \mathcal{O}_{\prism}$) sending $(A,I)$ to $I$ (resp. $\mathcal{N}^{\geq 1} A:=\varphi^{-1}(I)$) are also sheaves on $(R)_{\prism}$. 
\\

Let $R$ be a $p$-complete ring. One proves the existence of a morphism of topoi:

\[ v : \mathrm{Shv}((R)_{\prism}) \to \mathrm{Shv}((R)_{\rm qsyn}). \]

Set :
\[ \mathcal{O}^{\rm pris} := v_* \mathcal{O}_{\prism} ~~ ; ~~ \mathcal{N}^{\geq 1} \mathcal{O}^{\rm pris} := v_* \mathcal{N}^{\geq 1} \mathcal{O}_{\prism} ~~ ; ~~  \mathcal{I}^{\rm pris} := v_* I_{\prism} . \]
The sheaf $\mathcal{O}^{\rm pris}$ is endowed with a Frobenius lift $\varphi$. Moreover, if $R$ is quasi-syntomic, the quotient sheaf $\mathcal{O}^{\rm pris} / \mathcal{N}^{\geq 1} \mathcal{O}^{\rm pris}$ is naturally isomorphic to the structure sheaf $\mathcal{O}$ of $(R)_{\mathrm{qsyn}}$.

\subsection{\red{Admissible} prismatic Dieudonn\'e crystals and modules}
We are now in position to define the category of objects classifying $p$-divisible groups.

\begin{definition}
Let $R$ be a quasi-syntomic ring. A \textit{prismatic Dieudonn\'e crystal over $R$} is a finite locally free $\mathcal{O}^{\rm pris}$-module $\mathcal{M}$ together with $\varphi$-linear morphism
  $$
  \varphi_{\mathcal{M}} \colon  \mathcal{M}\to \mathcal{M}
  $$
  whose linearization $\varphi^\ast \mathcal{M}\to \mathcal{M}$ has its cokernel is killed by $\mathcal{I}^{\rm pris}$. It is said to be \textit{admissible} if the image of the composition
    \[
      \mathcal{M}\xrightarrow{\varphi_{\mathcal{M}}} \mathcal{M}\to \mathcal{M}/\mathcal{I}^\pris \mathcal{M}
    \]
  \blue{ is a finite, locally free $\mathcal{O}$-module $\mathcal{F}_{\mathcal{M}}$ such that the map $(\mathcal{O}^{\rm pris}/\mathcal{I}^{\rm pris}) \otimes_{\mathcal{O}} \mathcal{F}_{\mathcal{M}} \to \mathcal{M}/\mathcal{I}^{\rm pris}\mathcal{M}$ induced by $\varphi_{\mathcal{M}}$ is a monomorphism. }
\end{definition}

\begin{definition}
  \label{sec:abstr-divid-prism-definition-category-of-divided-dieudonne-crystals-intro}
  Let $R$ be a quasi-syntomic ring. We denote by \red{$\DM(R)$ the category of prismatic Dieudonn\'e crystals over $R$ (with morphisms the $\mathcal{O}^{\rm pris}$-linear morphisms commuting with the Frobenius), and by $\DM^{\rm adm}(R)$ its full subcategory of admissible prismatic Dieudonn\'e crystals}.
\end{definition}

\begin{remark}
\label{change-of-definition-thanks-to-the-referee}
\red{In a former version of the paper, we used the notion of \textit{filtered prismatic Dieudonn\'e crystal}. A filtered prismatic Dieudonn\'e crystal over a quasi-\blue{syntomic} ring $R$ is a collection $(\mathcal{M}, \mathrm{Fil} \mathcal{M}, \varphi_{\mathcal{M}})$ consisting of a finite locally free $\mathcal{O}^{\rm pris}$-module $\mathcal{M}$, a $\mathcal{O}^{\rm pris}$-submodule $\mathrm{Fil} \mathcal{M}$, and a $\varphi$-linear map $\varphi_{\mathcal{M}} : \mathcal{M} \to \mathcal{M}$, satisfying the following conditions :
    \begin{enumerate}
  \item $\varphi_{\mathcal{M}}(\mathrm{Fil} \mathcal{M}) \subset \mathcal{I}^{\rm pris}.\mathcal{M}$.
    \item $\mathcal{N}^{\geq 1}\mathcal{O}^{\rm pris}. \mathcal{M} \subset \mathrm{Fil} \mathcal{M}$ and $\mathcal{M}/\mathrm{Fil} \mathcal{M}$ is a finite locally free $\mathcal{O}$-module. 
  \item $\varphi_{\mathcal{M}}(\mathrm{Fil} \mathcal{M})$ generates $\mathcal{I}^{\rm pris}.\mathcal{M}$ as an $\mathcal{O}^{\rm pris}$-module.
\end{enumerate}
However, as was pointed out to us by the referee, the category of filtered prismatic Dieudonn\'e crystals embeds fully faithfully in the category of prismatic Dieudonn\'e crystals, with essential image given by the admissible objects (this essentially follows from \Cref{sec:abstr-divid-prism-proposition-equivalence-divided-prismatic-dieudonne-modules-windows} below). Since admissible prismatic Dieudonn\'e crystals are easier to work with than filtered prismatic Dieudonn\'e crystals, we decided to work only with the first; hence, the results stayed the same, but their formulation changed slightly.}
\end{remark}

For quasi-regular semiperfectoid rings, these abstract objects have a concrete incarnation. Let $R$ be a quasi-regular semiperfectoid ring. The prismatic site $(R)_{\prism}$ admits a final object $(\prism_R, I)$.

\begin{example}
\begin{enumerate}
\item If $R$ is a perfectoid ring, $(\prism_R,I)=(A_{\rm inf}(R),\ker(\tilde{\theta}))$. 
\item If $R$ is quasi-regular semiperfectoid and $pR=0$, $(\prism_R,I) \cong (A_{\rm crys}(R),(p))$. 
\end{enumerate}
\end{example}

\begin{definition}
A \textit{prismatic Dieudonn\'e module over $R$} is a finite locally free $\prism_R$-module $M$ together with a $\varphi$-linear morphism
  $$
  \varphi_M \colon M\to M
  $$
  whose linearization $\varphi^\ast M \to M$ has its cokernel is killed by $I$. It is said to be \textit{admissible} if the composition
  \[
    M\xrightarrow{\varphi_M}M\to M/I\cdot M
  \]
  \blue{is a finite, locally free $R\cong \prism_R/\mathcal{N}^{\geq 1}\prism_R$-module $F_M$ such that the map $\green{\prism_R/I\prism_R} \otimes_{R} F_M \to M/IM$ induced by $\varphi_{M}$ is a monomorphism.}
\end{definition}

\begin{proposition}[\Cref{sec:abstr-divid-prism-proposition-equivalence-crystals-modules-for-quasi-regular semiperfectoid}]
Let $R$ be a quasi-regular semiperfectoid ring. The functor of global sections induces an equivalence between the category of \red{(admissible)} prismatic Dieudonn\'e crystals over $R$ and the category of \red{(admissible)} prismatic Dieudonn\'e modules over $R$.
\end{proposition}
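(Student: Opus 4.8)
The plan is to exhibit the global sections functor as an equivalence by checking that it is essentially surjective and fully faithful, reducing everything to the corresponding statement for the structure sheaves. Since $(R)_{\prism}$ has a final object $(\prism_R, I)$, the key point is that for a sheaf $\mathcal{F}$ on $(R)_{\prism}$ which is a finite locally free $\mathcal{O}_{\prism}$-module, the global sections $\Gamma((R)_{\prism}, \mathcal{F})$ form a finite projective $\prism_R$-module and $\mathcal{F}$ is recovered as the crystal $(A,I) \mapsto \Gamma((R)_{\prism},\mathcal{F}) \otimes_{\prism_R} A$. This is a descent statement: one must show that a finite locally free module over the structure sheaf on a site with a final object is the same thing as a finite projective module over the value of the structure sheaf at that object, together with the crystal (base-change) property being automatic. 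The case of $\mathcal{O}_{\prism}$ itself follows from the sheaf property of $\mathcal{O}_{\prism}$ proven by Bhatt-Scholze and faithfully flat descent for finite projective modules along $(p,I)$-completely faithfully flat maps; one then checks finite locally free modules are Zariski-locally free and glues.

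First I would unwind the definitions on both sides. A filtered prismatic Dieudonn\'e crystal over $R$ is, by definition, a triple $(\mathcal{M}, \Fil\mathcal{M}, \varphi_{\mathcal{M}})$ of sheaves (via the morphism $v$, a finite locally free $\mathcal{O}^{\rm pris}$-module is the pushforward of a finite locally free $\mathcal{O}_{\prism}$-module, so one reduces to sheaves on $(R)_{\prism}$); a filtered prismatic Dieudonn\'e module is the same data evaluated at the final object. Taking global sections sends the first to the second, compatibly with Frobenius and filtration, because $\mathcal{I}^{\rm pris}$ and $\mathcal{N}^{\geq 1}\mathcal{O}^{\rm pris}$ are by construction the pushforwards of $I_{\prism}$ and $\mathcal{N}^{\geq 1}\mathcal{O}_{\prism}$, so all three defining conditions translate verbatim. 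Conversely, given a filtered prismatic Dieudonn\'e module $(M, \Fil M, \varphi_M)$ over $\prism_R$, I would define a crystal by the rule $(A,I_A) \mapsto M \otimes_{\prism_R} A$ for the underlying module, with the filtration generated by the image of $\Fil M$ together with $\mathcal{N}^{\geq 1} A \cdot (M \otimes_{\prism_R} A)$, and $\varphi$ extended semilinearly; then verify this is a sheaf (using that $\mathcal{O}_{\prism}$ is a sheaf and $M$ is finite projective, so $- \otimes_{\prism_R} \mathcal{O}_{\prism}$ is a sheaf by flat descent) and that conditions (1)--(3) propagate from the final object to all objects (condition (3) uses that $\varphi_M(\Fil M)$ generating $I \cdot M$ base-changes well since everything is finite projective).

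For full faithfulness: a morphism of crystals is determined by its value on the final object since the final object covers every object, and conversely any $\prism_R$-linear map of the modules compatible with $\varphi$ and filtration extends to a morphism of crystals by base change; the compatibility conditions at a general object $(A, I_A)$ follow by applying $- \otimes_{\prism_R} A$ to the conditions at $\prism_R$, again using finite projectivity. For essential surjectivity, the content is exactly that the crystal structure is rigid: any finite locally free $\mathcal{O}_{\prism}$-module $\mathcal{M}$ satisfies $\mathcal{M}(A,I_A) \cong \mathcal{M}(\prism_R, I) \otimes_{\prism_R} A$ naturally — this is the crystal property and should be deduced from the sheaf property of $\mathcal{O}_{\prism}$ together with the observation that products of prisms in $(R)_{\prism}$ compute the relevant Čech nerves, so that $\mathcal{M}$ is automatically the base change of its global sections (I expect this is essentially Bhatt-Scholze's argument that quasi-coherent crystals on the prismatic site of a ring with a final object are equivalent to modules over the prism). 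The main obstacle is precisely this last point: proving that finite locally free $\mathcal{O}_{\prism}$-modules are automatically crystals, i.e. that there are no higher obstructions and the base-change map is an isomorphism at every object. I would handle it by faithfully flat descent — given $(A, I_A) \in (R)_{\prism}$, the map $\prism_R \to A$ need not be flat, but one forms the coproduct $\prism_R \otimes_{?} A$ inside prisms (using that $\prism_R$ is final, the relevant object is just $A$ itself with a second structure map, or more precisely the self-product $A \times_{\prism_R} A$ built via prismatic envelopes) and uses the sheaf axiom for $\mathcal{O}_{\prism}$ along the cover, reducing to the statement that $\mathcal{M}(A, I_A) = \varinjlim$ over covers refining the final object, which gives $\mathcal{M}(\prism_R, I) \otimes_{\prism_R} A$ by flat base change on each term of the cover. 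Checking that the filtration and Frobenius glue correctly in this argument — in particular that $\Fil\mathcal{M}(A,I_A)$ is exactly generated by the base change of $\Fil\mathcal{M}(\prism_R,I)$ and not something larger — is the delicate bookkeeping step, and I would isolate it as a lemma about how $\mathcal{N}^{\geq 1}$ and the quotient $\mathcal{O}^{\rm pris}/\mathcal{N}^{\geq 1}\mathcal{O}^{\rm pris} \cong \mathcal{O}$ interact with base change along prism maps.
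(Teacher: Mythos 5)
Your proposal is correct and follows essentially the same route as the paper: reduce via the $v_*$/$v^*$ equivalence (\Cref{sec:abstr-divid-prism-proposition-finite-locally-free}) to identifying finite locally free $\mathcal{O}_{\prism}$-modules with finite projective $\prism_R$-modules by $(p,I)$-completely faithfully flat descent, and treat the filtration through $\mathcal{O}^{\rm pris}/\mathcal{N}^{\geq 1}\mathcal{O}^{\rm pris}\cong\mathcal{O}$ (\Cref{sec:abstr-divid-prism-proposition-quotient-sheaf-is-structure-sheaf}) together with the equivalence of finite locally free $\mathcal{O}$-modules with finite projective $R$-modules, everything else being base change along the unique map out of the initial prism. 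The only inaccuracy is the phrase ``the final object covers every object'' -- full faithfulness really comes from the crystal property (values at any $(A,I_A)$ are base changes from $(\prism_R,I)$), which you in fact also invoke, so there is no substantive gap.
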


\subsection{Statements of the main results}
In all this paragraph, $R$ is a quasi-syntomic ring.

\begin{theorem}[\Cref{sec:divid-prism-dieud-modul-proposition-window}]
\label{sec:intro-main-theorem-1}
Let $G$ be a $p$-divisible group over $R$. The \red{pair
\[  \left(\mathcal{M}_{\prism}(G)=\Ext^1(G, \mathcal{O}^{\rm pris}), \varphi_{\mathcal{M}_{\prism}(G)} \right), \]
where the $\Ext$ is an Ext-group of abelian sheaves on $(R)_{\rm qsyn}$ and $\varphi_{\mathcal{M}_{\prism}(G)}$ is the Frobenius induced by the Frobenius of $\mathcal{O}^{\rm pris}$, is an admissible prismatic Dieudonn\'e crystal over $R$, often denoted simply by $\mathcal{M}_{\prism}(G)$. }
\end{theorem}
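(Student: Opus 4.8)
The strategy is to reduce everything to the case of a quasi-regular semiperfectoid ring $R$ via descent, and there to compute the $\Ext$-groups explicitly. First I would recall that quasi-regular semiperfectoid rings form a basis of the quasi-syntomic topology, so that a finite locally free $\mathcal{O}^{\rm pris}$-module (and the analogous structures) can be checked and constructed locally; concretely, for $R$ quasi-regular semiperfectoid the prismatic site has a final object $(\prism_R, I)$ and the whole package reduces to the notion of a filtered prismatic Dieudonn\'e module over $\prism_R$. So the core of the proof is: for $R$ quasi-regular semiperfectoid, show that $M_{\prism}(G) := \mathrm{Ext}^1_{(R)_{\rm qsyn}}(G, \mathcal{O}^{\rm pris})$ is finite locally free over $\prism_R$ of rank the height of $G$, that $\mathrm{Fil}~M_{\prism}(G) := \mathrm{Ext}^1(G, \mathcal{N}^{\geq 1}\mathcal{O}^{\rm pris})$ is a submodule with $M/\mathrm{Fil}$ finite locally free over $R$ of rank the dimension of $G$, and that the three axioms (1)–(3) hold.

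**Key steps.** (i) \emph{Vanishing of $\mathrm{Hom}$ and $\mathrm{Ext}^{\geq 2}$.} Using that $G = \varinjlim G[p^n]$ and a presentation of $p$-divisible groups, together with the fact that $\mathcal{O}^{\rm pris}$, $\mathcal{N}^{\geq 1}\mathcal{O}^{\rm pris}$ and $\mathcal{I}^{\rm pris}$ are (after base change to qrsp rings) explicit $p$-torsion-free sheaves, I would show $\mathrm{Hom}(G, \mathcal{O}^{\rm pris}) = 0$ and control the higher Ext's; the key input is that multiplication by $p$ on $G$ is an epimorphism of qsyn sheaves, so $\mathrm{Hom}(G,-) = 0$ whenever the target is $p$-torsion-free, and a dimension/deformation argument kills $\mathrm{Ext}^2$. (ii) \emph{Identification of the module.} I would use the exact sequence $0 \to \mathcal{N}^{\geq 1}\mathcal{O}^{\rm pris} \to \mathcal{O}^{\rm pris} \to \mathcal{O} \to 0$ and the universal (vector) extension of $G$: the classical computation of Berthelot–Breen–Messing, transplanted from the crystalline to the prismatic setting, identifies $\mathrm{Ext}^1(G, \mathcal{O})$ with the Lie algebra of the universal extension and gives the finite locally free structure; the Frobenius $\varphi$ on $\mathcal{O}^{\rm pris}$ induces $\varphi_{M}$ by functoriality. (iii) \emph{The filtration axioms.} Axiom (2) follows from the identification of $M/\mathrm{Fil}$ with $\mathrm{Lie}(G^\vee)^\vee$ (or $\omega_G$), finite locally free over $R$ of rank $\dim G$; axioms (1) and (3) — that $\varphi_M$ sends $\mathrm{Fil}$ into $I\cdot M$ and generates it — are the "admissibility" of the filtration and should be checked on the perfectoid case first (where $\prism_R = A_{\rm inf}(R)$ and the statement is essentially the known classification over perfectoid rings, cf.\ the cited work of Lau and Scholze–Weinstein), then propagated to general qrsp rings by functoriality along a surjection from a perfectoid ring. (iv) \emph{Descent.} Finally, given a quasi-syntomic cover of $R$ by qrsp rings, the crystal property — i.e.\ that the values over a qrsp $R'$ assemble into a sheaf on $(R)_{\rm qsyn}$ and the formation commutes with base change — follows because $\mathrm{Ext}^i$ of abelian sheaves is computed locally and $G$ is unramified in the relevant sense; the finite-locally-freeness descends along faithfully flat $\prism_{R'}/\prism_R$-type maps.

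**Main obstacle.** The hard part will be step (i)–(ii): proving that $\mathrm{Ext}^1(G, \mathcal{O}^{\rm pris})$ is \emph{finite locally free of the expected rank} and that the higher $\mathrm{Ext}$'s vanish. In the crystalline theory this rests on Mazur–Messing's theory of the universal extension and a careful analysis of the crystalline site; here one must redo this with the prismatic structure sheaf, whose reduction $\mathcal{O}^{\rm pris}/\mathcal{N}^{\geq 1}$ is $\mathcal{O}$ but which is genuinely more rigid than the crystalline sheaf (no divided powers). The cleanest route is probably to first establish the perfectoid case by comparison with $A_{\rm inf}$-displays / the known perfectoid classification, then use that every qrsp ring receives a surjection from a perfectoid ring together with the fact that $\prism_R$ is computed as a prismatic cohomology ring, to transfer finiteness; controlling $\mathrm{Ext}^2$ requires a genuinely geometric input, presumably the syntomic-local triviality of $p$-divisible groups (they are extensions, locally, of étale by multiplicative parts after a cover) or a deformation-theoretic vanishing. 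Everything else — the two short exact sequences of structure sheaves, functoriality of $\varphi$, the descent bookkeeping — is formal once this finiteness is in hand.
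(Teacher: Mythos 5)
There is a genuine gap: the heart of the theorem --- that $\Ext^1_{(R)_{\rm qsyn}}(G,\mathcal{O}^{\rm pris})$ is a finite locally free $\mathcal{O}^{\rm pris}$-module of rank the height, together with the vanishing of the relevant $\mathcal{H}om$ and $\mathcal{E}xt^2$ and the crystal/base-change property --- is precisely what you defer in your ``main obstacle'' paragraph, and none of the routes you sketch for it goes through. The suggestion that $p$-divisible groups are quasi-syntomic--locally extensions of an \'etale by a multiplicative group is false over a general base (there is no connected-\'etale sequence, and the claim already fails for non-ordinary groups), so no such d\'evissage is available; the Mazur--Messing universal-vector-extension route is not known to transplant to $\mathcal{O}^{\rm pris}$ (the paper explicitly leaves the relation between $\mathcal{M}_{\prism}(G)$ and $\mathrm{Lie}$ of the universal extension as an open question); and the ``perfectoid case first, then transfer along a surjection from a perfectoid ring'' plan is both circular and structurally problematic: the identification of $M_{\prism}$ with the Scholze--Weinstein functor over perfectoid rings is itself proved using the abelian-scheme computation, and propagating conditions (1)--(3) from a perfectoid cover $S\twoheadrightarrow R$ requires lifting $G$ to $S$ and base-change compatibility of $\mathcal{M}_{\prism}$ --- but the base-change (crystal) property is part of what must be established.

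What actually carries the proof in the paper is a different mechanism, absent from your proposal: by Raynaud's theorem one embeds finite locally free group schemes Zariski-locally into abelian schemes; for the $p$-completion $X$ of an abelian scheme one computes $\mathcal{E}xt^i_{(R)_{\prism}}(u^{-1}X,\mathcal{O}_{\prism})$ for $i\le 2$ via the Breen--Deligne/Berthelot--Breen--Messing partial resolution and the K\"unneth formula, the decisive input being the degeneration of the conjugate spectral sequence for $X$; this degeneration is proved prismatically, by identifying $\mathrm{Fil}_1^{\rm conj}\overline{\prism}_{X/A}$ with $L_{X/A}\{-1\}[-1]^{\wedge_p}$ and using that abelian schemes lift mod $I^2$ (a Deligne--Illusie-type splitting). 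One then gets the crystal property and rank for truncated Barsotti--Tate groups (via Illusie's criterion, using local freeness of $\nu_{G^*}$ and $\omega_G$) and passes to the limit over $G[p^n]$, controlling $R^1\varprojlim$. Finally, the filtration conditions (2)--(3) are checked not by transfer from a perfectoid cover but by Nakayama: the relevant maps are between finitely generated modules over $p$-complete rings, so surjectivity may be tested after base change along maps $\prism_R\to W(k)$ to perfect fields, where the crystalline comparison applies. Without an argument of this type (or a fully worked-out substitute), your steps (i)--(iii) remain assertions rather than a proof.
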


\begin{remark}
The rank of the finite locally free $\mathcal{O}^{\rm pris}$-module $\mathcal{M}_{\prism}(G)$ is the height of $G$, and the quotient $\mathcal{M}_{\prism}(G)/\red{\varphi_{\mathcal{M}_\prism(G)}^{-1}(\mathcal{I}^{\rm pris}.\mathcal{M}_{\prism}(G))}$ is naturally isomorphic to $\mathrm{Lie}(\check{G})$, where $\check{G}$ is the Cartier dual of $G$.
\end{remark}

\begin{remark}
When $pR=0$, the \textit{crystalline comparison theorem} for prismatic cohomology allows us to prove that this construction coincides with the functor usually considered in crystalline Dieudonn\'e theory, relying on Berthelot-Breen-Messing's constructions (\cite{berthelot_breen_messing_theorie_de_dieudonne_cristalline_II}).
\end{remark}

\begin{theorem}[\Cref{sec:divid-prism-dieud-modul-theorem-main-theorem-of-the-paper}] 
\label{sec:intro-main-theorem-2}
\red{The prismatic Dieudonn\'e functor 
\[ \mathcal{M}_{\prism} : G \mapsto (\mathcal{M}_{\prism}(G),\varphi_{\mathcal{M}_\prism(G)}) \]
induces an antiequivalence between the category $\mathrm{BT}(R)$ of $p$-divisible groups over $R$ and the category $\DM^{\rm adm}(R)$ of admissible prismatic Dieudonn\'e crystals over $R$.}
\end{theorem}

\begin{remark}
\purple{\Cref{sec:intro-main-theorem-1} and \Cref{sec:intro-main-theorem-2} immediately extend to $p$-divisible groups over a quasi-syntomic formal scheme.}
\end{remark}

\begin{remark}
It is easy to write down a formula for a functor attaching to \red{an admissible} prismatic Dieudonn\'e crystal an abelian sheaf on $(R)_{\rm qsyn}$, which will be a quasi-inverse of the prismatic Dieudonn\'e functor : see \Cref{writing-a-quasi-inverse}. But such a formula does not look very useful.
\end{remark}

\begin{remark}
As a corollary of the theorem and the comparison with the crystalline functor, one obtains that the (contravariant) Dieudonn\'e functor from crystalline Dieudonn\'e theory is an antiequivalence for quasi-syntomic rings in characteristic $p$. For excellent l.c.i. rings, fully faithfulness was proved by de Jong-Messing; the antiequivalence was proved by Lau for $F$-finite l.c.i. rings (which are in particular excellent rings). 
\end{remark}

\begin{remark}
\purple{It is not difficult to prove that if $R$ is perfectoid, \red{admissible} prismatic Dieudonn\'e crystals (or modules) over $R$ are equivalent to minuscule Breuil-Kisin-Fargues modules for $R$, in the sense of \cite{bhatt_morrow_scholze_integral_p_adic_hodge_theory}.
Therefore, \Cref{sec:intro-main-theorem-2} contains as a special case the results of Lau and Scholze-Weinstein. But the proof of the theorem actually requires this special case\footnote{In fact, as observed in \cite{scholze2020berkeley}, only the case of perfectoid valuation rings with algebraically closed and spherically complete fraction field is needed.} as an input.}
\end{remark}

\begin{remark}
In general, the prismatic Dieudonn\'e functor (without the \red{admissibility condition}) is not essentially surjective, but we prove it is an antiequivalence for \blue{complete regular (Noetherian) local} rings in \Cref{sec:comp-case-mathc-proposition-forgetful-functor-regular}, \red{i.e., in this case the admissibility condition is automatic}.

Moreover, we explain in \Cref{sec:comp-case-mathc} how to recover Breuil-Kisin's classification (as extended by Kim, Lau and Liu to all $p$) of $p$-divisible groups over $\mathcal{O}_K$, where $K$ is a discretely valued extension of $\mathbb{Q}_p$ with perfect residue field, from \Cref{sec:intro-main-theorem-2}.
\end{remark} 

\begin{remark}
\Cref{sec:comparison-with-zink-displays} shows how to extract from the \red{admissible} prismatic Dieudonn\'e functor a functor from $\mathrm{BT}(R)$ to the category of displays of Zink over $R$. Even though the actual argument is slightly involved for technical reasons, the main result there ultimately comes from the following fact : if $R$ is a quasi-regular semiperfectoid ring, the natural morphism $\theta : \prism_R \to R$ gives rise by adjunction to a morphism of $\delta$-rings $\prism_R \to W(R)$, mapping $\mathcal{N}^{\geq 1} \prism_R$ to the image of Verschiebung on Witt vectors.

Zink's classification by displays works on very general bases but is restricted (by design) to formal $p$-divisible groups or to odd $p$ ; by contrast, our classification is limited to quasi-syntomic rings but do not make these restrictions. 
\end{remark}

\begin{remark}
  \label{remark-finite-flat-group-schemes-introduction}
As in Kisin's article \cite{kisin_crystalline_representations_and_f_crystals}, it should be possible to deduce from \Cref{sec:intro-main-theorem-2} a classification result for finite locally free group schemes. We only write this down over a perfectoid ring, in which case it was already known for $p>2$ by the work of Lau, \cite{lau_dieudonne_theory_over_semiperfect_rings_and_perfectoid_rings}. This result is used in \purple{the recent} work of \u{C}esnavi\u{c}ius and Scholze \cite{cesnavicius_scholze_purity_for_flat_cohomology}. 
\end{remark}

\subsection{Overview of the proof and plan of the paper}
\Cref{sec:generalities-prisms} and \Cref{sec:gener-prism-cohom} contain some useful basic results concerning prisms and prismatic cohomology, with special emphasis on the case of quasi-syntomic rings. Most of them are extracted from \cite{bhatt_morrow_scholze_topological_hochschild_homology} and \cite{bhatt_scholze_prisms_and_prismatic_cohomology}, but some are not contained in loc. cit. \orange{(for instance, the definition of the $q$-logarithm, \Cref{sec:q-logarithm}, or the K\"unneth formula, \Cref{sec:kunn-form-prism})}, or only briefly discussed there (for instance, the description of truncated Hodge-Tate cohomology, \Cref{sec:trunc-prism-cotang-complex}).

\Cref{sec:prism-dieu-theory-for-p-divisible-groups} is the heart of this paper. We first introduce the category \red{$\DM^{\rm adm}(R)$ of admissible} prismatic Dieudonn\'e crystals over a quasi-syntomic ring $R$ and discuss some of its abstract properties (\Cref{sec:abstr-divid-dieud}). We then introduce a candidate functor from $p$-divisible groups over $R$ to \red{$\DM^{\rm adm}(R)$} (\Cref{sec:divid-prism-dieud-definition-divided-prismatic-dieudonne-crystals}). That it indeed takes values in the category \red{$\DM^{\rm adm}(R)$} is the content of \Cref{sec:intro-main-theorem-1}, which we do not prove immediately. We first relate this functor to other existing functors, for characteristic $p$ rings or perfectoid rings (\Cref{sec:comp-with-cryst}). The next three sections are devoted to the proof of \Cref{sec:intro-main-theorem-1}. This proof follows a road similar to the one of \cite[Ch. 2, 3]{berthelot_breen_messing_theorie_de_dieudonne_cristalline_II}. The basic idea is to reduce many statements to the case of $p$-divisible groups attached to abelian schemes, using a theorem of Raynaud ensuring that a finite locally free group scheme on $R$ can always be realized as the kernel of an isogeny between two abelian schemes over $R$, Zariski-locally on $R$. For abelian schemes, via the general device, explained in \cite[Ch. 2]{berthelot_breen_messing_theorie_de_dieudonne_cristalline_II} and recalled in \Cref{sec:calc-ext-groups}, for computing Ext-groups in low degrees in a topos, one needs a good understanding of the prismatic cohomology. It relies on the degeneration of the conjugate spectral sequence abutting to reduced prismatic cohomology, in the same way as the description of the crystalline cohomology of abelian schemes is based on the degeneration of the Hodge-de Rham spectral sequence. We prove it in \Cref{sec:prism-dieud-modul-dieud-mod-abel-schemes} by appealing to the \red{group structure on the abelian scheme. Alternatively, one could use an} identification of some truncation of the reduced prismatic complex with some cotangent complex, in the spirit of Deligne-Illusie (or, more recently, \cite{bhatt_morrow_scholze_integral_p_adic_hodge_theory}), proved in \Cref{sec:trunc-prism-cotang-complex}.   

To prove \Cref{sec:intro-main-theorem-2}, stated as \Cref{sec:divid-prism-dieud-modul-theorem-main-theorem-of-the-paper} below, one first observes that the functors 
\[ R \mapsto \mathrm{BT}(R) \quad ; \quad R \mapsto \red{\mathrm{DM}^{\rm adm}(R)} \]
on $\mathrm{QSyn}$ are both stacks for the quasi-syntomic topology (for $\mathrm{BT}$, this is done in the Appendix). Therefore, to prove that the functor \red{$\mathcal{M}_{\prism}$} is an antiequivalence, it is enough to prove it for $R$ quasi-regular semiperfectoid, since these rings form a basis of the topology, in which case one can simply consider the more concrete functor \red{$M_{\prism}$} taking values in \red{admissible} prismatic Dieudonn\'e modules over $R$, defined by taking global sections of \red{$\mathcal{M}_{\prism}$}. Therefore, one sees that, even if one is ultimately interested only by Noetherian rings, the structure of the argument forces to consider large quasi-syntomic rings\footnote{In characteristic $p$, Lau has recently and independently implemented a similar strategy in \cite{lau_divided_dieudonne_crystals}.}.

Assume from now on that $R$ is quasi-regular semiperfectoid. The proof of fully faithfulness is ultimately reduced to the identification of the syntomic sheaf $\Z_p(1)$ (as defined using prismatic cohomology) to the $p$-adic Tate module of $\mathbb{G}_m$, a result of Bhatt-Morrow-Scholze recently reproved without $K$-theory by Bhatt-Lurie (\cite[Theorem 7.5.6]{bhatt2022absolute}). (A former version of this paper attempted to prove fully faithfulness using the strategy of \cite{scholze_weinstein_moduli_of_p_divisible_groups} (following an idea of de Jong-Messing) : one first proves it for morphisms from $\mathbb{Q}_p/\mathbb{Z}_p$ to $\mu_{p^{\infty}}$ and then reduces to this special case. This reduction step works fine in many cases of interest -- such as characteristic $p$ or $p$-torsion free quasi-regular semiperfectoid rings -- but we encountered several technical difficulties while trying to push it to the general case.)
Once fully faithfulness is acquired, the proof of essential surjectivity is by reduction to the perfectoid case. One can actually even reduce to the case of perfectoid valuation rings with algebraically closed fraction field. In this case, the result is known, and due - depending whether one is in characteristic $p$ or in mixed characteristic - to Berthelot and Scholze-Weinstein.   

Finally, \Cref{complements} gathers several complements to the main theorems, already mentioned above : the classification of finite locally free group schemes of $p$-power order over a perfectoid ring, Breuil-Kisin's classification of $p$-divisible groups over the ring of integers of a finite extension of $\Q_p$, the relation with the theory of displays and the description of the Tate module of the generic fiber of a $p$-divisible group from its prismatic Dieudonn\'e crystal.

\subsection{Notations and conventions}

In all the text, we fix a prime number $p$.

\begin{itemize}
\item All finite locally free group schemes will be assumed to be commutative.

\item If $R$ is a ring, we denote by $\BT(R)$ the category of $p$-divisible groups over $R$. 

\item If $A$ is a ring, $I \subset A$ an ideal, and $K \in D(A)$ an object of the derived category of $A$-modules, $K$ is said to be \textit{derived $I$-complete} if for every $f \in I$, the derived limit of the inverse system
\[ \dots K \overset{f} \to K \overset{f} \to K \]
vanishes. Equivalently, when $I=(f_1,\dots,f_r)$ is finitely generated, $K$ is derived $I$-complete if the natural map
\[ K \to R\lim(K \otimes_A^{\mathbb{L}} K_n^{\bullet}) \]
is an isomorphism in $D(A)$, where for each $n\geq 1$, $K_n^{\bullet}$ denotes the Koszul complex $K_{\bullet}(A;f_1^n,\dots,f_r^n)$ (one has $H^0(K_n^{\bullet})=A/(f_1^n,\dots,f_r^n)$, but beware that in general $K_n^{\bullet}$ may also have cohomology in negative degrees, unless $(f_1,\dots,f_r)$ forms a regular sequence). An $A$-module $M$ is said to be \textit{derived $I$-complete} if $K=M[0] \in D(A)$ is derived $I$-complete. The following properties are useful in practice :
\begin{enumerate}
\item A complex $K \in D(A)$ is derived $I$-complete if and only if for each integer $i$, $H^i(K)$ is derived $I$-complete (this implies in particular that the category of derived $I$-complete $A$-modules form a weak Serre subcategory of the category of $A$-modules).
\item If $I=(f_1,\dots,f_r)$ is finitely generated, the inclusion of the full subcategory of derived $I$-complete complexes in $D(A)$ admits a left adjoint, sending $K \in D(A)$ to its \textit{derived $I$-completion}
\[ \widehat{K} = R\lim(K \otimes_A^{\mathbb{L}} K_n^{\bullet}). \]
\item (Derived Nakayama) If $I$ is finitely generated, a derived $I$-complete complex $K \in D(A)$ (resp. a derived $I$-complete $A$-module $M$) is zero if and only if $K\otimes_A^{\mathbb{L}} A/I=0$ (resp. $M/IM=0$).
\item If $I$ is finitely generated, an $A$-module $M$ is (classically) $I$-adically complete if and only if it is derived $I$-complete and $I$-adically separated.
\item $I=(f)$ is principal and $M$ is an $A$-module with bounded $f^{\infty}$-torsion (i.e. such that $M[f^{\infty}]=M[f^N]$ for some $N$), the derived $I$-completion of $M$ (as a complex) is discrete and coincides with its (classical) $I$-adic completion.
\end{enumerate}
A useful reference for derived completions is \cite[Tag 091N]{stacks_project}. 

\item Let $A$ be a ring, $I$ a finitely generated ideal. A complex $K\in D(A)$ is \textit{$I$-completely flat} (resp. \textit{$I$-completely faithfully flat}) if $K \otimes_A^{\mathbb{L}} A/I$ is concentrated in degree $0$ and flat (resp. faithfully flat), cf. \cite[Definition
4.1]{bhatt_morrow_scholze_topological_hochschild_homology}. If an $A$-module $M$ is flat, its derived completion $\widehat{M}$ is $I$-completely flat.

Assume that $I$ is principal, generated by $f \in A$ (in the sequel, $f$ will often be $p$). Let $A\to B$ be a map of derived $f$-complete rings. If $A$ has bounded $f^{\infty}$-torsion and $A\to B$ is $f$-completely flat, then $B$ has bounded $f^{\infty}$-torsion. Conversely, if $B$ has bounded $f^{\infty}$-torsion and $A\to B$ is $f$-completely faithfully flat, $A$ has bounded $f^{\infty}$-torsion. Moreover, if $A$ and $B$ both have bounded $f^{\infty}$-torsion, then $A\to B$ is $f$-completely (faithfully) flat if and only if
$A/f^n \to B/f^n$ is (faithfully) flat for all $n\geq 1$. See \cite[Corollary 4.8]{bhatt_morrow_scholze_topological_hochschild_homology}).

\item A derived $I$-complete $A$-algebra $R$ is \textit{$I$-completely \'etale} (resp. \textit{$I$-completely smooth}) if $R \otimes_A^{\mathbb{L}} A/I$ is concentrated in degree $0$ and \'etale (resp. smooth). 
\end{itemize}

\subsection{Acknowledgements}
\label{sec:acknowledgments}
Special thanks go to Bhargav Bhatt who patiently answered our many questions about prismatic cohomology and to Peter Scholze who suggested this project and followed our progress with interest. We particularly thank Bhatt for a discussion regarding \Cref{sec:trunc-prism-cotang-complex}. The papers of Eike Lau had a strong influence on this work, and we thank him heartily for very helpful discussions and explanations. We would also like to thank Sebastian Bartling, Dustin Clausen, Laurent Fargues, \red{Yonatan Harpaz}, Fabian Hebestreit, Ben Heuer and Andreas Mihatsch for useful discussions on topics related to the content of this paper, as well as K\c{e}stutis \u{C}esnavi\u{c}ius for his comments on a first draft. In the winter term 2019/20, the ARGOS seminar in Bonn went through the manuscript, and we are very grateful to the participants for their careful reading and suggestions for improvements or corrections. We heartily thank Kazuhiro Ito for pointing out to us that our first proof of \Cref{sec:comp-case-mathc-theorem-comparison-with-bk} was erroneous and the anonymous referees for their tremendous report, which hopefully helped to improve and simplify the paper and for spotting many mistakes in earlier versions of this paper. Last, but not least, we would like to deeply thank Akhil Mathew for his interest in our work, for discussing with us our (failed) attempts to rescue our original argument for fully faithfulness and for coming up with an alternative simpler argument. 

The authors would like to thank the University of Bonn, the University Paris 13 and the Institut de Math\'ematiques de Jussieu for their hospitality while this work was done.
Moreover, the first author wants to thank Jonathan Schneider for his support during the first author's academic year in Paris.

\newpage
\section{Generalities on prisms}
\label{sec:generalities-prisms}

In this section we review the theory of prisms and collect some additional results. In particular, we present the definition of the $q$-logarithm (cf.\ \Cref{sec:q-logarithm}).

\subsection{Prisms and perfectoid rings}
\label{sec:prisms-perf-rings}

We list here some basic definitions and results from \cite{bhatt_scholze_prisms_and_prismatic_cohomology}, of which we will make constant use in the paper. Let us first recall the definition of a $\delta$-ring $A$. In the following all rings will be assumed to be $\Z_{(p)}$-algebras.

\begin{definition}
\label{sec:misc-things-delta-definition-delta-ring}
A \textit{$\delta$-ring} is a pair $(A,\delta)$ with $A$ a commutative ring and $\delta\colon A\to A$ a map (of sets) such that for $x,y\in A$ the following equalities hold:
$$
\begin{matrix}
\delta(0)=\delta(1)=0 \\
\delta(xy)=x^p\delta(y)+y^p\delta(x)+p\delta(x)\delta(y)\\
\delta(x+y)=\delta(x)+\delta(y)+\frac{x^p+y^p-(x+y)^p}{p}.
\end{matrix}
$$
A morphism of $\delta$-rings $f\colon (A,\delta)\to (A^\prime,\delta^\prime)$ is a morphism $f\colon A\to A^\prime$ of rings such that $f\circ \delta=\delta^\prime\circ f$. 
\end{definition}

By design the morphism
$$
\varphi\colon A\to A,\ x\mapsto x^p+p\delta(x)
$$
for a $\delta$-ring $(A,\delta)$ is a ring homomorphism lifting the Frobenius on $A/p$.
Using $\varphi$ the second property of $\delta$ can be rephrased as
$$
\delta(xy)=\varphi(x)\delta(y)+y^p\delta(x)=x^p\delta(y)+\varphi(y)\delta(x)
$$
which looks close to that of a derivation.
 If $A$ is $p$-torsion free, then any Frobenius lift $\psi\colon A\to A$ defines a $\delta$-structure on $A$ by setting 
$$
\delta(x):=\frac{\psi(x)-x^p}{p}.
$$
Thus, in the $p$-torsion free case a $\delta$-ring is the same as a ring with a Frobenius lift. 

\begin{remark}
The category of $\delta$-rings has all limits and colimits and that these are calculated on the underlying rings\footnote{This does not hold for the category of rings with a Frobenius lift in the presence of $p$-torsion.} (cf.\ \cite[Section 1]{bhatt_scholze_prisms_and_prismatic_cohomology}). In particular, there \blue{exist} free $\delta$-rings (by the adjoint functor theorem). Concretely, if $A$ is a $\delta$-ring and $X$ is a set, then the free $\delta$-ring $A\{X\}$ on $X$ is a polynomial ring over $A$ with variables $\delta^n(x)$ for $n\geq 0$ and $x\in X$ (cf.\ \cite[Lemma 2.11]{bhatt_scholze_prisms_and_prismatic_cohomology}).
Moreover, the Frobenius on $\Z_{(p)}\{X\}$ is faithfully flat (cf.\ \cite[Lemma 2.11]{bhatt_scholze_prisms_and_prismatic_cohomology}).
\end{remark}

\begin{definition}
  \label{sec:generalities-prisms-definition-distinguished-element-rank-one-element} Let $(A,\delta)$ be a $\delta$-ring.
  \begin{enumerate}
  \item An element $x\in A$ is called \textit{of rank $1$} if $\delta(x)=0$.
    \item An element $d\in A$ is called \textit{distinguished} if $\delta(d)\in A^\times$ is a unit. 
  \end{enumerate}
\end{definition}

In particular, $\varphi(x)=x^p$ if $x\in A$ is of rank $1$.

Here is a useful lemma showing how to find rank $1$ elements in a $p$-adically separated $\delta$-ring.

\begin{lemma}
  \label{sec:generalities-prisms-lemma-elements-with-p-th-roots-are-of-rank-1}
  Let $A$ be a $\delta$-ring and let $x\in A$. Then $\delta(x^{p^n})\in p^nA$ for all $n$. In particular, if $A$ is $p$-adically separated and $y\in A$ admits a $p^n$-th root for all $n\geq 0$, then $\delta(y)=0$, i.e., $y$ has rank $1$. 
\end{lemma}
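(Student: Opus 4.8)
The plan is to prove the first assertion, that $\delta(x^{p^n}) \in p^n A$, by induction on $n$, and then to deduce the rank-$1$ statement by a limiting argument. The base case $n = 0$ is trivial. For the inductive step, I would apply the multiplicativity identity for $\delta$ to the product $x^{p^n} \cdot x^{p^n} \cdots x^{p^n}$ ($p$ factors), or more efficiently write $x^{p^{n+1}} = (x^{p^n})^p$ and use the relation $\delta(z^p)$ in terms of $\delta(z)$. Concretely, from $\varphi(z) = z^p + p\delta(z)$ one gets $z^p = \varphi(z) - p\delta(z)$, and iterating the product formula for $\delta(z^p)$ yields an expression of the shape $\delta(z^p) = z^{p(p-1)}\delta(z) + (\text{terms divisible by }p)$ — more precisely, using $\delta(z^p) = \frac{\varphi(z)^p - (z^p)^p}{p} + \dots$; the cleanest route is to note $\varphi(z^p) = \varphi(z)^p$, so $z^{p^2} + p\delta(z^p) = (z^p + p\delta(z))^p = z^{p^2} + p\cdot z^{p(p-1)}\delta(z) + (\text{higher order in }p)$, whence $\delta(z^p) = z^{p(p-1)}\delta(z) + p(\cdots)$. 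Applying this with $z = x^{p^n}$ and the inductive hypothesis $\delta(x^{p^n}) \in p^n A$ gives $\delta(x^{p^{n+1}}) \in p^n A \cdot (\text{unit-free stuff}) + p\cdot A$; one must check the expansion carefully so that the $z^{p(p-1)}\delta(z)$ term contributes $p^n A$ and all remaining terms carry an extra factor of $p$, giving $\delta(x^{p^{n+1}}) \in p^{n+1} A$.

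For the second assertion, suppose $A$ is $p$-adically separated and $y \in A$ has a $p^n$-th root $y_n$ for every $n \geq 0$. Then $y = y_n^{p^n}$, so by the first part $\delta(y) = \delta(y_n^{p^n}) \in p^n A$. Since this holds for all $n$, we have $\delta(y) \in \bigcap_{n \geq 0} p^n A = 0$ by $p$-adic separatedness, so $\delta(y) = 0$, i.e.\ $y$ has rank $1$.

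The main obstacle is bookkeeping the $p$-divisibility in the binomial expansion of $(z^p + p\delta(z))^p$: one needs that every term other than the leading $z^{p^2}$ and the linear term $p z^{p(p-1)}\delta(z)$ lies in $p^2 A$, which follows from the binomial coefficients $\binom{p}{k}$ being divisible by $p$ for $1 \le k \le p-1$ together with the explicit powers of $p$ carried by each term; combined with the inductive hypothesis this is what upgrades $p^n$ to $p^{n+1}$. This is the only computational point; everything else is formal.
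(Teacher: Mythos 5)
Your overall strategy (induct on $n$ via an identity for $\delta(z^p)$, then use $p$-adic separatedness) is sound, and the second half of your argument is fine, but the key computation as you wrote it does not close the induction. Expanding $\varphi(z^p)=\varphi(z)^p$ gives
$$z^{p^2}+p\,\delta(z^p)=(z^p+p\delta(z))^p=z^{p^2}+p^2z^{p(p-1)}\delta(z)+\sum_{k=2}^{p}\binom{p}{k}p^{k}z^{p(p-k)}\delta(z)^k,$$
so the linear term carries $p^2$ (the binomial coefficient $\binom{p}{1}=p$ times the $p$ in $p\delta(z)$), not $p$ as in your display. After cancelling one $p$ the correct identity is $\delta(z^p)=p\,z^{p(p-1)}\delta(z)+\sum_{k\geq 2}\binom{p}{k}p^{k-1}z^{p(p-k)}\delta(z)^k$, i.e.\ $\delta(z^p)\in p\,\delta(z)A$. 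With your version, $\delta(z^p)=z^{p(p-1)}\delta(z)+p(\cdots)$, the inductive step only yields $\delta(x^{p^{n+1}})\in p^nA$, not $p^{n+1}A$; with the correct identity it closes immediately: $\delta(x^{p^{n+1}})\in p\,\delta(x^{p^n})A\subseteq p^{n+1}A$. A second point you gloss over is the cancellation of $p$ itself: a general $\delta$-ring may have $p$-torsion, so you cannot divide the displayed relation by $p$ in $A$. The standard remedy is to prove the identity for $\delta(z^p)$ in the free $\delta$-ring $\Z_{(p)}\{z\}$, which is a polynomial ring over $\Z_{(p)}$ and hence $p$-torsion free, and then specialize to $A$ (alternatively, iterate the product formula $\delta(zw)=z^p\delta(w)+w^p\delta(z)+p\delta(z)\delta(w)$, which never requires dividing).

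For comparison, the paper does not spell out an argument at all: it simply cites Lemma 2.31 of Bhatt--Scholze, whose proof also reduces to the $p$-torsion-free (universal) case but then argues in one stroke that $p\,\delta(x^{p^n})=\varphi(x)^{p^n}-x^{p^{n+1}}\equiv 0 \bmod p^{n+1}$, using $\varphi(x)\equiv x^p \bmod p$ and the elementary fact that $a\equiv b \bmod p$ implies $a^{p^n}\equiv b^{p^n} \bmod p^{n+1}$. Your inductive route is a legitimate variant of essentially the same computation; once you correct the factor of $p$ in the linear term and add the reduction to the $p$-torsion-free case, it is a complete proof.
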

\begin{proof}
  Cf.\ \cite[Lemma 2.31]{bhatt_scholze_prisms_and_prismatic_cohomology}.
\end{proof}

We can now state the definition of a prism (cf.\ \cite[Definition 3.2]{bhatt_scholze_prisms_and_prismatic_cohomology}). Recall that a $\delta$-pair $(A,I)$ is simply a $\delta$-ring $A$ together with an ideal $I\subseteq A$.

\begin{definition}
  \label{sec:generalities-prisms-definition-prism}
  A $\delta$-pair $(A,I)$ is a \textit{prism} if $I\subseteq A$ is an invertible ideal such that $A$ is derived $(p,I)$-complete, and $p\in I+\varphi(I)A$. A prism $(A,I)$ is called \textit{bounded} if $A/I$ has bounded $p^\infty$-torsion.
\end{definition}

\begin{remark}
  \label{remarks-after-definition-prism}
  Some comments about these definitions are in order :
\begin{enumerate}
\item By \cite[Lemma 3.1]{bhatt_scholze_prisms_and_prismatic_cohomology} the condition $p\in I+\varphi(I)A$ is equivalent to the fact that $I$ is pro-Zariski locally on $\Spec(A)$ generated by a distinguished element.
Thus it is usually not much harm to assume that $I=(d)$ is actually principal\footnote{For example, if $A$ is perfect, i.e., the Frobenius $\varphi\colon A\to A$ is bijective, then this condition is automatic by \cite[Lemma 3.7]{bhatt_scholze_prisms_and_prismatic_cohomology}.}.
\item If $(A,I)\to (B,J)$ is a morphism of prisms, i.e., $A\to B$ is a morphism of $\delta$-rings carrying $I$ to $J$, then \cite[Lemma 3.5]{bhatt_scholze_prisms_and_prismatic_cohomology} implies that $J=IB$.
\item An important example of a prism is provided by
$$
(A,I)=(\Z_p[[q-1]],([p]_q))
$$
where
$$
[p]_q:=\frac{q^p-1}{q-1}
$$
is the $q$-analog of $p$. Many other interesting examples will appear below.
\item The prism $(A,I)$ being bounded implies that $A$ is classically $(p,I)$-adically complete (cf.\ \cite[Exercise 3.4]{bhatt_lectures_on_prismatic_cohomology}), and thus in particular $p$-adically separated.
\end{enumerate}
\end{remark}

\begin{lemma}
  \label{sec:concr-pres-prism-lemma-regular-sequence-for-transversal-prism}
  Let $(A,I)$ be a prism and let $d\in I$ be distinguished. If $(p,d)$ is a regular sequence in $A$, then for all $r,s\geq 0$, $r\neq s$ the sequences
  $$
  (p,\varphi^r(d)),(\varphi^r(d),\varphi^s(d))
  $$
  are regular.
\end{lemma}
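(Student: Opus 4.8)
The plan is to reduce everything to the identity $\varphi(x)=x^p+p\delta(x)$ together with the completeness built into the notion of a prism. Write $d_r:=\varphi^r(d)$. Since $\delta$ commutes with $\varphi$, each $d_r$ is again distinguished, with $\delta(d_r)=\varphi^r(\delta(d))\in A^\times$ and $d_{r+1}=d_r^p+p\,\delta(d_r)$. Note that $A$ is $p$-torsion free (as $(p,d)$ is a regular sequence) and derived $p$-complete, hence classically $p$-complete, in particular $p$-adically separated; moreover $d_r\in(p,d)$ for all $r$, so $A$ is also derived $d_r$-complete. Modulo $p$ the Frobenius is the $p$-power map, so $\bar d_r=\bar d^{\,p^r}$ in $A/p$; as $d$ is a nonzerodivisor on $A/p$ so is any of its powers, and as $p$ is a nonzerodivisor on $A$, the sequence $(p,d_r)$ is regular for every $r$. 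The same computation shows that $A/e$ is $p$-torsion free whenever $(p,e)$ is regular (from $px=ey$ one gets $\bar y=0$ in $A/p$, then $y=pz$, then $x=ez$); in particular $A/d_r$ is $p$-torsion free for all $r$.

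The key step is that each $d_r$ is a nonzerodivisor on $A$, which I would prove by induction on $r$. For $r=0$: if $dx=0$ then $d\bar x=0$ in $A/p$ forces $\bar x=0$, so $x=px'$, and cancelling $p$ gives $dx'=0$; iterating puts $x\in\bigcap_n p^nA=0$. For the step, assume $d_r$ is a nonzerodivisor. Then $A$, being derived $d_r$-complete and $d_r$-torsion free, is classically $d_r$-complete, so $\bigcap_n d_r^nA=0$, and $A/d_r$ is $p$-torsion free by the previous paragraph. If $d_{r+1}x=0$, i.e.\ $d_r^px=-p\,\delta(d_r)x$, then reducing modulo $d_r$ and using $\delta(d_r)\in A^\times$ gives $p\bar x=0$ in $A/d_r$, so $\bar x=0$, i.e.\ $x=d_rx'$; cancelling the nonzerodivisor $d_r$ yields $d_{r+1}x'=0$, and iterating puts $x\in\bigcap_n d_r^nA=0$. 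Hence $d_{r+1}$ is a nonzerodivisor, closing the induction. It follows that $A/d_r$ (a cokernel of $d_r\colon A\to A$) is derived $p$-complete, hence classically $p$-complete since it is $p$-torsion free, so $p$ lies in its Jacobson radical.

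It remains to treat $(d_r,d_s)$ for $r\neq s$; since $d_r$ is a nonzerodivisor on $A$, I only need $\bar d_s$ to be a nonzerodivisor on $A/d_r$, and for that it is enough to exhibit $M\geq1$ with $\bar d_s^{\,M}=p\cdot(\mathrm{unit})$ in $A/d_r$ (as $p$ is a nonzerodivisor there). If $s>r$, then $\bar d_{r+1}=p\,\overline{\delta(d_r)}$ is a unit times $p$, and inductively $\bar d_{r+j}=\bar d_{r+j-1}^{\,p}+p\,\overline{\delta(d_{r+j-1})}$ is a unit times $p$ for all $j\geq1$, using that $\bar d_{r+j-1}\in(p)$ and that $p$ is in the Jacobson radical of $A/d_r$; so $M=1$ works. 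If $s<r$, then $\bar d_{r-1}^{\,p}=-p\,\overline{\delta(d_{r-1})}$ is a unit times $p$, and inductively, raising $\bar d_{r-i-1}^{\,p}=\bar d_{r-i}-p\,\overline{\delta(d_{r-i-1})}$ to the $p^i$-th power and noting that $\binom{p^i}{k}$ is divisible by $p$ for $0<k<p^i$ while the remaining terms carry enough extra powers of $p$, one gets $\bar d_{r-i-1}^{\,p^{i+1}}\equiv\bar d_{r-i}^{\,p^i}\pmod{p^2}$, so $\bar d_{r-i}^{\,p^i}$ is a unit times $p$ for all $i$; take $M=p^{r-s}$. In both cases $\bar d_s$ is a nonzerodivisor on $A/d_r$, so $(d_r,d_s)$ is regular, which together with the first paragraph proves the lemma.

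The step I expect to be the real obstacle is the middle one — that every $\varphi^r(d)$ is a nonzerodivisor on $A$ — because it is the only place where the regularity of $(p,d)$ is genuinely used, and it has to be combined carefully with the derived $(p,d)$-completeness of a prism (which is precisely what upgrades to classical $d_r$-adic completeness, hence separatedness). Everything else is bookkeeping with $\varphi(x)=x^p+p\delta(x)$.
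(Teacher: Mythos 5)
Your proof is correct, and I could not find a gap in it: the reduction of $(p,\varphi^r(d))$ to $\bar d_r=\bar d^{\,p^r}$ in $A/p$, the induction showing each $\varphi^r(d)$ is a nonzerodivisor (where you correctly use that derived $(p,I)$-completeness plus torsion-freeness upgrades to classical, hence separated, $p$-adic and $d_r$-adic topologies), and the two-sided analysis in $A/d_r$ (for $s>r$, $\bar d_s=p\cdot\mathrm{unit}$ directly; for $s<r$, $\bar d_s^{\,p^{r-s}}=p\cdot\mathrm{unit}$ via the binomial estimate mod $p^2$ and the fact that $p$ lies in the Jacobson radical of the $p$-complete ring $A/d_r$) all check out. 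The comparison with the paper is somewhat lopsided: the paper gives no argument of its own beyond the remark that one may assume $s=0$ after replacing $d$ by $\varphi^s(d)$, and then cites Lemmas 3.3 and 3.6 of the authors' companion paper on the cyclotomic trace, where the statement is proved for "transversal" prisms. So what you have written is a self-contained substitute for that outsourced proof; the ingredients you use (Frobenius is the $p$-power map mod $p$, distinguishedness of $\varphi^r(d)$ giving $\varphi(d_r)\equiv p\cdot\mathrm{unit}$ modulo $d_r$, and completeness/separatedness of the prism) are exactly the kind of elementary inputs the cited lemmas rely on, but you avoid the paper's normalization to $s=0$ by treating both cases $s>r$ and $s<r$ over the base $A/d_r$, at the cost of the slightly more involved mod-$p^2$ binomial computation in the case $s<r$. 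Two cosmetic remarks: the identity $\delta\circ\varphi=\varphi\circ\delta$ you invoke to see that each $d_r$ is distinguished deserves a one-line justification (check it in the $p$-torsion-free free $\delta$-ring), and if one insists on the convention that a regular sequence generate a proper ideal, note that $(d_r,d_s)\subseteq (p)+I$, which is proper by derived Nakayama.
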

\begin{proof}
  \red{Note that for the second case, one can always assume $\min(r,s)=0$, up to replacing $d$ by $\varphi^{\min(r,s)}(d)$.} Then the statement is proven in \cite[Lemma 3.3]{anschuetz_le_bras_the_cyclotomic_trace_in_degree_2} and \cite[Lemma 3.6]{anschuetz_le_bras_the_cyclotomic_trace_in_degree_2}.
\end{proof}

Previous work in $p$-adic Hodge theory used, in one form or another, the theory of perfectoid spaces. From the prismatic perspective, this is explained as follows. We recall that a $\delta$-ring $A$ (or prism $(A,I)$) is called \textit{perfect} if the Frobenius $\varphi\colon A\to A$ is an isomorphism. If $A$ is perfect, then necessarily $A\cong W(R)$ for some perfect \red{$\mathbb{F}_p$-algebra} $R$ (cf.\ \cite[Corollary 2.30]{bhatt_scholze_prisms_and_prismatic_cohomology}).

\begin{proposition}
  \label{sec:generalities-prisms-theorem-perfect-prisms-equivalent-to-perfectoid-rings} The functor
  $$
  \{\textrm{perfect prisms} ~ (A,I) \}\to \{ \textrm{(integral) perfectoid rings} ~ R \},\ (A,I)\mapsto A/I.
  $$
  is an equivalence of categories with inverse $R\mapsto (A_{\mathrm{inf}}(R),\mathrm{ker}(\tilde{\theta}))$, where $A_\inf(R):=W(R^\flat)$ and $\tilde{\theta}=\theta \circ \varphi^{-1}$, $\theta$ being Fontaine's theta map.
\end{proposition}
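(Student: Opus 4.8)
The plan is to exhibit the two functors explicitly and verify that they are mutually quasi-inverse. The backward direction $R \mapsto (A_{\inf}(R),\ker(\tilde\theta))$ is essentially a repackaging of facts about perfectoid rings, so the real content is to show that $A/I$ is a perfectoid ring for \emph{every} perfect prism $(A,I)$, and that its tilt recovers $A/p$; functoriality and full faithfulness will then follow formally from the tilting equivalence together with the fact that a perfect $\delta$-ring is the same as a $p$-torsion-free ring with an invertible Frobenius lift, i.e. $W$ of a perfect $\F_p$-algebra.

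\emph{The forward functor.} Let $(A,I)$ be a perfect prism. By the recollection preceding the statement, $A\cong W(R_0)$ for the perfect ring $R_0:=A/p$, so $A$ is $p$-torsion free; and by \cite[Lemma 3.7]{bhatt_scholze_prisms_and_prismatic_cohomology} (using that $A$ is perfect) we may take $I=(d)$ with $d$ distinguished. Writing $d=\sum_{i\ge 0}[a_i]p^i$ in Teichmüller coordinates and computing $\delta(d)=(\varphi(d)-d^p)/p$ modulo $p$ using $\varphi([x])=[x^p]$, one sees that $d$ distinguished forces $a_1\in R_0^{\times}$, so after scaling $d$ by a unit we may assume $a_1=1$. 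From this normal form one checks that $d$ is a nonzerodivisor on each $A/p^n$, hence on $A$, and that $R:=A/(d)$ has bounded $p^\infty$-torsion. Now $R$ is derived $(p,d)$-complete with $d\mapsto 0$, hence classically $p$-complete; in $R$ one has $(p)=([a_0])$ since $1+p[a_2]+\cdots$ is a unit, and as $a_0$ has a $p$-th root $b$ in the perfect ring $R_0$ the element $\pi:=[b]$ satisfies $\pi^p=(\mathrm{unit})\cdot p$; the Frobenius on $R/p=R_0/(a_0)$ is surjective since it is on $R_0$; and $R^{\flat}=\varprojlim_{x\mapsto x^p}R/p\cong R_0$, the isomorphism being $x\mapsto (x^{1/p^n}\bmod a_0)_n$ (using that $R_0$ is perfect and $a_0$-adically complete, the latter from $(p,d)$-completeness of $A$). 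Tracing definitions then identifies the canonical map $W(R^{\flat})=W(R_0)\to R$ with $q\circ\varphi^{-1}$, where $q\colon W(R_0)\to R$ is the quotient, so its kernel is $(\varphi(d))$, which is principal; hence $R$ is perfectoid in the sense of Bhatt–Morrow–Scholze.

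\emph{Quasi-inverseness.} For the backward functor one notes that $A_{\inf}(R)=W(R^{\flat})$ is perfect and $p$-torsion free, that $\ker(\tilde\theta)=\varphi(\ker\theta)$ is generated by a distinguished nonzerodivisor (the structure theory of $\ker\theta$ for perfectoid rings gives a generator $\xi=p-[\varpi]$, whence $\delta(\xi)$ a unit), and that $W(R^{\flat})$ is $(p,\ker\theta)$-adically complete; so $(A_{\inf}(R),\ker(\tilde\theta))$ is a prism with $A_{\inf}(R)/\ker(\tilde\theta)\cong R$, giving one composite equal to the identity. For the other composite, the computation above shows $A_{\inf}(A/(d))\cong A$ and carries $\ker(\tilde\theta)$ to $(\varphi(d))$; since $\varphi_A$ is an isomorphism of prisms $(A,(d))\xrightarrow{\sim}(A,(\varphi(d)))$, natural in $(A,(d))$ as $\varphi$ commutes with all $\delta$-ring maps, this composite is naturally isomorphic to the identity as well. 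Full faithfulness: a $\delta$-ring map between perfect prisms is determined by its reduction mod $p$, i.e. by a map $R_0\to S_0$ of perfect rings, which by tilting is exactly the datum of a map $A/I\to B/J$ of perfectoid rings, and compatibility with the $\theta$-maps forces any such map to send $I$ into $J$; so the two functors induce mutually inverse bijections on $\Hom$-sets.

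The main obstacle is the forward paragraph: checking that $A/(d)$ satisfies all the perfectoid axioms and computing its tilt. This rests on the normal form of distinguished elements of $W(R_0)$ and on the identification $R^{\flat}\cong R_0$; once these are in hand, principality of $\ker\theta$ is immediate (it equals $(d)$ up to Frobenius), and the remaining points are formal manipulations with $\delta$-rings, Witt vectors and derived completions.
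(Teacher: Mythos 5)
Your sketch is, in outline, a reconstruction of the proof of \cite[Theorem 3.9]{bhatt_scholze_prisms_and_prismatic_cohomology}, which is all the paper itself offers (its ``proof'' is a citation), so the route is the expected one: normal form of a distinguished element of $W(R_0)$, verification of the perfectoid axioms for $A/(d)$, identification of the tilt with $R_0=A/p$, and a $\varphi$-twist in the round trip. But the forward step has a genuine gap. The intermediate claim that ``$d$ is a nonzerodivisor on each $A/p^n$'' is false: for the crystalline perfect prism $(W(R_0),(p))$ one has $d=p\equiv 0$ on $A/p$. (That $d$ is a nonzerodivisor on $A$ itself is automatic, since $I$ is invertible by the definition of a prism, so that part buys nothing.) What you actually need, and only assert with ``one checks'', is that $R=A/(d)$ has bounded $p^\infty$-torsion, or at least is $p$-adically separated: derived $(p,d)$-completeness of $A$ only gives \emph{derived} $p$-completeness of $R$, whereas the definition of perfectoid used in the paper requires classical $p$-completeness. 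In Bhatt--Scholze this is a separate lemma ($A/I[p^\infty]=A/I[p]$ for any perfect prism), proved by a direct computation with the normal form $d=[a_0]+pu$; your suggested mechanism via regularity of $d$ modulo $p^n$ cannot produce it.

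The full-faithfulness paragraph is also off as written. A map of perfect rings $R_0\to S_0$ is \emph{not} ``by tilting exactly the datum of a map $A/I\to B/J$ of perfectoid rings'': tilting is faithful but not full (distinct untilts of the same perfect ring admit the identity on tilts with no compatible map of untilts), and the bijection between perfectoid maps and ideal-preserving maps of tilts is precisely the content of the proposition, so that sentence assumes the hard direction. The correct argument runs through functoriality of $\theta$: a perfectoid map $g\colon A/I\to B/J$ induces $W(g^\flat)\colon A\to B$, the unique $\delta$-map lifting $g^\flat$, and $\theta_B\circ W(g^\flat)=g\circ\theta_A$ forces $W(g^\flat)(\ker\tilde\theta_A)\subseteq\ker\tilde\theta_B$, i.e.\ a prism map; conversely a prism map reduces modulo $I$, and the two constructions are checked to be mutually inverse using your identification of $A\to A/I$ with $\tilde\theta$ (up to the natural automorphism $\varphi$, which you do handle correctly). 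The remaining points of your sketch --- the normal form, $\pi^p=(\mathrm{unit})\cdot p$, surjectivity of Frobenius on $R/p=R_0/(a_0)$, and $R^\flat\cong R_0$ --- are essentially fine, though the $a_0$-adic completeness of $R_0$ you invoke also deserves the short bounded-torsion argument for perfect rings rather than a parenthesis.
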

\begin{proof}
  Cf.\ \cite[Theorem 3.9]{bhatt_scholze_prisms_and_prismatic_cohomology}.
\end{proof}

\begin{remark}
(1) Of course, one could use $\theta$ instead of $\tilde{\theta}$. We make this (slightly strange) choice for coherence with later choices.

(2) The theorem implies in particular that for every perfect prism $(A,I)$, the ideal $I$ is principal. 
\end{remark}

As a corollary, we get the following easy case of almost purity.

\begin{corollary}
  \label{sec:essent-surj-lemma-existence-of-perfectoid-covers-which-are-henselian-along-ideal} Let $R$ be a perfectoid ring and let $R\to R^\prime$ be $p$-completely \'etale. Then $R^\prime$ is perfectoid. Moreover, if $J\subseteq R$ is an ideal, then the $p$-completion $R^\prime$ of the henselization of $R$ at $J$ is perfectoid. 
\end{corollary}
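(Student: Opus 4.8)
The plan is to deduce both assertions from the equivalence between perfect prisms and perfectoid rings (\Cref{sec:generalities-prisms-theorem-perfect-prisms-equivalent-to-perfectoid-rings}). Write $(A,I)=(A_{\inf}(R),\ker\tilde\theta)$ for the perfect prism attached to $R$; thus $A=W(R^\flat)$ is a perfect, $p$-torsion free, classically $(p,I)$-adically complete $\delta$-ring with $A/I\cong R$, and we may take $I=(\xi)$ with $\xi$ distinguished.

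For the first statement, suppose $R\to R'$ is $p$-completely \'etale. Deformation theory of \'etale algebras identifies the category of $(p,I)$-completely \'etale $A$-algebras with the category of \'etale $A/(p,I)=R/p$-algebras, hence with the category of $p$-completely \'etale $R$-algebras, the composite equivalence being $B\mapsto B/IB$. Let $A'$ be the $(p,I)$-completely \'etale $A$-algebra corresponding to $R'$, so $A'/IA'\cong R'$ over $R$. Frobenius lifts, equivalently $\delta$-structures, lift uniquely along $(p,I)$-completely \'etale maps, so $A'$ is canonically a $\delta$-ring over $A$; the image of $\xi$ remains distinguished (as $\delta(\xi)\in A^\times$ maps to a unit), $A'$ is derived $(p,IA')$-complete, and $A'/IA'\cong R'$ has bounded $p^\infty$-torsion since $R'$ is $p$-completely flat over the bounded ring $R$. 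Hence $(A',IA')$ is a bounded prism with $A'/IA'\cong R'$, and it remains only to check that $A'$ is perfect: granting this, \Cref{sec:generalities-prisms-theorem-perfect-prisms-equivalent-to-perfectoid-rings} shows that $R'\cong A'/IA'$ is perfectoid.

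To prove that $A'$ is perfect, note that $\varphi_{A'}\colon A'\to A'$ is a morphism of derived $p$-complete objects, so by derived Nakayama it suffices to show it is an isomorphism modulo $p$. Now $A'/p$ is an $\F_p$-algebra, $\xi$-completely \'etale over $A/p=R^\flat$, and its absolute Frobenius is the composite of the relative Frobenius $F_{(A'/p)/R^\flat}$ with the base change of $\varphi_{R^\flat}$. The latter is an isomorphism because $R^\flat$ is perfect; the former is an isomorphism because the relative Frobenius of a $\xi$-completely \'etale map is an isomorphism, which, after a further derived Nakayama reduction (replacing $\xi$ by a suitable $p$-power root so that reduction commutes with the relevant twists), comes down to the standard fact that the relative Frobenius of an \'etale morphism is an isomorphism. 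This establishes perfectness of $A'$. I expect this last verification to be the only genuine point; everything else is formal deformation theory together with bookkeeping on the prism conditions.

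For the henselization statement, write $R\to R_J^h$ as a filtered colimit $\varinjlim_i R_i$ of \'etale $R$-algebras (the \'etale $R$-algebras equipped with an $R/J$-isomorphism of their reduction modulo $J$ with $R/J$ form a filtered system computing the henselization of the pair $(R,J)$). For each $i$, the $p$-completion $\widehat{R_i}$ is $p$-completely \'etale over $R$, hence perfectoid by the first part. Since perfectoid rings are stable under filtered colimits followed by $p$-completion — equivalently, perfect $\delta$-rings, together with their distinguished-element data, pass to such colimits — the $p$-completion $R'$ of $\varinjlim_i R_i$ is perfectoid.
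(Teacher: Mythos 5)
Your proposal is correct and follows essentially the same route as the paper: lift $R'$ to a $(p,I)$-completely \'etale $A_{\inf}(R)$-algebra, use the unique extension of the $\delta$-structure, verify perfectness by reducing modulo $p$ and using \'etaleness over the perfect ring $R^\flat$, and then conclude via the equivalence of perfect prisms with perfectoid rings, with the henselization case handled by writing the henselization as a filtered colimit of \'etale algebras. The extra bookkeeping you supply (boundedness of $p^\infty$-torsion, distinguishedness of $\xi$, stability of perfectoid rings under $p$-completed filtered colimits) is just a more detailed version of what the paper leaves implicit.
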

\begin{proof}
  We can lift $R^\prime$ to a $(p,\ker(\theta))$-completely \'etale $A_\inf(R)$-algebra $B$. By \cite[Lemma 2.18]{bhatt_scholze_prisms_and_prismatic_cohomology}, the $\delta$-structure on $A_\inf(R)$ extends uniquely to $B$. Reducing modulo $p$ we see that $B$ is a perfect $\delta$-ring as it is $(p,\ker(\theta))$-completely \'etale over $A_\inf(R)$. Using \Cref{sec:generalities-prisms-theorem-perfect-prisms-equivalent-to-perfectoid-rings} $R^\prime\cong B/\ker(\theta)B$ is therefore perfectoid.
  The statement on henselizations follows from this as henselizations are colimits along \'etale maps (cf.\ the proof of \cite[Tag 0A02]{stacks_project}). \red{(Note that since $R$ has bounded $p^{\infty}$-torsion, the $p$-completion of an \'etale $R$-algebra is $p$-completely \'etale.)}  
\end{proof}

Moreover, perfectoid rings enjoy the following fundamental property.

\begin{proposition}
  \label{sec:prisms-perf-rings-lemma-initial-object-for-perfectoid-rings}
  Let $(A,I)$ be a perfect prism. Then for every prism $(B,J)$ the map
  $$
  \mathrm{Hom}((A,I),(B,J))\to \mathrm{Hom}(A/I,B/J)
  $$
  is a bijection.
\end{proposition}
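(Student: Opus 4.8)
Let $(A,I)$ be a perfect prism and $(B,J)$ any prism. Then $\Hom((A,I),(B,J)) \to \Hom(A/I, B/J)$ is a bijection.

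Here is the plan. The key structural input is that a perfect prism is $A \cong W(R)$ for $R = A/\varphi^{-1}$... more precisely $A/I$ is perfectoid with tilt $R^\flat$, and $A \cong W(R^\flat)$ with $\varphi$ the Witt vector Frobenius, which is an isomorphism. So giving a $\delta$-ring map $A \to B$ is the same as giving a ring map $A \to B$ compatible with $\varphi$ (since on the $p$-torsion-free perfect ring $A = W(R^\flat)$ the $\delta$-structure is determined by $\varphi$, and any ring map out of $W$ of a perfect ring commuting with Frobenius is automatically a $\delta$-map — one should check this, but it is standard: $\delta(x) = (\varphi(x)-x^p)/p$ is forced and functorial). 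I would structure the argument around the following three paragraphs.

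\textbf{Step 1: reduce to a universal property of Witt vectors.} First I would recall that since $(A,I)$ is perfect, $A = W(S)$ for $S = R^\flat$ a perfect ring of characteristic $p$, and $I = (\tilde\xi)$ is principal and generated by a distinguished element (as noted in the remark after Proposition \ref{sec:generalities-prisms-theorem-perfect-prisms-equivalent-to-perfectoid-rings}). Since $(B,J)$ is a prism, $B$ is derived $(p,J)$-complete; and since we only care about $\delta$-ring maps $W(S) \to B$, I would first establish that such maps are in bijection with ring maps $S \to B/(p,\text{nilpotents})$... no — better: I would use the classical fact that for any $\delta$-ring $B$, $\Hom_{\delta\text{-rings}}(W(S), B)$ — for $S$ a perfect $\F_p$-algebra — is in bijection with $\Hom_{\text{rings}}(S, B/p)$ where we further need $B/p$-target through the Frobenius-perfection, i.e. with $\Hom(S, (B/p)^{\mathrm{perf, inv\,\varphi}})$. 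Concretely: a $\delta$-map $W(S) \to B$ is determined by and corresponds to a map $S \to \varprojlim_\varphi B/p$ (the inverse perfection of $B/p$ along Frobenius), because $W(S) = W(\varprojlim_\varphi S/p) $ and $W$ of a perfect ring is the free "$\varphi$-invertible $p$-complete $\delta$-ring" on that ring. This is \cite[Lemma 2.18 / 2.30-type]{bhatt_scholze_prisms_and_prismatic_cohomology} material. So I reduce the left side to $\Hom_{\text{rings}}(S, \varprojlim_\varphi B/p)$.

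\textbf{Step 2: identify the right side the same way.} Now $A/I$ is perfectoid, so by the tilting equivalence $A/I$ and its tilt $S$ determine each other, and $\Hom(A/I, B/J)$ — for $B/J$ a $p$-complete ring — is by the universal property of tilting (perfectoid source!) in bijection with $\Hom(S, (B/J)^\flat)$ where $(B/J)^\flat := \varprojlim_{x\mapsto x^p} B/J$, equivalently $\varprojlim_\varphi (B/J)/p = \varprojlim_\varphi B/(p,J)$. Here I use that $A/I$ perfectoid means it is the "free perfectoid ring"... more precisely that for perfectoid $A/I$ the tilting functor $\flat$ is a right adjoint at the level of $p$-complete rings / the counit $W((A/I)^\flat) \to $ ... — the cleanest statement: $\Hom_{\text{perfd rings}}(A/I, C) \cong \Hom_{\text{rings}}(S, C^\flat)$ for $C$ any $p$-complete ring admitting... hmm, one must be slightly careful since $B/J$ need not be perfectoid. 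But $B/J$ is $p$-complete (as $B$ is $(p,J)$-complete), so $C^\flat := \varprojlim_{\varphi} C/p$ is defined and the adjunction $\Hom(A/I, C) \cong \Hom_{\F_p}(S, C^\flat)$ holds because $A/I$ is perfectoid and $S = (A/I)^\flat$: any map $A/I \to C$ gives $S = \varprojlim (A/I)/p \to \varprojlim C/p = C^\flat$, and conversely a map $S \to C^\flat$ lifts uniquely via $W(S) = A_{\inf}(A/I) \to W(C^\flat) \to C$ (the last map being the canonical $\theta$-type map, using $p$-completeness of $C$) and then factors through $A_{\inf}(A/I)/\ker\tilde\theta = A/I$... wait, that needs $\ker\tilde\theta$ to map into $\ker(W(C^\flat)\to C)$, which holds since $\tilde\xi$ is distinguished and maps to a distinguished element, but actually the cleanest is: $\theta_C: W(C^\flat) \to C$ exists for any $p$-complete $C$, $\ker(\theta_{A/I} = \tilde\theta)$ is generated by $\tilde\xi$, and $W(S\to C^\flat)$ carries $\tilde\xi$ into $\ker\theta_C$ because ... this is precisely where one uses that $\tilde\xi$ generates the kernel. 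So both sides are $\Hom_{\F_p\text{-rings}}(S, \varprojlim_\varphi B/(p,J))$ — but wait, on the left I got $\varprojlim_\varphi B/p$ and on the right $\varprojlim_\varphi B/(p,J)$.

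\textbf{Step 3: reconcile the two and the hard point.} The apparent discrepancy ($B/p$ versus $B/(p,J)$) is resolved by the prism condition $p \in I + \varphi(I)$, equivalently $p \in (d, \varphi(d))$ for $d$ a local generator of $J$: since $\delta(d)$ is a unit, $\varphi(d) = d^p + p\delta(d)$, and modulo $p$ we get $\varphi(d) \equiv d^p$, while $p \in (d,\varphi(d))$ forces $p \in (d)$ modulo... let me think — actually the point is that on the perfection $\varprojlim_\varphi B/p$, the element $p$ becomes zero (since $p = 0$ in $B/p$ obviously — no). The real content: a $\delta$-map $W(S) \to B$ automatically sends $\tilde\xi \mapsto $ (generator of $J$ up to unit) by \cite[Lemma 3.5]{bhatt_scholze_prisms_and_prismatic_cohomology} (the $J = IB$ statement applied formally), provided the map $S \to B/p$ we start from lifts to $S \to \varprojlim_\varphi B/p$ and then we must show the resulting ring map $A \to B$ actually carries $I$ into $J$ — this is the one genuine thing to check and is exactly \emph{why we need $\tilde\xi$ to be distinguished and why we need the source to be a perfect prism rather than an arbitrary one}. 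I expect \textbf{this compatibility — that any $\delta$-ring map $W(S) = A \to B$ of the underlying $\delta$-rings whose reduction $A/I \to B/J$ is a ring map automatically carries $I$ to $J$, and conversely that every ring map $A/I \to B/J$ arises — is the main obstacle}, and it is handled by: (i) lifting $A/I \to B/J$ along $B \to B/J$ and the tilting equivalence to get a $\delta$-map $A \to B$ (using $A$ perfect $\Rightarrow$ it is $A_{\inf}$ of a perfectoid, plus $B$ derived $(p,J)$-complete), then (ii) invoking \cite[Lemma 3.5]{bhatt_scholze_prisms_and_prismatic_cohomology} to see $I B \subseteq J$, and (iii) for injectivity of the map on $\Hom$-sets, noting a $\delta$-map $A \to B$ is determined by its reduction mod $I$ because $A = W(S)$ is generated as a $p$-complete $\delta$-ring by Teichmüller lifts of $S = A/I$ reduced mod $p$, err, mod $I$ — this uses perfectness to identify $A/I \bmod p$ with $S \bmod (\text{something})$ cleanly. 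I would write out (i)–(iii) carefully; everything else is formal. The expected one-line moral: \emph{$W(-)$ of a perfect ring is initial in the appropriate sense, and a perfect prism $(A,I)$ \textquotedblleft is\textquotedblright\ its reduction $A/I$ together with this freeness.}
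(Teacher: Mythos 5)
The paper gives no argument of its own here --- it simply cites \cite[Lemma 4.7]{bhatt_scholze_prisms_and_prismatic_cohomology} --- and your outline is essentially an attempted reconstruction of that proof: the skeleton (universal property of $W(S)$, $S=(A/I)^\flat$ perfect, among $p$-complete $\delta$-rings; tilting; rigidity of distinguished elements) is the right one. But as written there are two genuine gaps. First, the adjunction asserted in your Step 2, $\mathrm{Hom}(A/I,C)\cong\mathrm{Hom}(S,C^\flat)$ for an \emph{arbitrary} $p$-complete ring $C$, is false. Take $A/I=\Z_p^{\mathrm{cycl}}$ and $C=\Z_p$: there is no ring map $\Z_p^{\mathrm{cycl}}\to \Z_p$ (the relations $\Phi_{p^n}(\zeta_{p^n})=0$ cannot be preserved), while $\mathrm{Hom}(S,C^\flat)=\mathrm{Hom}\bigl(\F_p[t^{1/p^\infty}]^{\wedge_t},\F_p\bigr)\neq\emptyset$; and indeed the lift you propose, $W(S)\to W(C^\flat)\to C$, sends $\tilxi=1+[\varepsilon^{1/p}]+\cdots+[\varepsilon^{(p-1)/p}]$ to $p\neq 0$, so it does not factor through $A/I$. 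For a perfectoid source one only has an \emph{injection} $\mathrm{Hom}(A/I,C)\hookrightarrow\mathrm{Hom}(S,C^\flat)$, with image the maps for which the induced ring map $W(S)\to C$ kills $\tilxi$; showing this condition is automatic when $C=B/J$ for a prism $(B,J)$ and the map on tilts comes from a $\delta$-map $A\to B$ is exactly the content of the proposition, so Step 2 cannot be quoted as an independent formal input. The correct way to close the loop (and to get your point (a), $f(I)\subseteq J$) is to observe that under the identifications the composites $A\to B\to B/J$ and $A\to A/I\xrightarrow{\,g\,}B/J$ induce the same map $S\to (B/J)^\flat$, and ring maps $W(S)\to B/J$ into a $p$-complete ring are determined by their tilts, so the first composite kills $\tilxi$.

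Second, the identification $\varprojlim_\varphi B/p\;\cong\;\varprojlim_{x\mapsto x^p}B/(p,J)$, which you need in order to match the two sides, is precisely where the prism axioms enter (a distinguished local generator $d$ of $J$, so $\varphi(d)\equiv d^p \bmod p$, together with derived $(p,J)$-completeness of $B$), and your Step 3 never establishes it --- the attempted justification trails off. One has to run the usual lifting/convergence argument: lift a Frobenius-compatible system in $B/(p,J)$ to $B/p$ and check that the $p^m$-th powers of the lifts converge $d$-adically, independently of choices. Relatedly, the appeal to \cite[Lemma 3.5]{bhatt_scholze_prisms_and_prismatic_cohomology} to obtain $f(I)\subseteq J$ is misplaced: that lemma presupposes that $f$ is already a morphism of prisms (it then asserts $J=IB$), so it cannot produce the containment; the argument sketched above replaces it. With these two points supplied, your Steps 1--3 do assemble into the proof of the cited lemma.
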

\begin{proof}
  Cf.\ \cite[Lemma 4.7]{bhatt_scholze_prisms_and_prismatic_cohomology}.
\end{proof}

\subsection{The $q$-logarithm}
\label{sec:q-logarithm}
Each prism is endowed with its \textit{Nygaard filtration} (cf.\ \cite[Definition 11.2]{bhatt_lectures_on_prismatic_cohomology}).
\begin{definition}
  \label{sec:generalities-prisms-definition-nygaard-filtration}
  Let $(A,I)$ be a prism. Then we set
  $$
  \mathcal{N}^{\geq i}A:=\varphi^{-1}(I^i)
  $$
  for $i\geq 0$. The filtration $\mathcal{N}^{\geq \bullet}A$ is called the \textit{Nygaard filtration of $(A,I)$}.
\end{definition}

This filtration (or rather the first piece of this filtration) will play an important role in the rest of this text. It already shows up when proving the existence of the $q$-logarithm
$$
\log_q\colon \Z_p(1)(B/J)\to B,\ x\mapsto \log_q([x^{1/p}]_{\tilde{\theta}})
$$
for a prism $(A,I)$ over $(\Z_p[[q-1]],([p]_q))$ from \Cref{remarks-after-definition-prism}, as we now explain. 

Here,
$$
\Z_p(1):=T_p(\mu_{p^\infty})
$$
is the functor sending a ring $R$ to $T_p(R^\times)=\varprojlim\limits_n\mu_{p^n}(R)$
and
$$
[-]_{\tilde{\theta}}\colon \varprojlim\limits_{x\mapsto x^p} A/I \to A
$$
is the Teichm\"uller lift sending a $p$-power compatible system
$$
x:=(x_0,x_1,\ldots)\in \varprojlim\limits_{x\mapsto x^p} A/I
$$
to the limit
$$
[x]_{\tilde{\theta}}:=\varinjlim\limits_{n\to \infty} \tilde{x}_n^{p^n}
$$
where $\tilde{x}_n\in A$ is a lift of $x_n\in A/I$. \red{By definition, 
$$
\Z_p(1)(A/I)\subseteq \varprojlim\limits_{x\mapsto x^p} A/I
$$
is the subset of the inverse limit consisting of sequences that start with a $1$.}
Moreover, on $\varprojlim\limits_{x\mapsto x^p} A/I$ one can take $p$-th roots
$$
(-)^{1/p}\colon \varprojlim\limits_{x\mapsto x^p} A/I\to \varprojlim\limits_{x\mapsto x^p} A/I,\ (x_0,x_1,\ldots)\mapsto (x_1,x_2,\ldots).
$$

In \cite[Lemma 4.10]{anschuetz_le_bras_the_cyclotomic_trace_in_degree_2} there is the following lemma on the $q$-logarithm.
For $n\in \Z$ we recall that the $q$-number $[n]_q$ is defined as
$$
[n]_q:=\frac{q^n-1}{q-1}\in \Z_p[[q-1]].
$$

\begin{lemma}
  \label{sec:q-logarithm-convergence-of-q-logarithm}
  Let $(B,J)$ be a prism over $(\Z_p[[q-1]],([p]_q))$. Then for every
  element $x\in 1+\mathcal{N}^{\geq 1} B$ of rank $1$, i.e., $\delta(x)=0$, the series
$$
\mathrm{log}_q(x)=\sum\limits_{n=1}^\infty
(-1)^{n-1}q^{-n(n-1)/2}\frac{(x-1)(x-q)\cdots (x-q^{n-1})}{[n]_q}
$$
is well-defined and converges in $B$. Moreover,
$\log_q(x)\in \mathcal{N}^{\geq 1}B$ and, \red{in $$B[1/p][[x-1]]^{\wedge (q-1)},$$ one has the relation $\log_q(x)=\frac{q-1}{\log(q)}\log(x)$, where $\mathrm{log}(x):=\sum\limits_{n=1}^\infty (-1)^{n-1}\frac{\blue{(x-1)^n}}{n}$}.
\end{lemma}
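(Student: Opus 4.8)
The plan is to establish the three claims — convergence of the series in $B$, the inclusion $\log_q(x)\in\mathcal N^{\geq 1}B$, and the identity $\log_q(x)=\frac{q-1}{\log q}\log(x)$ — by two independent routes: a $q$-arithmetic divisibility estimate resting on rank $1$-ness of $x$ and the Nygaard filtration for the first two, and a formal $q$-calculus computation after inverting $p$ for the last. I would begin with preliminary observations: the image of $q$ in $B$ still satisfies $\delta(q)=0$, and $\varphi(q-1)=q^p-1=(q-1)[p]_q\in J$, so $q-1\in\mathcal N^{\geq 1}B$; since moreover $[p]_q\in(p,q-1)$ and $[p]_q\equiv(q-1)^{p-1}\bmod p$, the $(p,J)$- and $(p,q-1)$-adic topologies on $B$ agree and $B$ is complete for them. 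As $x-1\in\mathcal N^{\geq 1}B$ by hypothesis and $q^i-1=(q-1)[i]_q\in\mathcal N^{\geq 1}B$, every factor $x-q^i=(x-1)-(q^i-1)$ of the falling $q$-factorial $P_n(x):=(x-1)(x-q)\cdots(x-q^{n-1})$ lies in $\mathcal N^{\geq 1}B$, so $P_n(x)\in(\mathcal N^{\geq 1}B)^n\subseteq\mathcal N^{\geq n}B=\varphi^{-1}(J^n)$, i.e.\ $\varphi(P_n(x))=\prod_{i=0}^{n-1}(x^p-q^{pi})\in J^n=[p]_q^nB$.

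For convergence I would use a block decomposition of $P_n(x)$. Write $n=p^k m$ with $p\nmid m$, so that $[n]_q=[m]_{q^{p^k}}[p^k]_q$ with $[m]_{q^{p^k}}\in B^\times$ and $[p^k]_q=\prod_{j=0}^{k-1}\varphi^j([p]_q)$. For each $0\leq j\leq k-1$ I would group the $n$ factors of $P_n(x)$ into $n/p^{j+1}$ consecutive blocks of length $p^{j+1}$; since $\varphi^j([p]_q)=\Phi_{p^{j+1}}(q)$ and $q$ has exact order $p^{j+1}$ modulo it, each block $\prod_{c=0}^{p^{j+1}-1}(x-q^{bp^{j+1}+c})$ reduces modulo $\varphi^j([p]_q)$ to $\prod_{\omega\in\mu_{p^{j+1}}}(x-\omega)=x^{p^{j+1}}-1$, which lies in $\varphi^j([p]_q)B$ because $x$ has rank $1$ (whence $\varphi^{j+1}(x-1)=x^{p^{j+1}}-1\in\varphi^j([p]_q)B$). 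Thus $\varphi^j([p]_q)^{n/p^{j+1}}$ divides $P_n(x)$ for every such $j$; as the elements $\varphi^j([p]_q)$ are pairwise coprime by \Cref{sec:concr-pres-prism-lemma-regular-sequence-for-transversal-prism} — applied after reducing to a transversal prism, e.g.\ by passing to a faithfully flat cover by the $A_{\inf}$ of a $p$-torsion-free perfectoid ring — the $n$-th term $t_n:=(-1)^{n-1}q^{-n(n-1)/2}P_n(x)/[n]_q$ is divisible by $[p]_q^{\,n/p-1}$, hence lies in $J^{n/p-1}$ and tends to $0$ $(p,J)$-adically (the case $p\nmid n$ being handled directly via $P_n(x)\in\mathcal N^{\geq n}B$). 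The same computation gives $\varphi(t_n)\in J^n\subseteq J$, i.e.\ $t_n\in\mathcal N^{\geq 1}B$; since $\mathcal N^{\geq 1}B$ is closed, the sum $\log_q(x)$ lies in it.

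For the comparison formula I would work in $C:=B[1/p][[x-1]]^{\wedge(q-1)}$ with the (continuous) $q$-derivative $D_qf:=(f(qx)-f(x))/((q-1)x)$, a $B[1/p]^{\wedge(q-1)}$-linear operator on $C$. From $P_n(qx)=q^n(x-q^{-1})P_{n-1}(x)$ one computes $D_qP_n=[n]_qP_{n-1}$, so that $D_q\log_q(x)=\sum_{m\geq 0}(-1)^m q^{-m(m+1)/2}P_m(x)$; multiplying this series by $x$ and using $xP_m(x)=P_{m+1}(x)+q^mP_m(x)$ makes it telescope to $P_0(x)=1$, so $D_q\log_q(x)=x^{-1}$. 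On the other hand $\log(qx)=\log q+\log x$ as formal power series, and $(\log q)/(q-1)$ is a unit of $B[1/p]^{\wedge(q-1)}$, whence $D_q\bigl(\frac{q-1}{\log q}\log x\bigr)=x^{-1}$ too. Both sides vanish at $x=1$, so their difference $g\in C$ satisfies $D_qg=0$ and $g|_{x=1}=0$; reducing modulo $q-1$ forces the ordinary derivative $g'$ to vanish modulo $q-1$, hence $g\in(q-1)C$, and iterating (using that $q-1$ is a non-zero-divisor and that $C$ is $(q-1)$-adically separated) yields $g=0$, proving the identity.

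The step I expect to be the main obstacle is the divisibility estimate of the second paragraph — that $P_n(x)$ is divisible by $[p^{v_p(n)}]_q$ with $(p,J)$-adically small quotient. This is precisely where rank $1$-ness of $x$ and the multiplicativity of $\mathcal N^{\geq\bullet}$ are genuinely needed, and it is what forces the passage to transversal prisms in order to apply \Cref{sec:concr-pres-prism-lemma-regular-sequence-for-transversal-prism}; cleanly bookkeeping the exponents $n/p^{j+1}$ and treating the edge case $p\nmid n$ are the fiddly parts.
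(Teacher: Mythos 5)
You should first note that the paper does not actually prove this lemma here: it is quoted from the companion paper \cite[Lemma 4.10]{anschuetz_le_bras_the_cyclotomic_trace_in_degree_2}, and your core computation is essentially the divisibility mechanism of that reference — grouping the factors of $(x-1)(x-q)\cdots(x-q^{n-1})$ into blocks of length $p^{j+1}$, using rank-$1$-ness to get $x^{p^{j+1}}-1\in\varphi^j([p]_q)B$, factoring $[n]_q$ as a unit times $\prod_{j=0}^{k-1}\varphi^j([p]_q)$, and invoking \Cref{sec:concr-pres-prism-lemma-regular-sequence-for-transversal-prism} to combine the divisibilities. The genuine gap is your reduction to the transversal case, which goes in the wrong direction. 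A general prism $(B,J)$ over $(\Z_p[[q-1]],([p]_q))$ does not admit a $(p,J)$-completely faithfully flat map to a transversal prism, let alone to the $A_\inf$ of a $p$-torsion free perfectoid ring (perfect prisms are extremely special), and even granting such a cover, $(p,J)$-complete faithful flatness would not let you descend the divisibilities to $B$, nor make the quotients $P_n(x)/[n]_q$ well defined there: for $j\geq 1$ the elements $\varphi^j([p]_q)$, hence $[n]_q$ for $p\mid n$, may be zero divisors in $B$. Likewise $B$ is only \emph{derived} $(p,J)$-complete, so your assertions that $B$ ``is complete'' for the $(p,q-1)$-adic topology and that $\mathcal{N}^{\geq 1}B$ ``is closed'' are unjustified at this level of generality. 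The way the reference handles all of this at once is to work in the universal prism over $(\Z_p[[q-1]],([p]_q))$ equipped with a rank-$1$ element of $1+\mathcal{N}^{\geq 1}$: it is $(p,[p]_q)$-completely flat over the base, hence transversal and bounded, so there $[n]_q$ is a non-zero divisor, \Cref{sec:concr-pres-prism-lemma-regular-sequence-for-transversal-prism} applies, the series honestly converges and its terms (and limit) lie in the closed ideal $\mathcal{N}^{\geq 1}$; one then \emph{defines} $\log_q(x)\in B$, together with all the claimed memberships, as the image under the classifying map to $B$. Some reduction of this shape is indispensable, and your proposed substitute would fail.

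Two smaller points. First, treating $p\nmid n$ ``directly via $P_n(x)\in\mathcal{N}^{\geq n}B$'' does not suffice: membership in $\mathcal{N}^{\geq n}B=\varphi^{-1}(J^n)$ gives no $(p,J)$-adic smallness in a general prism ($\varphi$ need not be injective, and $\varphi^{-1}(J^n)$ need not shrink); but this is easily repaired inside your own argument, since the $j=0$ blocks alone give $P_n(x)\in J^{\lfloor n/p\rfloor}$ for \emph{every} $n$ with no transversality input. Second, your $q$-derivative and telescoping computations for the identity $\log_q(x)=\frac{q-1}{\log q}\log(x)$ are correct, but the uniqueness step ($D_qg=0$, $g|_{x=1}=0\Rightarrow g=0$) uses that $q-1$ is a non-zero divisor in $B[1/p]^{\wedge(q-1)}[[x-1]]$, which again is not known for an arbitrary prism; the standard fix, consistent with the footnote in the statement, is to prove the identity once and for all in $\Q_p[[q-1,x-1]]$ by exactly your argument and then specialize along $q\mapsto q$, $x\mapsto x$.
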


The defining properties of the $q$-logarithm are that $\log_q(1)=0$ and that its $q$-derivative is $\frac{d_qx}{x}$ (cf.\ \cite[Lemma 4.6]{anschuetz_le_bras_the_cyclotomic_trace_in_degree_2}).

One derives easily the existence of the ``divided $q$-logarithm''.

\begin{lemma}
\label{sec:prism-dieud-module-lemma-divided-logarithm}
  Let $(B,J)$ be a \red{bounded} prism over $(\Z_p[[q-1]],([p]_q))$ and let $x\in \Z_p(1)(B/J)$. Then $[x^{1/p}]_{\tilde{\theta}}\in B$ is of rank $1$ and lies in $1+\mathcal{N}^{\geq 1}B$. Thus
  $$
  \mathrm{log}_q([x^{1/p}]_{\tilde{\theta}})=\sum\limits_{n=1}^\infty (-1)^{n-1}q^{-n(n-1)/2}\frac{([x^{1/p}]_{\tilde{\theta}}-1)\ldots ([x^{1/p}]_{\tilde{\theta}}-q^{n-1})}{[n]_q}
  $$
  exists in $B$.
\end{lemma}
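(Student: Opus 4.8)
The plan is to establish the two properties asserted for the element $z := [x^{1/p}]_{\tilde{\theta}}$ and then to invoke \Cref{sec:q-logarithm-convergence-of-q-logarithm}. First note that $z$ is well-defined: since $x \in \Z_p(1)(B/J) \subseteq \varprojlim_{w\mapsto w^p} B/J$, applying the $p$-th root map $(-)^{1/p}$ produces an element $x^{1/p}\in\varprojlim_{w\mapsto w^p} B/J$, to which the Teichm\"uller lift $[-]_{\tilde{\theta}}$ assigns an element of $B$.

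\textbf{Rank $1$.} I would observe that $z$ admits a $p^n$-th root in $B$ for every $n\geq 0$, namely $[x^{1/p^{n+1}}]_{\tilde{\theta}}$; this uses only that the Teichm\"uller lift is compatible with $p$-th powers, i.e.\ $[y]_{\tilde{\theta}} = \bigl([y^{1/p}]_{\tilde{\theta}}\bigr)^{p}$ for any $y \in \varprojlim_{w\mapsto w^p} B/J$, which one reads off from the defining formula $[y]_{\tilde{\theta}} = \varinjlim_n \tilde{y}_n^{\,p^n}$ by noting $(y^{1/p})_n = y_{n+1}$ and re-indexing. Since $(B,J)$ is a bounded prism, $B$ is $p$-adically separated by \Cref{remarks-after-definition-prism}, so \Cref{sec:generalities-prisms-lemma-elements-with-p-th-roots-are-of-rank-1} yields $\delta(z)=0$.

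\textbf{Membership in $1+\mathcal{N}^{\geq 1}B$.} By definition $\mathcal{N}^{\geq 1}B = \varphi^{-1}(J)$, so it suffices to check $\varphi(z)\equiv 1 \pmod J$. As $z$ has rank $1$ we get $\varphi(z) = z^{p} = \bigl([x^{1/p}]_{\tilde{\theta}}\bigr)^{p} = [x]_{\tilde{\theta}}$, again by compatibility with $p$-th powers. On the other hand, reducing $[y]_{\tilde{\theta}} = \varinjlim_n \tilde{y}_n^{\,p^n}$ modulo $J$ and using $y_n^{p^n} = y_0$ for a $p$-power-compatible system $y=(y_0,y_1,\ldots)$ shows $[y]_{\tilde{\theta}} \equiv y_0 \pmod J$. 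Under the inclusion $\Z_p(1)(B/J) \hookrightarrow \varprojlim_{w\mapsto w^p} B/J$, $(x_0,x_1,\ldots)\mapsto(1,x_0,x_1,\ldots)$, the $0$-th coordinate of $x$ is $1$; hence $[x]_{\tilde{\theta}} \equiv 1 \pmod J$ and therefore $z \in 1 + \mathcal{N}^{\geq 1}B$.

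Having shown $z=[x^{1/p}]_{\tilde{\theta}}$ is a rank $1$ element of $1+\mathcal{N}^{\geq 1}B$, I would conclude by applying \Cref{sec:q-logarithm-convergence-of-q-logarithm}, which gives that the displayed series $\log_q([x^{1/p}]_{\tilde{\theta}})$ converges in $B$ (and in fact lies in $\mathcal{N}^{\geq 1}B$). The only point requiring any care is the pair of identities for the Teichm\"uller lift used above, namely its compatibility with $p$-th powers and the computation of its reduction modulo $J$; both follow directly from the construction of $[-]_{\tilde{\theta}}$ recalled before \Cref{sec:q-logarithm-convergence-of-q-logarithm}, so I do not expect a genuine obstacle beyond unwinding these definitions.
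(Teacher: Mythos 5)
Your proof is correct and takes essentially the same route as the paper's: rank $1$ via the existence of arbitrary $p^n$-th roots of the Teichm\"uller lift together with \Cref{sec:generalities-prisms-lemma-elements-with-p-th-roots-are-of-rank-1}, membership in $1+\mathcal{N}^{\geq 1}B$ from $\varphi([x^{1/p}]_{\tilde{\theta}})=[x]_{\tilde{\theta}}\equiv 1 \bmod J$, and then the convergence statement \Cref{sec:q-logarithm-convergence-of-q-logarithm}. The only difference is that you spell out the compatibility of $[-]_{\tilde{\theta}}$ with $p$-th powers and the $p$-adic separatedness point (via boundedness, which is not literally in the hypotheses but is harmless here), both of which the paper leaves implicit.
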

\begin{proof}
  By \Cref{sec:generalities-prisms-lemma-elements-with-p-th-roots-are-of-rank-1} \red{(which applies to $B$ as $B$ is bounded and thus classically $(p,[p]_q)$-complete, by \cite[Lemma 3.7 (1)]{bhatt_scholze_prisms_and_prismatic_cohomology})}, the element $[x^{1/p}]_{\tilde{\theta}}$ is of rank $1$ as it admits arbitrary $p^n$-roots. Moreover, $[x^{1/p}]_{\tilde{\theta}}\in 1+\mathcal{N}^{\geq 1}B$ as $\varphi([x^{1/p}]_{\tilde{\theta}})=[x]_{\tilde{\theta}}\equiv 1$ modulo $J$. 
  By \Cref{sec:q-logarithm-convergence-of-q-logarithm} we can therefore conclude.
\end{proof}

\newpage

\section{Generalities on prismatic cohomology}
\label{sec:gener-prism-cohom}

\subsection{Prismatic site and prismatic cohomology}
\label{sec:prism-site-prism-cohom}

In this paragraph, we shortly recall, mostly for the convenience of the reader and to fix notations, some fundamental definitions and results, without proofs, from \cite{bhatt_scholze_prisms_and_prismatic_cohomology}. 
Fix a bounded prism $(A,I)$. Let $R$ be a $p$-complete $A/I$-algebra. 

\begin{definition}
\label{sec:prismatic-site}
The \textit{prismatic site of $R$ relative to $A$}, denoted $(R/A)_{\prism}$, is the category whose objects are given by bounded prisms $(B,IB)$ over $(A,I)$ together with an $A/I$-algebra map $R \to B/IB$, with the obvious morphisms, endowed with the Grothendieck topology for which covers are given by $(p,I)$-completely faithfully flat morphisms of prisms over $(A,I)$. 
\end{definition}

\begin{remark}
  \label{remark-set-theoretic-issues}
  In this remark we deal with the set-theoretic issues arising from \Cref{sec:prismatic-site}. For example, as it stands there does not exist a sheafification functor for presheaves on $(R/A)_\prism$. We will implicitly fix a cut-off cardinal $\kappa$ like in \cite[Lemma 4.1]{scholze_etale_cohomology_of_diamonds} and assume that all objects appearing in \Cref{sec:prismatic-site} (or \Cref{sec:absolute:prismatic-site}) have cardinality $<\kappa$. The results of this paper will not change under enlarging $\kappa$. For example, the category of prismatic Dieudonn\'e crystals on $(R)_\prism$ will be independent of the choice of $\kappa$. Also the prismatic cohomology does not change (because it can be calculated via a \u{C}ech-Alexander complex), and thus the prismatic Dieudonn\'e crystals will be independent of $\kappa$ (by \Cref{sec:calc-ext-groups}).
\end{remark}

This affine definition admits an immediate extension to $p$-adic formal schemes over $\mathrm{Spf}(A/I)$, cf \cite{bhatt_scholze_prisms_and_prismatic_cohomology}. 

\begin{proposition}[\cite{bhatt_scholze_prisms_and_prismatic_cohomology}, Corollary\ 3.12]
\label{sec:prism-structure-sheaf}
The functor $\mathcal{O}_{\prism}$ (resp.\ $\overline{\mathcal{O}}_{\prism}$) on the prismatic site valued in $(p,I)$-complete $\delta-A$-algebras (resp. in $p$-complete $R$-algebras), sending $(B,IB) \in (R/A)_{\prism}$ to $B$ (resp. $B/IB$), is a sheaf. The sheaf $\mathcal{O}_{\prism}$ (resp. $\overline{\mathcal{O}}_{\prism}$) is called the \textit{prismatic structure sheaf} (resp. the \textit{reduced prismatic structure sheaf}). 
\end{proposition}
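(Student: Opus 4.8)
The plan is to check the sheaf axiom directly from the definition of the topology. Since the covers of $(R/A)_{\prism}$ are generated by $(p,I)$-completely faithfully flat morphisms of prisms over $(A,I)$ — finite products of prisms being again prisms, so that finite covering families reduce to a single morphism, and the $\kappa$-bound of \Cref{remark-set-theoretic-issues} taking care of the remaining set-theoretic bookkeeping — it suffices to prove the following: for a $(p,I)$-completely faithfully flat map $(B,IB)\to (C,IC)$ of bounded prisms over $(A,I)$, the augmented $(p,I)$-completed Čech complex
\[
0 \longrightarrow B \longrightarrow C \longrightarrow C\,\widehat{\otimes}_B\,C \longrightarrow C\,\widehat{\otimes}_B\,C\,\widehat{\otimes}_B\,C \longrightarrow \cdots
\]
is exact, and likewise the complex obtained by applying $-/I$ termwise (which computes the Čech complex for $\overline{\mathcal{O}}_{\prism}$).

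First I would record the stability facts needed to make sense of this: each term $C^{\widehat{\otimes}_B(n+1)}$ is again a bounded prism over $(A,I)$, and $B\to C^{\widehat{\otimes}_B(n+1)}$ is $(p,I)$-completely flat. Here one uses that $(p,I)$-completed tensor products of $(p,I)$-completely flat $B$-algebras are again $(p,I)$-completely flat, that the $\delta$-structure and the ideal $I$ propagate along the pushout (so that $I\cdot C^{\widehat{\otimes}_B(n+1)}$ is the relevant invertible ideal), and that boundedness of the $p^\infty$-torsion of $C^{\widehat{\otimes}_B(n+1)}/I$ follows from boundedness for $C/IC$ together with flatness. These are exactly the structural lemmas on flatness in the prismatic site from \cite{bhatt_scholze_prisms_and_prismatic_cohomology}, which I would simply invoke.

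Granting this, exactness is formal. Let $K^\bullet$ denote the cone in $D(B)$ of $B\to C^{\widehat{\otimes}_B(\bullet+1)}$. Its terms are derived $(p,I)$-complete $B$-modules, so $K^\bullet$ is derived $(p,I)$-complete; by derived Nakayama it is acyclic iff $K^\bullet\otimes_B^{\mathbb{L}}B/(p,I)$ is. Since $B\to C^{\widehat{\otimes}_B(n+1)}$ is $(p,I)$-completely flat, $C^{\widehat{\otimes}_B(n+1)}\otimes_B^{\mathbb{L}}B/(p,I)$ is concentrated in degree $0$ and equals $\bigl(C/(p,I)\bigr)^{\otimes_{B/(p,I)}(n+1)}$, so that $K^\bullet\otimes_B^{\mathbb{L}}B/(p,I)$ is the ordinary Amitsur complex of the faithfully flat ring map $B/(p,I)\to C/(p,I)$, which is exact by classical faithfully flat descent (e.g.\ \cite[Tag 023M]{stacks_project}). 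This gives the sheaf property of $\mathcal{O}_{\prism}$. For $\overline{\mathcal{O}}_{\prism}$ one runs the same argument after reducing modulo $I$: the map $B/IB\to C/IC$ is $p$-completely faithfully flat and each $C^{\widehat{\otimes}_B(n+1)}/I$ is $p$-completely flat over $B/IB$, so derived Nakayama along $(p)$ plus classical descent modulo $p$ again yields exactness.

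The step I expect to be the genuine obstacle is the one in the second paragraph: verifying that the completed tensor powers remain within the site (bounded prisms over $(A,I)$) and that $(p,I)$-complete flatness is preserved under completed tensor product. Once this is in place the descent itself is automatic, the derived-Nakayama reduction turning the claim into classical fppf descent modulo $(p,I)$. One could alternatively sidestep part of this by computing prismatic cohomology through a Čech–Alexander complex attached to a fixed weakly final object, as in \cite{bhatt_scholze_prisms_and_prismatic_cohomology}, but the route above is the most direct.
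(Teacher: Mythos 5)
The paper itself gives no proof of this proposition — it simply cites \cite{bhatt_scholze_prisms_and_prismatic_cohomology}, Corollary 3.12 — and your argument is exactly the standard proof given there: reduce to a single $(p,I)$-completely faithfully flat cover, check that the completed \v{C}ech nerve stays inside the site of bounded prisms over $(A,I)$, and conclude by derived Nakayama together with classical faithfully flat descent modulo $(p,I)$ (and modulo $p$ for $\overline{\mathcal{O}}_{\prism}$). So the proposal is correct and takes essentially the same route as the cited source, with the structural inputs you delegate to \cite{bhatt_scholze_prisms_and_prismatic_cohomology} (stability of bounded prisms and of $(p,I)$-complete flatness under completed tensor products) being precisely the lemmas used there.
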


These constructions have absolute variants, where one does not fix a base prism. Let $R$ be a $p$-complete ring.

\begin{definition}
\label{sec:absolute:prismatic-site}
The \textit{(absolute) prismatic site of $R$}, denoted $(R)_{\prism}$, is the category whose objects are given by bounded prisms $(B,J)$ together with a ring map $R \to B/J$, with the obvious morphisms, endowed with the Grothendieck topology for which covers are given by morphisms of prism $(B,J) \to (C,JC)$ which are $(p,I)$-completely faithfully flat. 
\end{definition}

Exactly as before, one defines functors $\mathcal{O}_{\prism}$ and $\overline{\mathcal{O}}_{\prism}$, which are sheaves on $(R)_{\prism}$. 
\\

We turn to the definition of (derived) prismatic cohomology. Fix a bounded prism $(A,I)$. The prismatic cohomology of $R$ over $A$ is defined in two steps. One starts with the case where $R$ is $p$-completely smooth over $A/I$. 

\begin{definition}
\label{sec:def-prism-cohom-smooth} Let $R$ be a $p$-complete $p$-completely smooth $A/I$-algebra. The prismatic complex $\prism_{R/A}$ of $R$ over $A$ is defined to be the cohomology of the sheaf $\mathcal{O}_{\prism}$ on the prismatic site :
\[ \prism_{R/A} = R\Gamma((R/A)_{\prism}, \mathcal{O}_{\prism}). \]
This is a $(p,I)$-complete commutative algebra object in $D(A)$ endowed with a semi-linear map $\varphi : \prism_{R/A} \to \prism_{R/A}$, induced by the Frobenius of $\mathcal{O}_{\prism}$. 

Similarly, one defines the reduced prismatic complex or Hodge-Tate complex :
\[ \overline{\prism}_{R/A} = R\Gamma((R/A)_{\prism}, \overline{\mathcal{O}}_{\prism}). \]
This is a $p$-complete commutative algebra object in $D(R)$. 
\end{definition}

A fundamental property of prismatic cohomology is the Hodge-Tate comparison theorem, which relates the Hodge-Tate complex to differential forms. For this, first recall that for any $A/I$-module $M$ and integer $n$, the $n$th-Breuil-Kisin twist of $M$ is defined as
\[ M\{n\} := M \otimes_{A/I} (I/I^2)^{\otimes n}. \]
The Bockstein maps
\[ \beta_I : H^i(\overline{\prism}_{R/A})\{i\} \to H^{i+1}(\overline{\prism}_{R/A})\{i+1\} \]
for each $i \geq 0$ make $(H^*(\overline{\prism}_{R/A}\red{)}\{*\},\beta_I)$ a graded commutative $A/I$-differential graded algebra\footnote{For $p=2$ this assertion is non-trivial and part of the proof of \cite[Theorem 4.10]{bhatt_scholze_prisms_and_prismatic_cohomology}.}, which comes with a map $\eta \colon R \to H^0(\overline{\prism}_{R/A})$. 

\begin{theorem}[\cite{bhatt_scholze_prisms_and_prismatic_cohomology}, Theorem 4.10]
\label{sec:hodge-tate-comp-smooth}
The map $\eta$ extends to a map
\[ \eta_R^* : (\Omega_{R/(A/I)}^{\wedge_p}, d) \to (H^*(\overline{\prism}_{R/A}\red{)}, \beta_I) \]  
which is an isomorphism.
\end{theorem}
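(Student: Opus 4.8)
The plan is to construct the map $\eta_R^*$ first, then check it is an isomorphism by reducing to an explicit computation on a well-chosen cover of the prismatic site. To build the map, I would start from $\eta\colon R\to H^0(\overline{\prism}_{R/A})$ and the Bockstein differential $\beta_I$. Since $(H^*(\overline{\prism}_{R/A})\{*\},\beta_I)$ is a graded-commutative differential graded algebra over $A/I$ and $H^0$ receives the map $\eta$ from $R$, the universal property of the de Rham complex as the initial such cdga under $R$ produces a unique multiplicative, differential-compatible map $\eta_R^*\colon (\Omega^{\bullet}_{R/(A/I)},d)\to (H^*(\overline{\prism}_{R/A}\{*\}),\beta_I)$; $p$-completing the source (and noting the target is already $p$-complete) gives the stated map out of $\Omega^{\wedge_p}_{R/(A/I)}$. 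The only nontrivial input here is that $\beta_I$ is genuinely a differential, i.e.\ $\beta_I\circ\beta_I=0$ and the Leibniz rule holds, which at $p=2$ requires the signs to work out; I would take this as part of the construction of the DGA structure.

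To prove $\eta_R^*$ is an isomorphism, the strategy is local-to-global plus a direct computation in the smallest affine case. Both source and target are quasi-coherent in $R$ and compatible with localization (the prismatic site and $\Omega^{\bullet}$ behave well under $p$-completely étale base change), and the statement is about an isomorphism of $R$-modules in each cohomological degree, so it suffices to check it Zariski-locally on $\Spf R$. Thus I may assume $R$ admits an étale map $A/I\langle t_1,\dots,t_d\rangle \to R$, and by base change along the corresponding map of prisms it is enough to treat $R = A/I\langle t_1,\dots,t_d\rangle$, and then by a Künneth/tensor argument (prismatic cohomology of a $p$-completed tensor product is the completed tensor product, and $\Omega^\bullet$ is multiplicative) it is enough to handle $d=1$, i.e.\ $R=(A/I)\langle t\rangle$. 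For this one-variable case I would exhibit an explicit object of $(R/A)_\prism$ that is a weakly final / covering object — the prismatic envelope of the surjection $A\langle t\rangle \to R=A/I\langle t\rangle$ adjoining $\delta$-powers appropriately — compute $\overline{\prism}_{R/A}$ via the associated Čech–Alexander (cosimplicial) complex, and read off that $H^i = \bigwedge^i (\bigoplus_j R\,dt_j)$ with the Bockstein matching $d$. Concretely one checks $H^0=R$, $H^1$ is free of rank one on the class of $dt$ (identified via $\beta_I$ applied to a lift of $t$), and $H^i=0$ for $i>1$, which forces $\eta_R^*$ to be an isomorphism in all degrees.

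The main obstacle is the $d=1$ computation: identifying the prismatic cohomology of $(A/I)\langle t\rangle$ explicitly. This requires a good description of the prismatic envelope — the free $\delta$-$A$-algebra on $t$, $(p,I)$-completed and modified so that $I$ becomes generated by a distinguished element after adjoining the required divided-power-like elements — and then showing its Čech nerve computes $\prism_{R/A}$ with the Frobenius and Nygaard structures under control. The cohomology computation itself is a $q$-de Rham style calculation: the relevant cosimplicial complex is, up to the twist by $(I/I^2)^{\otimes\bullet}$, a Koszul-type complex whose differential is the Bockstein, and one must show it has the expected two-term cohomology. I expect this to be where essentially all the real work lies; the reduction steps and the construction of $\eta_R^*$ are comparatively formal, modulo the flat-descent and base-change properties of $\prism_{(-)/A}$ which are established as part of the basic prismatic formalism.
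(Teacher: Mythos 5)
There is nothing in this paper to compare your argument against: Theorem \ref{sec:hodge-tate-comp-smooth} is one of the results that \Cref{sec:prism-site-prism-cohom} explicitly recalls ``without proofs'' from Bhatt--Scholze, where it is Theorem 4.10/4.11; the paper's ``proof'' is the citation itself. Judged against the cited source, your outline reproduces the standard strategy: build $\eta_R^*$ from the universal property of the de Rham complex applied to the Bockstein differential graded algebra, use \'etale localization (smooth algebras admit \'etale coordinates Zariski-locally, and prismatic cohomology satisfies \'etale base change via the unique lifting of \'etale algebras along the $\delta$-thickenings in the \v{C}ech--Alexander complex) plus multiplicativity/K\"unneth to reduce to a $p$-completed polynomial algebra in one variable, and then compute that case explicitly via the prismatic envelope of $A\langle t\rangle \to A/I\langle t\rangle$.

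The caveat is that everything you defer is precisely where the content of the Bhatt--Scholze proof lies, so your proposal is a correct road map rather than a proof. Two deferrals deserve emphasis. First, the applicability of the universal property of $\Omega^\bullet_{R/(A/I)}$ requires the target to be a \emph{strictly} graded-commutative differential graded algebra (odd-degree classes square to zero); at $p=2$ this is genuinely non-trivial --- the paper itself flags it in a footnote as ``part of the proof of [BS, Theorem 4.10]'' --- and cannot simply be absorbed into ``the signs work out.'' Second, the one-variable computation needs a usable description of the prismatic envelope (in particular its $(p,I)$-complete flatness over $A$, BS Proposition 3.13) before one can read off $H^0=R$, $H^1=R\,dt$ (via the Bockstein of a lift of $t$) and vanishing in higher degrees from the \v{C}ech--Alexander complex; this $q$-de Rham-style calculation is the heart of the argument. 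With those two points supplied, your reduction steps (Zariski-local \'etale coordinates, base change along the induced map of prisms, completed K\"unneth for polynomial algebras) are legitimate and match the cited proof.
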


While proving \Cref{sec:hodge-tate-comp-smooth}, Bhatt and Scholze also relate prismatic and crystalline cohomology when the ring $R$ is an $\F_p$-algebra. The precise statement is the following. Assume that $I=(p)$, i.e. that $(A,I)$ is a crystalline prism. Let $J \subset A$ be a PD-ideal with $p\in J$. Let $R$ be a smooth $A/J$-algebra and 
\[ R^{(1)} = R \otimes_{A/J} A/p, \]
where the map $A/J \to A/p$ is the map induced by Frobenius and the fact that $J$ is a PD-ideal. 

\begin{theorem}[\cite{bhatt_scholze_prisms_and_prismatic_cohomology}, Theorem 5.2]
\label{sec:crystalline-comp-smooth}
Under the previous assumptions, there is a canonical isomorphism of $E_{\infty}-A$-algebras
\[ \prism_{R^{(1)}/A} \simeq R\Gamma_{\rm crys}(R/A), \]
compatible with Frobenius.
\end{theorem}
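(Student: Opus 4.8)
The plan is to follow the strategy of \cite{bhatt_scholze_prisms_and_prismatic_cohomology}: produce a functorial comparison map, reduce by descent to the case of a polynomial algebra, and there identify both sides with the de Rham complex of a smooth lift.

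\emph{Reduction.} Viewed as functors from smooth $A/J$-algebras to $D(A)$, both $R\mapsto \prism_{R^{(1)}/A}$ and $R\mapsto R\Gamma_{\crys}(R/A)$ satisfy Zariski (indeed \'etale) descent --- on the prismatic side via the sheaf property of $\mathcal{O}_{\prism}$ (\Cref{sec:prism-structure-sheaf}) and the compatibility of the Frobenius twist $(-)^{(1)}$ with base change, on the crystalline side classically --- and both commute with filtered colimits. So once a functorial comparison map is available, it suffices to prove it is an isomorphism for $R=(A/J)\langle t_1,\dots,t_n\rangle$.

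\emph{The comparison map.} I would build it from the principle that, over a crystalline prism, a prism ``is'' a divided-power thickening, compatibly with Frobenius. Given $(B,(p))\in (R^{(1)}/A)_{\prism}$, the ring $B$ is $p$-torsion free (as $p$ is a distinguished generator of the invertible ideal $(p)$), so $(p)\subseteq B$ carries its canonical divided powers, compatible with $\gamma$ on $J\subseteq A$ since $p\in J$; and the Frobenius $\varphi_B$, which lifts the absolute Frobenius of $B/p$, is exactly what is needed to correct the Frobenius twist and upgrade the structure map $R^{(1)}\to B/pB$ to the datum of a divided-power thickening of $R$ over $(A,J,\gamma)$ (using that $p\in J$, so that the absolute Frobenius of $R$ factors as $R\to R^{(1)}\to R$). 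Performing this functorially gives a morphism of ringed topoi between $\mathrm{Shv}((R^{(1)}/A)_{\prism})$ and $\mathrm{Shv}((R/A)_{\crys})$, hence a comparison map of $E_\infty$-$A$-algebras relating $\prism_{R^{(1)}/A}$ and $R\Gamma_{\crys}(R/A)$, compatible with Frobenius by construction.

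\emph{The polynomial case and the main obstacle.} For $R=(A/J)\langle\underline t\rangle$ with smooth lift $\tilde R=A\langle\underline t\rangle$ (so $\tilde R/J\tilde R=R$, while $\tilde R/p\tilde R\cong R^{(1)}$, the twist merely relabelling coordinates), the crystalline side is easy: $J\tilde R$ inherits divided powers from $\gamma$, the divided-power envelope of the presentation $\tilde R\twoheadrightarrow R$ is $\tilde R$ itself, and hence $R\Gamma_{\crys}(R/A)\simeq\widehat\Omega^{\bullet}_{\tilde R/A}$. The prismatic side is the crux: the prismatic Poincar\'e lemma expresses $\prism_{R^{(1)}/A}$ as the de Rham complex of a \emph{prismatic} envelope of a $\delta$-presentation of $R^{(1)}$, and one must show --- this is the genuine algebraic input, and the place where the hypothesis $I=(p)$ and the Frobenius twist are essential --- that over a crystalline prism this prismatic envelope (which must also be shown to be $p$-torsion free) coincides with the $p$-completed divided-power envelope, so that it too yields $\widehat\Omega^{\bullet}_{\tilde R/A}$. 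Matching the two descriptions, checking that the comparison map of the previous step realizes this identification, and tracking that the prismatic and crystalline Frobenii correspond, then completes the proof. The descent reduction and the Frobenius-compatibility are routine modulo making the comparison map strictly functorial; the real work is the envelope comparison.
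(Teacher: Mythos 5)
You should note at the outset that the paper does not prove this statement at all: it is recalled with a citation to \cite{bhatt_scholze_prisms_and_prismatic_cohomology}, Theorem 5.2, so the only meaningful comparison is with the argument of loc.\ cit., which your sketch does follow in outline (reduce by descent and filtered colimits to the $p$-completed polynomial algebra, identify both sides with the $p$-completed de Rham complex of the smooth lift, with the Frobenius twist $R^{(1)}$ mediating between $\delta$-structures and divided powers).

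As a proof, however, the proposal has a genuine gap, and you flag it yourself: the step you defer is not a technical verification but the entire content of the theorem. What must be shown is that over a crystalline prism the prismatic envelopes appearing in the \u{C}ech--Alexander (equivalently, Poincar\'e-lemma) computation of $\prism_{R^{(1)}/A}$ are $p$-torsion free and agree with the $p$-completed divided-power envelopes of the corresponding surjections onto $R$. In \cite{bhatt_scholze_prisms_and_prismatic_cohomology} this is the chain of results culminating in Corollary 2.39 (the $p$-completed PD-envelope of a suitable $\delta$-pair carries a canonical $\delta$-structure and, after the Frobenius twist, satisfies the universal property of the prismatic envelope over $(A,(p))$), combined with the flatness/regular-sequence control of prismatic envelopes (Proposition 3.13 of loc.\ cit.) that legitimizes the term-by-term identification of the two \u{C}ech--Alexander complexes; this is exactly the input the present paper isolates in \Cref{sec:prism-site-prism-remarks-after-crystalline-comparison} when extending the comparison to syntomic algebras. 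Everything else in your sketch (\'etale descent, commutation with filtered colimits, Frobenius compatibility) is the routine frame around this lemma, so without it there is no proof. A secondary caveat: your construction of the comparison map is vaguer than it looks. From a prism $(B,(p))$ over $(R^{(1)}/A)_{\prism}$ one obtains a PD-thickening of $R$ simply by composing with the base-change map $R\to R^{(1)}$ and using the canonical divided powers on $pB$ (whose compatibility with $\gamma$ on $J$ must be checked); the Frobenius $\varphi_B$ plays no role in ``upgrading'' the structure map, and one must work with a version of the crystalline site admitting all such thickenings. In \cite{bhatt_scholze_prisms_and_prismatic_cohomology} these site-theoretic points are bypassed by comparing \u{C}ech--Alexander complexes directly, the map being induced by the $\delta$-structure on PD-envelopes — i.e.\ again by the lemma you have not proved.
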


\begin{remark}
  \label{sec:prism-site-prism-remarks-after-crystalline-comparison}
  \begin{enumerate}
  \item If $J=(p)$, $R^{(1)}$ is just the Frobenius twist of $R$.
  \item The proof of \Cref{sec:crystalline-comp-smooth} goes through for a syntomic $A/J$-algebra $R$. The important point is that in the proof in \cite[Theorem 5.2]{bhatt_scholze_prisms_and_prismatic_cohomology} in each simplicial degree the kernel of the morphism $B^\bullet\to \tilde{R}$ is the inductive limit of ideals of the form $(p,x_1,\ldots, x_r)$ with $(x_1,\ldots, x_r)$ being $p$-completely regular relative to $A$, which allows to apply \cite[Proposition 3.13]{bhatt_scholze_prisms_and_prismatic_cohomology}. The statement extends by descent from the quasi-regular semiperfect case to all quasi-syntomic rings over $\F_p$ (cf.\ \Cref{sec:prism-cohom-quasi-lemma-for-qr-semiperfect-prism-isomorphic-to-acrys}).
  \end{enumerate}

\end{remark}

\Cref{sec:def-prism-cohom-smooth} of course makes sense without the hypothesis that $R$ is $p$-completely smooth over $A/I$. But it would not give well behaved objects ; for instance, the Hodge-Tate comparison would not hold in general\footnote{Nevertheless, in \Cref{sec:prism-cohom-quasi-regular semiperfectoid} we will check that the site-theoretic defined prismatic cohomology is well-behaved for quasi-regular semiperfectoid rings (as it agrees with the derived prismatic cohomology), and also for quasi-syntomic rings}. The formalism of non-abelian derived functors allows to extend the definition of the prismatic and Hodge-Tate complexes to all $p$-complete $A/I$-algebras in a manner compatible with the Hodge-Tate comparison theorem.

\begin{definition}
\label{sec:def-prism-cohom-general} 
The \textit{derived prismatic cohomology} functor $L\prism_{-/A}$ (resp. the \textit{derived Hodge-Tate cohomology} functor $L\overline{\prism}_{-/A}$) is the left Kan extension (cf.\ \cite[Construction 2.1]{bhatt_morrow_scholze_topological_hochschild_homology}) of the functor $\prism_{-/A}$ (resp. $\overline{\prism}_{-/A}$) from $p$-completely smooth $A/I$-algebras to $(p,I)$-complete commutative algebra objects in (the $\infty$-category) $D(A)$ (resp. $p$-complete commutative algebra objects in $D(R)$), to the category of $p$-complete $A/I$-algebras.  
\end{definition}

For short, we will just write $\prism_{R/A}$ (resp. $\overline{\prism}_{R/A}$) for $L\prism_{R/A}$ (resp. $L\overline{\prism}_{R/A}$) in the following.

Left Kan extension of the Postnikov (or canonical filtration) filtration leads to an extension of Hodge-Tate comparison to derived prismatic cohomology.

\begin{proposition}
\label{sec:hodge-tate-comp-general}
For any $p$-complete $A/I$-algebra $R$, the derived Hodge-Tate complex $\overline{\prism}_{R/A}$ comes equipped with a functorial increasing multiplicative exhaustive filtration $\mathrm{Fil}_*^{\rm conj}$ in the category of $p$-complete objects in $D(R)$ and canonical identifications 
\[ \mathrm{gr}_i^{\rm conj}(\overline{\prism}_{R/A}) \simeq \wedge^i L_{R/(A/I)}\{-i\}[-i]^{\wedge_p}. \]
\end{proposition}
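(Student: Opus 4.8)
The plan is to obtain the filtration and the graded pieces by left Kan extending the corresponding structures in the smooth case, exactly as the statement's placement suggests. First I would recall that for $R$ $p$-completely smooth over $A/I$, the Hodge-Tate complex $\overline{\prism}_{R/A}$ has a canonical finite filtration — namely its Postnikov (canonical) filtration — whose $i$-th graded piece is $H^i(\overline{\prism}_{R/A})[-i]$. By the Hodge-Tate comparison \Cref{sec:hodge-tate-comp-smooth}, this is $\Omega^i_{R/(A/I)}{}^{\wedge_p}[-i]$, and since $R$ is smooth we have the canonical identification $\Omega^i_{R/(A/I)} \simeq \wedge^i L_{R/(A/I)}$ together with the Breuil-Kisin twist bookkeeping $H^i(\overline{\prism}_{R/A}) \simeq \wedge^i L_{R/(A/I)}\{-i\}$. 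So on smooth algebras we have a functor to filtered $p$-complete objects of $D(R)$ whose associated graded is $\bigoplus_i \wedge^i L_{R/(A/I)}\{-i\}[-i]$.

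Next I would left Kan extend this filtered refinement from $p$-completely smooth $A/I$-algebras to all $p$-complete $A/I$-algebras, landing in the $\infty$-category of filtered $p$-complete commutative algebra objects in $D(R)$. One must check that the underlying object of this left Kan extension is indeed $\overline{\prism}_{R/A}$ in the sense of \Cref{sec:def-prism-cohom-general}: this is because left Kan extension commutes with the forgetful functor from filtered objects to objects (it is a colimit, computed levelwise), so the underlying object of $\mathrm{Fil}^{\mathrm{conj}}_* \overline{\prism}_{R/A}$ is the left Kan extension of $\overline{\prism}_{-/A}$, i.e. $L\overline{\prism}_{R/A}$. Exhaustiveness is preserved because each $\mathrm{Fil}^{\mathrm{conj}}_i$ is itself a left Kan extension and the colimit $\mathrm{colim}_i \mathrm{Fil}^{\mathrm{conj}}_i$ can be computed before or after Kan extension; multiplicativity is inherited since left Kan extension of a lax symmetric monoidal functor (here from filtered smooth algebras) stays lax symmetric monoidal. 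Finally, the graded pieces: since $\mathrm{gr}$ is exact and commutes with colimits, $\mathrm{gr}^{\mathrm{conj}}_i L\overline{\prism}_{R/A}$ is the left Kan extension of $R \mapsto \wedge^i L_{R/(A/I)}\{-i\}[-i]$ from smooth algebras. But $R \mapsto \wedge^i L_{R/(A/I)}$ is already defined on all simplicial commutative rings via the derived exterior power of the cotangent complex, and it is the left Kan extension (from polynomial, hence from smooth, $A/I$-algebras) of $R \mapsto \Omega^i_{R/(A/I)}$ — this is one of the foundational properties of the cotangent complex and its exterior powers (Illusie). Tensoring with the invertible module $(I/I^2)^{\otimes(-i)}$ and shifting are exact and commute with left Kan extension, so we get $\mathrm{gr}^{\mathrm{conj}}_i(\overline{\prism}_{R/A}) \simeq \wedge^i L_{R/(A/I)}\{-i\}[-i]^{\wedge_p}$ as claimed. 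One should also note the filtration is increasing (indexed so $\mathrm{Fil}_i$ grows with $i$) because the Postnikov filtration in the smooth case is, and functoriality is automatic from the universal property of Kan extension.

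The main obstacle, I expect, is the careful $\infty$-categorical bookkeeping needed to make "left Kan extend the filtered object, not just the object" precise and to verify that the $p$-completion is threaded through consistently — in particular, that left Kan extension, passage to associated graded, and derived $p$-completion all commute in the required way, and that the multiplicative structure survives. These are formal but somewhat delicate; the cleanest route is to work in the filtered derived $\infty$-category $\mathrm{DF}(R)$ of $p$-complete objects, define everything there by the universal property of left Kan extension from $p$-completely smooth $A/I$-algebras, and then read off the underlying object and graded pieces by applying the (colimit-preserving, exact) functors "underlying object" and $\mathrm{gr}_i$. The input from the smooth case is precisely \Cref{sec:hodge-tate-comp-smooth}; everything else is the standard machinery of non-abelian derived functors.
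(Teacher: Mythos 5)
Your proposal is correct and follows essentially the same route the paper indicates: the paper (following Bhatt--Scholze) simply states that the conjugate filtration is obtained by left Kan extending the Postnikov filtration from the $p$-completely smooth case, where the Hodge--Tate comparison identifies the graded pieces with $\Omega^i_{R/(A/I)}{}^{\wedge_p}\{-i\}[-i]$, and your elaboration (Kan extension in the filtered $p$-complete derived category, exactness of $\mathrm{gr}_i$, and the fact that $\wedge^i L_{-/(A/I)}$ is the left Kan extension of $\Omega^i$) is exactly the standard way to make that one-line argument precise.
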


Finally, let us indicate how these affine statements globalize. 

\begin{proposition}
\label{sec:derived-prism-cohom-global}
Let $X$ be a $p$-adic formal scheme over $\mathrm{Spf}(A/I)$, \red{which is locally the formal spectrum of a $p$-complete ring with bounded $p^{\infty}$-torsion}. There exists a functorially defined $(p,I)$-complete commutative algebra object $\prism_{X/A} \in D(X,A)$, equipped with a $\varphi_A$-linear map $\varphi_X\colon \prism_{X/A} \to \prism_{X/A}$, and having the following properties :
\begin{itemize}
\item For any affine open $U=\mathrm{Spf}(R)$ in $X$, there is a natural isomorphism of $(p,I)$-complete commutative algebra objects in $D(A)$ between $R\Gamma(U,\prism_{X/A})$ and $\prism_{R/A}$, compatible with Frobenius. 
\item Set $\overline{\prism}_{X/A}=\prism_{X/A} \otimes_A^{\mathbb{L}} A/I \in D(X,A/I)$. Then $\overline{\prism}_{X/A}$ is naturally an object of $D(X)$, which comes with a functorial increasing multiplicative exhaustive filtration $\mathrm{Fil}_*^{\rm conj}$ in the category of $p$-complete objects in $D(X)$ and canonical identifications 
\[ \mathrm{gr}_i^{\rm conj}(\overline{\prism}_{X/A}) \simeq \wedge^i L_{X/(A/I)}\{-i\}[-i]^{\wedge_p}. \]
\end{itemize}
\end{proposition}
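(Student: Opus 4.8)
The plan is to deduce the global statement from the affine one of \Cref{sec:def-prism-cohom-general} and \Cref{sec:hodge-tate-comp-general} by Zariski descent on $X$, so the crux is to show that the functors $R\mapsto \prism_{R/A}$ and $R\mapsto \overline{\prism}_{R/A}$ send Zariski (indeed \'etale) covers of $p$-complete $A/I$-algebras to limit diagrams. I would first treat $\overline{\prism}_{-/A}$. By \Cref{sec:hodge-tate-comp-general} it carries a functorial exhaustive conjugate filtration with graded pieces $\wedge^i L_{R/(A/I)}\{-i\}[-i]^{\wedge_p}$, and for an \'etale map $R\to R'$ one has $L_{R'/(A/I)}\simeq L_{R/(A/I)}\otimes_R R'$, hence $\wedge^i L_{R'/(A/I)}\simeq \wedge^i L_{R/(A/I)}\otimes_R R'$; thus each graded piece satisfies \'etale descent as a $p$-complete object of $D(R)$. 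Since the conjugate filtration is exhaustive and the limits over a \v{C}ech nerve commute with the filtered colimit recovering $\overline{\prism}_{R/A}$ from its graded pieces, $\overline{\prism}_{-/A}$ satisfies \'etale, a fortiori Zariski, descent; the same computation shows that the conjugate-graded pieces globalize.

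Next I would bootstrap to $\prism_{-/A}$. Since $\prism_{R/A}$ is derived $(p,I)$-complete with $\prism_{R/A}\otimes_A^{\mathbb{L}} A/I\simeq \overline{\prism}_{R/A}$, checking that a cosimplicial diagram of such objects is a limit diagram may be done after applying $-\otimes_A^{\mathbb{L}} A/(p,I)$: a limit of uniformly derived $(p,I)$-complete objects is again derived $(p,I)$-complete, and derived Nakayama reduces the comparison map, which is itself a map of derived $(p,I)$-complete objects, to its reduction modulo $(p,I)$. But $\prism_{R/A}\otimes_A^{\mathbb{L}} A/(p,I)\simeq \overline{\prism}_{R/A}\otimes_{A/I}^{\mathbb{L}} (A/I)/p$ inherits from the conjugate filtration an exhaustive filtration with graded pieces $\big(\wedge^i L_{R/(A/I)}\{-i\}[-i]\big)\otimes_{A/I}^{\mathbb{L}} (A/I)/p$, which again satisfy \'etale descent by base change for the cotangent complex. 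Hence $\prism_{-/A}$ satisfies \'etale, and in particular Zariski, descent, compatibly with the Frobenius $\varphi$ of \Cref{sec:prism-structure-sheaf}.

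With descent in hand, I would define $\prism_{X/A}\in D(X,A)$ by Zariski-gluing the presheaf $U=\Spf(R)\mapsto \prism_{R/A}$ on the small Zariski site of $X$ (equivalently, as the $R\lim$ over the \v{C}ech nerve of any affine open cover); descent guarantees this is well-defined, $(p,I)$-complete, and satisfies $R\Gamma(U,\prism_{X/A})\simeq \prism_{R/A}$ on affine opens. Functoriality of the $\varphi_R$ in $R$ provides the $\varphi_A$-linear map $\varphi_X$. Setting $\overline{\prism}_{X/A}=\prism_{X/A}\otimes_A^{\mathbb{L}} A/I$, the functorial conjugate filtrations on the $\overline{\prism}_{R/A}$ glue to a filtration $\mathrm{Fil}_*^{\rm conj}$, and since $\wedge^i L_{R/(A/I)}$ is computed \'etale-locally it sheafifies to $\wedge^i L_{X/(A/I)}$, yielding $\mathrm{gr}_i^{\rm conj}(\overline{\prism}_{X/A})\simeq \wedge^i L_{X/(A/I)}\{-i\}[-i]^{\wedge_p}$; exhaustiveness together with the fact that the graded pieces lie in $D(X)$ shows $\overline{\prism}_{X/A}\in D(X)$. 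The main obstacle I anticipate is purely the bookkeeping around completeness: ensuring that ``satisfying descent'' is stable under the derived $(p,I)$-completion and under the exhaustive filtered colimit defining $\overline{\prism}_{R/A}$ from its conjugate-graded pieces, and that the maps one wants to invert are maps of derived complete objects so that derived Nakayama applies. All of this is parallel to the corresponding facts for derived de Rham and crystalline cohomology and is contained in \cite{bhatt_scholze_prisms_and_prismatic_cohomology}, so I would keep this argument brief and refer there for the details not spelled out above.
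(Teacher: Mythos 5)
The paper does not actually prove this proposition: it is stated in a section that explicitly recalls results from \cite{bhatt_scholze_prisms_and_prismatic_cohomology} without proofs, so there is no internal argument to compare against. Your sketch reconstructs the standard globalization argument from loc.\ cit.\ (descent via the conjugate filtration and base change for the cotangent complex, then passage from $\overline{\prism}$ to $\prism$ by $(p,I)$-completeness and derived Nakayama, using that the Koszul complex on $(p,I)$ is perfect so that reduction commutes with the totalization), and in outline this is the right and intended route.

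One step, however, is asserted where it genuinely needs an argument: the claim that ``the limits over a \v{C}ech nerve commute with the filtered colimit recovering $\overline{\prism}_{R/A}$ from its graded pieces.'' Totalizations do not commute with filtered colimits in general, and the conjugate filtration is exhaustive only in the $p$-complete sense, so the swap is not automatic. The standard way to repair this is either (a) to glue each finite stage $\mathrm{Fil}_n^{\rm conj}\overline{\prism}_{-/A}$ separately --- each is a successive extension of finitely many $p$-completed quasi-coherent complexes, hence satisfies Zariski (even $p$-completely flat) descent --- and then define $\overline{\prism}_{X/A}$ as the $p$-completed colimit of the glued stages, using that $R\Gamma(U,-)$ on a quasi-compact quasi-separated (formal) scheme commutes with filtered colimits of quasi-coherent objects to recover $R\Gamma(U,\overline{\prism}_{X/A})\simeq\overline{\prism}_{R/A}$ on affines; or (b) to note that for the Zariski descent you actually use, covers of an affine may be taken finite, so the relevant \v{C}ech totalization has finite cohomological amplitude and does commute with the colimit. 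Your reduction modulo $(p,I)$ for the prismatic (as opposed to Hodge--Tate) sheaf is fine as written, precisely because tensoring with the perfect Koszul complex commutes with limits, but the same colimit-versus-totalization point recurs there and should be handled by the same device. With that repair made explicit, the construction, the identification on affine opens, the Frobenius, and the identification of $\mathrm{gr}_i^{\rm conj}$ with $\wedge^i L_{X/(A/I)}\{-i\}[-i]^{\wedge_p}$ all follow as you indicate.
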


\subsection{Truncated Hodge-Tate cohomology and the cotangent complex}
\label{sec:trunc-prism-cotang-complex}

Let $(A,I)$ be a bounded prism, and let $X$ be a $p$-adic $A/I$-formal scheme. The following result also appears in \cite[Proposition 4.14]{bhatt_scholze_prisms_and_prismatic_cohomology}\footnote{Recently, Illusie has also obtained related results in characteristic $p$ (private communication).}. We give a similar argument (suggested to us by Bhatt), with more details than in loc. cit. \green{Since this result is not strictly necessary for the rest of the paper, the reader can safely skip this subsection.}

\begin{proposition}
\label{sec:isom-trunc-prism-cohom-cotangent-complex}
There is a canonical isomorphism :
$$
\alpha_X \colon L_{X/\mathrm{Spf}(A)}\{-1\}[-1]^{\wedge_p}\cong \mathrm{Fil}_{1}^{\mathrm{conj}}(\overline{\prism}_{X/A}),
$$
where the right-hand side is the first piece of the increasing filtration on $\overline{\prism}_{X/A}$ introduced  in \Cref{sec:derived-prism-cohom-global}.
\end{proposition}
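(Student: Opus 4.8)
The plan is to build the isomorphism $\alpha_X$ from the conjugate filtration of \Cref{sec:derived-prism-cohom-global} together with the Hodge-Tate comparison, and to verify it is an isomorphism by reduction to the affine, smooth case via left Kan extension. First, recall that $\mathrm{Fil}_0^{\mathrm{conj}}(\overline{\prism}_{X/A}) = \mathrm{gr}_0^{\mathrm{conj}}(\overline{\prism}_{X/A}) \simeq \mathcal{O}_X$ and that there is an exact triangle
\[ \mathrm{Fil}_0^{\mathrm{conj}}(\overline{\prism}_{X/A}) \to \mathrm{Fil}_1^{\mathrm{conj}}(\overline{\prism}_{X/A}) \to \mathrm{gr}_1^{\mathrm{conj}}(\overline{\prism}_{X/A}) \simeq L_{X/(A/I)}\{-1\}[-1]^{\wedge_p}. \]
So $\mathrm{Fil}_1^{\mathrm{conj}}$ is an extension of $L_{X/(A/I)}\{-1\}[-1]^{\wedge_p}$ by $\mathcal{O}_X$, and the claim is that this extension is canonically identified with $L_{X/\mathrm{Spf}(A)}\{-1\}[-1]^{\wedge_p}$ — which, by the transitivity triangle for $\mathrm{Spf}(A) \to \mathrm{Spf}(A/I) \to X$, is itself (after twisting and shifting) an extension of $L_{X/(A/I)}\{-1\}[-1]^{\wedge_p}$ by $L_{(A/I)/A}\{-1\}[-1]^{\wedge_p} \simeq (I/I^2)\{-1\}[0]^{\wedge_p} \simeq \mathcal{O}_X$, using that $L_{(A/I)/A} \simeq I/I^2[1]$ because $I$ is an invertible ideal (a Cartier divisor, cut out locally by a nonzerodivisor). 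So both sides are two-step extensions with the same graded pieces, and the content is to match the extension classes.

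\textbf{Reduction to the smooth affine case.} Both $L_{X/\mathrm{Spf}(A)}\{-1\}[-1]^{\wedge_p}$ and $\mathrm{Fil}_1^{\mathrm{conj}}(\overline{\prism}_{X/A})$ are, by construction, obtained by left Kan extension from the category of $p$-completely smooth $A/I$-algebras (for the cotangent complex this is Illusie's description; for $\mathrm{Fil}_1^{\mathrm{conj}}$ it is how the conjugate filtration on derived prismatic cohomology was defined in \Cref{sec:hodge-tate-comp-general}), and both are compatible with Zariski localization, so it suffices to produce a functorial isomorphism when $X = \mathrm{Spf}(R)$ with $R$ $p$-completely smooth over $A/I$. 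In that case $L_{R/(A/I)}^{\wedge_p} \simeq \Omega^1_{R/(A/I)}[0]$ sits in degree $0$, the Hodge-Tate comparison \Cref{sec:hodge-tate-comp-smooth} gives $H^0(\overline{\prism}_{R/A}) = R$ and $H^1(\overline{\prism}_{R/A}) \simeq \Omega^1_{R/(A/I)}\{-1\}$, and $\mathrm{Fil}_1^{\mathrm{conj}}(\overline{\prism}_{R/A}) \simeq \tau^{\leq 1}\overline{\prism}_{R/A}$ is exactly the two-term complex computing these two cohomology groups. On the other side, $L_{R/A}^{\wedge_p}$ is the two-term complex with $H^0 = \Omega^1_{R/A}$ and (since $I/I^2$ is free of rank one and $d$ is a nonzerodivisor) $H^{-1} = 0$... more precisely one uses the transitivity sequence $I/I^2 \to \Omega^1_{R/A} \to \Omega^1_{R/(A/I)} \to 0$ with injective left map. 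Twisting by $\{-1\}$ and shifting by $[-1]$ turns this into a complex concentrated in degrees $0,1$ with the correct cohomology. The isomorphism $\alpha_R$ is then induced by the map $d\colon R \to \Omega^1_{R/A}$, respectively by the differential in the Hodge-Tate dga; concretely, the Bockstein $\beta_I$ and the universal derivation are intertwined by the Hodge-Tate comparison map $\eta_R^*$, and this is what makes the square commute.

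\textbf{Main obstacle.} The real work — and the place where I would follow Bhatt's suggested argument carefully — is verifying that the two extension classes agree, i.e.\ that the boundary map $R = \mathrm{gr}_0 \to L_{R/(A/I)}\{-1\}[-1]^{\wedge_p}[1] = \Omega^1_{R/(A/I)}\{-1\}$ coming from the conjugate filtration on $\overline{\prism}_{R/A}$ coincides (under $\eta_R^*$ and the twist) with the connecting map $R = (I/I^2)\{-1\} \to \Omega^1_{R/(A/I)}\{-1\}$ in the cotangent-complex transitivity triangle, namely $d$ itself. Concretely this amounts to identifying the Bockstein operator $\beta_I\colon H^0(\overline{\prism}_{R/A}) \to H^1(\overline{\prism}_{R/A})\{1\}$ with the de Rham differential $d\colon R \to \Omega^1_{R/(A/I)}$ under the Hodge-Tate comparison — which is precisely the statement of \Cref{sec:hodge-tate-comp-smooth} that $\eta_R^*$ is a map of differential graded algebras, so $\eta_R^* \circ d = \beta_I \circ \eta_R^*$. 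Packaging this into a single functorial isomorphism $\alpha_X$, and checking it is independent of the chosen smooth presentation (automatic, since everything is stated functorially on smooth $A/I$-algebras before left Kan extending), is then routine. One also has to be mildly careful with $p$-completions throughout and with the case $p = 2$ for the dga structure, but \Cref{sec:hodge-tate-comp-smooth} already takes care of the latter.
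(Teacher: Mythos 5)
There is a genuine gap: you never actually construct the map $\alpha_X$. Your reduction to the $p$-completely smooth affine case via left Kan extension is the same as the paper's, and your observation that both sides are extensions of $L_{R/(A/I)}\{-1\}[-1]^{\wedge_p}$ by $R$ is correct; but you then declare that "the content is to match the extension classes" and propose to do so via the dga property of the Hodge--Tate comparison. This cannot work as stated, for two reasons. First, in the affine $p$-completely smooth case both extension classes are zero: $R$ lifts to $A/I^2$, so $\mathrm{Fil}_1^{\mathrm{conj}}$ splits (this is exactly \Cref{sec:obstruction-split-same}), and likewise the class of $L_{R/A}$ vanishes; so "the classes agree" is vacuous and pins down nothing. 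Second, even when an equality of extension classes is nontrivial, it only tells you that \emph{some} isomorphism compatible with the graded identifications exists; it does not produce a canonical, functorial map on the category of smooth $A/I$-algebras, and functoriality (indeed coherent functoriality) is precisely what you need in order to left Kan extend. (One could try to rescue canonicity by noting that $\mathrm{Hom}_{D(R)}\bigl(\Omega^1_{R/(A/I)}\{-1\}[-1],R\bigr)=0$ for smooth $R$, so a filtered isomorphism inducing the identity on graded pieces is unique if it exists, but you do not make that argument, and it still leaves the higher coherence needed for Kan extension unaddressed.)

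The missing idea, and the heart of the paper's proof, is a direct deformation-theoretic construction of the map before any isomorphism check: for each prism $(B,J)\in (R/A)_{\prism}$ with structure map $\iota\colon R\to B/J$, pull back the square-zero extension $0\to J/J^2\to B/J^2\to B/J\to 0$ along $\iota$; this extension of $R$ by $B/J\{1\}$ is classified by a functorial morphism $\alpha_R'\colon L_{R/A}^{\wedge_p}\to B/J\{1\}[1]$, and taking the homotopy limit over $(R/A)_{\prism}$ (then twisting and shifting) yields $\alpha_R\colon L_{R/A}\{-1\}[-1]^{\wedge_p}\to \tau^{\leq 1}\overline{\prism}_{R/A}$, functorially in $R$. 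Only at that point does one check $\alpha_R$ is an isomorphism, by computing on $H^{-1}$ (the canonical map $R\otimes_{A/I}I/I^2\to H^{-1}$) and on $H^0$, where the image of $dx$ is the Bockstein $\beta_I(\iota(x))$ and the identification with the Hodge--Tate isomorphism of \Cref{sec:hodge-tate-comp-smooth} enters — this is the place where your Bockstein-versus-$d$ compatibility is genuinely used, but as a verification that an already-constructed map is an isomorphism, not as a substitute for constructing it.
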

\begin{proof}
We can assume that $X=\mathrm{Spf}(R)$ is affine. \blue{Write $\bar{A}=A/I$.} We want to prove that there is a canonical isomorphism
$$
\alpha_R \colon L_{R/A}\{-1\}[-1]^{\wedge_p}\cong \mathrm{Fil}_{1}^{\mathrm{conj}}(\overline{\prism}_{R/A}).
$$
First, let us note that by the transitivity triangle for $A\to \blue{\bar{A}} \to R$ the cotangent complex $L_{R/A}\{-1\}[-1]^{\wedge_p}$ sits inside a triangle
$$
R\cong R\otimes_{\blue{\bar{A}}}L_{\blue{\bar{A}}/A}\{-1\}[-1]^{\wedge_p}\to L_{R/A}\{-1\}[-1]^{\wedge_p}\to L_{R/\blue{\bar{A}}}\{-1\}[-1]^{\wedge_p}
$$
and the outer terms are isomorphic to $R\cong \mathrm{gr}_{0}^{\mathrm{conj}}\overline{\prism}_{R/A}$ and
$$\mathrm{gr}_{1}^{\mathrm{conj}}\overline{\prism}_{R/A}\cong L_{R/\blue{\bar{A}}}\{-1\}[-1]^{\wedge_p}. $$

To construct the isomorphism $\alpha_R$ it suffices to restrict to $\blue{\bar{A}}\to R$ $p$-completely smooth first, and then Kan extend. Thus assume from now on that $R$ is $p$-completely smooth over $\blue{\bar{A}}$.

Let $(B,J) \in (R/A)_{\prism}$, i.e., $(B,J)$ is a prism over $(A,I)$ with a morphism $\iota\colon R\to B/J$.
Pulling back the extension of $A$-algebras
$$
0\to J/J^2\to B/J^2\to B/J\to 0
$$
along $\iota\colon R\to B/J$ defines an extension of $R$ by $J/J^2\cong B/J\{1\}$ and as such, is thus classified by a morphism
$$
\alpha_R^\prime\colon L_{R/A}^{\wedge_p}\to B/J\{1\}[1].
$$
Passing to the (homotopy) limit over all $(B,J)\in (R/A)_\prism$ then defines (after shifting and twisting) the morphism
$$
\alpha_R\colon L_{R/A}\{-1\}[-1]^{\wedge_p}\to \tau^{\leq 1}\overline{\prism}_{R/A}. 
$$
Concretely, if $R=\blue{\bar{A}}\langle x\rangle$, then
$$
L_{R/A}^{\wedge_p}\cong R\otimes_{\blue{\bar{A}}} I/I^2[1]\oplus Rdx.
$$
On the summand $R\otimes_{\blue{\bar{A}}}I/I^2[1]$, the morphism $\alpha_R^\prime$ is simply the base extension of $I/I^2\to J/J^2$ as follows by considering the case $\blue{\bar{A}}=R$. On the summand $Rdx$ the morphism $\alpha_R^\prime$ is (canonically) represented by the $J/J^2$-torsor of preimages of $\iota(x)$ in $B/J^2$ and factors as $R\xrightarrow{\iota}B/J\to B/J\{1\}[1]$ with the second morphism the connecting morphism for $0\to B/J\{1\}\to B/J^2\to B/J\to 0$.
Thus, after passing to the limit, we get a diagram
$$
\xymatrix{
  R\ar[d]\ar[rd] \\
  \overline{\prism}_{R/A}\ar[r]& \overline{\prism}_{R/A}\{1\}[1]
}
$$
and on $H^0$ the horizontal morphism induces the Bockstein differential
$$
\beta\colon H^0(\overline{\prism}_{R/A})\to H^0(\overline{\prism}_{R/A}\{1\}[1])=H^1(\overline{\prism}_{R/A})\{1\}. 
$$
Thus the image of $dx\in H^0(L_{R/A}^{\wedge_p})$ under $\alpha_R$ is $\beta(\iota(x))$. Therefore we see that on $H^0$ the morphism $\alpha_R$ induces the identity under the identifications
$$
(\Omega^{1}_{R/\blue{\bar{A}}})^{\wedge_p}\cong H^0(L_{R/A}^{\wedge_p})
$$
and
$$
(\Omega^1_{R/\blue{\bar{A}}})^{\wedge_p}\cong H^1(\overline{\prism}_{R/A})\{1\}
$$
(the second is the Hodge-Tate comparison).
Moreover, the morphism $$R\otimes_{\blue{\bar{A}}}I/I^2\cong H^{-1}(L_{R/A}^{\wedge_p})\xrightarrow{H^{-1}(\alpha_R)} H^{-1}(\overline{\prism}_{R/A}\{1\}[1])$$ is the canonical one obtained by tensoring $R\to H^0(\overline{\prism}_{R/A})$ with $I/I^2$.
By functoriality (and as $\Omega^1_{R/A}$ is generated by $dr$ for $r\in R$), we can conclude that for every $p$-completely smooth algebra $R$ over $A$
$$
\alpha_R\colon H^i(L_{R/A}^{\wedge_p})\to H^i(\overline{\prism}_{R/A}\{1\}[1])
$$
induces the canonical morphism, and thus, that $\alpha_R$ is an isomorphism in general.
\end{proof}

Recall the following proposition, which is a general consequence of the theory of the cotangent complex.

\begin{proposition}
  \label{sec:prism-cohom-degr-general-lifting-via-cotangent-complex}
  Let $S$ be a ring, $I\subseteq S$ an invertible ideal and $X$ a flat $\overline{S}:=S/I$-scheme. Then the class $\gamma\in \mathrm{Ext}^2_{\mathcal{O}_X}(L_{X/\Spec(\overline{S})},I/I^2\otimes_{\overline{S}}\mathcal{O}_X)$ defined by $L_{X/\Spec(S)}$ is $\pm$ the obstruction class for lifting $X$ to a flat $S/I^2$-scheme. 
\end{proposition}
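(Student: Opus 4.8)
The plan is to recognise $\gamma$ as the connecting map of a transitivity triangle and to match it with Illusie's obstruction class for flat deformations along a square-zero extension. First I would record the shape of $L_{\overline{S}/S}$: since $I$ is invertible, Zariski-locally on $\Spec(S)$ we have $I=(t)$ with $t$ a nonzerodivisor, so $0\to S\xrightarrow{t} S\to \overline{S}\to 0$ is a free resolution and one obtains a canonical isomorphism $L_{\overline{S}/S}\simeq (I/I^2)[1]$ with $I/I^2$ an invertible $\overline{S}$-module; these local identifications are canonical and glue. Consequently, for the structure morphism $f\colon X\to \Spec(\overline{S})$ one has $Lf^{*}L_{\overline{S}/S}\simeq (I/I^2\otimes_{\overline{S}}\mathcal{O}_X)[1]$, with no higher Tor since $I/I^2$ is flat. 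Now the transitivity triangle for $X\to \Spec(\overline{S})\to \Spec(S)$,
\[ Lf^{*}L_{\overline{S}/S}\longrightarrow L_{X/\Spec(S)}\longrightarrow L_{X/\Spec(\overline{S})}\xrightarrow{\ \partial\ } Lf^{*}L_{\overline{S}/S}[1], \]
together with the previous identification, exhibits $\partial$ as an element of $\mathrm{Ext}^{2}_{\mathcal{O}_X}(L_{X/\Spec(\overline{S})},I/I^2\otimes_{\overline{S}}\mathcal{O}_X)$, and this is precisely the class $\gamma$ determined by $L_{X/\Spec(S)}$.

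Next I would invoke the standard obstruction formalism for the cotangent complex (due to Illusie; see e.g.\ \cite{stacks_project}). View the quotient $S/I^2\twoheadrightarrow \overline{S}$ as a square-zero extension of $\overline{S}$ by $I/I^2$ in the category of $S$-algebras; its isomorphism class lies in $\mathrm{Exal}_S(\overline{S},I/I^2)\cong \mathrm{Ext}^{1}_{\overline{S}}(L_{\overline{S}/S},I/I^2)$, and under $L_{\overline{S}/S}\simeq (I/I^2)[1]$ it corresponds to $\pm\,\mathrm{id}_{I/I^2}$, the sign depending only on the conventions chosen for distinguished triangles. The obstruction theorem then says that the obstruction to the existence of a flat $S/I^2$-scheme $X'$ equipped with an identification $X'\times_{S/I^2}\overline{S}\cong X$ lives in $\mathrm{Ext}^{2}_{\mathcal{O}_X}(L_{X/\Spec(\overline{S})},I/I^2\otimes_{\overline{S}}\mathcal{O}_X)$ and is the image of this extension class under the map $\mathrm{Ext}^{1}_{\overline{S}}(L_{\overline{S}/S},I/I^2)\to \mathrm{Ext}^{2}_{\mathcal{O}_X}(L_{X/\Spec(\overline{S})},I/I^2\otimes_{\overline{S}}\mathcal{O}_X)$ given by pullback to $X$ followed by composition with $\partial$. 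Since the extension class is $\pm\,\mathrm{id}$, this image equals $\pm\,\partial=\pm\,\gamma$, which is the assertion.

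The main obstacle is bookkeeping rather than substance: one must pin down the compatibility between the sign conventions in the transitivity triangle and those implicit in Illusie's obstruction class (which is why the statement claims equality only up to sign), and one must be careful that lifting $X$ to a flat $S/I^2$-scheme is the deformation problem governed by $L_{X/\Spec(\overline{S})}$ rather than by $L_{X/\Spec(S)}$. The latter point holds precisely because $\overline{S}$ and $S/I^2$ are considered together with their fixed $S$-algebra structures, so no lifting of the base ring is at issue and only the \emph{vertical} cotangent complex $L_{X/\Spec(\overline{S})}$ enters. Once these conventions are fixed, the proposition is a direct unwinding of the general deformation theory of the cotangent complex.
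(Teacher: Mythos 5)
Your proposal is correct and is essentially the paper's argument: the paper simply cites Illusie (III.2.1.2.3 and III.2.1.3.3 of \emph{Complexe cotangent et d\'eformations I}), and your proof is exactly the unwinding of those two results — identifying $L_{\overline{S}/S}\simeq (I/I^2)[1]$, reading the class $\gamma$ off the transitivity triangle, and matching it (up to sign) with Illusie's obstruction class for flat deformation along the square-zero extension $S/I^2\twoheadrightarrow \overline{S}$, whose class in $\mathrm{Ext}^1_{\overline{S}}(L_{\overline{S}/S},I/I^2)$ is $\pm\,\mathrm{id}_{I/I^2}$. No gaps; your remarks on flatness of $I/I^2$ (hence no higher Tor in $Lf^{*}L_{\overline{S}/S}$) and on the deformation problem being governed by $L_{X/\Spec(\overline{S})}$ are exactly the points that make the citation apply.
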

\begin{proof}
See \cite[III.2.1.2.3]{illusie_complexe_cotangent_et_deformations_I} resp.\ \cite[III.2.1.3.3]{illusie_complexe_cotangent_et_deformations_I}.
\end{proof}

As before, let $(A,I)$ be a bounded prism. 
\begin{corollary}
\label{sec:obstruction-split-same}
Let $X$ be a $p$-completely flat $p$-adic formal scheme over $A/I$. The complex $\mathrm{Fil}_{1}^{\mathrm{conj}}\overline{\prism}_{X/A}$ splits in $D(X)$ (i.e., is isomorphic in $D(X)$ to a complex with zero differentials) if and only if $X$ admits a lifting to a $p$-completely flat formal scheme over $A/I^2$.
\end{corollary}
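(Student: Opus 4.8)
The plan is to combine the identification of \Cref{sec:isom-trunc-prism-cohom-cotangent-complex} with the deformation-theoretic description of the cotangent complex recalled in \Cref{sec:prism-cohom-degr-general-lifting-via-cotangent-complex}. First I would use \Cref{sec:isom-trunc-prism-cohom-cotangent-complex} to rewrite
\[
\mathrm{Fil}_1^{\mathrm{conj}}\overline{\prism}_{X/A}\ \cong\ L_{X/\mathrm{Spf}(A)}\{-1\}[-1]^{\wedge_p},
\]
and, more importantly, I would record that under the isomorphism $\alpha_X$ the two-step conjugate filtration on the left-hand side (with $\mathrm{gr}_0^{\mathrm{conj}}=\mathcal{O}_X$ and $\mathrm{gr}_1^{\mathrm{conj}}=L_{X/\mathrm{Spf}(A/I)}\{-1\}[-1]^{\wedge_p}$) corresponds to the $p$-completed transitivity triangle for $A\to A/I\to \mathcal{O}_X$, namely
\[
\mathcal{O}_X\longrightarrow L_{X/\mathrm{Spf}(A)}\{-1\}[-1]^{\wedge_p}\longrightarrow L_{X/\mathrm{Spf}(A/I)}\{-1\}[-1]^{\wedge_p}\xrightarrow{\ \delta\ }\mathcal{O}_X[1].
\]
Here the left-hand term really is $\mathcal{O}_X$: since $I$ is an invertible ideal, $L_{(A/I)/A}^{\wedge_p}\simeq (I/I^2)[1]$, and the Breuil--Kisin twist by $\{-1\}$ cancels $I/I^2$. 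This compatibility of triangles is already built into the proof of \Cref{sec:isom-trunc-prism-cohom-cotangent-complex}, where the transitivity triangle for $A\to A/I\to R$ is precisely what is used to identify the outer terms with $\mathrm{gr}_0^{\mathrm{conj}}$ and $\mathrm{gr}_1^{\mathrm{conj}}$.

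Consequently $\mathrm{Fil}_1^{\mathrm{conj}}\overline{\prism}_{X/A}$ splits if and only if this triangle splits, i.e.\ if and only if the connecting map
\[
\delta\ \in\ \mathrm{Ext}^2_{\mathcal{O}_X}\!\left(L_{X/\mathrm{Spf}(A/I)}^{\wedge_p},\, I/I^2\otimes_{A/I}\mathcal{O}_X\right)
\]
vanishes. This $\delta$ is exactly the class ``defined by $L_{X/\mathrm{Spf}(A)}$'' that occurs in \Cref{sec:prism-cohom-degr-general-lifting-via-cotangent-complex}. Since $A/I^2\to A/I$ is a square-zero extension of $p$-adically complete rings whose kernel $I/I^2$ is an invertible $A/I$-module, I would then appeal to the deformation-theoretic statement of \Cref{sec:prism-cohom-degr-general-lifting-via-cotangent-complex}: $\delta$ is, up to sign, the obstruction to lifting $X$ to a $p$-completely flat $p$-adic formal scheme over $A/I^2$, and whenever it vanishes the set of such lifts is a (non-empty) torsor under $\mathrm{Ext}^1_{\mathcal{O}_X}(L_{X/\mathrm{Spf}(A/I)}^{\wedge_p}, I/I^2\otimes_{A/I}\mathcal{O}_X)$. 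Chaining these equivalences proves the corollary in both directions.

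The main point that needs care is the passage from the scheme-theoretic \Cref{sec:prism-cohom-degr-general-lifting-via-cotangent-complex} to the $p$-adic formal, $p$-completely flat setting: the rings $A/I$ and $A/I^2$ are in general non-Noetherian and only $p$-adically complete, and $X$ is only $p$-completely flat, so ``flat lift'' has to be replaced throughout by ``$p$-completely flat lift'' and $L_{X/\mathrm{Spec}(A/I)}$ by its derived $p$-completion $L_{X/\mathrm{Spf}(A/I)}^{\wedge_p}$. I expect this to be the only genuine obstacle, and I would settle it either by checking that Illusie's arguments in \cite{illusie_complexe_cotangent_et_deformations_I}, III.2.1, go through once everything is read $p$-completely --- the relevant $\mathrm{Ext}$-groups are automatically derived $p$-complete, as $I/I^2\otimes_{A/I}\mathcal{O}_X$ already is --- or by the standard left Kan extension / derived deformation theory formalism. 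Two routine verifications to fold in along the way: that $A/I^2$ is itself $p$-adically complete (which follows from $A$ being classically $(p,I)$-adically complete, $(A,I)$ being a bounded prism, cf.\ \Cref{remarks-after-definition-prism}), and that for the two-term object $\mathrm{Fil}_1^{\mathrm{conj}}\overline{\prism}_{X/A}$ the notion of ``splitting'' is unambiguous here, i.e.\ coincides with the vanishing of $\delta$.
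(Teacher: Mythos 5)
Your argument is essentially the paper's: splitting of the two-step filtered object is the vanishing of the class in $\mathrm{Ext}^2_{\mathcal{O}_X}(L_{X/\Spf(A/I)}^{\wedge_p}\{-1\},\mathcal{O}_X)$, which by \Cref{sec:isom-trunc-prism-cohom-cotangent-complex} is the class of $L_{X/\mathrm{Spf}(A)}^{\wedge_p}\{-1\}$, and then Illusie's obstruction theory (\Cref{sec:prism-cohom-degr-general-lifting-via-cotangent-complex}) finishes. The one step you flag but leave open — passing from the scheme-theoretic obstruction statement to the $p$-completely flat formal setting — is handled in the paper not by redoing Illusie $p$-completely, but by observing that a lift over $A/I^2$ is the same as compatible flat lifts of $X\otimes_{A/I}A/(I,p^n)$ over $A/(I^2,p^n)$ for all $n$ (using boundedness of the prism so that $A/I$ is classically $p$-complete), together with the fact that $p$-completing the cotangent complex does not change its derived reduction modulo $p^n$.
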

\begin{proof}
Indeed, $\mathrm{Fil}_{1}^{\mathrm{conj}}\overline{\prism}_{X/A}$ splits if and only if the class in 
\[ \mathrm{Ext}^1_{\mathcal{O}_X}(\mathrm{gr}_{1}^{\mathrm{conj}}\overline{\prism}_{X/A},\mathrm{gr}_{0}^{\mathrm{conj}}\overline{\prism}_{X/A}) = \mathrm{Ext}^2_{\mathcal{O}_X}(L_{X/\Spf(A/I)}^{\wedge_p}\{-1\},\mathcal{O}_X) \]
defined by $\mathrm{Fil}_1^{\rm conj}(\overline{\prism}_{X/A})$ vanishes. \Cref{sec:isom-trunc-prism-cohom-cotangent-complex} shows that this class is the same as the class defined by the $p$-completed cotangent complex $L_{X/\mathrm{Spf}(A)}^{\wedge_p}\{-1\}$. Lifting $X$ to a $p$-completely flat formal scheme over $A/I^2$ is the same as lifting $X\otimes_{A/I}A/(I,p^n)$ to a flat scheme over $A/(I^2,p^n)$ for all $n\geq 1$ (here we use that $(A,I)$ is bounded in order to know that $A/I$ is classically $p$-complete). One concludes by applying \Cref{sec:prism-cohom-degr-general-lifting-via-cotangent-complex}, together with the fact that the $p$-completion of the cotangent complex does not affect the (derived) reduction modulo $p^n$.
\end{proof}


\subsection{Quasi-syntomic rings}
\label{sec:quasi-syn-rings}

We shortly recall some key definitions from \cite[Chapter 4]{bhatt_morrow_scholze_topological_hochschild_homology}.

\begin{definition}
\label{sec:quasi-syn-rings-definition-quasi-syn-rings}

A ring $R$ is \textit{quasi-syntomic} if $R$ is $p$-complete with bounded
$p^{\infty}$-torsion and if the cotangent complex $L_{R/\zp}$ has $p$-complete
Tor-amplitude in $[-1,0]$\footnote{This means that the complex $M=L_{R/\zp}
\otimes_R^{\mathbb{L}} R/p \in D(R/p)$ is such that $M \otimes_R^{\mathbb{L}} N \in
D^{[-1,0]}(R/p)$ for any $R/p$-module $N$.}. The category of all quasi-syntomic
rings is denoted by $\mathrm{QSyn}$. 

Similarly, a map $R \to R'$ of $p$-complete rings with bounded $p^{\infty}$-torsion
is a \textit{quasi-syntomic morphism} (resp. a \textit{quasi-syntomic cover}) if
$R'$ is $p$-completely flat (resp. $p$-completely faithfully flat) over $R$ and
$L_{R'/R} \in D(R')$ has $p$-complete Tor-amplitude in $[-1,0]$. 
\end{definition}

  For a quasi-syntomic ring $R$ the $p$-completed cotangent complex $(L_{R/\Z_p})^{\wedge}_p$ will thus be in $D^{[-1,0]}$ (cf.\ \cite[Lemma 4.6]{bhatt_morrow_scholze_topological_hochschild_homology}).

\begin{remark}
  \label{sec:quasi-syntomic-rings-remark-quasi-syntomic-generalizes-lci}
This definition extends (in the $p$-complete world) the usual notion of locally complete intersection
ring and syntomic morphism (flat and local complete intersection) to the
non-Noetherian, non finite-type setting, as shown by the next example. 
\end{remark}

\begin{example}
  \label{sec:quasi-syntomic-rings-example-quasi-syntomic-rings}
\begin{enumerate}
\item Any $p$-complete l.c.i. Noetherian ring is in $\mathrm{QSyn}$ (cf.\ \cite[Theorem 1.2]{avramov_locally_complete_intersection_homomorphisms_and_a_conjecture_of_quillen_on_the_vanishing_of_cotangent_homology}). 

\item There are also big rings in $\mathrm{QSyn}$. For example, any (integral)
perfectoid ring (i.e., a ring $R$ which is $p$-complete, such that $\pi^p=pu$
for some $\pi\in R$ and $u \in R^{\times}$, Frobenius is surjective on $R/p$ and
$\ker(\theta)$ is principal.) is in $\mathrm{QSyn}$ (cf.\ \cite[Proposition 4.18]{bhatt_morrow_scholze_topological_hochschild_homology}). We give a short explanation : if $R$ is such a ring,
the transitivity triangle for
\[  \zp \to A_{\rm inf}(R) \to R \]
and the fact that $A_{\rm inf}(R)$ is relatively perfect over $\zp$ modulo $p$
imply that after applying $- \otimes_R^{\mathbb{L}} R/p$, $L_{R/\zp}$ and $L_{R/A_{\rm inf}(R)}$ identify. But
\[ L_{R/A_{\rm inf}(R)} = \ker(\theta)/\ker(\theta)^2 [1]= R[1], \]
as $\ker(\theta)$ is generated by a non-zero divisor\footnote{One also proves that
$R[p^{\infty}]=R[p]$, which shows that $R$ has bounded $p^{\infty}$-torsion.}.

\item As a consequence of (ii), the $p$-completion of a smooth algebra over a
perfectoid ring is also quasi-syntomic, as well as any $p$-complete bounded $p^{\infty}$-torsion ring which can be presented as the quotient of an integral
perfectoid ring by a finite regular sequence. 
\end{enumerate}
\end{example} 

The (opposite of the) category $\mathrm{QSyn}$ is endowed with the structure of a site.

\begin{definition}
\label{sec:quasi-syn-rings-definition-quasi-syn-site}
Let $\mathrm{QSyn}_{\rm qsyn}^{\mathrm{op}}$ be the site whose underlying category is the opposite category of the category $\mathrm{QSyn}$ and endowed with the Grothendieck topology generated by quasi-syntomic covers.

If $R \in \mathrm{QSyn}$ we will denote by $(R)_{\mathrm{QSYN}}$ (resp.\ $(R)_{\rm qsyn}$) the big (resp. the small) quasi-syntomic site of $R$, given by all $p$-complete with bounded $p^{\infty}$-torsion \red{$R$-algebras (resp. by all quasi-syntomic $R$-algebras, i.e. all $p$-complete with bounded $p^{\infty}$-torsion $R$-algebras $S$ such that the structure map $R \to S$ is quasi-syntomic)} endowed with the quasi-syntomic topology. 
\end{definition}

The authors of \cite{bhatt_morrow_scholze_topological_hochschild_homology} isolated an interesting class of quasi-syntomic rings.

\begin{definition}
\label{sec:quasi-syn-rings-definition-quasi-regular semiperfectoid}

A ring $R$ is \textit{quasi-regular semiperfectoid} if $R \in \mathrm{QSyn}$ and
there exists a perfectoid ring $S$ mapping surjectively to $R$.
\end{definition}

\begin{example}
  \label{sec:quasi-syntomic-rings-example-quasi-regular-semiperfectoids}
Any perfectoid ring, or any $p$-complete bounded $p^{\infty}$-torsion quotient of a perfectoid ring by a finite regular
sequence, is quasi-regular semiperfectoid.
\end{example}

The interest of quasi-regular semiperfectoid rings comes from the fact that they form a basis of the site $\mathrm{QSyn}_{\rm qsyn}^{\rm op}$. 

\begin{proposition}
\label{sec:quasi-syn-rings-proposition-quasi-regular semiperfectoid-basis}

Let $R$ be quasi-syntomic ring. There exists a quasi-syntomic cover $R \to R'$, with $R'$ quasi-regular semiperfectoid. Moreover, all terms of the \u{C}ech nerve $R^{'\bullet}$ are quasi-regular semiperfectoid. 
\end{proposition}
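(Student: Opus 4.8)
The plan is to realize $R'$ as a ``perfectoid hull'' of $R$, in the spirit of \cite[\S 4]{bhatt_morrow_scholze_topological_hochschild_homology}. First I would fix a polynomial presentation of $R$: choose a set $S$ (for instance $S=R$, with $X_r\mapsto r$) and a surjection $\mathbb{Z}_p[X_s:s\in S]\twoheadrightarrow R$; since $R$ is $p$-complete this $p$-completes to a surjection $\widehat{P}:=\mathbb{Z}_p\langle X_s:s\in S\rangle\twoheadrightarrow R$. Set $S_\infty:=\mathbb{Z}_p\langle X_s^{1/p^\infty}:s\in S\rangle$, the $p$-completed perfection, which is an integral perfectoid ring. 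I claim $\widehat{P}\to S_\infty$ is a quasi-syntomic cover: it is $p$-completely faithfully flat, being a filtered colimit of faithfully flat maps $\mathbb{Z}_p[X_s:s]\to\mathbb{Z}_p[X_s^{1/p^n}:s]$, and each of these is syntomic (a tower of hypersurface extensions $Y^p-c$), so its $p$-completed cotangent complex is perfect in degrees $[-1,0]$; passing to the colimit, $L_{S_\infty/\widehat P}$ has $p$-complete Tor-amplitude in $[-1,0]$.

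Next I would define $R':=R\,\widehat{\otimes}^{\mathbb{L}}_{\widehat P}\,S_\infty$, the derived $p$-completed tensor product. Since $\widehat P\to S_\infty$ is $p$-completely flat this is discrete, and $R\to R'$ is the base change of the quasi-syntomic cover $\widehat P\to S_\infty$, hence itself a quasi-syntomic cover (quasi-syntomic morphisms are stable under $p$-completed base change, \cite{bhatt_morrow_scholze_topological_hochschild_homology}); in particular $R'$ has bounded $p^\infty$-torsion, and the transitivity triangle for $\mathbb{Z}_p\to R\to R'$ together with the two outer terms having $p$-complete Tor-amplitude in $[-1,0]$ shows $R'\in\mathrm{QSyn}$. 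Finally $S_\infty=\widehat P\,\widehat{\otimes}_{\widehat P}\,S_\infty\to R\,\widehat{\otimes}_{\widehat P}\,S_\infty=R'$ is surjective, being the base change of $\widehat P\twoheadrightarrow R$; hence $R'$ is quasi-regular semiperfectoid.

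For the Čech nerve $R'^{\bullet}$, the term in cosimplicial degree $n$ is the $(n{+}1)$-fold $p$-completed tensor power $R'^{\widehat{\otimes}_R(n+1)}$. As $R\to R'$ is a $p$-completely flat quasi-syntomic cover and quasi-syntomic covers are stable under base change and composition, the map $R\to R'^{\widehat{\otimes}_R(n+1)}$ is again a (flat, hence discrete) quasi-syntomic cover, so $R'^{\widehat{\otimes}_R(n+1)}\in\mathrm{QSyn}$ by the same transitivity-triangle argument. To see it is semiperfectoid, observe that $S_\infty^{\widehat{\otimes}_{\mathbb{Z}_p}(n+1)}=\mathbb{Z}_p\langle X_{s,j}^{1/p^\infty}:s\in S,\ 0\le j\le n\rangle$ is perfectoid (a $p$-completed perfection of a polynomial ring), and that the natural surjections $S_\infty^{\widehat{\otimes}_{\mathbb{Z}_p}(n+1)}\twoheadrightarrow S_\infty^{\widehat{\otimes}_{\widehat P}(n+1)}\twoheadrightarrow R'^{\widehat{\otimes}_R(n+1)}$ (the first because $\mathbb{Z}_p\to\widehat P$, the second from the compatible surjections $\widehat P\twoheadrightarrow R$ and $S_\infty\twoheadrightarrow R'$) exhibit $R'^{\widehat{\otimes}_R(n+1)}$ as a quotient of a perfectoid ring. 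Hence every term of $R'^{\bullet}$ is quasi-regular semiperfectoid.

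The main obstacle is the verification that $\widehat P\to S_\infty$ is a quasi-syntomic cover, i.e.\ controlling the $p$-completed cotangent complex of the $p$-adic perfection of a polynomial ring; the rest is bookkeeping: checking that the various derived $p$-completed tensor products are discrete, that bounded $p^\infty$-torsion propagates along $p$-completely faithfully flat maps, and that surjectivity is preserved under $p$-completed base change. Once these points are settled, stability of quasi-syntomic morphisms under base change and composition, combined with the cotangent-complex transitivity triangle, yields the statement.
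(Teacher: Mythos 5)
There is a genuine gap, and it is in the one step you yourself flagged as the crux: the ring $S_\infty=\Z_p\langle X_s^{1/p^\infty}:s\in S\rangle$ is \emph{not} perfectoid. A perfectoid ring must contain an element $\pi$ with $\pi^p=pu$ for a unit $u$ (see the paper's definition recalled in Example 3.3.3), and $\Z_p\langle X_s^{1/p^\infty}\rangle$ has none: such a $\pi$ would be divisible by $p$ (its reduction mod $p$ is nilpotent in the reduced ring $\F_p[X_s^{1/p^\infty}]$), forcing $u\in p^{p-1}\Z_p\langle X_s^{1/p^\infty}\rangle$, a contradiction. This is not merely a missing verification --- the construction itself fails. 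Take $R=\Z_p$ with the presentation $\Z_p\langle X\rangle\to\Z_p$, $X\mapsto 0$; then your $R'$ is the $p$-completion of $\Z_p[X^{1/p^\infty}]/(X)$. Every $X^a$ with $a>0$ is nilpotent there, so $R'$ surjects onto $\Z_p$ killing them, and any ring map $S\to R'$ from a perfectoid $S$ would produce, after composing with $R'\to\Z_p$, an element $z\in\Z_p$ with $z^p=p\cdot(\text{unit})$, which is impossible by valuation reasons. So this $R'$ admits no map from a perfectoid ring at all and is not quasi-regular semiperfectoid; the cover has to be built differently.

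The repair is exactly what Bhatt--Morrow--Scholze do in the lemmas the paper cites (\cite[Lemmas 4.27, 4.29]{bhatt_morrow_scholze_topological_hochschild_homology}): adjoin compatible $p$-power roots of $p$ (or of unity) as well, i.e.\ replace the base by the perfectoid ring $\Z_p[p^{1/p^\infty}]^{\wedge_p}$ (or $\Z_p^{\mathrm{cycl}}$) and set $S_\infty:=\Z_p[p^{1/p^\infty}]^{\wedge_p}\langle X_s^{1/p^\infty}\rangle$, which \emph{is} perfectoid. The map $\widehat P\to S_\infty$ is still a $p$-completely faithfully flat quasi-syntomic cover (it is a $p$-completed filtered colimit of finite flat complete-intersection extensions, so your cotangent-complex argument applies verbatim), and then your bookkeeping for $R'=R\widehat\otimes_{\widehat P}S_\infty$ --- discreteness, bounded torsion, QSyn via the transitivity triangle, surjectivity of $S_\infty\to R'$ --- goes through. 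For the \v{C}ech nerve, be aware that even after the fix your assertion that $S_\infty^{\widehat\otimes_{\Z_p}(n+1)}$ is perfectoid is not free (completed tensor products of perfectoids over the non-perfectoid base $\Z_p$ require an argument); the cleaner route, and the one in \cite[Lemma 4.29]{bhatt_morrow_scholze_topological_hochschild_homology}, is to note that each term of the nerve is quasi-syntomic over $R$ (base change and composition), is semiperfect modulo $p$ (a tensor product of semiperfect $\F_p$-algebras is semiperfect), and receives a map from the perfectoid $S_\infty$ through any one factor, which by $p$-completeness and semiperfectness mod $p$ can be upgraded to a surjection from a perfectoid ring, giving quasi-regular semiperfectoidness in the paper's sense.
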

\begin{proof}
See \cite[Lemma 4.27]{bhatt_morrow_scholze_topological_hochschild_homology} and \cite[Lemma 4.29]{bhatt_morrow_scholze_topological_hochschild_homology}.
\end{proof}

Finally, recall the following result, which is \cite[Prop 7.11]{bhatt_scholze_prisms_and_prismatic_cohomology}.

\begin{proposition}
 \label{sec:quasi-syntomic-rings-proposition-quasi-syntomic-cover-of-prism-admits-refinement}
Let $(A,I)$ be a bounded prism, and $R$ be a quasi-syntomic $A/I$-algebra. There exists a prism $(B,IB) \in (R/A)_{\prism}$ such that the map $R \to B/IB$ is $p$-completely faithfully flat. In particular, if $A/I \to R$ is a quasi-syntomic cover, then $(A,I) \to (B,IB)$ is a faithfully flat map of prisms. 
\end{proposition}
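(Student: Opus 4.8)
The plan is to reduce to the case where $R$ is quasi-regular semiperfectoid over $A/I$, and then take the required prism to be the derived prismatic cohomology $\prism_{R/A}$ itself, which one shows to be discrete. The reduction step goes as follows. It suffices to solve the problem after replacing $R$ by one quasi-syntomic cover $R\to S$: if $(B,IB)\in(S/A)_{\prism}$ is such that $S\to B/IB$ is $p$-completely faithfully flat, then, regarding $(B,IB)$ as an object of $(R/A)_{\prism}$ via $R\to S\to B/IB$, the composite $R\to B/IB$ is still $p$-completely faithfully flat, because a composite of $p$-completely faithfully flat maps of $p$-complete rings with bounded $p^\infty$-torsion is again one (base change the reduction modulo $p$). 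Carrying out the construction of \cite[Section 4]{bhatt_morrow_scholze_topological_hochschild_homology} relative to the base $A/I$ — adjoin a compatible system of $p$-power roots to a set of $A/I$-algebra generators of $R$, then $p$-completely faithfully flat descend — produces such a cover $R\to S$ with $S$ quasi-regular semiperfectoid over $A/I$; in particular $S$ is $p$-complete with bounded $p^\infty$-torsion and $L_{S/(A/I)}^{\wedge_p}\simeq N[1]$ for a flat $S$-module $N$. Thus I may assume $R$ has these properties.

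Next I would show $\prism_{R/A}$ is discrete. By the conjugate filtration of \Cref{sec:hodge-tate-comp-general}, $\overline{\prism}_{R/A}$ carries an exhaustive increasing filtration with
\[
\mathrm{gr}_i^{\mathrm{conj}}\overline{\prism}_{R/A}\;\simeq\;\bigl(\wedge^i(N[1])\bigr)[-i]^{\wedge_p}\;\simeq\;\Gamma^i_R(N)^{\wedge_p},
\]
which is a $p$-completely flat $R$-module concentrated in cohomological degree $0$; induction on $i$ together with exhaustiveness then shows that $\overline{\prism}_{R/A}$ is concentrated in degree $0$ and is $p$-completely faithfully flat over $R$ (its submodule $\mathrm{gr}_0^{\mathrm{conj}}=R$ has $p$-completely flat successive quotients). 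Since $\prism_{R/A}$ is derived $(p,I)$-complete and $\prism_{R/A}\otimes_A^{\mathbb{L}}A/I\simeq\overline{\prism}_{R/A}$ is discrete, I would work locally where $I=(d)$ and read off from the exact sequences $0\to H^i(\prism_{R/A})/d\to H^i(\overline{\prism}_{R/A})\to H^{i+1}(\prism_{R/A})[d]\to 0$ that $H^i(\prism_{R/A})/d=0$ for $i\neq 0$; as each $H^i(\prism_{R/A})$ is derived $(p,I)$-complete, derived Nakayama forces $H^i(\prism_{R/A})=0$ for $i\neq 0$ and also shows that $d$ is a nonzerodivisor on $\prism_R:=H^0(\prism_{R/A})$.

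It then remains to check that $(\prism_R,I\prism_R)$ does the job. It is a derived $(p,I)$-complete $\delta$-$A$-algebra on which $I$ acts through an invertible ideal with nonzerodivisor local generator, and $p\in I\prism_R+\varphi(I\prism_R)\prism_R$ is inherited from $A$, while $\prism_R/I\prism_R=\overline{\prism}_{R/A}$ is $p$-completely flat over $R$ and hence has bounded $p^\infty$-torsion; so $(\prism_R,I\prism_R)$ is a bounded prism over $(A,I)$, it lies in $(R/A)_{\prism}$ via $R\to\overline{\prism}_{R/A}$, and that structure map is $p$-completely faithfully flat, which is the first claim. For the second claim, if moreover $A/I\to R$ is a quasi-syntomic cover, then $A/I\to R\to\overline{\prism}_{R/A}$ is $p$-completely faithfully flat, so $A/(p,I)\to\prism_R/(p,I)$ is faithfully flat; combined with the $(p,I)$-complete flatness of $A\to\prism_R$ (which follows from $\prism_R\otimes_A^{\mathbb{L}}A/(p,I)\simeq\overline{\prism}_{R/A}/p$ being a flat $A/(p,I)$-module, using that $d$ is a nonzerodivisor on $\prism_R$), this is exactly the statement that $(A,I)\to(\prism_R,I\prism_R)$ is a faithfully flat map of prisms; in the general (unreduced) case the same argument applies with $S$ in place of $R$, since $A/I\to R\to S$ remains $p$-completely faithfully flat.

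The hard part is establishing the discreteness of $\prism_{R/A}$ and the faithful flatness of $R\to\overline{\prism}_{R/A}$ in the quasi-regular semiperfectoid case; both rest on the explicit conjugate filtration (\Cref{sec:hodge-tate-comp-general}) together with the fact that $L_{R/(A/I)}^{\wedge_p}$ is then a flat module concentrated in a single degree. The reduction step is comparatively formal, but does use the relative analogue of \Cref{sec:quasi-syn-rings-proposition-quasi-regular semiperfectoid-basis}.
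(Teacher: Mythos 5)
Your proposal is correct and follows essentially the same route as the paper's (and Bhatt--Scholze's) proof: pass to a quasi-syntomic cover $S$ of $R$ obtained by adjoining $p$-power roots of generators, so that $L_{S/(A/I)}^{\wedge_p}$ is a flat module in degree $-1$, and then use the Hodge--Tate/conjugate filtration to see that $\prism_{S/A}$ is a (bounded) prism with $S\to\overline{\prism}_{S/A}$ $p$-completely faithfully flat, taking $B=\prism_{S/A}$. You merely spell out the discreteness, bounded-prism and flatness verifications that the paper leaves implicit.
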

\begin{proof}
Since the proof is short, we recall it. Choose a surjection
$$A/I\langle x_j, j\in J \rangle \to R, $$for some index set $J$. Set
\[ S = A/I\langle x_j^{1/p^{\infty}} \rangle \hat{\otimes}_{A/I\langle x_j, j\in J \rangle}^{\mathbb{L}} R. \]
Then $R \to S$ is a quasi-syntomic cover and by assumption $A/I \to R$ is quasi-syntomic : hence, the map $A/I \to S$ is quasi-syntomic. Moreover the $p$-completion of $\Omega_{S/(A/I)}^1$ is zero. We deduce that the map $A/I \to S$ is such that $(L_{S/(A/I)})^{\wedge_p}$ has $p$-complete Tor-amplitude in degree $[-1,-1]$. Therefore, by the Hodge-Tate comparison, the derived prismatic cohomology $\prism_{S/A}$ is concentrated in degree $0$ and the map $S \to \overline{\prism_{S/A}}$ is $p$-completely faithfully flat. One can thus just take $B=\prism_{S/A}$. 
\end{proof}

As observed in \cite{bhatt_scholze_prisms_and_prismatic_cohomology}, a corollary of \Cref{sec:quasi-syntomic-rings-proposition-quasi-syntomic-cover-of-prism-admits-refinement} is Andr\'e's lemma.

\begin{theorem}[Andr\'e's lemma]
  \label{sec:generalities-prisms-andres-lemma}
  Let $R$ be perfectoid ring. Then there exists a $p$-completely faithfully flat map $R\to S$ of perfectoid rings such that $S$ is absolutely integrally closed, i.e., every monic polynomial with coefficients in $S$ has a solution. 
\end{theorem}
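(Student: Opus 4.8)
The plan is to reduce to adjoining a single root and then invoke \Cref{sec:quasi-syntomic-rings-proposition-quasi-syntomic-cover-of-prism-admits-refinement}. Concretely, I would first prove the following claim: for every perfectoid ring $R$ there is a $p$-completely faithfully flat map $R \to R'$ of perfectoid rings such that every monic polynomial with coefficients in $R$ has a root in $R'$. Granting this, one builds an ordinal-indexed tower $R = R_0 \to R_1 \to \cdots \to R_\alpha \to \cdots$, with $R_{\alpha+1}$ obtained from $R_\alpha$ by the claim and $R_\lambda := (\varinjlim_{\alpha < \lambda} R_\alpha)^{\wedge}_p$ at limit ordinals $\lambda$. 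Since ``perfectoid'' (most cleanly via \Cref{sec:generalities-prisms-theorem-perfect-prisms-equivalent-to-perfectoid-rings}, passing to $A_{\inf}$) and ``$p$-completely faithfully flat over $R$'' are both preserved under $p$-completed filtered colimits, every $R_\alpha$ is perfectoid and $p$-completely faithfully flat over $R$. Setting $S := R_{\omega_1}$: any monic $f \in S[T]$ has finitely many coefficients, each of which --- being a $p$-adic limit of elements of $\varinjlim_{\alpha<\omega_1} R_\alpha$ and using that $\omega_1$ has uncountable cofinality --- already lies in $R_\beta$ for some $\beta < \omega_1$; hence $f$ has a root in $R_{\beta+1} \subseteq S$, so $S$ is absolutely integrally closed.

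For the claim, let $R''$ be the $p$-completed tensor product over $R$ of the algebras $R[T_f]/(f)$, as $f$ ranges over all monic polynomials in $R[T]$. Each $R[T_f]/(f)$ is finite free over $R$, hence $p$-completely faithfully flat and quasi-syntomic over $R$ (its relative cotangent complex has $p$-complete Tor-amplitude in $[-1,0]$); writing $R''$ as the filtered colimit of the finite sub-tensor-products, it follows that $R \to R''$ is $p$-completely faithfully flat and that $L_{R''/R}$ has $p$-complete Tor-amplitude in $[-1,0]$. So $R \to R''$ is a quasi-syntomic cover, and $R'' \in \mathrm{QSyn}$ since $R$ is perfectoid. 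Now let $(A,I) = (A_{\inf}(R), \ker(\tilde{\theta}))$ be the perfect prism attached to $R$ by \Cref{sec:generalities-prisms-theorem-perfect-prisms-equivalent-to-perfectoid-rings}, so $A/I = R$, and apply \Cref{sec:quasi-syntomic-rings-proposition-quasi-syntomic-cover-of-prism-admits-refinement} to $(A,I)$ and the cover $R \to R''$: this produces a prism $(B,IB) \in (R''/A)_{\prism}$ with $R'' \to B/IB$ $p$-completely faithfully flat and $(A,I) \to (B,IB)$ a faithfully flat map of prisms.

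The prism $(B,IB)$ is merely bounded, so I would pass to its perfection $(B_{\mathrm{perf}}, IB_{\mathrm{perf}})$, the $(p,I)$-completed colimit of $B \xrightarrow{\varphi} B \xrightarrow{\varphi} \cdots$; recall $B \to B_{\mathrm{perf}}$ is $(p,I)$-completely faithfully flat. Then $(B_{\mathrm{perf}}, IB_{\mathrm{perf}})$ is a perfect prism, so by \Cref{sec:generalities-prisms-theorem-perfect-prisms-equivalent-to-perfectoid-rings} the ring $R' := B_{\mathrm{perf}}/IB_{\mathrm{perf}}$ is perfectoid; it is $p$-completely faithfully flat over $R$ as the composite $R = A/I \to B/IB \to R'$ of two $p$-completely faithfully flat maps. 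Finally, for each monic $f$ the image of $T_f \in R''$ in $B/IB \to R'$ is a root of $f$, which proves the claim.

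The one-polynomial step is essentially formal once \Cref{sec:quasi-syntomic-rings-proposition-quasi-syntomic-cover-of-prism-admits-refinement} and the faithful flatness of the perfection of a bounded prism are available; the genuinely delicate part is the bookkeeping in the transfinite iteration. One must check that $p$-completed filtered colimits of perfectoid rings along $p$-completely faithfully flat maps stay perfectoid and stay $p$-completely faithfully flat over $R$, and that the uncountable-cofinality argument really does trap every monic polynomial over $S$ at a bounded stage of the tower --- the point where $p$-completion interacts with the colimit and where one must be a little careful about injectivity of the transition maps. A related minor subtlety is that $R''$, being an infinite $p$-completed tensor product, is not literally a quotient of a polynomial ring, so the Tor-amplitude bound and flatness over $R$ are obtained by passing to the colimit of finite sub-tensor-products rather than directly.
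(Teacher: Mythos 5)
Your overall route is the same as the paper's: adjoin roots of all monic polynomials to get a quasi-syntomic cover $R''$ of $R$, refine it by a prism via \Cref{sec:quasi-syntomic-rings-proposition-quasi-syntomic-cover-of-prism-admits-refinement}, pass to the perfection, and iterate transfinitely (the paper leaves the transfinite bookkeeping implicit; your uncountable-cofinality argument is a reasonable way to carry it out, and injectivity of the transition maps is indeed not needed -- one only needs that a root produced at stage $\beta+1$ maps to a root in $S$, using that each $R_\alpha$ is classically $p$-complete and $S$ is $p$-adically separated). However, one step as written is wrong: the parenthetical ``recall $B\to B_{\mathrm{perf}}$ is $(p,I)$-completely faithfully flat'', which you use to make $B/IB\to R'=B_{\mathrm{perf}}/IB_{\mathrm{perf}}$ $p$-completely faithfully flat and then compose with $R\to B/IB$. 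The absolute perfection of a bounded prism is not $(p,I)$-completely flat in general: modulo $(p,I)$ the relevant terms are Frobenius twists of $B/(p,I)$ over itself, so this would essentially require the Frobenius of $B/(p,I)$ to be flat, which (by Kunz-type results) fails as soon as $B/(p,I)$ is not regular; the prisms $B=\prism_{S/A}$ produced by \Cref{sec:quasi-syntomic-rings-proposition-quasi-syntomic-cover-of-prism-admits-refinement} have no reason to satisfy this.

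The fix is exactly what the paper does, and your conclusion survives: you do not need flatness of $B\to B_{\mathrm{perf}}$, only faithful flatness of $(A,I)\to(B_{\mathrm{perf}},IB_{\mathrm{perf}})$. Since $R\to R''$ is a quasi-syntomic cover, \Cref{sec:quasi-syntomic-rings-proposition-quasi-syntomic-cover-of-prism-admits-refinement} gives that $(A,I)\to(B,IB)$ is a faithfully flat map of prisms, and because $(A,I)$ is already perfect, its perfection $(A,I)\to(B_{\mathrm{perf}},IB_{\mathrm{perf}})$ is again $(p,I)$-completely faithfully flat: this can be checked modulo $(p,I)$, where the \emph{relative} perfection is the filtered colimit of the $\varphi_A^n$-twists of the faithfully flat $A/(p,I)$-algebra $B/(p,I)$, each twist being faithfully flat since $\varphi_A$ is an automorphism. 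Hence $R=A/I\to R'$ is $p$-completely faithfully flat directly, and the roots $T_f$ still land in $R'$ via the ring map $R''\to B/IB\to R'$, which requires no flatness at all. With that one substitution, your argument coincides with the paper's proof, with more detail supplied at the transfinite step.
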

\begin{proof}
 This is \cite[Theorem 7.12]{bhatt_scholze_prisms_and_prismatic_cohomology}. Since the proof is also short, we recall it. Write $R=A/I$, for a perfect prism $(A,I)$ (\Cref{sec:generalities-prisms-theorem-perfect-prisms-equivalent-to-perfectoid-rings}). The $p$-complete $R$-algebra $\tilde{R}$ obtained by adding roots of all possible monic polynomials over $R$ is a quasi-syntomic cover, so by \Cref{sec:quasi-syntomic-rings-proposition-quasi-syntomic-cover-of-prism-admits-refinement}, we can find a prism $(B,J)$ over $(A,I)$ with a $p$-completely faithfully flat morphism $\tilde{R} \to R_1:=B/J$. Moreover, we can (and do) assume that $(B,J)$ is a perfect prism. \red{Indeed, as $(A,I)$ is perfect, the underlying $A$-algebra of the perfection\footnote{The \textit{perfection} of a prism is the $(p,I)$-derived completion (or classical) of its colimit along $\varphi$. See \cite{bhatt_scholze_prisms_and_prismatic_cohomology}.} of $(B,J)$ is the $(p,I)$-completion of a filtered colimit of $(p,I)$-completely faithfully flat $A$-algebras, hence is $(p,I)$-completely faithfully flat.} Transfinitely iterating the construction $R\mapsto R_1$ produces the desired ring $S$. 
\end{proof}

 Let us recall that a functor $u\colon \mathcal{C}\to \mathcal{D}$ between sites is cocontinuous (cf.\ \cite[Tag 00XI]{stacks_project}) if \blue{for every object $C\in \mathcal{C}$ and any covering $\{V_j\to u(C)\}_{j\in J}$ of $u(C)$ in $\mathcal{D}$ there exists a covering $\{ C_i\to C\}_{i\in I}$ of $C$ in $\mathcal{C}$ such that the family $\{ u(C_i)\to u(C)\}_{i\in I}$ refines the covering $\{V_j\to u(C)\}_{j\in J}$}. For a cocontinuous functor
  $u\colon \mathcal{C}\to \mathcal{D}$ the functor
  $$
  u^{-1} \colon \mathrm{Shv}(\mathcal{D})\to \mathrm{Shv}(\mathcal{C}),\ \mathcal{F}\to (\mathcal{F}\circ u)^\sharp
  $$
  (here $()^\sharp$ denotes sheafification)
  is left-exact (even exact) with right adjoint
  $$
  \mathcal{G}\in \mathrm{Shv}(\mathcal{C})\mapsto (D\mapsto \varprojlim\limits_{\{ u(C)\to D\}^\mathrm{op}} \mathcal{G}(C)).
  $$
  Thus, a cocontinuous functor $u\colon \mathcal{C}\to \mathcal{D}$ induces a morphism of topoi
  $$
  u\colon \mathrm{Shv}(\mathcal{C})\to \mathrm{Shv}(\mathcal{D}).
  $$
  Note that in the definition of a cocontinuous functor the morphisms $u(C_j)\to u(C)$ are not required to form a covering of $C$.

\begin{corollary}
  \label{sec:quasi-syntomic-rings-morphism-of-topoi-from-prismatic-topos}
Let $R$ be a $p$-complete ring. The functor $u\colon (R)_{\prism} \to (R)_{\mathrm{QSYN}}$, sending $(A,I)$
to $$R \to A/I$$ is cocontinuous. Consequently, it defines a morphism of
topoi, still denoted by $u$ :
        \[ u\colon  \mathrm{Shv}((R)_{\prism}) \to \mathrm{Shv}((R)_{\mathrm{QSYN}}).
 \]
 \end{corollary}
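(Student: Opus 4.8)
The plan is to check that $u$ satisfies the cocontinuity criterion recalled just above (cf.\ \cite[Tag 00XI]{stacks_project}), after which the morphism of topoi is automatic. So I would fix an object $(A,I)\in(R)_{\prism}$ and write $u(A,I)$ for the corresponding object of $(R)_{\mathrm{QSYN}}$, namely the $R$-algebra $A/I$. Given a covering of $A/I$ in $(R)_{\mathrm{QSYN}}$, the covering sieve it generates contains, since the topology of $(R)_{\mathrm{QSYN}}$ is generated by quasi-syntomic covers, a single quasi-syntomic cover $A/I\to S$, which moreover factors as $A/I\to V\to S$ for some member $A/I\to V$ of the given covering. (Note that neither $S$ nor $A/I$ need be a quasi-syntomic ring, only the maps quasi-syntomic morphisms; this is exactly why one works with the \emph{big} site $(R)_{\mathrm{QSYN}}$.) Thus it will be enough to produce a covering $(A,I)\to(B,IB)$ of $(A,I)$ in $(R)_{\prism}$ together with an $A/I$-algebra map $S\to B/IB$: the composite $V\to S\to B/IB$ then provides the morphism $u(B,IB)\to V$ over $u(A,I)$ required by the criterion.

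For this I would invoke \Cref{sec:quasi-syntomic-rings-proposition-quasi-syntomic-cover-of-prism-admits-refinement}, applied to the bounded prism $(A,I)$ and the quasi-syntomic $A/I$-algebra $S$. It produces a bounded prism $(B,IB)$ over $(A,I)$ with an $A/I$-algebra map $S\to B/IB$ that is $p$-completely faithfully flat, and --- because $A/I\to S$ is a quasi-syntomic cover --- with $(A,I)\to(B,IB)$ a faithfully flat map of prisms, hence a covering in $(R)_{\prism}$. Regarding $(B,IB)$ as an object of $(R)_{\prism}$ via $R\to A/I\to B/IB$, and using that $S\to B/IB$ is $A/I$-linear (so that the two resulting maps $R\to B/IB$ agree and everything lies over $u(A,I)$), this completes the verification of cocontinuity.

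Granting \Cref{sec:quasi-syntomic-rings-proposition-quasi-syntomic-cover-of-prism-admits-refinement}, the argument is essentially formal; the only points needing attention are bookkeeping ones --- the reversal of arrows in $(R)_{\mathrm{QSYN}}$, the reduction of an arbitrary covering of $A/I$ to a single quasi-syntomic cover using that the topology is generated by such covers, and the compatibility of the produced data over $u(A,I)$. In that sense the real work is concentrated in \Cref{sec:quasi-syntomic-rings-proposition-quasi-syntomic-cover-of-prism-admits-refinement}, whose proof in turn comes down to a Hodge-Tate comparison computation, degenerate because $\Omega^1_{S/(A/I)}$ becomes zero after $p$-completion once all $p$-power roots have been adjoined.
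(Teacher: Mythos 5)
Your argument is correct and is essentially the paper's own proof: the paper disposes of the corollary in one line as "immediate from the definition and the previous proposition," and your write-up just spells out that verification of the cocontinuity criterion, with the key input being exactly \Cref{sec:quasi-syntomic-rings-proposition-quasi-syntomic-cover-of-prism-admits-refinement} (including the bookkeeping about reducing a covering to a single quasi-syntomic cover and the arrow reversal). Nothing further is needed.
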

\begin{proof}
Immediate from the definition (cf. \cite[Tag 00XJ]{stacks_project}) and the previous proposition.
\end{proof}

This yields the following important corollary. 

\begin{corollary}
        \label{sec:prismatic-to-syntomic-corollary-exactness-if-kernel-is-affine-p-completely-syntomic}
        Let $R$ be a $p$-complete ring. Let $$0\to G_1\to G_2\to G_3\to 0$$ be a short
exact sequence of abelian sheaves on $(R)_{\mathrm{QSYN}}$. Then
the sequence
        $$
        0\to u^{-1}(G_1)\to u^{-1}(G_2)\to u^{-1}(G_3)\to 0
        $$
        is an exact sequence on $(R)_{\prism}$. This applies for example when $G_1, G_2,
G_3$ are finite locally free group schemes over $R$.
\end{corollary}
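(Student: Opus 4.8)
The plan is to deduce the first assertion formally from \Cref{sec:quasi-syntomic-rings-morphism-of-topoi-from-prismatic-topos}. Recall from the discussion preceding that corollary that, on abelian sheaves, the functor $u^{-1}$ is the composite of restriction along $u$ (namely $\mathcal{F}\mapsto\mathcal{F}\circ u$) followed by sheafification. Restriction along $u$ is exact, since kernels and cokernels of presheaves of abelian groups are computed sectionwise and restriction preserves this; and sheafification of abelian presheaves is exact. Hence $u^{-1}$ is an exact functor from abelian sheaves on $(R)_{\mathrm{QSYN}}$ to abelian sheaves on $(R)_{\prism}$, and in particular it carries the given short exact sequence to a short exact sequence. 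Nothing more is needed for the first claim.

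For the application, the point is that a sequence $0\to G_1\to G_2\to G_3\to 0$ of finite locally free group schemes over $R$ which is exact for the fppf topology is also exact as a sequence of abelian sheaves on $(R)_{\mathrm{QSYN}}$. First, each $G_i=\Spec B_i$ does define an object of this site, since $B_i$ is a finite projective $R$-module, hence $p$-adically complete with bounded $p^\infty$-torsion as $R$ is. Left exactness is then purely formal: $G_1\to G_2$ is a closed immersion and $G_1=G_2\times_{G_3}e$ as representable functors, independently of the topology. For right exactness I would show that $G_2\to G_3$ is itself a quasi-syntomic cover. It is a $G_1$-torsor for the fppf topology, hence faithfully flat, hence $p$-completely faithfully flat (all the rings involved being $p$-complete with bounded $p^\infty$-torsion). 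It is moreover quasi-syntomic: the condition that $L_{B_2/B_3}$ have $p$-complete Tor-amplitude in $[-1,0]$ can be checked after an fppf base change $G_3\to G_3'$ trivialising the torsor, over which $G_2\to G_3$ becomes the projection $G_1\times_R G_3'\to G_3'$; since Tor-amplitude is preserved by base change, it suffices to know that $G_1$ is quasi-syntomic over $R$, i.e.\ that $(L_{G_1/R})^{\wedge_p}$ has Tor-amplitude in $[-1,0]$.

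This last fact is the only non-formal ingredient, and hence the main point to get right. It follows from Raynaud's theorem recalled in the introduction: Zariski-locally on $R$, the group scheme $G_1$ is the kernel of an isogeny $\mathcal{A}\to\mathcal{A}'$ of abelian schemes over $R$. An isogeny is finite flat, and $\mathcal{A},\mathcal{A}'$ are smooth over $R$, so in the transitivity triangle for $R\to\mathcal{A}'\to\mathcal{A}$ the first two terms are finite projective modules in degree $0$, which shows that $L_{\mathcal{A}/\mathcal{A}'}$ has Tor-amplitude in $[-1,0]$. As $\mathcal{A}\to\mathcal{A}'$ is flat, $G_1=\mathcal{A}\times_{\mathcal{A}'}\Spec R$ satisfies $L_{G_1/R}\cong L_{\mathcal{A}/\mathcal{A}'}\otimes^{\mathbb{L}}_{\mathcal{O}_{\mathcal{A}}}\mathcal{O}_{G_1}$, which again has Tor-amplitude in $[-1,0]$. (Alternatively, one may invoke directly that finite locally free group schemes are relative complete intersections over the base.) Once $G_2\to G_3$ is known to be a quasi-syntomic cover, it is in particular an epimorphism of sheaves on $(R)_{\mathrm{QSYN}}$, so $0\to G_1\to G_2\to G_3\to 0$ is exact there; applying the first part finishes the proof.
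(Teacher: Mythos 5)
There is a genuine gap in your proof of the first assertion, in the sentence ``Restriction along $u$ is exact, since kernels and cokernels of presheaves of abelian groups are computed sectionwise''. The sequence $0\to G_1\to G_2\to G_3\to 0$ is exact as a sequence of \emph{sheaves} on $(R)_{\mathrm{QSYN}}$, not of presheaves: $G_2\to G_3$ is only locally surjective. So the functor you are actually applying is the composite of the inclusion of sheaves into presheaves (which is only left exact), restriction along $u$, and sheafification on $(R)_{\prism}$; sectionwise exactness of presheaf kernels and cokernels says nothing about the first of these steps. Note that your argument uses no property of $u$ whatsoever, so it would show that $\mathcal{F}\mapsto(\mathcal{F}\circ u)^\sharp$ is exact for an arbitrary functor between sites, which is false: if the source carries the chaotic topology and $u$ is the identity, this functor is the forgetful functor from sheaves to presheaves, which does not preserve surjections. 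The missing ingredient is precisely the cocontinuity of $u$ established in \Cref{sec:quasi-syntomic-rings-morphism-of-topoi-from-prismatic-topos} (which rests on \Cref{sec:quasi-syntomic-rings-proposition-quasi-syntomic-cover-of-prism-admits-refinement}). Either invoke that $u^{-1}$ admits the right adjoint $u_*$ recalled there, hence commutes with colimits, left exactness being the formal part — this is the paper's proof, via \cite[Tag 00XL]{stacks_project} — or argue directly: a section of $G_3$ over $u(A,I)=A/I$ lifts to $G_2$ after some quasi-syntomic cover $A/I\to S$, and this cover can be refined by $B/IB$ for a $(p,I)$-completely faithfully flat prism $(B,IB)$ over $(A,I)$, so the lift exists after a cover in $(R)_{\prism}$.

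Your treatment of the second assertion is correct and essentially the paper's: the whole point is that $G_2\to G_3$ is a quasi-syntomic cover, hence an epimorphism of sheaves on $(R)_{\mathrm{QSYN}}$, while left exactness is formal. The paper obtains syntomicity of finite locally free group schemes by citing Breuil; you rederive it from Raynaud's embedding into abelian schemes (and mention the relative complete intersection property as an alternative), which buys nothing new but is perfectly fine. The only small point left implicit is faithfully flat descent of ($p$-complete) Tor-amplitude when you check the cotangent complex condition after trivializing the torsor; that is standard and not a real issue.
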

\begin{proof}
The first assertion is just saying that $u^{-1}$ is exact, as $u$ is a
cocontinuous functor (\cite[Tag 00XL]{stacks_project}). The second assertion follows, as
any finite locally free group scheme is syntomic (cf.\ \cite[Proposition 2.2.2]{breuil_groupes_p_divisibles_groupes_finis_et_modules_filtres}).   
\end{proof}

\subsection{Prismatic cohomology of quasi-regular semiperfectoid rings}
\label{sec:prism-cohom-quasi-regular semiperfectoid}

\blue{In this short subsection, we collect a few facts about prismatic cohomology of quasi-regular semiperfectoid rings for later reference.}

For the moment, fix a bounded base prism $(A,I)$ and let $R$ be $p$-complete $A/I$-algebra. There are several cohomologies attached to $R$ :
\begin{enumerate}
\item The derived prismatic cohomology
$$
\prism_{R/A}
$$
of $R$ over $(A,I)$ defined in \Cref{sec:def-prism-cohom-general} via left Kan extension of prismatic cohomology.
\item The cohomology
$$
\prism_{R/A}^{\mathrm{init}}:=R\Gamma((R/A)_{\prism},\mathcal{O}_\prism)
$$
of the prismatic site of $(R/A)_\prism$ (with its $p$-completely faithfully flat topology). 
\item Finally (and only for technical purposes),
$$
\prism_{R/A}^{\mathrm{init, unbdd}}:=R\Gamma((R/A)_{\prism,\mathrm{unbdd}},\mathcal{O}_\prism),
$$
the prismatic cohomology of $R$ with respect to the site $(R/A)_{\prism,\mathrm{unbdd}}$ of not necessarily bounded prisms $(B,J)$ over $(A,I)$ together with a morphism $R\to B/J$ of $A/I$-algebras. We equip $(R/A)_{\prism,\mathrm{unbdd}}$ with the chaotic topology.
\end{enumerate}
%

Assume from now on that $(A,I)$ is a perfect prism and that $A/I\to R$ is a surjection with $R$ quasi-regular semiperfectoid.
The prism $\prism_{R/A}^{\mathrm{init, unbdd}}$ admits then a more concrete (but in general rather untractable) description. Let $K$ be the kernel of $A\to R$. Then
$$
\prism_{R/A}^{\mathrm{init, unbdd}}\cong A\{\frac{K}{I}\}^{\wedge_{(p,I)}}
$$
is the prismatic envelope of the $\delta$-pair $(A,K)$ from \cite[Lemma V.5.1]{bhatt_lectures_on_prismatic_cohomology} as follows from the universal property of the latter. In particular, the site $(R/A)_{\prism,\mathrm{unbdd}}$ has a final object\footnote{Up to now this discussion did not use that $R$ is quasi-regular, it was sufficient that $A/I\to R$ is surjective.}.

\begin{proposition}
  \label{sec:prism-cohom-quasi-proposition-all-prisms-agree-for-quasi-regular semiperfectoid}
  Let as above $(A,I)$ be a perfect prism and $R$ quasi-regular semiperfectoid with a surjection $A/I\twoheadrightarrow R$. Then the canonical morphisms induce isomorphisms
  $$
  \prism_{R/A}\cong \prism_{R/A}^{\mathrm{init}}\cong \prism_{R/A}^{\mathrm{init, unbdd}}
  $$
  as $\delta$-rings.
\end{proposition}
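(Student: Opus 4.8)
The plan is to identify all three objects with the prismatic envelope of the $\delta$-pair $(A,K)$, where $K=\ker(A\to R)$. Recall that, by its very construction, $\prism_{R/A}^{\mathrm{init,unbdd}}$ \emph{is} the prismatic envelope $\mathcal{P}:=A\{K/I\}^{\wedge_{(p,I)}}$, which is the final object of the chaotically topologized site $(R/A)_{\prism,\mathrm{unbdd}}$; so the whole content is to show that $\mathcal{P}$ is a \emph{bounded} prism agreeing with the derived prismatic cohomology and computing $\prism_{R/A}^{\mathrm{init}}$.

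First I would show that the derived prismatic cohomology $\prism_{R/A}$ is concentrated in degree $0$ and underlies a bounded prism in $(R/A)_\prism$. Since $(A,I)$ is a perfect prism, $A/I$ is a perfectoid ring (\Cref{sec:generalities-prisms-theorem-perfect-prisms-equivalent-to-perfectoid-rings}) surjecting onto the quasi-regular semiperfectoid ring $R$, so by \cite{bhatt_morrow_scholze_topological_hochschild_homology} the $p$-completed cotangent complex $(L_{R/(A/I)})^{\wedge_p}$ is of the form $N[1]$ for a $p$-completely flat $R$-module $N$. Plugging this into the conjugate filtration of \Cref{sec:hodge-tate-comp-general}, each graded piece $\mathrm{gr}_i^{\mathrm{conj}}(\overline{\prism}_{R/A})\simeq \wedge^i L_{R/(A/I)}\{-i\}[-i]^{\wedge_p}\simeq \Gamma^i_R(N)\{-i\}^{\wedge_p}$ (décalage) is a $p$-completely flat $R$-module placed in degree $0$; as the filtration is exhaustive, $\overline{\prism}_{R/A}$ is discrete, and a standard argument (passing to the derived $(p,I)$-adic filtration, whose graded pieces are Breuil--Kisin twists of $\overline{\prism}_{R/A}$, and invoking $(p,I)$-completeness) shows that $\prism_{R/A}$ is discrete as well. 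Writing $I=(d)$ with $d$ a non-zero-divisor of the perfect ring $A$, the vanishing of $\mathrm{Tor}^A_1(\prism_{R/A},A/I)$ means that $d$ is a non-zero-divisor on $\prism_{R/A}$; and $\overline{\prism}_{R/A}$ has bounded $p^\infty$-torsion because it is $p$-completely flat over the quasi-syntomic ring $R$. Hence $(\prism_{R/A},I\prism_{R/A})$ is a bounded prism, lying in $(R/A)_\prism$ via the structure map $R\to \overline{\prism}_{R/A}$.

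Next I would compare $\mathcal{P}$ with $\prism_{R/A}$. Since $K$ maps to $0$ in $\prism_{R/A}/I\prism_{R/A}=\overline{\prism}_{R/A}$, the universal property of the prismatic envelope (\cite[Lemma V.5.1]{bhatt_lectures_on_prismatic_cohomology}) produces a map of $\delta$-$A$-algebras $c\colon \mathcal{P}=\prism_{R/A}^{\mathrm{init,unbdd}}\to \prism_{R/A}$. To see that $c$ is an isomorphism — and thereby, in particular, that $\mathcal{P}$ is a bounded prism and $(p,I)$-completely flat over $A$ — it suffices, both sides being derived $(p,I)$-complete, to check it modulo $(p,I)$; by the previous step the target becomes $\overline{\prism}_{R/A}/p\cong\bigoplus_i\Gamma^i_{R/p}(N/p)\{-i\}$, while the source becomes a divided-power-type algebra on $K/(I,K^2)$. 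One reduces this identification, by quasi-syntomic descent — all three constructions being sheaves for the quasi-syntomic topology on $A/I$-algebras, cf.\ \Cref{sec:quasi-syntomic-rings-proposition-quasi-syntomic-cover-of-prism-admits-refinement} and flat descent for $\mathcal{O}_\prism$ — together with a direct-limit argument, to the case where $K$ is, pro-Zariski locally, generated by a finite sequence that is $(p,I)$-completely regular relative to $A/I$; there it is the Koszul computation of \cite[Proposition 3.13]{bhatt_scholze_prisms_and_prismatic_cohomology}.

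Finally, knowing that $\mathcal{P}$ is a bounded prism, it is the final object of $(R/A)_\prism$ as well, and $\mathcal{O}_\prism$ satisfies $(p,I)$-completely faithfully flat descent with vanishing higher cohomology over bounded prisms — exactly the \v{C}ech--Alexander argument recalled in the proof of \Cref{sec:prism-cohom-quasi-lemma-all-prismatic-cohomology-agrees-for-smooth-things}, whose terms over a bounded prism stay $(p,I)$-completely flat, hence bounded. Therefore $\prism_{R/A}^{\mathrm{init}}=R\Gamma((R/A)_\prism,\mathcal{O}_\prism)=\mathcal{O}_\prism(\mathcal{P})=\mathcal{P}=\prism_{R/A}^{\mathrm{init,unbdd}}$, and combining with the previous step gives the asserted chain of isomorphisms; all maps in sight are $\delta$-ring maps by construction. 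I expect the genuinely delicate point to be the quasi-regular-but-not-regular case of $K$: proving that the \emph{ordinary} prismatic envelope $A\{K/I\}^{\wedge}$, as opposed to its derived analogue, is concentrated in degree $0$, $(p,I)$-completely flat over $A$, and a bounded prism. This is what forces the detour through descent and direct limits, since a general quasi-regular ideal need not admit a single finite regular system of generators.
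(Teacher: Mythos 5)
Your first half is fine and agrees with the paper: discreteness of $\prism_{R/A}$ via the conjugate filtration and flatness of $L_{R/(A/I)}^{\wedge_p}[-1]$, $\tilde\xi$-torsion-freeness and boundedness, and the map $c=\beta_R\colon \prism_{R/A}^{\mathrm{init,unbdd}}\to\prism_{R/A}$ from the universal property of the prismatic envelope. The gap is in your third step, where you claim $c$ is an isomorphism by reducing modulo $(p,I)$ and then, ``by quasi-syntomic descent together with a direct-limit argument,'' to the case where $K$ is pro-Zariski locally generated by a finite $(p,I)$-completely regular sequence. No such reduction is available: for a general quasi-regular semiperfectoid $R$ the kernel $\overline K=\ker(A/I\to R)$ is typically infinitely generated and admits no (even local) presentation by a finite regular sequence, and Zariski localization or writing $R$ as a colimit does not produce one, since finite subsets of generators need not form regular sequences in $A/I$. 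Moreover, it is not established (and is essentially equivalent to the proposition itself) that the envelope $R\mapsto\prism_{R/A}^{\mathrm{init,unbdd}}$ satisfies quasi-syntomic descent in $R$; \Cref{sec:quasi-syntomic-rings-proposition-quasi-syntomic-cover-of-prism-admits-refinement} gives refinements of covers, not descent for this initial-object construction. Finally, asserting that $\prism_{R/A}^{\mathrm{init,unbdd}}/(p,I)$ ``becomes a divided-power-type algebra on $K/(I,K^2)$'' is assuming exactly what has to be proved: controlling the envelope of a non-regular ideal is the whole difficulty, and \cite[Proposition 3.13]{bhatt_scholze_prisms_and_prismatic_cohomology} only applies to regular sequences.

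The paper circumvents this by never trying to compute $\prism_{R/A}^{\mathrm{init,unbdd}}$ and never checking an isomorphism mod $(p,I)$ directly. It also constructs the other map $\alpha_R\colon\prism_{R/A}\to\prism_{R/A}^{\mathrm{init,unbdd}}$ from the universal property of the left Kan extension and shows $\beta_R\circ\alpha_R=\mathrm{Id}$, so that only a surjectivity statement for $\beta_R$ mod $(p,I)$ remains. Surjectivity, unlike an isomorphism, can be transported along a map that is merely surjective and not flat: after a faithfully flat base change of perfect prisms and Andr\'e's lemma (\Cref{sec:generalities-prisms-andres-lemma}) one arranges that the generators of $\overline K$ admit compatible $p$-power roots, and then one maps the auxiliary ring $S=A/I\langle X_k^{1/p^\infty}\rangle/(X_k)$ onto $R$ so that $\overline\prism_{S/B}\to\overline\prism_{R/A}$ is surjective; for $S$, which is a completed colimit of quotients of a perfectoid ring by finite regular sequences, the statement is known (\cite[Ex.\ 7.9]{bhatt_scholze_prisms_and_prismatic_cohomology}, \cite[Lemma V.2.15]{bhatt_lectures_on_prismatic_cohomology}). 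Note also that the base change step needs care precisely because $\prism_{R/A}^{\mathrm{init,unbdd}}$ is not yet known to be bounded (the paper takes $H^0$ of the completed base change), a point your argument would also have to address. To repair your proof you would either have to prove quasi-syntomic descent for the envelope (which is not easier than the proposition) or restructure it along the retraction-plus-surjectivity scheme of the paper.
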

\begin{proof}
\blue{This is \cite[Proposition 7.10]{bhatt_scholze_prisms_and_prismatic_cohomology} (the second isomorphism, i.e. the fact that $\prism_{R/A}^{\mathrm{init, unbb}}$ is bounded, follows from the last assertion of loc. cit.).}
\end{proof}

If $pR=0$, i.e., $R$ is quasi-regular semiperfect, there is moreover the universal $p$-complete PD-thickening
$$
A_\crys(R)
$$
of $R$ (cf.\ \cite[Proposition 4.1.3]{scholze_weinstein_moduli_of_p_divisible_groups}).
The ring $A_\crys(R)$ is $p$-torsion free by \cite[Theorem 8.14]{bhatt_morrow_scholze_topological_hochschild_homology}.

\begin{lemma}
  \label{sec:prism-cohom-quasi-lemma-for-qr-semiperfect-prism-isomorphic-to-acrys} Let $(A,I)$, $R$ be as above and assume that $pR=0$. Then there is a canonical $\varphi$-equivariant isomorphism
  $$
  \prism_{R/A}\cong A_\crys(R).
  $$
\end{lemma}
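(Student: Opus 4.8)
Since $pR=0$, the ring $R$ is quasi-regular semiperfect, so by \Cref{sec:prism-cohom-quasi-proposition-all-prisms-agree-for-quasi-regular semiperfectoid} the complex $\prism_{R/A}$ is concentrated in degree $0$, agrees as a $\delta$-ring with the site-theoretic $\prism^{\mathrm{init}}_{R/A}$, and $(\prism_{R/A},I\prism_{R/A})$ is the final object of $(R/A)_{\prism}$. In particular it is a bounded prism, so $I\prism_{R/A}=d\prism_{R/A}$ for the image $d$ of a distinguished generator of $I$, with $d$ a non-zero-divisor and $\delta(d)$ a unit. The first step is to recognise this as a \emph{crystalline} prism: $\overline{\prism}_{R/A}=\prism_{R/A}/d\prism_{R/A}$ is an $R$-algebra, hence killed by $p$, so $p=de$ for some $e$; reducing the identity $\delta(de)=\varphi(d)\delta(e)+e^{p}\delta(d)$ modulo $d$ and using $\varphi(d)\equiv 0$ and $\delta(p)\equiv 1\pmod d$ shows $e^{p}\delta(d)$ is a unit modulo $d$, hence $e$ is a unit by $(p,I)$-adic completeness. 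Therefore $I\prism_{R/A}=(p)$, the pair $(\prism_{R/A},(p))$ is a bounded crystalline prism, $\prism_{R/A}$ is $p$-torsion-free and $p$-complete, and $\overline{\prism}_{R/A}=\prism_{R/A}/p$.

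Next I would identify $\prism_{R/A}\twoheadrightarrow R$ as a $p$-complete PD-thickening. By the Hodge--Tate comparison (\Cref{sec:hodge-tate-comp-general}) and quasi-regular semiperfectness, $L_{R/(A/I)}^{\wedge_p}$ is $p$-completely flat and concentrated in degree $-1$, so $\overline{\prism}_{R/A}$ sits in degree $0$ with conjugate filtration satisfying $\mathrm{gr}_i^{\mathrm{conj}}\cong\Gamma_R^{i}\big(L_{R/(A/I)}\{-1\}[-1]^{\wedge_p}\big)$; this exhibits $\overline{\prism}_{R/A}$ as a $p$-complete divided-power thickening of $R$, the augmentation onto $\mathrm{gr}_0^{\mathrm{conj}}=R$ being the PD-surjection. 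Composing with $\prism_{R/A}\twoheadrightarrow\overline{\prism}_{R/A}$ gives a surjection $\prism_{R/A}\twoheadrightarrow R$ whose kernel contains $p$; since $\prism_{R/A}$ is $p$-torsion-free, the divided powers lift uniquely and compatibly with the canonical ones on $(p)$, so $\prism_{R/A}\twoheadrightarrow R$ is a $p$-complete PD-thickening of $R$.

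Finally I would produce mutually inverse maps. The universal property of $A_\crys(R)$ as the initial $p$-complete PD-thickening of $R$ gives a PD-morphism $A_\crys(R)\to\prism_{R/A}$ over $R$. Conversely, $(A_\crys(R),(p))$ is a bounded prism ($A_\crys(R)$ is $p$-torsion-free by \cite[Theorem~8.14]{bhatt_morrow_scholze_topological_hochschild_homology}, and $\delta(p)$ is a unit), and since $(A,I)$ is perfect, \Cref{sec:prisms-perf-rings-lemma-initial-object-for-perfectoid-rings} lifts the ring map $A/I\twoheadrightarrow R\hookrightarrow A_\crys(R)/p$ (the section $R=\mathrm{Fil}_0^{\mathrm{conj}}$) to a morphism of prisms $(A,I)\to(A_\crys(R),(p))$; this makes $(A_\crys(R),(p))$ an object of $(R/A)_{\prism}$, so finality of $\prism_{R/A}$ yields a $\delta$-ring map $\prism_{R/A}\to A_\crys(R)$. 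Granting that both maps respect the divided-power filtrations (equivalently the augmentations onto $R$), the two composites are forced to be identities by the respective universal properties, and one obtains the desired $\varphi$-equivariant isomorphism.

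The step I expect to be the main obstacle is exactly this last reconciliation: $\prism_{R/A}$ is characterised as a \emph{final} object of the prismatic site while $A_\crys(R)$ is characterised as an \emph{initial} PD-thickening, and matching the two forces one to keep careful track of the two roles of $R$ (as the divided-power quotient and as $\mathrm{Fil}_0$ of the conjugate filtration) and of the Frobenius twists relating the structure maps $A\to A/I$ and $A\to A/p=(A/I)^{\flat}$. A less self-contained but quicker alternative for this step is to invoke the crystalline comparison theorem (\Cref{sec:crystalline-comp-smooth}) in the form extended to quasi-regular semiperfect $\F_p$-algebras (\Cref{sec:prism-site-prism-remarks-after-crystalline-comparison}), which identifies the prismatic cohomology of such an $R$ with $R\Gamma_{\mathrm{crys}}(R/\Z_p)=A_\crys(R)$ compatibly with Frobenius.
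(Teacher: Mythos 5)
Your overall architecture is the same as the paper's: produce $\prism_{R/A}\to A_\crys(R)$ from the universal property of $\prism_{R/A}$ on the prismatic site, applied to the crystalline prism $(A_\crys(R),(p))$ (with $R\to A_\crys(R)/p$ coming from the Frobenius), produce $A_\crys(R)\to\prism_{R/A}$ from the universal $p$-complete PD-thickening property applied to $\prism_{R/A}\twoheadrightarrow R$, and check the composites are identities as in \cite[Theorem 8.14]{bhatt_morrow_scholze_topological_hochschild_homology}. Your preliminary observation that $I\prism_{R/A}=(p)$ (via the distinguished-element computation) is correct and a nice explicit addition.

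The genuine gap is the step producing the PD-thickening $\prism_{R/A}\twoheadrightarrow R$. First, the surjection you build does not exist as described: the conjugate filtration on $\overline{\prism}_{R/A}$ is \emph{increasing} and exhaustive with $\mathrm{Fil}_0^{\mathrm{conj}}=R$ a subring, so there is no ``augmentation onto $\mathrm{gr}_0^{\mathrm{conj}}$'' through which $\prism_{R/A}\to\overline{\prism}_{R/A}$ could be composed; the correct surjection is $\theta\colon\prism_{R/A}\to\prism_{R/A}/\mathcal{N}^{\geq 1}\prism_{R/A}\cong R$ from \Cref{sec:prism-cohom-quasi-theorem-identification-of-the-graded-pieces-of-nygaard-filtration}, which is a Frobenius twist of the Hodge--Tate reduction (it is $\varphi$ followed by reduction mod $I$, landing in $\mathrm{Fil}_0^{\mathrm{conj}}$). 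Second, even with the right map, the fact that the conjugate graded pieces are divided-power functors of $L_{R/(A/I)}\{-1\}[-1]$ does not by itself put divided powers on $\ker\theta$: having an associated graded of PD shape is weaker than a PD structure on the ideal. The paper verifies the PD property as in \cite[Proposition 8.12]{bhatt_morrow_scholze_topological_hochschild_homology}, using that the crystalline comparison \Cref{sec:crystalline-comp-smooth} goes through for syntomic algebras (\Cref{sec:prism-site-prism-remarks-after-crystalline-comparison}); some such input is unavoidable. Your proposed shortcut --- invoking the crystalline comparison already extended to quasi-regular semiperfect rings --- is circular in this paper, since that extension is itself deduced (by descent) from the present lemma. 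Finally, the mutual-inverse verification you ``grant'' is indeed not purely formal (one must see that the PD-map $A_\crys(R)\to\prism_{R/A}$ is compatible with the prism/$\delta$-structure, and this is where the $W(R^\flat)$-Frobenius twist is absorbed); the paper also only sketches it, by reference to \cite[Theorem 8.14]{bhatt_morrow_scholze_topological_hochschild_homology}.
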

\begin{proof}
  As $A_\crys(R)$ is $p$-torsion free (cf.\ \cite[Theorem 8.14]{bhatt_morrow_scholze_topological_hochschild_homology}) and carries a canonical Frobenius lift there we get a natural morphism
  $$
  \prism_{R/A}\to A_\crys(R).
  $$
  Conversely, the kernel of the natural morphism (cf.\ \Cref{sec:prism-cohom-quasi-theorem-identification-of-the-graded-pieces-of-nygaard-filtration}, which does not depend on this lemma)
  $$
  \theta\colon \prism_{R/A}\to R
  $$
  has divided powers (as one checks similarly to \cite[Proposition 8.12]{bhatt_morrow_scholze_topological_hochschild_homology}, using that the proof of \Cref{sec:crystalline-comp-smooth} goes through in the syntomic case, cf.\ \Cref{sec:prism-site-prism-remarks-after-crystalline-comparison}). This provides a canonical morphism
  $$
  A_\crys(R)\to \prism_R
  $$
  in the other direction. Similarly, to \cite[Theorem 8.14]{bhatt_morrow_scholze_topological_hochschild_homology} one checks that both are inverse to each other.
\end{proof}

\begin{remark}
Both rings $\prism_{R/A}$ and $A_\crys(R)$ are naturally $W(R^\flat)$-algebras, but the isomorphism of \Cref{sec:prism-cohom-quasi-lemma-for-qr-semiperfect-prism-isomorphic-to-acrys} restricts to the Frobenius on $W(R^\flat)$. Concretely, if $R=R^\flat/x$ for some non-zero divisor $x\in R^\flat$, then
$$
\prism_{R/W(R^\flat)}\cong W(R^\flat)\{\frac{x}{p}\}^\wedge
$$
and (cf.\ \cite[Corollary 2.37]{bhatt_scholze_prisms_and_prismatic_cohomology})
$$
A_\crys(R)\cong W(R^\flat)\{\frac{x^p}{p}\}^\wedge\cong \prism_{R/W(R^\flat)}\otimes_{W(R^\flat),\varphi} W(R^\flat).
$$
\end{remark}

\orange{The prismatic cohomology $\prism_R$ of a quasi-regular semiperfectoid ring $R$ comes equipped with its \textit{Nygaard filtration}, \cite[\S 12]{bhatt_scholze_prisms_and_prismatic_cohomology}, an $\mathbb{N}$-indexed decreasing multiplicative filtration defined for $i\geq 0$ by 
$$
\mathcal{N}^{\geq i}(\prism_R) = \{ x \in \prism_R, \varphi(x) \in d^i \prism_R \},
$$
$d$ denoting a generator of the  ideal $I$. The graded pieces of the Nygaard filtration can be described as follows.}

\begin{theorem}
  \label{sec:prism-cohom-quasi-theorem-identification-of-the-graded-pieces-of-nygaard-filtration} Let $R$ be a quasi-regular semiperfectoid ring. Then
  $$
  \mathcal{N}^{\geq i}(\prism_R)/\mathcal{N}^{\geq i+1}(\prism_R)\cong \mathrm{Fil}^{\mathrm{conj}}_i(\overline{\prism}_R)\{i\}
  $$
  for $i\geq 0$. In particular, $\prism_R/\mathcal{N}^{\geq 1}\prism_R\cong R$.
\end{theorem}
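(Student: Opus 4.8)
The plan is to construct the isomorphism from the Frobenius on $\prism_R$, deduce injectivity on graded pieces for free, and then identify the image with the conjugate filtration by reducing, via faithfully flat base change and Andr\'e's lemma, to an explicit model case. Fix a perfect prism $(A,I)$ with $I=(d)$ and a surjection $A/I\twoheadrightarrow R$, so that by \Cref{sec:prism-cohom-quasi-proposition-all-prisms-agree-for-quasi-regular semiperfectoid} the ring $\prism_R\cong\prism_{R/A}$ is a $\delta$-ring concentrated in degree $0$ and $p$-completely flat over $A$; as $d$ is a nonzerodivisor on each $A/p^n$ it is a nonzerodivisor on $\prism_R$, so $(\prism_R,d\prism_R)$ is a bounded prism whose Nygaard filtration in the sense of \Cref{sec:generalities-prisms-definition-nygaard-filtration} is $\mathcal{N}^{\geq i}\prism_R=\varphi^{-1}(d^i\prism_R)$, and $\overline{\prism}_R=\prism_R/d\prism_R$ carries the conjugate filtration of \Cref{sec:hodge-tate-comp-general}. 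Since $R$ is quasi-regular semiperfectoid one has $L^{\wedge_p}_{R/(A/I)}\simeq N[1]$ with $N$ a $p$-completely flat $R$-module (as recalled in the proof of \Cref{sec:prism-cohom-quasi-proposition-all-prisms-agree-for-quasi-regular semiperfectoid}); combined with the d\'ecalage isomorphism $\wedge^i(N[1])\simeq(\Gamma^i_RN)[i]$ this gives $\mathrm{gr}^{\mathrm{conj}}_i\overline{\prism}_R\cong\Gamma^i_R(N)\{-i\}$, a flat $R$-module in degree $0$, so $\overline{\prism}_R$ is discrete and $\mathrm{Fil}^{\mathrm{conj}}_\bullet\overline{\prism}_R$ is an honest increasing filtration of $\prism_R/d\prism_R$ by $R$-submodules with $\mathrm{Fil}^{\mathrm{conj}}_0\overline{\prism}_R\cong R$.

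Next I would build the comparison map. As $\varphi$ carries $\mathcal{N}^{\geq i}\prism_R$ into $d^i\prism_R$, dividing by $d^i$ (legitimate since $d$ is a nonzerodivisor) and reducing modulo $d$ yields a homomorphism
$$
\bar\psi_i\colon\mathcal{N}^{\geq i}\prism_R\longrightarrow d^i\prism_R/d^{i+1}\prism_R=\overline{\prism}_R\{i\},\qquad x\mapsto \varphi(x)d^{-i}\bmod d .
$$
One has $\bar\psi_i(x)=0$ exactly when $\varphi(x)\in d^{i+1}\prism_R$, i.e.\ when $x\in\mathcal{N}^{\geq i+1}\prism_R$; hence $\bar\psi_i$ kills $\mathcal{N}^{\geq i+1}\prism_R$ and induces an \emph{injection} $\mathrm{gr}^i_{\mathcal N}\prism_R\hookrightarrow\overline{\prism}_R\{i\}$ (this uses nothing about $R$ beyond $\prism_R$ being a prism with $d$ a nonzerodivisor). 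It then remains to identify the image of $\bar\psi_i$ with $\mathrm{Fil}^{\mathrm{conj}}_i\overline{\prism}_R\{i\}$. Note the case $i=0$ already gives $\prism_R/\mathcal{N}^{\geq 1}\prism_R\cong\mathrm{Fil}^{\mathrm{conj}}_0\overline{\prism}_R\cong R$, the ``in particular'' of the statement, and exhibits the map $\theta\colon\prism_R\to R$ referred to in \Cref{sec:prism-cohom-quasi-lemma-for-qr-semiperfect-prism-isomorphic-to-acrys}.

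To identify the image I would reduce to a model case, reusing the reduction steps from the proof of \Cref{sec:prism-cohom-quasi-proposition-all-prisms-agree-for-quasi-regular semiperfectoid}. All the data in play --- $\prism_R$ with its Frobenius, $\overline{\prism}_R$ with its conjugate filtration, the Nygaard filtration, and $\bar\psi_i$ --- are compatible with faithfully flat base change $(A,I)\to(A',I')$ along perfect prisms (via $\prism_{R'/A'}\cong\prism_{R/A}\hat\otimes^{\mathbb L}_AA'$, the Hodge--Tate comparison and base change of the cotangent complex, exactly as in loc.\ cit.) and with filtered colimits in $R$ (the relevant modules being $p$-completely flat with uniformly bounded $p^\infty$-torsion, so exactness and $(p,d)$-adic completion are preserved). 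Applying \Cref{sec:generalities-prisms-andres-lemma} to assume $\ker(A/I\twoheadrightarrow R)$ generated by elements with compatible $p$-power roots, passing to the explicit cover $S$ of loc.\ cit., and passing to a filtered colimit then reduces the statement to $R=P/(f_1,\dots,f_r)$ with $P$ perfectoid, $(f_1,\dots,f_r)$ regular, and each $f_j$ admitting a compatible system of $p$-power roots.

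In this model case $(A,I)=(A_{\inf}(P),(d))$ and, by the flatness of prismatic envelopes along regular sequences (\cite[Proposition 3.13]{bhatt_scholze_prisms_and_prismatic_cohomology}, cf.\ \Cref{remarks-after-definition-prism}),
$$
\prism_R=A\Big\{\tfrac{\tilde f_1}{d},\dots,\tfrac{\tilde f_r}{d}\Big\}^{\wedge}_{(p,d)},\qquad \tilde f_j=[f_j^\flat]\ \text{a lift of}\ f_j ,
$$
which is $(p,d)$-completely flat over $A$; writing $u_j:=\tilde f_j/d$, so $du_j=\tilde f_j$ and $\varphi(u_j)=\tilde f_j^{\,p}/\varphi(d)$, one reads off an explicit divided-power $P$-basis of $\overline{\prism}_R$ under which $\mathrm{Fil}^{\mathrm{conj}}_i\overline{\prism}_R\{i\}$ is the span of the divided-power monomials of total weight $\leq i$, and one computes $\varphi^{-1}(d^i\prism_R)$ directly; comparing the two shows $\bar\psi_i$ maps $\mathcal{N}^{\geq i}\prism_R$ onto $\mathrm{Fil}^{\mathrm{conj}}_i\overline{\prism}_R\{i\}$. (Alternatively, writing $R=(A/I)\otimes^{\mathbb L}_{(A/I)[t_1,\dots,t_r]}(A/I)$ with structure maps $t_j\mapsto f_j$ and $t_j\mapsto 0$, one could apply a K\"unneth formula for derived prismatic cohomology to reduce to the \emph{smooth} relative statement $\mathrm{gr}^i_{\mathcal N}\prism_{-/A}\cong\tau^{\leq i}\overline{\prism}_{-/A}\{i\}$ and left Kan extend, once one checks that $\varphi^{-1}(d^i)$ agrees with the left-Kan-extended Nygaard filtration --- automatic here since all the modules are flat and in degree $0$.) The main obstacle is precisely this last comparison in the model case: the conjugate filtration is homological in origin whereas the Nygaard filtration is defined as $\varphi^{-1}(I^\bullet)$, and reconciling them forces one to push the divided-power presentation of the prismatic envelope carefully through the Breuil--Kisin twists and the d\'ecalage isomorphism (or, on the alternative route, to set up the K\"unneth formula and the flatness/degree-$0$ vanishing needed to descend the smooth statement). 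Everything else --- injectivity of $\bar\psi_i$ and compatibility of all structures with flat base change and filtered colimits --- is formal given the results recalled above.
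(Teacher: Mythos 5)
The first thing to say is that the paper does not prove this theorem at all: its ``proof'' is the single line ``See \cite[Theorem 12.2]{bhatt_scholze_prisms_and_prismatic_cohomology}''. So your outline is not comparable with an argument in the paper; what it does is retrace, in compressed form, Bhatt--Scholze's own proof of that theorem: produce the injection $\mathrm{gr}^i_{\mathcal{N}}\prism_R\hookrightarrow\overline{\prism}_R\{i\}$ from the divided Frobenius, reduce by Andr\'e's lemma and faithfully flat base change to quotients of perfectoids by regular sequences of elements admitting compatible $p$-power roots, and identify the image by an explicit computation with prismatic envelopes (the $q$-divided power description, cf.\ \cite[Lemma 12.3]{bhatt_scholze_prisms_and_prismatic_cohomology}, which this paper itself quotes in \Cref{sec:fully-faithfulness-p-example-prism-with-almost-zero-elements}). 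The architecture is right, but as a proof your text has a hole exactly where you flag it: the model-case identification of $\varphi^{-1}(d^i\prism_R)$ with the span of divided-power monomials of weight $\leq i$, and of that span with $\mathrm{Fil}^{\mathrm{conj}}_i$, is asserted rather than proved, and it is the entire content of the theorem. Note in particular that \Cref{sec:concr-pres-prism-proposition-presentation-of-prismatic-envelope-for-rank-one-element} only gives a \emph{surjection} onto $A\{x/d\}$, not a basis, so the ``explicit divided-power $P$-basis'' you want to read off requires the finer flatness/freeness results of \cite[\S 12]{bhatt_scholze_prisms_and_prismatic_cohomology}.

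Two auxiliary assertions also need repair. First, $\prism_R$ is \emph{not} in general $(p,I)$-completely flat over $A$: for $R=\mathcal{O}_C/p$ one has $\prism_R\cong A_\crys$, and $A_\crys/p$ has $\xi$-torsion, so it is not $p$-completely flat over $A_\inf$. The facts you actually need --- that $d$ is a nonzerodivisor on $\prism_R$ and that $\overline{\prism}_R$ has bounded $p^\infty$-torsion --- should instead be extracted from the Hodge--Tate comparison as in \Cref{sec:prism-cohom-quasi-proposition-all-prisms-agree-for-quasi-regular semiperfectoid}. Second, the compatibility of $\mathcal{N}^{\geq i}\prism_R=\varphi^{-1}(d^i\prism_R)$ (and hence of its graded pieces and of $\bar\psi_i$) with completed faithfully flat base change along perfect prisms, and with filtered colimits, is not ``formal'': it is a kernel, and kernels need not commute with completed flat base change --- the paper's \Cref{remark-problem-with-base-change-along-p-completely-flat-morphisms} is precisely a counterexample to exactness of completed flat base change --- and without it you cannot descend surjectivity of $\bar\psi_i$ from the cover (knowing $\mathcal{N}^{\geq i}\prism_{R'}$ surjects onto $\mathrm{Fil}^{\mathrm{conj}}_i\{i\}$ does not by itself say the image of $\mathcal{N}^{\geq i}\prism_R\widehat{\otimes}_AA'$ does). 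One must argue via bounded $p^\infty$-torsion of $\prism_R/d^i$ (d\'evissage through the $d$-adic filtration with graded pieces $\overline{\prism}_R\{j\}$) together with the descent formalism of the appendix, or follow Bhatt--Scholze's bookkeeping in their \S 12. The same caveat applies to your parenthetical alternative: that $\varphi^{-1}(d^i)$ agrees with a left Kan extended (relative) Nygaard filtration is not automatic; it is the content of the comparison results of \cite[\S 15]{bhatt_scholze_prisms_and_prismatic_cohomology} that the paper invokes elsewhere (\Cref{sec:mmh-...-does-1-isomorphism-compatible-with-nygaard-filtration}).
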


Here $\mathrm{Fil}^{\mathrm{conj}}_\bullet(\overline{\prism}_R)$ denotes the conjugate filtration on $\overline{\prism}_R$ with graded pieces given by $\mathrm{gr}_i^{\mathrm{conj}}(\overline{\prism}_R)\cong \Lambda^i{L}_{R/S}^{\wedge_p}[-i]$, for any choice of perfectoid ring $S$ mapping to $R$, \orange{cf. \ref{sec:hodge-tate-comp-general})}. 

\begin{proof}
  See \cite[Theorem 12.2]{bhatt_scholze_prisms_and_prismatic_cohomology}.
\end{proof}

\subsection{The K\"unneth formula in prismatic cohomology}
\label{sec:kunn-form-prism}

The Hodge-Tate comparison implies a K\"unneth formula. Here is the precise statement.
Note that for a bounded prism $(A,I)$ the functor $R\mapsto \prism_{R/A}$ is naturally defined on all derived $p$-complete simplicial $A/I$-algebras.

\begin{proposition}
  \label{sec:kunn-form-prism-proposition-kunneth-formula}
  Let $(A,I)$ be a bounded prism. Then the functor
  $$
  R\mapsto \prism_{R/A}
  $$
  from derived $p$-complete simplicial rings over $A/I$ to derived $(p,I)$-complete $E_\infty$-algebras over $A$ preserves tensor products, i.e., for all morphism $R_1\leftarrow R_3 \to R_2$ the canonical morphism
  $$
  \prism_{R_1/A}\hat{\otimes}^{\mathbb{L}}_{\prism_{R_3/A}} \prism_{R_2/A}\to \prism_{R_1\hat{\otimes}^{\mathbb{L}}_{R_3}R_2/A}
  $$
  is an equivalence. 
\end{proposition}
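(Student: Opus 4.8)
The idea is to reduce, by left Kan extension, to the case of $p$-completely smooth algebras, then to reduce modulo $(p,I)$ to a Künneth statement for Hodge--Tate cohomology, and finally to deduce the latter by comparing conjugate filtrations and invoking the (elementary) Künneth decomposition of the cotangent complex of a smooth morphism. First I would observe that the two functors
\[ (R_1\leftarrow R_3\to R_2)\mapsto \prism_{R_1/A}\hat{\otimes}^{\mathbb{L}}_{\prism_{R_3/A}}\prism_{R_2/A}, \qquad (R_1\leftarrow R_3\to R_2)\mapsto \prism_{R_1\hat{\otimes}^{\mathbb{L}}_{R_3}R_2/A}, \]
from spans of derived $p$-complete simplicial $A/I$-algebras to derived $(p,I)$-complete $E_\infty$-$A$-algebras, both preserve sifted colimits: $\prism_{-/A}$ does by construction (\Cref{sec:def-prism-cohom-general}), the relative tensor product of $E_\infty$-algebras preserves colimits in each variable, and the span $R_1\hat{\otimes}^{\mathbb{L}}_{R_3}R_2$ is again $p$-completely smooth --- with the derived tensor product agreeing with the classical one --- when $R_1,R_2,R_3$ are. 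Since every span of derived $p$-complete simplicial $A/I$-algebras is a sifted colimit of spans of finitely generated polynomial $A/I$-algebras, both functors are left Kan extended from spans of $p$-completely smooth $A/I$-algebras, and the canonical comparison map is a natural transformation between them, hence an equivalence in general as soon as it is one on smooth spans. So I may assume $R_1,R_2,R_3$ are $p$-completely smooth over $A/I$; then $T:=R_1\hat{\otimes}^{\mathbb{L}}_{R_3}R_2$ is as well.

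\textbf{Reduction to Hodge--Tate cohomology.} All four prismatic complexes are derived $(p,I)$-complete, so by derived Nakayama for the invertible ideal $I$ it suffices to check the equivalence after applying $-\otimes^{\mathbb{L}}_A A/I$. Since this functor is symmetric monoidal on derived $(p,I)$-complete objects and carries $\prism_{-/A}$ to $\overline{\prism}_{-/A}$ (\Cref{sec:derived-prism-cohom-global}), I am reduced to showing that
\[ \overline{\prism}_{R_1/A}\otimes^{\mathbb{L}}_{\overline{\prism}_{R_3/A}}\overline{\prism}_{R_2/A}\longrightarrow \overline{\prism}_{T/A} \]
is an equivalence of $E_\infty$-$(A/I)$-algebras.

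\textbf{Conjugate filtrations.} I would equip the left side with the tensor product of the conjugate filtrations (\Cref{sec:hodge-tate-comp-general}) and the right side with its conjugate filtration; the comparison map is then filtered, and both filtrations are exhaustive and, for $p$-completely smooth inputs, bounded, so it is enough to check that the map is an equivalence on associated gradeds. For $p$-completely smooth $R$ the graded algebra $\mathrm{gr}^{\mathrm{conj}}_\bullet\overline{\prism}_{R/A}$ is the twisted exterior algebra $\bigoplus_i \Omega^i_{R/(A/I)}\{-i\}[-i]$ (by \Cref{sec:hodge-tate-comp-smooth}, since $L_{R/(A/I)}^{\wedge_p}=\Omega^1_{R/(A/I)}$ is a finite projective module in degree $0$). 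Because $R_3\to R_1$ is smooth one has a (locally split) exact sequence $0\to \Omega^1_{R_3/(A/I)}\otimes_{R_3}R_1\to \Omega^1_{R_1/(A/I)}\to \Omega^1_{R_1/R_3}\to 0$ of finite projective modules, whence $\Omega^\bullet_{R_1/(A/I)}\cong(\Omega^\bullet_{R_3/(A/I)}\otimes_{R_3}R_1)\otimes_{R_1}\Omega^\bullet_{R_1/R_3}$ is flat over $\Omega^\bullet_{R_3/(A/I)}$; hence $\mathrm{gr}^{\mathrm{conj}}_\bullet\overline{\prism}_{R_1/A}$ is flat over $\mathrm{gr}^{\mathrm{conj}}_\bullet\overline{\prism}_{R_3/A}$, so $\mathrm{gr}^{\mathrm{conj}}$ commutes with the relative tensor product. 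The remaining point is the identity of graded $(A/I)$-algebras
\[ \Omega^\bullet_{R_1/(A/I)}\otimes_{\Omega^\bullet_{R_3/(A/I)}}\Omega^\bullet_{R_2/(A/I)}\ \xrightarrow{\ \sim\ }\ \Omega^\bullet_{T/(A/I)} \]
(suppressing Breuil--Kisin twists), which follows by splitting the transitivity sequences: $\Omega^1_{T/(A/I)}\cong(\Omega^1_{R_3/(A/I)}\otimes_{R_3}T)\oplus(\Omega^1_{R_1/R_3}\otimes_{R_1}T)\oplus(\Omega^1_{R_2/R_3}\otimes_{R_2}T)$, and $\wedge^\bullet$ of a direct sum is the tensor product of the $\wedge^\bullet$'s.

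\textbf{Main obstacle.} The delicate step is the passage to associated gradeds in Step~3: identifying $\mathrm{gr}^{\mathrm{conj}}$ of the \emph{relative} tensor product $\overline{\prism}_{R_1/A}\otimes^{\mathbb{L}}_{\overline{\prism}_{R_3/A}}\overline{\prism}_{R_2/A}$ with the relative tensor product of the conjugate-graded pieces, which requires the flatness of $\mathrm{gr}^{\mathrm{conj}}_\bullet\overline{\prism}_{R_1/A}$ over $\mathrm{gr}^{\mathrm{conj}}_\bullet\overline{\prism}_{R_3/A}$. This is precisely the reason the reduction to $p$-completely smooth algebras in Step~1 is doing real work rather than being a mere convenience; over a general simplicial or quasi-syntomic base the conjugate-graded pieces are unbounded complexes of flat modules and the bookkeeping, while still valid, is considerably less transparent. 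Everything else --- constructing the comparison map, checking that it is filtered, the cotangent-complex computations --- is formal once one is in the smooth situation.
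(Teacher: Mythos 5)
Your strategy (Kan extension to a smooth situation, reduction mod $I$, conjugate filtrations plus Künneth for differential forms) is genuinely different from the paper's, but as written it has a gap at the reduction step, and it is exactly the point your "main obstacle" paragraph leans on. After writing an arbitrary span as a sifted colimit of spans of ($p$-completions of) finitely generated polynomial $A/I$-algebras, you assert that "$T:=R_1\hat{\otimes}^{\mathbb{L}}_{R_3}R_2$ is again $p$-completely smooth" and, later, that "$R_3\to R_1$ is smooth". Neither follows from smoothness of the three objects alone: the spans produced by such a reduction a priori carry arbitrary $A/I$-algebra maps between polynomial algebras, and for such maps (e.g.\ $A/I[x]\to A/I$, $x\mapsto 0$, on both legs) the derived pushout is a Koszul/exterior algebra, hence neither discrete nor smooth, and $\Omega^\bullet_{R_1/(A/I)}$ is certainly not flat over $\Omega^\bullet_{R_3/(A/I)}$. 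So the flatness of conjugate gradeds that drives your Step 3 is simply not available in the generality your Step 1 delivers; the smoothness of the objects is not the relevant hypothesis, smoothness (or at least flatness) of the two legs is.

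The gap is fixable: the compact projective generators of the span-diagram category are finite coproducts of left Kan extensions of polynomial algebras from the three vertices, i.e.\ spans of the form $P_1\otimes P_3\leftarrow P_3\rightarrow P_2\otimes P_3$ with free — hence $p$-completely smooth — legs, and after restricting the reduction to those, your smoothness and flatness claims do hold and the rest of the argument goes through; but this has to be said, since your reduction as stated does not arrange it. Note also that the paper sidesteps the issue entirely: since $\prism_{-/A}$ preserves sifted colimits, by \cite[Proposition 5.5.8.15]{lurie_higher_topos_theory} it preserves all colimits as soon as it preserves finite coproducts; the empty coproduct is clear ($\prism_{(A/I)/A}\cong A$), and the binary coproduct map $\prism_{R/A}\hat{\otimes}^{\mathbb{L}}_{A}\prism_{S/A}\to \prism_{R\hat{\otimes}S/A}$ for $R,S$ $p$-completely smooth over $A/I$ is checked modulo $I$ via the Hodge--Tate comparison (\Cref{sec:hodge-tate-comp-smooth}). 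The relative statement over $R_3$ then comes for free because a pushout is a colimit — no filtration on a relative tensor product, no flatness of conjugate gradeds, and no hypothesis on the legs is ever needed. If you keep your route, fix the reduction as above; otherwise the formal colimit-preservation argument is both shorter and avoids the delicate step you singled out.
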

\begin{proof}
  Using \cite[Construction 2.1]{bhatt_morrow_scholze_topological_hochschild_homology} (resp.\ \cite[Proposition 5.5.8.15]{lurie_higher_topos_theory}) the functor $R\mapsto \prism_{R/A}$, which is the left Kan extension from $p$-completely smooth algebras to all derived $p$-complete simplicial $A/I$-algebras, commutes with colimits if it preserves finite coproducts. Clearly, $\prism_{(A/I)/A}\cong A$, i.e., $\prism_{-/A}$ preserves the final object. Moreover, for $R,S$ $p$-completely smooth over $A/I$ the canonical morphism
  $$
  \prism_{R/A}\hat{\otimes}^{\mathbb{L}}_{A} \prism_{S/A}\to \prism_{S\hat{\otimes} R/A} 
  $$
  is an isomorphism because this may by $I$-completeness be checked for $\overline{\prism}_{-/A}$ where it follows from the Hodge-Tate comparison.
\end{proof}

Gluing the isomorphism in \Cref{sec:kunn-form-prism-proposition-kunneth-formula} we can derive, using as well the projection formula and flat base change for quasi-coherent cohomology, the following statement.
\begin{corollary}
  \label{sec:kunn-form-prism-smooth-proper-case}
  If $X$ and $Y$ are \red{quasi-compact quasi-separated} $p$-completely smooth $p$-adic formal schemes over $\Spf(A/I)$), then
$$
R\Gamma(X\times_{\Spf(A/I)} Y,\prism_{X\times_{\Spf(A/I)} Y/A})\cong R\Gamma(X,\prism_{X/A})\hat{\otimes}_A^{\mathbb{L}}R\Gamma(Y,\prism_{Y/A}).
$$
\end{corollary}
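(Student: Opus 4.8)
The plan is to bootstrap the affine K\"unneth isomorphism of \Cref{sec:kunn-form-prism-proposition-kunneth-formula} (applied with $R_3=A/I$) to the global statement in two steps: first upgrade it to an identification of complexes of sheaves on the product formal scheme, and then compute the cohomology of the resulting external product using the projection formula and flat base change for quasi-coherent cohomology of proper schemes.

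First I would promote \Cref{sec:kunn-form-prism-proposition-kunneth-formula} to an isomorphism of sheaves on $Z:=X\times_{\Spf(A/I)}Y$. Writing $q_X,q_Y$ for the two projections, the functoriality of $\prism_{-/A}$ in $X\leftarrow Z\to Y$ together with the multiplication on $\prism_{Z/A}$ give a natural map
$$ q_X^{-1}\prism_{X/A}\,\widehat{\otimes}^{\mathbb{L}}_{\underline{A}}\,q_Y^{-1}\prism_{Y/A}\longrightarrow \prism_{Z/A}, $$
where $\widehat{\otimes}$ denotes the derived $(p,I)$-completed tensor product of sheaves of $\underline{A}$-modules. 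Since the affine opens $U\times_{\Spf(A/I)}V$ with $U\subseteq X$ and $V\subseteq Y$ affine form a basis for the topology of $Z$, and since $\prism_{-/A}$ is compatible with restriction to affine opens by \Cref{sec:derived-prism-cohom-global}, checking the map is an isomorphism reduces to the affine case: on $U\times_{\Spf(A/I)}V=\Spf\!\big(\mathcal{O}(U)\,\widehat{\otimes}_{A/I}\,\mathcal{O}(V)\big)$ it is exactly the isomorphism of \Cref{sec:kunn-form-prism-proposition-kunneth-formula} for $R_1=\mathcal{O}(U)$, $R_2=\mathcal{O}(V)$, $R_3=A/I$.

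Next I would take $R\Gamma(Z,-)$ of this identification. Reducing modulo $(p,I)$ — which is legitimate because $I$ is pro-Zariski-locally principal, so $-\otimes_A^{\mathbb{L}}A/I$ commutes with $R\Gamma$ on the quasi-compact $X,Y,Z$, and because both sides of the asserted equality are derived $(p,I)$-complete — replaces $\prism$ by the Hodge-Tate complex $\overline{\prism}$, which by \Cref{sec:hodge-tate-comp-general} is a genuine perfect object of the derived category of quasi-coherent sheaves (with cohomology the wedge powers of the cotangent complex). At this point the external product of the previous step becomes the box product $\overline{\prism}_{Z/A}\simeq q_X^{\ast}\overline{\prism}_{X/A}\otimes^{\mathbb{L}}_{\mathcal{O}_Z}q_Y^{\ast}\overline{\prism}_{Y/A}$, and $R\Gamma(Z,-)$ of it is computed by the classical K\"unneth formula: factor through $R\Gamma(X,Rq_{X\ast}(-))$, apply the projection formula, and then use flat base change along the $p$-completely smooth (hence flat) structure map $X\to\Spf(A/I)$ to identify $Rq_{X\ast}q_Y^{\ast}\overline{\prism}_{Y/A}$ with the pullback of $R\Gamma(Y,\overline{\prism}_{Y/A})$. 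Properness of $X$ and $Y$ is what guarantees that $R\Gamma(X,\overline{\prism}_{X/A})$ and $R\Gamma(Y,\overline{\prism}_{Y/A})$ are perfect complexes over $A/I$, which is needed for the projection formula and base change to apply and for the derived tensor product to commute with cohomology; for non-proper $X,Y$ the statement fails. Undoing the reduction modulo $(p,I)$ by derived Nakayama then gives the claim over $A$.

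The main obstacle, and the only real subtlety, is the bookkeeping with derived $(p,I)$-completions and with the fact that $\prism_{X/A}$ is a complex of $\underline{A}$-modules rather than of $\mathcal{O}_X$-modules; this is precisely why one wants to pass to $\overline{\prism}$, which does live in $D_{\mathrm{qc}}$, before invoking the quasi-coherent projection formula and base change, and to organize the completion arguments so that none of the reductions disturb them.
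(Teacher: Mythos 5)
Your argument is correct and follows essentially the same route as the paper, whose (very brief) proof is exactly what you spell out: glue the affine K\"unneth isomorphism of \Cref{sec:kunn-form-prism-proposition-kunneth-formula} over a basis of product affines and then apply the projection formula and flat base change for quasi-coherent cohomology, with your reduction modulo $(p,I)$ and derived Nakayama being a reasonable way to make the completion bookkeeping precise. The only quibble is your closing remark on properness: the projection formula and flat base change in $D_{\mathrm{qc}}$ hold for quasi-compact quasi-separated morphisms, so properness here is a convenience (giving perfectness/coherence of the cohomology) rather than what makes those tools applicable.
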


\newpage
\section{Prismatic Dieudonn\'e theory for $p$-divisible groups}
\label{sec:prism-dieu-theory-for-p-divisible-groups}

This chapter is the heart of this paper. \purple{We construct the \textit{prismatic Dieudonn\'e functor} over any quasi-syntomic ring and prove that it gives an antiequivalence between $p$-divisible groups over $R$ and \textit{\red{admissible} prismatic Dieudonn\'e crystals over $R$}.} The strategy to do this is to use \textit{quasi-syntomic descent} to reduce to the case where $R$ is quasi-regular semiperfectoid, in which case the \red{(admissible)} prismatic Dieudonn\'e crystals over $R$ can be replaced by simpler objects, the \textit{\red{(admissible)} prismatic Dieudonn\'e modules}.

\subsection{Abstract prismatic Dieudonn\'e crystals and modules}
\label{sec:abstr-divid-dieud}
Let $R$ be a $p$-complete ring. We defined in \Cref{sec:quasi-syntomic-rings-morphism-of-topoi-from-prismatic-topos} a morphism of topoi :
\[ u : \mathrm{Shv}((R)_{\prism}) \to \mathrm{Shv}((R)_{\rm QSYN}).   \]
\blue{We let 
$$
\epsilon_\ast: \mathrm{Shv}((R)_{\rm QSYN}) \to \mathrm{Shv}((R)_{\rm qsyn})
$$
be the functor defined by $\epsilon_\ast \mathcal{F}(R') = \mathcal{F}(R')$ for $\mathcal{F} \in \mathrm{Shv}((R)_{\rm QSYN})$ and $R' \in (R)_{\rm qsyn}$. It has a left adjoint $\epsilon^\natural: \mathrm{Shv}((R)_{\rm qsyn}) \to \mathrm{Shv}((R)_{\rm QSYN})$. We warn the reader that the restriction functor from the big to the small quasi-syntomic site does not induce a morphism of sites\footnote{We thank Kazuhiro Ito for drawing our attention to this point.}, i.e., this left adjoint need not preserve finite limits (which explains why we denoted it $\epsilon^\natural$ instead of $\epsilon^{-1}$). 

We let 
$$ v_\ast = \epsilon_\ast \circ u_\ast :  \mathrm{Shv}((R)_{\prism}) \to \mathrm{Shv}((R)_{\rm qsyn})
$$
and
$$
v^\natural = u^{-1} \circ \epsilon^\natural : \mathrm{Shv}((R)_{\rm qsyn}) \to  \mathrm{Shv}((R)_{\prism}).
$$
We still have the formula $Rv_\ast\cong R\varepsilon_\ast\circ Ru_\ast$ as $\varepsilon_\ast$ is exact.
}

\begin{definition}
 \label{sec:abstr-divid-prism-definition-of-the-sheaves}

Let $R$ be a $p$-complete ring. We define :
\[ \mathcal{O}^{\rm pris} := v_* \mathcal{O}_{\prism} ~~ ; ~~ \mathcal{N}^{\geq 1} \mathcal{O}^{\rm pris} := v_* \mathcal{N}^{\geq 1} \mathcal{O}_{\prism} ~~ ; ~~  \mathcal{I}^{\rm pris} := v_* \mathcal{I}_{\prism} , \]
where $\mathcal{I}_\prism\subseteq \mathcal{O}_{\prism}$ denotes the canonical invertible ideal sheaf sending a prism $(B,J)\in (R)_{\prism}$ to $J$.
The sheaf $\mathcal{O}^{\rm pris}$ is endowed with a Frobenius lift $\varphi$. 
\end{definition}

Although these sheaves are defined in general, we will only use them over quasi-syntomic rings. 

\begin{proposition}
 \label{sec:abstr-divid-prism-proposition-quotient-sheaf-is-structure-sheaf}

Let $R$ be quasi-syntomic ring. The quotient sheaf $$\mathcal{O}^{\rm pris} / \mathcal{N}^{\geq 1} \mathcal{O}^{\rm pris}$$ is isomorphic to the structure sheaf $\mathcal{O}$ of $(R)_{\mathrm{qsyn}}$. 
\end{proposition}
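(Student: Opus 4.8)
The plan is to reduce the statement to the basis of quasi-regular semiperfectoid rings, where the absolute prismatic site becomes concrete. Write $\mathcal{B}\subseteq (R)_{\mathrm{qsyn}}$ for the full subcategory of quasi-regular semiperfectoid rings over $R$, equipped with the induced topology. By \Cref{sec:quasi-syn-rings-proposition-quasi-regular semiperfectoid-basis}, $\mathcal{B}$ is a basis for the quasi-syntomic topology, so restriction induces an equivalence of topoi $\mathrm{Shv}((R)_{\mathrm{qsyn}})\xrightarrow{\ \sim\ }\mathrm{Shv}(\mathcal{B})$; in particular the quotient sheaf $\mathcal{O}^{\mathrm{pris}}/\mathcal{N}^{\geq 1}\mathcal{O}^{\mathrm{pris}}$ may be computed after restricting to $\mathcal{B}$, namely as the sheafification on $\mathcal{B}$ of the presheaf cokernel of $\mathcal{N}^{\geq 1}\mathcal{O}^{\mathrm{pris}}|_{\mathcal{B}}\hookrightarrow \mathcal{O}^{\mathrm{pris}}|_{\mathcal{B}}$. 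Thus it suffices to identify this with $\mathcal{O}|_{\mathcal{B}}$.

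Next I would compute $v_{*}\mathcal{F}$ on an object $S\in\mathcal{B}$ for $\mathcal{F}\in\{\mathcal{O}_{\prism},\ \mathcal{N}^{\geq 1}\mathcal{O}_{\prism}\}$. Unwinding $v_{*}=(\text{restriction to the small site})\circ u_{*}$ together with the explicit formula for the pushforward along the cocontinuous functor $u$ recalled just before \Cref{sec:quasi-syntomic-rings-morphism-of-topoi-from-prismatic-topos}, one sees that $v_{*}\mathcal{F}(S)$ is the limit of $\mathcal{F}$ over the opposite of the comma category attached to $S$, which is exactly the absolute prismatic site $(S)_{\prism}$. Since $S$ is quasi-regular semiperfectoid, $(S)_{\prism}$ has a final object $(\prism_{S},I)$, so this limit is just $\mathcal{F}(\prism_{S},I)$. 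Hence $\mathcal{O}^{\mathrm{pris}}(S)=\prism_{S}$ and $\mathcal{N}^{\geq 1}\mathcal{O}^{\mathrm{pris}}(S)=\varphi^{-1}(I)=\mathcal{N}^{\geq 1}\prism_{S}$, with the subsheaf inclusion being the evident one, so the presheaf cokernel on $\mathcal{B}$ is $S\mapsto \prism_{S}/\mathcal{N}^{\geq 1}\prism_{S}$. By the case $i=0$ of \Cref{sec:prism-cohom-quasi-theorem-identification-of-the-graded-pieces-of-nygaard-filtration}, and the naturality in $S$ of the map $\theta\colon\prism_{S}\to S$ that induces it, this presheaf is already isomorphic to $S\mapsto S=\mathcal{O}(S)$, which is a sheaf on $\mathcal{B}$ and therefore equals its own sheafification. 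Transporting back through the equivalence above yields $\mathcal{O}^{\mathrm{pris}}/\mathcal{N}^{\geq 1}\mathcal{O}^{\mathrm{pris}}\cong\mathcal{O}$.

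The point requiring the most care is exactly this passage to the basis: since $v_{*}$ is a right adjoint it does not commute with cokernels, so one cannot simply push forward along $v$ the tautological exact sequence $0\to\mathcal{N}^{\geq 1}\mathcal{O}_{\prism}\to\mathcal{O}_{\prism}\to\mathcal{O}_{\prism}/\mathcal{N}^{\geq 1}\mathcal{O}_{\prism}\to 0$ on $(R)_{\prism}$. Restricting to quasi-regular semiperfectoid rings is what rescues the argument, because there the prismatic site has a final object $(\prism_{S},I)$ and the presheaf cokernel of $\mathcal{N}^{\geq 1}\mathcal{O}^{\mathrm{pris}}\hookrightarrow\mathcal{O}^{\mathrm{pris}}$ is literally $\prism_{S}/\mathcal{N}^{\geq 1}\prism_{S}$, on which one may invoke the concrete identification of the bottom graded piece of the Nygaard filtration. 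The only other thing to check is that $\prism_{S}/\mathcal{N}^{\geq 1}\prism_{S}\cong S$ is functorial in $S$, which is immediate since it is induced by the natural map $\theta$.
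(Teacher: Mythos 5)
Your proposal is correct and follows essentially the same route as the paper: reduce to the basis of quasi-regular semiperfectoid rings via \Cref{sec:quasi-syn-rings-proposition-quasi-regular semiperfectoid-basis} and then invoke \Cref{sec:prism-cohom-quasi-theorem-identification-of-the-graded-pieces-of-nygaard-filtration} to identify $\prism_S/\mathcal{N}^{\geq 1}\prism_S\cong S$ functorially. You simply spell out the sheaf-theoretic details (evaluation of $v_*$ on quasi-regular semiperfectoid $S$ via the final object of $(S)_\prism$, and working with the presheaf cokernel on the basis) that the paper leaves implicit.
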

\begin{proof}
It is enough to produce such an isomorphism functorially on a basis of $(R)_{\mathrm{qsyn}}$. By \Cref{sec:quasi-syn-rings-proposition-quasi-regular semiperfectoid-basis}, we can thus assume that $R$ is quasi-regular semiperfectoid. In this case, we conclude by \Cref{sec:prism-cohom-quasi-theorem-identification-of-the-graded-pieces-of-nygaard-filtration}.
\end{proof} 

\begin{definition}
  \label{sec:prism-dieud-cryst-definition-prismatic-crystals}
  Let $R$ be a $p$-complete ring. A \textit{prismatic crystal} over $R$ is an $\mathcal{O}_\prism$-module $\mathcal{M}$ on the prismatic site $(R)_\prism$ of $R$ such that for all morphisms $(B,J)\to (B^\prime,J^\prime)$ in $(R)_\prism$ the canonical morphism
  $$
  \mathcal{M}(B,J)\otimes_{B}B^\prime\red{\to} \mathcal{M}(B^\prime,J^\prime)
  $$
  is an isomorphism.
\end{definition}

Note that a prismatic crystal in finitely generated projective $\mathcal{O}_\prism$-modules (resp. in finitely generated projective $\overline{\mathcal{O}}_{\prism}$-modules) is the same thing as a finite locally free $\mathcal{O}_{\prism}$-module (resp. a finite locally free $\overline{\mathcal{O}}_{\prism}$-module). In what follows, we will essentially consider only this kind of prismatic crystal.

\begin{proposition}
 \label{sec:abstr-divid-prism-proposition-finite-locally-free}
Let $R$ be a quasi-syntomic ring. The functors $v_*$ and \blue{$v^{\ast}(-):=\mathcal{O}_{\prism}\otimes_{v^{\natural}\mathcal{O}^\pris}v^{\natural}\green{(-)}$} induce equivalences between the category of finite locally free $\mathcal{O}_{\prism}$-modules and the category of finite locally free $\mathcal{O}^{\rm pris}$-modules. 
\end{proposition}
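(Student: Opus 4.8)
The plan is to reduce, via the basis property of quasi-regular semiperfectoid rings, to the case where $R$ is itself quasi-regular semiperfectoid, and then to exploit the existence of a final object in the prismatic site. First I would check that both sides descend: the fibered category over $(R)_{\rm qsyn}$ sending a quasi-syntomic $R$-algebra $S$ to the category of finite locally free $\mathcal{O}_{\prism}$-modules on $(S)_{\prism}$, respectively to finite locally free $\mathcal{O}^{\rm pris}$-modules on $(S)_{\rm qsyn}$, is a stack for the quasi-syntomic topology, and that $v_*$ and $v^*$ are compatible with restriction along $S' \to S$. For the $\mathcal{O}^{\rm pris}$-side this is just descent for finite locally free modules over a sheaf of rings; for the prismatic side one uses that a finite locally free $\mathcal{O}_{\prism}$-module is finite projective over any prismatic cover, faithfully flat descent of modules, and \Cref{sec:quasi-syntomic-rings-proposition-quasi-syntomic-cover-of-prism-admits-refinement} to produce enough prismatic covers. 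Since quasi-regular semiperfectoid rings form a basis of $(R)_{\rm qsyn}$ and the \v{C}ech nerve of a quasi-regular semiperfectoid cover is again quasi-regular semiperfectoid (\Cref{sec:quasi-syn-rings-proposition-quasi-regular semiperfectoid-basis}), this reduces us to $R$ quasi-regular semiperfectoid.

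In that case $(R)_{\prism}$ has a final object $(\prism_R, I)$, and $R\Gamma((R)_{\prism}, \mathcal{O}_{\prism}) = \prism_R$ is concentrated in degree $0$ by the results recalled in \Cref{sec:prism-cohom-quasi-regular semiperfectoid}. Evaluation at the final object then identifies finite locally free $\mathcal{O}_{\prism}$-modules with finite projective $\prism_R$-modules: such a module is automatically a crystal, hence determined by its value $M$ at $(\prism_R, I)$, and being locally free for the $(p,I)$-completely faithfully flat topology is equivalent to $M$ being finite projective, using $(p,I)$-completeness of $\prism_R$ and Nakayama. On the other hand $(R)_{\rm qsyn}$ has a final object $R$, with $\mathcal{O}^{\rm pris}(R) = \prism_R$; a finite locally free $\mathcal{O}^{\rm pris}$-module $\mathcal{N}$ trivialises over some quasi-regular semiperfectoid quasi-syntomic cover $R \to S$, and since $\mathcal{O}^{\rm pris}(S^n) = \prism_{S^n} \cong \prism_S^{\widehat{\otimes}_{\prism_R} n}$ by the K\"unneth formula (\Cref{sec:kunn-form-prism-proposition-kunneth-formula}) along the \v{C}ech nerve $S^{\bullet}$, and $\prism_R \to \prism_S$ is faithfully flat (being induced by the $(p,I)$-completely faithfully flat map of prisms $(\prism_R, I) \to (\prism_S, I)$), faithfully flat descent identifies finite locally free $\mathcal{O}^{\rm pris}$-modules with finite projective $\prism_R$-modules via $\mathcal{N} \mapsto \mathcal{N}(R)$. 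Finally I would check that these two identifications are intertwined by $v_*$ and $v^*$: $v_* \mathcal{M}$ evaluated at $R$ is $\mathcal{M}(\prism_R, I)$ (again because $(R)_{\prism}$ has a final object and no higher cohomology), so $v_*$ sends $\mathcal{M}$ to the $\mathcal{O}^{\rm pris}$-module attached to the finite projective $\prism_R$-module $\mathcal{M}(\prism_R, I)$, while $v^*$ applied to the $\mathcal{O}^{\rm pris}$-module attached to a finite projective $\prism_R$-module $M$ recovers the crystal $(B,J) \mapsto M \otimes_{\prism_R} B$; hence $v_*$ and $v^*$ are mutually inverse equivalences.

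The hard part will be the first step — making precise that finite locally free $\mathcal{O}_{\prism}$-modules on $(S)_{\prism}$ form a stack in $S$, and that the morphism of topoi $v$ is compatible with localisation on $(R)_{\rm qsyn}$ — since this forces one to control how the absolute prismatic site and $v$ behave under quasi-syntomic base change, ultimately via \Cref{sec:quasi-syntomic-rings-proposition-quasi-syntomic-cover-of-prism-admits-refinement} and the K\"unneth formula. Once this bookkeeping is in place, the identification over quasi-regular semiperfectoid rings, and the compatibility with $v_*$ and $v^*$, is essentially formal given the structural results on prismatic cohomology of such rings already recalled.
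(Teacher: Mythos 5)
Your overall skeleton matches the paper's: reduce to $R$ quasi-regular semiperfectoid by localising (the paper does this via the slice-topos identifications $(R)_\prism/h_{R'}\cong (R')_\prism$, $(R)_{\mathrm{qsyn}}/R'\cong (R')_{\mathrm{qsyn}}$, which is lighter than your full stack bookkeeping, since one only has to check that the unit and counit of $(v^*,v_*)$ are isomorphisms, a local question), and then use \Cref{sec:descent-p-completely-1-proposition-descent-of-finite-projective-modules-for-faithfully-morphisms-of-prisms} to identify finite locally free $\mathcal{O}_\prism$-modules with finite projective $\prism_R$-modules via the final object. Up to that point you are in step with the paper.

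The gap is in your treatment of the $\mathcal{O}^{\rm pris}$-side. You trivialise a finite locally free $\mathcal{O}^{\rm pris}$-module over a quasi-regular semiperfectoid cover $R\to S$ and then invoke classical faithfully flat descent along $\prism_R\to\prism_S$, asserting that this map is faithfully flat ``being induced by the $(p,I)$-completely faithfully flat map of prisms $(\prism_R,I)\to(\prism_S,I)$''. That justification is circular: the claim that $(\prism_R,I)\to(\prism_S,I)$ is a $(p,I)$-completely faithfully flat map of prisms for an \emph{arbitrary} quasi-syntomic cover $S$ of $R$ is exactly what needs proof, and it is not among the results recalled in the paper. \Cref{sec:quasi-syntomic-rings-proposition-quasi-syntomic-cover-of-prism-admits-refinement} does not give it: it produces \emph{some} prism $(B,IB)$, namely the prismatic cohomology of a specially constructed ``large'' algebra obtained by adjoining $p$-power roots, with $R\to B/IB$ $p$-completely faithfully flat; it says nothing about $\prism_S$ for the cover $S$ you started with (and your Künneth identification of the Čech nerve also silently replaces absolute by relative prismatic cohomology, which needs \Cref{sec:prism-cohom-quasi-proposition-all-prisms-agree-for-quasi-regular semiperfectoid} and a choice of perfectoid surjection). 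One can in fact prove the flatness of $\prism_R\to\prism_S$ by a separate conjugate-filtration argument, but as written your proposal has a hole precisely where the paper does its real work: the paper instead uses that the pair $(\prism_R,\ker\theta)$ is henselian (\Cref{sec:essent-surj-lemma-prism_r-henselian-along-ker-theta}, via \Cref{sec:essent-surj-lemma-iso-classes-of-finite-projective-modules-for-henselian-pairs}), so a finite projective $\prism_R$-module splits after pulling back an open cover of $\Spf(R)$; after this further quasi-syntomic localisation the module is finite free and the counit $v^*v_*\mathcal{N}\to\mathcal{N}$ is evidently an isomorphism, with no descent along $\prism_R\to\prism_S$ needed. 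You should either adopt that henselian splitting argument or supply an actual proof of the flatness claim you are using.
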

\begin{proof}
  Because $v_\ast(\mathcal{O}_\prism)=\mathcal{O}^\pris$ it is clear that for all finite locally free $\mathcal{O}^\pris$-modules $\mathcal{M}$ the canonical morphism
    $$
    \mathcal{M}\to v_{\ast}(v^\ast(\mathcal{M}))
    $$
    is an isomorphism as this can be checked locally on $(R)_{\mathrm{qsyn}}$. Conversely, let $\mathcal{N}$ be a finite locally free $\mathcal{O}_\prism$-module. We have to show that the counit
    $$
    v^{\ast}v_\ast(\mathcal{N})\to \mathcal{N}
    $$
    is an isomorphism.
    For any morphism $R\to R^\prime$ with $R^\prime$ quasi-syntomic there are equivalences
    $$
    (R)_\prism/h_{R^\prime}\cong (R^\prime)_{\prism}\ ,\ (R)_{\mathrm{qsyn}}/{R^\prime}\cong (R^\prime)_{\mathrm{qsyn}}
    $$
    of slice topoi where $h_{R^\prime}(B,J):=\mathrm{Hom}_R(R^\prime,B/J)$. By passing to a quasi-syntomic cover $R\to R^\prime$ we can therefore assume that $R$ is quasi-regular semiperfectoid, in particular that the site $(R)_{\prism}$ has a final object given by $\prism_R$. By $(p,I)$-completely faithfully flat descent of finitely generated projective modules over $(p,I)$-complete rings of bounded $(p,I)$-torsion (cf. \Cref{sec:descent-p-completely-1-proposition-descent-of-finite-projective-modules-for-faithfully-morphisms-of-prisms}), the category of finite locally free $\mathcal{O}_\prism$-modules on $(R)_{\prism}$ is equivalent to finitely generated projective $\prism_R$-modules\footnote{The non-trivial point is that the global sections of a finite locally free $\mathcal{O}_{\prism}$-module are locally free over $\prism_R$.}.
    As the morphism $\prism_R\to R$ (the ``$\theta$''-map) is henselian along its kernel, cf.\ \Cref{sec:essent-surj-lemma-prism_r-henselian-along-ker-theta}, finite locally free $\prism_R$-modules split on the pullback of an open cover of $\Spf(R)$. Thus, after passing to a quasi-syntomic cover of $\mathrm{Spf}(R)$, we may assume that $\mathcal{N}$ is finite free. Then the isomorphism
    $$
    v^\ast v_\ast(\mathcal{N})\cong \mathcal{N}
    $$
    is clear.
\end{proof}

\begin{definition}
  \label{sec:abstr-divid-prism-definition-prismatic-dieudonne-crystals}
  Let $R$ be a quasi-syntomic ring. A \textit{prismatic Dieudonn\'e crystal over $R$} is a finite locally free $\mathcal{O}^{\rm pris}$-module $\mathcal{M}$ together with $\varphi$-linear morphism
  $$
  \varphi_{\mathcal{M}} \colon  \mathcal{M}\to \mathcal{M}
  $$
  whose linearization \red{$\varphi^\ast \mathcal{M}\to \mathcal{M}$} has its cokernel killed by $\mathcal{I}^{\rm pris}$. \red{We call a prismatic Dieudonn\'e crystal $(\mathcal{M},\varphi_{\mathcal{M}})$ \textit{admissible} if the image of the composition
    \[
      \mathcal{M}\xrightarrow{\varphi_{\mathcal{M}}} \mathcal{M}\to \mathcal{M}/\mathcal{I}^\pris\cdot \mathcal{M}
    \]
    is a finite, locally free $\mathcal{O}$-module $\mathcal{F}_{\mathcal{M}}$ such that the map $(\mathcal{O}^{\rm pris}/\mathcal{I}^{\rm pris}) \otimes_{\mathcal{O}} \mathcal{F}_{\mathcal{M}} \to \mathcal{M}/\mathcal{I}^{\rm pris}\mathcal{M}$ induced by $\varphi_{\mathcal{M}}$ is a monomorphism}.
\end{definition}

Here, $\mathcal{M}/\mathcal{I}^\pris\cdot \mathcal{M}$ is an $\mathcal{O}\cong \mathcal{O}^\pris/\mathcal{N}^{\geq 1}\mathcal{O}^\pris$-module, cf.\ \ref{sec:abstr-divid-prism-proposition-quotient-sheaf-is-structure-sheaf}, via the composition $\mathcal{O}^\pris\xrightarrow{\varphi}\mathcal{O}^\pris\to \mathcal{O}^\pris/\mathcal{I}^\pris\mathcal{O}$.

%
%

\begin{remark}
\label{remarks-on-the-definition-of-a-filtered-prismatic-dieudonne-crystal}
 For a prismatic Dieudonn\'e crystal $(\mathcal{M},\varphi_{\mathcal{M}})$ the linearization $\varphi^\ast \mathcal{M} \to \mathcal{M}$ of the morphism $\varphi_{\mathcal{M}}\colon \mathcal{M} \to \mathcal{M}$ is an isomorphism after inverting a local generator $\tilde{\xi}$ of $\mathcal{I}^{\rm pris}$ and in particular is injective, since $\varphi^{\ast} \mathcal{M}$ is $\tilxi$-torsion free.
%
\end{remark}

\begin{remark}
\label{remark-on-admissibility}
Let $(\mathcal{M},\varphi_{\mathcal{M}})$ be a prismatic Dieudonn\'e crystal. Write $\mathrm{Fil} \mathcal{M}=\varphi_{\mathcal{M}}^{-1}(\mathcal{I}^{\rm pris}.\mathcal{M})$. Consider the diagram (defining $Q,K$)
  \[
    \xymatrix{
      0\ar[r]& \varphi^\ast \mathrm{Fil} \mathcal{M} \ar[r]^{\varphi_{\mathcal{M}}}\ar[d] & \mathcal{I}^{\rm pris}.\mathcal{M} \ar[r]\ar[d] & Q\ar[r]\ar[d]^\alpha & 0 \\
      0\ar[r]& \varphi^\ast \mathcal{M} \ar[r]^{\varphi_{\mathcal{M}}} &  \mathcal{M} \ar[r] & K\ar[r] & 0.   
    }
  \]
  As $\mathcal{I}^{\rm pris}.K=0$ (by definition of a prismatic Dieudonn\'e crystal) the map $\alpha$ is zero. The snake lemma implies therefore that there exists a short exact sequence
  \[
    0\to Q \to \varphi^\ast \mathcal{M}/{\varphi^\ast \Fil \mathcal{M}}\cong \mathcal{O}^{\rm pris}/\mathcal{I}^{\rm pris}\otimes_{\mathcal{O}} \mathcal{F}_{\mathcal{M}} \xrightarrow{\beta} \mathcal{M}/\mathcal{I}^{\rm pris}\mathcal{M} \to K \to 0
  \]
 (where as in \Cref{sec:abstr-divid-prism-definition-prismatic-dieudonne-crystals} we wrote $\mathcal{F}_{\mathcal{M}}= \mathcal{M}/\mathrm{Fil} \mathcal{M}$). Hence we see that the injectivity of $\beta$ (condition required in the definition of admissibility) is equivalent to the condition that $Q=0$.  
\end{remark}


\begin{definition}
  \label{sec:abstr-divid-prism-definition-category-of-divided-dieudonne-crystals}
  Let $R$ be a quasi-syntomic ring. We denote by $\mathrm{DM}(R)$ the category of prismatic Dieudonn\'e crystals over $R$ (with $\mathcal{O}^{\rm pris}$-linear morphisms commuting with Frobenius) and by $\mathrm{DM}^{\mathrm{adm}}(R)$ the full subcategory of admissible objects.
\end{definition}

\begin{proposition}
\label{sec:astr-divid-prism-proposition-descent}
The fibered category of \red{(usual or admissible)} prismatic Dieudonn\'e crystals over the category $\mathrm{QSyn}$ of quasi-syntomic rings endowed with the quasi-syntomic topology is a stack.
\end{proposition}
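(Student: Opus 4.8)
The strategy is the one announced at the start of this chapter: use quasi-syntomic descent to reduce to the quasi-regular semiperfectoid case, where one may replace filtered prismatic Dieudonn\'e crystals by the more concrete filtered prismatic Dieudonn\'e modules. First, a fibered category over a site is a stack as soon as its descent axioms are verified for coverings of objects of a basis of the topology by objects of that basis whose relevant fiber products again lie in the basis; by \Cref{sec:quasi-syn-rings-proposition-quasi-regular semiperfectoid-basis} the quasi-regular semiperfectoid rings form such a basis, and any quasi-syntomic cover $R\to R'$ between them can be refined to one whose entire \v{C}ech nerve $R'^{\bullet}$ consists of quasi-regular semiperfectoid rings. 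So I would assume $R$ quasi-regular semiperfectoid and $R\to R'$ such a cover. For such $R$ the prismatic site $(R)_{\prism}$ has final object $(\prism_R,I)$, so by \Cref{sec:abstr-divid-prism-proposition-finite-locally-free}, \Cref{sec:abstr-divid-prism-proposition-quotient-sheaf-is-structure-sheaf} and \Cref{sec:prism-cohom-quasi-theorem-identification-of-the-graded-pieces-of-nygaard-filtration} (equivalently \Cref{sec:abstr-divid-prism-proposition-equivalence-crystals-modules-for-quasi-regular semiperfectoid}), $\DF(R)$ is equivalent, compatibly with pullback, to the category of filtered prismatic Dieudonn\'e modules over $\prism_R$. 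Thus the task reduces to showing that the functor from filtered prismatic Dieudonn\'e modules over $\prism_R$ to descent data along $\prism_R\to\prism_{R'^{\bullet}}$ is an equivalence.

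Next I would identify $\prism_{R'^{\bullet}}$. Fixing a perfectoid ring $S$ with a surjection $S\twoheadrightarrow R$ and putting $(A,I)=(A_{\inf}(S),\ker\tilde{\theta})$, perfectness of $A$ forces every object of each $(R'^{n})_{\prism}$ to lie canonically over $(A,I)$, so the absolute and $A$-relative prismatic cohomologies of $R'^{n}$ agree. Since $R\to R'$ is $p$-completely flat, $R'^{n}\cong R'\widehat{\otimes}_R\cdots\widehat{\otimes}_R R'$ underived, and the K\"unneth formula \Cref{sec:kunn-form-prism-proposition-kunneth-formula} gives canonical isomorphisms $\prism_{R'^{n}}\cong\prism_{R'}\widehat{\otimes}^{\mathbb L}_{\prism_R}\cdots\widehat{\otimes}^{\mathbb L}_{\prism_R}\prism_{R'}$; in other words $\prism_{R'^{\bullet}}$ is the derived $(p,I)$-completed \v{C}ech nerve of $\prism_R\to\prism_{R'}$. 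The non-formal input here is that $\prism_R\to\prism_{R'}$ is $(p,I)$-completely faithfully flat, all terms bounded prisms: this is part of Bhatt--Scholze's flat quasi-syntomic descent for prismatic cohomology (compare \Cref{sec:quasi-syntomic-rings-proposition-quasi-syntomic-cover-of-prism-admits-refinement}). Granting it, finite locally free modules over bounded prisms satisfy $(p,I)$-completely faithfully flat descent by \Cref{sec:descent-p-completely-1-proposition-descent-of-finite-projective-modules-for-faithfully-morphisms-of-prisms}, so the underlying module of a descent datum glues to a finite locally free $\prism_R$-module $M$; the semilinear maps $\varphi_{M_n}$ are $\varphi$-equivariant for the flat transition maps and hence glue to $\varphi_M\colon M\to M$, and the condition that the cokernel of its linearisation be killed by $I$ may be checked after the faithfully flat base change to $\prism_{R'}$.

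Finally I would handle the filtration. The quotients $N_n:=M_n/\Fil\,M_n$ — finite locally free over $R'^{n}$ by axiom (2) — form a descent datum for finite locally free modules along the underived \v{C}ech nerve $R'^{\bullet}$ of the faithfully flat map $R\to R'$, hence glue by fppf descent to a finite locally free $R$-module $N$ and a surjection $M\twoheadrightarrow N$ of descent data; set $\Fil\,M:=\ker(M\to N)$. This has the correct base change: $N$ is an $R$-module and $\prism_{R'^{n}}$ is $\prism_R$-flat, so $R\otimes_{\prism_R}\prism_{R'^{n}}$ is $R$-flat and $N\otimes^{\mathbb L}_{\prism_R}\prism_{R'^{n}}\cong N\otimes_R(R\otimes_{\prism_R}\prism_{R'^{n}})$ sits in degree $0$; therefore $0\to\Fil\,M\to M\to N\to 0$ stays exact after base change and $\Fil\,M\otimes_{\prism_R}\prism_{R'^{n}}\cong\Fil\,M_n$. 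Given this, axioms (1), (2), (3) for $(M,\Fil\,M,\varphi_M)$ over $R$ — all of them containments of submodules, surjectivities, or finite-local-freeness assertions — hold after the faithfully flat base change $\prism_R\to\prism_{R'}$, hence hold over $R$; so $(M,\Fil\,M,\varphi_M)\in\DF(R)$ and restricts to the given datum, which gives essential surjectivity. Full faithfulness is easier: a morphism of filtered prismatic Dieudonn\'e modules is a $\prism_R$-linear map, which descends by faithfully flat descent of homomorphisms, while commutation with $\varphi$ and compatibility with the filtrations are local. I expect the main obstacle to be the two prismatic inputs on which everything downstream rests: the faithful flatness of $\prism_R\to\prism_{R'}$ (where all the nontrivial cohomological content is concentrated) together with the $\mathrm{Tor}$-vanishing that makes $\Fil$ commute with base change; once these are secured the remainder is formal faithfully flat descent.
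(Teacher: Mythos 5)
There is a genuine gap at the very first step. The proposition asserts that $R\mapsto\DF(R)$ is a stack on all of $\mathrm{QSyn}$, and for a fibered category that is \emph{already defined on the whole site} it is not enough to verify descent for covers whose \emph{base} lies in a basis of the topology: you must check $\DF(R)\xrightarrow{\sim}\varprojlim\DF(R'^{\bullet})$ for every quasi-syntomic $R$ and a cofinal family of covers $R\to R'$ — so $R'$ (and the \v{C}ech nerve) may be taken quasi-regular semiperfectoid by \Cref{sec:quasi-syn-rings-proposition-quasi-regular semiperfectoid-basis}, but $R$ itself cannot. Your argument uses in an essential way that $R$ is quasi-regular semiperfectoid (existence of the initial prism $\prism_R$, passage to filtered prismatic Dieudonn\'e modules via \Cref{sec:abstr-divid-prism-proposition-equivalence-crystals-modules-for-quasi-regular semiperfectoid}), so the case of a general quasi-syntomic $R$ covered by quasi-regular semiperfectoid rings is simply not treated. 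That missing case is exactly where one has to exploit the sheaf-theoretic definition of $\DF(R)$: a filtered prismatic Dieudonn\'e crystal is by definition a finite locally free $\mathcal{O}^{\rm pris}$-module on $(R)_{\mathrm{qsyn}}$ with a submodule and a Frobenius subject to local conditions, the slice site $(R)_{\mathrm{qsyn}}/R'$ identifies with $(R')_{\mathrm{qsyn}}$, and modules under the sheaves $\mathcal{O}^{\rm pris}$ and $\mathcal{O}$ form a stack by general properties of topoi, the conditions (1)--(3) and finite local freeness being local. This formal observation is the paper's entire proof; once you invoke it to fill the gap, your concrete descent along $\prism_R\to\prism_{R'^{\bullet}}$ becomes redundant.

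Two further points on the part you do prove. First, it rests on the assertion that $\prism_R\to\prism_{R'}$ is $(p,I)$-completely faithfully flat for a quasi-syntomic cover of quasi-regular semiperfectoid rings; this is true but is not \Cref{sec:quasi-syntomic-rings-proposition-quasi-syntomic-cover-of-prism-admits-refinement} and is nowhere established in the paper, so you are importing a nontrivial external input where none is needed. Second, what your module-level descent (via \Cref{sec:descent-p-completely-1-proposition-descent-of-finite-projective-modules-for-faithfully-morphisms-of-prisms}, K\"unneth, and fppf descent of the quotients $M_n/\Fil M_n$) actually re-derives is, in substance, the content already packaged in \Cref{sec:abstr-divid-prism-proposition-finite-locally-free} and the appendix; the small remaining subtleties in your argument (e.g.\ that $\Fil M$ commutes with the merely $(p,I)$-completely flat base change) would also need more care with completions than the one line you give them.
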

\begin{proof}
This follows from the definition, because by general properties of topoi modules under $\mathcal{O}^{\rm pris}$ and $\mathcal{O}$ form a stack for the quasi-syntomic topology on $(R)_{\mathrm{qsyn}}$.
\end{proof}

For quasi-regular semiperfectoid rings, these abstract objects have a more concrete incarnation, which we explain now. Let $R$ be a quasi-regular semiperfectoid ring and let $(\prism_R,I)$ be the prism associated with $R$. Note that $I$ is necessarily principal as there exists a perfectoid ring mapping to $R$.
Recall (\Cref{sec:prism-cohom-quasi-theorem-identification-of-the-graded-pieces-of-nygaard-filtration}) that
$$
\theta\colon \prism_R/\mathcal{N}^{\geq 1}\prism_R\cong R
$$
is an isomorphism.

\begin{definition}
  \label{sec:abstr-divid-prism-definition-prismatic-dieudonne-modules-for-quasi-regular semiperfectoid}
  A \textit{prismatic Dieudonn\'e module over $R$} is a finite locally free $\prism_R$-module $M$ together with a $\varphi$-linear morphism
  $$
  \varphi_M \colon M\to M
  $$
  whose linearization \red{$\varphi^\ast M \to M$} has its cokernel killed by $I$. As in \ref{sec:abstr-divid-prism-definition-prismatic-dieudonne-crystals}, we call a prismatic Dieudonn\'e module $(M,\varphi_M)$ over $R$ \textit{admissible} if the image of the composition
  \[
    M\xrightarrow{\varphi_M}M\to M/I\cdot M
  \]
  is a finite, locally free $R\cong \prism_R/\mathcal{N}^{\geq 1}\prism_R$-module $F_M$ such that the map $\green{\prism_R/I\prism_R} \otimes_{R} F_M \to M/IM$ induced by $\varphi_{M}$ is a monomorphism.
\end{definition}

%

\begin{remark}
\label{remarks-on-the-definition-ofo-prismatic-dieudonne-module}
 For a prismatic Dieudonn\'e module $(M,\varphi_M)$ the linearization \red{$\varphi^\ast M \to M$} of the morphism $\varphi_M\colon M\to M$ is an isomorphism after inverting a generator $\tilde{\xi}$ of $I$ and in particular is injective, since $\varphi^{\ast}M$ is $\tilxi$-torsion free. \green{In \ref{remark-projectivity-referee} we will prove that these properties imply that the cokernel of $\varphi^\ast M\to M$ is a finite projective $\prism_R/I$-module.}
%
%
\end{remark}

If $R$ is perfectoid, one has $$(\prism_R,I)=(A_{\rm inf}(R),(\tilxi)). $$
A prismatic Dieudonn\'e module is the same thing as a \textit{minuscule Breuil-Kisin-Fargues module} (\cite{bhatt_morrow_scholze_integral_p_adic_hodge_theory}) over $A_{\rm inf}(R)$ with respect to $\tilxi$. In fact, the situation for perfectoid rings is simple, as shown by the following proposition.

\begin{proposition}
  \label{sec:abstr-divid-prism-divided-prismatic-dieudonne-modules-vs-bkf-modules} Let $R$ be a perfectoid ring. Any prismatic Dieudonn\'e module over $R$ is admissible.
  \end{proposition}
 We postpone the proof, it will be given below after \Cref{sec:abstr-divid-prism-proposition-equivalence-divided-prismatic-dieudonne-modules-windows}.

%
%

\red{
\begin{proposition}
 \label{sec:abstr-divid-prism-proposition-equivalence-crystals-modules-for-quasi-regular semiperfectoid} 
 Let $R$ be a quasi-regular semiperfectoid ring. The functor
 $$
 (\mathcal{M}, \varphi_{\mathcal{M}}) \mapsto (v^{\ast}\mathcal{M}(\prism_R,I), v^{\ast} \varphi_{\mathcal{M}}(\prism,I)  )
 $$
 of evaluation on the initial prism $(\prism_R,I)$ induces an equivalence between the category of \red{(usual or admissible)} prismatic Dieudonn\'e crystals over $R$ and the category of \red{(usual or admissible)} prismatic Dieudonn\'e modules over $R$, with quasi-inverse
 $$
 (M,\varphi_M) \mapsto (M \otimes_{\prism_R} \mathcal{O}^{\rm pris}, \varphi_M \otimes \varphi_{\mathcal{O}^{\rm pris}})
 $$
\end{proposition}
\begin{proof}
Let us call \blue{$G_R$, resp. $F_R$,} the first, resp. the second, functor displayed in the statement of the proposition. Using \Cref{sec:abstr-divid-prism-proposition-finite-locally-free} and the equivalence between finite locally free $\mathcal{O}_{\prism}$-modules and finite locally free $\prism_R$-modules, one immediately gets that $F_R$ is an equivalence between the category of prismatic Dieudonn\'e crystals over $R$ and the category of prismatic Dieudonn\'e modules over $R$, with quasi-inverse given by $G_R$. Hence, we only need to check that the admissibility conditions on both sides agree. 
  
Let $(M,\varphi_M)$ be an admissible Dieudonn\'e module over $R$. 

\begin{lemma}
Let $R \to R^\prime$ be a quasi-syntomic morphism, with $R^\prime$ being also quasi-regular semiperfectoid. Let $(M^\prime, \varphi_{M^\prime}):=(M\otimes_{\prism_R} \prism_{R^\prime}, \varphi_M \otimes \varphi_{\prism_{R^\prime}})$ be the base change of $(M,\varphi_M)$. Then 
$$
\varphi_{M^\prime}^{-1}(I\prism_{R^\prime}. M^\prime) = \mathcal{N}^{\geq 1} \prism_{R^\prime}. M^\prime + \mathrm{Im}(\varphi_M^{-1}(I.M) \otimes_{\prism_R} \prism_{R^\prime} \to M^\prime). 
$$
\end{lemma}
The lemma follows from \Cref{sec:abstr-divid-prism-proposition-equivalence-divided-prismatic-dieudonne-modules-windows} (and \Cref{alpha-morphisms-and-base-change}), which will be proved below; let us take it for granted and finish the proof. For any quasi-regular semiperfectoid ring $R^\prime$ quasi-syntomic over $R$, note that, using the notations from the lemma,
$$
\Gamma(R^\prime, F_R(M)) = M^\prime, \quad \Gamma(R^\prime, \varphi_{F_R(M)}^{-1}(\mathcal{I}^{\rm pris}.F_R(M))) =  \varphi_{M^\prime}^{-1}(I\prism_{R^\prime}. M^\prime). 
$$
The lemma tells us that in particular
$$
M^\prime/ \varphi_{M^\prime}^{-1}(I\prism_{R^\prime}. M^\prime) \cong R^\prime \otimes_R M/\varphi_M^{-1}(I.M).
$$
This being true for any quasi-regular semiperfectoid ring $R^\prime$ quasi-syntomic over $R$, we deduce that we have a short exact sequence of sheaves on $(R)_{\rm qsyn}$
$$ 
0 \to \varphi_{F_R(M)}^{-1}(\mathcal{I}^{\rm pris}.F_R(M)) \to F_R(M) \to \mathcal{O} \otimes_R M/\varphi_M^{-1}(I.M) \to 0
$$ 
By admissibility of $(M,\varphi_M)$ the right most term is a finite locally free $\mathcal{O}$-module, and thus \blue{$F_R(M)$} is admissible. 

Conversely, let $(\mathcal{M},\varphi_{\mathcal{M}})$ be an admissible Dieudonn\'e crystal. Consider the exact sequence of sheaves
$$
0 \to \varphi_{\mathcal{M}}^{-1}(\mathcal{I}^{\rm pris}.\mathcal{M}) \to \mathcal{M} \to  \mathcal{M}/\varphi_{\mathcal{M}}^{-1}(\mathcal{I}^{\rm pris}.\mathcal{M}) \to 0,
$$
and apply to it the functor $\Gamma(R,-)$. We get an exact sequence
$$
0 \to \Gamma(R, \varphi_{\mathcal{M}}^{-1}(\mathcal{I}^{\rm pris}.\mathcal{M})) = \varphi_{G_R(\mathcal{M})}^{-1}(I.G_R(\mathcal{M})) \to G_R(\mathcal{M}) \to  \Gamma(R,\mathcal{M}/\varphi_{\mathcal{M}}^{-1}(\mathcal{I}^{\rm pris}.\mathcal{M})).
$$
Since $\mathcal{M}/\varphi_{\mathcal{M}}^{-1}(\mathcal{I}^{\rm pris}.\mathcal{M})$ is a finite locally free $\mathcal{O}$-module by admissibility of $(\mathcal{M},\varphi_{\mathcal{M}})$, the right most term is a finite projective $R$-module, and it therefore suffices to show that the above sequence is also right-exact. The map 
$$
G_R(\mathcal{M}) \to  \Gamma(R,\mathcal{M}/\varphi_{\mathcal{M}}^{-1}(\mathcal{I}^{\rm pris}.\mathcal{M}))
$$ 
factors through 
$$
G_R(\mathcal{M})/\mathcal{N}^{\geq 1} \prism_R. G_R(\mathcal{M}) \to  \Gamma(R,\mathcal{M}/\varphi_{\mathcal{M}}^{-1}(\mathcal{I}^{\rm pris}.\mathcal{M}))
$$
which is a map of $R$-modules, and it suffices to show that this map is surjective. Since the target is a finitely generated $R$-module and $R$ is $p$-complete, it suffices by Nakayama's lemma to prove surjectivity after base-change along any surjection $R \to k$, with $k$ a perfect field of characteristic $p$. After base-change along such a morphism $R \to k$, the above map factors through
$$
G_R(\mathcal{M}) \otimes_{\prism_R} \prism_k \to  \Gamma(R,\mathcal{M}/\varphi_{\mathcal{M}}^{-1}(\mathcal{I}^{\rm pris}.\mathcal{M})) \otimes_R k.
$$
Since $G_R(\mathcal{M})$, resp. $\mathcal{M}/\varphi_{\mathcal{M}}^{-1}(\mathcal{I}^{\rm pris}.\mathcal{M})$, is a finite locally free $\mathcal{O}^{\rm pris}$-module, resp. a finite locally free $\mathcal{O}$-module, this identifies with the map
$$
G_k(\mathcal{M}_k) \to \Gamma(k,\mathcal{M}_k/\varphi_{\mathcal{M}_k}^{-1}(\mathcal{I}^{\rm pris}.\mathcal{M}_k)),
$$  
i.e. the same map as the one we originally wanted to prove is surjective, but now with $R$ replaced by $k$ (we denoted with an index $k$ the restrictions of the various objects involved to the quasi-syntomic site of $k$). But since $k$ is perfect, $(G_k(\mathcal{M}_k),\varphi_{G_k(\mathcal{M}_k)})$ is automatically admissible, \blue{by definition of admissibility using that every $k$-module is free}. Hence, as proved above, we have an exact sequence (using that $F_k \circ G_k \cong \mathrm{Id}$)
$$
0  \to \varphi_{\mathcal{M}_k}^{-1}(\mathcal{I}^{\rm pris}.\mathcal{M}_k) \to \mathcal{M}_k \to \mathcal{O} \otimes_R G_k(\mathcal{M}_k)/\varphi_{G_k(\mathcal{M}_k)}^{-1}(I.G_k(\mathcal{M}_k)) \to 0,
$$
i.e. 
$$
\mathcal{M}_k/\varphi_{\mathcal{M}_k}^{-1}(\mathcal{I}^{\rm pris}.\mathcal{M}_k) \cong \mathcal{O} \otimes_R G_k(\mathcal{M}_k)/\varphi_{G_k(\mathcal{M}_k)}^{-1}(I.G_k(\mathcal{M}_k)),
$$
hence 
$$
\Gamma(k,\mathcal{M}_k/\varphi_{\mathcal{M}_k}^{-1}(\mathcal{I}^{\rm pris}.\mathcal{M}_k)) \cong G_k(\mathcal{M}_k)/\varphi_{G_k(\mathcal{M}_k)}^{-1}(I.G_k(\mathcal{M}_k)).
$$
This shows that the map 
$$
G_k(\mathcal{M}_k) \to \Gamma(k,\mathcal{M}_k/\varphi_{\mathcal{M}_k}^{-1}(\mathcal{I}^{\rm pris}.\mathcal{M}_k))
$$ 
is surjective, as desired.
 \end{proof}
}

\begin{definition}
  \label{sec:abstr-divid-prism-definition-categories-of-divided-dieudonne-modules} We denote by $\mathrm{DM}(R)$ the category of prismatic Dieudonn\'e modules over $R$ (with morphisms commuting with the Frobenius) and by $\mathrm{DM}^{\rm adm}(R)$ the full subcategory formed by admissible objects. 
\end{definition}
\Cref{sec:abstr-divid-prism-proposition-equivalence-crystals-modules-for-quasi-regular semiperfectoid} shows that the possible conflict of notation is not an issue : for $R$ quasi-regular semiperfectoid, the two categories denoted by $\mathrm{DM}(R)$ are naturally equivalent, and similarly for $\DF(R)$.
\\



In the rest of this subsection, we will shortly recall the general notions of \textit{frame} and \textit{window}, and then discuss the connection with the definitions above.   

\begin{definition} \label{frame}
A \textit{frame} $\underline{A}=(A,\Fil ~A, \varphi, \varphi_1)$ consists of (classically) $(p,d)$-adically complete rings $A$ and $R=A/\Fil~A$, for some $d \in A$ and some ideal $\Fil ~A$, a lift of Frobenius $\varphi$, a $\varphi$-linear map $\varphi_1 : \Fil~ A \to A$ \red{(called the \textit{divided Frobenius} on $A$)} such that $\varphi=\varpi \varphi_1$ on $\Fil~ A$, with $\varpi=\varphi(d)$.

\red{Let $\underline{A}, \underline{A}^\prime$ be two frames, and let $u \in A^\prime$ be a unit. A \textit{$u$-morphism of frames} $\alpha: \underline{A} \to \underline{A}^\prime$ is a morphism of rings $\alpha : A \to A^\prime$ intertwinning $\varphi$ and $\varphi^\prime$, carrying $\Fil~ A$ into $\Fil~ A^\prime$, and satisfying $\varphi_1^\prime \circ \alpha = u \alpha \circ \varphi_1$ and $\alpha(\varpi)=u \varpi^\prime$.}
\end{definition}

\begin{remark} \label{firstremark}
	In many situations (such as those considered in this paper), the image of $\varphi_1$ will always generate the unit ideal of $A$.
\end{remark}

Here is an important source of examples. 

\begin{example} \label{exprism}
	Let $(A,I=(d))$ be an oriented prism. There are usually two natural ways of attaching a frame to $(A,(d))$. One possibility is to consider the frame
	\[ \underline{A}_d = (A,(d),\varphi,\varphi_1), \]
	where $\varphi_1$ is defined by $\varphi_1(dx)=\varphi(x)$ (recall that $A$ is $d$-torsion free). Here, $\varphi=\varphi(d)\varphi_1$ on $\mathrm{Fil} A=(d)$. The other possibility \red{works when $d$ is of the form $d=\varphi(d')$ for some $d'\in A$}: one can then consider the frame
	\[ \underline{A}_{\rm Nyg}=(A, \mathcal{N}^{\geq 1} A, \varphi, \varphi_1) \]
	where $\varphi_1:=\varphi/d$ on $\mathcal{N}^{\geq 1}A$ (using again that $A$ is $d$-torsion free). Here, $\varphi=d\varphi_1$ on $\mathrm{Fil} A$. Note that in the first case, the divided Frobenius is with respect to $\varphi(d)$, whereas in the second case the divided Frobenius is with respect to $d$. 
   \end{example} 

\begin{definition} \label{window}
	A \textit{window} $\underline{M}=(M,\mathrm{Fil}~M,\varphi_M,\varphi_{M,1})$ over a frame $\underline{A}$ consists of a finite locally free $A$-module $M$, an $A$-submodule $\Fil~M \subset M$, and $\varphi$-linear maps $\varphi_M : M \to M$ and $\varphi_{M,1} : \mathrm{Fil}~M \to M$, such that :
	\begin{itemize}
		\item $\Fil~A \cdot M \subset \mathrm{Fil}~M$ and $M/\mathrm{Fil}~M$ is a finite locally free $R$-module.
		\item If $a \in \mathrm{Fil}~A$, $m \in M$, $\varphi_{M,1}(am)= \varphi_1(a)\varphi_M(m)$.
		\item If $m \in \mathrm{Fil}~M$, $\varphi_M(m)=\varpi \varphi_{M,1}(m)$.
		\item $\varphi_{M,1}(\mathrm{Fil}~M) + \varphi_M(M)$ generates $M$ as an $A$-module. 
	\end{itemize} 
A morphism of windows is an $A$-linear map preserving the filtrations and commuting with $\varphi_M$ and $\varphi_{M,1}$. The category of windows over $\underline{A}$ is denoted by $\Win(\underline{A})$.  
\end{definition}

\begin{remark} \label{simplifications}
	If the surjectivity condition on the image of $\varphi_1$ of \Cref{firstremark} is satisfied, then the third point of the previous definition follows from the second and the last one simply says that $\varphi_{M,1}(\mathrm{Fil}~M)$ generates $M$ \red{(indeed, by assumption one can write $1=\sum_{i=1}^r a_i \varphi_1(b_1)$ for some $a_i \in A, b_i \in \mathrm{Fil}~A$, whence $\varpi=\sum_{i=1}^r a_i \varphi(b_i)$)}.
	\end{remark}
	
\red{	
\begin{remark}
\label{alpha-morphisms-and-base-change}
If $\alpha : \underline{A} \to \underline{A}^\prime$ is a $u$-morphism of frames as \Cref{frame} (for some unit $u\in A^\prime$), and $\underline{M}$, resp. $\underline{M}^\prime$, is a window over $\underline{A}$, resp. $\underline{A}^\prime$, an \textit{$\alpha$-morphism of windows} $f: \underline{M} \to \underline{M}^\prime$ is a morphism $f: M\to M^\prime$ of $A$-modules, intertwinning $\varphi_M$ and $\varphi_{M^\prime}$, sending $\Fil~ M$ into $\Fil ~ M^\prime$ and satisfying $\varphi_{M^\prime,1} \circ f = u f \circ \varphi_{M,1}$ (hence if $\underline{A}=\underline{A}^\prime$, $\alpha=\mathrm{Id}_{\underline{A}}$, an $\alpha$-morphism of windows is just a morphism of windows over $\underline{A}$). There is a base change functor 
$$
\alpha^\ast : \Win(\underline{A}) \to \Win(\underline{A}^\prime)
$$ characterized by the universal property that if $\underline{M}\in \Win(\underline{A})$, $\underline{M}^\prime \in \Win(\underline{A}^\prime)$, homomorphisms in $\Win(\underline{A}^\prime)$ from $\alpha^\ast \underline{M}$ to $\underline{M}^\prime$ identify with $\alpha$-morphisms of windows from $\underline{M}$ to $\underline{M}^\prime$. Concretely, if  $\underline{M}\in \Win(\underline{A})$, then $\alpha^\ast \underline{M}=(M^\prime, \Fil ~ M^\prime, \varphi_{M^\prime}, \varphi_{M^\prime, 1})$ is given by $M^\prime = A^\prime \otimes_A M$, $\Fil ~ M^\prime$ is the submodule generated by $(\Fil~ A^\prime).M^\prime$ and the image of $\Fil~ M$, and $\varphi_{M^\prime}, \varphi_{M^\prime,1}$ are uniquely determined by the requirement that $M \to M^\prime$, $m \mapsto 1 \otimes m$, is an $\alpha$-morphism of windows.
\end{remark}	
}

\red{	
\begin{proposition}
  \label{sec:abstr-divid-prism-proposition-existence-of-normal-decompositions}
  Let $\underline{A}=(A,\Fil A, \varphi,\varphi_1)$ be a frame, such that any finite projective $A/\Fil~A$-module lifts to a finite projective $A$-module. Let $(M,\mathrm{Fil}~M,\varphi_M,\varphi_{M,1})$ be a window over $\underline{A}$. Then there exist finite projective $A$-modules $L,T$ such that $M=L\oplus T$ and $\mathrm{Fil}~M=L\oplus \Fil~A. T$. Moreover, given $L,T$ there exists a bijection between $\varphi$-semilinear isomorphisms (i.e. $\varphi$-semilinear maps which become isomorphisms after linearization) $\Psi\colon \blue{L}\oplus T \to L\oplus T$ and $\underline{A}$-window structures on the pair $(L\oplus T, L \oplus \Fil~A. T)$. 
\end{proposition}
\begin{proof}
  This is a combination of \cite[Remark 2.4]{lau_frames_and_finite_group_schemes_over_complete_regular_local_rings} and \cite[Lemma 2.5]{lau_frames_and_finite_group_schemes_over_complete_regular_local_rings}. Let us give some details, and set $S:=A/\Fil~A$. The module $S\otimes_{A}M$ decomposes, as $M/\mathrm{Fil}~M$ is finite projective, into a direct sum $S\otimes_{A}M\cong M/\mathrm{Fil}~M\oplus Q$ for some finite projective $S$-module $Q$. Let $L,T$ be finite projective $A$-modules such that $L$ is a lift of $Q$ and $T$ a lift of $M/\mathrm{Fil}~M$. We can then lift the decomposition $S\otimes_{A}M$ to a decomposition $M=L\oplus T$ by projectivity. The property $\mathrm{Fil}~M=L\oplus \Fil~A T$ follows. Given $\varphi_{M}$ we define $\Psi(l+t):=\varphi_{M,1}(l)+\varphi_M(t)$ for $l\in L, t\in T$ on $M=L\oplus T$, and conversely given $\Psi$ we set $\varphi_M(l+t):=\varpi \Psi(l)+\Psi(t)$ and $\varphi_{M,1}(l+at):=\Psi(l)+\varphi_1(a)\Psi(t)$ for $l\in L,t\in T,a\in \Fil~A$.
\end{proof}

\begin{lemma}
\label{filtration-must-be-the-pullback-by-frobenius}
Let $\underline{A}=(A,\Fil A, \varphi,\varphi_1)$ as in \Cref{sec:abstr-divid-prism-proposition-existence-of-normal-decompositions} such that $\varpi$ is a non-zero divisor and $\Fil A= \varphi^{-1}(\varpi A)$. Then if $(M, \Fil~ M, \varphi_M, \varphi_{M,1})$ is a window over $\underline{A}$, we have
$$
\Fil ~ M = \varphi_M^{-1}(\varpi M)
$$
(note that one always has an inclusion $\Fil ~ M \subset \varphi_M^{-1}(\varpi M)$). \green{Moreover, $\varphi_M\colon M\to M$ induces an injection $M/\Fil M\to M/\varpi M$, and the latter extends to an injection $A/\varpi\otimes_{A/\Fil A} M/\Fil M\to M/I$ of a locally direct summand.} 
\end{lemma} 
\begin{proof}
Let
$$
M=L \oplus T
$$
be a normal decomposition of $M$ as in \ref{sec:abstr-divid-prism-proposition-existence-of-normal-decompositions}, and
$$
\Psi= (\varphi_{M,1})_{|_L} + (\varphi_{M})_{|_T},
$$
 so that $\Fil ~ M= L \oplus \Fil~ A.T$. Let $x=l+t \in M$ such that $\varphi_M(x) \in \varpi M$. We have 
$$
\varphi_M(x) = \varpi \Psi(l) + \Psi(t)
$$ 
so the condition is equivalent to requiring that $\Psi(t) \in \varpi. M$. \green{For simplicity we assume that $L, T$ are free $A$-modules in the following. The general case follows by localization.} Fix a basis $t_1,\dots,t_r$ of $T$ and a basis $l_1,\dots,l_s$ of $L$, as $A$-modules. Since $\Psi$ is a $\varphi$-linear isomorphism, the family $(\Psi(t_1),\dots,\Psi(t_r),\Psi(l_1),\dots,\Psi(l_s))$ is a basis of $M$, and so the reduction of the family $(\Psi(t_1),\dots,\Psi(t_r))$ modulo $\varpi$ is linearly independent. Write $t=\sum_{i=1}^r a_i t_i$, with $a_i \in A$ for all $i=1,\dots, r$. By assumption, we have that
$$
\Psi(t) = \sum_{i=1}^r \varphi(a_i) \Psi(t_i) \in \varpi. M
$$
and therefore we must have $\varphi(a_i) \in \varpi A$ for all $i=1,\dots,r$, i.e. $a_i \in \Fil ~A$ for all $i=1,\dots,r$, by the condition on $\Fil~A$. Hence $t \in \Fil~A.T$ and thus $x \in \Fil~M$, as desired.
\green{For the last statements note that the map $\varphi_M\colon M/\Fil M\cong T/\Fil A.T\to M/\varpi$ identifies with the map induced by $\Psi$. As $\varphi_M(t_i)=\Psi(t_i), i=1,\ldots, r,$ are linearly independent (over $A/\varpi$) this map extends to an inclusion
  \[
    A/\varpi\otimes_{A/\Fil A} M/\Fil \blue{M} \to M/\varpi
  \]
  of a direct summand. This finishes the proof.}
  \end{proof}

Let us now see what the categories of windows look like for the frames attached to prisms discussed in \Cref{exprism}.}

\begin{definition}
  \label{sec:comp-case-mathc-definition-breuil-kisin-module}
  Let $(A,I=(d))$ be a prism. A \textit{Breuil-Kisin module $(M,\varphi_M)$ over $(A,I)$, or just $A$ if $I$ is understood}, is a finite free $A$-module $M$ together with an isomorphism
  $$
  \varphi_M\colon \varphi^{\ast}M[\frac{1}{I}]\cong M[\frac{1}{I}].
  $$
  If $\varphi_M(\varphi^{\ast}M)\subseteq M$ with cokernel killed by $I$, then $(M,\varphi_M)$ is called \textit{minuscule}.
  
  We denote by $\mathrm{BK}(A)$ the category of Breuil-Kisin modules over $A$ and by $\mathrm{BK}_{\mathrm{min}}(A)\subseteq \mathrm{BK}(A)$ its full subcategory of minuscule ones.
\end{definition}

\begin{remark}
  \label{remark-projectivity-referee}
  
If $(M,\varphi_M)$ is a minuscule Breuil-Kisin module over $(A,I)$, the cokernel $N$ of $\varphi_M(\varphi^{\ast}M)\subseteq M$ is a finite projective $A/I$-module. Indeed $N$ is pseudocoherent as an $A$-module (having a $2$-term resolution by finite projective $A$-modules), hence as an $A/I$-module. Moreover, if $k$ is the residue field of $\mathrm{Spec}(A/I)$ at any closed point, then the derived tensor
$$ \bar{k} \otimes_{A/I}^L N = W(\bar{k}) \otimes_A^L N $$
is a perfect complex of $W(\bar{k})$-modules, hence bounded. It follows that the complex $k \otimes_{A/I}^L N$ is also bounded, so that $N$ has a finite resolution by finite projective $A/I$-modules (\cite[Tag 068W]{stacks_project}). Since $N$ has projective dimension $\leq 1$ as an $A$-module, it is necessarily projective as an $A/I$-module. We thank the referee for poiting out this argument to us.
\end{remark}

\begin{proposition}
  \label{sec:abstr-divid-prism-remark-cais-lau-principal}
  Let $(A,(d))$ is an oriented prism. The functor 
$$
(M,\mathrm{Fil} M,\varphi_M, \varphi_{M,1}) \mapsto (\mathrm{Fil} M, d.\varphi_{M,1})
$$
induces an equivalence between the category of windows over the frame $\underline{A}_d$ of \Cref{exprism} and the category $\mathrm{BK}_{\rm min}(A)$.  \end{proposition}
  \begin{proof}
  See \cite[Lemma 2.1.16]{cais_lau_dieudonne_crystals_and_wach_modules_for_p_divisible_groups} (taking \Cref{remark-projectivity-referee} into account). 
    \end{proof}	


Before turning to the second example introduced in \Cref{exprism}, let us recall some facts about henselian pairs. Let $A$ be a ring and let $I\subseteq A$ be an ideal. We recall that the pair $(A,I)$ is henselian if $I$ is contained in the Jacobson radical of $A$ and if for any monic polynomial $f\in A[T]$ and each factoriztion $\overline{f}=g_0h_0$ with $g_0,h_0\in A/I[T]$ monic and generating the unit ideal, there exists a factorization $f=gh$ with $g,h$ monic and $g_0=\overline{g}$, $h_0=\overline{h}$ (cf.\ \cite[Tag 09XE]{stacks_project}).

If $I$ is locally nilpotent\footnote{That is, every element in $I$ is nilpotent.} or $A$ is $I$-adically complete, then the pair $(A,I)$ is henselian (cf.\ \cite[Tag 0ALI]{stacks_project}, \cite[Tag 0ALJ]{stacks_project}).

For us the following well-known property of henselian pairs will be important (cf.\ \cite[Lemma 4.20]{clausen_mathew_morrow_k_theory_and_topological_cyclic_homology_of_henselian_pairs}).

\begin{lemma}
  \label{sec:essent-surj-lemma-iso-classes-of-finite-projective-modules-for-henselian-pairs} Let $(A,I)$ be an henselian pair. The base change $M\mapsto M\otimes_{A} A/I$ induces a bijection on isomorphism classes of finite projective modules over $A$, resp.\ $A/I$.
\end{lemma}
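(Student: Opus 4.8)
The statement is the standard lifting property of henselian pairs for finite projective modules, so the plan is to first establish surjectivity of the map on isomorphism classes (every finite projective $A/I$-module lifts to a finite projective $A$-module) and then injectivity (two finite projective $A$-modules with isomorphic reductions are already isomorphic). For surjectivity, given a finite projective $\overline{P}$ over $A/I$, choose a projector $\overline{e} \in M_n(A/I)$ with $\overline{e}^2 = \overline{e}$ whose image is $\overline{P}$; lift $\overline{e}$ arbitrarily to some $e_0 \in M_n(A)$. The element $e_0^2 - e_0$ lies in $M_n(I)$, and the classical idempotent-lifting argument for henselian pairs (apply the henselian property to the monic polynomial $T^2 - T$ over the commutative subring $A[e_0] \subseteq M_n(A)$, or argue directly via the Newton-type iteration $e_{m+1} = 3e_m^2 - 2e_m^3$, which converges because $(A,I)$ is henselian, hence $I$ is pro-nilpotent in a suitable completed sense and one can solve $x^2=x$ over it) produces an honest idempotent $e \in M_n(A)$ lifting $\overline{e}$. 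Its image $P := eA^n$ is finite projective over $A$ and reduces to $\overline{P}$.

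For injectivity, let $P, Q$ be finite projective $A$-modules and $\overline{\psi}\colon P/IP \xrightarrow{\ \sim\ } Q/IQ$ an isomorphism. Since $P$ is projective, $\overline{\psi}$ lifts to an $A$-linear map $\psi\colon P \to Q$. It suffices to show $\psi$ is an isomorphism. Complete $P$ to a free module by choosing $P'$ with $P \oplus P' \cong A^n$, and similarly $Q \oplus Q' \cong A^m$; after stabilizing (adding free summands) we may assume $P, Q$ are direct summands of a common $A^n$ and that $\psi$ is (the restriction of) a matrix $f \in M_n(A)$ which becomes invertible modulo $I$ when restricted appropriately. Concretely, it is cleanest to reduce to the following: if $f \in M_n(A)$ has the property that $f \bmod I$ is invertible in $M_n(A/I)$, then $f$ is invertible in $M_n(A)$ — this holds because $\det(f) \bmod I \in (A/I)^\times$ forces $\det(f) \in A^\times$, using that $I$ is contained in the Jacobson radical of $A$ (part of the definition of a henselian pair, so an element of $A$ mapping to a unit in $A/I$ is a unit). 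Patching this together with the idempotent bookkeeping (choosing compatible projectors $e_P, e_Q$ and viewing $\psi$ together with a chosen lift of $\overline{\psi}^{-1}$ as endomorphisms of $A^n$ that are mutually inverse modulo $I$) upgrades $\psi$ to an isomorphism $P \cong Q$.

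The one genuinely substantive input is the idempotent-lifting step in the proof of surjectivity; everything else (Jacobson radical giving units, projectivity giving lifts of maps, stabilization) is formal. Since the lemma is explicitly quoted from \cite[Lemma 4.20]{clausen_mathew_morrow_k_theory_and_topological_cyclic_homology_of_henselian_pairs} (and is also \cite[Tag 0D4A]{stacks_project} in the commutative case), in practice one would simply cite that reference rather than reproduce the argument; the sketch above indicates the proof one has in mind.
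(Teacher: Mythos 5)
Your proposal is correct and follows essentially the same route as the paper: surjectivity of the lifting is delegated to the standard result for henselian pairs (\cite[Tag 0D4A]{stacks_project}, equivalently the idempotent-lifting argument you sketch), and injectivity comes from lifting the given isomorphism by projectivity of $M$ and using that $I$ lies in the Jacobson radical. The paper concludes that injectivity step directly (the lifted map is automatically an isomorphism, by Nakayama), so your matrix/determinant stabilization is just a more roundabout phrasing of the same fact.
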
 
\begin{proof}
  If $M,N$ are finite projective $A$-modules, then any isomorphism $M/IM\cong N/IN$ can be lifted to a morphism $M\to N$ by projectivity of $M$. As $I\subseteq A$ lies in the Jacobson radical of $A$ this lifted homomorphism is then automatically an isomorphism. Moreover, any finite projective $A/I$-module can be lifted to a finite projective $A$-module by \cite[Tag 0D4A]{stacks_project}.
\end{proof}

Now, we provide the proof that $\prism_R$ is henselian along $\mathcal{N}^{\geq 1}\prism_R=\ker(\theta\colon \prism_R\to R)$. We learned the argument from \cite[Remark 5.2]{lau_divided_dieudonne_crystals}.

\begin{lemma}
  \label{sec:essent-surj-lemma-prism_r-henselian-along-ker-theta}
  The pair $(\prism_R,\mathrm{ker}(\theta))$ is henselian.
\end{lemma}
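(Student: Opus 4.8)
The plan is to verify directly the two conditions in the definition of a henselian pair recalled above, applied to $(\prism_R,\mathcal{N}^{\geq 1}\prism_R)$, where $\theta\colon\prism_R\to R$ is the projection with $\ker(\theta)=\mathcal{N}^{\geq 1}\prism_R=\varphi^{-1}(I\prism_R)$ and $(\prism_R,I)$ is the initial prism of $R$. Fixing a perfectoid ring $S$ with a surjection $S\twoheadrightarrow R$, the resulting $\varphi$-equivariant map $A_{\inf}(S)=\prism_S\to\prism_R$ carries $I_S$ into $I$, so $I=(\tilxi)$ is principal and we may take $\tilxi=\varphi(\xi)$ for $\xi$ the image of a $\varphi^{-1}$ of a generator of $I_S$; in particular $\xi\in\varphi^{-1}(I\prism_R)=\mathcal{N}^{\geq 1}\prism_R$. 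Since $(\prism_R,I)$ is a bounded prism, $\prism_R$ is classically $(p,I)$-adically complete (\Cref{remarks-after-definition-prism}), so $(p,I)\prism_R$ lies in the Jacobson radical $\mathrm{rad}(\prism_R)$ and the pair $(\prism_R,(p,I)\prism_R)$ is henselian by \cite[Tag 0ALJ]{stacks_project}.

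First I would check the radical condition. For $y\in\mathcal{N}^{\geq 1}\prism_R$ we have $\varphi(y)\in I\prism_R$, hence $y^p=\varphi(y)-p\delta(y)\in (p,I)\prism_R\subseteq\mathrm{rad}(\prism_R)$, and since every maximal ideal is prime this forces $y\in\mathrm{rad}(\prism_R)$. The very same computation shows that the image of $\mathcal{N}^{\geq 1}\prism_R$ in $\prism_R/(p,I)\prism_R$ is a nil ideal (each element has vanishing $p$-th power), so that quotient together with this ideal is a henselian pair by \cite[Tag 0ALI]{stacks_project}. Composing henselian pairs (cf.\ \cite{stacks_project}), the pair $(\prism_R,J')$ is henselian, where $J':=(p,I)\prism_R+\mathcal{N}^{\geq 1}\prism_R$. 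Finally, $\tilxi=\varphi(\xi)=\xi^p+p\delta(\xi)$ lies in $\mathcal{N}^{\geq 1}\prism_R+p\prism_R$ (using $\xi\in\mathcal{N}^{\geq 1}\prism_R$), so in fact $J'=p\prism_R+\mathcal{N}^{\geq 1}\prism_R=\theta^{-1}(pR)$ and $\prism_R/J'\cong R/pR$.

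It then remains to lift factorizations. Given a monic $f\in\prism_R[T]$ and a factorization $\bar f=g_0h_0$ over $R$ into monic polynomials generating the unit ideal, reduce modulo $p$ to obtain a coprime monic factorization of $f\bmod J'$ over $\prism_R/J'\cong R/pR$; since $(\prism_R,J')$ is henselian this lifts to a factorization $f=gh$ over $\prism_R$. Reducing $g,h$ modulo $\mathcal{N}^{\geq 1}\prism_R$ yields a monic factorization $\bar f=\bar g\,\bar h$ over $R$ agreeing with $g_0h_0$ modulo $p$. Because $R$ is quasi-syntomic, hence $p$-adically complete, the pair $(R,pR)$ is henselian, and the Hensel factorization over it is unique for a fixed coprime reduction mod $p$; therefore $\bar g=g_0$, $\bar h=h_0$, which is precisely the required lift. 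Together with the radical condition this proves that $(\prism_R,\ker(\theta))$ is henselian.

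The crux — and the reason the argument is not purely formal — is that $p\notin\mathcal{N}^{\geq 1}\prism_R$ when $R$ has mixed characteristic, so one cannot simply reduce everything modulo $p$. The device is that $\mathcal{N}^{\geq 1}\prism_R$ becomes a nil ideal after passing to $\prism_R/(p,I)\prism_R$, which lets one first lift a factorization from $R/pR=\prism_R/J'$, and only afterwards recover the finer factorization over $R$ by invoking uniqueness of Hensel factorizations over the $p$-complete ring $R$.
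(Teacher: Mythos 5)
Your argument is correct, and it rests on exactly the same two observations as the paper's proof: $\prism_R$ is classically $(p,\tilxi)$-adically complete, and every $x\in\ker(\theta)=\mathcal{N}^{\geq 1}\prism_R$ satisfies $x^p=\varphi(x)-p\delta(x)\in(p,\tilxi)$, so the image of $\ker(\theta)$ in $\prism_R/(p,\tilxi)$ is locally nilpotent. Where you diverge is in the bookkeeping. The paper disposes of the lemma in three lines by citing \cite[Tag 0DYD]{stacks_project}: completeness reduces henselianity of $(\prism_R,\ker\theta)$ to henselianity of the image ideal in $\prism_R/(p,\tilxi)$, which holds because that ideal is locally nilpotent. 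You instead extract from transitivity only the henselianity of $(\prism_R,J')$ with $J'=\theta^{-1}(pR)$, and then descend from $J'$ to the smaller ideal $\ker\theta$ by hand: lift a coprime monic factorization over $R$ first modulo $p$, then to $\prism_R$ via $(\prism_R,J')$, and finally identify its reduction modulo $\ker\theta$ with the given factorization by uniqueness of Hensel factorizations over the $p$-complete pair $(R,pR)$. This works, with two caveats. First, the uniqueness you invoke is asserted without justification; it is true, but deserves the one-line resultant argument (for monic polynomials coprimality is equivalent to the resultant being a unit, and a unit modulo an ideal contained in the Jacobson radical is a unit, so two congruent coprime monic factorizations divide each other and coincide). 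Second, that very argument proves in general that a henselian pair $(A,I)$ remains henselian along any sub-ideal $J\subseteq I$, so your detour through the auxiliary pair $(R,pR)$ is not needed: once $(\prism_R,\theta^{-1}(pR))$ is henselian, $(\prism_R,\ker\theta)$ follows formally — this descent-to-a-subideal step is essentially what the paper's single citation of \cite[Tag 0DYD]{stacks_project} absorbs. In short: same mathematical content, but the paper's reduction to the locally nilpotent quotient is immediate, whereas you re-prove by hand (a special case of) the general formalism of henselian pairs.
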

\begin{proof}
  Because $\prism_R$ is $(p,\xi)$-adically complete it suffices to prove that the pair
  $$
  (\prism_R/(p,\xi),(p,\mathrm{ker}(\theta))/(p,\xi))
  $$
  is henselian (cf.\ \cite[Tag 0DYD]{stacks_project}). We know $\ker(\theta)=\mathcal{N}^{\geq 1}\prism_R$. Hence,
  for every element $x\in \ker(\theta)$, $x^p\in (p,\tilxi)$. As
  locally nilpotent ideals are henselian the claim follows.
\end{proof}

%
%
\red{
\begin{proposition}
  \label{sec:abstr-divid-prism-proposition-equivalence-divided-prismatic-dieudonne-modules-windows}
Let $R$ be a quasi-regular semiperfectoid ring. Fix a generator $\tilde{\xi}=\varphi(\xi)$ of the ideal $I$ of the prism $(\prism_R,I)$, giving rise to a frame $\underline{\prism}_{R, \rm Nyg}$ of \Cref{exprism} (with $d=\tilxi$). The forgetful functor
$$
\mathrm{Win}(\underline{\prism}_{R, \rm Nyg}) \to \DM(R), \quad (M, \Fil~ M, \varphi_M,\varphi_{M,1}) \mapsto (M,\varphi_M)
$$
is fully faithful, with essential image the subcategory $\DM^{\rm adm}(R)$. 
\end{proposition}
\begin{proof}
Thanks to \Cref{sec:essent-surj-lemma-prism_r-henselian-along-ker-theta}, we can apply \Cref{filtration-must-be-the-pullback-by-frobenius} to the frame $\underline{\prism}_{R, \rm Nyg}$. This yields fully faithfulness, and that for a window $(M,\Fil~M,\varphi_M,\varphi_{M,1})$ the image of
    \[
      M\xrightarrow{\varphi_M}M\to M/I\cdot M
    \]
    identifies with $M/\Fil~M$. \green{By \Cref{filtration-must-be-the-pullback-by-frobenius} we can deduce admissibility}. Assume conversely that $(M,\varphi_M)$ is an admissible prismatic Dieudonn\'e module. Then the datum
    $(M,\varphi_M^{-1}(I\cdot M), \varphi_M, \frac{1}{\tilxi}\varphi_M)$ is a window over $\underline{\prism}_{R,\rm Nyg}$. Indeed, the condition that $\varphi_{M,1}(\Fil M)$ generates $M$ follows from the definition of admissibility and \Cref{remark-on-admissibility}. This finishes the proof.
\end{proof}
}

\begin{remark}
Assume that $R$ is quasi-regular semiperfect, i.e. $R$ is quasi-regular semiperfectoid and $pR=0$. Let $(M,\varphi_M)$ be a prismatic Dieudonn\'e module over $R$. Let $N \subset M/\mathcal{N}^{\geq 1}\prism_R M$ be a \red{locally free $R$-module which is a direct summand}, and define $\mathrm{Fil}~M$ to be the inverse image of $N$ in $M$. Then the collection \red{$(M,\mathrm{Fil}~M,\varphi_M,1/p \varphi_M)$} is a \red{window over $\underline{\prism}_{R,\rm Nyg}=\underline{A_{\rm crys}(R)}_{\rm Nyg}$} if and only if $N$ is an ``admissible'' filtration in the sense of Grothendieck on the Dieudonn\'e module $(M,\varphi_M, V_M)$, where $V_M=\varphi_M^{-1}.p$ (which makes sense by the assumption that $(M,\varphi_M)$ is a prismatic Dieudonn\'e module). For a proof of this, see \cite[Lemma 2.5.1]{cais_lau_dieudonne_crystals_and_wach_modules_for_p_divisible_groups}).  
\end{remark}

We can now prove \Cref{sec:abstr-divid-prism-divided-prismatic-dieudonne-modules-vs-bkf-modules}.

 \begin{proof}[Proof of \Cref{sec:abstr-divid-prism-divided-prismatic-dieudonne-modules-vs-bkf-modules}]
 We know by \Cref{sec:abstr-divid-prism-proposition-equivalence-divided-prismatic-dieudonne-modules-windows} that the functor
 $$
 (M, \varphi_M) \mapsto (M,\varphi_M^{-1}(\tilxi. M), \varphi_M, \frac{1}{\tilxi} \varphi_M) 
 $$
 is an equivalence between $\DM^{\rm adm}(R)$ and $\mathrm{Win}(\underline{\prism}_{R,\rm Nyg})$. Since $R$ is perfectoid, $\mathcal{N}^{\geq 1}\prism_R =(\xi)$, and so
 $$
 \underline{\prism}_{R,\rm Nyg} = \underline{\prism}_{R,\xi}
 $$
 By \Cref{sec:abstr-divid-prism-remark-cais-lau-principal}, the functor 
  \[ (N, \mathrm{Fil}~N, \varphi_N) \mapsto (\mathrm{Fil}~N, \frac{\xi}{\tilxi} \varphi_{N}) \]
induces an equivalence between $\mathrm{Win}( \underline{\prism}_{R,\xi})$ and \blue{$\mathrm{BK}_{\rm min}(A_{\rm inf}(R))$ (the category of minuscule Breuil-Kisin modules over $A_{\rm inf}(R)$). The latter category is, however, obviously equivalent to $\DM(R')$, with $R'=A_{\rm inf}(R) / \xi$.} As $\varphi$ is bijective on $\prism_R$, base change along $\varphi$ is also an equivalence between $\DM(R')$ and $\DM(R)$. Composing these equivalences, we obtain an equivalence
$$
\DM^{\rm adm}(R) \to \DM(R)
$$
But this composite functor is nothing but the identity functor. 
  \end{proof}

Finally, we record some statements which are later used to prove essential surjectivity for the prismatic Dieudonn\'e functor.

For a ring $A$ with an endomorphism $\varphi\colon A\to A$ we denote by $\varphi-\mathrm{Mod}_A^{\mathrm{unit}}$ the category of ``unit'' $\varphi$-modules over $A$, i.e., the category of pairs $(M,\varphi_M)$ with $M$ a finite projective $A$-module and $\varphi_M\colon \varphi^\ast M\cong M$ an isomorphism.

\begin{lemma}
  \label{sec:essent-surj-lemma-equivalence-of-varphi-modules-over-delta-rings}
Let $A\to B$ be a surjection of \blue{bounded} prisms with kernel $J\subseteq A$. Assume that the Frobenius $\varphi$ of $A$ is topologically nilpotent \blue{(for the $(p,I)$-adic topology)} on $J$ and that $(A,J)$ is henselian. Then the functor
$$
\varphi-\mathrm{Mod}_A^{\mathrm{unit}}\to \varphi-\mathrm{Mod}_B^{\mathrm{unit}},\ (M,\varphi_M)\mapsto (M\otimes_A B,\varphi_M\otimes_A B)
$$ 
is an equivalence.
\end{lemma}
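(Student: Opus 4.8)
The plan is to verify essential surjectivity and full faithfulness separately; the underlying-module statements will come from the henselian hypothesis via \Cref{sec:essent-surj-lemma-iso-classes-of-finite-projective-modules-for-henselian-pairs}, while the Frobenius bookkeeping is where topological nilpotence of $\varphi$ on $J$ enters.

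For essential surjectivity, start with $(N,\varphi_N)\in \varphi-\mathrm{Mod}_B$. Since $(A,J)$ is henselian, $N$ lifts to a finite projective $A$-module $M$ together with a chosen isomorphism $M\otimes_A B\cong N$ by \Cref{sec:essent-surj-lemma-iso-classes-of-finite-projective-modules-for-henselian-pairs}. By projectivity of $\varphi^{\ast}M$ the isomorphism $\varphi_N$ lifts to some $A$-linear map $g\colon \varphi^{\ast}M\to M$. Because $(A,J)$ henselian forces $J\subseteq \mathrm{Jac}(A)$, the map $g$ is an isomorphism modulo the Jacobson radical, and, $M$ and $\varphi^{\ast}M$ being finite projective of the same local rank, Nakayama shows $g$ is an isomorphism. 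Thus $(M,g)$ maps to $(N,\varphi_N)$. (Only henselianness is used here.)

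For full faithfulness, fix $(M_1,\varphi_1),(M_2,\varphi_2)\in\varphi-\mathrm{Mod}_A$, put $H:=\mathrm{Hom}_A(M_1,M_2)$, a finite projective $A$-module, and let $\Psi\colon H\to H$, $\Psi(h):=\varphi_2\circ\varphi^{\ast}(h)\circ\varphi_1^{-1}$, which is $\varphi$-semilinear. Then $\mathrm{Hom}_{\varphi-\mathrm{Mod}_A}((M_1,\varphi_1),(M_2,\varphi_2))=\ker(\mathrm{id}_H-\Psi)$, and, since the $M_i$ are finite projective, $H\otimes_A B\cong\mathrm{Hom}_B(M_1\otimes_AB,M_2\otimes_AB)$ compatibly with the analogous operator on the target. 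As $\varphi(J)\subseteq J$ (the $\delta$-structure of $A$ descends to $B=A/J$), $\Psi$ carries $JH$ into $\varphi(J)H\subseteq JH$, so the additive endomorphism $\mathrm{id}_H-\Psi$ preserves the exact sequence $0\to JH\to H\to H\otimes_AB\to 0$. The snake lemma then reduces the claim that the map on Hom-sets $\ker(\mathrm{id}_H-\Psi)\to\ker(\mathrm{id}_{H\otimes_AB}-\Psi)$ is bijective to the assertion that $\mathrm{id}-\Psi$ is bijective on $JH$.

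The heart of the argument is thus the following claim: for $H$ finite projective over $A$ and $\Psi$ any $\varphi$-semilinear endomorphism of $H$, the map $\mathrm{id}-\Psi\colon JH\to JH$ is bijective. Writing a general element of $JH$ as a finite sum $\sum_k a_k h_k$ with $a_k\in J$, semilinearity gives $\Psi^i(\sum_k a_k h_k)=\sum_k\varphi^i(a_k)\,\Psi^i(h_k)\in\varphi^i(J)\cdot H$; by topological nilpotence of $\varphi$ on $J$, $\varphi^i(J)$ lies in an arbitrarily high power of $(p,I)$ for $i$ large. Injectivity is then immediate ($\Psi(y)=y$ forces $y=\Psi^i(y)\in\varphi^i(J)H$ for all $i$, whence $y=0$ by separatedness), and surjectivity follows because for any $y\in JH$ the series $\sum_{i\ge0}\Psi^i(y)$ converges in the $(p,I)$-adically complete module $H$ to an element of the closed submodule $JH$ which inverts $\mathrm{id}-\Psi$. (Alternatively: reduce modulo $(p,I)^n$, where $\Psi$ becomes nilpotent on the image of $JH$ so that $\mathrm{id}-\Psi$ is inverted by a finite geometric sum, and pass to the limit by completeness.) The only genuinely delicate point is this last step, and within it the convergence and the closedness of $JH$ in $H$; these are harmless in the bounded (classically $(p,I)$-complete) case relevant to our applications, but would need extra care under only derived completeness.
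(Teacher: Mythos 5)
Your proof is correct and follows essentially the same route as the paper: essential surjectivity by lifting the underlying module along the henselian pair and the Frobenius by projectivity (automatically an isomorphism since $J$ lies in the radical), and full faithfulness by passing to the internal Hom $H=\mathrm{Hom}_A(M_1,M_2)$ with its semilinear operator and using topological nilpotence of $\varphi$ on $J$ via a convergent geometric series. Your snake-lemma packaging (bijectivity of $\mathrm{id}-\Psi$ on $JH$) is just a reformulation of the paper's direct check that the fixed points of $H$ map bijectively to those of $H/JH$, and the separatedness/closedness caveat you flag is equally implicit in the paper's own series argument.
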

\begin{proof}
  To prove fully faithfulness it suffices to show (by passing to internal hom's) that for every $\varphi$-module $(M,\varphi_M)$ over $A$ the map
$$
M^{\varphi_M=1}\to (M/JM)^{\varphi_M=1}
$$
is bijective. Let $m\in M^{\varphi_M=1}\cap JM$ and write $m=\sum\limits_{i=1}^na_im_i$ with $a_i\in J$ and $m_i\in M$. Then
$$
m=\varphi^j_M(m)=\sum\limits_{i=1}^n\varphi^j(a_i)\varphi_M^j(m_i)
$$
where the $\varphi^j(a_i)$ converge to $0$ if $j\to \infty$ by our assumption on $\varphi$. Thus $m=\varphi^j_M(m)\to 0$ if $j\to \infty$ and therefore $m=0$, which proves injectivity.
Conversely, let $m\in M$ and assume that $\varphi_M(m)\equiv m$ modulo $JM$. Write
$$
z:=\varphi_M(m)-m\in JM.
$$
As above the sequence $\varphi_M^j(z)$ converges to $0$ if $j\to \infty$. Set
$$
\tilde{m}:=m+\sum\limits_{j=0}^\infty \varphi_M^j(z).
$$
Then $\tilde{m}\equiv m$ modulo $JM$ and $\varphi_M(\tilde{m})=\tilde{m}$.
Thus we showed that
$$
M^{\varphi_M=1}\cong (M/JM)^{\varphi_M=1}
$$
and the functor $\varphi-\mathrm{Mod}_A^{\mathrm{unit}}\to \varphi-\mathrm{Mod}_B^{\mathrm{unit}}$ is fully faithful and we are left with essential surjectivity.
For this let $(N,\varphi_N)\in \varphi-\mathrm{Mod}_B^{\mathrm{unit}}$. By assumption $A$ is henselian along $J$ and thus we can write $N\cong M\otimes_A B$ for some finite projective $A$-module $M$. Using projectiviy of $\varphi^\ast M$ over $A$ we can lift $\varphi_N\colon \varphi^\ast N\to N$ to some homomorphism $\varphi_M\colon \varphi^\ast M\to M$. As $J$ lies in the radical of $A$ the homomorphism $\varphi_M$ will automatically be an isomorphism as $\varphi_N$ is. Thus, we have lifted $(N,\varphi_N)$ to $(M,\varphi_M)$, which finishes the proof.
\end{proof}

The following statement is similar to \cite[Lemma 2.12]{lau_frames_and_finite_group_schemes_over_complete_regular_local_rings} or \cite[Appendix A.4]{kisin_crystalline_representations_and_f_crystals}.

  It will use the ``Nygaard frame'' associated to an oriented \blue{prisms}, which was discussed in \ref{exprism}. 
  
\begin{lemma}
  \label{sec:essent-surj-lemma-equivalence-on-window-categories}
  Let $(A,(\tilxi)) \to (B,(\tilxi))$ be a surjection of oriented \blue{bounded} prisms with kernel $J$ contained in $\mathcal{N}^{\geq 1}A$, and assume that $\tilxi=\varphi(\xi)$ for some $\xi \in A$ and that $(A,\tilxi)$ bounded. Assume that $\varphi_1$ is (pointwise) topologically nilpotent on $J$ and that $(A,J)$ is henselian.
  Then the base change functor induces an equivalence :
  $$\mathrm{Win}(\underline{A}_{\mathrm{Nyg}}) \simeq \mathrm{Win}((B,\mathcal{N}^{\geq 1} A/J,A/\mathcal{N}^{\geq 1}A, \varphi,\varphi_1)).$$
\end{lemma}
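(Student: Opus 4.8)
The plan is to follow the strategy of Lau \cite[Lemma 2.12]{lau_frames_and_finite_group_schemes_over_complete_regular_local_rings} and Kisin \cite[Appendix A.4]{kisin_crystalline_representations_and_f_crystals}. Here $\underline A$ means the Nygaard frame $\underline A_{\rm Nyg}=(A,\mathcal N^{\geq 1}A,\varphi,\varphi_1)$ of $(A,(\tilxi))$ from \Cref{exprism}, and I abbreviate the target frame by $\underline B=(B,\mathcal N^{\geq 1}A/J,A/\mathcal N^{\geq 1}A,\varphi,\varphi_1)$. The first thing I would check is that reduction modulo $J$ really is a strict morphism of frames $\underline A\to\underline B$: since $J\subseteq\mathcal N^{\geq 1}A$ one has $B/(\mathcal N^{\geq 1}A/J)=A/\mathcal N^{\geq 1}A$, so the base rings agree, and since $A\to B$ is a morphism of $\delta$-rings and $B$ is $\tilxi$-torsion free (being an oriented prism), $\varphi$ descends to $B$ and $\varphi_1=\varphi/\tilxi$ descends to $\mathcal N^{\geq 1}A/J$ with $\varphi_1(J)\subseteq J$. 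It then suffices to show the base change functor $\Win(\underline A)\to\Win(\underline B)$ is essentially surjective and fully faithful.

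As a preliminary step I would record that $(A,\mathcal N^{\geq 1}A)$ and $(B,\mathcal N^{\geq 1}A/J)$ are henselian pairs: modulo $(p,\tilxi)$, along which $A$ (which we may take to be classically $(p,\tilxi)$-complete, cf.\ \Cref{remarks-after-definition-prism}), hence $B$, is complete, the ideal $\mathcal N^{\geq 1}A$ becomes locally nilpotent, because $x\in\mathcal N^{\geq 1}A$ forces $x^p=\varphi(x)-p\delta(x)\in(p,\tilxi)$; one then concludes exactly as in \Cref{sec:essent-surj-lemma-prism_r-henselian-along-ker-theta}. Consequently the proof of \Cref{sec:abstr-divid-prism-proposition-existence-of-normal-decompositions} applies verbatim to both frames, giving normal decompositions: every window $\underline M$ has $M=L\oplus T$ with $L,T$ finite projective and $\mathrm{Fil}~M=L\oplus\mathcal N^{\geq 1}A\cdot T$, and once $L,T$ are fixed the window structures on this decomposition correspond bijectively to isomorphisms $\psi\colon\varphi^*(L\oplus T)\xrightarrow{\sim}L\oplus T$ via $\psi=(\varphi_{M,1}|_L\oplus\varphi_M|_T)^{\rm lin}$; moreover base change of windows is compatible with normal decompositions and sends $\psi$ to $\psi\otimes_A B$. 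For \emph{essential surjectivity} I would then take $\underline N\in\Win(\underline B)$ with normal decomposition $N=L_B\oplus T_B$ and window isomorphism $\psi_B$, lift $L_B,T_B$ to finite projective $A$-modules $L,T$ using that $(A,J)$ is henselian (\Cref{sec:essent-surj-lemma-iso-classes-of-finite-projective-modules-for-henselian-pairs}), lift $\psi_B$ to an $A$-linear $\psi\colon\varphi^*(L\oplus T)\to L\oplus T$ by projectivity of the source, and note $\psi$ is an isomorphism by Nakayama since $J$ lies in the radical of $A$. The window over $\underline A$ attached to $(L,T,\psi)$ base-changes to $\underline N$.

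For \emph{full faithfulness}, fix $\underline M,\underline M'\in\Win(\underline A)$ with normal decompositions and window isomorphisms $\psi,\psi'$. Writing an $A$-linear $g\colon M\to M'$ in block form with respect to $M=L\oplus T$, $M'=L'\oplus T'$, the condition $g(\mathrm{Fil}~M)\subseteq\mathrm{Fil}~M'$ says exactly that the $L\to T'$ component of $g$ lies in $\mathcal N^{\geq 1}A\cdot\Hom_A(L,T')$; substituting the relation $\varphi=\tilxi\varphi_1$ on $\mathcal N^{\geq 1}A$ into the remaining axioms of \Cref{window}, the single factor of $\tilxi$ that appears is absorbed into this forced $\mathcal N^{\geq 1}A$-factor (here one divides by $\tilxi$, legitimate as $A$ is $\tilxi$-torsion free), and what is left is a fixed-point equation $g=U(g)$ for an operator $U$ on the $A$-module $H$ of ``admissible'' maps (those satisfying the filtration condition), built functorially from $\psi,\psi'$, the Frobenius pullback, and the divided Frobenius $\varphi_1$. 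One checks $U(J\cdot H)\subseteq J\cdot H$ and, using that $\varphi_1$, hence $\varphi$, is topologically nilpotent on $J$, that $U^k(J\cdot H)\to 0$ in the $(p,\tilxi)$-adic topology, for which $H$ is complete. The successive-approximation argument of \Cref{sec:essent-surj-lemma-equivalence-of-varphi-modules-over-delta-rings} then applies: a $U$-fixed point $g_B$ over $\underline B$ lifts to an admissible $g_0$ over $A$, the defect $U(g_0)-g_0$ lies in $J\cdot H$, the series $g_0+\sum_{k\geq0}U^k(U(g_0)-g_0)$ converges to the unique fixed point lifting $g_B$, and an element of $H^{U=1}\cap J\cdot H$ vanishes since it equals $U^k$ of itself for every $k$. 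This yields bijectivity of $\Hom_{\Win(\underline A)}(\underline M,\underline M')\to\Hom_{\Win(\underline B)}(\underline M_B,\underline M'_B)$.

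The main obstacle I anticipate is precisely the bookkeeping in the full-faithfulness step: making the operator $U$ explicit and checking that the window-morphism axioms, once rewritten through normal decompositions, genuinely assume the contractive shape $g=U(g)$ with $U$ sending admissible maps to admissible maps and being topologically nilpotent modulo $J$ — in other words, verifying carefully that the division by $\tilxi$ can always be performed against the filtration. This is the filtered analogue, with a nontrivial divided Frobenius, of the clean module-level computation of \Cref{sec:essent-surj-lemma-equivalence-of-varphi-modules-over-delta-rings}, and it is where the hypotheses ``$\varphi_1$ topologically nilpotent on $J$'' and ``$(A,J)$ henselian'' are really used.
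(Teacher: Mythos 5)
Your proposal is correct and follows essentially the same route as the paper: essential surjectivity by normal decompositions plus henselian lifting of the projective modules and of the structural isomorphism, and full faithfulness by a Lau-style successive-approximation operator whose iterates tend to zero thanks to the pointwise topological nilpotence of $\varphi_1$ on $J$. The ``bookkeeping obstacle'' you flag is handled in the paper without block matrices or a module of admissible maps: the operator $U(\beta)=\varphi_{N,1}^{\sharp}(\mathrm{Id}\otimes\beta)(\varphi_{M,1}^{\sharp})^{-1}$ is defined directly on $A$-linear maps $\beta\colon M\to JN$ (the division by $\tilxi$ is automatic there, since $J\subseteq\mathcal{N}^{\geq 1}A$ gives $\varphi(J)\subseteq\tilxi A$), a window morphism vanishing modulo $J$ is a fixed point of $U$ and hence zero, and an arbitrary $A$-linear lift $\alpha$ of a window morphism over $\underline{B}$ (which preserves the filtration because this can be checked modulo $J$) is corrected to a genuine window morphism by the convergent series $\alpha+\sum_{n\geq 0}U^{n}\bigl(U(\alpha)-\alpha\bigr)$.
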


We note that $\varphi_1(J)\subseteq J$ as $B$ is $\tilxi$-torsion free and $\varphi(j)=\tilxi\varphi_1(j)$ in $A$. Thus the condition that $\varphi_1$ is topologically nilpotent on $J$ makes sense. Moreover, $\varphi_1(J)\subseteq J$ implies that $(B,\mathcal{N}^{\geq 1} A/J, A/\mathcal{N}^{\geq 1}A, \varphi,\varphi_1)$ is indeed a well-defined frame.

\begin{proof}
 In this proof, we will use the following convenient notation: if $\sigma: S\to S$ is a ring endomorphism and $f: M \to N$ is a $\sigma$-linear map between two $S$-modules, we will denote by $f^\sharp: \sigma^\ast M \to N$ its linearization. \blue{We will also abbreviate $\underline{A}_{\mathrm{Nyg}}$ as $\underline{A}$ and $(B,\mathcal{N}^{\geq 1} A/J,A/\mathcal{N}^{\geq 1}A, \varphi,\varphi_1)$ as $\underline{B}$.}
  
  By the existence of normal decompositions \blue{(cf. \Cref{sec:abstr-divid-prism-proposition-existence-of-normal-decompositions}: we can apply it since the proof of \Cref{sec:essent-surj-lemma-prism_r-henselian-along-ker-theta} shows that $A$ is henselian along $\mathcal{N}^{\geq 1} A$ and this implies that finite projective $B/\mathrm{im}(\mathcal{N}^{\geq 1} A)$-modules can be lifted to finite locally free $B$-modules -- even to finite projective $A$-modules)} and the fact that $A$ is henselian along $J$ the base change functor
  $$
  \mathrm{Win}(\underline{A})\to \mathrm{Win}(\underline{B})
  $$
  is essentially surjective. Let $\underline{M}, \underline{N}$ be two windows over $\underline{A}$. We want to prove that
  $$
  \mathrm{Hom}_{\underline{A}}(\underline{M},\underline{N})\cong \mathrm{Hom}_{\underline{B}}(\underline{M}/J,\underline{N}/J)
  $$
  where $\underline{M}/J, \underline{N}/J$ denote the base change of $\underline{M},\underline{N}$ to $\underline{B}$. The idea of proof is similar to \Cref{sec:essent-surj-lemma-equivalence-of-varphi-modules-over-delta-rings} (and \cite[Theorem 3.2]{lau_frames_and_finite_group_schemes_over_complete_regular_local_rings}).
  Let
  $$
  \beta\colon  M\to JN
  $$
  be an arbitrary homomorphism of $A$-modules. Then the $A$-module homomorphism
  $$
  U(\beta)\colon M\to JN,\ m\mapsto \varphi_{N,1}^\sharp (\mathrm{Id}\otimes \beta)(\varphi_{M,1}^\sharp)^{-1}(m) 
  $$
  is well-defined. Indeed, $\varphi_M^\sharp \colon \varphi^\ast M\to M$ is injective with cokernel killed by $\tilxi$ (which follows from the fact that $\varphi_{M,1}(\mathrm{Fil} M)$ generates $M$ and that $M,\varphi^\ast(M)$ are $\tilxi$-torsion free) and thus on $\tilxi M$ there exists a partial inverse $\blue{(\varphi_M^\sharp)}^{-1}\colon \tilxi M\to \varphi^{\ast}M$ of $\blue{\varphi_M^\sharp}$. Moreover, as $\beta$ has image in $JN$ the composition $\varphi_N^\sharp (\mathrm{Id}\otimes \beta)$ has image in $\tilxi N$. The module $M$ is finitely generated: choose generators $x_1, \dots, x_r$. For each $n\geq 1$, and each $x \in M$, we can write
  $$
    \blue{ ((\varphi^{n-1})^\ast (\varphi_{M,1}^\sharp)^{-1} \circ \dots \circ \varphi^\ast (\varphi_{M,1}^\sharp)^{-1} \circ (\varphi_{M,1}^\sharp)^{-1} (x) } = \sum_{i=1}^r b_{i,n}(x) \otimes x_i \in (\varphi^n)^\ast M, 
   $$
   with $b_{i,n}(x) \in A$.
   Hence, we get 
   $$
   U^n(\beta)(x)= \blue{(((\varphi_{N,1})^\sharp \circ \varphi^\ast (\varphi_{N,1})^\sharp  \circ \dots \circ (\varphi^{n-1})^\ast (\varphi_{N,1})^\sharp) }\circ (\varphi^n)^\ast \beta) \left(  \sum_{i=1}^r b_{i,n}(x) \otimes x_i \right)
   $$ 
    whence
   $$
   U^n(\beta)(x)= \sum_{i=1}^r \varphi^n(b_{i,n}(x)) \varphi_{N,1}^n(\beta(x_i)).
   $$
    Write for each $i=1,\dots, r$,
  $$
  \beta(x_i)=\sum_{k=1}^{s_r} j_{i,k} y_{i,k},
  $$
   with $j_{i,k} \in J$, $y_{i,k} \in N$.
   We have, for each $i=1,\dots, r$, 
  $$
  \varphi_{N,1}^n(\beta(x_i)) = \varphi_{N,1}^n \left(\sum_{k=1}^{s_r} j_{i,k} y_{i,k}\right) = \sum_{k=1}^{s_r} \varphi_1^n(j_{i,k}) \varphi_{N}^n(y_{i,k}).
  $$
  By our assumption, $\varphi_1$ on $J$ is pointwise topologically nilpotent, and so in particular for each $m_0 \geq 0$, we can find $m\geq 0$ such that $\varphi_1^m(j_{i,k}) \in (p,\tilxi)^{m_0}$, for all $i=1,\dots, r$, $j=1,\dots, s_r$. The above equalities show that for all $n\geq m$ and for all $x\in M$, 
  $$
  U^n(\beta)(x) \in (p,\tilxi)^{m_0} N.
  $$
 Hence, we deduce from the above that for every $\beta\colon M\to JN$ the sequence
  $$
  \beta, U(\beta),U(U(\beta)),\ldots, U^n(\beta),\ldots
  $$
  converges to $0$ (because $A$ is $(p,\tilxi)$-adically complete as $(A,\tilxi)$ is bounded).
  Now let $\alpha\colon M\to N$ be a homomorphism of windows such that $\alpha\equiv 0$ modulo $J$. Then $U^n(\alpha)=\alpha$ for all $n$ because $\alpha\circ \varphi_M=\varphi_N\circ \alpha$, which implies $\alpha=0$ as the sequence $U^n(\alpha)$ converges to $0$ as we saw above.
  Conversely, assume that $\alpha\colon M\to N$ is an $A$-module homomorphism, such that $\alpha$ modulo $J$ is an homomorphism of windows over $\underline{B}$. Then $\alpha$ maps $\mathrm{Fil} M$ to $\mathrm{Fil} N$ because this can be checked modulo $J$. Note that $(\varphi_M^\sharp)^{-1}(\tilxi.M)= \varphi^\ast(\mathrm{Fil} M)$ as follows from \Cref{filtration-must-be-the-pullback-by-frobenius}. Hence $U(\alpha)$ sends $M$ to $N$. Set
  $$
  \beta:= U(\alpha)-\alpha\colon M\to N.
  $$
  Then $\beta(M)\subseteq JN$ by the assumption on $\alpha$. Therefore the homomorphism
  $$
\tilde{\alpha}\colon M\to N,\ m\mapsto \alpha(m)+\sum\limits_{n=0}^\infty U^n(\beta)(m)
$$
is well-defined. Moreover, $\alpha\equiv \tilde{\alpha}$ modulo $J$ and $\tilde{\alpha}$ is a homomorphism of windows over $\underline{A}$.
 \end{proof}

From the proof of the last lemma, one can also extract the following statement.
\begin{lemma}
  \label{sec:essent-surj-lemma-faithful-dieudonne-modules-divid-frob-top-nilpotent}
Let $R \to R'$ be a morphism of quasi-regular semiperfectoid rings such that $J=\ker(\prism_R \to \prism_{R'})$ is contained in $\mathcal{N}^{\geq 1} \prism_R$, stable by $\varphi_1$ and such that $\varphi_1$ is topologically nilpotent on $J$ (for some, or equivalently any, choice of a generator of the ideal $I$ defining the prism structure of $\prism_R$). Then the base change functors
\[ \mathrm{DM}(R) \to \mathrm{DM}(R') \quad ; \quad \red{\DM^{\rm adm}(R) \to \DM^{\rm adm}(R')} \]
are faithful.
\end{lemma}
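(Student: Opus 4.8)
The plan is to reduce the statement to the unfiltered case and then to mimic, with $\varphi_M$ in place of a divided Frobenius, the convergence argument from the proof of \Cref{sec:essent-surj-lemma-equivalence-on-window-categories}. First, the assertion for $\DF$ is formal once we have it for $\DM$: the forgetful functor $\DF(R')\to\DM(R')$ is faithful (recall that faithfulness of the forgetful functor is already noted after \Cref{sec:abstr-divid-prism-definition-categories-of-divided-dieudonne-modules}), the base change functors and the forgetful functors fit into a commuting square, so $\DF(R)\to\DF(R')$ becomes faithful after composing with a faithful functor, hence is faithful. Thus it is enough to show: if $\alpha\colon (M,\varphi_M)\to (N,\varphi_N)$ is a morphism in $\DM(R)$ whose base change to $\prism_{R'}$ vanishes --- equivalently, $\alpha(M)\subseteq JN$, since $N$ is finite projective --- then $\alpha=0$.

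Next I would fix a generator $\tilxi$ of the ideal $I$ of $(\prism_R,I)$, so that $\varphi_1=\varphi/\tilxi$ on $\mathcal{N}^{\geq 1}\prism_R\supseteq J$ and $\varphi(j)=\tilxi\varphi_1(j)$ for all $j\in J$; recall that $\prism_R$ is $\tilxi$-torsion free and classically $(p,\tilxi)$-adically complete, hence so is the finite projective module $N$. Pick finitely many generators $x_1,\dots,x_r$ of $M$. Since the linearization $\varphi_M^\sharp\colon \varphi^{\ast}M\to M$ is injective with cokernel killed by $\tilxi$, one may write $\tilxi x_k=\varphi_M^\sharp(\sum_i c_{k,i}\otimes x_i)$ with $c_{k,i}\in\prism_R$; applying $\alpha$ and using the identity $\alpha\circ\varphi_M^\sharp=\varphi_N^\sharp\circ\varphi^{\ast}\alpha$ gives the basic relation $\tilxi\,\alpha(x_k)=\sum_i c_{k,i}\,\varphi_N(\alpha(x_i))$ in $N$. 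Now write $\alpha(x_i)=\sum_l j_{i,l}w_{i,l}$ with $j_{i,l}\in J$ and $w_{i,l}\in N$, \emph{committing once and for all to these finitely many elements $j_{i,l}\in J$}. Substituting into the relation, using $\varphi_N(j_{i,l}w_{i,l})=\tilxi\,\varphi_1(j_{i,l})\varphi_N(w_{i,l})$ and dividing by $\tilxi$ (legitimate since $N$ is $\tilxi$-torsion free), yields $\alpha(x_k)=\sum_{i,l}c_{k,i}\,\varphi_1(j_{i,l})\,\varphi_N(w_{i,l})$. Because $J$ is $\varphi_1$-stable and contained in $\mathcal{N}^{\geq 1}\prism_R$, we have $\varphi_1^m(j_{i,l})\in J\subseteq\mathcal{N}^{\geq 1}\prism_R$ for all $m$, so $\varphi\big(\varphi_1^{m}(j_{i,l})\big)=\tilxi\,\varphi_1^{m+1}(j_{i,l})$; iterating the substitution $n$ times (one division by $\tilxi$ at each step) produces
\[ \alpha(x_k)=\sum_{i_1,\dots,i_n,l} c_{k,i_1}\varphi(c_{i_1,i_2})\cdots\varphi^{n-1}(c_{i_{n-1},i_n})\,\varphi_1^n(j_{i_n,l})\,\varphi_N^n(w_{i_n,l}) \]
for every $n\geq 1$.

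Finally I would let $n\to\infty$. Since there are only finitely many elements $j_{i,l}\in J$ and $\varphi_1$ is topologically nilpotent on $J$, for each $m_0$ there is an $n$ with $\varphi_1^n(j_{i,l})\in(p,\tilxi)^{m_0}\prism_R$ for all $i,l$; as the remaining factors lie in $\prism_R$ resp.\ in $N$, the displayed formula forces $\alpha(x_k)\in(p,\tilxi)^{m_0}N$. Letting $m_0\to\infty$ and using that $N$ is $(p,\tilxi)$-adically separated gives $\alpha(x_k)=0$ for all $k$, hence $\alpha=0$. The one delicate point, exactly as in the proof of \Cref{sec:essent-surj-lemma-equivalence-on-window-categories}, is that one must fix the finitely many $j_{i,l}\in J$ before iterating, so that the topological nilpotence hypothesis is applied only to a fixed finite subset of $J$; the repeated division by $\tilxi$, made possible by $\tilxi$-torsion-freeness together with the relation $\varphi(j)=\tilxi\varphi_1(j)$ on $J$, is precisely what upgrades pointwise topological nilpotence of $\varphi_1$ into the convergence $\alpha(x_k)\to 0$.
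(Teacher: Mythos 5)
Your proof is correct and follows essentially the same route as the paper: the paper reduces to faithfulness of $\mathrm{DM}(R)\to\mathrm{DM}(R')$ and then invokes "the exact same argument as in the proof of \Cref{sec:essent-surj-lemma-equivalence-on-window-categories}", which is precisely the iteration you spell out, with the division by $\tilxi$ (using $\tilxi$-torsion-freeness and $\varphi(j)=\tilxi\varphi_1(j)$ on the $\varphi_1$-stable ideal $J$) playing the role of the operator $U$ there, and convergence coming from topological nilpotence of $\varphi_1$ on the fixed finite set of elements $j_{i,l}$. Your explicit $n$-step formula is just the unwound form of $U^n(\alpha)(x_k)$, so no new idea is involved.
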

\begin{proof}
It is enough to prove that the first functor is faithful. For this, one uses the exact same argument used in the proof of \Cref{sec:essent-surj-lemma-equivalence-on-window-categories}.
\end{proof}

\begin{remark}
  \label{sec:abstr-filt-prism-remark-faithfulness-on-windows-if-divided-frob-is-topologically-nilpotent}
More generally, if one has a $1$-morphism of frames $\underline{A} \to \underline{A}'$, whose kernel $J$ is contained in $\mathrm{Fil}~ A$, stable by $\varphi_1$, and such that $\varphi_1$ is topologically nilpotent on $J$, the same proof shows that the base change functor
\[ \mathrm{Win}(\underline{A}) \to \mathrm{Win}(\underline{A}') \]
is faithful. 
\end{remark}

\subsection{Definition of the prismatic Dieudonn\'e functor}
\label{sec:divid-prism-dieud-definition-divided-prismatic-dieudonne-crystals}

In this subsection we define the prismatic Dieudonn\'e crystals of $p$-divisible groups over quasi-syntomic rings and prove some formal properties of them. More difficult properties, like the crystal property or local freeness, will be proved later (cf.\ \Cref{sec:divid-prism-dieud-definition-for-p-div-groups}) after discussing the case of abelian schemes first (cf.\ \Cref{sec:prism-dieud-modul-dieud-mod-abel-schemes}).

Let $R\in \mathrm{QSyn}$ be a quasi-syntomic ring and let $(R)_\prism$ be its absolute prismatic site. We recall from \Cref{sec:abstr-divid-prism-proposition-finite-locally-free} that the category of finite locally free crystals on $(R)_{\prism}$ is equivalent to the category of finite locally free $\mathcal{O}^{\pris}$-modules on the small quasi-syntomic site $(R)_{\mathrm{qsyn}}$ of $R$ endowed with the quasi-syntomic topology.

Recall as well that there is an exact sequence
$$
0\to \mathcal{N}^{\geq 1}\mathcal{O}^{\mathrm{pris}}\to \mathcal{O}^\pris\to \mathcal{O}\to 0
$$
where $\mathcal{O}$ is the structure sheaf $S\in (R)_{\mathrm{qsyn}}\mapsto S$  on $(R)_{\mathrm{qsyn}}$ (cf.\ \Cref{sec:abstr-divid-prism-proposition-quotient-sheaf-is-structure-sheaf}).

\begin{definition}
  \label{sec:divid-prism-dieud-definition-divided-prismatic-dieudonne-crystal-of-p-divisible-groups} Let $G$ be a $p$-divisible group over $R$. We define\footnote{For an alternative perspective on this definition, using classifying stacks, see the work of Mondal \cite{mondal2021dieudonne}.}
  $$
    \mathcal{M}_{\prism}(G):=\mathcal{E}xt^1_{(R)_{\mathrm{qsyn}}}(G,\mathcal{O}^\pris) \\
  $$
  and $\varphi_{\mathcal{M}_\prism(G)}$ as the endomorphism of $\mathcal{M}_\prism(G)$ induced from the endomorphism $\varphi$ on $\mathcal{O}^\pris$.
  We call $(\mathcal{M}_\prism(G),\varphi_{\mathcal{M}_\prism(G)})$ the \textit{prismatic Dieudonn\'e crystal} of $G$.
\end{definition}

We will check later that $(\mathcal{M}_\prism(G),\varphi_{\mathcal{M}_\prism(G)})$ is indeed a(n admissible) prismatic Dieudonn\'e crystal.

\begin{remark}
\label{no-hom-btw-o-pris-and-G}
Let us note that
$$
\mathcal{H}om(G,Q)=0
$$
for any derived $p$-adically complete quasi-syntomic sheaf $Q$. Indeed, the finite locally free group schemes $G[p^n]$ are syntomic over $R$ for $n\geq 0$ (as follows e.g.\ from \cite[II.(3.2.6)]{messing_the_crystals_associated_to_barsotti_tate_groups}) \blue{(hence multiplication by $p$ on $G$ is surjective in the syntomic topology)}. This implies that the derived $p$-completion of $G$ on the big quasi-syntomic site over $R$ is given by $T_pG$ placed in degree $-1$. As there are no morphisms from $D^{\leq -1}$ to $D^{\geq 0}$, and $Q$ is assumed to be derived $p$-adically complete, the statement follows. 

In particular, we can apply this to $Q=\mathcal{O}^\pris$ and deduce that 
$$
\mathcal{H}om(G, \mathcal{O}^\pris)=0
$$
and thus also
$$
\mathcal{H}om(G,\mathcal{N}^{\geq 1} \mathcal{O}^\pris)=0.
$$
\end{remark}

\begin{remark}
Beware that the prismatic Dieudonn\'e crystal of a $p$-divisible group is a sheaf on the quasi-syntomic site, not on the prismatic site. In particular, it is not a crystal on the prismatic site of $R$, but rather the push-forward along $v$ of a crystal on the prismatic site (as will be proved later). We hope that this choice of terminology does not create too much confusion ; from the mathematical point of view, it is justified by \Cref{sec:abstr-divid-prism-proposition-finite-locally-free}.
\end{remark}


Fix a $p$-divisible group $G$ over $R$.
We check some easy properties of $\red{\mathcal{M}}_{\prism}(G)$.

%
%

In \cite{berthelot_breen_messing_theorie_de_dieudonne_cristalline_II}, the crystalline Dieudonn\'e crystal of a $p$-divisible group is defined via the sheaf of local extensions on the crystalline site. There is a similar description of the prismatic Dieudonn\'e crystal.
%

\begin{lemma}
  \label{sec:divid-prism-dieud-lemma-divided-dieudonne-module-via-local-ext-on-prismatic-site} There is a canonical isomorphism
  $$
      \mathcal{M}_{\prism}(G)\cong v_\ast(\mathcal{E}xt^1_{(R)_\prism}(u^{-1}(G),\mathcal{O}_\prism)).
  $$
\end{lemma}
\begin{proof}
First, we claim that there is a canonical isomorphism
  $$
    \mathcal{E}xt^1_{(R)_{\mathrm{QSYN}}}(G,u_\ast \mathcal{O}_\prism)\cong u_\ast(\mathcal{E}xt^1_{(R)_\prism}(u^{-1}(G),\mathcal{O}_\prism)).
    $$
    By adjunction, there is a canonical isomorphism
    \[      
      R\mathcal{H}om_{(R)_{\mathrm{QSYN}}}(G,Ru_\ast(\mathcal{O}_{\prism}))\cong Ru_\ast(R\mathcal{H}om_{(R)_\prism}(u^{-1}G,\mathcal{O}_\prism)).
    \]

   It thus suffices to see that $\mathcal{E}xt^1_{(R)_{\mathrm{QSYN}}}(G,u_\ast \mathcal{O}_\prism)$, resp.\ $u_\ast(\mathcal{E}xt^1_{(R)_\prism}(u^{-1}(G),\mathcal{O}_\prism))$, are the first cohomology sheaves on both sides.
  The sheaves
  $$
  \mathcal{H}om(G,R^1u_\ast(\mathcal{O}_\prism)), \mathcal{H}om(u^{-1}(G),\mathcal{O}_\prism)
  $$
  are $0$: \blue{for the first this follows as $G$ is $p$-divisible and the target \red{derived $p$-complete, cf. \Cref{no-hom-btw-o-pris-and-G}} and for the second the same argument as in  \Cref{no-hom-btw-o-pris-and-G} can be applied since the multiplication by $p$ map on $u^{-1}(G)$ is surjective and the prismatic topos is replete}. This implies the claim. 
  
  To finish, the proof of the proposition, it therefore remains to show that we have
  $$
  \epsilon_\ast \mathcal{E}xt^1_{(R)_{\mathrm{QSYN}}}(G,u_\ast \mathcal{O}_\prism) \cong  \mathcal{M}_{\prism}(G).
  $$  
 We will in fact give an argument, inspired by \cite{bauer1992conjecture}, which works with $\mathcal{E}xt^1$ replaced by $\mathcal{E}xt^i$, for any $i\geq 0$. The Breen-Deligne resolution $C(G)$ of $G$, seen either as a sheaf on the big or on the small quasi-syntomic site (cf. \cite[Appendix to Lecture IV]{scholze_lectures_on_condensed_mathematics}, see also \Cref{sec:calc-ext-groups} below for a partial explicit resolution, sufficient for our purposes), give, for each $i\geq 0$, spectral sequences 
  $$
 \mathcal{E}xt^{i-j}_{(R)_{\mathrm{QSYN}}}(C_j(G),u_\ast \mathcal{O}_\prism) \Longrightarrow  \mathcal{E}xt^i_{(R)_{\mathrm{QSYN}}}(G,u_\ast \mathcal{O}_\prism), 
 $$
 and 
 $$
 \mathcal{E}xt^{i-j}_{(R)_{\mathrm{qsyn}}}(C_j(G),v_\ast \mathcal{O}_\prism) \Longrightarrow  \mathcal{E}xt^i_{(R)_{\mathrm{qsyn}}}(G,v_\ast \mathcal{O}_\prism), 
 $$
 Since for each $j$, $C_j(G)$ is a finite direct sum of terms of the form $\Z[G^n]$, $n\geq 1$, it suffices to show that for each $k\geq 0, j\geq 1$, 
 $$
 \epsilon_\ast  \mathcal{E}xt^k_{(R)_{\mathrm{QSYN}}}(\Z[G^j],u_\ast \mathcal{O}_\prism) \cong  \mathcal{E}xt^k_{(R)_{\mathrm{qsyn}}}(\Z[G^j],v_\ast \mathcal{O}_\prism).
 $$
   Since $f_n : G^n \to \mathrm{Spf}(R)$ is quasi-syntomic, it induces a morphism of topoi $f_{n,\rm qsyn}: G_{\rm qsyn}^n \to (R)_{\rm qsyn}$, identifying $G_{\rm qsyn}^n$ with the slice topos $(R)_{\rm qsyn}/G^n$. Hence,
    $$
    \mathcal{E}xt^k_{(R)_{\mathrm{qsyn}}}(\Z[G^j],v_\ast \mathcal{O}_\prism) \cong R^k f_{n,\mathrm{qsyn},\ast} f_{n, \rm qsyn}^\ast \epsilon_\ast u_\ast \mathcal{O}_\prism. 
    $$
    Analogously, if $f_{n,\rm QSYN}: G_{\rm QSYN}^n \to (R)_{\rm QSYN}$ denotes the morphism of topoi induced by $f_n$, we have 
     $$
    \mathcal{E}xt^k_{(R)_{\mathrm{QSYN}}}(\Z[G^j],u_\ast \mathcal{O}_\prism) \cong R^k f_{n,\mathrm{QSYN},\ast} f_{n, \rm QSYN}^\ast  u_\ast \mathcal{O}_\prism. 
    $$
   The sheaf $R^k f_{n,\mathrm{qsyn},\ast} f_{n, \rm qsyn}^\ast \epsilon_\ast u_\ast \mathcal{O}_\prism$ is the sheaf attached to the presheaf sending $X \in (R)_{\rm qsyn}$ to 
   $$
   H^k(f_{n, \rm qsyn}^\ast X,  f_{n, \rm qsyn}^\ast \epsilon_\ast u_\ast \mathcal{O}_\prism)
   $$
   while the sheaf $\epsilon_\ast R^k f_{n,\mathrm{QSYN},\ast} f_{n, \rm QSYN}^\ast  u_\ast \mathcal{O}_\prism$ is the sheaf attached to the presheaf sending 
   $X \in (R)_{\rm qsyn}$ to 
   $$
   H^k(f_{n, \rm QSYN}^\ast X,  f_{n, \rm QSYN}^\ast  u_\ast \mathcal{O}_\prism)
   $$
Both $f_{n, \rm qsyn}^\ast X$ and $f_{n, \rm QSYN}^\ast X$ are represented by $X \times_{\mathrm{Spf}(R)} G^n \in G_{\rm qsyn}^n$, and therefore $H^k(f_{n, \rm qsyn}^\ast X,  f_{n, \rm qsyn}^\ast \epsilon_\ast u_\ast \mathcal{O}_\prism)$, resp. $H^k(f_{n, \rm QSYN}^\ast X,  f_{n, \rm QSYN}^\ast  u_\ast \mathcal{O}_\prism)$, agrees with $H_{\rm qsyn}^k(X \times_{\mathrm{Spf}(R)} G^n, \epsilon_\ast u_\ast \mathcal{O}_\prism)$, resp. with $H_{\rm QSYN}^k(X \times_{\mathrm{Spf}(R)} G^n, u_\ast \mathcal{O}_\prism)$. But these last two cohomology groups agree, since on both sites, quasi-regular semiperfectoid rings form a basis on which the cohomology in positives degrees of $u_\ast \mathcal{O}_\prism$ vanishes. Whence our claim, and the end of the proof. 
\end{proof}

Using the $p$-adic Tate module $T_pG$ of $G$, i.e., the inverse limit
$$
\varprojlim\limits_{n} G[p^n]
$$
of sheaves on $(R)_{\mathrm{qsyn}}$, one can give a more explicit description of the prismatic Dieudonn\'e crystal $\mathcal{M}_{\prism}(G)$.

\begin{lemma}
  \label{sec:divid-prism-dieud-lemma-exactness-of-universal-covering-exact-sequence} Define the universal cover $\tilde{G}:=\varprojlim\limits_{p} G$ of $G$. Then the sequences
  $$
  \begin{matrix}
    0\to T_pG\to \tilde{G}\to G\to 0\\
    0\to u^{-1} T_pG\to u^{-1} \tilde{G}\to u^{-1} G\to 0    
  \end{matrix}
  $$
  of sheaves on $(R)_{\mathrm{qsyn}}$ resp.\ $(R)_\prism$ are exact for the quasi-syntomic topology.
\end{lemma}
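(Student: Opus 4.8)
The plan is to isolate the only nonformal ingredient, namely the surjectivity of the projection $\pi_0\colon \tilde G\to G$ onto the zeroth component, everything else being immediate. First I would note that $0\to T_pG\to \tilde G\xrightarrow{\pi_0} G$ is exact already at the level of presheaves on $(R)_{\mathrm{qsyn}}$: a section of $\ker(\pi_0)$ over $S$ is a compatible system $(y_n)_{n\geq 0}$ in $G(S)$ with $y_0=0$ and $py_{n+1}=y_n$, which forces $y_n\in G[p^n](S)$, and forgetting the (zero) zeroth term identifies $\ker(\pi_0)(S)$ with $\varprojlim_n G[p^n](S)=T_pG(S)$. Granting that $\pi_0$ is an epimorphism of sheaves on $(R)_{\mathrm{qsyn}}$, the first sequence is exact; the second is then obtained by applying the pullback functor $v^{-1}\colon \mathrm{Shv}((R)_{\mathrm{qsyn}})\to \mathrm{Shv}((R)_{\prism})$, which is exact (inverse images of morphisms of topoi are exact, and $u$ is induced by a cocontinuous functor by \Cref{sec:quasi-syntomic-rings-morphism-of-topoi-from-prismatic-topos}), so it carries short exact sequences to short exact sequences.

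To prove that $\pi_0$ is an epimorphism I would show that every section lifts after an explicit quasi-syntomic cover. Fix $S\in (R)_{\mathrm{qsyn}}$ and $x\in G(S)$. Since $G$ is $p$-divisible, $[p]\colon G\to G$ is a torsor under $G[p]$ for the fppf topology; as $G[p]$ is a finite locally free group scheme over $R$, it is syntomic over $R$ (\cite[Proposition 2.2.2]{breuil_groupes_p_divisibles_groupes_finis_et_modules_filtres}), and hence the fibre of $[p]\colon G\to G$ over any point is finite locally free and syntomic over the base. I would then set $S_0:=S$, $x_0:=x$, and inductively let $\mathrm{Spf}(S_{n+1})$ be the fibre of $[p]\colon G\to G$ over $x_n\colon \mathrm{Spf}(S_n)\to G$, so that $S_n\to S_{n+1}$ is finite locally free and syntomic and $S_{n+1}$ carries a tautological point $x_{n+1}\in G(S_{n+1})$ with $px_{n+1}=x_n$. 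Finally I would put $S':=(\varinjlim_n S_n)^{\wedge_p}$. Each $S\to S_n$ is syntomic, so $S\to S'$ is $p$-completely faithfully flat and $L_{S'/S}$ has $p$-complete Tor-amplitude in $[-1,0]$ (composition, filtered colimits and $p$-completion all preserving this bound); thus $S\to S'$ is a quasi-syntomic cover, and $(x_n)_{n\geq 0}\in \varprojlim_n G(S')=\tilde G(S')$ is a lift of $x|_{S'}$. This shows $\pi_0$ is an epimorphism and finishes the argument.

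The main (and essentially the only) obstacle is this surjectivity of $\pi_0$: one needs the transition maps $[p]\colon G\to G$ of the defining inverse system to be not merely fppf epimorphisms but \emph{quasi-syntomic covers}, so that the infinite tower of fibre products still produces a quasi-syntomic cover after passing to the $p$-completed colimit. This is exactly where the input that finite locally free group schemes are syntomic enters; left-exactness and the transfer to the prismatic site are purely formal.
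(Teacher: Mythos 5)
Your argument is correct, and its core input is the same as the paper's: the only non-formal point is that multiplication by $p$ on $G$ is a quasi-syntomic-locally split surjection because the finite locally free group scheme $G[p]$ (resp.\ $G[p^n]$) is syntomic, and the second sequence then follows from the first because the inverse image functor of the morphism of topoi induced by the cocontinuous functor of \Cref{sec:quasi-syntomic-rings-morphism-of-topoi-from-prismatic-topos} is exact, exactly as in \Cref{sec:prismatic-to-syntomic-corollary-exactness-if-kernel-is-affine-p-completely-syntomic}. Where you differ is in the passage to the inverse limit: the paper records exactness of $0\to G[p^n]\to G\xrightarrow{p^n} G\to 0$ for each $n$ and then invokes repleteness of $(R)_{\mathrm{qsyn}}$ to conclude that $\varprojlim_p G\to G$ remains an epimorphism, whereas you bypass repleteness by exhibiting, for a given section $x\in G(S)$, the explicit cover $S':=(\varinjlim_n S_n)^{\wedge_p}$ built from the tower of $G[p]$-torsors $[p]^{-1}(x_n)$, over which compatible lifts exist at every level simultaneously. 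This is in effect the proof of the relevant instance of repleteness unfolded by hand: your route is more self-contained (no citation of the Bhatt--Scholze repleteness formalism, at the cost of the routine checks that $S\to S'$ is indeed a quasi-syntomic cover of $p$-complete rings with bounded $p^\infty$-torsion, which you correctly indicate), while the paper's is shorter by appealing to the abstract property of the topos. Two small points of hygiene: the functor you apply at the end should be written $u^{-1}$ (or one should note that your lifting argument works verbatim on the big site $(R)_{\mathrm{QSYN}}$ before pulling back to $(R)_{\prism}$), a looseness the paper shares; and to represent the fibre of $[p]$ over $x_n$ one uses that $x_n$ factors (locally) through some $G[p^N]$, so the fibre is the $G[p]$-torsor $\mathrm{Spf}(S_n)\times_{G[p^N]}G[p^{N+1}]$ — worth saying explicitly, but harmless.
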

\begin{proof}
  Exactness of the second follows from exactness of the first and exactness of $u^{-1}$ (cf.\ \Cref{sec:prismatic-to-syntomic-corollary-exactness-if-kernel-is-affine-p-completely-syntomic}). Each $G[p^n]$ is syntomic over $R$. This implies that $\tilde{G}\to G$ is a quasi-syntomic cover, which implies exactness of the first sequence.
\end{proof}

The following lemma will be useful when describing the prismatic Dieudonn\'e crystals of $\Q_p/\Z_p$ and $\mu_{p^\infty}$ and when proving fully faithfulness of the prismatic Dieudonn\'e functor.

\begin{lemma}
  \label{sec:divid-prism-dieud-lemma-prismatic-dieudonne-module-via-hom}
  There are canonical isomorphisms
  $$
  \mathcal{M}_{\prism}(G)\cong \mathcal{H}om_{(R)_\mathrm{qsyn}}(T_pG,\mathcal{O}^\pris) \cong v_*\mathcal{H}om_{(R)_\prism}(u^{-1}(T_pG),\mathcal{O}_\prism).
  $$
\end{lemma}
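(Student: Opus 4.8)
The plan is to deduce this from \Cref{sec:divid-prism-dieud-lemma-exactness-of-universal-covering-exact-sequence} together with the vanishing statements already established in \Cref{sec:divid-prism-dieud-lemma-frobenius-factors-through-ideal-times-module}. First I would work on the small quasi-syntomic site and consider the short exact sequence
$$
0 \to T_pG \to \tilde{G} \to G \to 0
$$
from \Cref{sec:divid-prism-dieud-lemma-exactness-of-universal-covering-exact-sequence}. Applying $R\mathcal{H}om_{(R)_{\mathrm{qsyn}}}(-, \mathcal{O}^{\mathrm{pris}})$ gives a long exact sequence of $\mathcal{E}xt$-sheaves, and the point is that the universal cover $\tilde{G}$ is uniquely $p$-divisible, so $\mathcal{H}om(\tilde{G}, \mathcal{F}) = \mathcal{E}xt^1(\tilde{G}, \mathcal{F}) = 0$ for any $p$-complete sheaf $\mathcal{F}$: indeed, multiplication by $p$ is an isomorphism on $\tilde{G}$, hence induces an isomorphism on each $\mathcal{E}xt^i(\tilde{G}, \mathcal{F})$, but it also acts as the $p$-power map on the target, which is topologically nilpotent on a $p$-complete sheaf. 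Since $\mathcal{O}^{\mathrm{pris}}$ and $\mathcal{N}^{\geq 1}\mathcal{O}^{\mathrm{pris}}$ are $p$-complete (being pushforwards of $p$-complete prismatic sheaves), the long exact sequence collapses to give
$$
\mathcal{M}_{\prism}(G) = \mathcal{E}xt^1_{(R)_{\mathrm{qsyn}}}(G,\mathcal{O}^{\mathrm{pris}}) \cong \mathcal{H}om_{(R)_{\mathrm{qsyn}}}(T_pG, \mathcal{O}^{\mathrm{pris}}),
$$
and likewise with $\mathcal{N}^{\geq 1}\mathcal{O}^{\mathrm{pris}}$ in place of $\mathcal{O}^{\mathrm{pris}}$, which yields the corresponding description of $\mathrm{Fil}\,\mathcal{M}_{\prism}(G)$. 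One has to be slightly careful that $\tilde{G}$ is a limit of copies of $G$ along multiplication by $p$ and that one may compute $\mathcal{E}xt$ into a $p$-complete sheaf compatibly with this limit; this uses the repleteness of $(R)_{\mathrm{qsyn}}$ exactly as in the proof of \Cref{sec:divid-prism-dieud-lemma-exactness-of-universal-covering-exact-sequence}, so no new input is needed.

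For the second isomorphism, identifying $\mathcal{H}om_{(R)_{\mathrm{qsyn}}}(T_pG, \mathcal{O}^{\mathrm{pris}})$ with $v_*\mathcal{H}om_{(R)_{\prism}}(u^{-1}(T_pG), \mathcal{O}_{\prism})$, I would invoke the adjunction $(v^{-1}, v_*)$ (equivalently, the fact that $v = \varepsilon \circ u$ and the corresponding adjunctions), which gives a canonical isomorphism
$$
R\mathcal{H}om_{(R)_{\mathrm{qsyn}}}(v^{-1}\mathcal{G}, \mathcal{O}_{\prism})^{v_*} \cong R v_* R\mathcal{H}om_{(R)_{\prism}}(v^{-1}\mathcal{G}, \mathcal{O}_{\prism})
$$
for a sheaf $\mathcal{G}$ on $(R)_{\mathrm{qsyn}}$; here one needs to know that $v^{-1}(T_pG) \cong u^{-1}(T_pG)$ restricted appropriately and that $v^{-1}$ commutes with the relevant inverse limit $T_pG = \varprojlim G[p^n]$, which again follows from exactness of $u^{-1}$ (\Cref{sec:prismatic-to-syntomic-corollary-exactness-if-kernel-is-affine-p-completely-syntomic}) applied levelwise together with repleteness. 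Taking $H^0$ and using the vanishing of $\mathcal{H}om$ and the identification of $v_*$ with the pushforward of sheaves gives the stated formula. The same argument with $\mathcal{N}^{\geq 1}\mathcal{O}_{\prism}$ handles $\mathrm{Fil}\,\mathcal{M}_{\prism}(G)$, using that $\mathcal{N}^{\geq 1}\mathcal{O}^{\mathrm{pris}} = v_*\mathcal{N}^{\geq 1}\mathcal{O}_{\prism}$ by definition.

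The main obstacle, such as it is, is bookkeeping rather than conceptual: one must make sure the passage to the limit defining $T_pG$ (and $\tilde{G}$) is harmless both for the $\mathcal{E}xt$-computation and under $u^{-1}$/$v^{-1}$, and that "$p$-complete" really does kill the relevant $\mathcal{E}xt^i$ of the uniquely-$p$-divisible sheaf $\tilde{G}$. Both points are settled by the repleteness of the quasi-syntomic topos (which guarantees $R\varprojlim$ is well-behaved and that $\varprojlim^1$-terms vanish appropriately) and by the exactness of $u^{-1}$, so the proof is short once these are in place.
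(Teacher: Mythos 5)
Your handling of the first isomorphism is essentially the paper's own argument: apply $R\mathcal{H}om_{(R)_{\mathrm{qsyn}}}(-,\mathcal{O}^{\pris})$ (resp.\ with $\mathcal{N}^{\geq 1}\mathcal{O}^{\pris}$) to the universal cover sequence of \Cref{sec:divid-prism-dieud-lemma-exactness-of-universal-covering-exact-sequence} and kill the $\tilde{G}$-terms. One quibble: your justification of the vanishing ("multiplication by $p$ \dots acts as the $p$-power map on the target, which is topologically nilpotent on a $p$-complete sheaf") is not the right formulation — multiplication by $p$ is not nilpotent on, say, $\Z_p$. The correct statement, and the one the paper uses, is that $R\mathcal{H}om(\tilde{G},\mathcal{F})$ is derived $p$-complete (being $R\mathcal{H}om$ into a derived $p$-complete sheaf) while $p$ acts invertibly on it because $\tilde{G}$ is a $\Q_p$-vector space; a derived $p$-complete object on which $p$ is invertible vanishes by derived Nakayama.

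For the second isomorphism you genuinely diverge from the paper: you want the adjunction $v_*\mathcal{H}om_{(R)_{\prism}}(v^{-1}\mathcal{G},\mathcal{O}_{\prism})\cong \mathcal{H}om_{(R)_{\mathrm{qsyn}}}(\mathcal{G},v_*\mathcal{O}_{\prism})$ with $\mathcal{G}=T_pG$, and then must identify $v^{-1}(T_pG)$ with $u^{-1}(T_pG)$. That route can be made to work, but your stated justification — "exactness of $u^{-1}$ applied levelwise together with repleteness" — does not establish it: exactness of a pullback concerns finite limits and says nothing about the countable inverse limit $T_pG=\varprojlim_n G[p^n]$, and repleteness of the source or target topos does not make $u^{-1}$ or $v^{-1}$ commute with $\varprojlim$. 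What actually saves the step is representability: $T_pG$ is represented by an affine $p$-adic formal scheme quasi-syntomic over $R$, hence is an object of the small site, and since $\varepsilon_*$ is restriction, Yoneda gives $\varepsilon^{-1}(h^{\mathrm{small}}_X)\cong h^{\mathrm{big}}_X$ for any such $X$, whence $v^{-1}(T_pG)=u^{-1}\varepsilon^{-1}(T_pG)\cong u^{-1}(T_pG)$ with no commutation of pullbacks with limits needed. The paper avoids this small-versus-big bookkeeping altogether: it runs the same boundary-map argument on $(R)_{\prism}$ to get $\mathcal{H}om_{(R)_{\prism}}(u^{-1}T_pG,\mathcal{O}_{\prism})\cong \mathcal{E}xt^1_{(R)_{\prism}}(u^{-1}G,\mathcal{O}_{\prism})$ and then applies $v_*$, invoking \Cref{sec:divid-prism-dieud-lemma-divided-dieudonne-module-via-local-ext-on-prismatic-site}, where the comparison of sites was already handled (via the Breen--Deligne resolution). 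So: either repair your compatibility step as above, or simply quote that lemma; as written, that step is the one real gap in your argument.
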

\begin{proof}
  This follows from \Cref{sec:divid-prism-dieud-lemma-exactness-of-universal-covering-exact-sequence} and the fact that
  $$
  R\mathcal{H}om_{(R)_{\prism}}(u^{-1}(\tilde{G}),\mathcal{O}_\prism)=0 \quad ; \quad
  R\mathcal{H}om_{(R)_{\mathrm{qsyn}}}(\tilde{G},\mathcal{O}^\pris)=0
  $$ as $\mathcal{O}_{\prism}, \mathcal{O}^\pris$ are derived $p$-complete sheaves and $\tilde{G}$ is a $\Q_p$-vector space. 
  \end{proof}


\begin{remark}
The universal vector extension $E(G)$ of $G$ can be seen as an extension of sheaves on $(R)_{\mathrm{qsyn}}$  :
\[ 0 \to \omega_{\check{G}} \to E(G) \to G \to 0. \]
It is defined as in \cite{messing_the_crystals_associated_to_barsotti_tate_groups} (this makes sense since $R$ is $p$-complete), or equivalently as the push-out of the universal cover exact sequence 
\[  0\to T_pG\to \tilde{G}\to G\to 0 \]
along the \textit{Hodge-Tate map}
\[ HT : T_p G \to \omega_{\check{G}}, \]
 which sends $f \in T_p G = \mathrm{Hom}_R(\mathbb{Q}_p/\mathbb{Z}_p,G)$, viewed by Cartier duality as an element of \red{$\mathrm{Hom}_{R}(\check{G},\mu_{p^{\infty}})$}, to $f^* dT/T$, $dT/T$ being the canonical generator of $\omega_{\mu_{p^{\infty}}}$. Is there a way to use \Cref{sec:divid-prism-dieud-lemma-prismatic-dieudonne-module-via-hom} to relate the prismatic Dieudonn\'e module to the dual of the Lie algebra of $E(G)$?
 \end{remark}

Assume now that $R$ is quasi-regular semiperfectoid. Then, by \Cref{sec:abstr-divid-prism-proposition-finite-locally-free}, the category of finite locally free crystals on $(R)_\prism$ is equivalent to the category of finite projective $\prism_R$-modules by evaluating a crystal on the initial prism $\prism_R$. Similarly, finite locally free $\mathcal{O}^\pris$-modules on $(R)_{\mathrm{qsyn}}$ are equivalent to finite projective $\prism_R$ by evaluating a finite locally free $\mathcal{O}^\pris$-module $\mathcal{M}$ on $R$.
This allows the following simplification of the definition of the prismatic Dieudonn\'e crystal of a $p$-divisible group $G$ over $R$.

\begin{definition}
  \label{sec:divid-prism-dieud-definition-divided-prismatic-dieudonne-module}
  Let $R$ be quasi-regular semiperfectoid and let $G$ be a $p$-divisible group over $R$. Define
  $$
     M_{\prism}(G):=\mathrm{Ext}^1_{(R)_{\mathrm{qsyn}}}(G,\mathcal{O}^\pris)\cong \mathrm{Ext}^1_{(R)_\prism}(u^{-1}(G),\mathcal{O}_\prism)
       $$
  and $\varphi_{M_\prism(G)}$ as the endomorphism induced by $\varphi$ on $\mathcal{O}^\pris$. We call
  $$
  (M_{\prism}(G),\varphi_{M_\prism(G)})
  $$
  the \textit{prismatic Dieudonn\'e module} of $G$.
\end{definition}

We will see later that $M_{\prism}(G)$ is indeed \red{a(n admissible) prismatic Dieudonn\'e module} in the sense of \Cref{sec:abstr-divid-prism-definition-prismatic-dieudonne-modules-for-quasi-regular semiperfectoid}.
Moreover, $M_{\prism}(G)$ is the evaluation of the prismatic Dieudonn\'e crystal $\mathcal{M}_\prism(G)$ as follows from the local-global spectral sequence
$$
E^{ij}_2=H^i(\Spf(R),\mathcal{E}xt^j_{(R)_\mathrm{qsyn}}(G,\mathcal{O}^\pris))\Rightarrow \mathrm{Ext}^{i+j}_{(R)_\mathrm{qsyn}}(G,\mathcal{O}^\pris)
$$
by the vanishing of the sheaf $\mathcal{H}om_{(R)_\mathrm{qsyn}}(G,\mathcal{O}^\pris)$.
Thus under the equivalence from \Cref{sec:abstr-divid-prism-proposition-equivalence-crystals-modules-for-quasi-regular semiperfectoid} the prismatic Dieudonn\'e crystal $\mathcal{M}_\prism(G)$ corresponds to the prismatic Dieudonn\'e module $M_\prism(G)$.

\subsection{Comparison with former constructions}
\label{sec:comp-with-cryst}

In this section we prove a comparison of the prismatic Dieudonn\'e functor \red{$\mathcal{M}_{\prism}$} with former constructions, in two special cases :
\begin{enumerate}
\item For quasi-syntomic rings such that $pR=0$, we relate $\mathcal{M}_{\prism}$ to the crystalline Dieudonn\'e functor of Berthelot-Breen-Messing \cite{berthelot_breen_messing_theorie_de_dieudonne_cristalline_II}. 
\item For perfectoid rings, we relate the prismatic Dieudonn\'e functor to the functor introduced by Scholze-Weinstein in \cite[Appendix to Lecture XVII]{scholze2020berkeley}. 
\end{enumerate}
The intersection of these two cases is the case of perfect rings, which was historically the first to be studied. The situation for perfect fields is briefly discussed at the end of this section.
\\

We start with the case of quasi-syntomic rings $R$ with $pR=0$. We want to compare the prismatic Dieudonn\'e functor to the crystalline Dieudonn\'e functor
$$
G\mapsto \mathcal{E}xt^1_{(R/\Z_p)_{\mathrm{crys},\mathrm{pr}}}(i_*^{\rm crys}(G),\mathcal{O}_{\mathrm{crys}})
$$
of \cite{berthelot_breen_messing_theorie_de_dieudonne_cristalline_II}.
Here $(R/\Z_p)_{\crys,\mathrm{pr}}$ is the (big) crystalline site of $R$ over $\Z_p$, $\mathcal{O}_{\mathrm{crys}}$ is the crystalline structure sheaf, \red{$\mathrm{pr}$ denotes the $p$-th root topology of \cite[Definition 7.2]{lau_divided_dieudonne_crystals}} and
$$
i^{\rm crys} \colon \mathrm{Shv}(R)_{\mathrm{pr}}\to \mathrm{Shv}(R/\Z_p)_{\mathrm{crys},\mathrm{pr}}
$$
\blue{defined as in \ \cite[Lemma 8.1]{lau_divided_dieudonne_crystals}}, \red{where the left-hand side denotes the category of all schemes over $R$ endowed with the pr topology}. As in \cite[Section 8]{lau_divided_dieudonne_crystals} we define
$$
\mathcal{O}^{\mathrm{crys}}:=u_\ast^{\rm crys}(\mathcal{O}_{\mathrm{crys}})
$$
as the pushforward of the crystalline structure sheaf $\mathcal{O}_{\mathrm{crys}}$ along the morphism
$$
u^{\rm crys} \colon \mathrm{Shv}(R/\Z_p)_{\crys,\mathrm{pr}}\to \mathrm{Shv}(R)_{\mathrm{pr}}
$$ of topoi. Note that by definition $i_*^{\rm crys}=(u^{\mathrm{crys}})^{-1}$, so we can rewrite the crystalline Dieudonn\'e functor as
\[ G\mapsto \mathcal{E}xt^1_{(R/\Z_p)_{\mathrm{crys},\mathrm{pr}}}((u^{\rm crys})^{-1}(G),\mathcal{O}_{\mathrm{crys}}) \]
Let $\mathcal{J}^\crys\subseteq \mathcal{O}^{\crys}$ be the pushforward of the crystalline ideal sheaf $\mathcal{J}_{\mathrm{crys}}\subseteq \mathcal{O}_{\mathrm{crys}}$.

The following lemma is the basic input in the comparison of the prismatic and crystalline Dieudonn\'e functor.

\begin{lemma}
  \label{sec:comp-with-cryst-lemma-comparison-of-prismatic-and-crystalline-pushforward}
  \red{Let $R^\prime$ be a quasi-syntomic $\mathbb{F}_p$-algebra.} Then there is a canonical isomorphism
  $$
  \mathcal{O}^\pris(R^\prime)\to \mathcal{O}^{\crys}(R^\prime)
  $$
  identifying $\mathcal{N}^{\geq 1}\mathcal{O}^\pris(R^\prime)$ with $\mathcal{J}^{\mathrm{crys}}(R^\prime)$.
\end{lemma}

\begin{proof}
  Using the sheaf property for the $\mathrm{pr}$-topology we may assume that $R^\prime$ is semiperfect. Then $R^\prime$ is even quasi-regular semiperfect as it is quasi-syntomic. Hence,
  $$
  \mathcal{O}^\pris(R^\prime)=\prism_{R^\prime}\cong A_{\mathrm{crys}}(R^\prime)=\mathcal{O}^{\crys}(R^\prime)
  $$
  by \Cref{sec:prism-cohom-quasi-lemma-for-qr-semiperfect-prism-isomorphic-to-acrys}. Moreover, the isomorphism in \Cref{sec:prism-cohom-quasi-lemma-for-qr-semiperfect-prism-isomorphic-to-acrys} identifies $\mathcal{N}^{\geq 1}\mathcal{O}^\pris(R^\prime)$ with $\mathcal{J}^{\mathrm{crys}}$.
\end{proof}
Let $(R)_{\mathrm{qsyn},\mathrm{pr}}$ be the category of quasi-syntomic $R$-algebras equipped with the $\mathrm{pr}$-topology, and \blue{let
$$
v^{\rm crys}_\ast \colon \mathrm{Shv}(R/\Z_p)_{\crys,\mathrm{pr}}\to \mathrm{Shv}(R)_{\mathrm{qsyn}, \mathrm{pr}}
$$ obtained by composing $u^{\rm crys}_\ast$ with restriction (the same caveat as in the beginning of \Cref{sec:abstr-divid-dieud} applies here)}. \Cref{sec:comp-with-cryst-lemma-comparison-of-prismatic-and-crystalline-pushforward} implies that the sheaves $\mathcal{O}^\pris$ and $\mathcal{O}^\crys$ on $(R)_{\mathrm{qsyn},\mathrm{pr}}$ are isomorphic. We note that the categories of finite locally free $\mathcal{O}^{\mathrm{crys}}$-modules on $(R)_{\mathrm{pr}}$ and finite locally free $\mathcal{O}^{\mathrm{crys}}_{|(R)_{\mathrm{qsyn},\mathrm{pr}}}$-modules on $(R)_{\mathrm{qsyn},\mathrm{pr}}$ are equivalent because for $R$ quasi-regular semiperfect both categories identify with finite locally free $A_{\mathrm{crys}}(R)$-modules. 
These remarks give a meaning to the comparison contained in the next two results. 

\begin{theorem}
  \label{sec:prism-dieud-theory-theorem-comparison-with-crystalline-dieudonne-functor}
  Let $R$ be a quasi-syntomic ring with $pR=0$ and $G$ a $p$-divisible group over $R$. Then there is a canonical Frobenius equivariant isomorphism
  $$
  \mathcal{M}_\prism(G)\cong v_\ast^{\rm crys}(\mathcal{E}xt^{1}_{(R/\Z_p)_{\crys,\mathrm{pr}}}((u^{\rm crys})^{-1}(G),\mathcal{O}^{\crys})) 
  $$
  from the prismatic Dieudonn\'e crystal of $G$ (cf.\ \Cref{sec:divid-prism-dieud-definition-divided-prismatic-dieudonne-crystal-of-p-divisible-groups}) to the push-forward of the crystalline Dieudonn\'e crystal of $G$. In particular, if $R$ is quasi-regular semiperfect, $M_{\prism}(G)$ is isomorphic to the evaluation $M^{\rm crys}(G)$ on $A_{\rm crys}(R)$ of the crystalline Dieudonn\'e crystal, compatibly with the Frobenius.
\end{theorem}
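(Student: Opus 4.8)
The plan is to deduce the theorem from a single input, \Cref{sec:comp-with-cryst-lemma-comparison-of-prismatic-and-crystalline-pushforward}, which identifies the filtered sheaves $(\mathcal{O}^\pris,\mathcal{N}^{\geq 1}\mathcal{O}^\pris)$ and $(\mathcal{O}^\crys,\mathcal{J}^\crys)$ on the small $p$-th root site $(R)_{\mathrm{qsyn},\mathrm{pr}}$, together with the Frobenius-equivariant identification $\prism_{R'}\cong A_\crys(R')$ of \Cref{sec:prism-cohom-quasi-lemma-for-qr-semiperfect-prism-isomorphic-to-acrys} for $R'$ quasi-regular semiperfect. Everything else should be formal bookkeeping with $\mathcal{E}xt$-sheaves.

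First I would rewrite both sides as $\mathcal{E}xt^1$-sheaves on $(R)_{\mathrm{qsyn},\mathrm{pr}}$. On the prismatic side, the argument of \Cref{sec:divid-prism-dieud-lemma-divided-dieudonne-module-via-local-ext-on-prismatic-site} applies essentially verbatim with the quasi-syntomic topology replaced by the $p$-th root topology: this is legitimate because each $G[p^n]$ is syntomic over $R$ and because quasi-regular semiperfect rings form a basis of both topologies, on which $\mathcal{O}^\pris$ has no higher cohomology. This yields a canonical, Frobenius-equivariant isomorphism $\mathcal{M}_\prism(G)\cong \mathcal{E}xt^1_{(R)_{\mathrm{qsyn},\mathrm{pr}}}(G,\mathcal{O}^\pris)$ carrying $\Fil\mathcal{M}_\prism(G)$ to $\mathcal{E}xt^1_{(R)_{\mathrm{qsyn},\mathrm{pr}}}(G,\mathcal{N}^{\geq 1}\mathcal{O}^\pris)$. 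The same formalism applied to the morphism $u^{\rm crys}$ in place of $u$ --- which is precisely the content of \cite[Section 8]{lau_divided_dieudonne_crystals} --- gives $v_\ast^{\rm crys}\mathcal{E}xt^1_{(R/\Z_p)_{\crys,\mathrm{pr}}}((u^{\rm crys})^{-1}(G),\mathcal{O}_\crys)\cong \mathcal{E}xt^1_{(R)_{\mathrm{qsyn},\mathrm{pr}}}(G,\mathcal{O}^\crys)$, compatibly with Frobenius and with the filtrations by $\mathcal{J}_\crys$, resp.\ $\mathcal{J}^\crys$.

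Next I would apply $\mathcal{E}xt^1_{(R)_{\mathrm{qsyn},\mathrm{pr}}}(G,-)$ to the isomorphism of filtered, Frobenius-equivariant sheaves furnished by \Cref{sec:comp-with-cryst-lemma-comparison-of-prismatic-and-crystalline-pushforward} (its Frobenius-equivariance comes from the fact that the identification $\prism_{R'}\cong A_\crys(R')$ is compatible with the canonical Frobenius lifts), and splice it between the two identifications of the previous step. This produces the asserted canonical Frobenius-equivariant isomorphism between $\mathcal{M}_\prism(G)$ and the push-forward of the crystalline Dieudonn\'e crystal, matching $\Fil\mathcal{M}_\prism(G)$ with the crystalline Hodge filtration. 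For the final assertion, when $R$ is quasi-regular semiperfect both $(R)_\prism$ and $(R/\Z_p)_{\crys,\mathrm{pr}}$ have a final object --- $\prism_R$, resp.\ $A_\crys(R)$ --- and the local-to-global spectral sequence together with the vanishing $\mathcal{H}om(G,\mathcal{O}^\pris)=0=\mathcal{H}om(G,\mathcal{O}^\crys)$ and the concentration in degree $0$ of $R\Gamma$ of these sheaves on quasi-regular semiperfect rings turns the sheaf isomorphism into the module isomorphism $M_\prism(G)\cong M^{\rm crys}(G)$ over $\prism_R\cong A_\crys(R)$, compatibly with Frobenius and filtration.

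The main obstacle I expect is the bookkeeping in the first step: justifying that the $\mathcal{E}xt$-sheaves of $G$ with values in $\mathcal{O}^\pris$ (resp.\ $\mathcal{O}^\crys$) are unchanged when one passes between the quasi-syntomic topology and the coarser $p$-th root topology, and that on the crystalline side $\mathcal{E}xt$ over the big crystalline $p$-th root site matches $\mathcal{E}xt$ over its quasi-syntomic variant. Both rely on the fact that quasi-regular semiperfect rings form a basis for all the sites in play, that the structure sheaves involved have vanishing higher cohomology on such rings for each of the topologies, and that $G$ is syntomic, so that the Breen--Deligne resolution --- or the explicit partial resolution of \Cref{sec:calc-ext-groups} --- computes the $\mathcal{E}xt$-sheaves uniformly on the common basis. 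Once these identifications are in place, the theorem follows formally from \Cref{sec:comp-with-cryst-lemma-comparison-of-prismatic-and-crystalline-pushforward}.
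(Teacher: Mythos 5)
Your proposal is correct and follows essentially the same route as the paper: rewrite $\mathcal{M}_\prism(G)$ as an $\mathcal{E}xt^1$ over the pr-topology (justified via the resolution of \Cref{sec:calc-ext-groups} and agreement of $\mathcal{O}^\pris$-cohomology on the common basis of quasi-regular semiperfect(oid) rings), identify the crystalline push-forward with $\mathcal{E}xt^1_{(R)_{\mathrm{pr}}}(G,\mathcal{O}^\crys)$ by the same reasoning as in \Cref{sec:divid-prism-dieud-lemma-divided-dieudonne-module-via-local-ext-on-prismatic-site}, and splice via \Cref{sec:comp-with-cryst-lemma-comparison-of-prismatic-and-crystalline-pushforward}, which also gives the Frobenius and filtration compatibility. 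The only difference is cosmetic: you spell out the quasi-regular semiperfect evaluation statement via final objects and the local-to-global spectral sequence, which the paper leaves implicit.
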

Of course, the isomorphism is linear over the isomorphism $\mathcal{O}^{\pris}\cong \mathcal{O}^{\crys}$ from \Cref{sec:comp-with-cryst-lemma-comparison-of-prismatic-and-crystalline-pushforward}.

\begin{proof}
  By definition
  $$
  \mathcal{M}_{\prism}(G)=\mathcal{E}xt^{1}_{(R)_{\mathrm{qsyn}}}(G,\mathcal{O}^\pris).
  $$
  But
  $$
  \mathcal{E}xt^{1}_{(R)_{\mathrm{qsyn}}}(G,\mathcal{O}^\pris)\cong \mathcal{E}xt^{1}_{(R)_{\mathrm{qsyn,pr}}}(G,\mathcal{O}^\pris).
  $$
  Indeed, by \blue{the spectral sequence constructed in \Cref{sec:calc-ext-groups} below} it suffices to see that the $\mathcal{O}^{\pris}$-cohomology for the quasi-syntomic and pr-topologies agree. But quasi-regular semiperfectoid rings form a basis for both topologies and on such the higher cohomology of $\mathcal{O}^\pris$ vanishes in both topologies.
  Thus by \Cref{sec:comp-with-cryst-lemma-comparison-of-prismatic-and-crystalline-pushforward} it suffices to see
  $$
  v_\ast^{\rm crys}(\mathcal{E}xt^{1}_{(R/\Z_p)_{\crys,\mathrm{pr}}}((u^{\rm crys})^{-1}(G),\red{\mathcal{O}_{\crys}}))\cong \mathcal{E}xt^1_{(R)_{\mathrm{qsyn,pr}}}(G,\mathcal{O}^\crys).
  $$
  \red{ 
  As $u^\crys$ is a morphism of topoi , we get
  \[
    R\mathcal{H}om_{(R)_{\mathrm{QSYN,pr}}}(G,Ru_\ast^{\crys}(\mathcal{O}^\crys))\cong Ru_\ast^\crys(R\mathcal{H}om_{(R/\Z_p)_{\rm crys,pr}}((u^{\crys})^{-1}(G),\mathcal{O}_\crys)).
  \]
  Here we use that we are dealing with the pr-topology: we don't know if this statement is true for the quasi-syntomic topology, but it holds the syntomic topology as the arguments of \cite[Proposition 1.1.5]{berthelot_breen_messing_theorie_de_dieudonne_cristalline_II} apply because syntomic morphisms can be lifted locally along PD-thickenings, cf.\ \cite[Tag 0070]{stacks_project}. 
  As in \ref{sec:divid-prism-dieud-lemma-divided-dieudonne-module-via-local-ext-on-prismatic-site} it suffices to see that
    \[
      \epsilon_\ast(\mathcal{H}om(G,R^1u^{\mathrm{crys}}_{\ast}(\mathcal{O}_{\mathrm{crys}}))),\quad \mathcal{H}om((u^{\rm crys})^{-1}(G),\mathcal{O}_{\rm crys})
    \]
    vanish (here $\epsilon_\ast$ is the (exact) pushforward to the small quasi-syntomic site). The sheaf $R^1u^\crys_\ast(\mathcal{O}_\crys)$ for the pr-topology on $(R)_{\mathrm{ QSYN,pr}}$ vanishes on every $R$-algebra $S$, which is quasi-syntomic, because it vanishes on quasi-regular semiperfects (cf.\ \cite[Section 8]{bhatt_morrow_scholze_topological_hochschild_homology}) and each quasi-syntomic $\mathbb{F}_p$-algebra admits a pr-cover by some quasi-regular semiperfect ring. Write
    \[
      \mathcal{H}om(G,R^1u^{\mathrm{crys}}_{\ast}(\mathcal{O}_{\mathrm{crys}}))=\varprojlim\limits_{n}\mathcal{H}om(G[p^n],R^1u^{\mathrm{crys}}_\ast(\mathcal{O}_\crys)).
    \]
    The set $\mathrm{Hom}_{(R)_{\mathrm{QSYN,pr}}}(G[p^n],R^1u^{\mathrm{crys}}_\ast(\mathcal{O}_\crys))$ embeds into the sections of $R^1u_{\ast}^\crys(\mathcal{O}_{\crys})$ over $G[p^n]$, but these sections vanish because $G[p^n]$ is syntomic over $R$. Applying the same reasoning to all quasi-syntomic $R$-algebras proves the desired vanishing of the first $\mathcal{H}om$. For the second $\mathcal{H}om$ note that $\mathcal{O}_\crys, (u^{\crys})^{-1}(G)$ are actually sheaves for the syntomic topology on the site $(R/\Z_p)_{\crys}$ and the local $\mathcal{H}om$ does not depend on the topology. Multiplication by $p^n$ on $(u^{\crys})^{-1}(G)$ is surjective for the syntomic topology for every $n\geq 0$ (\cite[Proposition 1.1.7]{berthelot_breen_messing_theorie_de_dieudonne_cristalline_II}) . This implies that
    \[
      \mathcal{H}om((u^{\rm crys})^{-1}(G),\mathcal{O}_{\rm crys})\cong \varprojlim\limits_{n}\mathcal{H}om((u^{\crys})^{-1}(G),\mathcal{O}_{\crys}/p^n)=0
    \]
    using that $\mathcal{O}_\crys$ is $p$-adically complete (being $p$-adically separated would be sufficient for this argument).
  }
  \Cref{sec:comp-with-cryst-lemma-comparison-of-prismatic-and-crystalline-pushforward} implies then moreover compatibility with Frobenius.
\end{proof}

In general, i.e., when $p$ is not necessarily zero in $R$, one can still relate the prismatic Dieudonn\'e crystal of a $p$-divisible group to the crystalline Dieudonn\'e crystal, as follows. Let $R$ be a $p$-complete ring and let $D$ be a $p$-complete $p$-torsion free $\delta$-ring with a surjection $D\to R$ whose kernel has divided powers.\footnote{We don't require $p^nR=0$ for some $n\geq 0$.}
As the kernel of $D\to R$ has divided powers, the Frobenius on $D$ induces a morphism $R\to D/p$. With this morphism the prism $(D,(p))$ defines an object of the absolute prismatic site $(R)_\prism$ of $R$. Via \Cref{sec:divid-prism-dieud-lemma-divided-dieudonne-module-via-local-ext-on-prismatic-site} it thus makes sense to evaluate the prismatic Dieudonn\'e module of a $p$-divisible group over $R$, more precisely $v^{\ast}$ of it, on $(D,(p))$.

\begin{lemma}
  \label{sec:comp-over-mathc-comparison-over-pd-thickening-with-crystalline-dieudonne-module} For every $p$-divisible group $G$ over $R$ there is a natural Frobenius equivariant isomorphism 
  $$
  v^\ast(\mathcal{M}_\prism(G))(D,(p))\cong \mathbb{D}(G)(D).
  $$
  Here $\mathbb{D}(G)(D)$ denotes the evaluation of the (contravariant, crystalline) Dieudonn\'e crystal of $G$ on the PD-thickening $D\to R$. 
\end{lemma}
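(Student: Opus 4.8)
The plan is to reduce both sides to $\mathcal{E}xt^1$-groups of $G$ against a structure sheaf --- on the prismatic site localized at $(D,(p))$ on the left, and on the crystalline site localized at the PD-thickening $D\to R$ on the right --- and then to match the two by a crystalline comparison over the base $(D,\ker(D\to R))$. First I would unwind the left-hand side. As noted before the statement, $(D,(p))$ is a crystalline prism in $(R)_\prism$, the map $R\to D/(p)$ being the one induced by $\varphi_D$ and the divided powers on $\ker(D\to R)$. By \Cref{sec:divid-prism-dieud-lemma-divided-dieudonne-module-via-local-ext-on-prismatic-site}, $v^\ast(\mathcal{M}_\prism(G))$ is the prismatic $\mathcal{E}xt$-sheaf $\mathcal{E}xt^1_{(R)_\prism}(u^{-1}G,\mathcal{O}_\prism)$ (and the filtered piece is the analogue with $\mathcal{N}^{\geq 1}\mathcal{O}_\prism$); since $\mathcal{E}xt$-sheaves commute with localization, its value at $(D,(p))$ is the relative $\mathrm{Ext}^1$ over the relative prismatic site $(R/D)_\prism$ with base prism $(D,(p))$. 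Symmetrically, $\mathbb{D}(G)(D)$ together with its Hodge filtration is the value at $D$ of $\mathcal{E}xt^1_{(R/\Z_p)_\crys}((u^\crys)^{-1}G,\mathcal{O}_\crys)$, resp.\ of $\mathcal{E}xt^1$ against the PD-ideal sheaf $\mathcal{J}_\crys$, computed as an $\mathrm{Ext}^1$ over the crystalline site of $R$ localized at $D$.

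Next, using the computation of $\mathrm{Ext}^i$ in low degrees via a (partial) Breen--Deligne resolution of $G$ (\Cref{sec:calc-ext-groups}), both $\mathrm{Ext}^1$'s are functorially built out of the cohomology of $\mathcal{O}_\prism$, resp.\ $\mathcal{O}_\crys$, on the powers $G^n$ of $G$, i.e.\ out of the relative cohomologies $\prism_{G^n/D}$ and $R\Gamma_\crys(G^n/D)$. So it suffices to produce, functorially in a syntomic $R$-algebra $T$, a $\varphi$-equivariant isomorphism $\prism_{T/D}\simeq R\Gamma_\crys(T/D)$ respecting the Nygaard, resp.\ Hodge, filtrations (in the spirit of \Cref{sec:another-try-1-hodge-filtration-equal-nygaard-filtration-preserved}). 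The route I would take is to construct the morphism of topoi induced by the functor from $(R/D)_\prism$ to the crystalline site of $R$ over $D$ sending a prism $(B,pB)$ to the PD-thickening $\Spec(B/\mathcal{N}^{\geq 1}B)\hookrightarrow\Spec(B)$ over $D$, with $B/\mathcal{N}^{\geq 1}B$ viewed as an $R$-algebra through $B/pB\twoheadrightarrow B/\mathcal{N}^{\geq 1}B$. Under it $\mathcal{O}_\crys$, $\mathcal{J}_\crys$ and $(u^\crys)^{-1}G$ pull back to $\mathcal{O}_\prism$, $\mathcal{N}^{\geq 1}\mathcal{O}_\prism$ and $u^{-1}G$ --- the first two by inspection of values, the last because $\Spec(B/\mathcal{N}^{\geq 1}B)\hookrightarrow\Spec(B/pB)$ is a universal homeomorphism (any $x\in\mathcal{N}^{\geq 1}B$ has $x^p\in pB$), so that after Raynaud's presentation of $G[p^n]$ and quasi-syntomic sheafification the two pullbacks of $G$ agree. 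Transporting the resulting comparison through the Breen--Deligne computation gives the natural, Frobenius-equivariant (via $\varphi_D$ on both sides) isomorphism $v^\ast(\mathcal{M}_\prism(G))(D,(p))\cong\mathbb{D}(G)(D)$; the filtration is respected because on each side it is cut out by the sub-$\mathcal{E}xt^1$ against $\mathcal{N}^{\geq 1}\mathcal{O}_\prism$, resp.\ $\mathcal{J}_\crys$, and these correspond.

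The main obstacle is exactly this crystalline comparison in the needed generality. When $pR=0$ one is in the situation of \Cref{sec:prism-dieud-theory-theorem-comparison-with-crystalline-dieudonne-functor}, and for $R$ quasi-regular semiperfect the comparison is the isomorphism $\prism_R\cong A_\crys(R)$ of \Cref{sec:prism-cohom-quasi-lemma-for-qr-semiperfect-prism-isomorphic-to-acrys}; but here $p$ need not vanish in $R$, so one cannot simply invoke the characteristic-$p$ comparison \Cref{sec:crystalline-comp-smooth} and its extension \Cref{sec:prism-site-prism-remarks-after-crystalline-comparison}. Concretely, what must be checked is that the functor $(B,pB)\mapsto(\Spec(B/\mathcal{N}^{\geq 1}B)\hookrightarrow\Spec B)$ is well defined --- which requires $\mathcal{N}^{\geq 1}B=\varphi^{-1}(pB)$ to be a PD-ideal for the bounded prisms in play, the divided-power property of $\ker(\theta)$ behind \Cref{sec:prism-cohom-quasi-lemma-for-qr-semiperfect-prism-isomorphic-to-acrys} in the quasi-regular semiperfectoid case and reduced to it by descent in general --- and that it induces an isomorphism on the cohomology of $\mathcal{O}_\prism$ (a computation with explicit cofinal covers), together with the identification of the two pullbacks of $G$, where the comparison genuinely has content.
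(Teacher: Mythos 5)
Your reduction of both sides to $\mathcal{E}xt^1$-groups via the Breen--Deligne resolution matches the paper's skeleton, but the decisive step -- the comparison between relative prismatic cohomology over $(D,(p))$ and crystalline cohomology over $D$ -- is exactly what you leave unproven, and your reason for discarding the paper's route is mistaken. You assert that, since $p$ need not vanish in $R$, one ``cannot simply invoke'' \Cref{sec:crystalline-comp-smooth} and \Cref{sec:prism-site-prism-remarks-after-crystalline-comparison}; but that is precisely what the paper's proof does. The trick is to apply the comparison not to $H$ over $R$ but to the syntomic $R/p$-algebra $H/p$, with base prism $(D,(p))$ and PD-ideal $\ker(D\to R)+pD$ (so the quotient is $R/p$; divided powers cause no trouble since $D$ is $p$-torsion free): this yields $\prism_{H^{(1)}/D}\cong R\Gamma_{\crys}((H/p)/D)$, where $H^{(1)}=H\times_{\Spec(R)}\Spec(D/p)$ along the Frobenius-induced map $R\to D/p$ -- and this twisted fibre is exactly what the slice of $(R)_\prism$ over $(D,(p))$ sees, because the structure map of the object $(D,(p))$ is that twisted map. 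The bridge back to $\mathbb{D}(H)(D)$ is then the elementary fact that $H^1((H/D)_{\crys},\mathcal{O}_{\crys})=H^1(((H/p)/D)_{\crys},\mathcal{O}_{\crys})$ (agreement of divided power envelopes, the footnote in the paper's proof). You do not supply this reduction; instead you propose to build a morphism of sites $(B,pB)\mapsto(\Spec(B/\mathcal{N}^{\geq 1}B)\hookrightarrow\Spec(B))$ and to verify by hand that it induces an isomorphism on cohomology of structure sheaves -- which amounts to re-proving the Bhatt--Scholze crystalline comparison, and you explicitly flag this as the ``main obstacle'' without carrying it out. Moreover, your target isomorphism $\prism_{T/D}\simeq R\Gamma_\crys(T/D)$, as literally stated, is off by the Frobenius twist that the genuine comparison carries; the twist is absorbed into the structure map $R\to D/p$ of $(D,(p))$, so it must reappear on one side of any correct comparison, which makes the untwisted site-level construction suspect as written.

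A second, smaller gap: you feed the Breen--Deligne computation the powers $G^n$ of the $p$-divisible group itself, so you would need the cohomological comparison for these ind-schemes, whereas the syntomic comparison input only applies to (products of) finite locally free group schemes. The paper first proves the isomorphism for finite flat $H$ (matching the two spectral sequences of \Cref{sec:calc-ext-groups} on the $E_1$-page) and then passes to $G=\varinjlim_n G[p^n]$ by the limit argument from the proof of \Cref{sec:divid-prism-dieud-proposition-ext-crystal}; your proposal does not address this limit step. (A minor remark: your concern about $\mathcal{N}^{\geq 1}B$ carrying divided powers is not where the difficulty lies -- a crystalline prism is automatically $p$-torsion free, and $\varphi(x)\in pB$ gives $x^p/p=\varphi(x)/p-\delta(x)\in B$ with $\varphi(x^p/p)\in pB$ again, so all divided powers exist by iteration.)
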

\begin{proof}
  \red{Let $\mathcal{C}$ be the category of schemes over $R$, which are $p$-completely syntomic over $R$. For each scheme $H\in \mathcal{C}$ there is a canonical isomorphism in the $\infty$-category $\mathcal{D}(\Z)$
  $$
  \red{\eta_H\colon R\Gamma}((H^{(1)}/D)_{\prism},\mathcal{O}_\prism)\cong \red{R\Gamma}((H/D)_{\mathrm{crys}},\mathcal{O}_{\rm crys})
  $$
  by the crystalline comparison for syntomic morphisms (cf.\ \Cref{sec:prism-site-prism-remarks-after-crystalline-comparison}), where $H^{(1)}:=H\times_{\Spec(R)}\Spec(D/p)$\footnote{Note that $R\Gamma((H/D)_{\mathrm{crys}},\mathcal{O}_{\rm crys})=R\Gamma(((H/p)/D)_{\mathrm{crys}},\mathcal{O}_{\rm crys})$. This follows from the computation of crystalline cohomology by a \u{C}ech-Alexander complex and the following fact : if $A$ is a $\Z/p^n$-algebra (for some $n>0$), $P$ a free $\Z_p$-algebra surjecting onto $A$, the divided power envelopes of $P/p^m \to A$ and $P/p^m \to A/p$ agree for any $m \geq n$ : see \cite[Theorem I.2.8.2]{berthelot_cohomologie_cristalline_des_schemas_de_caracteristique_p}.}.
   We can write both sides as
  \[
    H\mapsto R\Gamma((H^{(1)}/D)_\prism,\mathcal{O}_\prism)\cong R\mathrm{Hom}_{(R)_\prism}(\Z[u^{-1}(H)],\mathcal{O}_\prism)
  \]
  resp.\
  \[
        H\mapsto R\Gamma((H/D)_\crys,\mathcal{O}_\crys)\cong R\mathrm{Hom}_{(R)_\crys}(\Z[({u^{\crys}})^{-1}(H)],\mathcal{O}_\crys),
      \]
      making it clear that both functors are actually restrictions to $\mathcal{C}$ of $R\mathrm{Hom}$-functors
      \[
        F(-):=R\mathrm{Hom}_{(R)_\prism}(-,\mathcal{O}_\prism),\ G(-):=R\mathrm{Hom}_{(R)_\crys}(-,\mathcal{O}_\crys)
      \]
      on the category of sheaves of abelian groups $\mathrm{Shv}_{\Z}((R)_\prism)$ resp.\ $\mathrm{Shv}_\Z((R)_\crys)$ on $(R)_\prism$, resp.\ $(R)_\crys$ along the functors
      \[        
        H\mapsto \iota_\pris(H):=\Z[u^{-1}(H)]\in \mathrm{Shv}_{\Z}((R)_\prism)
      \]
      resp.\
      \[
        H\mapsto \iota_\crys(H):=\Z[{u^{\crys}}^{-1}(H)]\in \mathrm{Shv}_\Z((R)_\crys).
      \]
      Assume now that $H$ is a finite locally free group scheme over $R$, in particular $H$ is syntomic over $R$. Applying $F(-), G(-)$ to the Breen-Deligne resolution, cf. \Cref{sec:calc-ext-sheav-canonical-resolution-of-abelian-group}, of $u^{-1}(H)$, ${u^{\crys}}^{-1}(H)$ (seen via Dold-Kan as simplicial objects in $\mathrm{Shv}((R)_\prism)$, resp.\ $\mathrm{Shv}((R)_\crys)$) yield two cosimplicial objects $K_1^\bullet, K_2^\bullet\colon \Delta\to \mathcal{D}(\green{\Z})$ (here $\Delta$ is the simplex category) with limits 
      \[
        R\mathrm{Hom}_{(R)_\prism}(u^{-1}(H),\mathcal{O}_\prism)
        \]
      and
      \[
        R\mathrm{Hom}_{(R)_\prism}((u^{\crys})^{-1}(H),\mathcal{O}_\crys).
      \]
    
    We claim that the natural isomorphism $\eta$ extends to a natural isomorphism $K_1^\bullet\cong K_2^\bullet$.
  }
  \red{   
    Intuitively, this is clear as the morphisms in the Breen-Deligne resolution are \textit{sums} of maps induced by morphisms between schemes. We thank Yonatan Harpaz and Fabian Hebestreit for their help with the following rigorous $\infty$-categorical argument. It suffices to argue for the left Kan extensions of $F\circ \iota_\pris, G\circ \iota_\crys\colon \mathcal{C}\to \mathcal{D}(\Z)^{\mathrm{op}}$ from $\mathcal{C}$ to the category of all schemes over $R$ (this ensures the existence of fiber products in $\mathcal{C}$ commuting with coproducts). Hence, we abuse notation and denote by $\mathcal{C}$ the category of all schemes over $R$. Let $\mathcal{D}$ be any category with action by the symmetric monodial (via tensor product) category $\mathcal{F}ree_\Z$ of finite, free $\Z$-modules, such that the action commutes with finite coproducts in each variable, e.g., $\mathcal{D}=\mathcal{D}(\Z)^{\mathrm{op}}$. In other words, $\mathcal{D}$ is required to be a module under $\mathcal{F}ree_\Z$ in the symmetric monoidal $\infty$-category $\mathcal{C}at_{\infty}(\mathcal{K}_{\mathrm{fin}})$ from \cite[Corollary 4.8.1.4]{lurie_higher_algebra} with $\mathcal{K}_{\mathrm{fin}}$ the class of finite sets. Now each functor $\varphi\colon \mathcal{C}\to \mathcal{D}$ preserving finite coproducts resp.\ each natural transformation between such functors extends to a functor $\varphi^{\mathrm{ab}}\colon \mathcal{C}^{\mathrm{ab}}:=\mathcal{F}ree_{\Z}\otimes_{\mathcal{K}_{\mathrm{fin}}} \mathcal{C}\to \mathcal{D}$ resp.\ a natural transformation between such functors with $-\otimes_{\mathcal{K}_{\mathrm{fin}}}-$ the tensor product in $\mathcal{C}at_{\infty}(\mathcal{K}_{\mathrm{fin}})$ by $\mathcal{F}ree_\Z$-linearity of $\mathcal{D}$. The category $\mathcal{C}^{\mathrm{ab}}$ can now be calculated as follows: Consider the category $\mathrm{Fun}(\mathcal{C}^{\mathrm{op}},\mathcal{D}_{\geq 0}(\Z))$ of functors, and its full subcategory $\mathrm{Fun}^{\times}(\mathcal{C}^{\mathrm{op}},\mathcal{D}_{\geq 0}(\Z))$ of product-preserving functors. The inclusion $\mathrm{Fun}^{\times}(\mathcal{C}^{\mathrm{op}},\mathcal{D}_{\geq 0}(\Z))\to \mathrm{Fun}(\mathcal{C}^{\mathrm{op}},\mathcal{D}_{\geq 0}(\Z))$ admits a left adjoint $L$, given by sheafification on $\mathcal{C}$ with respect to the Grothendieck topology in which coverings are finite collections $\{X_i\to X\}_{i\in I}$, such that $\coprod\limits_{i\in I} X_i\to X$ is an isomorphism. Now, $\mathcal{C}^{\mathrm{ab}}$ is the smallest full subcategory of $\mathrm{Fun}^{\times}(\mathcal{C}^{\mathrm{op}},\mathcal{D}_{\geq 0}(\Z))$ containing all objects $L(\Z[\mathrm{Hom}_{\mathcal{C}}(-,X)])$ with $X\in \mathcal{C}$. Note that $\mathcal{C}^{\mathrm{ab}}$ is a $1$-category because sheafification for \green{this} Grothendieck topology preserves set-valued presheaves. In fact, we only need that functors $\varphi\colon \mathcal{C}\to \mathcal{D}$ preserving coproducts extend to $\mathcal{C}^{\mathrm{ab}}$ when the latter is defined by the above concrete description. To see this note if $\mathcal{S}$ denotes the $\infty$-category of spaces, i.e., Kan complexes, that
      \[
        \mathrm{Fun}^{\times}(\mathcal{F}ree_{\Z}^{\mathrm{op}},\mathcal{S})\cong \mathcal{D}_{\geq 0}(\Z)
      \]
      by \cite[Example 1.2.9]{lurie2017elliptic}. Each functor $\varphi\colon \mathcal{C}\to \mathcal{D}$ yields a functor
      \[
        \mathrm{Fun}(\mathcal{C}^{\mathrm{op}},\mathcal{D}_{\geq 0}(\Z))\to \mathrm{Fun}(\mathcal{D}^{\mathrm{op}},\mathcal{D}_{\geq 0}(\Z)).
      \]
      Now $\mathrm{Fun}(\mathcal{D}^{\mathrm{op}},\mathcal{D}_{\geq 0}(\Z))$ embeds into
      \[
        \mathrm{Fun}(\mathcal{D}^{\mathrm{op}}\times \mathcal{F}ree_\Z^{\mathrm{op}},\mathcal{S})
        \]
        (with essential image those functors commuting with products in the second factor), and the $\mathcal{F}ree_\Z$-action $\mathcal{F}ree_\Z\times \mathcal{D}\to \mathcal{D}$ furnishes a functor from this to the category $\mathrm{Fun}(\mathcal{D}^{\mathrm{op}},\mathcal{S}),$
        which contains $\mathcal{D}$ by the Yoneda lemma. Restricting further along the inclusion $\mathcal{C}^{\mathrm{ab}}\to \mathrm{Fun}(\mathcal{C}^{\mathrm{op}},\mathcal{D}_{\geq 0}(\Z))$ then yields a functor
        \[
          \mathcal{C}^{\mathrm{ab}}\to \mathrm{Fun}(\mathcal{D}^{\mathrm{op}},\mathcal{S})
        \]
        with image in $\mathcal{D}$ as $\varphi$ preserves finite coproducts. This yields the desired extension, and similarly we see that natural transformations extend.
        Given these considerations, and in particular the description of $\mathcal{C}^{\mathrm{ab}}$, it follows by unraveling the construction that in our situation the simplicial objects given by the images under $F$ resp.\ $G$ of the Breen-Deligne resolutions of $u^{-1}(H)$ resp.\ ${u^{\crys}}^{-1}(H)$ are the images of a simplicial object in $\mathcal{C}^{\mathrm{ab}}$ under the extensions of $F\circ \iota_\pris, G\circ \iota_\crys$. This shows that $\eta$ extends as desired.
       
   Passing to the limits and taking cohomology in degree $1$} we can deduce that

  $$
  \mathcal{M}_\prism(H)(D,(p)):=\mathrm{Ext}^1(u^{-1}(H),\mathcal{O}_\prism)
  $$ resp.\ $\mathbb{D}(H)(D)$ are canonically isomorphic. Hence, we obtain the desired natural isomorphism for finite flat group schemes. The proof of \Cref{sec:divid-prism-dieud-proposition-ext-crystal} below\footnote{Which the reader can check to be independent of the present lemma.} shows that writing $$G=\varinjlim\limits_{n}G[p^n]$$ and passing to the limit yields a canonical isomorphism
  $$
\mathcal{M}_\prism(G)(D,(p))\cong \mathbb{D}(G)(D)
$$
for $G$ a $p$-divisible group over $R$.
\end{proof}

\begin{remark}
The relation between the prismatic and the crystalline Dieudonn\'e functors will mostly be used over a characteristic $p$ perfect field in the rest of this text, and it could be interesting to find a more direct proof of it in this special case, as explained at the end of this section. But it will also be used for comparison with the Scholze-Weinstein functor in the next paragraph and in \Cref{sec:comp-case-mathc}. 
\end{remark}

We turn to perfectoid rings. The following statement is a special case of a theorem of Fargues (\cite{fargues_quelques_resultats_et_conjectures_concernant_la_courbe}, \cite{scholze2020berkeley}). Let $C$ be a complete algebraically closed extension of $\mathbb{Q}_p$. We abbreviate
$$
A_{\rm inf}=A_{\rm inf}(\mathcal{O}_C)\ ,\ A_\crys:=A_{\rm crys}(\mathcal{O}_C/p).
$$
We also fix a compatible system $\varepsilon$ of $p$-th roots of unity, and let $\tilxi=[p]_q$, where $q=[\varepsilon]-1$. We identify the initial prism of $(\mathcal{O}_C)_{\prism}$ with $(A_{\rm inf},(\tilxi))$.

\begin{proposition}
A prismatic Dieudonn\'e module $(M,\varphi_M)$ over $\mathcal{O}_C$ (i.e., a minuscule Breuil-Kisin-Fargues module) is uniquely determined up to isomorphism by the triple
\[ (T_M, M_{\rm crys}, \alpha_M), \]
where $T_M$ is the finite free $\mathbb{Z}_p$-module $$ T_M=M[\frac{1}{\tilxi}]^{\varphi_M=1},$$ $$M_{\rm crys}=M\otimes_{A_{\rm inf}} A_{\rm crys}$$ is a $\varphi$-module over $A_{\rm crys}$ and $\alpha_M : T_M \otimes_{\mathbb{Z}_p} B_{\rm crys} \simeq M_{\rm crys} \otimes_{A_{\rm crys}} B_{\rm crys}$ is the $\varphi$-equivariant isomorphism coming from the natural map $M[\frac{1}{\tilxi}]^{\varphi_M=1} \to M[\frac{1}{\tilxi}]$.
\end{proposition}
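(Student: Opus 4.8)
\emph{Proof plan.} The plan is to prove the statement by \emph{reconstructing} $(M,\varphi_M)$ from the triple, so that two minuscule Breuil-Kisin-Fargues modules with isomorphic triples are forced to be isomorphic. The three pieces $T_M$, $M_{\rm crys}$, $\alpha_M$ are to be read as the \'etale realization (a $\Z_p$-lattice attached to the generic fibre), the crystalline realization together with its Frobenius, and the comparison isomorphism over $B_{\rm crys}$ identifying the two; and $M$ will be recovered as a glueing over $\Spec(A_{\rm inf})$ of the integral structure given by $M_{\rm crys}$ ``near the de Rham point $V(\tilxi)$'' and the one given by $T_M$ ``away from the Frobenius orbit of that point'', the two being matched over the common localization via $\alpha_M$. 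Geometrically this is a glueing argument in the style of Fargues (equivalently, of the Beauville-Laszlo theorem): away from $V(\tilxi)$ and its Frobenius translates the map $\varphi_M$ is invertible and $M$ is controlled by $T_M$, near that point it is controlled by $M_{\rm crys}$, and $\alpha_M$ is the glueing datum.

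The essential input is precisely the theorem of Fargues referred to in the text (cf.\ also \cite{scholze_weinstein_berkeley_new_version}): since $C$ is algebraically closed and $M$ is effective and minuscule, the underlying $\varphi$-module of $M$ is trivial over $B_{\rm crys}$. Concretely, $T_M = M[1/\tilxi]^{\varphi_M=1}$ is finite free over $\Z_p$ of rank $\rk_{A_{\rm inf}} M$, and the natural $\varphi$-equivariant map
$$ T_M\otimes_{\Z_p} B_{\rm crys}\;\longrightarrow\; M\otimes_{A_{\rm inf}} B_{\rm crys} $$
is an isomorphism. Under the identification $M\otimes_{A_{\rm inf}} B_{\rm crys}\cong M_{\rm crys}\otimes_{A_{\rm crys}} B_{\rm crys}$ this map is exactly $\alpha_M$; in particular $\alpha_M$ is already determined by $T_M$ and $M_{\rm crys}$, and what remains as genuine data is that these two sit as lattices inside the common rational object $M[1/\tilxi]$.

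Granting this, I would recover $M$ as the fibre product, formed inside $M\otimes_{A_{\rm inf}} B_{\rm crys}$ via $\alpha_M$, of $M_{\rm crys}$ over $A_{\rm crys}$ and the \'etale lattice coming from $T_M$ over the complementary locus, and then check that this fibre product is again finite projective over $A_{\rm inf}$ of the right rank and that the Frobenii on the two factors glue to $\varphi_M$. This glueing step rests on the coherence of $A_{\rm inf}$ and on the fact that finite projective $A_{\rm inf}$-modules can be reconstructed from their localizations at $V(\tilxi)$ and on its complement (an avatar of the Beauville-Laszlo theorem, as used in \cite{bhatt_morrow_scholze_integral_p_adic_hodge_theory}, \cite{scholze_weinstein_berkeley_new_version}), together with the observation that inverting $\tilxi$ and its Frobenius translates amounts to deleting that point. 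Finally, the minuscule hypothesis on $(M,\varphi_M)$ translates into an admissibility condition on $M_{\rm crys}$ --- a bound on its Hodge-Tate weights, equivalently on the relative position of the two $B_{\rm dR}^+$-lattices at $V(\tilxi)$ --- which ensures the reconstructed module is again a minuscule Breuil-Kisin-Fargues module, and which, being intrinsic to the triple, also shows the functor to triples is fully faithful onto its essential image.

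The main difficulty is the input recalled above: the triviality of $M\otimes_{A_{\rm inf}} B_{\rm crys}$ and the finite freeness of $T_M$. This is where the hypothesis that $C$ is algebraically closed, together with Fargues' classification (of vector bundles on the Fargues-Fontaine curve, equivalently of $\varphi$-modules over $A_{\rm inf}$), genuinely enters; once it is in hand, the remainder of the argument is the formal glueing over $\Spec(A_{\rm inf})$ and the bookkeeping of the minuscule condition.
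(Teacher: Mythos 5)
Your plan is essentially the route the paper itself takes: the paper gives no proof of this proposition but quotes it as a special case of Fargues' theorem (citing \cite{fargues_quelques_resultats_et_conjectures_concernant_la_courbe} and \cite{scholze_weinstein_berkeley_new_version}), and your sketch is the standard unwinding of that citation --- the identification $M[1/\mu]\cong T_M\otimes_{\Z_p}A_{\rm inf}[1/\mu]$ of Bhatt--Morrow--Scholze/Scholze--Weinstein (which uses that $C$ is algebraically closed, not the minuscule hypothesis), followed by recovering $M$ by glueing, i.e.\ intersecting with $M_{\rm crys}$ inside $M\otimes_{A_{\rm inf}}B_{\rm crys}$. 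The only slip is the parenthetical claim that $\alpha_M$ is determined by $T_M$ and $M_{\rm crys}$: it is not --- it is precisely the glueing datum recording the relative position of the two lattices, without which the pair does not determine $M$ --- but since your reconstruction uses $\alpha_M$ as the glueing datum anyway, this does not affect the argument.
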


\blue{Let $R$ be a perfectoid ring. In \cite[Theorem 17.5.2]{scholze2020berkeley}, Scholze-Weinstein construct a covariant functor $M^{SW}$ from $p$-divisible groups over $R$ to prismatic Dieudonn\'e modules over $R$ inducing an equivalence between the two categories. It has the following properties characterizing it uniquely, which will be used in the next proof. \begin{itemize}
\item When $R$ is perfect, $M^{SW}=M_{\rm crys}(-)$ is the (covariant) crystalline Dieudonn\'e functor dual to $M^{\rm crys}$.
\item If $R=\mathcal{O}_C$, with $C$ a complete algebraically closed extension of $\mathbb{Q}_p$,
$$M^{SW}(-) \otimes_{A_{\rm inf}} A_{\rm crys} \cong M_{\rm crys}(- \otimes_{\mathcal{O}_C} \mathcal{O}_C/p)$$ (\cite[Proposition 14.8.3]{scholze2020berkeley}). In particular, if $G=X[p^{\infty}]$, for some formal abelian scheme $X$ over $\mathcal{O}_C$, the functor $M^{SW}$ sends $G$ to the prismatic Dieudonn\'e module over $\mathcal{O}_C$ dual to $H_{A_{\rm inf}}^1(X)$: this follows from the definition of $M^{SW}(G)$ (\cite[\S 12.1]{scholze2020berkeley}), \cite[Proposition 14.8.3]{scholze2020berkeley} and the above proposition.
\item In general, for any perfectoid ring $R$, if $G$ is a $p$-divisible group over $R$,
$$M^{SW}(G) \subset M_{\rm crys}(G \otimes_R R/p)$$
is the largest submodule mapping into $M(G \otimes_R V) \subset M_{\rm crys}(G \otimes_R V)$ for all maps $R \to V$ where $V$ is an integral perfectoid valuation ring with algebraically closed fraction field.
\end{itemize}  }

\begin{proposition}
 \label{sec:prism-dieud-theory-proposition-comparison-scholze-weinstein-functor}
Let $R$ be a perfectoid ring. The functor $G \mapsto M_{\prism}(G)$ from $\BT(R)$ to $\DM(R)$ coincides with the (naive)\footnote{I.e., $(-)^\ast=\mathrm{Hom}_{A_\inf(R)}(-,A_\inf(R))$.} dual of the functor $M^{SW}$ of \cite[Appendix to Lecture XVII]{scholze2020berkeley}. 
\end{proposition}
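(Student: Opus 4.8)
The plan is to split the statement into the two ``boundary'' realizations that determine a minuscule Breuil--Kisin--Fargues module over $A_{\rm inf}(R)$ — its \'etale part and its crystalline part — together with the period isomorphism gluing them, and to match each separately, in both cases using the concrete descriptions available on the two sides.

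First I would unwind $M_{\prism}(G)$ over a perfectoid $R$. Here $(R)_{\prism}$ has the final object $(A_{\rm inf}(R),I)$ with $I=(\tilxi)$ and $R\Gamma((R)_{\mathrm{qsyn}},\mathcal{O}^{\rm pris})=A_{\rm inf}(R)$, so by \Cref{sec:divid-prism-dieud-lemma-prismatic-dieudonne-module-via-hom} the module $M_{\prism}(G)$ is the $A_{\rm inf}(R)$-module $\mathrm{Hom}_{(R)_{\mathrm{qsyn}}}(T_pG,\mathcal{O}^{\rm pris})$ of homomorphisms of pro-\'etale sheaves, equipped with the Frobenius induced by $\varphi$ on $A_{\rm inf}(R)$. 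On the other side, the minuscule Breuil--Kisin--Fargues module $M^{SW}(G)$ of Scholze--Weinstein is reconstructed, via the rigidity statement recalled just above, from the triple $(T_pG,\ \mathbb{D}(G\otimes_R R/p),\ \alpha_G)$, with $\mathbb{D}(G\otimes_R R/p)$ the crystalline Dieudonn\'e module evaluated on $A_{\rm crys}$ and $\alpha_G$ the crystalline period isomorphism over $B_{\rm crys}$. It therefore suffices to produce, naturally in $G$, an identification of the naive dual $(M^{SW}(G))^{\ast}$ with $M_{\prism}(G)$ compatible with these two realizations and with the gluing, and matching Frobenius; the last is immediate, since both Frobenius structures are induced by $\varphi$ on $A_{\rm inf}(R)$.

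For the crystalline realization I would use that $R/p$ is quasi-regular semiperfect, that $\prism_{R/p}\cong A_{\rm crys}(R/p)$ by \Cref{sec:prism-cohom-quasi-lemma-for-qr-semiperfect-prism-isomorphic-to-acrys}, and that the canonical map $A_{\rm inf}(R)=\prism_R\to\prism_{R/p}$ (equivalently, Fontaine's $\delta$-PD thickening $A_{\rm crys}(R)\twoheadrightarrow R$, with its Frobenius-twisted structure map to $R/p$) realizes $(A_{\rm crys},(p))$ as an object of $(R)_{\prism}$. Applying \Cref{sec:comp-over-mathc-comparison-over-pd-thickening-with-crystalline-dieudonne-module} to this thickening identifies $v^{\ast}(\mathcal{M}_{\prism}(G))$ evaluated on $(A_{\rm crys},(p))$, with its Frobenius and filtration, with $\mathbb{D}(G\otimes_R R/p)(A_{\rm crys})$, i.e.\ precisely the crystalline part of $(M^{SW}(G))^{\ast}$. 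For the \'etale realization I would invert $\tilxi$ and pass to $\varphi_M=1$-invariants: since $A_{\rm inf}(R)[1/\tilxi]^{\varphi=1}=\mathbb{Z}_p$ (for $R=\mathcal{O}_C$; in general the appropriate \'etale coefficient ring), one gets $M_{\prism}(G)[1/\tilxi]^{\varphi_M=1}=\mathrm{Hom}(T_pG,\mathbb{Z}_p)=(T_pG)^{\vee}$, matching the \'etale part of $(M^{SW}(G))^{\ast}$; here the universal cover sequence of \Cref{sec:divid-prism-dieud-lemma-exactness-of-universal-covering-exact-sequence} and the $q$-logarithm (\Cref{sec:q-logarithm-convergence-of-q-logarithm}, \Cref{sec:prism-dieud-module-lemma-divided-logarithm}) give the concrete shape of this comparison on torsion classes, e.g.\ for $\mu_{p^\infty}$.

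The main obstacle is compatibility with the gluing: one must show that the period isomorphism implicit in $M_{\prism}(G)$ — produced for free by the single prismatic site, which sees $A_{\rm inf}(R)$, $A_{\rm crys}$ and their localizations simultaneously — agrees with the comparison isomorphism $\alpha_G$ of Scholze--Weinstein, built from the crystalline comparison for $p$-divisible groups. By naturality it suffices to check this after base change to $R=\mathcal{O}_C$ with $C$ algebraically closed and spherically complete, where the rigidity statement above shows a minuscule Breuil--Kisin--Fargues module together with its morphisms is determined by its triple: so one is reduced to checking the agreement of the $\alpha$'s on $\mathbb{Q}_p/\mathbb{Z}_p$, on $\mu_{p^\infty}$, and on the generic fibre of a one-dimensional formal group, where on both sides $\alpha$ is computed by the explicit period integrals (the $q$-logarithm for $\mu_{p^\infty}$, the logarithm of the universal vector extension in general, recovered prismatically through \Cref{sec:divid-prism-dieud-lemma-prismatic-dieudonne-module-via-hom} and the universal cover sequence). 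Finally, the passage from $\mathcal{O}_C$ back to an arbitrary perfectoid $R$ proceeds as in Scholze--Weinstein: both functors are compatible with filtered colimits and with base change along maps of perfectoid rings, and a map of finite projective $A_{\rm inf}$-modules that becomes an isomorphism after a faithfully flat base change — e.g.\ to an absolutely integrally closed perfectoid cover, which exists by Andr\'e's lemma \Cref{sec:generalities-prisms-andres-lemma} — is itself an isomorphism.
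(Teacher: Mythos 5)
Your identifications of the two ``boundary'' realizations are essentially fine: over a perfectoid $R$ the crystalline part of $M_{\prism}(G)$ is matched with $\mathbb{D}(G)(A_{\crys})$ by evaluating the crystal on the PD-thickening $A_{\crys}\to R$ (this is exactly \Cref{sec:comp-over-mathc-comparison-over-pd-thickening-with-crystalline-dieudonne-module}, modulo the usual Frobenius-twist bookkeeping), and the \'etale part is $(T_pG)^{\vee}$ via the Hom-description of \Cref{sec:divid-prism-dieud-lemma-prismatic-dieudonne-module-via-hom}. The genuine gap is the step you yourself flag as ``the main obstacle'': compatibility of the two gluing (period) isomorphisms. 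The rigidity statement says the functor from minuscule Breuil--Kisin--Fargues modules over $A_{\inf}(\mathcal{O}_C)$ to triples is fully faithful; it does \emph{not} say that two natural Frobenius-equivariant identifications of the \'etale and crystalline realizations, one coming from the prismatic site and one from Scholze--Weinstein, must be compatible with the two $\alpha$'s as soon as they are compatible on $\Q_p/\Z_p$, $\mu_{p^\infty}$ and one-dimensional formal groups. Agreement of the comparison data is a statement for \emph{every} $G$, and these test objects do not generate the category of $p$-divisible groups over $\mathcal{O}_C$ in any way that would let naturality propagate the check (contrast with \Cref{sec:divid-prism-dieud-compatibility-cartier-duality}, where a specific Hom-formula reduces everything to $\mu_{p^\infty}$ --- no analogous formula is available here, and ``the logarithm of the universal vector extension, recovered prismatically'' is precisely the computation you have not carried out). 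As written, the central point of the proposition is asserted rather than proved.

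The paper avoids this difficulty by a different route: after reducing to $R=\mathcal{O}_C$ by v-descent (and disposing of the characteristic~$p$ case by \Cref{sec:prism-dieud-theory-theorem-comparison-with-crystalline-dieudonne-functor}), it treats $p$-divisible groups of formal abelian schemes first, where both sides are computed by $H^1$ of the abelian scheme --- $M^{SW}$ is dual to $H^1_{A_{\inf}}(X)$ by Scholze--Weinstein's own results, and $M_{\prism}(X[p^\infty])\cong R^1f_{\prism,*}\mathcal{O}_{\prism}$ by \Cref{sec:prism-dieud-modul-corollary-prismatic-dieudonne-crystal-of-abelian-schemes} --- so that the comparison is exactly Bhatt--Scholze's theorem identifying prismatic and $A_{\inf}$-cohomology; functoriality in morphisms of $p$-divisible groups (not just of abelian schemes) is then checked after base change to $A_{\crys}$ using \Cref{sec:comp-over-mathc-comparison-over-pd-thickening-with-crystalline-dieudonne-module}, and the general case follows from $G\times\check{G}\cong X[p^\infty]$ and an idempotent argument. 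If you want to salvage your triple-matching strategy, you would need either this abelian-scheme mechanism (or an equivalent one) to verify the gluing compatibility for all $G$, and you would also need to tidy the final descent step: an isomorphism over $\mathcal{O}_C$ cannot be ``checked after faithfully flat base change'' over a general perfectoid $R$ before a natural map over $R$ has been constructed --- the paper instead invokes v-descent for both sides at the outset to reduce to $\mathcal{O}_C$.
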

\begin{proof}
  \red{
    If $R$ is perfect and $G$ a $p$-divisible group over $R$, then we get a natural isomorphism
    \[
      \alpha_{R,G}\colon M_\prism(G)\cong M^{SW}(G)^\ast
    \]
    because both sides identify with the (contravariant) crystalline Dieudonn\'e module, cf.\ \ref{sec:prism-dieud-theory-theorem-comparison-with-crystalline-dieudonne-functor}. Moreover, $\alpha_{R,-}$ is compatible with base change along morphisms of perfect rings.
  }
    Now assume that $R=\mathcal{O}_C$, where $C$ is a perfectoid algebraically closed field over $\mathbb{Q}_p$. In this case, assume first that $G=X[p^{\infty}]$, for some formal abelian scheme $X$ over $\mathcal{O}_C$, with rigid generic fiber $X^{\rm rig}$. \blue{As recalled above}, the functor $M^{SW}$ sends $G$ to the prismatic Dieudonn\'e module over $\mathcal{O}_C$ dual to $H_{A_{\rm inf}}^1(X)$. In particular, in this case, $M^{SW}(G)$ is isomorphic to the (naive) dual to $M_{\prism}(G)$, by \Cref{sec:prism-dieud-modul-corollary-prismatic-dieudonne-crystal-of-abelian-schemes} and the comparison theorem \cite[Theorem 17.2]{bhatt_scholze_prisms_and_prismatic_cohomology}\footnote{Note that we chose $\tilxi$ as a generator of the ideal of the prism, so the Frobenius twist in the statement of loc. cit. disappears.}. Moreover, this identification is functorial for morphisms of $p$-divisible groups of abelian schemes (and not simply for morphisms of abelian schemes) : indeed, let $X, X'$ be two abelian schemes over $\mathcal{O}_C$, and $G=X[p^{\infty}]$, $H=X'[p^{\infty}]$, with a morphism $f\colon G\to H$. We want to see that the diagram 
  	 $$
	 \xymatrix{
	 	M^{SW}(G) \ar[r]^{\cong} \ar[d]_{M^{SW}(f)} & M_{\prism}(G)^{\ast} \ar[d]^{M_{\prism}(f)^*}   \\
	 	M^{SW}(H) \ar[r]^{\cong}  & M_{\prism}(H)^{\ast}    }
	 $$
commutes. This can be checked after base change to $A_{\rm crys}$. Then, using \Cref{sec:comp-over-mathc-comparison-over-pd-thickening-with-crystalline-dieudonne-module}, the terms on the top line (resp. on the bottom line) are identified with the covariant crystalline Dieudonn\'e module of $G$ (resp. $H$), and the horizontal isomorphisms induce the identity, by construction.
 
Let now $G$ be a general $p$-divisible group over $\mathcal{O}_C$. There exists a formal abelian scheme $X$ over $\mathcal{O}_C$, such that $X[p^{\infty}]=G \times \check{G}$ (cf. \cite[Proposition 14.8.4]{scholze2020berkeley}). Let $e\colon X[p^\infty]\to X[p^\infty]$ be the idempotent with kernel $G$. Then
  $$
  M_\prism(G)^{\ast}=\mathrm{ker}(M_\prism(e)^{\ast}\colon M_\prism(X[p^\infty])^{\ast}\to M_\prism(X[p^\infty])^{\ast})
  $$
  and
  $$
  M^{\mathrm{SW}}(G)=\mathrm{ker}(M^{\mathrm{SW}}(e) \colon M^{\mathrm{SW}}(X[p^\infty]) \to M^{\mathrm{SW}}(X[p^\infty])). 
  $$
  By the functoriality explained above we can conclude \red{the proof when $R=\mathcal{O}_C$, i.e., we have constructed an isomorphism $\alpha_{R,G}\cong M_\prism(G)\cong M^{SW}(G)^\ast$ in this case, which is natural in $G$ and compatible with base change along morphisms of such $R$'s. If $k$ denotes the residue field of $\mathcal{O}_C$, then by construction the base change of $\alpha_{R,G}$ along $\prism_R\to \prism_k$ is $\alpha_{k,G\otimes_R k}$.} 

  \red{Now assume that $R$ is a general perfectoid ring. By \cite[Remark 8.8]{lau_dieudonne_theory_over_semiperfect_rings_and_perfectoid_rings} we can write
    \[
      R\cong R_1\times_{S_2}S_1
    \]
    with $R_1$ $p$-torsion free perfectoid, $S_1,S_2$ perfect and $R_1\to S_2, S_1\to S_2$ surjective. As in \cite[Lemma 9.2]{lau_dieudonne_theory_over_semiperfect_rings_and_perfectoid_rings} the category $\DM(R)$ of prismatic Dieudonn\'e modules for $R$ is naturally equivalent to the $2$-limit
    \[
      \DM(R_1)\times_{\DM(S_2)}\DM(S_1).
    \]
    Thus it suffices to construct a natural isomorphism $M_\prism(G)\cong M^{SW}(G)^\ast$ for any $p$-divisible group over a perfectoid ring $R$, which is either perfect or $p$-torsion free, and show that it is compatible with base change in $R$. If $R$ is perfect, then we are already done. Let us assume that $R$ is $p$-torsion free. Then the ring $R/p$ is quasi-regular semiperfect, and $\prism_{R/p}\cong A_\crys(R/p)$ by \ref{sec:prism-cohom-quasi-lemma-for-qr-semiperfect-prism-isomorphic-to-acrys}. By \ref{sec:prism-dieud-theory-theorem-comparison-with-crystalline-dieudonne-functor}, \ref{sec:divid-prism-dieud-proposition-ext-crystal}\footnote{The proof of \ref{sec:divid-prism-dieud-proposition-ext-crystal} does not use the comparison with \cite{scholze2020berkeley}.} and the construction of $M^{SW}(G)^\ast$ we have a natural isomorphism
    \[
      \alpha_{G\otimes_{R} R/p}\colon M_\prism(G)\otimes_{\prism_R}A_\crys(R/p) \cong M^{SW}(G)^\ast\otimes_{\prism_R} A_\crys(R/p)
    \]
    because both sides identify with the (contravariant) crystalline Dieudonn\'e module of $G\otimes_R R/p$. By \ref{sec:divid-prism-dieud-proposition-ext-crystal} $M_\prism(G)$ is a finite, locally free $\prism_R$-module. Thus $M_\prism(G)$ identifies with a $\prism_R$-submodule of $M_\prism(G)\otimes_{\prism_R}A_\crys(R/p)$ because the morphism $\prism_R\to A_\crys(R/p)$ is injective. We claim that $\alpha_{G\otimes_R R/p}$ maps (injectively) $M_\prism(G)$ into $M^{SW}(G)^\ast$.
    By the very construction of $M^{SW}(G)^\ast$ we have to check that for any perfectoid valuation ring $V$ with algebraically closed fraction field and morphism $R\to V$ the module $M_\prism(G)$ maps to $M^{SW}(G_V)^\ast\subseteq M_\crys(G_V)$ with $G_V:=G\otimes_R V$. If $V$ is perfect, this follows by \ref{sec:prism-dieud-theory-theorem-comparison-with-crystalline-dieudonne-functor}. If $V$ is of mixed characteristic we can write $V$ as the fiber product $V^\prime\times_{\kappa} S$ of a perfect valuation ring $S$ with a mixed-characteristic valuation ring $V^\prime$ of rank $1$ over the residue field $\kappa$ of $V^\prime$, and write
    \[
      M^{SW}(G_V)\cong M^{SW}(G_{V^\prime})\times_{M^{SW}(G_\kappa)}M^{SW}(G_S).
    \]
    We already checked the statement for $V^\prime,\kappa, S,$ and thus we have finished the construction of a natural injective morphism
    \[
      \alpha_{R,G}\colon M_\prism(G)\to M^{SW}(G)^\ast
    \]
    for a general perfectoid ring $R$. Assume $R\to R^\prime$ is a morphism of perfectoid rings, then we know that $\alpha_{R,G}\otimes_{\prism_R}\prism_{R^\prime}=\alpha_{R^\prime,G_{R^\prime}}$ if $R,R^\prime$ are perfect. If $R$ is $p$-torsion free and $R^\prime$ perfect, we can draw the same conclusion as then $\prism_{R}\to \prism_{R^\prime}$ factors over $A_\crys(R/p)$ and $\alpha_{R,G}\otimes_{\prism_R} A_\crys(R/p)$ is the identification coming from Dieudonn\'e theory. As $M^{SW}(G)^\ast$ is a finite free $\prism_R$-module (by \cite[Theorem 17.5.2]{scholze2020berkeley}), we can check that it is an isomorphism after base change along all morphisms $\prism_R\to \prism_k$ for $R\to k$ a morphism from $R$ to a perfect field $k$. But this case was already handled. This finishes the proof. 
  }
\end{proof} 

We obtain the following corollary, which we will need in \Cref{sec:essent-surj-1}.

\begin{corollary}
\label{sec:prism-dieud-theory-comparison-with-bkf-modules-perfectoid-case}

Let $R$ be a perfectoid ring. The prismatic Dieudonn\'e functor $M_{\prism}$ \red{takes values in $\DM^{\rm adm}(R) \cong \DM(R)$} and induces an antiequivalence between $\BT(R)$ and $\DM^{\rm adm}(R)  \cong \DM(R)$.
\end{corollary}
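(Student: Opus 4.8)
The plan is to deduce this from \Cref{sec:prism-dieud-theory-proposition-comparison-scholze-weinstein-functor}, which identifies $M_{\prism}$ with the naive dual of the Scholze--Weinstein functor $M^{SW}$, together with the known classification of $p$-divisible groups over perfectoid rings by Breuil--Kisin--Fargues modules.

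First I would recall that for $R$ perfectoid one has $(\prism_R, I) = (A_{\inf}(R), (\tilxi))$, so that the category $\DM(R)$ is the category of minuscule Breuil--Kisin--Fargues modules over $A_{\inf}(R)$ with respect to $\tilxi$, as noted after \Cref{sec:abstr-divid-prism-definition-prismatic-dieudonne-modules-for-quasi-regular semiperfectoid}. Next I would check that the naive dual $(-)^{\ast} = \Hom_{A_{\inf}(R)}(-, A_{\inf}(R))$ restricts to an anti-autoequivalence of $\DM(R)$: given $(M, \varphi_M) \in \DM(R)$, minusculity provides a unique partial inverse $\psi \colon M \to \varphi^{\ast}M$ of the linearized Frobenius $\varphi_M^{\sharp}$ with $\psi \circ \varphi_M^{\sharp} = \varphi_M^{\sharp} \circ \psi = \tilxi$; dualizing $\psi$ endows $M^{\ast}$ with a $\varphi$-linear Frobenius whose linearization has cokernel killed by $\tilxi$ and projective over $R$, so $M^{\ast} \in \DM(R)$, and biduality of finite projective $A_{\inf}(R)$-modules makes $(-)^{\ast}$ an involution up to natural isomorphism. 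I would then invoke the theorem of Lau and Scholze--Weinstein (\cite{lau_dieudonne_theory_over_semiperfect_rings_and_perfectoid_rings}, \cite[Appendix to Lecture XVII, Theorem~17.5.2]{scholze_weinstein_berkeley_new_version}) that $M^{SW}$ is an equivalence between $\BT(R)$ and $\DM(R)$ for every perfectoid ring $R$. Finally, since \Cref{sec:prism-dieud-theory-proposition-comparison-scholze-weinstein-functor} gives an isomorphism of functors $M_{\prism} \simeq (-)^{\ast} \circ M^{SW}$, it follows that $M_{\prism}$ is the composite of an equivalence with an anti-autoequivalence, hence an antiequivalence between $\BT(R)$ and $\DM(R)$.

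Since all of the substantial input --- the classification over $\mathcal{O}_C$ in both the equal and the mixed characteristic case and its $v$-descent to arbitrary perfectoid rings --- is imported from the cited works through \Cref{sec:prism-dieud-theory-proposition-comparison-scholze-weinstein-functor}, the only point requiring genuine verification is that the naive dual is well defined on $\DM(R)$ and is an involution; this is where the minuscule hypothesis enters, and I expect it to be the (mild) main obstacle. A minor subtlety to keep track of is the normalization of the Frobenius on $M^{\ast}$ (the twist by $\tilxi$), but this bookkeeping is exactly what has already been carried out in the proof of \Cref{sec:prism-dieud-theory-proposition-comparison-scholze-weinstein-functor}.
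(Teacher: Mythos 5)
Your proposal is correct and follows essentially the same route as the paper: the corollary is deduced directly from the comparison with the Scholze--Weinstein functor together with \cite[Theorem 17.5.2]{scholze_weinstein_berkeley_new_version} (the paper additionally remarks that loc.\ cit.\ reduces everything to perfectoid valuation rings with algebraically closed fraction field, handled by Berthelot and Scholze--Weinstein). Your explicit verification that the naive dual is an anti-autoequivalence of the category of minuscule Breuil--Kisin--Fargues modules is a detail the paper leaves implicit, and it is correctly carried out.
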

\begin{proof}
This follows \red{immediately} from the last proposition and \cite[Theorem 17.5.2]{scholze2020berkeley}. Note that the argument of loc. cit. shows that one only needs to prove the equivalence when $R$ is the ring of integers of a perfectoid algebraically closed field, where it is due to Berthelot \cite[Theorem 3.4.1]{berthelot_theorie_de_dieudonne_sur_un_anneau_de_valuation_parfait} and Scholze-Weinstein \cite[Theorem 5.2.1]{scholze_weinstein_moduli_of_p_divisible_groups} (in this case, one can even assume that the fraction field of $R$ is spherically complete, and the result is then an easy consequence of results of Fargues : see \cite[\S 5.2]{scholze_weinstein_moduli_of_p_divisible_groups}). 
\end{proof}

\begin{remark}
  \label{sec:comp-with-form-remark-exactness-of-equivalences}
Let $R$ be a perfectoid ring. The functor $M_{\prism}$ is exact (see below \Cref{sec:divid-prism-dieud-exactness}) and has an exact quasi-inverse (we will provide an argument for this later in \Cref{sec:complements-finite-locally-free} in the case of finite locally free group schemes, which applies verbatim for $p$-divisible groups). 
\end{remark}

Let us conclude this section by discussing the case of perfect fields. For a perfect field $k$, Fontaine \cite{fontaine_groupes_p_divisibles_sur_les_corps_locaux} was the first to give a uniform definition of a functor from $p$-divisible groups to (prismatic) Dieudonn\'e modules over $k$. Let us recall it first, as formulated in \cite[\S 4.1]{berthelot_messing_theorie_de_dieudonne_cristalline_III}. If $A$ is a commutative ring, the set $\mathrm{CW}(A)$ of \textit{Witt covectors with values in $A$} is the set of all family $(a_{-i})_{i\in \mathbb{N}}$ of elements of $A$ such that there exist integers $r, s\geq 0$ such that the ideal $J_r$ generated by the $a_{-i}$, $i\geq r$, satisfies $J_r^s=0$. One still denotes by $\mathrm{CW}$ the sheaf on the big fpqc site\footnote{We could as well use any other topology finer than the Zariski topology.} of $k$ associated to the presheaf of Witt covectors. This is an abelian sheaf of $W(k)$-modules, endowed with a Frobenius operator which is semi-linear with respect to the Frobenius on $W(k)$. Fontaines defines :
\[ M^{\rm cl}(G):= \mathrm{Hom}_{(k)_{\rm fpqc}}(G, \mathrm{CW}). \]

As a corollary of \Cref{sec:prism-dieud-theory-theorem-comparison-with-crystalline-dieudonne-functor} and results of Berthelot-Breen-Messing, one gets 
\begin{proposition}
Let $k$ be a perfect field, and let $G$ be a $p$-divisible group over $R$. One has a canonical $W(k)$-linear Frobenius-equivariant isomorphism
\[ M_{\prism}(G) \cong M^{\rm cl}(G). \]
\end{proposition}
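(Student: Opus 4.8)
The statement to prove is the identification $M_{\prism}(G) \cong M^{\rm cl}(G)$ for a $p$-divisible group $G$ over a perfect field $k$. The plan is to run this through the comparison with the crystalline Dieudonn\'e functor already established in \Cref{sec:prism-dieud-theory-theorem-comparison-with-crystalline-dieudonne-functor}, and then invoke the known comparison, due to Berthelot-Breen-Messing, between the Witt-covector description of the crystalline Dieudonn\'e module over a perfect field and the crystalline $\mathcal{E}xt$-sheaf description. First I would note that $k$ is quasi-syntomic (it is a perfect $\F_p$-algebra, so $L_{k/\Z_p}$ has $p$-complete Tor-amplitude concentrated in degree $[-1,-1]$, hence in $[-1,0]$), so $\mathcal{M}_{\prism}(G)$ and hence $M_{\prism}(G) = R\Gamma((k)_{\rm qsyn}, \mathcal{M}_{\prism}(G))$ are defined, and moreover $(k)_{\prism}$ has the perfect prism $(W(k),(p))$ as its (essentially) initial object up to the quasi-syntomic structure, so that evaluating on $W(k)$ via \Cref{sec:comp-over-mathc-comparison-over-pd-thickening-with-crystalline-dieudonne-module} (with $D = W(k)$, whose kernel to $k$ is $(p)$, which has trivial divided powers) gives
$$
M_{\prism}(G) \cong v^\ast(\mathcal{M}_{\prism}(G))(W(k),(p)) \cong \mathbb{D}(G)(W(k)),
$$
the evaluation on $W(k)$ of the (contravariant) crystalline Dieudonn\'e crystal, compatibly with Frobenius. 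This reduces the claim to the statement that $\mathbb{D}(G)(W(k)) \cong M^{\rm cl}(G) = \mathrm{Hom}_{(k)_{\rm fpqc}}(G, \mathrm{CW})$ as $W(k)$-modules with Frobenius.

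Next I would establish this last isomorphism by citing the work of Berthelot-Breen-Messing and Berthelot-Messing. The key input is that over a perfect field $k$, the sheaf $\mathrm{CW}$ of Witt covectors represents (in the appropriate derived sense) the crystalline structure sheaf in the relevant degree: more precisely, by \cite[\S 4.1]{berthelot_messing_theorie_de_dieudonne_cristalline_III} one has $\mathcal{E}xt^1_{(k/\Z_p)_{\rm crys}}((u^{\rm crys})^{-1}(G), \mathcal{O}_{\rm crys})$ pushed forward to the fppf site of $k$ is canonically $\mathrm{Hom}_{(k)_{\rm fpqc}}(G, \mathrm{CW})$ — this is exactly Fontaine's theorem that his functor $M^{\rm cl}$ agrees with the crystalline Dieudonn\'e functor, which is proved in loc. cit. by analyzing the universal extension and the De Rham-Witt complex. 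Concretely, one uses that for a finite locally free group scheme $H$ over $k$ there is a canonical isomorphism $\mathrm{Hom}(H,\mathrm{CW}) \cong \mathcal{E}xt^1_{\rm crys}(H, \mathcal{O}_{\rm crys})(W(k))$, and then passes to the limit over $H = G[p^n]$, exactly as in the proof of \Cref{sec:comp-over-mathc-comparison-over-pd-thickening-with-crystalline-dieudonne-module} and \Cref{sec:divid-prism-dieud-proposition-ext-crystal}. Combining the two identifications — prismatic to crystalline via \Cref{sec:comp-over-mathc-comparison-over-pd-thickening-with-crystalline-dieudonne-module}, and crystalline to Witt-covectors via Berthelot-Messing/Fontaine — yields the desired isomorphism, and the Frobenius-equivariance follows because both comparison maps are Frobenius-equivariant by construction; the $W(k)$-linearity is automatic since the initial prism $(W(k),(p))$ carries its structure as a $W(k)$-algebra through both constructions.

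The main obstacle, and really the only nontrivial point, is making sure the direction of functoriality and the twist conventions match. The prismatic functor $M_{\prism}$ is contravariant and the Frobenius $\varphi_{\mathcal{M}_{\prism}(G)}$ is induced by the Frobenius on $\mathcal{O}^{\rm pris}$; on the $W(k)$-side, $\mathbb{D}(G)(W(k))$ carries the crystalline Frobenius $F$, and $M^{\rm cl}(G) = \mathrm{Hom}(G,\mathrm{CW})$ carries the Frobenius induced by the operator on $\mathrm{CW}$. One should check that under the classical equivalence these Frobenii correspond without an extra $\varphi$-twist — this is already implicit in the statement of \Cref{sec:prism-dieud-theory-theorem-comparison-with-crystalline-dieudonne-functor} (which asserts the comparison is "canonical Frobenius equivariant") and in Fontaine's and Berthelot-Messing's normalizations, so it is a matter of bookkeeping rather than genuine difficulty. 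I would therefore structure the proof as: (1) reduce to $\mathbb{D}(G)(W(k))$ via \Cref{sec:comp-over-mathc-comparison-over-pd-thickening-with-crystalline-dieudonne-module}; (2) cite \cite[\S 4.1]{berthelot_messing_theorie_de_dieudonne_cristalline_III} (equivalently Fontaine \cite{fontaine_groupes_p_divisibles_sur_les_corps_locaux}) for the identification $\mathbb{D}(G)(W(k)) \cong M^{\rm cl}(G)$; (3) observe compatibility of Frobenius and $W(k)$-module structures. The remark in the excerpt that "it could be interesting to find a more direct proof" signals that this indirect route through crystalline cohomology is exactly what is intended here.
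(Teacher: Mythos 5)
Your overall route---prismatic to crystalline over $W(k)$, then Berthelot--Messing/Fontaine to pass to Witt covectors---is essentially the paper's (the paper composes \Cref{sec:prism-dieud-theory-theorem-comparison-with-crystalline-dieudonne-functor} with \cite[Theorem 4.2.14]{berthelot_messing_theorie_de_dieudonne_cristalline_III}; your use of \Cref{sec:comp-over-mathc-comparison-over-pd-thickening-with-crystalline-dieudonne-module} with $D=W(k)$ amounts to the same input, since $A_{\rm crys}(k)=W(k)$). The genuine gap is your handling of the Frobenius twist, which you declare to be absent: it is not, and the paper's entire (short) proof consists of dealing with it. Concretely, in \Cref{sec:comp-over-mathc-comparison-over-pd-thickening-with-crystalline-dieudonne-module} the prism $(W(k),(p))$ is regarded as an object of $(k)_{\prism}$ via the map $k\to W(k)/p$ induced by the Frobenius of $W(k)$ (this is forced by the PD-hypothesis in the setup of that lemma), and this structure map differs from the one on the initial prism of $(k)_{\prism}$ by the Witt Frobenius $\sigma$. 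Since $\mathcal{M}_{\prism}(G)$ is a crystal, the evaluation occurring in that lemma is the $\sigma$-twist of $M_{\prism}(G)$, not $M_{\prism}(G)$ itself. Equivalently, the comparison of \Cref{sec:prism-dieud-theory-theorem-comparison-with-crystalline-dieudonne-functor} is linear over the isomorphism $\prism_k\cong A_{\rm crys}(k)$, which restricts to $\sigma$ on $W(k)$ (see the remark following \Cref{sec:prism-cohom-quasi-lemma-for-qr-semiperfect-prism-isomorphic-to-acrys}). So the naive chain you write yields a $W(k)$-linear isomorphism $M_{\prism}(G)\cong(\sigma^{-1})^{*}M^{\rm crys}(G)$, i.e.\ a $\sigma$-semilinear identification with $\mathbb{D}(G)(W(k))$, not the untwisted one you assert in step (1).

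Consequently your claim that "these Frobenii correspond without an extra $\varphi$-twist" and that this is "implicit in the statement" of the comparison theorem is precisely the false step: Frobenius-equivariance of that comparison does not mean linearity over the identity of $W(k)$. The repair is what the paper does: record the comparison as $M_{\prism}(G)\cong(\sigma^{-1})^{*}M^{\rm crys}(G)$ and then compose with the $\sigma^{-1}$-pullback of (the inverse of) the $W(k)$-linear Frobenius-equivariant isomorphism of \cite[Theorem 4.2.14]{berthelot_messing_theorie_de_dieudonne_cristalline_III}, so that the twists are absorbed and the resulting isomorphism $M_{\prism}(G)\cong M^{\rm cl}(G)$ is honestly $W(k)$-linear. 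Since $\sigma$ is an automorphism of $W(k)$ the statement is of course still true, but for $k\neq\F_p$ a $\sigma$-semilinear identification is genuinely weaker than the $W(k)$-linear one being claimed, so as written your argument does not prove the proposition without this correction.
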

\begin{proof}
By construction, the isomorphism of \Cref{sec:prism-dieud-theory-theorem-comparison-with-crystalline-dieudonne-functor} is linear over the isomorphism $\prism_k \simeq A_{\rm crys}(k)$, which is given by the Frobenius $\sigma$ of $W(k)$, i.e., it can be seen as a Frobenius-equivariant $W(k)$-linear isomorphism :
\[ M_{\prism}(G) \cong (\sigma^{-1})^* M^{\rm crys}(G). \]
Composing it with $\sigma^{-1}$-pullback of the inverse of the $W(k)$-linear Frobenius-equivariant isomorphism of \cite[Theorem 4.2.14]{berthelot_messing_theorie_de_dieudonne_cristalline_III}, we get the desired isomorphism.
\end{proof}

It would be interesting to get a more direct proof of this corollary. In characteristic $p$, the prismatic Dieudonn\'e crystal of a $p$-divisible group admits a description which looks similar to Fontaine's definition.

\begin{definition}
Let $R$ be a a quasi-syntomic ring with $pR=0$. We define the sheaf $\mathcal{Q}$ on $(R)_{\prism}$ as the quotient :
\[ 0\to \mathcal{O}_\prism\to \mathcal{O}_\prism[1/p]\to \mathcal{Q}\to 0. \]
\end{definition}

The morphism $\mathcal{O}_\prism\to \mathcal{O}_\prism[1/p]$ is injective since any prism in $(R)_{\prism}$ is $p$-torsion free.

\begin{proposition}
Let $R$ be a quasi-syntomic ring with $pR=0$, and let $G$ be a $p$-divisible group over $R$. The \red{connecting map of the} canonical exact sequence $$0\to \mathcal{O}_\prism\to \mathcal{O}_\prism[1/p]\to \mathcal{Q}\to 0$$ induces an isomorphism :
$$
\mathcal{H}om_{(R)_{\mathrm{qsyn}}}(G,v_*\mathcal{Q})=v_*\mathcal{H}om_{(R)_{\prism}}(u^{-1}G, \red{\mathcal{Q}}) \cong \mathcal{M}_{\prism}(G).
$$
\end{proposition}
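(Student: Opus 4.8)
The plan is to read the isomorphism off the long exact sequence obtained by applying $R\mathcal{H}om_{(R)_{\prism}}(u^{-1}G,-)$ to the defining sequence $0\to \mathcal{O}_\prism\to \mathcal{O}_\prism[1/p]\to \mathcal{Q}\to 0$ on $(R)_{\prism}$, the point being that the rational structure sheaf contributes nothing. So the first step is to establish
$$
R\mathcal{H}om_{(R)_{\prism}}(u^{-1}G,\mathcal{O}_\prism[1/p])=0.
$$
Since $u^{-1}$ is exact and preserves colimits, $u^{-1}G=\varinjlim_n u^{-1}(G[p^n])$, and hence $R\mathcal{H}om_{(R)_{\prism}}(u^{-1}G,\mathcal{O}_\prism[1/p])\cong R\lim_n R\mathcal{H}om_{(R)_{\prism}}(u^{-1}(G[p^n]),\mathcal{O}_\prism[1/p])$, via the $\mathrm{hocolim}$-presentation of the sequential colimit and the fact that $R\mathcal{H}om$ carries direct sums to products. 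Each $G[p^n]$ is annihilated by $p^n$, so each term of this inverse system is annihilated by $p^n$; but each term is simultaneously a module over the sheaf of $\mathbb{Q}$-algebras $\mathcal{O}_\prism[1/p]$, hence a $\mathbb{Q}$-module, so it vanishes; therefore so does the limit.

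Granting this, the long exact sequence produces, for every $i\ge 0$, connecting isomorphisms $\mathcal{E}xt^i_{(R)_{\prism}}(u^{-1}G,\mathcal{Q})\xrightarrow{\sim}\mathcal{E}xt^{i+1}_{(R)_{\prism}}(u^{-1}G,\mathcal{O}_\prism)$; only $i=0$ is needed, giving $\mathcal{H}om_{(R)_{\prism}}(u^{-1}G,\mathcal{Q})\xrightarrow{\sim}\mathcal{E}xt^1_{(R)_{\prism}}(u^{-1}G,\mathcal{O}_\prism)$. Pushing forward along $v$ and invoking \Cref{sec:divid-prism-dieud-lemma-divided-dieudonne-module-via-local-ext-on-prismatic-site}, which identifies $v_*\mathcal{E}xt^1_{(R)_{\prism}}(u^{-1}G,\mathcal{O}_\prism)$ with $\mathcal{M}_{\prism}(G)$, yields $v_*\mathcal{H}om_{(R)_{\prism}}(u^{-1}G,\mathcal{Q})\cong \mathcal{M}_{\prism}(G)$. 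To finish, one identifies this with $\mathcal{H}om_{(R)_{\mathrm{qsyn}}}(G,v_*\mathcal{Q})$ by the internal-Hom/pushforward adjunction $\mathcal{H}om_{(R)_{\mathrm{qsyn}}}(F,v_*H)=v_*\mathcal{H}om_{(R)_{\prism}}(v^{-1}F,H)$ applied with $F=G$, $H=\mathcal{Q}$, together with the identification $v^{-1}G=u^{-1}G$ (which holds since $v=\varepsilon\circ u$ and $\varepsilon^{-1}$ carries the small-site $G$ to the big-site $G$). Tracing through, the resulting isomorphism is precisely the one induced by the boundary map of the short exact sequence.

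I do not expect a genuine obstacle here: the argument is formal once one has the vanishing above and \Cref{sec:divid-prism-dieud-lemma-divided-dieudonne-module-via-local-ext-on-prismatic-site}. The two points needing a little care are (a) treating the colimit $G=\varinjlim_n G[p^n]$ and its pullback $u^{-1}G$ derivedly, so that the $R\lim$-formula for $R\mathcal{H}om$ out of a sequential filtered colimit is legitimate, and (b) observing that all the identifications are of sheaves, not merely of global sections, which is automatic since \Cref{sec:divid-prism-dieud-lemma-divided-dieudonne-module-via-local-ext-on-prismatic-site} and the adjunction are themselves sheaf-theoretic. Finally, note that the hypothesis $pR=0$ is not actually used; it is present because the statement is the characteristic-$p$ analogue of Fontaine's formula $M^{\mathrm{cl}}(G)=\mathrm{Hom}_{(R)_{\mathrm{fpqc}}}(G,\mathrm{CW})$, with $v_*\mathcal{Q}$ in the role of the sheaf of Witt covectors.
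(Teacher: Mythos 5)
Your proposal is correct and follows essentially the same route as the paper: the key point in both is that $R\mathcal{H}om_{(R)_{\prism}}(u^{-1}(-),\mathcal{O}_\prism[1/p])$ vanishes on $p$-power torsion inputs because the source is killed by $p^n$ while $p$ acts invertibly on the target, after which the long exact sequence and \Cref{sec:divid-prism-dieud-lemma-divided-dieudonne-module-via-local-ext-on-prismatic-site} give the result. The only difference is organizational: the paper treats finite locally free group schemes first and then invokes "a limit argument" for $G=\varinjlim_n G[p^n]$, whereas you fold that limit into the vanishing statement itself via the $R\lim$-formula for $R\mathcal{H}om$ out of a sequential colimit, which is a fine (and slightly more explicit) way to carry out the same step.
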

\begin{proof}
First assume that $G$ is a finite locally free group scheme. Then the statement is clear, as
$$
R\mathcal{H}om_{(R)_{\prism}}(u^{-1}(G),\mathcal{O}_\prism[1/p])=0,
$$
because $u^{-1}(G)$ is killed by some power of $p$, whereas on $\mathcal{O}_\prism[1/p]$ multiplication by $p$ is invertible. The result for $p$-divisible groups is deduced by a limit argument.
\end{proof}

This naturally leads to the following question.
\begin{question} \label{question_fontaine}
When $R=k$ is a perfect field, what is the relation between the sheaf $v_* \mathcal{Q}$ and the sheaf $\mathrm{CW}$ of Witt covectors?
\end{question} 

\subsection{Calculating $\mathrm{Ext}$-groups in topoi}
\label{sec:calc-ext-groups}

In this section we recall the method of calculating Ext-groups in a topos as presented by Berthelot, Breen, Messing (cf.\ \cite[2.1.5]{berthelot_breen_messing_theorie_de_dieudonne_cristalline_II}\footnote{For simplicity we omit the case of the local $\mathrm{Ext}$-sheaves, which is entirely similar.}.
Let $\mathfrak{X}$ be a topos and let $G,H\in \mathfrak{X}$ be two abelian groups, i.e., two abelian group objects.

The following theorem is attributed to Deligne in \cite{berthelot_breen_messing_theorie_de_dieudonne_cristalline_II}. A proof can be found in \cite[Appendix to Lecture IV, Theorem 4.10]{scholze_lectures_on_condensed_mathematics}.

\begin{theorem}
\label{sec:calc-ext-sheav-canonical-resolution-of-abelian-group}
Let $G\in \mathrm{\mathfrak{X}}$ be an abelian group. Then there exists a natural functorial (in $G$) resolution
$$
C(G)_\bullet:=(\ldots \to \Z[X_2]\to \Z[X_1]\to \Z[X_0])\simeq G
$$
where each $X_i\in \mathfrak{X}$ is a finite disjoint unions of products of copies $G$.
\end{theorem}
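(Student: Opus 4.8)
The plan is to deduce everything from the \emph{universal} statement over the category $\mathrm{Ab}$ of abelian groups in $\mathrm{Set}$, and then transport it to $\mathfrak{X}$ by sheafification. Concretely, one first constructs, functorially in an abelian group $M$, a chain complex
$$
C(M)_\bullet=(\cdots\to C(M)_2\to C(M)_1\to C(M)_0)
$$
together with an augmentation $C(M)_\bullet\to M$ which is a resolution, such that each $C(M)_i=\Z[Y_i(M)]$ with $Y_i(M)$ a finite disjoint union of finite powers of $M$, the differentials and augmentation being given by formulas natural in $M$. Granting such a construction, one gets the theorem as follows: write $\mathfrak{X}=\mathrm{Sh}(\mathcal{C})$ for a site $\mathcal{C}$ and apply the construction objectwise to the presheaf of abelian groups $U\mapsto G(U)$. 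This yields a complex of presheaves of abelian groups which is exact over each $U$ (it resolves $G(U)$), with $i$-th term $U\mapsto\bigoplus_j\Z[G(U)^{n_{ij}}]$. Since $G^{n}$ has sections $U\mapsto G(U)^{n}$, and the free abelian group object on a sheaf of sets is the sheafification of the sectionwise free abelian group, the sheafification of this presheaf complex is exactly $C(G)_\bullet\to G$ with $C(G)_i=\Z[X_i]$ and $X_i=Y_i(G)$ a finite disjoint union of finite powers of $G$; and since sheafification is exact, $C(G)_\bullet\to G$ is again a resolution. Functoriality in $G$ is built into the formulas.

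It then remains to carry out the universal construction, which proceeds by induction on the homological degree. One takes $C(M)_0=\Z[M]$ with augmentation $\varepsilon\colon[a]\mapsto a$; this is surjective, and $\ker\varepsilon\subseteq\Z[M]$ is generated by the natural relations $[a+b]-[a]-[b]$, so $(a,b)\mapsto[a+b]-[a]-[b]$ gives a natural surjection $\Z[M^2]\to\ker\varepsilon$, whence $C(M)_1=\Z[M^2]$. Inductively, suppose $C_{\le i}$ has been built with $i$-th cycle functor $Z_i\subseteq C_i$; one must produce a finite direct sum $C_{i+1}=\bigoplus_j\Z[\mathrm{ev}^{m_{ij}}]$ of functors $M\mapsto\Z[M^{m_{ij}}]$ and a natural surjection $C_{i+1}\to Z_i$, and then set $Z_{i+1}=\ker(C_{i+1}\to Z_i\hookrightarrow C_i)$. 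By the Yoneda lemma, a natural transformation $\Z[\mathrm{ev}^{m}]\to Z_i$ is the same as an element of $Z_i(\Z^{m})$, so such a finite natural surjection exists precisely when $Z_i$ is \emph{finitely generated as a functor}: there are finitely many $z_1,\dots,z_k$ with $z_l\in Z_i(\Z^{m_l})$ whose images under all homomorphisms $\Z^{m_l}\to M$ span $Z_i(M)$, for every $M$.

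The one genuinely hard point is exactly this finiteness: that each syzygy functor $Z_i$ is finitely generated, equivalently that the resolution can be taken with each degree a \emph{finite} direct sum. This is the theorem of Breen (also attributed to Deligne); its proof is a delicate combinatorial analysis of the free abelian group functors $\Z[\mathrm{ev}^n]$ and their cross-effects, establishing a coherence-type property of the category of finitary functors $\mathrm{Ab}\to\mathrm{Ab}$ (for which $\{\Z[\mathrm{ev}^n]\}_{n\ge 0}$ is a set of projective generators, by Yoneda): a finitely generated subfunctor of a finitely presented one is finitely presented. Feeding this into the inductive scheme above produces the required finite terms in every degree. I would invoke \cite[Appendix to Lecture IV]{scholze_lectures_on_condensed_mathematics} (and Breen's original treatment) for this rather than reproduce it. Finally, for the purposes of this paper one needs only the first few terms of $C(G)_\bullet$ together with exactness in a bounded range; these can be written down explicitly --- $C_0=\Z[G]$, $C_1=\Z[G^2]$, $C_2=\Z[G^3]\oplus\Z[G^2]$, with the usual differentials --- so that Breen's general finiteness is not logically required for the applications; see \Cref{sec:calc-ext-groups}.
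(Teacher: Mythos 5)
Your outline is fine, and it is at exactly the same level of rigor as the paper, whose entire ``proof'' of this theorem is a citation to \cite[2.1.5]{berthelot_breen_messing_theorie_de_dieudonne_cristalline_II} and \cite[Appendix to Lecture IV, Theorem 4.10]{scholze_lectures_on_condensed_mathematics}: your reduction to the universal case over $\mathrm{Ab}$, the sheafification step, and the Yoneda description of maps $\Z[\mathrm{ev}^m]\to Z_i$ are all correct, and the genuinely hard point (finite generation of the syzygy functors, i.e.\ Breen--Deligne finiteness) is deferred to the same sources the paper cites. You also rightly note that the paper only ever uses the explicit low-degree terms $\Z[G]$, $\Z[G^2]$, $\Z[G^3]\oplus\Z[G^2]$ with their explicit differentials, for which the general finiteness theorem is not needed.
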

\begin{proof}
See \cite[2.1.5]{berthelot_breen_messing_theorie_de_dieudonne_cristalline_II} or \cite[Appendix to Lecture IV, Theorem 4.10]{scholze_lectures_on_condensed_mathematics}  
\end{proof}

\begin{lemma}
\label{sec:calc-ext-sheav-derived-yoneda-lemma}
Let $X\in \mathfrak{X}$ be any object and let $\mathcal{F}\in \mathrm{Ab}(\mathfrak{X})$ be an abelian group. Then
$$
R\Gamma(X,\mathcal{F})\cong R\mathrm{Hom}_{\mathrm{Ab}(\mathfrak{X})}(\Z[X],\mathcal{F}),
$$
where $\Z[X]$ denotes the free abelian group on $X$. 
\end{lemma}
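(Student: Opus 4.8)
The plan is to reduce everything to the elementary adjunction between the ``free abelian group'' functor and the forgetful functor, and then to invoke the uniqueness of derived functors. First I would recall that the functor $\Z[-]\colon \mathfrak{X}\to \mathrm{Ab}(\mathfrak{X})$, sending an object of the topos to the free abelian group it generates, is left adjoint to the forgetful functor $U\colon \mathrm{Ab}(\mathfrak{X})\to \mathfrak{X}$. Hence for every abelian sheaf $\mathcal{F}$ there is a natural isomorphism
$$
\Hom_{\mathrm{Ab}(\mathfrak{X})}(\Z[X],\mathcal{F})\cong \Hom_{\mathfrak{X}}(X,U\mathcal{F})=\mathcal{F}(X)=\Gamma(X,\mathcal{F}),
$$
the middle identification being the Yoneda lemma in $\mathfrak{X}$. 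In other words, the two functors $\Hom_{\mathrm{Ab}(\mathfrak{X})}(\Z[X],-)$ and $\Gamma(X,-)$ from $\mathrm{Ab}(\mathfrak{X})$ to $\mathrm{Ab}$ are canonically the same functor.

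Next I would pass to derived functors. Since $\mathrm{Ab}(\mathfrak{X})$ is a Grothendieck abelian category, it has enough injectives, so one may choose an injective resolution $\mathcal{F}\to I^{\bullet}$. By definition, $R\Gamma(X,\mathcal{F})$ is represented by the complex $\Gamma(X,I^{\bullet})$, while $R\Hom_{\mathrm{Ab}(\mathfrak{X})}(\Z[X],\mathcal{F})$ is represented by the complex $\Hom_{\mathrm{Ab}(\mathfrak{X})}(\Z[X],I^{\bullet})$. Applying the natural isomorphism of the previous paragraph degreewise, and compatibly with the differentials by naturality, identifies these two complexes, hence yields the asserted isomorphism in the derived category. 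By naturality this identification does not depend on the chosen injective resolution, so it is canonical; and it is clearly functorial in $X$ and in $\mathcal{F}$.

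The argument is entirely formal, so I do not expect a genuine obstacle. The one point worth stating explicitly is that $\Z[X]$ is in general \emph{not} a projective object of $\mathrm{Ab}(\mathfrak{X})$ (indeed $\Gamma(X,-)$ is typically not exact, as witnessed by higher cohomology), so the right-hand side must be computed via an injective resolution of the second variable rather than a projective resolution of $\Z[X]$; no difficulty results from this, but the notation $R\Hom$ could otherwise suggest the wrong computation. In particular, this lemma is independent of all the prismatic material above and uses only general topos theory.
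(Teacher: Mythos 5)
Your proof is correct and is essentially the paper's own argument: the paper simply derives the natural isomorphism $\mathcal{F}(X)\cong \mathrm{Hom}_{\mathrm{Ab}(\mathfrak{X})}(\Z[X],\mathcal{F})$ coming from the free--forgetful adjunction, which is exactly what you do via injective resolutions. Your remark that $\Z[X]$ need not be projective is a sensible clarification but does not change the substance.
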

\begin{proof}
This follows by deriving the isomorphism $\mathcal{F}(X)\cong \mathrm{Hom}_{\mathrm{Ab}(\mathfrak{X})}(\Z[X],\mathcal{F}).$
\end{proof}

These two results show that the $\mathrm{Ext}$-groups
$$
\mathrm{Ext}^i_{\mathrm{Ab}(\mathfrak{X})}(G,H)
$$
can, \textit{in principle}, be calculated in terms of the cohomology groups
$$
H^i(G\times\ldots \times G,H)
$$
for various products $G\times\ldots \times G$. Unfortunately, the construction of the resolution in \Cref{sec:calc-ext-sheav-canonical-resolution-of-abelian-group} is rather involved. However, the first terms, which are sufficient for our applications, can be made explicit\footnote{By this, we mean that one can construct a functorial (in $G$) resolution having these terms in the beginning.}. 
For example, the first terms can be chosen to be 
$$
\begin{matrix}
  C(G)_0:=\Z[G] \\
  C(G)_1:=\Z[G^2]\\
  C(G)_2:=\Z[G^3]\oplus\Z[G^2]
\end{matrix}
$$ 
with explicit differentials (cf.\ \cite[(2.1.5.2.)]{berthelot_breen_messing_theorie_de_dieudonne_cristalline_II}).
The stupid filtration of the complex $C(G)_\bullet$ yields a spectral sequence
$$
E_1^{i,j}=\mathrm{Ext}^{j}_{\mathrm{Ab}(\mathfrak{X})}(C(G)_i,\mathcal{F})\Rightarrow \mathrm{Ext}^{i+j}_{\mathrm{Ab}(\mathfrak{X})}(C(G)_\bullet, \mathcal{F})\cong \mathrm{Ext}^{i+j}_{\mathrm{Ab}(\mathfrak{X})}(G,\mathcal{F})
$$
and the terms
$$
\mathrm{Ext}^i_{\mathrm{Ab}(\mathfrak{X})}(C(G)_j,\mathcal{F})
$$
can be calculated using the cohomology.
For later use let us make the first terms of the first page of this spectral sequence explicit:
$$
\xymatrix{
\ldots \ar@{-->}[rrd] &\ldots\ar@{-->}[rrd]  & \ldots  &\ldots  \\
H^2(G,\mathcal{F})\ar[r]^-{d_1}\ar@{-->}[rrd] & H^2(G\times G,\mathcal{F})\ar[r]^-{d_2}\ar@{-->}[rrd] & H^2(G\times G,\mathcal{F})\oplus H^2(G\times G\times G,\mathcal{F})\ar[r] &\ldots \\
H^1(G,\mathcal{F})\ar[r]^-{d_1}\ar@{-->}[rrd] & H^1(G\times G,\mathcal{F})\ar[r]^-{d_2}\ar@{-->}[rrd] & H^1(G\times G,\mathcal{F})\oplus H^1(G\times G\times G,\mathcal{F})\ar[r]&\ldots \\
H^0(G,\mathcal{F})\ar[r]^-{d_1} & H^0(G\times G,\mathcal{F})\ar[r]^-{d_2} & H^0(G\times G,\mathcal{F})\oplus H^0(G\times G\times G,\mathcal{F})\ar[r] &\ldots 
}
$$
For an element $(x_1,\ldots, x_n)\in G^n$ let us denote by $[x_1,\ldots, x_n]\in \Z[G^n]$ the corresponding element in the group ring $\Z[G^n]$.
The morphisms $d_1$ and $d_2$ are then induced by
$$
\Z[G^2]\to \Z[G],\ [x,y]\mapsto -[x]+[x+y]-[y]
$$
for $d_1$
and
$$
\begin{matrix}
  \Z[G^2]\to \Z[G^2],\ [x,y]\mapsto [x,y]-[y,x] \\
  \Z[G^3]\to \Z[G^2],\ [x,y,z]\mapsto -[y,z]+ [x+y,z]-[x,y+z]+[x,y]
\end{matrix}
$$
for $d_2$ (cf.\ \cite[(2.1.5.2.)]{berthelot_breen_messing_theorie_de_dieudonne_cristalline_II}).

\subsection{Prismatic Dieudonn\'e crystals of abelian schemes}
\label{sec:prism-dieud-modul-dieud-mod-abel-schemes}

In this section we describe the prismatic cohomology of the $p$-adic completion of abelian schemes and deduce from this the construction of the prismatic Dieudonn\'e crystal \red{
$$
\mathcal{M}_{\prism}(X[p^\infty]) = (\mathcal{M}_{\prism}(X[p^{\infty}]), \varphi_{\mathcal{M}_{\prism}(X[p^{\infty}])}). 
$$}
of the $p$-divisible group $X[p^\infty]$ of the $p$-adic completion of an abelian scheme $X$ \purple{over a quasi-syntomic ring $R$}. \red{Admissibility of this prismatic Dieudonn\'e crystal will be proved in the next section, in fact for any $p$-divisible group.}
\\

Let $(A,I)$ be a bounded prism. \red{Write $\bar{A}=A/I$.} Let $X\to \Spf(\bar{A})$ be the $p$-adic completion of an abelian scheme over $\Spec(\bar{A})$.

We first prove degeneracy of the conjugate spectral sequence (cf.\ \Cref{sec:hodge-tate-comp-general}) for $X$. The proof is an adaptation of the argument in \cite[Proposition 2.5.2]{berthelot_breen_messing_theorie_de_dieudonne_cristalline_II}, which proves degeneration of the Hodge-de Rham spectral sequence.

Recall the following statement. 
\begin{proposition}\label{sec:prism-cryst-abel-1-proposition-bbm-252}
For all $k\geq 0$ (resp. for all $i,j\geq 0$), the $\bar{A}$-module $H^k(X,\Omega_{X/\bar{A}}^{\bullet})$ (resp. $H^i(X,\Omega_{X/\bar{A}}^j)$) is finite locally free, and its formation commutes with base change.

Moreover, the algebra $H^*(X,\Omega_{X/\bar{A}}^{\bullet})$ is alternating and the canonical algebra morphism
\[ \wedge^* H^1(X,\Omega_{X/\bar{A}}^{\bullet}) \to H^*(X,\Omega_{X/\bar{A}}^{\bullet}) \]
defined by the multiplicative structure of $H^*(X,\Omega_{X/\bar{A}}^{\bullet})$, is an isomorphism.
\end{proposition}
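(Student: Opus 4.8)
\textbf{Plan for the proof of Proposition \ref{sec:prism-cryst-abel-1-proposition-bbm-252}.}

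The plan is to reduce the statement to classical facts about the de Rham cohomology of abelian schemes by a standard spreading-out and base-change argument, essentially following \cite[Proposition 2.5.2]{berthelot_breen_messing_theorie_de_dieudonne_cristalline_II} verbatim. First I would note that, since $X$ is the $p$-adic completion of an abelian scheme $X_0$ over $\Spec(A/I)$ and everything in sight is compatible with $p$-adic completion, it suffices to work with the algebraic abelian scheme $X_0$ (the coherent cohomology of $\Omega^j_{X_0/(A/I)}$ is finitely generated and commutes with $p$-completion since $A/I$ is $p$-complete with bounded $p^\infty$-torsion, so the finite locally free conclusion for $X_0$ over $\Spec(A/I)$ transfers to $X$). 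By a limit argument one further reduces to the case where $A/I$ is Noetherian, in fact a finite-type $\Z$-algebra: write $A/I$ as a filtered colimit of such and descend $X_0$, using that local freeness and base-change compatibility are detected on a finite-type model and that finitely presented cohomology commutes with filtered colimits.

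Next, over such a Noetherian base $S$, I would invoke the classical result (Mumford, \emph{Abelian Varieties}; or \cite[Prop.\ 2.5.2]{berthelot_breen_messing_theorie_de_dieudonne_cristalline_II} itself) that for an abelian scheme $X_0/S$ the Hodge cohomology sheaves $R^i(f_0)_*\Omega^j_{X_0/S}$ are locally free and their formation commutes with arbitrary base change, because the Hodge-de Rham spectral sequence degenerates at $E_1$ over a field (hence, by semicontinuity plus the base-change criterion, the sheaves $R^i f_*\Omega^j$ are flat with universally exact differentials, and therefore so is de Rham cohomology). The key geometric input is the translation-invariance of $\Omega^1_{X_0/S}$, i.e.\ $\Omega^1_{X_0/S} \cong f_0^* \omega$ with $\omega = e^*\Omega^1_{X_0/S}$ the locally free conormal sheaf at the identity section $e$, so that $\Omega^j_{X_0/S}\cong f_0^*\wedge^j\omega$; combining this with the projection formula and the classical computation $Rf_{0*}\mathcal{O}_{X_0}\cong \wedge^*(\mathrm{Lie}(X_0^\vee)) = \wedge^*(R^1 f_{0*}\mathcal{O}_{X_0})$ gives the Hodge numbers and the exterior-algebra structure simultaneously. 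For the multiplicative statement on $H^*(X,\Omega^\bullet_{X/(A/I)})$: the cup product makes it a graded-commutative algebra, and the canonical map from the exterior algebra on $H^1$ is an isomorphism because this can be checked after base change to geometric points (using the base-change compatibility just established), where it is the classical fact that the de Rham cohomology of an abelian variety over a field is the exterior algebra on $H^1_{\mathrm{dR}}$. That the algebra is alternating (not merely graded-commutative, relevant when $p=2$) follows because $H^1_{\mathrm{dR}}$ of an abelian variety, being an extension of $H^0(\Omega^1)$ by $H^1(\mathcal{O})$ both of which are the $H^1$ of a proper smooth scheme, squares to zero under cup product — again checkable fiberwise, and true over a field by the classical theory.

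The main obstacle, such as it is, is bookkeeping rather than conceptual: one must make sure the reduction to a Noetherian (indeed finite-type over $\Z$) base genuinely works in the $p$-complete, bounded $p^\infty$-torsion setting, so that the passage between the formal scheme $X$ and an algebraic model $X_0$, and between $A/I$ and a finitely generated subring, does not lose the ``commutes with base change'' property — this is where boundedness of $p^\infty$-torsion and $p$-completeness of $A/I$ are used, to know $H^i(X,\Omega^j_{X/(A/I)}) = H^i(X_0,\Omega^j_{X_0/(A/I)})^{\wedge_p} = H^i(X_0,\Omega^j_{X_0/(A/I)})$ already being $p$-complete and finitely generated. Once this is in place, everything else is a citation of the classical theory of the de Rham and Hodge cohomology of abelian schemes, exactly as in loc.\ cit. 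I would therefore keep the write-up short, stating the reduction explicitly and then quoting \cite[Proposition 2.5.2]{berthelot_breen_messing_theorie_de_dieudonne_cristalline_II} (or Mumford) for the remaining assertions.
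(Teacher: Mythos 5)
Your proposal is correct and matches the paper's approach: the paper's entire proof of this proposition is the citation to \cite[Proposition 2.5.2 (i)--(ii)]{berthelot_breen_messing_theorie_de_dieudonne_cristalline_II}, which is exactly where you end up, and the spreading-out/base-change bookkeeping you describe is essentially what is already carried out inside that reference. Your added care about passing between the formal scheme $X$ and an algebraic model (using $p$-completeness of $A/I$ and finiteness of the cohomology) is a reasonable supplement that the paper leaves implicit, but it does not constitute a different route.
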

\begin{proof}
This is \cite[Proposition 2.5.2. (i)-(ii)]{berthelot_breen_messing_theorie_de_dieudonne_cristalline_II}.
\end{proof}

\begin{proposition}
  \label{sec:prism-cryst-abel-1-proposition-conjugate-spectral-sequence-degenerates-for-abelian-scheme-affine-case}
  The conjugate spectral sequence
  $$
  E^{ij}_2=H^i(X,\Omega^{j}_{X/{\bar{A}}})\{-j\}\Rightarrow H^{i+j}(X,\overline{\prism}_{X/A})
  $$
  degenerates and each term as well as the abutment commutes with base change in the bounded prism $(A,I)$. Moreover,
  $$
  H^\ast(X,\overline{\prism}_{X/A})\cong \Lambda^{\ast}H^1(X,\overline{\prism}_{X/A})
  $$
  is an exterior $\bar{A}$-algebra on $H^1(X,\overline{\prism}_{X/A})$.
\end{proposition}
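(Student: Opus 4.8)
The plan is to adapt the Hopf-algebra degeneration argument of \cite[Proposition 2.5.2]{berthelot_breen_messing_theorie_de_dieudonne_cristalline_II} for Hodge--de Rham to the conjugate filtration on $\overline{\prism}_{X/A}$. First I would identify the spectral sequence: since $X$ is $p$-completely smooth over $A/I$ one has $L_{X/(A/I)}^{\wedge_p}\simeq\Omega^1_{X/(A/I)}$, so by \Cref{sec:derived-prism-cohom-global} $\mathrm{gr}^{\mathrm{conj}}_j(\overline{\prism}_{X/A})\simeq\Omega^j_{X/(A/I)}\{-j\}[-j]$, and applying $R\Gamma(X,-)$ to the (functorial, multiplicative, exhaustive, and in each total degree finite) conjugate filtration produces exactly the spectral sequence of the statement. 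Next I would record, from \Cref{sec:prism-cryst-abel-1-proposition-bbm-252} and the fact that $X$ is a group scheme (so $\Omega^1_{X/(A/I)}$ is the pullback of the module $\omega_{X/(A/I)}:=e^\ast\Omega^1_{X/(A/I)}$ of invariant differentials, finite locally free of rank $g:=\dim X$, and $H^\ast(X,\mathcal O_X)\cong\Lambda^\ast_{A/I}H^1(X,\mathcal O_X)$), that the bigraded $A/I$-algebra $E_2^{\bullet\bullet}=\bigoplus_{i,j}H^i(X,\Omega^j_{X/(A/I)})\{-j\}$ is the exterior algebra $\Lambda^\ast_{A/I}\!\big(E_2^{1,0}\oplus E_2^{0,1}\big)$ on $E_2^{1,0}=H^1(X,\mathcal O_X)$ and $E_2^{0,1}=H^0(X,\Omega^1_{X/(A/I)})\{-1\}$, both finite locally free of rank $g$; in particular $E_2$ is generated, as an algebra, in total cohomological degree $1$.

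The heart is degeneration. Multiplicativity of the conjugate filtration makes the conjugate spectral sequence one of $A/I$-algebras, so each $d_r$ is a derivation. Since the Hodge modules occurring on $E_2$ are flat, the classical Künneth formula for the Hodge cohomology of $X\times_{\Spf(A/I)}X$ gives $E_2(X\times X)\cong E_2(X)\otimes_{A/I}E_2(X)$; together with the group law $m\colon X\times X\to X$ (a morphism of $A/I$-schemes, hence filtered for the conjugate filtration) this equips $E_2$ with a cocommutative comultiplication $m^\ast\colon E_2\to E_2\otimes_{A/I}E_2$ making it a bialgebra, and by functoriality $d_2$ is a coderivation. Connectedness ($E_2^{0,0}=H^0(X,\mathcal O_X)=A/I$) forces every element of total degree $1$ to be primitive. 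Now $d_r$ has bidegree $(r,1-r)$, so on the algebra generators $E_2^{1,0},E_2^{0,1}$ the only potentially nonzero differential is $d_2\colon E_2^{0,1}\to E_2^{2,0}=H^2(X,\mathcal O_X)$; but a coderivation carries the primitive submodule $E_2^{0,1}$ into the primitives of $E_2^{\bullet\bullet}$ in total degree $2$, and those vanish because $E_2^{\bullet\bullet}$ is the primitively generated exterior algebra on a finite locally free module --- a statement one checks after base change to residue fields, where it is classical (in any characteristic). Hence $d_2=0$, so $E_3=E_2$; this is still generated in total degree $1$, on which all $d_r$ with $r\ge 3$ vanish for degree reasons, and the spectral sequence degenerates at $E_2$.

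Finally, the structural consequences are formal. Degeneration identifies $\mathrm{gr}^{\mathrm{conj}}_\bullet H^n(X,\overline{\prism}_{X/A})$ with $\bigoplus_{i+j=n}H^i(X,\Omega^j_{X/(A/I)})\{-j\}$, an iterated extension of finite locally free $A/I$-modules, so $H^n(X,\overline{\prism}_{X/A})$ is finite locally free; base-change compatibility of each $R\Gamma(X,\Omega^j_{X/(A/I)})$ (from \Cref{sec:prism-cryst-abel-1-proposition-bbm-252}) and flatness of the graded pieces then give that both the terms $E_2^{ij}$ and the abutment $H^n(X,\overline{\prism}_{X/A})$ commute with base change along morphisms of bounded prisms $(A,I)\to(A',I')$. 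In total degree $1$ this displays a short exact sequence $0\to H^1(X,\mathcal O_X)\to H^1(X,\overline{\prism}_{X/A})\to H^0(X,\Omega^1_{X/(A/I)})\{-1\}\to 0$, with $H^1(X,\overline{\prism}_{X/A})$ locally free of rank $2g$, and the cup product furnishes a filtered $A/I$-algebra map $\Lambda^\ast_{A/I}H^1(X,\overline{\prism}_{X/A})\to H^\ast(X,\overline{\prism}_{X/A})$ whose associated graded is the isomorphism $\Lambda^\ast_{A/I}(E_2^{1,0}\oplus E_2^{0,1})\xrightarrow{\sim}E_2^{\bullet\bullet}$ of the first paragraph --- hence an isomorphism, since both sides carry exhaustive filtrations that are finite in each total degree. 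The main obstacle is precisely the vanishing of $d_2$: everything there depends on correctly installing the bialgebra structure on $E_2$ over the possibly non-Noetherian ring $A/I$, which is why the Künneth input on the $E_2$-page and the reduction of the ``no primitives in degree $\ge 2$'' assertion to base-change points are the delicate points; the rest is bookkeeping with the conjugate filtration.
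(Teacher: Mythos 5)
Your argument is correct in outline, but it takes a genuinely different route from the paper. The paper follows the BBM template literally: it first controls $H^1(X,\overline{\prism}_{X/A})$ by identifying $\mathrm{Fil}_1^{\mathrm{conj}}\overline{\prism}_{X/A}$ with $L_{X/A}\{-1\}[-1]$ (\Cref{sec:isom-trunc-prism-cohom-cotangent-complex}) and using that abelian schemes lift, so this complex splits (\Cref{sec:obstruction-split-same}); it then runs a descending induction on the cohomological degree, using the trace isomorphism in top degree, a pairing $H^k\to\Lambda^k H^1$, the Nakayama-type \Cref{sec:prism-cryst-abel-2-morphism-from-finitely-presented-module-to-finite-free-over-p-complete-ring-is-isomorphism} to reduce to algebraically closed fields of characteristic $p$, and there a Frobenius twist identifying Hodge--Tate with de Rham cohomology, whose ranks are known by \Cref{sec:prism-cryst-abel-1-proposition-bbm-252}; degeneration and base change fall out of the rank count. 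You instead kill the differentials directly on the $E_2$-page by the Hopf-algebra/primitivity argument: connectedness makes the degree-one generators primitive, the group law plus K\"unneth make $d_2$ a coderivation, and the primitively generated exterior algebra on a finite locally free module has no primitives in degree $\geq 2$, so $d_2=0$ and the higher differentials die for bidegree reasons. What your route buys is that it needs neither the cotangent-complex comparison, nor liftability of abelian schemes, nor any reduction to characteristic $p$; what it costs is a filtered K\"unneth statement for the conjugate filtration (upgrading \Cref{sec:kunn-form-prism-smooth-proper-case} so that $E_2(X\times X)\cong E_2(X)\otimes_{A/I}E_2(X)$ compatibly with $d_2$), which is available here because the graded pieces have finite locally free cohomology, but which the paper's proof never has to set up. Two small points to tighten: the ``no primitives in degree $\geq 2$'' claim should not be phrased as a pointwise check for an arbitrary module map --- either observe directly that the $(1,k-1)$-component of the reduced comultiplication $\Lambda^kP\to P\otimes\Lambda^{k-1}P$ is locally split injective for $P$ finite locally free (a basis computation), or dualize and apply Nakayama at closed points; and for base change of the abutment it is cleanest to note that $R\Gamma(X,\overline{\prism}_{X/A})$ is a perfect complex whose cohomology you have just shown to be finite locally free, hence it splits and its cohomology commutes with arbitrary base change in $(A,I)$.
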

\begin{proof}
  \green{If $p\neq 2$, we can use a simple argument using the multiplication by $n\in \Z$ on $X$.}
  \red{If $n\in \Z$, then the multiplication by $n$ on $X$ induces on $H^i(X,\Omega^j_{X/{\bar{A}}})\{-j\}$ multiplication by $n^{i+j}$. As the differentials of the spectral sequence are natural in $X$ this implies that they vanish on each $E_r$-page, $r\geq 0$ \green{(this uses $p\neq 2$)}. This proves that $H^i(X,\overline{\prism}_{X/A})$ is a finite locally free $\bar{A}$-module for each $i\geq 0$. By the Hodge-Tate comparison the complex
  $$
  \overline{\prism}_{X/A}
  $$
  satisfies base change in $(A,I)$, i.e., for a morphism $(A,I)\to (A^\prime,I^\prime)$ of prisms with induced morphism $g\colon X^\prime:=X\times_{\Spf(\bar{A})} \Spf(A^\prime/I^\prime)\to X$ the canonical morphism
  $$
  Lg^{\ast}\overline{\prism}_{X/A}\to \overline{\prism}_{X^\prime/A^\prime}
  $$
  is an isomorphism. From this we can deduce that each $H^i(X,\overline{\prism}_{X/A}), i\geq 0,$ satisfies base change in $(A,I)$. To show that $H^\ast(X,\overline{\prism}_{X/A})$ is an exterior algebra on $H^1(X,\overline{\prism}_{X/A})$, we need \red{first} to see that each element in $H^1(X,\overline{\prism}_{X/A})$ squares to zero. For this we can argue as in the proof \cite[Proposition 2.5.2.(ii)]{berthelot_breen_messing_theorie_de_dieudonne_cristalline_II}. Then we obtain a canonical morphism
  \[
    \beta\colon \wedge^\ast H^1(X,\overline{\prism}_{X/A})\to H^\ast(X,\overline{\prism}_{X/A}).
  \]
  \red{We} can use \Cref{sec:prism-cryst-abel-2-morphism-from-finitely-presented-module-to-finite-free-over-p-complete-ring-is-isomorphism} and compatibility with base change to reduce to the case that $\bar{A}$ is an algebraically closed field of characteristic $p$. In particular, the Frobenius on $A$ is bijective in this case, $I=(p)$ and the twists $(-)\{j\}$ are isomorphic to the identity. We may check that $\beta$ is an isomorphism after pullback along $\varphi_{\bar{A}}$. Then
  $$
  \varphi_{\bar{A}}^\ast H^k(X,\overline{\prism}_{X/A})\cong  H^{k}(X^{(1)},(\varphi_{X/\bar{A}})_{\ast}(\Omega^\bullet_{X/\bar{A}}))\cong H^k(X,\Omega_{X/\bar{A}}^\bullet)
  $$
  where we used in the second isomorphism that the relative Frobenius
  $$
  \varphi_{X/\bar{A}}\colon X\to X^{(1)}:=X\times_{\Spec(\bar{A}),\varphi_{\bar{A}}} \Spec(\bar{A})
  $$
  is finite. This reduces the assertion to de Rham cohomology, \red{which is the content of \Cref{sec:prism-cryst-abel-1-proposition-bbm-252}}. This finishes the proof.}

\green{Alternatively (\blue{including the case $p=2$)}, we could have argued like in \cite[Theorem 2.5.2.(i)]{berthelot_breen_messing_theorie_de_dieudonne_cristalline_II} to reduce, by descending induction, to the claim that $H^1(X,\overline{\prism}_{X/A})$ is locally free of rank $2n$, where $n$ is the relative dimension of $X$ over $\Spf(\bar{A})$, and commutes with base change in $(A,I)$.
  From \Cref{sec:isom-trunc-prism-cohom-cotangent-complex} it follows that
  $$
 H^1(X,\overline{\prism}_{X/A})\cong H^1(X,\tau_{\leq 1}\overline{\prism}_{X/A})\cong H^0(X,L_{X/A}[-1]).
  $$
   As $L_{X/A}$ is a perfect complex with amplitude in $[-1,0]$ this implies compatibility of $H^1(X,\overline{\prism}_{X/A})$ with base change in $(A,I)$ if all the higher cohomology groups $H^j(X,L_{X/A}[-1])$ are locally free.
   As $X$ admits a lift to $A$ (see e.g. \cite[Theorem 2.2.1]{oort_moduli_abelian_varieties}), \Cref{sec:obstruction-split-same} shows that $L_{X/A}\cong \mathcal{O}_X[1]\oplus \Omega_{X/\bar{A}}^1$. Another application of \Cref{sec:prism-cryst-abel-1-proposition-bbm-252} implies therefore that $H^1(X,\overline{\prism}_{X/A})$ is locally free of dimension $2n$ and commutes with base change in $(A,I)$ as all the $\bar{A}$-modules $H^j(X,\mathcal{O}_X)$ and $H^j(X,\Omega_{X/\bar{A}}^1)$ are locally free for $j\geq 0$.}
\end{proof}

\begin{lemma}
  \label{sec:prism-cryst-abel-2-morphism-from-finitely-presented-module-to-finite-free-over-p-complete-ring-is-isomorphism}
  Let $S$ be a ring and let $g\colon M\to N$ be a morphism of $S$-modules with $M$ finitely generated and $N$ finite projective. If
  $$
  g\otimes_{S} k(x)\colon M\otimes_S k(x)\to N\otimes_S k(x)
  $$
  is an isomorphism for all closed points $x\in \Spec(S)$, then $g$ is an isomorphism.
\end{lemma}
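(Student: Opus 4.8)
The plan is to reduce everything to Nakayama's lemma applied at closed points. The one point that needs care is the passage from ``vanishing of a module at every closed point of $\Spec(S)$'' to ``vanishing of the module''; this is false in general but holds for \emph{finitely generated} modules, because the support of a finitely generated module is closed, and every nonempty closed subset $V(J)\subseteq \Spec(S)$ contains a closed point of $\Spec(S)$ (a maximal ideal of $S/J$ lifts to a maximal ideal of $S$ containing $J$).

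First I would show that $g$ is surjective. The cokernel $C=\mathrm{coker}(g)$ is a quotient of $N$, hence finitely generated, so its support is the closed set $V(\mathrm{Ann}_S(C))$. If $C\neq 0$ this set is nonempty, hence contains a closed point $x=\mathfrak{m}$ of $\Spec(S)$, and then $C_{\mathfrak{m}}\neq 0$; by Nakayama $C\otimes_S k(x)=C_{\mathfrak{m}}\otimes_{S_{\mathfrak{m}}}k(x)\neq 0$, contradicting the surjectivity of $g\otimes_S k(x)$. Therefore $C=0$ and $g$ is surjective.

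Next, since $N$ is projective, the short exact sequence
$$
0\to \Ker(g)\to M\xrightarrow{\,g\,} N\to 0
$$
splits; in particular $\Ker(g)$ is a direct summand of $M$, hence finitely generated, and the sequence remains exact after applying $-\otimes_S k(x)$ for any closed point $x$. Since $g\otimes_S k(x)$ is injective, this forces $\Ker(g)\otimes_S k(x)=0$ for every closed point $x$. Applying the same support-plus-Nakayama argument now to the finitely generated module $\Ker(g)$ yields $\Ker(g)=0$, so $g$ is an isomorphism. The only genuinely non-formal ingredient is the elementary topological observation about closed points recalled in the first paragraph; the rest is bookkeeping.
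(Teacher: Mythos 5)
Your proof is correct and follows essentially the same route as the paper: Nakayama at closed points applied to the (finitely generated) cokernel to get surjectivity, then splitting the sequence using projectivity of $N$ so that the kernel is finitely generated and vanishes at all closed points, whence it is zero. The only difference is that you spell out the support argument (support of a finitely generated module is closed and any nonempty closed set contains a maximal ideal) which the paper leaves implicit in its invocation of Nakayama.
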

\begin{proof}
  Let $Q$ be the cokernel of $g$. Then $Q$ is finitely generated and $Q\otimes_S k(x)=0$ for all closed points $x\in \Spec(S)$. By Nakayama's lemma, this implies that $Q=0$, i.e., $g$ is surjective. As $N$ is projective, this implies $M\cong N \oplus K$ for $K$ the kernel of $g$. As $M$ is finitely generated, $K$ is finitely generated. Moreover for all closed points $x\in \Spec(S)$
  $$
  K\otimes_{S} k(x)=0
  $$
  and thus another application of Nakayama's lemma implies that $K=0$.
\end{proof}

We recall that for a $p$-complete ring $R$ there is the natural morphism of topoi
$$
u\colon \mathrm{Shv}(R)_\prism\to \mathrm{Shv}(R)_{\mathrm{QSYN}}.
$$

Using the previous computations, we can first describe extension groups modulo $I$.

\begin{theorem}
  \label{sec:prism-dieud-cryst-reduced-ext-for-abelian-schemes}
  Let $R$ be a $p$-complete ring and let $f\colon X\to \Spf(R)$ be the $p$-adic completion of an abelian scheme over $\Spec(R)$. Then
  \begin{enumerate}
   \item $\mathcal{E}xt_{(R)_{\prism}}^i(u^{-1}(X),\overline{\mathcal{O}}_\prism)=0$ for $i=0,2$.
   \item $\mathcal{E}xt_{(R)_{\prism}}^1(u^{-1}(X),\overline{\mathcal{O}}_\prism)$ is a prismatic crystal over $R$. Moreover,
     $$
     \mathcal{E}xt_{(R)_{\prism}}^1(u^{-1}(X),\overline{\mathcal{O}}_\prism)\cong R^1f_{\prism,\ast}(\overline{\mathcal{O}}_\prism)
     $$
     for $f_\prism\colon \mathrm{Shv}{(X)_\prism}\to \mathrm{Shv}{(R)_\prism}$ the morphism induced by $f$ on topoi and $\mathcal{E}xt_{(R)_{\prism}}^1(u^{-1}(X),\overline{\mathcal{O}}_\prism)$ is locally free of rank $2\mathrm{dim}(X)$ over $\overline{\mathcal{O}}_{\prism}$.
   \end{enumerate}
 \end{theorem}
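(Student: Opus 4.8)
The plan is to run the Berthelot--Breen--Messing device for computing $\mathcal{E}xt$-sheaves recalled in \Cref{sec:calc-ext-groups}. Writing $G=u^{-1}(X)$, the Deligne resolution $C(G)_\bullet\xrightarrow{\sim}G$ of \Cref{sec:calc-ext-sheav-canonical-resolution-of-abelian-group} together with \Cref{sec:calc-ext-sheav-derived-yoneda-lemma} produces, for the local $\mathcal{E}xt$-sheaves on $(R)_\prism$, a spectral sequence
\[
E_1^{p,q}=\mathcal{E}xt^q_{(R)_\prism}(C(G)_p,\overline{\mathcal{O}}_\prism)\ \Longrightarrow\ \mathcal{E}xt^{p+q}_{(R)_\prism}(G,\overline{\mathcal{O}}_\prism),
\]
in which, as each $C(G)_p$ is a finite sum of terms $\Z[G^n]=\Z[u^{-1}(X^n)]$, the $E_1$-terms are finite sums of sheaves of the shape $R^qf^{(n)}_{\prism,\ast}\overline{\mathcal{O}}_\prism$ for $f^{(n)}\colon X^n\to\Spf(R)$. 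To make this concrete I would evaluate at an arbitrary $(B,J)\in(R)_\prism$ and use that, exactly as in the proof of \Cref{sec:abstr-divid-prism-proposition-finite-locally-free} but in the relative situation, the localization $(R)_\prism/h_{(B,J)}$ is the relative prismatic site of $\Spf(B/J)$ over $(B,J)$, under which $G$ restricts to the sheaf attached to $X_{B/J}:=X\times_{\Spf R}\Spf(B/J)$ and $\overline{\mathcal{O}}_\prism$ to the reduced structure sheaf. Hence $E_1^{p,q}(B,J)$ is a finite sum of relative Hodge--Tate cohomology groups $H^q(X^n_{B/J},\overline{\prism}_{X^n_{B/J}/B})$, and the whole spectral sequence over $(B,J)$ is one of $B/J$-modules.

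\textbf{Input from abelian schemes.} Each $X^n_{B/J}$ is again the $p$-adic completion of an abelian scheme over $\Spf(B/J)$, so \Cref{sec:prism-cryst-abel-1-proposition-conjugate-spectral-sequence-degenerates-for-abelian-scheme-affine-case} applies: $H^\ast(X^n_{B/J},\overline{\prism}_{X^n_{B/J}/B})$ is a finite free exterior $B/J$-algebra on $H^1$, every $H^q$ is finite locally free, its formation commutes with base change in $(B,J)$, and $H^1(X_{B/J},\overline{\prism}_{X_{B/J}/B})$ has rank $2\dim X$. Via the K\"unneth formula (\Cref{sec:kunn-form-prism-smooth-proper-case}, reduced modulo $I$) one expresses $H^\ast(X^n_{B/J},\overline{\prism})\cong H^\ast(X_{B/J},\overline{\prism})^{\otimes n}$ in terms of $H^\ast(X_{B/J},\overline{\prism})$; no Tor terms appear since the factors are locally free.

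\textbf{Extracting the $\mathcal{E}xt$-sheaves.} Now the combinatorial part of the BBM method takes over. One has $\mathcal{E}xt^0(G,\overline{\mathcal{O}}_\prism)=\mathcal{H}om(G,\overline{\mathcal{O}}_\prism)=0$: a section of $\overline{\mathcal{O}}_\prism$ over $X_{B/J}$ lies in $H^0(X_{B/J},\overline{\prism}_{X_{B/J}/B})=B/J$ by Hodge--Tate comparison, i.e.\ is ``constant'', and no nonzero constant is primitive. For $i=1,2$ one feeds the exterior-algebra description, together with the explicit low-degree differentials recalled in \Cref{sec:calc-ext-groups}, into the $E_1$-page: the bottom row $E_1^{\bullet,0}$ is acyclic in degrees $\le 1$ because $H^0(X^n_{B/J},\overline{\prism})=B/J$ for all $n$; and the relevant primitivity computation gives $E_2^{0,1}=\ker\bigl(H^1(X_{B/J},\overline{\prism})\to H^1(X^2_{B/J},\overline{\prism})\bigr)=H^1(X_{B/J},\overline{\prism})$, since $m^\ast$ is additive on degree-one Hodge--Tate cohomology of an abelian scheme (the unit axioms combined with the exterior-algebra structure force $m^\ast\alpha=p_1^\ast\alpha+p_2^\ast\alpha$), while $H^2(X_{B/J},\overline{\prism})=\Lambda^2 H^1$ contributes no primitive classes. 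As in \cite[Ch.\ 2]{berthelot_breen_messing_theorie_de_dieudonne_cristalline_II} this degenerates the spectral sequence in total degrees $\le 2$, yielding $\mathcal{E}xt^2(G,\overline{\mathcal{O}}_\prism)=0$ and, via the edge map $\mathcal{E}xt^1(G,\overline{\mathcal{O}}_\prism)\to E_\infty^{0,1}\hookrightarrow E_1^{0,1}$, an isomorphism $\mathcal{E}xt^1_{(R)_\prism}(u^{-1}X,\overline{\mathcal{O}}_\prism)\cong R^1f_{\prism,\ast}\overline{\mathcal{O}}_\prism$. Evaluated at $(B,J)$ this is $H^1(X_{B/J},\overline{\prism}_{X_{B/J}/B})$, which by \Cref{sec:prism-cryst-abel-1-proposition-conjugate-spectral-sequence-degenerates-for-abelian-scheme-affine-case} is finite locally free of rank $2\dim X$ over $B/J=\overline{\mathcal{O}}_\prism(B,J)$ and compatible with base change in $(B,J)$; this is precisely the assertion that $\mathcal{E}xt^1(u^{-1}X,\overline{\mathcal{O}}_\prism)$ is a prismatic crystal (\Cref{sec:prism-dieud-cryst-definition-prismatic-crystals}) over $R$, finite locally free of rank $2\dim X$ over $\overline{\mathcal{O}}_\prism$.

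\textbf{Main obstacle.} The genuinely delicate point is not any single prismatic input---those are supplied by \Cref{sec:prism-cryst-abel-1-proposition-conjugate-spectral-sequence-degenerates-for-abelian-scheme-affine-case} and \Cref{sec:kunn-form-prism-smooth-proper-case}---but the degeneration of the Deligne spectral sequence in total degrees $\le 2$: one must rule out the differential $d_2\colon E_2^{0,1}\to E_2^{2,0}$ and control the degree-two contributions, and it is here that the exterior (rather than divided-power) nature of $H^\ast(X^n,\overline{\prism})$ is essential, exactly as in loc.\ cit. A secondary, bookkeeping-type obstacle is the careful verification of the localization statement identifying the restrictions of $u^{-1}X$ and of $\overline{\mathcal{O}}_\prism$ to the slice over $(B,J)$ with the relative prismatic data of $X_{B/J}$ over $(B,J)$, so that the $E_1$-terms are genuinely the relative Hodge--Tate cohomologies $H^q(X^n_{B/J},\overline{\prism}_{X^n_{B/J}/B})$ and the edge map lands in $R^1f_{\prism,\ast}\overline{\mathcal{O}}_\prism$ as claimed.
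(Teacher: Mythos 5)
Your proposal is correct and follows essentially the same route as the paper's proof: evaluate on a prism $(B,J)$, run the Berthelot--Breen--Messing/Deligne spectral sequence on the localized site, feed in the degeneration of the conjugate spectral sequence and the K\"unneth formula to get the exterior-algebra and primitivity arguments, conclude that the first differential vanishes on $H^1$ so that $\mathcal{E}xt^1(u^{-1}X,\overline{\mathcal{O}}_\prism)_{(B,J)}\cong H^1(X_{B/J},\overline{\prism}_{X_{B/J}/B})$, and deduce the crystal property and rank from base-change compatibility. The only steps you delegate to BBM rather than spell out --- exactness of the bottom row also in degree $2$ and the vanishing of $E_2^{1,1}$ via injectivity of the next differential on $H^1(X^2_{B/J})$ (K\"unneth plus the explicit formula, including the $[x,y]-[y,x]$ term) --- are precisely the points the paper's proof makes explicit, using its lemma on primitive elements in exterior algebras.
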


The proof is entirely similar to the one of \cite[Th\'eor\`eme 2.5.6]{berthelot_breen_messing_theorie_de_dieudonne_cristalline_II}.
 
 \begin{proof}
   Let $(B,J)\in (R)_\prism$. We use the spectral sequence from \Cref{sec:calc-ext-groups} to calculate for $i\in \{0,1,2\}$ the groups
   $$
   \mathrm{Ext}^i(u^{-1}(X)_{|(B,J)},\overline{\mathcal{O}}_{\prism})
   $$
   on the localised site $(R)_{\prism}/(B,J)$\footnote{Which will be implicitly the subscript of all $\mathrm{Ext}$-groups appearing in this proof.}. Set $Y:=X\times_{\Spf(\bar{A})}\Spf(B/J)$.
   As by Hodge-Tate comparison
   $$
   \purple{H^0((Y/B)_{\prism},\overline{\mathcal{O}}_\prism)\cong }H^0(Y,\overline{\prism}_{Y/B})\cong B/J
   $$
   for any $n$ the first row \red{$E_1^{\ast,0}$} of the spectral sequence \red{is
seen to be independent of $X$ and exact in the case that $X=0$ is trivial (the spectral sequence for $X=0$ is concentrated in the first row and converges to $0$), hence always exact.} In general we see that $\mathrm{Hom}(u^{-1}(X)_{|(B,J)},\overline{\mathcal{O}}_\prism)=0$ and $\mathrm{Ext}^1(u^{-1}(X)_{|(B,J)},\overline{\mathcal{O}}_\prism)$ is isomorphic to the kernel of
   $$
   H^1(Y,\overline{\prism}_{Y/B}) \xrightarrow{d_1} H^1(\purple{Y\times Y},\overline{\prism}_{\purple{Y\times Y}/B})
   $$
   and $d_1=\mathrm{pr}_1^\ast+\mathrm{pr}_2^\ast-\mu^\ast$ for $\mathrm{pr}_i$ the two projections and $\mu$ the multiplication. From the K\"unneth formula (cf.\ \Cref{sec:kunn-form-prism-smooth-proper-case}) and \blue{\Cref{sec:prism-cryst-abel-1-proposition-conjugate-spectral-sequence-degenerates-for-abelian-scheme-affine-case}} it follows that
   $$
   H^1(\purple{Y\times Y},\overline{\prism}_{\purple{Y\times Y}/B})\cong H^1(Y,\overline{\prism}_{Y/B})\oplus H^1(Y,\overline{\prism}_{Y/B}).
   $$
   This implies $\mu^\ast=\mathrm{pr}_1^\ast+\mathrm{pr}_2^\ast$, i.e., $d_1=0$ and
   $$
\mathrm{Ext}^1(u^{-1}(X)_{|(B,J)},\overline{\mathcal{O}}_\prism)\cong H^1(Y,\overline{\prism}_{Y/B}).
$$
In particular, this group is compatible with base change in $(B,J)$ and locally free of rank $2\mathrm{dim}(X)$ (by \Cref{sec:prism-cryst-abel-1-proposition-conjugate-spectral-sequence-degenerates-for-abelian-scheme-affine-case}).
Moreover, the morphism $d_2$ is injective on \red{$H^1(Y \times Y,\overline{\prism}_{Y/B})$} as follows from the K\"unneth theorem and the concrete formula for $d_2$.
Finally, from \Cref{sec:prism-dieud-modul-corollary-prismatic-cohomology-of-abelian-schemes} and \Cref{sec:prism-dieud-modul-lemma-on-primitive-elements-in-exterior-algebras} one can deduce that
$$
H^i(Y,\overline{\prism}_{Y/B})\xrightarrow{d_1} H^i(Y,\overline{\prism}_{Y/B})
$$ 
is injective for all $i\geq 2$.  
These statements \purple{(together with the mentioned exactness of the first row) imply
$$
\mathrm{Ext}^2(u^{-1}(X)_{|(B,J)},\overline{\mathcal{O}}_\prism)=0.
$$}
This finishes the proof by passing to the local Ext-groups, i.e., by letting $(B,J)$ vary.
 \end{proof}

In the proof we used the following lemma on primitive elements in exterior algebras.
 
 \begin{lemma}
   \label{sec:prism-dieud-modul-lemma-on-primitive-elements-in-exterior-algebras}Let $S$ be a ring and let $M$ be a projective $S$-module. Then
   $$
   \{x\in \Lambda(M)\ |\ \mu^\ast(x)=1\otimes x+x\otimes 1\}=\Lambda^1 M,
   $$
   where $\mu^\ast\colon \Lambda(M)\to \Lambda(M+M)\cong \Lambda(M)\otimes_S \Lambda(M)$ is the natural comultiplication on $\Lambda(M)$ coming from the diagonal $M\to M\oplus M$.
 \end{lemma}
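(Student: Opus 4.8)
The plan is to recognize $\Lambda(M)$ as a graded bialgebra and compute its module of primitive elements. The inclusion $\Lambda^1 M\subseteq \{x\in\Lambda(M)\mid \mu^\ast(x)=1\otimes x+x\otimes 1\}$ is immediate, since by construction $\mu^\ast(m)=m\otimes 1+1\otimes m$ for $m\in\Lambda^1 M=M$; so the whole point is the reverse inclusion. First I would reduce to the case of a free module. Since $M$ is projective, choose $N$ with $M\oplus N=F$ free. Under the standard isomorphism $\Lambda(F)\cong\Lambda(M)\otimes_S\Lambda(N)$ of graded algebras, and compatibly with the comultiplications induced by the diagonals, the map $\Lambda(M)\to\Lambda(F)$, $\xi\mapsto\xi\otimes 1$, is a morphism of coalgebras (because $1\in\Lambda(N)$ is group-like) and is a split $S$-module injection (because $\Lambda^0 N=S$ is a direct summand of $\Lambda(N)$). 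A coalgebra morphism carries primitives to primitives, and $\big(\Lambda(M)\otimes\Lambda^0 N\big)\cap\Lambda^1 F=\Lambda^1 M\otimes\Lambda^0 N$ inside $\Lambda(F)$; hence any primitive of $\Lambda(M)$ lies in $\Lambda^1 M$ once the statement is known for $F$.

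For $M=F$ free, fix a totally ordered basis $(e_i)_{i\in\Sigma}$, and write $e_I=e_{i_1}\wedge\cdots\wedge e_{i_k}$ for $I=\{i_1<\cdots<i_k\}$. The comultiplication is graded, $\mu^\ast(\Lambda^k F)\subseteq\bigoplus_{a+b=k}\Lambda^a F\otimes\Lambda^b F$, and $\mu^\ast(e_I)=\prod_{r=1}^k(e_{i_r}\otimes 1+1\otimes e_{i_r})$ in $\Lambda(F)\otimes\Lambda(F)$. Decompose a candidate primitive as $x=\sum_k x_k$ with $x_k\in\Lambda^k F$. Reading the equation $\mu^\ast(x)=x\otimes 1+1\otimes x$ in bidegree $(0,0)$ forces $x_0=0$ (the scalar $c$ with $x_0=c$ satisfies $c=2c$ in $S$); in bidegrees $(k,0)$ and $(0,k)$ it is automatically satisfied; and for each $k\geq 2$ it forces the bidegree-$(1,k-1)$ component of $\mu^\ast(x_k)$ to vanish. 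Writing $x_k=\sum_{|I|=k}c_I e_I$, that component is $\sum_{j\in\Sigma} e_j\otimes\big(\sum_{I\ni j}\pm c_I\, e_{I\setminus\{j\}}\big)$; since the $e_j$ form a basis of $\Lambda^1 F$ and, for fixed $j$, the vectors $e_{I\setminus\{j\}}$ (over $k$-subsets $I$ containing $j$) are pairwise distinct basis vectors of $\Lambda^{k-1}F$, its vanishing forces $c_I=0$ for all $I$. Hence $x_k=0$ for $k\geq 2$, so $x=x_1\in\Lambda^1 F$, as desired.

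I do not anticipate a genuine obstacle here. The only points to handle with a little care are the Koszul signs in $\Lambda(M\oplus N)\cong\Lambda(M)\otimes_S\Lambda(N)$, which are irrelevant for us since we use only the bidegree bookkeeping and the linear independence of basis monomials; the check that the reduction to free modules does not secretly assume finite generation, which it does not, as every element of $\Lambda(F)$ involves only finitely many basis elements; and the observation that the argument is characteristic-free, no integer being inverted (the implication $c=2c\Rightarrow c=0$ holds in any abelian group).
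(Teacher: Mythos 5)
Your proof is correct and follows essentially the same route as the paper, whose one-line argument ("decompose $\Lambda(M)\otimes_S\Lambda(M)$ into its bigraded pieces $\Lambda^i(M)\otimes_S\Lambda^j(M)$") is exactly your bidegree bookkeeping; you have simply supplied the details the paper leaves implicit, including the reduction to a free module with a basis, which correctly sidesteps any need to invert $k$ and so keeps the argument characteristic-free.
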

 \begin{proof}
   This follows easily by decomposing $\Lambda(M)\otimes_S \Lambda(M)$ into its bigraded pieces $\Lambda^{i}(M)\otimes_S \Lambda^j(M)$.
 \end{proof}

Now we calculate the full extension groups, up to degree $2$.

\begin{theorem}
  \label{sec:prism-dieud-cryst-ext-for-abelian-schemes}
  Let $R$ be a $p$-complete ring and let $f\colon X\to \Spf(R)$ be the $p$-adic completion of an abelian scheme over $\Spec(R)$. Then
  \begin{enumerate}
   \item $\mathcal{E}xt_{(R)_{\prism}}^i(u^{-1}(X),\mathcal{O}_\prism)=0$ for $i=0,2$.
   \item $\mathcal{E}xt_{(R)_{\prism}}^1(u^{-1}(X),\mathcal{O}_\prism)$ is a prismatic crystal over $R$. Moreover, $$\mathcal{E}xt_{(R)_{\prism}}^1(u^{-1}(X),\mathcal{O}_\prism)\cong R^1f_{\prism,\ast}(\mathcal{O}_\prism),$$ for $f_\prism\colon \mathrm{Shv}(X)_\prism\to \mathrm{Shv}(R)_\prism$ the induced morphism on topoi and the prismatic crystal $\mathcal{E}xt_{(R)_{\prism}}^1(u^{-1}(X),\mathcal{O}_\prism)$ is locally free of rank $2\mathrm{dim}(X)$ over \red{$\mathcal{O}_{\prism}$}.
   \end{enumerate}
 \end{theorem}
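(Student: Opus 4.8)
The plan is to run the proof of \Cref{sec:prism-dieud-cryst-reduced-ext-for-abelian-schemes} essentially verbatim, replacing $\overline{\mathcal{O}}_\prism$ by $\mathcal{O}_\prism$ and $\overline{\prism}_{-/-}$ by $\prism_{-/-}$ throughout. Concretely, fix $(B,J)\in(R)_\prism$ and set $Y:=X\times_{\Spf(R)}\Spf(B/J)$, the $p$-adic completion of an abelian scheme over $B/J$. I would compute the groups $\mathrm{Ext}^i(u^{-1}(X)_{|(B,J)},\mathcal{O}_\prism)$ for $i\le 2$ on the localised site $(R)_\prism/(B,J)\simeq (B/J)_\prism$ using the Breen--Deligne/BBM resolution spectral sequence recalled in \Cref{sec:calc-ext-groups}, whose $E_1$-page is built from the groups $H^j(Y^{\times k},\prism_{Y^{\times k}/B})$. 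Here one uses that each $Y^{\times k}$ is smooth over $B/J$, so that its site-theoretic prismatic cohomology agrees with its derived prismatic cohomology by \Cref{sec:prism-cohom-quasi-lemma-all-prismatic-cohomology-agrees-for-smooth-things}, and hence $H^0(Y,\prism_{Y/B})\cong B$. The two essential inputs are (a) the prismatic cohomology of abelian schemes computed in \Cref{sec:prism-dieud-modul-corollary-prismatic-cohomology-of-abelian-schemes} --- that $H^{\ast}(Y,\prism_{Y/B})$ is the exterior $B$-algebra on the finite locally free $B$-module $H^1(Y,\prism_{Y/B})$ of rank $2\dim X$, compatibly with base change in $(B,J)$ --- and (b) the Künneth formula \Cref{sec:kunn-form-prism-smooth-proper-case}, giving $H^{\ast}(Y^{\times k},\prism_{Y^{\times k}/B})\cong H^{\ast}(Y,\prism_{Y/B})^{\otimes k}$.

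With these in hand the spectral-sequence bookkeeping is identical to that of \Cref{sec:prism-dieud-cryst-reduced-ext-for-abelian-schemes}. The first row $B\xrightarrow{\mathrm{id}}B\xrightarrow{0}B^2\xrightarrow{\alpha}B^2\oplus B^3\to\cdots$ is exact, since it is independent of $X$ and is exact in the case $X=0$; hence $\mathrm{Hom}(u^{-1}(X)_{|(B,J)},\mathcal{O}_\prism)=0$. One identifies $\mathrm{Ext}^1(u^{-1}(X)_{|(B,J)},\mathcal{O}_\prism)$ with $\ker\big(d_1\colon H^1(Y,\prism_{Y/B})\to H^1(Y\times Y,\prism_{Y\times Y/B})\big)$, where $d_1=\mathrm{pr}_1^{\ast}+\mathrm{pr}_2^{\ast}-\mu^{\ast}$; the Künneth decomposition $H^1(Y\times Y,\prism_{Y\times Y/B})\cong H^1(Y,\prism_{Y/B})\oplus H^1(Y,\prism_{Y/B})$ via $(\mathrm{pr}_1^{\ast},\mathrm{pr}_2^{\ast})$ forces $\mu^{\ast}=\mathrm{pr}_1^{\ast}+\mathrm{pr}_2^{\ast}$, so $d_1=0$ and $\mathrm{Ext}^1(u^{-1}(X)_{|(B,J)},\mathcal{O}_\prism)\cong H^1(Y,\prism_{Y/B})$, finite locally free of rank $2\dim X$ and compatible with base change. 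Finally $d_2$ is injective on $H^1(Y,\prism_{Y/B})$ by Künneth and the explicit formula for $d_2$, while $d_1$ is injective on $H^i(Y,\prism_{Y/B})$ for $i\ge 2$ by Künneth together with the primitive-elements lemma \Cref{sec:prism-dieud-modul-lemma-on-primitive-elements-in-exterior-algebras}; combined with the exactness of the first row this gives $\mathrm{Ext}^2(u^{-1}(X)_{|(B,J)},\mathcal{O}_\prism)=0$.

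To conclude I would let $(B,J)$ vary. Since the identification of $\mathrm{Ext}^1$ with $H^1(Y,\prism_{Y/B})$ and the vanishing in degrees $0,2$ are compatible with pullback along morphisms of prisms (by base change in \Cref{sec:prism-dieud-modul-corollary-prismatic-cohomology-of-abelian-schemes} and the functoriality of the resolution in \Cref{sec:calc-ext-groups}), passing to the local $\mathcal{E}xt$-sheaves yields $\mathcal{E}xt^i_{(R)_\prism}(u^{-1}(X),\mathcal{O}_\prism)=0$ for $i=0,2$, and $\mathcal{E}xt^1_{(R)_\prism}(u^{-1}(X),\mathcal{O}_\prism)$ a prismatic crystal in the sense of \Cref{sec:prism-dieud-cryst-definition-prismatic-crystals}, finite locally free of rank $2\dim X$ over $\mathcal{O}_\prism$. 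The isomorphism with $R^1f_{\prism,\ast}\mathcal{O}_\prism$ follows, exactly as in the reduced case, from the vanishing $\mathcal{H}om(u^{-1}(X),\mathcal{O}_\prism)=0$, which makes the first local-$\mathcal{E}xt$ sheaf coincide with the first higher direct image along $f_\prism$.

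The main obstacle is not in the Ext-group computation, which is purely formal once the inputs are granted, but in having the unreduced analogue of \Cref{sec:prism-cryst-abel-1-proposition-conjugate-spectral-sequence-degenerates-for-abelian-scheme-affine-case} available --- i.e.\ that $\prism_{Y/B}$ is a perfect complex whose cohomology modules are finite locally free of the expected ranks, form an exterior algebra, and commute with base change. This is obtained from the reduced statement by a $(p,J)$-completeness and derived-Nakayama argument (using $\prism_{Y/B}\otimes^{\mathbb L}_B B/J\simeq\overline{\prism}_{Y/B}$, properness and flatness of $Y/B$, and the fact that all higher $H^i(Y,\overline{\prism}_{Y/B})$ are already known to be locally free), which is precisely the content already packaged in \Cref{sec:prism-dieud-modul-corollary-prismatic-cohomology-of-abelian-schemes}; it is there, rather than in the present statement, that the real work sits.
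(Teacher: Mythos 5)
There is a genuine gap, and it sits exactly where you locate "the real work": your argument takes \Cref{sec:prism-dieud-modul-corollary-prismatic-cohomology-of-abelian-schemes} as an input, but in the paper that corollary is deduced \emph{from} (the proof of) the very theorem you are proving — its proof begins by extracting from the theorem's proof the surjectivity of $H^1(Y,\prism_{Y/B})\to H^1(Y,\overline{\prism}_{Y/B})$, which is what allows one to lift the $2\dim X$ generators and then invoke derived Nakayama. So quoting the corollary is circular, and your proposed independent derivation of it ("$(p,J)$-completeness and derived-Nakayama from the reduced statement") does not suffice: derived $(p,\tilxi)$-completeness of $R\Gamma(Y,\prism_{Y/B})$ together with finite local freeness of the cohomology of $R\Gamma(Y,\prism_{Y/B})\otimes^{\mathbb{L}}_B B/\tilxi$ does not imply that the cohomology of the completed complex is finite locally free, nor that it is $\tilxi$-torsion free, nor that $H^1(\prism_{Y/B})\to H^1(\overline{\prism}_{Y/B})$ is surjective. (A derived complete complex such as $B/\tilxi$ placed in degree $0$, or a module like $B\oplus B/\tilxi$, has "locally free mod $\tilxi$" data but torsion cohomology; some extra input beyond completeness is needed to rule this out, and that input is precisely what the theorem provides.) Granting the corollary, the rest of your spectral-sequence computation (first-row exactness, $d_1=0$ on $H^1$ via K\"unneth, injectivity of $d_2$ and of $d_1$ in higher degrees via \Cref{sec:prism-dieud-modul-lemma-on-primitive-elements-in-exterior-algebras}, sheafification) is fine — the problem is solely that the corollary is not available at this stage.

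The paper avoids this by a d\'evissage on coefficients rather than by upgrading the cohomological description first. Working locally so that $J=(\tilxi)$ is principal, one uses the exact sequences $0\to \mathcal{O}_{\prism}/\tilxi^{n}\xrightarrow{\tilxi}\mathcal{O}_{\prism}/\tilxi^{n+1}\to\overline{\mathcal{O}}_{\prism}\to 0$: the vanishing of $\mathrm{Ext}^{0}$ and $\mathrm{Ext}^{2}$ with $\overline{\mathcal{O}}_{\prism}$-coefficients (\Cref{sec:prism-dieud-cryst-reduced-ext-for-abelian-schemes}) propagates inductively to all $\mathcal{O}_{\prism}/\tilxi^{n}$, produces short exact sequences on $\mathrm{Ext}^{1}$ with surjective transition maps, and then $\mathrm{Ext}^{i}(u^{-1}(X)_{|(B,J)},\mathcal{O}_{\prism})\cong\varprojlim_n\mathrm{Ext}^{i}(u^{-1}(X)_{|(B,J)},\mathcal{O}_{\prism}/\tilxi^{n})$ gives the vanishing for $i=0,2$ and a finite locally free module of rank $2\dim X$ for $i=1$; only afterwards is the identification with $H^1(Y,\prism_{Y/B})$ made and the exterior-algebra corollary deduced. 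The point is that the degree-$0$ and degree-$2$ vanishing with reduced coefficients is exactly what controls both injectivity of multiplication by $\tilxi$ and surjectivity of reduction in the inductive step — something a bare completeness argument for the cohomology cannot see. So you should either adopt this d\'evissage, or supply a genuine standalone proof of the unreduced cohomology statement; but any such proof must itself establish the surjectivity of $H^1(\prism_{Y/B})\to H^1(\overline{\prism}_{Y/B})$ (or equivalent torsion-freeness), which is the nontrivial content you have deferred.
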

 \begin{proof}
   Let $(B,J)\in (R)_{\prism}$. As the statements are local for the faithfully flat topology we may assume that $J=(\tilxi)$ is principal.
   From the exact sequence
   $$
   0\to \mathcal{O}_{\prism}/\tilxi^n\xrightarrow{\tilxi} \mathcal{O}_{\prism}/\tilxi^{n+1}\to \mathcal{O}_\prism/\tilxi=\overline{\mathcal{O}}_\prism\to 0 
   $$
   of sheaves on $(R)_\prism/(B,J)$ and \Cref{sec:prism-dieud-cryst-reduced-ext-for-abelian-schemes} we can inductively conclude that
   $$
   \mathrm{Ext}^{i}(u^{-1}(X)_{|(B,J)},\mathcal{O}_\prism/(\tilxi^n))=0
   $$
   for $i\in \{0,2\}$ and any $n\geq 0$.
   This implies that
   $$
   0\to \mathrm{Ext}^{1}(u^{-1}(X)_{|(B,J)},\mathcal{O}_\prism/(\tilxi^n))\xrightarrow{\tilxi} \mathrm{Ext}^{1}(u^{-1}(X)_{|(B,J)},\mathcal{O}_\prism/(\tilxi^{n+1}))
   $$
   $$
   \to \mathrm{Ext}^{1}(u^{-1}(X)_{|(B,J)},\overline{\mathcal{O}}_\prism)\to 0
   $$
   is exact and that \red{for $0 \leq i \leq 2$,}
   $$
   \mathrm{Ext}^{i}(u^{-1}(X)_{|(B,J)},\mathcal{O}_\prism)\cong \varprojlim\limits_{n}\mathrm{Ext}^{i}(u^{-1}(X)_{|(B,J)},\mathcal{O}_\prism/(\tilxi^n)),
   $$
   and that it is zero for $i\in \{0,2\}$ or a locally free $B$-module of rank $2\mathrm{dim}(X)$ if $i=1$.
   \red{Using the spectral sequence from \Cref{sec:calc-ext-groups}, we get as in the proof of \Cref{sec:prism-dieud-cryst-reduced-ext-for-abelian-schemes} for each $n \geq 1$ a map 
   $$
   \mathrm{Ext}^{1}(u^{-1}(X)_{|(B,J)},\mathcal{O}_\prism/(\tilxi^n))\to H^1(X\times_{\Spf(R)}\Spf(B/J),\prism_{X/A}/(\tilxi^n)).
   $$
  By induction on $n$, we deduce from \Cref{sec:prism-dieud-cryst-reduced-ext-for-abelian-schemes} that this map is an isomorphism for all $n$. Passing to the inverse limit over all $n\geq 1$ and using the above identification, we deduce an isomorphism
    $$
   \mathrm{Ext}^{1}(u^{-1}(X)_{|(B,J)},\mathcal{O}_\prism)\cong H^1(X\times_{\Spf(R)}\Spf(B/J),\prism_{X/A}).
   $$ }
   This finishes the proof by passing to local Ext-groups.   
 \end{proof}
 
 \begin{corollary}
  \label{sec:prism-dieud-modul-corollary-prismatic-dieudonne-crystal-of-abelian-schemes}
Let $R$ be a $p$-complete ring. Let $X$ be the $p$-completion of an abelian scheme over $R$. The $\mathcal{O}^{\rm pris}$-module
\[ \mathcal{M}_{\prism}(X[p^{\infty}]) = \mathcal{E}xt_{(R)_{\mathrm{qsyn}}}^1(\purple{X[p^\infty]},  \mathcal{O}^{\rm pris}) \]
is a finite locally free $\mathcal{O}^{\rm pris}$-module of rank $2 \dim(X)$, given by $R^1 f_{\prism,\ast}\mathcal{O}_\prism$.
\end{corollary}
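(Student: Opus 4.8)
The plan is to deduce this from \Cref{sec:prism-dieud-cryst-ext-for-abelian-schemes} by the usual passage from the finite level to the $p$-divisible group. First, by \Cref{sec:divid-prism-dieud-lemma-divided-dieudonne-module-via-local-ext-on-prismatic-site} one has $\mathcal{M}_{\prism}(X[p^\infty]) \cong v_\ast\bigl(\mathcal{E}xt^1_{(R)_\prism}(u^{-1}(X[p^\infty]),\mathcal{O}_\prism)\bigr)$, and by \Cref{sec:abstr-divid-prism-proposition-finite-locally-free} the functor $v_\ast$ restricts to an equivalence between finite locally free $\mathcal{O}_\prism$-modules and finite locally free $\mathcal{O}^{\rm pris}$-modules. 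So it suffices to show that $\mathcal{E}xt^1_{(R)_\prism}(u^{-1}(X[p^\infty]),\mathcal{O}_\prism)$ is a finite locally free $\mathcal{O}_\prism$-module of rank $2\dim(X)$ canonically isomorphic to $R^1 f_{\prism,\ast}\mathcal{O}_\prism$; the asserted identity $\mathcal{M}_\prism(X[p^\infty])=R^1 f_{\prism,\ast}\mathcal{O}_\prism$ is then to be read through the equivalence of \Cref{sec:abstr-divid-prism-proposition-finite-locally-free}.

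For each $n\geq 1$, multiplication by $p^n$ on $X$ is a finite locally free isogeny, so $0\to X[p^n]\to X\xrightarrow{p^n}X\to 0$ is a short exact sequence of abelian sheaves on $(R)_{\rm qsyn}$ (the kernel $X[p^n]$ is syntomic over $R$, and $p^n$ is a quasi-syntomic cover). Applying the exact functor $u^{-1}$ (\Cref{sec:prismatic-to-syntomic-corollary-exactness-if-kernel-is-affine-p-completely-syntomic}) gives a short exact sequence $0\to u^{-1}(X[p^n])\to u^{-1}(X)\xrightarrow{p^n}u^{-1}(X)\to 0$ on $(R)_\prism$. Feeding this into the long exact $\mathcal{E}xt$-sequence and using \Cref{sec:prism-dieud-cryst-ext-for-abelian-schemes} — namely $\mathcal{H}om(u^{-1}(X),\mathcal{O}_\prism)=\mathcal{E}xt^2(u^{-1}(X),\mathcal{O}_\prism)=0$ and that $\mathcal{E}xt^1(u^{-1}(X),\mathcal{O}_\prism)=R^1 f_{\prism,\ast}\mathcal{O}_\prism$ is finite locally free over $\mathcal{O}_\prism$, hence $p$-torsion free — one gets $\mathcal{H}om(u^{-1}(X[p^n]),\mathcal{O}_\prism)=0$ and a canonical isomorphism $\mathcal{E}xt^1(u^{-1}(X[p^n]),\mathcal{O}_\prism)\cong (R^1 f_{\prism,\ast}\mathcal{O}_\prism)/p^n$. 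Comparing, via the evident morphism of isogeny sequences with vertical maps $\mathrm{id}$, $\mathrm{id}$ and multiplication by $p$, the systems indexed by the inclusions $X[p^n]\hookrightarrow X[p^{n+1}]$, one checks that the transition maps of the inverse system $\{\mathcal{E}xt^1(u^{-1}(X[p^n]),\mathcal{O}_\prism)\}_n$ become the natural projections $(R^1 f_{\prism,\ast}\mathcal{O}_\prism)/p^{n+1}\to (R^1 f_{\prism,\ast}\mathcal{O}_\prism)/p^n$.

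Finally, since $u^{-1}$ preserves colimits we have $u^{-1}(X[p^\infty])=\varinjlim_n u^{-1}(X[p^n])$, so the $\varprojlim$--${\varprojlim}^1$ exact sequence expresses $\mathcal{E}xt^1(u^{-1}(X[p^\infty]),\mathcal{O}_\prism)$ as an extension of $\varprojlim_n\mathcal{E}xt^1(u^{-1}(X[p^n]),\mathcal{O}_\prism)$ by ${\varprojlim_n}^{1}\mathcal{H}om(u^{-1}(X[p^n]),\mathcal{O}_\prism)$; the latter vanishes by the previous paragraph, whence $\mathcal{E}xt^1(u^{-1}(X[p^\infty]),\mathcal{O}_\prism)\cong \varprojlim_n (R^1 f_{\prism,\ast}\mathcal{O}_\prism)/p^n$, the $p$-adic completion of $R^1 f_{\prism,\ast}\mathcal{O}_\prism$. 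Because every prism in $(R)_\prism$ is bounded, hence classically $(p,I)$-adically complete, and $R^1 f_{\prism,\ast}\mathcal{O}_\prism$ is finite locally free over $\mathcal{O}_\prism$, this $p$-adic completion is the identity, and the corollary follows. Since all the substance already lies in \Cref{sec:prism-dieud-cryst-ext-for-abelian-schemes} (ultimately in the degeneration statement \Cref{sec:prism-cryst-abel-1-proposition-conjugate-spectral-sequence-degenerates-for-abelian-scheme-affine-case}), there is no real obstacle here; the only point requiring a little care is this last one — identifying $\varprojlim_n(R^1 f_{\prism,\ast}\mathcal{O}_\prism)/p^n$ with $R^1 f_{\prism,\ast}\mathcal{O}_\prism$, i.e. that $p$-adic completion does nothing to a finite locally free $\mathcal{O}_\prism$-module, which works precisely because the objects of the prismatic site are bounded prisms, so that local freeness together with $p$-adic separatedness forces completeness.
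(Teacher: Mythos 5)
Your overall skeleton (pass to the prismatic site via \Cref{sec:divid-prism-dieud-lemma-divided-dieudonne-module-via-local-ext-on-prismatic-site}, use \Cref{sec:prism-dieud-cryst-ext-for-abelian-schemes} for the abelian scheme, and \Cref{sec:abstr-divid-prism-proposition-finite-locally-free} to pass between $\mathcal{O}_\prism$- and $\mathcal{O}^{\rm pris}$-modules) is the same as the paper's, but your passage from $X$ to $X[p^\infty]$ by finite-level d\'evissage contains a step that fails as justified. You claim $\mathcal{H}om_{(R)_\prism}(u^{-1}(X[p^n]),\mathcal{O}_\prism)=0$ because $R^1f_{\prism,\ast}\mathcal{O}_\prism$ is finite locally free, ``hence $p$-torsion free''. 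This presupposes that $\mathcal{O}_\prism$ is $p$-torsion free on the absolute prismatic site of an arbitrary $p$-complete ring, i.e.\ that the underlying ring of every bounded prism in $(R)_\prism$ is $p$-torsion free. Boundedness only constrains $B/J$, not $B$: for instance $B=\Z_p[[q-1]][x]/(x^2,px)$ with $\delta(q)=\delta(x)=0$ and $I=([p]_q)$ is a bounded prism with $px=0$, $x\neq 0$, and it lies in $(R)_\prism$ for many $R$ (e.g.\ $R=\Z_p$). The paper is careful about exactly this point: it adds the hypothesis ``$A$ is $p$-torsion free'' in \Cref{sec:prism-dieud-theory-lemma-h-0-p-torsion-free-for-syntomic-things}, and getting $\mathrm{Hom}$-vanishing for finite group schemes requires real work even over perfectoid bases (proof of \Cref{theorem-exact-antiequivalence-for-finite-flat-group-schemes}). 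From your long exact sequence one only gets $\mathcal{H}om(u^{-1}(X[p^n]),\mathcal{O}_\prism)\cong \mathcal{E}xt^1_{(R)_\prism}(u^{-1}X,\mathcal{O}_\prism)[p^n]$, which need not vanish, so your ${\varprojlim}^1$-term is not disposed of and the limit identification is not yet proved as written. (The rest of your argument — the identification $\mathcal{E}xt^1(u^{-1}(X[p^n]),\mathcal{O}_\prism)\cong \mathcal{E}xt^1(u^{-1}X,\mathcal{O}_\prism)/p^n$ from the vanishing of $\mathcal{E}xt^2$, and the completeness of a finite projective module over a bounded prism at the end — is fine.)

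The paper's own proof avoids finite levels altogether: it identifies $\mathcal{E}xt^1(X[p^\infty],\mathcal{O}^{\rm pris})$ with $\mathcal{E}xt^1(X,\mathcal{O}^{\rm pris})$ (the quotient $X/X[p^\infty]$ is uniquely $p$-divisible and $\mathcal{O}^{\rm pris}$, resp.\ $\mathcal{O}_\prism$, is derived $p$-complete — the same device used for $\widehat{\Gm}$ versus $\mu_{p^\infty}$ in \Cref{sec:prism-dieud-modul}) and then quotes \Cref{sec:prism-dieud-cryst-ext-for-abelian-schemes} and \Cref{sec:abstr-divid-prism-proposition-finite-locally-free} directly. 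If you prefer to keep the finite-level route, you should argue as in the proof of \Cref{sec:divid-prism-dieud-proposition-ext-crystal}: work over each $(B,J)$, use surjectivity of the transition maps on the $\mathrm{Ext}^1$'s and the vanishing of $\mathrm{Hom}(u^{-1}(X[p^\infty]),\mathcal{O}_\prism/p^n)$ — i.e.\ $\mathrm{Hom}$ out of the full $p$-divisible group, which vanishes for trivial reasons — together with boundedness of $B$; this is precisely how the paper circumvents the $\mathrm{Hom}$-vanishing at finite level that you asserted.
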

\begin{proof}
By \Cref{sec:divid-prism-dieud-lemma-divided-dieudonne-module-via-local-ext-on-prismatic-site}, 
\[ \mathcal{M}_{\prism}(X[p^{\infty}]) = v_*(\mathcal{E}xt_{(R)_{\prism}}^1(u^{-1} G, \mathcal{O}_{\prism})). \]
Hence the corollary results from \Cref{sec:prism-dieud-cryst-ext-for-abelian-schemes} and \Cref{sec:abstr-divid-prism-proposition-finite-locally-free}.
\end{proof}

Although we will not use it, let us record the full description of the prismatic cohomology of $X$.

\begin{corollary}
  \label{sec:prism-dieud-modul-corollary-prismatic-cohomology-of-abelian-schemes} With the notation from \Cref{sec:prism-dieud-modul-corollary-prismatic-dieudonne-crystal-of-abelian-schemes},
  the prismatic cohomology
  $$
  R^\ast f_{\prism,\ast}\mathcal{O}_\prism
  $$
  is a finite locally free crystal on $(R)_\prism$ and an exterior algebra on the locally free crystal
  $$
R^1f_{\prism,\ast}(\mathcal{O}_\prism)
$$
of dimension $2\mathrm{dim}(X)$.
  
\end{corollary}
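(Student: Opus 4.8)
The plan is to bootstrap this from the results already established for abelian schemes, extending \Cref{sec:prism-dieud-cryst-reduced-ext-for-abelian-schemes} and \Cref{sec:prism-dieud-cryst-ext-for-abelian-schemes} from the degrees $\le 2$ that were needed there to all cohomological degrees, and then to upgrade the degeneration/exterior algebra statement of \Cref{sec:prism-cryst-abel-1-proposition-conjugate-spectral-sequence-degenerates-for-abelian-scheme-affine-case} from $\overline{\mathcal{O}}_\prism$ to $\mathcal{O}_\prism$. The basic input is that, for a prism $(B,J)\in (R)_\prism$, the stalk of $R^k f_{\prism,\ast}\mathcal{O}_\prism$ at $(B,J)$ computes the relative prismatic cohomology $H^k(Y,\prism_{Y/B})$ of $Y:=X\times_{\Spf(R)}\Spf(B/J)$ over $(B,J)$, via the identification of the absolute prismatic topos of $X$ localized at $(B,J)$ with the relative prismatic site of $Y$ over $(B,J)$ and the compatibility of higher direct images with this localization; this is exactly the identification already used in the proofs of \Cref{sec:prism-dieud-cryst-reduced-ext-for-abelian-schemes} and \Cref{sec:prism-dieud-cryst-ext-for-abelian-schemes}.

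First I would record the reduced case: by \Cref{sec:prism-cryst-abel-1-proposition-conjugate-spectral-sequence-degenerates-for-abelian-scheme-affine-case}, for every $(B,J)$ the module $H^k(Y,\overline{\prism}_{Y/B})$ is finite locally free over $B/J$, commutes with base change in the prism, and the canonical algebra map $\Lambda^\ast H^1(Y,\overline{\prism}_{Y/B})\to H^\ast(Y,\overline{\prism}_{Y/B})$ is an isomorphism. Hence $R^k f_{\prism,\ast}\overline{\mathcal{O}}_\prism$ is a finite locally free crystal in $\overline{\mathcal{O}}_\prism$-modules and $R^\ast f_{\prism,\ast}\overline{\mathcal{O}}_\prism$ is the exterior algebra on $R^1 f_{\prism,\ast}\overline{\mathcal{O}}_\prism$. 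Next I would lift to $\mathcal{O}_\prism$ exactly as in the proof of \Cref{sec:prism-dieud-cryst-ext-for-abelian-schemes}: working faithfully flat-locally so that $J=(\tilxi)$ is principal, one uses the exact sequences $0\to \mathcal{O}_\prism/\tilxi^n\xrightarrow{\tilxi}\mathcal{O}_\prism/\tilxi^{n+1}\to\overline{\mathcal{O}}_\prism\to 0$ and induction on $n$; since the graded pieces $H^k(Y,\overline{\prism}_{Y/B})$ are finite locally free there are no connecting maps, so $H^k(Y,\prism_{Y/B}/\tilxi^{n+1})$ is an iterated extension of finite locally free modules, hence finite locally free of rank $\binom{2\dim X}{k}$, and passing to the $(p,\tilxi)$-completed inverse limit shows $H^k(Y,\prism_{Y/B})=\varprojlim_n H^k(Y,\prism_{Y/B}/\tilxi^n)$ is finite locally free over $B$ of that rank, with formation commuting with base change. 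This yields that $R^k f_{\prism,\ast}\mathcal{O}_\prism$ is a finite locally free crystal on $(R)_\prism$, and the crystal (compatibility with morphisms of prisms) property is part of the base-change statement.

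Finally, for the exterior algebra structure over $\mathcal{O}_\prism$: the multiplicative structure of $R^\ast f_{\prism,\ast}\mathcal{O}_\prism$ gives a canonical algebra morphism $\Lambda^\ast R^1 f_{\prism,\ast}\mathcal{O}_\prism\to R^\ast f_{\prism,\ast}\mathcal{O}_\prism$; both sides are finite locally free $\mathcal{O}_\prism$-modules, hence derived $(p,\tilxi)$-complete, and by the previous step together with \Cref{sec:prism-cryst-abel-1-proposition-conjugate-spectral-sequence-degenerates-for-abelian-scheme-affine-case} this morphism reduces modulo $\tilxi$ to the isomorphism $\Lambda^\ast H^1(Y,\overline{\prism}_{Y/B})\xrightarrow{\sim}H^\ast(Y,\overline{\prism}_{Y/B})$, so by derived Nakayama it is an isomorphism. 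I expect the only mildly delicate point to be the precise identification of the stalk of $R^k f_{\prism,\ast}\mathcal{O}_\prism$ with relative prismatic cohomology and its base-change behavior in the prism; but since this is the exact reasoning already carried out in \Cref{sec:prism-dieud-cryst-reduced-ext-for-abelian-schemes} and \Cref{sec:prism-dieud-cryst-ext-for-abelian-schemes}, there is no genuine new obstacle — the corollary is essentially a bookkeeping extension of those theorems from low degrees to all degrees.
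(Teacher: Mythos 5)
Your overall frame (reduce to stalks $H^\ast(Y,\prism_{Y/B})$, use the Hodge--Tate exterior-algebra description, conclude by a Nakayama-type argument) is the right one, but step 3 has a genuine gap. You claim that in the long exact sequences attached to $0\to \mathcal{O}_\prism/\tilxi^n\xrightarrow{\tilxi}\mathcal{O}_\prism/\tilxi^{n+1}\to\overline{\mathcal{O}}_\prism\to 0$ the connecting maps vanish ``since the graded pieces are finite locally free'', and that an iterated extension of finite locally free modules over the graded quotients is finite locally free over $B/\tilxi^{n+1}$. Neither inference is valid: local freeness of source and target does not force a boundary map (here essentially a Bockstein) to vanish, and an extension of $B/\tilxi$-free modules need not be $B/\tilxi^{n+1}$-free (e.g.\ $\Z/p\oplus\Z/p$ as an extension of $\Z/p$ by $\Z/p$ over $\Z/p^2$). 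The mechanism that made the analogous induction work in \Cref{sec:prism-dieud-cryst-ext-for-abelian-schemes} was the vanishing of the Ext-sheaves in the adjacent degrees $0$ and $2$; for the full cohomology of an abelian scheme there is no such vanishing ($H^k(Y,\overline{\prism}_{Y/B})\neq 0$ for all $0\le k\le 2g$), so that strategy does not transport. Nor can you get around this by completeness alone: a derived $J$-complete complex whose derived reduction mod $J$ has finite free cohomology in every degree need not have free cohomology itself (consider $\Z_p\xrightarrow{p}\Z_p$ over $(\Z_p,(p))$, whose reduction mod $p$ has free cohomology in degrees $0$ and $1$ while $H^1=\Z/p$). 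So finite local freeness and base change for $R^kf_{\prism,\ast}\mathcal{O}_\prism$ do not follow from your induction, and your final derived-Nakayama step presupposes exactly these facts.

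The paper's proof supplies the missing input by lifting classes: from (the proof of) \Cref{sec:prism-dieud-cryst-ext-for-abelian-schemes} one knows $H^1(Y,\prism_{Y/B})\to H^1(Y,\overline{\prism}_{Y/B})$ is surjective, so one can lift $2g$ generators and obtain a map of complexes $\Lambda^\ast(B^{2g})[-\ast]\to R\Gamma(Y,\prism_{Y/B})$ using cup products. This map becomes an isomorphism after $\otimes^{\mathbb{L}}_B B/J$ by \Cref{sec:prism-cryst-abel-1-proposition-conjugate-spectral-sequence-degenerates-for-abelian-scheme-affine-case}, hence is an isomorphism by derived completeness (Nakayama). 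Because the source is an explicit complex of finite free $B$-modules with zero differentials, this single step yields simultaneously the finite local freeness of $H^k(Y,\prism_{Y/B})$, its compatibility with base change in the prism (hence the crystal property), and the exterior-algebra structure. If you want to salvage your write-up, replace your induction by this lifting argument (or prove the vanishing of the Bocksteins and the $\tilxi$-flatness of the mod-$\tilxi^n$ cohomologies by some independent means, which amounts to the same work).
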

\begin{proof}
  Let $(B,J)\in (R)_\prism$ and let $Y:=X\times_{\Spf(R)}\Spf(B/J)$. It suffices to prove the analog statements for $H^\ast(Y,\prism_{Y/B})$.
  From (the proof of) \Cref{sec:prism-dieud-cryst-ext-for-abelian-schemes} we see that
  $$
  H^1(Y,\prism_{Y/B})\to H^1(Y,\overline{\prism}_{Y/B})
  $$
  is surjective and that $H^\ast(Y,\overline{\prism}_{Y/B})$ is an exterior algebra on $H^1(Y,\overline{\prism}_{Y/B})$. \blue{Since $H^1(Y,\prism_{Y/B})$ is projective, we can lift the identity $H^1(Y,\prism_{Y/B}) \to H^1(Y,\prism_{Y/B})$ to a map}
  $$
  \blue{H^1(Y,\prism_{Y/B})}[-1]\to \prism_{Y/B}.
  $$
  \red{Using multiplication in $H^\ast(Y,\prism_{Y/B})$ and that $H^\ast(Y,\overline{\prism}_{Y/B})$ is an exterior algebra, we see that for each $i\geq 0$ the morphism
    \[
      H^i(Y,\prism_{Y/B})\to H^i(Y,\overline{\prism}_{Y/B})
    \]
  is surjective. This implies that each $B$-module $H^i(Y,\prism_{Y/B})$ is $J$-torsion free, and then that it is a finite, locally free $B$-module as modulo $J$ it identifies with $H^i(Y,\overline{\prism}_{Y/B})$. The same argument \blue{as in \cite[Proposition 2.5.2.(ii)]{berthelot_breen_messing_theorie_de_dieudonne_cristalline_II})} implies then that each element in $H^1(Y,\prism_{Y/B})$ squares to zero.} \blue{We obtain a morphism
  $$
  \Lambda^i{H^1(Y,\prism_{Y/B})}[-i]\to R\Gamma(Y,\prism_{Y/B})
  $$}
  inducing an isomorphism on $H^i$ after passing to $\otimes^{\mathbb{L}}_{B}B/J$.
  Altogether, we obtain a morphism
  $$
  \blue{\Lambda^\ast(H^1(Y,\prism_{Y/B}))[-\ast]}\to R\Gamma(Y,\prism_{Y/B})
  $$
  of complexes which is an isomorphism after applying $\otimes^{\mathbb{L}}_BB/J$. By derived $J$-adic completeness it is therefore an isomorphism, which implies the statements.
 \end{proof}

\subsection{The prismatic Dieudonn\'e crystal of a $p$-divisible group}
\label{sec:divid-prism-dieud-definition-for-p-div-groups}

In this section, we establish the basic properties of the prismatic Dieudonn\'e functor for $p$-divisible groups. The idea, due to Berthelot-Breen-Messing, is to make systematic use of the following theorem of Raynaud, to reduce to statements about ($p$-divisible groups of) abelian schemes proved in the last section.

\begin{theorem}
\label{sec:divid-prism-dieud-definition-for-p-div-groups-theorem-raynaud}
Let $S$ be a scheme, and let $G$ be a finite locally free group scheme over $S$. There exists Zariski-locally on $S$, a (projective) abelian scheme $A$ and a closed immersion $G \hookrightarrow A$ \purple{of group schemes} over $S$.
\end{theorem}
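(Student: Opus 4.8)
\textbf{Proof plan for Raynaud's theorem (Theorem \ref{sec:divid-prism-dieud-definition-for-p-div-groups-theorem-raynaud}).} The statement is classical; I will follow the standard argument, originally due to Raynaud and recorded in \cite{berthelot_breen_messing_theorie_de_dieudonne_cristalline_II}, Th\'eor\`eme 3.1.1. Since the conclusion is Zariski-local on $S$, and a finite locally free group scheme has locally constant rank, I may assume $S$ is affine, say $S = \Spec(\Lambda)$, and that $G$ has constant order $n$. The key idea is to first embed $G$ into a smooth affine group scheme with connected fibers, then to realize a quotient of that group scheme as (an open in) a projective abelian scheme by passing to its "abelianization" via a torsor construction, and finally to arrange the embedding to land in the abelian scheme.

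\textbf{Step 1: embed $G$ into a smooth affine group scheme.} By a theorem of Raynaud (see \cite{berthelot_breen_messing_theorie_de_dieudonne_cristalline_II}, 3.1.1, or the Appendix of \cite{mazur_messing_universal_extensions_and_one_dimensional_crystalline_cohomology}), any finite locally free group scheme $G$ over an affine base embeds as a closed subgroup scheme into a smooth affine group scheme $H$ over $S$; one may moreover take $H$ to have geometrically connected fibers. Concretely, one uses that $G$ acts freely on a suitable affine space (via the regular representation on $\mathcal{O}_G$, after embedding $G \hookrightarrow \GL_N$ for some $N$ using that $\mathcal{O}_G$ is locally free of finite rank), and one takes $H$ to be $\GL_N$, or a quotient thereof, containing $G$; the point is that $\GL_{N,S}$ is smooth affine with connected fibers and contains $G$ as a closed subgroup scheme.

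\textbf{Step 2: pass from $H$ to an abelian scheme.} This is the main obstacle, and is where the projectivity and the properness are produced. The classical device is the following: consider the fppf quotient sheaf $H/G$; since $G$ is finite locally free and $H$ is smooth affine, $H \to H/G$ is a $G$-torsor and $H/G$ is again a smooth (quasi-projective) $S$-scheme with a free $H$-action... more precisely, one wants to build an abelian scheme $A$ together with a closed immersion $G \hookrightarrow A$. The actual construction (Raynaud): one first reduces, by a limit/spreading-out argument, to the case $\Lambda$ Noetherian, even of finite type over $\Z$, and then to $\Lambda$ a discrete valuation ring or a field by a further descent; over such a base one invokes the existence of abelian schemes with prescribed $G$ inside their $p^k$-torsion — for $G$ killed by $p^k$, one uses that $\mu_{p^k}^{\,r} \times (\Z/p^k)^{\,s}$ embeds into a product of elliptic curves with sufficient level structure, and reduces a general $G$ to this case by the embedding $G \hookrightarrow H$ with $H$ smooth, writing $G$ as a subquotient of the torsion of the generalized Jacobian or Néron–Severi torus associated to an auxiliary projective scheme. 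I expect to reproduce verbatim the argument of \cite{berthelot_breen_messing_theorie_de_dieudonne_cristalline_II}, §3.1, rather than reinvent it, since it is intricate and its details (properness via compactification of the quotient $H/G$, projectivity via a theorem of the base / ampleness argument) are precisely the technical heart.

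\textbf{Step 3: conclude.} Once an abelian scheme $A$ over (an affine open of) $S$ and a monomorphism $G \to A$ of group schemes are produced, one notes that a monomorphism from a finite locally free group scheme to a separated scheme is automatically a closed immersion (it is proper — $G$ is finite over $S$, $A$ is separated over $S$ — and a proper monomorphism is a closed immersion, \cite[Tag 04XV]{stacks_project}). Finally, an abelian scheme over an affine base is projective (this is standard: an abelian scheme is projective Zariski-locally on the base, and over an affine base it is globally projective after possibly shrinking — in fact it is known to be projective outright, but shrinking Zariski-locally is all the statement requires). This yields the desired closed immersion $G \hookrightarrow A$ of group schemes with $A$ a projective abelian scheme, Zariski-locally on $S$. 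I emphasize that for our purposes the only input actually needed downstream is the \emph{existence} of such an embedding locally on $S$; no uniformity or functoriality in $G$ is claimed or required.
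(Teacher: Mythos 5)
The paper does not actually prove this statement: its ``proof'' consists of the single line ``See \cite[Theorem 3.1.1]{berthelot_breen_messing_theorie_de_dieudonne_cristalline_II}'', and since your plan is ultimately to defer to that same reference (you say explicitly that you would reproduce the BBM/Raynaud argument rather than reinvent it), at the level of approach you and the paper coincide. Your Step 3 observation that a monomorphism from a finite (hence proper) group scheme into a separated scheme is automatically a closed immersion is also the standard and correct way to upgrade the embedding.

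Two caveats about the material you interpolated as a placeholder, which would not survive being written out. First, your description of Raynaud's construction in Step 2 (reduction to discrete valuation rings or fields, embedding $\mu_{p^k}^{r}\times(\Z/p^k)^{s}$ into products of elliptic curves with level structure, ``N\'eron--Severi torus'', compactifying $H/G$) is not how the proof goes. Roughly, one embeds $G$ into $\GL_{N,S}$ via the regular representation, uses a Bertini-type argument (Serre, Raynaud) to produce a $G$-stable smooth projective complete intersection $Y$ inside a projective space on which $G$ acts freely off a closed subset of large codimension -- one can take $Y$ to be a relative curve -- so that $X:=Y/G$ is a smooth projective relative curve; the $G$-torsor $Y\to X$ then induces a monomorphism of the Cartier dual $\check{G}$ into the Jacobian $\mathrm{Pic}^0_{X/S}$, and dualizing the resulting isogeny (whose kernel is $\check{G}$) embeds $G=\check{\check{G}}$ into the dual abelian scheme, which is projective by construction. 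Second, the claim in your Step 3 that an abelian scheme over an affine base is projective (``in fact it is known to be projective outright'') is false in general: Raynaud constructed non-projective abelian schemes over non-normal bases, and even Zariski-local projectivity is only guaranteed over normal bases. In the theorem as stated, the projectivity of $A$ comes from the construction itself (a Jacobian, or a quotient/dual thereof), not from a general projectivity theorem invoked after the fact. Neither slip is fatal given that you plan to follow BBM verbatim, but as written those two passages are incorrect and should be removed or replaced by the actual argument.
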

\begin{proof}
See \cite[Theorem 3.1.1]{berthelot_breen_messing_theorie_de_dieudonne_cristalline_II}.
\end{proof} 

\begin{proposition}
\label{sec:divid-prism-dieud-proposition-crystals-loc-finitely-presented}
Let $R$ be a $p$-complete ring, and let $G$ be a finite locally free group scheme over $R$.  The sheaf $\Ext_{(R)_{\prism}}^1(u^{-1}G,\mathcal{O}_ {\prism})$ is a prismatic crystal of locally finitely presented $\mathcal{O}_{\prism}$-modules. 
\end{proposition}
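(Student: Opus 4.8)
The plan is to use Raynaud's theorem (\Cref{sec:divid-prism-dieud-definition-for-p-div-groups-theorem-raynaud}) to embed $G$, Zariski-locally on $\Spec(R)$, into an abelian scheme, and then exploit the results on abelian schemes from the previous section. Since being a crystal of locally finitely presented $\mathcal{O}_\prism$-modules is a Zariski-local condition on $R$, I may assume there is a closed immersion $G \hookrightarrow X$ of group schemes over $R$, with $X$ (the $p$-completion of) an abelian scheme; let $Y := X/G$, which is again an abelian scheme over $R$ (it is proper, smooth, with connected fibres and a group structure, being the quotient of $X$ by a finite locally free subgroup scheme). Thus I get a short exact sequence
\[ 0 \to G \to X \to Y \to 0 \]
of abelian sheaves on $(R)_{\mathrm{QSYN}}$, all of whose terms are syntomic over $R$; by \Cref{sec:prismatic-to-syntomic-corollary-exactness-if-kernel-is-affine-p-completely-syntomic}, applying $u^{-1}$ keeps it exact on $(R)_{\prism}$, giving
\[ 0 \to u^{-1}G \to u^{-1}X \to u^{-1}Y \to 0. \]

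Next I would run the long exact sequence of $\Ext$-sheaves $\Ext^i_{(R)_\prism}(-,\mathcal{O}_\prism)$ attached to this short exact sequence. By \Cref{sec:prism-dieud-cryst-ext-for-abelian-schemes}, $\mathcal{E}xt^0_{(R)_\prism}(u^{-1}X,\mathcal{O}_\prism) = \mathcal{E}xt^0_{(R)_\prism}(u^{-1}Y,\mathcal{O}_\prism) = 0$ and $\mathcal{E}xt^2_{(R)_\prism}(u^{-1}X,\mathcal{O}_\prism) = 0$, while $\mathcal{E}xt^1_{(R)_\prism}(u^{-1}X,\mathcal{O}_\prism)$ and $\mathcal{E}xt^1_{(R)_\prism}(u^{-1}Y,\mathcal{O}_\prism)$ are finite locally free $\mathcal{O}_\prism$-modules. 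The long exact sequence therefore reads
\[ 0 \to \mathcal{H}om_{(R)_\prism}(u^{-1}G,\mathcal{O}_\prism) \to \mathcal{E}xt^1_{(R)_\prism}(u^{-1}Y,\mathcal{O}_\prism) \to \mathcal{E}xt^1_{(R)_\prism}(u^{-1}X,\mathcal{O}_\prism) \to \mathcal{E}xt^1_{(R)_\prism}(u^{-1}G,\mathcal{O}_\prism) \to 0, \]
and moreover $\mathcal{H}om_{(R)_\prism}(u^{-1}G,\mathcal{O}_\prism) = 0$ since $u^{-1}G$ is killed by a power of $p$ while $\mathcal{O}_\prism$ is $p$-torsion free. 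So $\mathcal{E}xt^1_{(R)_\prism}(u^{-1}G,\mathcal{O}_\prism)$ is the cokernel of a map between two finite locally free $\mathcal{O}_\prism$-modules; in particular it is a locally finitely presented $\mathcal{O}_\prism$-module. The crystal property (base change: $\mathcal{M}(B,J)\otimes_B B' \cong \mathcal{M}(B',J')$ for morphisms $(B,J)\to (B',J')$ in $(R)_\prism$) then follows since both $\mathcal{E}xt^1$-terms for the abelian schemes are crystals by \Cref{sec:prism-dieud-cryst-ext-for-abelian-schemes}, the connecting map $\mathcal{E}xt^1(u^{-1}Y,\mathcal{O}_\prism) \to \mathcal{E}xt^1(u^{-1}X,\mathcal{O}_\prism)$ is a morphism of crystals, and cokernels of morphisms of crystals are crystals (tensoring the presentation with $B'$ over $B$ is right exact and preserves the displayed sequence, because the relevant $\mathcal{H}om$ and higher $\mathcal{E}xt$ sheaves that would obstruct base-change vanish, as just noted).

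The main obstacle I anticipate is checking that $u^{-1}$ applied to $0 \to G \to X \to Y \to 0$ is still exact: this is exactly where one needs \Cref{sec:prismatic-to-syntomic-corollary-exactness-if-kernel-is-affine-p-completely-syntomic}, and the hypothesis there is that the terms be (affine) $p$-completely syntomic over $R$, which holds for finite locally free group schemes and for ($p$-completions of) abelian schemes. A secondary point requiring care is the passage from "cokernel of a morphism of finite locally free modules" to "crystal of locally finitely presented modules" — one must verify that forming this cokernel commutes with the base-change functors $(B,J)\mapsto (B',J')$, which again reduces to the vanishing statements from \Cref{sec:prism-dieud-cryst-ext-for-abelian-schemes} applied on each localized site $(R)_\prism/(B,J)$, so that the whole four-term exact sequence base-changes to the analogous one over $(B',J')$. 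Once this is in place, the conclusion is immediate. (One should also remark that the construction is independent of the chosen embedding $G\hookrightarrow X$, since $\mathcal{E}xt^1_{(R)_\prism}(u^{-1}G,\mathcal{O}_\prism)$ is intrinsically defined; this is automatic and needs no separate argument.)
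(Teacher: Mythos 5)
Your proposal is correct and follows essentially the same route as the paper: Raynaud's theorem to embed $G$ locally into an abelian scheme $X$ with abelian quotient, then the long exact sequence of local $\Ext$-sheaves together with \Cref{sec:prism-dieud-cryst-ext-for-abelian-schemes} (vanishing of $\mathcal{E}xt^0$ and $\mathcal{E}xt^2$ for abelian schemes, and the finite locally free crystal property of $\mathcal{E}xt^1$), exhibiting $\mathcal{E}xt^1_{(R)_\prism}(u^{-1}G,\mathcal{O}_\prism)$ as the cokernel of a morphism of finite locally free crystals, hence a crystal of locally finitely presented modules.

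One small caveat: your justification of $\mathcal{H}om_{(R)_\prism}(u^{-1}G,\mathcal{O}_\prism)=0$ by "$\mathcal{O}_\prism$ is $p$-torsion free" is not warranted at this level of generality — the underlying ring of a bounded prism need not be $p$-torsion free, and indeed the paper only proves such a vanishing later, over perfectoid base prisms, via \Cref{sec:prism-dieud-theory-lemma-h-0-p-torsion-free-for-syntomic-things}, which explicitly assumes $p$-torsion freeness of the base prism. Fortunately this claim is superfluous here: for both the finite presentation and the crystal (base-change) property one only needs the right-exact three-term portion, which on each localized site $(R)_\prism/(B,J)$ follows from $\mathrm{Ext}^2(u^{-1}X'|_{(B,J)},\mathcal{O}_\prism)=0$, exactly as in the paper's proof.
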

\begin{proof}
By  \Cref{sec:divid-prism-dieud-definition-for-p-div-groups-theorem-raynaud}, one can choose locally on $R$ an exact sequence of group schemes
\[ 0 \to G \to X \to X' \to 0, \]
where $X$ and $X'$ are abelian schemes over $R$. Whence, by \Cref{sec:prism-dieud-cryst-ext-for-abelian-schemes} (1), an exact sequence
\[ \Ext_{(R)_{\prism}}^1(u^{-1}X',\mathcal{O}_ {\prism}) \to \Ext_{(R)_{\prism}}^1(u^{-1}X,\mathcal{O}_ {\prism}) \to \Ext_{(R)_{\prism}}^1(u^{-1}G,\mathcal{O}_ {\prism}) \to 0. \]
This proves the proposition, by \Cref{sec:prism-dieud-cryst-ext-for-abelian-schemes} (2). 
\end{proof}

\red{Let $n\geq 1$. Recall (\cite[Definition 1.1]{illusie_deformations_de_groupes_de_barsotti_tate}) that an a finite locally free group scheme $G$ over a scheme $S$ is called a \textit{truncated Barsotti-Tate group of level $n$} if it is killed by $p^n$ and flat over $\Z/p^n$, and, when $n=1$, if it also satisfies that the sequence
$$
G_0 \overset{F} \to \varphi_{S_0,\ast} G_0 \overset{V} \to G_0
$$
is exact, where $G_0$ denotes the base change of $G$ to $S_0=V(p) \subset S$. The rank of $G[p]$ is of the form $p^h$, for an integer $h$ locally constant on $S$ called the \textit{height} of $G$. In the sequel, we will make use of the following basic facts on truncated Barsotti-Tate groups (cf. \cite[1.3 (e), 1.3 (f), 1.6]{illusie_deformations_de_groupes_de_barsotti_tate}):
\begin{enumerate}
\item If $G$ is a $p$-divisible group over $S$ (of height $h$), $G[p^n]$ is a truncated Barsotti-Tate group of level $n$ over $S$ (of height $h$) for all $n\geq 1$.
\item If $G$ is a truncated Barsotti-Tate group of level $n$ and height $h$, then so is the Cartier dual $G^\ast$ of $G$.
\item If $0 \to G_1 \to G_2 \to G_3 \to 0$ is an exact sequence finite locally free group schemes of order $p^n$ over $S$, and if two of them are truncated Barsotti-Tate groups of level $n$, then so is the third one.
\end{enumerate}
}


\begin{remark}
  \label{remark-statement-about-truncated-bt}
Let $G$ be a finite locally free group scheme \red{killed by $p^n$ over a scheme $S$ such that $p^n \mathcal{O}_S=0$}, and let $\ell_G$ be its coLie complex. Set :
\[ \omega_G=H^0(\ell_G) ~ , ~ n_G=H^{-1}(\ell_G) ~ , ~ t_G=H^0(\check{\ell}_G) ~ ; ~ \nu_G=H^1(\check{\ell}_G). \]
Grothendieck's duality formula identifies $\check{\ell}_G$ with the truncation $\tau^{\leq 1}R\mathcal{H}om(G^*,\mathbb{G}_a)$, and this gives rise to a canonical morphism :
\[ \phi_G :  \nu_G \to t_G.\]
Then $G$ is a $\mathrm{BT}_n$ if and only if $t_G, t_{G^*}$ are locally free and the canonical morphisms $\phi_G$ and $\phi_{G^*}$ are isomorphisms (cf.\ \cite[Corollary 2.2.5]{illusie_deformations_de_groupes_de_barsotti_tate}). \red{In this situation, $\omega_G$ is finite locally free of rank called the \textit{dimension} $\dim(G)$ of $G$, and $\nu_{G^\ast}$ is finite locally free of rank $h-\dim(G)$, if $h$ is the height of $G$.} 
\end{remark}

\begin{proposition} \label{crystal O bar}
	Let $R$ be a \red{quasi-syntomic} ring, and let $G$ be a truncated Barsotti-Tate group over $R$ of level $n$. The sheaf $\Ext_{(R)_{\prism}}^1(u^{-1}G,\mathcal{O}_ {\prism})$ is a prismatic crystal of finite locally free $\mathcal{O}_{\prism}/p^n$-modules.
\end{proposition}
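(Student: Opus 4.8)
The plan is to reduce the statement to the already-established case of $p$-divisible groups by embedding $G$ into an abelian scheme, and then to identify the extension sheaf with a suitable torsion quotient of the prismatic cohomology of that abelian scheme. First, by \Cref{sec:divid-prism-dieud-definition-for-p-div-groups-theorem-raynaud} we may work Zariski-locally on $R$ and choose a closed immersion of group schemes $G\hookrightarrow X$ with $X$ the $p$-completion of an abelian scheme over $R$; since $G$ is a $\mathrm{BT}_n$, the fppf quotient $X'=X/G$ is again (the $p$-completion of) an abelian scheme over $R$, and $G$ is then realized as the kernel of an isogeny $X\to X'$, so that $G=X[p^n]$ in the sense that there is a short exact sequence of abelian sheaves $0\to G\to X\xrightarrow{f} X'\to 0$ on $(R)_{\mathrm{qsyn}}$ (equivalently on $(R)_\prism$ after applying the exact functor $u^{-1}$, by \Cref{sec:prismatic-to-syntomic-corollary-exactness-if-kernel-is-affine-p-completely-syntomic}). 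Applying $R\mathcal{H}om_{(R)_\prism}(-,\mathcal{O}_\prism)$ and using \Cref{sec:prism-dieud-cryst-ext-for-abelian-schemes}(1), which gives the vanishing of $\mathcal{E}xt^i$ for $i=0,2$ for abelian schemes, we obtain a short exact sequence
\[ 0\to \mathcal{E}xt^1_{(R)_\prism}(u^{-1}X',\mathcal{O}_\prism)\xrightarrow{f^*} \mathcal{E}xt^1_{(R)_\prism}(u^{-1}X,\mathcal{O}_\prism)\to \mathcal{E}xt^1_{(R)_\prism}(u^{-1}G,\mathcal{O}_\prism)\to 0, \]
together with the vanishing of $\mathcal{H}om_{(R)_\prism}(u^{-1}G,\mathcal{O}_\prism)$ and $\mathcal{E}xt^2_{(R)_\prism}(u^{-1}G,\mathcal{O}_\prism)$ (the latter because it is sandwiched between $\mathcal{E}xt^2$ of $u^{-1}X$, which vanishes, and $\mathcal{E}xt^1(u^{-1}X',\mathcal{O}_\prism)/\,\mathrm{im}$, which is zero by the surjectivity above — more precisely one reads this off the long exact sequence).

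Next I would identify the map $f^*$ explicitly. Both $\mathcal{E}xt^1_{(R)_\prism}(u^{-1}X,\mathcal{O}_\prism)$ and $\mathcal{E}xt^1_{(R)_\prism}(u^{-1}X',\mathcal{O}_\prism)$ are finite locally free $\mathcal{O}_\prism$-modules of rank $2\dim X=2\dim X'$ by \Cref{sec:prism-dieud-cryst-ext-for-abelian-schemes}(2), and under the identification with $R^1f_{\prism,*}\mathcal{O}_\prism$ the morphism $f^*$ is the one induced by the isogeny $f$; since $f$ has degree a power of $p$ and kills $G=X[p^n]$ (so $f$ factors the multiplication by $p^n$ on $X$, i.e. there is $g\colon X'\to X$ with $g\circ f=[p^n]_X$ and $f\circ g=[p^n]_{X'}$), the cokernel of $f^*$ is killed by $p^n$. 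Combined with local freeness of source and target, this shows $\mathcal{E}xt^1_{(R)_\prism}(u^{-1}G,\mathcal{O}_\prism)=\mathrm{coker}(f^*)$ is a crystal (being a cokernel of a crystal morphism between crystals, the crystal property is immediate from the corresponding property for $X$ and $X'$) which is killed by $p^n$ and locally finitely presented. The remaining point is that it is in fact finite locally free over $\mathcal{O}_\prism/p^n$, not merely a finitely presented $p^n$-torsion module: for this I would use that $f^*$ is, locally on $(R)_\prism$, given by a $2\dim X\times 2\dim X$ matrix over $\mathcal{O}_\prism$ whose cokernel is $p^n$-torsion, and that this cokernel is compatible with arbitrary base change in the prism (by \Cref{sec:prism-dieud-cryst-ext-for-abelian-schemes} both $R^1f_{\prism,*}\mathcal{O}_\prism$'s commute with base change, and a cokernel of a base-change-compatible map of base-change-compatible finite locally free modules commutes with base change). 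A finitely presented module $Q=\mathrm{coker}(f^*)$ over $\mathcal{O}_\prism$ which is killed by $p^n$ and whose formation commutes with base change, applied to base change to residue fields, has locally constant fibre dimension over $\mathcal{O}_\prism/p$, hence is finite locally free over some $\mathcal{O}_\prism/p^m$; the $\mathrm{BT}_n$ hypothesis via \Cref{remark-statement-about-truncated-bt} (the conditions on $t_G$, $t_{G^*}$, $\phi_G$, $\phi_{G^*}$ being isomorphisms) is exactly what pins down $m=n$ and the freeness, by comparing Hodge-Tate specializations — I would run this over quasi-regular semiperfectoid $R$, where everything becomes a statement about finite projective modules over the ring $\prism_R$, and use \Cref{sec:prism-dieud-modul-proposition-nygaard-filtration-on-dieu-module-for-abelian-scheme} to control the graded pieces.

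I expect the main obstacle to be the last step: proving local freeness over $\mathcal{O}_\prism/p^n$ (rather than just finite presentation and $p^n$-torsion) genuinely uses the full $\mathrm{BT}_n$ structure and not just the fact that $G$ is a finite locally free group scheme embedded in an abelian scheme — an arbitrary finite locally free $G$ would give a finitely presented torsion module without this freeness. The cleanest route is probably to observe that over a quasi-regular semiperfectoid base the evaluation $M_\prism(G)=\mathrm{coker}(M_\prism(f))$ is a finitely presented $\prism_R$-module killed by $p^n$, that $\prism_R$ is $(p,I)$-adically complete and $I$-torsion free, and to use the standard structure theory of such modules together with the known behaviour of $M_\prism(X[p^\infty])$ and $\mathrm{Fil}\,M_\prism(X[p^\infty])$ from \Cref{sec:prism-dieud-modul-proposition-window}; the $\mathrm{BT}_n$ condition then feeds in through the identification of $M_\prism(G)/\mathcal{N}^{\ge1}\prism_R\,M_\prism(G)$ with $H^1(X,\mathcal{O})/p^n$ and the duality constraints of \Cref{remark-statement-about-truncated-bt}. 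One also needs to check the construction is independent of the chosen embedding $G\hookrightarrow X$; this follows as in \cite{berthelot_breen_messing_theorie_de_dieudonne_cristalline_II} from functoriality of $\mathcal{E}xt^1_{(R)_\prism}(u^{-1}(-),\mathcal{O}_\prism)$ applied to a common refinement of two embeddings, using the vanishing of the relevant $\mathcal{H}om$ and $\mathcal{E}xt^2$ groups established above, so that the resulting sheaf glues over $\mathrm{Spec}(R)$.
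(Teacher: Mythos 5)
Your first half is sound and coincides with the paper's setup: Raynaud's theorem, the long exact sequence for $0\to G\to X\to X'\to 0$, and the vanishing of $\Ext^2_{(R)_\prism}(u^{-1}X',\mathcal{O}_\prism)$ give that $\Ext^1_{(R)_\prism}(u^{-1}G,\mathcal{O}_\prism)$ is a crystal of finitely presented $\mathcal{O}_\prism$-modules killed by $p^n$ (this much holds for any finite locally free $G$, cf.\ \Cref{sec:divid-prism-dieud-proposition-crystals-loc-finitely-presented}). (A side remark: your parenthetical claim that $\Ext^2_{(R)_\prism}(u^{-1}G,\mathcal{O}_\prism)=0$ does not follow from this sequence --- it only injects into $\mathcal{E}xt^3$ of $u^{-1}X'$, which you do not control --- but you never use it.) The genuine gap is exactly the step you flag as the main obstacle, and the way you propose to fill it does not work. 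The module-theoretic claim you rely on is false: a finitely presented module killed by $p^n$, whose formation commutes with base change and whose fibre dimension is locally constant, need not be locally free over any $\mathcal{O}_\prism/p^m$; already $Q=A/p\oplus A/p^2$ over $A=\Z_p$ satisfies all these hypotheses. And the sentence ``the $\mathrm{BT}_n$ hypothesis pins down $m=n$ by comparing Hodge--Tate specializations'' is not an argument --- you never say how the conditions of \Cref{remark-statement-about-truncated-bt} actually intervene.

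The paper closes this step with two concrete ingredients absent from your sketch. First, a bound on generators: for every $(B,J)$ the $B$-module $\Ext^1_{(R)_\prism}(u^{-1}G,\mathcal{O}_\prism)_{(B,J)}$ is generated by $h=\mathrm{height}(G)$ elements. By the crystal property, $p$-completeness and Nakayama this reduces to $(W(k),(p))$ over a perfect field, where one passes to $\overline{\mathcal{O}}_\prism$ and extracts from the two abelian-scheme sequences the exact sequence $u^\ast\nu_{G^\ast}\to \Ext^1(u^{-1}G,\overline{\mathcal{O}}_\prism)\to u^\ast\omega_G\to 0$; the $\mathrm{BT}_n$ hypothesis enters precisely here, via Illusie's criterion (\Cref{remark-statement-about-truncated-bt}) that $\nu_{G^\ast}$ is locally free of rank $h-d$, giving the $h$ generators. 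Second, instead of analyzing the cokernel of the isogeny map $f^\ast$, the paper uses the embedding $G\subset X[p^n]$ with quotient $H$ of height $2g-h$ (whose $\mathcal{E}xt^1$ is generated by $2g-h$ sections by the same bound), together with $\Ext^1(u^{-1}X[p^n],\mathcal{O}_\prism)=\Ext^1(u^{-1}X,\mathcal{O}_\prism)/p^n$ being locally free of rank $2g$ over $\mathcal{O}_\prism/p^n$; an elementary counting lemma over the local rings of $\mathcal{O}_\prism/p^n$ --- a free module of rank $2g$ sitting in an exact sequence whose outer terms are generated by $2g-h$ and $h$ elements forces both to be free of those ranks --- then yields the desired local freeness. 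Without the generator bound and this counting argument your proof does not close.
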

\begin{proof}
Fix once and for all an embedding of $G$ into an abelian scheme \red{$X^\prime$} of dimension $g$ over $R$. By \Cref{sec:divid-prism-dieud-definition-for-p-div-groups-theorem-raynaud}, this can be done Zariski-locally on \red{$\mathrm{Spf}(R)$}, and the reader can check that the different steps of the proof are all local statements on \red{$\mathrm{Spf}(R)$}. Let \red{$X$} be the cokernel of the embedding $G\to \red{X^\prime}$ ; this an abelian scheme, and one has an exact sequence 
\[ 0 \to G \to \red{X^\prime \to X} \to 0 \]
of group schemes over $R$. 

We first prove that for any $(B,J) \in (R)_{\prism}$, the $B$-module $$\Ext_{(R)_{\prism}}^1(u^{-1}G, \mathcal{O}_{\prism})_{(B,J)}$$ is locally generated by $h$ sections, where $h$ is the height of $G$. By the crystal property of $\Ext_{(R)_{\prism}}^1(u^{-1}G, \mathcal{O}_{\prism})$ (cf.\ \Cref{sec:divid-prism-dieud-proposition-crystals-loc-finitely-presented}), for any morphism of prisms $(B,J) \to (W(k),(p))$, where $k$ is a characteristic $p$ perfect field,
\[ \Ext_{(R)_{\prism}}^1(u^{-1}G, \mathcal{O}_{\prism})_{(B,J)} \otimes_B W(k) = \Ext_{(R)_{\prism}}^1(u^{-1}G_k, \mathcal{O}_{\prism})_{(W(k),(p))}. \]

By Nakayama's lemma, \red{$(p,J)$}-completeness of $B$ and the finite presentation proved in \Cref{sec:divid-prism-dieud-proposition-crystals-loc-finitely-presented}, it suffices to prove that for any morphism $B \to k$ \red{vanishing on $J$}, $k$ characteristic $p$ perfect field, $$\Ext_{(R)_{\prism}}^1(u^{-1}G, \mathcal{O}_{\prism})_{(B,J)} \otimes_B k$$ is generated by $h$ elements. Such a morphism $B \to k$ extends to a morphism of prisms $(B,J) \to (W(k),(p))$, so it suffices by the above to prove our claim when $R=k$ is a perfect field and $(B,J)=(W(k),(p))$. First, observe that
\[ \Ext_{(k)_{\prism}}^1(u^{-1}G,\mathcal{O}_{\prism})_{(W(k),(p))} \otimes k = \Ext_{(k)_{\prism}}^1(u^{-1}G,\overline{\mathcal{O}}_{\prism})_{(W(k),(p))}. \]  
This is easily seen, using that $\Ext_{(k)_{\prism}}^2(u^{-1}X,\mathcal{O}_{\prism})$ and $\Ext_{(k)_{\prism}}^2(u^{-1}X,\overline{\mathcal{O}}_{\prism})$ both vanish \red{(\Cref{sec:prism-dieud-cryst-reduced-ext-for-abelian-schemes} and \Cref{sec:prism-dieud-cryst-ext-for-abelian-schemes})}.

As a corollary of \Cref{sec:prism-cryst-abel-1-proposition-conjugate-spectral-sequence-degenerates-for-abelian-scheme-affine-case} \red{(together with the standard relation between $H^1(X,\mathcal{O})$ and $\mathrm{Lie}(X^\ast)$, cf \cite[\S 5.1.1]{berthelot_breen_messing_theorie_de_dieudonne_cristalline_II})} and \Cref{sec:prism-dieud-cryst-reduced-ext-for-abelian-schemes}, one has a short exact sequence 
\[ 0 \to u^{*} \mathrm{Lie}(X^*) \to \Ext_{(R)_{\prism}}^1(u^{-1}X,\overline{\mathcal{O}}_ {\prism}) \to u^{*}\omega_X\to 0,  \]
and similarly for $X'$. Also, note that we have exact sequences\footnote{Recall (\cite[\S 5.1.1]{berthelot_breen_messing_theorie_de_dieudonne_cristalline_II}) that if $X$ is an abelian scheme, $\mathrm{Lie}(X^*)\cong\mathrm{Ext}^1(X,\mathbb{G}_a)$.} :
    \[ 	u^{*} \mathrm{Lie}(X^*) \to u^{*} \mathrm{Lie}(X^{'*}) \to u^{*} \nu_{G^*} \to 0     \]
    (where $\nu_{G^*}=\Ext^1(G,\mathbb{G}_a)$) and 
     \[  u^{*} \omega_X \to u^{*} \omega_{X'} \to u^{*} \omega_{G} \to 0.  \]
    The map $\Ext_{(k)_{\prism}}^1(u^{-1}X',\overline{\mathcal{O}}_ {\prism}) \to \Ext_{(k)_{\prism}}^1(u^{-1}X,\overline{\mathcal{O}}_ {\prism})$ is compatible with the natural maps $u^{*} \mathrm{Lie}(X^*) \to u^{*} \mathrm{Lie}(X^{'*})$ and $u^{*} \omega_{X'} \to u^{*} \omega_X$, through the identifications of \Cref{sec:prism-dieud-cryst-reduced-ext-for-abelian-schemes}.  
     The long exact sequence of $\Ext$ gives a surjection :
	\[  \Ext_{(k)_{\prism}}^1(u^{-1}X',\overline{\mathcal{O}}_ {\prism}) \to \Ext_{(k)_{\prism}}^1(u^{-1}X,\overline{\mathcal{O}}_ {\prism}) \to \Ext_{(k)_{\prism}}^1(u^{-1}G,\overline{\mathcal{O}}_ {\prism}) \to 0,   \]
	since, as we have seen in \Cref{sec:prism-dieud-cryst-reduced-ext-for-abelian-schemes}, $\Ext_{(k)_{\prism}}^2(u^{-1}X',\overline{\mathcal{O}}_{\prism})=0$. By the above remark, we even have a commutative diagram :
	 $$
	 \xymatrix{
	 	0 \ar[r] \ar[d] & 0\ar[d] \ar[r] & 0  \ar[d]  \\
	 	u^{*} \mathrm{Lie}(X^*) \ar[r] \ar[d] & u^{*} \mathrm{Lie}(X^{'*})\ar[d] \ar[r] & u^{*} \nu_{G^*} \ar[r] \ar[d] & 0  \\
	 	\Ext_{(k)_{\prism}}^1(u^{-1}X,\overline{\mathcal{O}}_ {\prism}) \ar[r] \ar[d] & \Ext_{(k)_{\prism}}^1(u^{-1}X',\overline{\mathcal{O}}_ {\prism})\ar[d] \ar[r] & \Ext_{(k)_{\prism}}^1(u^{-1}G,\overline{\mathcal{O}}_ {\prism}) \ar[r] \ar[d] & 0 \\
	 	u^{*}\omega_X \ar[r] \ar[d]  & u^{*}\omega_{X'}  \ar[d] \ar[r] & u^{*} \omega_{G} \ar[d] \ar[r]  & 0 \\
 	   0 \ar[r]  & 0 \ar[r] & 0  }
	 $$
	 where all \red{rows} and the first two columns are exact. This proves that the map
	 \[ \Ext_{(k)_{\prism}}^1(u^{-1}G,\overline{\mathcal{O}}_ {\prism})  \to u^{*} \omega_{G}   \]
	 is surjective and an easy diagram chase prove that in fact the sequence 
	\[  u^{*} \nu_{G^*} \to \Ext_{(k)_{\prism}}^1(u^{-1}G,\overline{\mathcal{O}}_ {\prism}) \to u^{*}\omega_{G} \to 0  \]
	is exact. \red{As $G$ is a truncated Barsotti-Tate group, the sheaf $\omega_{G}$ is a locally free sheaf of rank $d=\dim G$ and $\nu_{G^*}$ is a locally free sheaf of rank $h-d$ (cf. \Cref{remark-statement-about-truncated-bt}, which applies whatever the level of $G$ is, since $p=0$ on $k$)}. Hence the sequence stays exact after evaluation on $(W(k),(p))$ and $\Ext_{(k)_{\prism}}^1(u^{-1}G,\overline{\mathcal{O}}_ {\prism})_{(W(k),(p))}$ is generated by $h$ sections. This proves the claim.

Back to the proof of the proposition, we know, as a direct consequence of \Cref{sec:prism-dieud-cryst-ext-for-abelian-schemes} that
\[ \Ext_{(R)_{\prism}}^1(u^{-1} \red{X^\prime}[p^n], \mathcal{O}_{\prism}) = \Ext_{(R)_{\prism}}^1(u^{-1} \red{X^\prime}, \mathcal{O}_{\prism})/p^n \]
is crystal of locally free $\mathcal{O}_{\prism}/p^n$-modules of rank \red{$2g$}. Consider the exact sequence
\[ 0 \to G \to \red{X^\prime}[p^n] \to H \to 0, \]
where $H$ is a Barsotti-Tate group of height $2g-h$, induced by the embedding of $G$ in $\red{X^\prime}$. This gives an exact sequence
\[ \Ext_{(R)_{\prism}}^1(u^{-1} H, \mathcal{O}_{\prism}) \to \Ext_{(R)_{\prism}}^1(u^{-1} \red{X^\prime}[p^n], \mathcal{O}_{\prism}) \to \Ext_{(R)_{\prism}}^1(u^{-1} G, \mathcal{O}_{\prism}) \to 0. \]
\green{Indeed, right-exactness follows from \ref{sec:prism-dieud-cryst-ext-for-abelian-schemes}, which implies that already $$\Ext_{(R)_{\prism}}^1(u^{-1} \red{X^\prime}, \mathcal{O}_{\prism}) \to \Ext_{(R)_{\prism}}^1(u^{-1} G, \mathcal{O}_{\prism})$$ is surjective.}
Locally on $(R)_{\prism}$, the middle term is free of rank $2g$ over $\mathcal{O}_{\prism}/p^n$, while the left (resp. right) term is generated by $2g-h$ (resp. $h$) sections. Therefore, $\Ext_{(R)_{\prism}}^1(u^{-1} H, \mathcal{O}_{\prism})$ and $\Ext_{(R)_{\prism}}^1(u^{-1} G, \mathcal{O}_{\prism})$ are free over $\mathcal{O}_{\prism}/p^n$ of rank $2g-h$ and $h$.
\end{proof}

\begin{proposition}  \label{sec:divid-prism-dieud-proposition-ext-crystal}
	Let $R$ be a $p$-complete ring, and let $G$ be a $p$-divisible group over $R$. The sheaf $$\Ext_{(R)_{\prism}}^1(u^{-1}G,\mathcal{O}_{\prism})$$ is a prismatic crystal of finite locally free $\mathcal{O}_{\prism}$-modules of rank the height of $G$.
	
	\purple{In particular, if $R$ is a quasi-syntomic ring and $G$ is a $p$-divisible group over $R$, the $\mathcal{O}^{\rm pris}$-module $\mathcal{M}_{\prism}(G)$ is a finite locally free $\mathcal{O}^{\rm pris}$-module of rank the height of $G$.} 
\end{proposition}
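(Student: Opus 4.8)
The plan is to deduce this from \Cref{crystal O bar} by a limit argument, exactly mirroring the structure already used in \Cref{sec:comp-over-mathc-comparison-over-pd-thickening-with-crystalline-dieudonne-module} and the proof of \Cref{crystal O bar}. Write $G = \varinjlim_n G[p^n]$, so that $u^{-1}(G) = \varinjlim_n u^{-1}(G[p^n])$ as sheaves on $(R)_{\prism}$ (using that $u^{-1}$ is exact, \Cref{sec:prismatic-to-syntomic-corollary-exactness-if-kernel-is-affine-p-completely-syntomic}, and commutes with colimits). Each $G[p^n]$ is a truncated Barsotti--Tate group of level $n$ over $R$, so by \Cref{crystal O bar}, $\mathcal{E}xt^1_{(R)_{\prism}}(u^{-1}G[p^n], \mathcal{O}_{\prism})$ is a prismatic crystal of finite locally free $\mathcal{O}_{\prism}/p^n$-modules, of rank equal to the height $h$ of $G$ (independent of $n$). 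First I would record the vanishing $\mathcal{E}xt^i_{(R)_{\prism}}(u^{-1}G[p^n], \mathcal{O}_{\prism}) = 0$ for $i = 0$, which follows for $i=0$ from the fact that $u^{-1}(G[p^n])$ is killed by $p^n$ while $\mathcal{O}_{\prism}$ is $p$-torsion free (each prism being $p$-torsion free), and note that the natural maps $G[p^n]\to G[p^{n+1}]$ induce maps of the associated $\mathcal{E}xt^1$-sheaves.

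Next I would analyze the transition maps. Multiplication by $p$ on $G[p^{n+1}]$ factors as $G[p^{n+1}]\twoheadrightarrow G[p^n]\hookrightarrow G[p^{n+1}]$, and the inclusion $G[p^n]\hookrightarrow G[p^{n+1}]$ together with the quotient $G[p^{n+1}]/G[p^n]\cong G[p]$ (via $p^n$) gives short exact sequences of finite locally free group schemes over $R$. Applying $\mathcal{H}om_{(R)_{\prism}}(u^{-1}(-),\mathcal{O}_{\prism})$ to these, using $\mathcal{H}om=0$ and the local freeness over $\mathcal{O}_{\prism}/p^m$ from \Cref{crystal O bar}, one gets that the system $\{\mathcal{E}xt^1_{(R)_{\prism}}(u^{-1}G[p^n],\mathcal{O}_{\prism})\}_n$ is, locally on $(R)_{\prism}$, isomorphic to the system $\{(\mathcal{O}_{\prism}/p^n)^h\}_n$ with the obvious surjective transition maps (more precisely: the transition map $\mathcal{E}xt^1(u^{-1}G[p^{n+1}])\to \mathcal{E}xt^1(u^{-1}G[p^n])$ is identified with reduction modulo $p^n$). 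In particular, this pro-system is Mittag--Leffler with surjective transition maps, so $\varprojlim^1$ vanishes, and passing to the inverse limit of the defining exact sequences yields
$$
\mathcal{E}xt^1_{(R)_{\prism}}(u^{-1}G, \mathcal{O}_{\prism}) \cong \varprojlim_n \mathcal{E}xt^1_{(R)_{\prism}}(u^{-1}G[p^n], \mathcal{O}_{\prism}),
$$
which is therefore, locally on $(R)_{\prism}$, isomorphic to $\varprojlim_n (\mathcal{O}_{\prism}/p^n)^h = \mathcal{O}_{\prism}^h$ (each prism being $p$-adically complete, as it is bounded, cf.\ \Cref{remarks-after-definition-prism}(4)). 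Hence $\mathcal{E}xt^1_{(R)_{\prism}}(u^{-1}G,\mathcal{O}_{\prism})$ is a finite locally free $\mathcal{O}_{\prism}$-module of rank $h$; the crystal property is inherited from the crystal property at each finite level.

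I would then derive the statement for $\mathcal{M}_{\prism}(G)$ over a quasi-syntomic $R$ immediately: by \Cref{sec:divid-prism-dieud-lemma-divided-dieudonne-module-via-local-ext-on-prismatic-site}, $\mathcal{M}_{\prism}(G) = v_*\big(\mathcal{E}xt^1_{(R)_{\prism}}(u^{-1}G,\mathcal{O}_{\prism})\big)$, and by \Cref{sec:abstr-divid-prism-proposition-finite-locally-free} the functor $v_*$ carries finite locally free $\mathcal{O}_{\prism}$-modules to finite locally free $\mathcal{O}^{\rm pris}$-modules, preserving the rank. The main obstacle I anticipate is not conceptual but bookkeeping: one must verify carefully that the transition maps in the pro-system behave as claimed (i.e.\ that the map on $\mathcal{E}xt^1$ induced by $G[p^n]\hookrightarrow G[p^{n+1}]$ really is, locally, reduction mod $p^n$), which requires the finite-level local-freeness of \Cref{crystal O bar} together with the vanishing of the relevant $\mathcal{H}om$ and $\mathcal{E}xt^2$ groups; once this is in place, the $\varprojlim^1=0$ argument and the completeness of bounded prisms close the proof routinely.
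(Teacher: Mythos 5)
Your overall strategy (reduce to the truncated levels $G[p^n]$ via \Cref{crystal O bar} and pass to the limit) is the same as the paper's, but two steps you treat as routine are exactly where the real content lies, and one of them rests on a false premise. First, you justify $\mathcal{H}om_{(R)_{\prism}}(u^{-1}G[p^n],\mathcal{O}_{\prism})=0$ by saying that ``each prism is $p$-torsion free''. This is not true for a general $p$-complete ring $R$: the definition of a (bounded) prism $(B,J)$ only constrains the torsion of $B/J$, and $p$-torsion freeness of $B$ is only available in special situations (e.g.\ in characteristic $p$, where every prism in $(R)_{\prism}$ is crystalline, or over a perfectoid base, where the paper has to prove a torsion-freeness statement, \Cref{sec:prism-dieud-theory-lemma-h-0-p-torsion-free-for-syntomic-things}, under the explicit hypothesis that the base prism is $p$-torsion free). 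Without this vanishing your Milnor sequence only gives surjectivity of $\Ext^1(u^{-1}G,\mathcal{O}_{\prism})\to\varprojlim_n\Ext^1(u^{-1}G[p^n],\mathcal{O}_{\prism})$, not injectivity. The paper circumvents precisely this point: it never claims $\mathcal{H}om(u^{-1}G[p^n],\mathcal{O}_{\prism})=0$, but instead proves $M/p^n\cong M_n$ (and $p$-torsion freeness of the relevant $\Ext^2$ of $G$), and then gets $M\cong\varprojlim_n M/p^n$ from $\mathcal{O}_{\prism}\cong R\varprojlim_n\mathcal{O}_{\prism}/p^n$ (classical completeness of bounded prisms) together with the vanishing $\mathrm{Hom}(u^{-1}G,\mathcal{O}_{\prism}/p^n)=0$, which uses only that $G$ is $p$-divisible and the target is killed by $p^n$ --- no torsion-freeness of prisms is needed.

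Second, the assertion that the pro-system $\{\Ext^1(u^{-1}G[p^n],\mathcal{O}_{\prism})\}$ is, locally, $\{(\mathcal{O}_{\prism}/p^n)^h\}$ with transition maps equal to reduction mod $p^n$ does not follow from \Cref{crystal O bar} plus ``vanishing of the relevant $\mathcal{H}om$ and $\mathcal{E}xt^2$ groups''. The needed surjectivity of $\Ext^1(u^{-1}G[p^{n+m}])\to\Ext^1(u^{-1}G[p^n])$ (equivalently, right-exactness of $M_m\xrightarrow{p^n}M_{n+m}\to M_n\to 0$) cannot be extracted from the finite-level structure alone: from the relations $[p]=\iota\circ\pi$ one only gets that the image of the transition map contains $p\,\Ext^1(u^{-1}G[p^n])$, and $\Ext^2$ of a finite locally free group scheme against $\mathcal{O}_{\prism}$ does not vanish in general, so the connecting map cannot be killed that way. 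The paper's mechanism is Raynaud's theorem (\Cref{sec:divid-prism-dieud-definition-for-p-div-groups-theorem-raynaud}): locally one embeds $G[p^{n+m}]$ into an abelian scheme $X$, and the surjectivity of $\Ext^1(u^{-1}X,\mathcal{O}_{\prism})\to\Ext^1(u^{-1}G[p^n],\mathcal{O}_{\prism})$, which comes from $\Ext^2(u^{-1}X',\mathcal{O}_{\prism})=0$ for abelian schemes (\Cref{sec:prism-dieud-cryst-ext-for-abelian-schemes}), factors through $\Ext^1(u^{-1}G[p^{n+m}],\mathcal{O}_{\prism})$. You need to incorporate this input explicitly; it is the heart of the argument rather than bookkeeping. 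Once both points are repaired, the remaining steps (the identification $N=\varprojlim_n M_n$ is finite locally free via the Stacks Project, the crystal property, and the deduction for $\mathcal{M}_{\prism}(G)$ via \Cref{sec:divid-prism-dieud-lemma-divided-dieudonne-module-via-local-ext-on-prismatic-site} and \Cref{sec:abstr-divid-prism-proposition-finite-locally-free}) go through as you say.
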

\begin{proof}
	Let $G$ be a $p$-divisible group over $R$. Since $G=\mathrm{colim}~G[p^n]$, we have a short exact sequence :
	\[  0 \to R^1 \underset{n} \lim ~ \mathcal{H}om_{(R)_{\prism}}(u^{-1} G[p^n], \mathcal{O}_{\prism}) \to \Ext_{(R)_{\prism}}^1(u^{-1} G, \mathcal{O}_{\prism}) \]
        \[\to   \underset{n} \lim  ~ \Ext_{(R)_{\prism}}^1(u^{-1} G[p^n], \mathcal{O}_{\prism}) \to \blue{R^2 \underset{n} \lim ~ \mathcal{H}om_{(R)_{\prism}}(u^{-1} G[p^n], \mathcal{O}_{\prism})}. \]
          \blue{The last term vanishes as the prismatic topos is replete.} We have to show that the first term vanishes, or even stronger, that for each $(B,J)\in (R)_{\prism}$ the morphism
          $$
          \mathrm{Ext}^1_{(R)_\prism/(B,J)}(u^{-1}(G),\mathcal{O}_\prism)\to \underset{n}\lim ~ \mathrm{Ext}^1_{(R)_\prism/(B,J)}(u^{-1}(G[p^n]),\mathcal{O}_\prism)
          $$
          is bijective.
          Set
          $$
          M:=\mathrm{Ext}^1_{(R)_\prism/(B,J)}(u^{-1}(G),\mathcal{O}_\prism)
          $$
          and
          $$
          M_n:=\mathrm{Ext}^1_{(R)_\prism/(B,J)}(u^{-1}(G[p^n]),\mathcal{O}_\prism)
          $$
          for $n\geq 0$. For $n,m\geq 0$ the sequence
          $$
          M_{m}\xrightarrow{p^n} M_{n+m}\to M_n\to 0
          $$
          is right exact (this follows by locally embedding $G[p^{m+n}]$ and using \Cref{sec:prism-dieud-cryst-ext-for-abelian-schemes}). Thus, the canonical morphism
          $$
          M_{n+m}\otimes_{B/p^{n+p}} B/p^n\to M_n
          $$
          is an isomorphism for $n,m\geq 0$. As all $M_n$ are finite locally free over $B/p^n$ (of rank the height of $G$) the $B$-module $N:=\varprojlim\limits_{n} M_n$ is finite locally free over $B$ (of rank the height of $G$) by \cite[Tag 0D4B]{stacks_project}. By the same reference
          $$
          N/p^n\cong M_n.
          $$
          The canonical morphism $M\to N$ is surjective (by a similar $R^1\varprojlim\limits_{n}$ sequence as above). In particular, we can conclude that $M\to M_n$ is surjective for each $n\geq 0$. The long exact sequence for $0\to u^{-1}(G[p^n])\to u^{-1}G\to u^{-1}G\to 0$ and the surjectivity of $M\to M_n$ imply that $M/p^n\cong M_n$ and $\mathrm{Ext}^{2}_{{(R)_{\prism}/(B,J)}}(u^{-1}(G),\mathcal{O}_{\prism})$ has no $p^n$-torsion. This $p$-torsion freeness of $\mathrm{Ext}^2$ in turn implies that
          $$
          M/p^n\cong \mathrm{Ext}^1_{(R)_\prism/(B,J)}(u^{-1}(G),\mathcal{O}_{\prism}/p^n).
          $$
          Our aim is to prove that $M\cong N$ or equivalently that $M$ is classically $p$-complete, i.e., $M\cong \varprojlim\limits_{n} M/p^n$.
          As all prisms in $(R)_{\prism}$ are by definition bounded, and thus classically $p$-complete,
          $$
          \mathcal{O}_{\prism}\cong \varprojlim\limits_{n} \mathcal{O}_{\prism}/p^n\cong R\varprojlim\limits_{n} \mathcal{O}_\prism/p^n.
          $$
          We can therefore calculate $M=\mathrm{Ext}^1_{(R)_\prism/(B,J)}(u^{-1}G,\mathcal{O}_\prism)$ by an exact sequence
          $$
          0\to R^1\varprojlim\limits_{n} \mathrm{Hom}_{(R)_\prism/(B,J)}(u^{-1}G,\mathcal{O}_\prism/p^n)\to M\to \varprojlim\limits_nM/p^n\to 0.
          $$
          In this sequence the $R^1\varprojlim\limits_{n}$-term vanishes as each $\mathrm{Hom}_{(R)_\prism/(B,J)}(u^{-1}G,\mathcal{O}_\prism/p^n)$ is zero because $G$ is $p$-divisible.
          \red{The isomorphisms}
          $$
          M\cong \varprojlim\limits_{n} M/p^n \cong \varprojlim\limits_n M_n
          $$
          \red{imply} that $\mathcal{M}_{\prism}(G)$ is a crystal, because they show that, even stronger,
          $$
          \mathrm{Ext}^1_{(R)_\prism/(B,J)}(u^{-1}G,\mathcal{O}_\prism)
          $$
          commutes with base change in $(B,J)$. This finishes the proof of the first sentence of the proposition. 
          
          The second sentence is an immediate corollary of the first one, together with \Cref{sec:abstr-divid-prism-proposition-finite-locally-free} and \Cref{sec:divid-prism-dieud-lemma-divided-dieudonne-module-via-local-ext-on-prismatic-site}.
	      \end{proof}
      
We can now summarize our discussion and prove the main result of this section. We need a last lemma.

 \begin{lemma}
      \label{sec:fully-faithfulness-p-lifting-p-divisible-groups-along-henselian-pairs} Let $(C,J)$ be an henselian pair and let $\overline{G}$ be a $p$-divisible group over $C/J$. Then there exists a $p$-divisible group $G$ over $C$ such that
      $$
G\otimes_{C} C/J\cong \overline{G}.
      $$
    \end{lemma}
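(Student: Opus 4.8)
The plan is to realize $\overline G$ as the filtered colimit of its truncations $\overline G[p^n]$ and to lift these compatibly, using the smoothness of the moduli stacks of truncated Barsotti--Tate groups together with the defining property of a henselian pair. First I would recall two inputs from Illusie's deformation theory of Barsotti--Tate groups (\cite{illusie_deformations_de_groupes_de_barsotti_tate}; see also \Cref{remark-statement-about-truncated-bt}): for each $n\geq 1$ the moduli stack $\BT_n^h$ of truncated Barsotti--Tate groups of level $n$ and height $h$ is a smooth algebraic stack of finite presentation over $\Z_p$, and the truncation morphisms $\BT_{n+1}^h\to\BT_n^h$, $G\mapsto G[p^n]$, are smooth. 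I would also record the elementary lifting statement: if $\mathcal{Y}$ is a smooth algebraic stack of finite presentation over $C$ and $(C,J)$ is henselian, then $\mathcal{Y}(C)\to\mathcal{Y}(C/J)$ is essentially surjective. To see this, choose a smooth presentation $U\to\mathcal{Y}$ by an affine scheme; after an \'etale localization on $C/J$, which lifts to an \'etale $C$-algebra because $(C,J)$ is henselian, one reduces to lifting a $C/J$-point of a smooth $C$-scheme, and this is possible since a smooth morphism is, \'etale-locally on the source, a composite of an \'etale morphism and an affine space, while \'etale morphisms have the unique lifting property along henselian pairs.

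Granting this, I would construct the lift by induction on $n$. Start with a lift $G_1\in\BT_1^h(C)$ of $\overline G[p]$. Assuming $G_n\in\BT_n^h(C)$ has been built together with an isomorphism $G_n\otimes_C C/J\cong\overline G[p^n]$, the pair consisting of $\overline G[p^{n+1}]$ and of the identification $\overline G[p^{n+1}][p^n]\cong\overline G[p^n]\cong G_n\otimes_C C/J$ defines a $C/J$-point of the algebraic stack $\mathcal{Y}_n:=\BT_{n+1}^h\times_{\BT_n^h,\,G_n}\Spec(C)$, which is smooth over $C$ because $\BT_{n+1}^h\to\BT_n^h$ is; by the lifting statement above this point extends to a $C$-point, yielding $G_{n+1}\in\BT_{n+1}^h(C)$ with isomorphisms $G_{n+1}[p^n]\cong G_n$ and $G_{n+1}\otimes_C C/J\cong\overline G[p^{n+1}]$ compatible with the datum for $G_n$. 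Setting $G:=\varinjlim_n G_n$, with transition maps the inclusions $G_n\cong G_{n+1}[p^n]\hookrightarrow G_{n+1}$, produces a $p$-divisible group over $C$: the Barsotti--Tate axioms for the $G_n$ ensure that the sequences $0\to G_n\to G_{n+m}\xrightarrow{p^n}G_m\to 0$ are exact. By construction $G\otimes_C C/J\cong\varinjlim_n\overline G[p^n]=\overline G$, as desired.

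The main obstacle is the lifting statement for smooth algebraic stacks over a general henselian pair, and, relatedly, keeping the whole argument at finite level so that the isomorphisms $G_{n+1}[p^n]\cong G_n$ are incorporated into the data being lifted (this is exactly why one passes to the fiber products $\mathcal{Y}_n$ rather than lifting each $\BT_n^h$-point in isolation); once one takes for granted the standard fact that sections of smooth morphisms lift along henselian pairs, everything else is bookkeeping. As a remark, in the only place where this lemma is used $C$ is moreover $(p,J)$-adically complete---e.g.\ $C=\prism_R$ and $J=\ker(\theta)$ by \Cref{sec:essent-surj-lemma-prism_r-henselian-along-ker-theta}---so one could alternatively lift $\overline G$ to a compatible system over the quotients $C/J^k$ using the formal smoothness of the deformation theory of Barsotti--Tate groups, and then algebraize, thereby avoiding moduli stacks entirely.
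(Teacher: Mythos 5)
Your proposal is essentially the paper's own proof: the paper likewise lifts the truncations $\overline G[p^n]$ inductively using that $\mathrm{BT}_n^h$ and the truncation morphisms $\mathrm{BT}_{n+1}^h\to\mathrm{BT}_n^h$ are smooth, invokes Elkik's theorem that sections of smooth $C$-algebras extend along the henselian pair $(C,J)$ in place of your sketched smooth-presentation argument, and then takes $G=\varinjlim_n H_n$. Your fiber-product bookkeeping with $\mathcal{Y}_n$ just makes explicit the compatibility the paper's induction leaves implicit.
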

    \begin{proof}
      Set $h$ as the height of $\overline{G}$.
      Let $\mathrm{BT}_n^h$ be the Artin stack (over $\Spec(\Z)$) of $n$-truncated Barsotti-Tate groups of height $h$. Then for any $n\geq 1$ the morphism
      $$
      \mathrm{BT}_n^h\to \mathrm{BT}^h_{n-1}
      $$
      is a smooth morphism between smooth Artin stacks (cf.\ \cite[Section 2]{lau_displays_and_formal_p_divisible_groups} resp.\ \cite[Thm 4.4]{illusie_deformations_de_groupes_de_barsotti_tate}). By \cite[Theorem, page 568]{elkik_solution_d_equation_a_coefficients_dans_un_anneau_henselian} (which extends to the non-noetherian case by passing to the limit) any section $D\to C/J$ of some smooth $C$-algebra $D$ extends to a section $D\to C$.
      These statements imply that inductively, we can lift $\overline{G}[p^n]$ to a truncated $p$-divisible group $H_n$ over $C$. Then finally
      $$
      G:=\varinjlim\limits_n H_n
      $$
      yields the desired lift over $\overline{G}$.
    \end{proof}

\red{
\begin{theorem} \label{sec:divid-prism-dieud-modul-proposition-window}
	Let $R$ be a quasi-syntomic ring, and let $G$ be a $p$-divisible group over $R$. The pair $(\mathcal{M}_{\prism}(G),\varphi_{\mathcal{M}_{\prism}(G)})$ of \Cref{sec:divid-prism-dieud-definition-divided-prismatic-dieudonne-crystal-of-p-divisible-groups} is an admissible prismatic Dieudonn\'e crystal over $R$.
\end{theorem}
\begin{proof}
Let $G$ be a $p$-divisible group over $R$. By \Cref{sec:divid-prism-dieud-proposition-ext-crystal}, we already know that $\mathcal{M}_{\prism}(G)$ is a finite locally free $\mathcal{O}^{\rm pris}$-module, endowed with the semilinear endomorphism $\varphi_{\mathcal{M}_G}$. We need to see that it this gives an admissible prismatic Dieudonn\'e crystal over $R$. The construction being functorial in $R$, it suffices by \Cref{sec:astr-divid-prism-proposition-descent} to deal with the case where $R$ is quasi-regular semiperfectoid. Choose a perfectoid ring $S$ mapping surjectively onto $R$; by \Cref{sec:essent-surj-lemma-existence-of-perfectoid-covers-which-are-henselian-along-ideal}, we can assume that $S$ is henselian along $\ker(S \to R)$. \Cref{sec:fully-faithfulness-p-lifting-p-divisible-groups-along-henselian-pairs} (applied to $(C,J)=(S,\ker(S\to R))$ and $\overline{G}=G$) shows that $G$ is the base change of a $p$-divisible group $H$ over $S$. Hence, $(\mathcal{M}_{\prism}(G),\varphi_{\mathcal{M}_{\prism}(G)})$ is the base change of $(\mathcal{M}_{\prism}(H),\varphi_{\mathcal{M}_{\prism}(H)})$, which we know to be an admissible Dieudonn\'e crystal since $S$ is perfectoid, cf. \Cref{sec:prism-dieud-theory-comparison-with-bkf-modules-perfectoid-case}. 
\end{proof}
}

We now state two useful properties of the prismatic Dieudonn\'e functor : its exactness and its compatibility with Cartier duality. 

\begin{proposition} \label{sec:divid-prism-dieud-exactness}
Let $R$ be a quasi-syntomic ring. The functor
\[ \mathcal{M}_{\prism} : \BT(R) \to \mathrm{DM}(R), \quad  G \mapsto \mathcal{M}_{\prism}(G) \]
is exact.
\end{proposition}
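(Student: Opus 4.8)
The plan is to show that a short exact sequence $0 \to G' \to G \to G'' \to 0$ of $p$-divisible groups over $R$ induces a short exact sequence
\[ 0 \to \mathcal{M}_{\prism}(G'') \to \mathcal{M}_{\prism}(G) \to \mathcal{M}_{\prism}(G') \to 0 \]
of prismatic Dieudonn\'e crystals. The key input is the long exact sequence of $\mathcal{E}xt$-sheaves on $(R)_{\mathrm{qsyn}}$ obtained by applying $R\mathcal{H}om_{(R)_{\mathrm{qsyn}}}(-,\mathcal{O}^{\rm pris})$ to the given short exact sequence. Since $\mathcal{H}om_{(R)_{\mathrm{qsyn}}}(H,\mathcal{O}^{\rm pris}) = 0$ for any $p$-divisible group $H$ (because $\mathcal{O}^{\rm pris}$ is derived $p$-complete and $H$ is $p$-divisible, cf.\ the proof of \Cref{sec:divid-prism-dieud-lemma-frobenius-factors-through-ideal-times-module}), the long exact sequence begins
\[ 0 \to \mathcal{M}_{\prism}(G'') \to \mathcal{M}_{\prism}(G) \to \mathcal{M}_{\prism}(G') \to \mathcal{E}xt^2_{(R)_{\mathrm{qsyn}}}(G'',\mathcal{O}^{\rm pris}) \to \cdots, \]
so left exactness is immediate and it remains to prove that the connecting map to $\mathcal{E}xt^2$ vanishes, i.e.\ that $\mathcal{M}_{\prism}(G) \to \mathcal{M}_{\prism}(G')$ is surjective.

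To check surjectivity it suffices to work locally on $(R)_{\mathrm{qsyn}}$, hence by \Cref{sec:quasi-syn-rings-proposition-quasi-regular semiperfectoid-basis} one may assume $R$ is quasi-regular semiperfectoid and check surjectivity of the corresponding map of finite locally free $\prism_R$-modules $M_{\prism}(G) \to M_{\prism}(G')$. By Nakayama and $p$-completeness of $\prism_R$, this can be verified after base change along each surjection $\prism_R \to k$ with $k$ a perfect field of characteristic $p$; as in the proof of \Cref{sec:prism-dieud-modul-proposition-nygaard-filtration-on-dieu-module-for-abelian-scheme}, such a map lifts to a map of $\delta$-rings $\prism_R \to W(k)$, and by the crystal property of $\mathcal{M}_{\prism}$ established in \Cref{sec:divid-prism-dieud-proposition-ext-crystal} the base-changed map is the prismatic Dieudonn\'e map for the induced sequence over $k$. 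Thus one reduces to $R = k$ a perfect field. There one invokes the comparison with the crystalline Dieudonn\'e functor (\Cref{sec:prism-dieud-theory-theorem-comparison-with-crystalline-dieudonne-functor}), which identifies $M_{\prism}$ with the evaluation on $W(k)$ of the Berthelot-Breen-Messing crystalline Dieudonn\'e crystal; exactness of the latter on short exact sequences of $p$-divisible groups over a perfect field is classical (cf.\ \cite{berthelot_breen_messing_theorie_de_dieudonne_cristalline_II}).

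Alternatively, and perhaps more cleanly, one can reduce directly to the level of finite locally free group schemes: writing $G = \mathrm{colim}\, G[p^n]$ and using the vanishing of the relevant $R^1\lim$ term exactly as in \Cref{sec:divid-prism-dieud-proposition-ext-crystal}, surjectivity of $\mathcal{M}_{\prism}(G) \to \mathcal{M}_{\prism}(G')$ follows from surjectivity of $\mathcal{M}_{\prism}(G[p^n]) \to \mathcal{M}_{\prism}(G'[p^n])$ for all $n$, and the latter in turn follows by embedding everything, Zariski-locally on $R$, into abelian schemes via Raynaud's theorem (\Cref{sec:divid-prism-dieud-definition-for-p-div-groups-theorem-raynaud}) and using the vanishing $\mathcal{E}xt^2_{(R)_{\prism}}(u^{-1}X,\mathcal{O}_{\prism}) = 0$ for abelian schemes from \Cref{sec:prism-dieud-cryst-ext-for-abelian-schemes}(1). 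The main obstacle is precisely the vanishing of the connecting homomorphism into $\mathcal{E}xt^2$; everything else is formal homological algebra in the topos $(R)_{\mathrm{qsyn}}$ combined with results already proved. I would present the argument via the reduction to perfect fields and the crystalline comparison, since that keeps the dependencies minimal.
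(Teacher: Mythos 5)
Your proof is correct, and its opening step (the long exact sequence obtained from $R\mathcal{H}om_{(R)_{\rm qsyn}}(-,\mathcal{O}^{\rm pris})$ together with the vanishing of $\mathcal{H}om(G',\mathcal{O}^{\rm pris})$) coincides with the paper's; but you settle the key point, surjectivity of $\mathcal{M}_{\prism}(G)\to\mathcal{M}_{\prism}(G')$, by a genuinely different route. The paper never reduces to perfect fields here: it works at finite level, writing $G''=\varinjlim_n H_n$ with $H_n=G[p^n]/G'[p^n]$, deduces surjectivity of $\mathcal{M}_{\prism}(G[p^n])\to\mathcal{M}_{\prism}(G'[p^n])$ and of the transition maps $\mathcal{M}_{\prism}(H_{n+1})\to\mathcal{M}_{\prism}(H_n)$ from Raynaud's embedding theorem (\Cref{sec:divid-prism-dieud-definition-for-p-div-groups-theorem-raynaud}) and the vanishing of $\mathcal{E}xt^2$ for abelian schemes (\Cref{sec:prism-dieud-cryst-ext-for-abelian-schemes}), and then passes to the limit using that surjectivity of the transition maps makes the system of exact sequences Mittag-Leffler; note that your aside sketching this ``alternative'' is a bit too quick, since level-wise surjectivity alone does not give surjectivity after $\varprojlim$ -- one needs precisely this input on the system $\mathcal{M}_{\prism}(H_n)$. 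Your preferred argument instead exploits that $\mathcal{M}_{\prism}(G)$ and $\mathcal{M}_{\prism}(G')$ are already known to be finite locally free crystals (\Cref{sec:divid-prism-dieud-proposition-ext-crystal}), so that after reducing to $R$ quasi-regular semiperfectoid the cokernel can be tested on fibres at characteristic $p$ perfect fields via Nakayama, the $\delta$-lift $\prism_R\to W(k)$ and the crystal property, and is then killed by the crystalline comparison (\Cref{sec:prism-dieud-theory-theorem-comparison-with-crystalline-dieudonne-functor}) together with classical Dieudonn\'e theory over $k$; this is the same point-counting device the paper uses in \Cref{sec:prism-dieud-modul-proposition-nygaard-filtration-on-dieu-module-for-abelian-scheme} and \Cref{sec:divid-prism-dieud-modul-proposition-window}, and there is no circularity since both ingredients precede the exactness statement. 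What the paper's route buys is independence from the crystalline comparison for this proposition and level-wise surjectivity statements that get reused for finite locally free group schemes; what yours buys is brevity and fewer moving parts once local freeness is known, at the cost of invoking the comparison with Berthelot-Breen-Messing and exactness of classical Dieudonn\'e theory over perfect fields.
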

\begin{proof}
Let 
\[ 0 \to G' \to G \to G'' \to 0 \] 
be a short exact sequence of $p$-divisible groups over $R$, which we see as an exact sequence of abelian sheaves on $(R)_{\rm qsyn}$. Applying $R\mathcal{H}om_{(R)_{\rm qsyn}}(-, \mathcal{O}^{\rm pris})$ to it, we get a long exact sequence :
\[  \mathcal{H}om_{(R)_{\rm qsyn}}(G',\mathcal{O}^{\rm pris}) \to \mathcal{M}_{\prism}(G'') \to \mathcal{M}_{\prism}(G) \to \mathcal{M}_{\prism}(G') \to  \mathcal{E}xt_{(R)_{\rm qsyn}}^2(G'',\mathcal{O}^{\rm pris}).  \]
The first term vanishes as $G^\prime$ is $p$-divisible and $\mathcal{O}^{\rm pris}$ derived $p$-complete. Let us prove surjectivity of $\mathcal{M}_\prism(G)\to \mathcal{M}_\prism(G^\prime)$. For $n\geq 1$ consider the exact sequences
  $$
  0\to G^\prime[p^n]\to G[p^n]\to H_n\to 0.
  $$
  Then $G^{\prime\prime}=\varinjlim\limits_n H_n$ with injective transition maps $H_n\to H_{n+1}$ (as $G[p^{n}]\subseteq G^\prime=G^\prime[p^n]$ for all $n\geq 1$). As in the proof of \Cref{sec:divid-prism-dieud-proposition-ext-crystal} we can conclude that
  $$
  \mathcal{M}_\prism(G[p^n])\to \mathcal{M}_\prism(G^\prime[p^n]),\ \mathcal{M}_\prism(H_{n+1})\to \mathcal{M}_\prism(H_n) 
  $$
  are surjective. Passing to the limit of the exact sequences
  $$
  \mathcal{M}_{\prism}(H_n) \to \mathcal{M}_{\prism}(G[p^n]) \to \mathcal{M}_{\prism}(G'[p^n]) \to 0
  $$
  implies therefore that
  $$
  \mathcal{M}_\prism(G)\to \mathcal{M}_\prism(G^\prime)
  $$
  is surjective, as desired.
\end{proof}

Let $R$ be a quasi-syntomic ring and let $G$ be a $p$-divisible group over $R$ with Cartier dual $\check{G}$.
Passing to the limit for the Cartier duality on finite flat group schemes yields isomorphisms
$$
T_p(\check{G})\cong \mathcal{H}om_R(T_pG,T_p\mu_{p^\infty})\cong \mathcal{H}om_R(G,\mu_{p^\infty})
$$
of sheaves on $(R)_{\mathrm{qsyn}}$.
We first construct a canonical morphism
$$
\Phi_{G}\colon \mathcal{M}_{\prism}(G)^\vee\otimes_{\mathcal{O}^{\pris}} \mathcal{M}_{\prism}(\mu_{p^\infty})\to \mathcal{M}_{\prism}(\check{G}),
$$
where $\mathcal{M}_{\prism}(G)^\vee$ denotes the $\mathcal{O}^\pris$-linear dual of $\mathcal{M}_\prism(G)$.
Recall that
$$
\mathcal{M}_{\prism}(\check{G})\cong \mathcal{H}om(T_p\check{G},\mathcal{O}^\pris)
$$
by \Cref{sec:divid-prism-dieud-lemma-prismatic-dieudonne-module-via-hom}.
Thus we can define $\Phi_G$ by setting
$$
\Phi_G(\delta\otimes l)(\alpha):=(\delta\circ \mathcal{M}_{\prism}(\alpha))(l)\in \mathcal{O}^\pris
$$
where
$$
\delta\in \mathcal{M}_\prism(G)^\vee, l\in \mathcal{M}_\prism(\mu_{p^{\infty}}), \alpha\in \mathcal{H}om(G,\mu_{p^\infty})\cong T_p\check{G}.
$$
Clearly, the morphism $\Phi_G$ is natural in $G$ and commutes with base change in $R$.

\begin{proposition}
\label{sec:divid-prism-dieud-compatibility-cartier-duality}
Let $R$ be a quasi-syntomic ring. For every $p$-divisible group $G$ over $R$, the map
$$
\Phi_G\colon \mathcal{M}_{\prism}(G)^\vee\otimes_{\mathcal{O}^\pris}\mathcal{M}_\prism(\mu_{p^\infty})\to \mathcal{M}_{\prism}(\check{G})
$$ constructed above is an isomorphism.
\end{proposition}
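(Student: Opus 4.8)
The plan is to reduce the statement to the already-proven compatibility with Cartier duality for $p$-divisible groups attached to abelian schemes, and, ultimately, to the perfectoid case, where everything is known by the comparison with the Scholze--Weinstein functor (\Cref{sec:prism-dieud-theory-proposition-comparison-scholze-weinstein-functor}). First I would note that, by \Cref{sec:astr-divid-prism-proposition-descent} and \Cref{sec:quasi-syn-rings-proposition-quasi-regular semiperfectoid-basis}, the morphism $\Phi_G$ is an isomorphism of finite locally free $\mathcal{O}^{\rm pris}$-modules if and only if it is an isomorphism after restriction to a quasi-syntomic cover; since $\Phi_G$ commutes with base change in $R$, it is enough to treat the case where $R$ is quasi-regular semiperfectoid, where, via \Cref{sec:abstr-divid-prism-proposition-equivalence-crystals-modules-for-quasi-regular semiperfectoid}, one can work with the concrete $\prism_R$-modules $M_{\prism}(-)$. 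Here $\Phi_G$ becomes a $\prism_R$-linear map between finite projective modules of the same rank (the height $h$ of $G$, using \Cref{sec:divid-prism-dieud-proposition-ext-crystal}), so by Nakayama it suffices to check that it is surjective, and for this one may further base change along each map $\prism_R \to W(k)$ to a perfect field (these detect whether a map of finite projective modules over the $p$-complete ring $\prism_R$ is surjective). Alternatively — and this is the cleaner route — one can check the isomorphy after base change along $\prism_R \to \mathcal{O}_C^\flat\text{-side}$, i.e. reduce directly to $R$ perfectoid.

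Concretely, I would argue as follows. The construction of $\Phi_G$ is functorial in $G$ and compatible with base change in $R$; moreover all three functors $G\mapsto \mathcal{M}_\prism(G)^\vee$, $G \mapsto \mathcal{M}_\prism(\check G)$, and the formation of the pairing are compatible with the passage to Cartier duals and with taking kernels of idempotents. Using \Cref{sec:prism-dieud-theory-proposition-comparison-scholze-weinstein-functor} (Fargues's theorem in the form of Scholze--Weinstein) one knows that over a perfectoid ring the prismatic Dieudonn\'e functor is an antiequivalence $\BT(R)\simeq \DM(R)$ compatible with duality (the duality on $\DM(R)$ being twisted by $\mathcal{M}_\prism(\mu_{p^\infty})$, which over a perfectoid ring is the Breuil--Kisin--Fargues module $A_{\rm inf}(R)\{1\}$ — the key computation $\mathcal{M}_\prism(\mu_{p^\infty}) \cong A_{\rm inf}(R)\cdot t$ with the Frobenius twist being exactly $\tilxi$, coming from the $q$-logarithm of \Cref{sec:prism-dieud-module-lemma-divided-logarithm}). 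Hence $\Phi_G$ is an isomorphism when $R$ is perfectoid. Now for $R$ quasi-regular semiperfectoid, choose a perfectoid $S$ surjecting onto $R$ and a perfect prism $(A,I)$ with $A/I\cong S$; then $\prism_R$ is an $A$-algebra, and the map $A \to \prism_R$ — or rather, one should pass to a faithfully flat perfectoid cover and invoke the perfectoid case over $S$ together with a descent/base-change argument. More robustly: since $\Phi_G$ is a map of finite projective $\prism_R$-modules, to prove it is an isomorphism it suffices to do so modulo the Jacobson radical after base change along each $\prism_R\to W(k)$; such a base change comes from a map $R\to k$, and $W(k)$ is (the $A_{\rm inf}$ of) the perfectoid ring $k$, so the statement over $k$ is the perfectoid case, hence known.

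The main obstacle I anticipate is bookkeeping rather than conceptual: one must carefully check that the explicitly defined pairing $\Phi_G$ is genuinely compatible with the duality isomorphism furnished by the Scholze--Weinstein comparison — i.e. that the diagram relating $\Phi_G$ to $M^{SW}$-duality commutes — and that the identification $\mathcal{M}_\prism(\mu_{p^\infty})\cong A_{\rm inf}\{1\}$ used there matches the normalizations ($\tilxi$ versus $\xi$, the choice of $q$) fixed throughout the paper. A secondary point requiring care is verifying that $\Phi_G$ is defined using $\mathcal{M}_\prism(\check G)\cong \mathcal{H}om(T_p\check G,\mathcal{O}^{\rm pris})$ (\Cref{sec:divid-prism-dieud-lemma-prismatic-dieudonne-module-via-hom}) in a way compatible with the isomorphism $T_p(\check G)\cong \mathcal{H}om_R(G,\mu_{p^\infty})$, so that the formula $\Phi_G(\delta\otimes l)(\alpha)=(\delta\circ\mathcal{M}_\prism(\alpha))(l)$ really does land in the claimed target; once the perfectoid case and this compatibility are in hand, the reduction via descent is formal.
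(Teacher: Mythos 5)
Your reduction to perfect fields is exactly the paper's first step: both sides are finite locally free $\mathcal{O}^{\rm pris}$-modules of the same rank, so it suffices to check surjectivity of $\Phi_G$, and this can be tested after base change along the maps $\prism_R\to W(k)$ induced by points $R\to k$ with $k$ perfect. The gap is at the bottom of your reduction: over $k$ (or a perfectoid ring) the statement you need is not ``known''. What is known — Fontaine's classical duality for crystalline Dieudonn\'e theory, or whatever duality compatibility one can extract for $M^{SW}$ — is that \emph{some} duality isomorphism between $\mathcal{M}_{\prism}(G)^\vee\otimes_{\mathcal{O}^\pris}\mathcal{M}_\prism(\mu_{p^\infty})$ and $\mathcal{M}_{\prism}(\check{G})$ exists; the proposition asserts that the \emph{specific} map $\Phi_G$, defined by $\Phi_G(\delta\otimes l)(\alpha)=(\delta\circ \mathcal{M}_{\prism}(\alpha))(l)$, is an isomorphism. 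You explicitly defer the identification of $\Phi_G$ with the comparison-induced duality, calling it ``bookkeeping rather than conceptual'', but that identification is precisely the mathematical content of the proof: a priori $\Phi_G$ could fail to be surjective even though the two modules are abstractly isomorphic. Note also that \Cref{sec:prism-dieud-theory-proposition-comparison-scholze-weinstein-functor} as stated identifies the underlying functors only; it does not assert compatibility with duality, so invoking it does not by itself give you anything about $\Phi_G$.

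The paper closes this hole with a naturality argument you would need to supply (or replace). For any natural transformation $\Psi_{(-)}\colon \mathcal{M}_{\prism}(-)^\vee\otimes_{\mathcal{O}^\pris}\mathcal{M}_\prism(\mu_{p^\infty})\to \mathcal{M}_{\prism}((-)^\ast)$, applying naturality to the morphism $\alpha\colon G\to\mu_{p^\infty}$ itself yields $\Psi_G(\delta\otimes l)(\alpha)=\Psi_{\mu_{p^\infty}}(\delta\circ \mathcal{M}_{\prism}(\alpha)\otimes l)(\mathrm{Id}_{\mu_{p^\infty}})$, so $\Psi$ is determined by its value at $G=\mu_{p^\infty}$. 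Over a perfect field both $\Phi_{\mu_{p^\infty}}$ (an evaluation map on a rank-one module, free by the crystalline comparison) and the classical duality isomorphism $\Phi^{\mathrm{cl}}_{\mu_{p^\infty}}$ of Fontaine are isomorphisms, hence $\Phi_{(-)}=u\,\Phi^{\mathrm{cl}}_{(-)}$ for a single unit $u$, and Fontaine's theorem then gives that $\Phi_G$ is an isomorphism for every $G$. To repair your proposal you should either reproduce an argument of this kind over perfect fields (where the crystalline comparison \Cref{sec:prism-dieud-theory-theorem-comparison-with-crystalline-dieudonne-functor} is available), or genuinely verify the commutative diagram relating $\Phi_G$ to a duality statement for $M^{SW}$ over perfectoid rings — which is itself a claim requiring proof, not a citation.
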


\red{\Cref{sec:prism-dieud-modul-proposition-prismatic-dieudonne-module-of-mu-p-infty} implies, via quasi-syntomic descent, that $\mathcal{M}_{\prism}(G)^\vee\otimes_{\mathcal{O}^\pris}\mathcal{M}_\prism(\mu_{p^\infty})$ is naturally a prismatic Dieudonn\'e crystal when equipped with the Frobenius
  \[
    1\otimes \delta\otimes l \in \mathcal{O}^\pris\otimes_{\varphi, \mathcal{O}^\pris} (\mathcal{M}_\prism(G)^\vee\otimes_{\mathcal{O}^\pris}\mathcal{M}_\prism(\mu_{p^\infty})) \mapsto \varphi^{\ast}\delta\circ \varphi_{\mathcal{M}_{\prism}(G)}^{-1}\otimes \varphi_{\mu_{p^\infty}}(1\otimes l)
  \]
(using the identification $\varphi^{\ast}\mathcal{O}^\pris\cong \mathcal{O}^\pris$ and the inverse $\varphi^{-1}_{\mathcal{M}_\prism(G)}\colon \mathcal{M}_\prism(G)\to 1/\mathcal{I}^\pris\varphi^\ast \mathcal{M}_\prism(G)$ of the linearized Frobenius on $\mathcal{M}_\prism(G)$). With this choice of Frobenius one checks that $\Phi_G$ is a morphism of prismatic Dieudonn\'e crystals, i.e., compatible with the Frobenius.}

\begin{proof}
  Both sides are locally free $\mathcal{O}^\pris$-modules of the same rank (cf.\ \Cref{sec:divid-prism-dieud-proposition-ext-crystal}). Hence it suffices to see that $\Phi_G$ is surjective, which can be checked after base change $R\to k$ to perfect fields $k$ of characteristic $p$. Thus, assume that $R=k$. By \Cref{sec:prism-dieud-theory-theorem-comparison-with-crystalline-dieudonne-functor} the prismatic Dieudonn\'e functor over $k$ is isomorphic to the crystalline one. Let
  $$
  \Phi_G^{\mathrm{cl}}\colon \mathcal{M}_{\prism}(G)^\vee\otimes_{\mathcal{O}^\pris}\mathcal{M}_\prism(\mu_{p^\infty})\to \mathcal{M}_{\prism}(\check{G})
  $$
  be the natural isomorphism coming from classical duality for the crystalline Dieudonn\'e functor over perfect fields (cf. for example\ \cite[Proposition III 5.1.iii)]{fontaine_groupes_p_divisibles_sur_les_corps_locaux}).
Let
$$
\Psi_{(-)}\colon \mathcal{M}_{\prism}(-)^\vee\otimes_{\mathcal{O}^\pris}\mathcal{M}_\prism(\mu_{p^\infty})\to \mathcal{M}_{\prism}((-)^\ast)
$$
be any natural transformation (of functors on $p$-divisible groups over quasi-syntomic rings over $k$). Then for any morphism $\gamma\colon G\to H$ of $p$-divisible groups, there is an equality
\begin{equation}
  \label{eq:1}
\Psi_G(\delta\otimes l)(\alpha\circ \gamma)=\Psi_H(\delta\circ \mathcal{M}_{\prism}(\gamma)\otimes l)(\alpha)  
\end{equation}
where $\delta\in \mathcal{M}_\prism(G), l\in \mathcal{M}_{\prism}(\mu_{p^\infty}), \alpha\in \mathcal{H}om(H,\mu_{p^\infty})$.
We want to show that $\Phi_G=u\Phi^{\mathrm{cl}}_G$ for all $p$-divisible groups $G$ and some unit $u\in \mathcal{O}^{\pris}$ (independent of $G$). Thus pick $\delta\in \mathcal{M}_\prism(G)^\vee, l\in \mathcal{M}_\prism(\mu_{p^\infty})$ and $\alpha\in \mathcal{H}om(G,\mu_{p^\infty})$. Applying (\Cref{eq:1}) to $\gamma=\alpha\colon G\to \mu_{p^\infty}$ implies
$$
\Psi_G(\delta\otimes l)(\alpha)=\Psi_{\mu_{p^\infty}}(\delta\circ \mathcal{M}_{\prism}(\alpha)\otimes l)(\mathrm{Id}_{\mu_{p^\infty}})
$$
for any natural transformation $\Psi_{(-)}$ as above. In particular, $\Psi$ (and thus $\Phi_{(-)}$ and $\Phi^{\mathrm{cl}}_{(-)}$ as examples) are determined by their behavior on $G=\mu_{p^\infty}$. For $\mu_{p^\infty}$ both induce an isomorphism
$$
\mathcal{M}_{\prism}(\mu_{p^\infty})^\vee\otimes_{\mathcal{O}^\pris} \mathcal{M}_{\prism}(\mu_{p^\infty})\cong \mathcal{H}om(T_p(\mu_{p^\infty}),\mathcal{O}^\pris)\cong \mathcal{O}^\pris.
$$
Namely, $\Phi_{\mu_{p^\infty}}$ is given by the natural evaluation, which is an isomorphism as $\mathcal{M}_{\prism}(\mu_{p^\infty})$ is free over rank $1$ (by the crystalline comparison, cf.\ \Cref{sec:prism-dieud-theory-theorem-comparison-with-crystalline-dieudonne-functor}). That $\Phi_{\mu_{p^\infty}}^{\mathrm{cl}}$ is an isomorphism follows from classical Dieudonn\'e theory (cf.\ \cite[Proposition 5.1.iii)]{fontaine_groupes_p_divisibles_sur_les_corps_locaux}). Hence, $\Phi_{\mu_{p^\infty}}$ and $\Phi^{\mathrm{cl}}_{\mu_{p^\infty}}$ differ by some unit $u\in \mathcal{O}^\pris$\footnote{Of course, one expects $u=\pm 1$, but as this finer statement is not necessary for us, we avoided the calculation verifying this.}. This implies $\Phi_G=u\Phi^{\mathrm{cl}}_G$ for all $G$ by naturality. By \cite[Proposition 5.1.iii)]{fontaine_groupes_p_divisibles_sur_les_corps_locaux} we can conclude.
\end{proof}

The main result of this text is the following theorem, whose proof will spread out over the next sections.

\begin{theorem} 
\label{sec:divid-prism-dieud-modul-theorem-main-theorem-of-the-paper}
\purple{Let $R$ be a quasi-syntomic ring.} The \red{prismatic Dieudonn\'e functor :
\[ \mathcal{M}_{\prism} : \mathrm{BT}(R) \to \DM^{\rm adm}(R) \]
is an antiequivalence between the category of $p$-divisible groups over $R$ and the category of admissible prismatic Dieudonn\'e crystals over $R$.}
\end{theorem}

\begin{proof}
By \Cref{sec:quasi-syn-rings-proposition-quasi-regular semiperfectoid-basis} and the fact that both $\mathrm{BT}$ and \red{$\DM^{\rm adm}$} are stacks on $\mathrm{QSyn}$ for the quasi-syntomic topology (see   \Cref{sec:fully-faithf-prism-proposition-bt-is-a-stack} and \Cref{sec:astr-divid-prism-proposition-descent}), we can assume that moreover $R$ is quasi-regular semiperfectoid. Then the theorem is a consequence of \Cref{sec:fully-faithf-prism-fully-faithfulness-for-p-torsion-free-quasi-regular semiperfectoid-new} and \Cref{sec:essent-surj-main-theorem-equivalence-of-divided-prismatic-dieudonne-functor}, to be proved below.
\end{proof}

\subsection{The prismatic Dieudonn\'e modules of $\Q_p/\Z_p$ and $\mu_{p^\infty}$}
\label{sec:prism-dieud-modul}

In this subsection, we calculate the prismatic Dieudonn\'e crystals of $\Q_p/\Z_p$ and $\mu_{p^\infty}$ to explicitly work out some examples for prismatic Dieudonn\'e crystals. We deduce as well a description for all \'etale and multiplicative $p$-divisible groups.
For the analogous results for the crystalline Dieudonn\'e functor see \cite[2.2]{berthelot_messing_theorie_de_dieudonne_cristalline_III}.
Let us fix a quasi-syntomic ring $R$.
Recall that for a $p$-divisible group $G$ over $R$ the prismatic Dieudonn\'e crystal $\mathcal{M}_\prism(G)$ is defined (cf.\ \Cref{sec:divid-prism-dieud-definition-divided-prismatic-dieudonne-crystal-of-p-divisible-groups}) as the sheaf

$$
\mathcal{M}_{\prism}(G):=\mathcal{E}xt_{(R)_{\mathrm{qsyn}}}^1(G,\mathcal{O}^{\rm pris}) = v_*\mathcal{E}xt_{(R)_{\prism}}^{1}(u^{-1}(G),\mathcal{O}_\prism)
$$
on the absolute prismatic site $(R)_\prism$ of $R$ and that
$$
\mathcal{M}_\prism(G)\cong \mathcal{H}om_{(R)_{\mathrm{qsyn}}}(T_pG, \mathcal{O}^{\rm pris})= v_*\mathcal{H}om_{(R)_{\prism}}(u^{-1}(T_pG),\mathcal{O}_\prism),
$$
by \Cref{sec:divid-prism-dieud-lemma-prismatic-dieudonne-module-via-hom}.

\begin{lemma}
  \label{sec:prism-dieud-modul-lemma-prismatic-dieudonne-module-for-etale-p-divisible-groups} The $\mathcal{O}^{\rm pris}$-module $\mathcal{M}_\prism(\Q_p/\Z_p)$ is \red{freely generated by the isomorphism class of the extension of $\mathcal{O}^{\rm pris}$ by $\Q_p/\Z_p$ obtained as} the push-out of the short exact sequence
  $$
  0\to \Z_p\to \Q_p\to \Q_p/\Z_p\to 0
  $$
  on $(R)_{\mathrm{qsyn}}$ along the canonical morphism $\Z_p\to \mathcal{O}^{\rm pris}$. More generally,
  $$
  \mathcal{M}_\prism(G)\cong \mathcal{H}om_{\red{(R)_{\rm qsyn}}}(T_p(G),\Z_p)\otimes_{\Z_p} \mathcal{O}^{\rm pris}.
  $$
  if $G$ is an \'etale $p$-divisible group.
\end{lemma}
\begin{proof}
  This follows directly from the isomorphism
  $$
  \mathcal{M}_\prism(G)\cong \mathcal{H}om_{(R)_{\mathrm{qsyn}}}(T_pG,\mathcal{O}^{\rm pris})
  $$
  and the fact that for an \'etale $p$-divisible group $T_pG$ is a local system of finite free $\Z_p$-modules on $(R)_{\mathrm{qsyn}}$\footnote{Here, we did some abuse of notation and denoted by $\Z_p$ the sheaf $S\mapsto \mathrm{Hom}_{\mathrm{cts}}(\pi_0(S),\Z_p)$ on $(R)_{\mathrm{qsyn}}$, which is usually called $\underline{\Z_p}$.}.
\end{proof}

\red{Let us now describe the prismatic Dieudonn\'e crystal $\mathcal{M}_{\prism}(\mu_{p^{\infty}})$ of $\mu_{p^\infty}$ on $(R)_{\rm qsyn}$. 

\begin{definition}
\label{definition-negative-breuil-kisin-tate-twist}
Let $\mathcal{O}_{\prism}\{-1 \}$ be the sheaf
$$
\mathcal{O}_{\prism}\{-1 \} = \mathcal{H}om_{(\Z_p)_{\prism}}(u^{-1}(\Z_p(1)),\mathcal{O}_\prism)
$$
on the absolute prismatic site of $\Z_p$, with $\Z_p(1):=T_p\mu_{p^\infty}$. 
\end{definition}

Note that, if $\widehat{\Gm}$ denotes, the $p$-adic completion of the multiplicative group scheme $\Gm$, we also have
$$
\mathcal{O}_{\prism} \{-1 \} \cong \mathcal{E}xt_{(\Z_p)_{\prism}}^{1}(u^{-1} \mu_{p^{\infty}},\mathcal{O}_{\prism}) \cong \mathcal{E}xt_{(\Z_p)_{\prism}}^{1}(\widehat{\Gm},\mathcal{O}_{\prism})
$$
as $\widehat{\Gm}/\mu_{p^\infty}$ is uniquely $p$-divisible and $\mathcal{O}_{\prism}$ $p$-complete. Also, as recalled above, we have a natural isomorphism
$$
\mathcal{M}_{\prism}(\mu_{p^{\infty}}) \cong v_\ast \mathcal{O}_{\prism} \{-1 \}_{|_{(R)_{\prism}}}.
$$ 
}

We can describe the sheaf $\mathcal{O}_{\prism} \{-1\}$ in restriction to prisms $(B,J)$ which live over the ``cyclotomic'' base prism
$$
(A,I):=(\Z_p[[q-1]],([p]_q))
$$
from \Cref{sec:q-logarithm}. We point out that Mondal \cite{mondal2022computation} was able to recently get rid of this restriction, using Bhatt-Lurie's syntomic Chern classes \cite{bhatt2022absolute}.

The reason is that for such prisms we can use the $q$-logarithm from \Cref{sec:q-logarithm}
$$
\log_q\colon u^{-1}(\Z_p(1))\to \mathcal{O}_\prism
$$
which defines a canonical element, which we call $\ell_q \in \mathcal{O}_{\prism}\{-1\}(A,I)$.

\begin{proposition}
  \label{sec:prism-dieud-modul-proposition-prismatic-dieudonne-module-of-mu-p-infty} 
  The $\mathcal{O}_\prism$-linear map
  $$
  \mathcal{O}_{\prism} \to \mathcal{O}_{\prism}\{-1\}, 
    $$
  sending $1$ to $\ell_q$, of sheaves on the category of all prisms living over $(A,I)=(\Z_p[[q-1]],([p]_q))$, is an isomorphism. Moreover, the Frobenius on $\mathcal{O}_{\prism}\{-1\}$ sends $\ell_q$ to $[p]_q\ell_q$. 
\end{proposition}
\begin{proof}
Let $(B,J)$ be a prism over $(A,I)$. It suffices to show that the morphism
  $$
  B \to \mathrm{Ext}^1(u^{-1}(\widehat{\Gm})_{|(B,J)},\mathcal{O}_\prism),
  $$
  (where we mean $\mathrm{Ext}^1$ in the category of abelian sheaves on the site of prisms over $(B,J)$) given by the $q$-logarithm is an isomorphism. By \Cref{sec:divid-prism-dieud-proposition-ext-crystal} the formation of this map is compatible with base change in $(B,J)$. \red{From the proof of loc.\ cit.\ we also know that $\mathrm{Ext}^1(u^{-1}(\widehat{\Gm})_{|(B,J)},\mathcal{O}_\prism)$ is a finite, locally free $B$-module of rank $1$. Therefore, it suffices to show surjectivity.} To show surjectivity one may pass to the case that $(B,J)=(W(k),(p))$ for $k$ an algebraically closed field of characteristic $p$. Then the comparison with the crystalline Dieudonn\'e crystal (cf.\ \Cref{sec:prism-dieud-theory-theorem-comparison-with-crystalline-dieudonne-functor}) reduces to an analogous statement for the usual logarithm as for $q=1$ the $q$-logarithm becomes the logarithm. Let $R$ be a general ring of characteristic $p$ and let $R^\prime\to R$ be a surjection of schemes with a PD-structure $\{\gamma_{n}\}_{n\geq 0}$ on $K:=\ker(R^\prime\to R)$ and assume $p$ nilpotent in $R^\prime$. Then there is the canonical morphism
  $$
  \mathrm{log}\colon \Z_p(1)(R)\to R^\prime,\ x\mapsto \mathrm{log}([x])
  $$
  where $[-]\colon \lim\limits_{x\mapsto x^p} R\to R^\prime$ is the
  Teichm\"uller lift and $\log$ the crystalline logarithm
  $$
  \log\colon 1+K\to R^\prime,\ y\mapsto
  \sum\limits_{n=1}^\infty(-1)^{n-1}(n-1)! \gamma_n(y-1)
  $$
  (which makes sense as $[x]\in 1+K$). But it is known that the logarithm generates the crystalline Dieudonn\'e crystal of $\mu_{p^\infty}$ (cf.\ \cite[2.2.3.Corollaire]{berthelot_messing_theorie_de_dieudonne_cristalline_III}).
  Finally the action of Frobenius on $\ell_q$ can be calculated using \Cref{sec:q-logarithm-convergence-of-q-logarithm}:
  $$
  \varphi_{\mathcal{H}om(u^{-1}(\Z_p(1)),\mathcal{O}_\prism)}(\ell_q)(x)=\frac{q^p-1}{\log(q)}\log(x^p)=\frac{q^p-1}{q-1}\ell_q(x)=[p]_q\ell_q(x)
  $$
  for $x\in \Z_p(1)$.

\end{proof}

\begin{remark}
  \label{remark-on-identifying-q-log-with-logarithm-in-characteristic-p}
Note that, when $pR=0$, the identification between the prismatic and crystalline Dieudonn\'e modules from \Cref{sec:prism-dieud-theory-theorem-comparison-with-crystalline-dieudonne-functor} is linear \textit{over the isomorphism $\prism_R\cong A_{\rm crys}(R)$} from \Cref{sec:prism-cohom-quasi-lemma-for-qr-semiperfect-prism-isomorphic-to-acrys}. This explains why the map $x\mapsto \log_q([x^{1/p}]_{\tilde{\theta}})$ is sent to $x\mapsto \log([x])$ (and not something like $x\mapsto \log([x^{1/p}])$, which would not make sense as $[x^{1/p}]-1$ need not have divided powers), cf.\ the remark after \Cref{sec:prism-cohom-quasi-lemma-for-qr-semiperfect-prism-isomorphic-to-acrys}.
\end{remark}

Assume now that $R$ is an $A/I=\Z[\zeta_p]$-algebra.

\begin{corollary}
  \label{sec:prism-dieud-modul-corollary-description-of-prismatic-dieudonne-module-for-multiplicatives}
  Let $G$ be a multiplicative $p$-divisible group over $R$. Then there is a canonical isomorphism
  $$
  u^{-1}(\mathcal{H}om(G,\mu_{p^\infty})) \otimes_{\Z_p} \mathcal{O}_{\prism} \cong \mathcal{E}xt_{(R)_{\prism}}^1(u^{-1}G,\mathcal{O}_{\prism})_{|(R/A)_\prism}
  $$
  induced by sending $f\colon G\to \mu_{p^\infty}$ to the evaluation of the morphism induced by $f$:
 \[ \mathcal{E}xt_{(R)_{\prism}}^1(u^{-1}\mu_{p^{\infty}},\mathcal{O}_{\prism})_{|(R/A)_\prism} \to \mathcal{E}xt_{(R)_{\prism}}^1(u^{-1}G,\mathcal{O}_{\prism})_{|(R/A)_\prism} \]
 on $\ell_q$.
\end{corollary}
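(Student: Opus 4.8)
\textbf{Proof strategy for \Cref{sec:prism-dieud-modul-corollary-description-of-prismatic-dieudonne-module-for-multiplicatives}.}
The plan is to reduce the statement for a general multiplicative $p$-divisible group $G$ to the case $G=\mu_{p^\infty}$, which has just been handled in \Cref{sec:prism-dieud-modul-proposition-prismatic-dieudonne-module-of-mu-p-infty}. The key point is that a multiplicative $p$-divisible group over $R$ is, quasi-syntomic-locally on $R$, isomorphic to a finite product of copies of $\mu_{p^\infty}$: indeed, by Cartier duality $G$ is multiplicative if and only if $\check G$ is étale, and an étale $p$-divisible group becomes isomorphic to $(\Q_p/\Z_p)^r$ after passing to a quasi-syntomic (even pro-étale) cover, so that $G=\mathcal{H}om(\check G,\mu_{p^\infty})$ becomes a power of $\mu_{p^\infty}$ over such a cover. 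Since both sides of the claimed isomorphism are prismatic crystals (the left-hand side because $T_p G=\mathcal{H}om(G,\mu_{p^\infty})$ is, quasi-syntomic-locally, a free $\Z_p$-sheaf, and $\mathcal{O}_\prism$ is a crystal; the right-hand side by \Cref{sec:divid-prism-dieud-proposition-ext-crystal}) and the comparison morphism is functorial in $G$ and compatible with base change in the prism and in $R$, it suffices to produce and check the isomorphism after a quasi-syntomic cover; that is, one may assume $G=\mu_{p^\infty}^r$, and then the statement for products follows formally from the additivity of $\mathcal{E}xt^1(u^{-1}(-),\mathcal{O}_\prism)$ together with \Cref{sec:prism-dieud-modul-proposition-prismatic-dieudonne-module-of-mu-p-infty}.

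In more detail, I would proceed as follows. First, use the identification $\mathcal{M}_\prism(G)\cong v_*\mathcal{H}om_{(R)_\prism}(u^{-1}(T_pG),\mathcal{O}_\prism)$ from \Cref{sec:divid-prism-dieud-lemma-prismatic-dieudonne-module-via-hom}, and observe that for $G$ multiplicative one has $T_p\check G\cong \mathcal{H}om_R(G,\mu_{p^\infty})$ with $\check G$ étale, so $T_pG\cong\mathcal{H}om_R(T_p\check G,\Z_p(1))$. Next, define the comparison morphism
$$
u^{-1}(\mathcal{H}om(G,\mu_{p^\infty}))\otimes_{\Z_p}\mathcal{O}_\prism\to \mathcal{E}xt^1_{(R)_\prism}(u^{-1}G,\mathcal{O}_\prism)_{|(R/A)_\prism}
$$
exactly by the recipe in the statement: a section $f\colon G\to\mu_{p^\infty}$ induces $\mathcal{E}xt^1(u^{-1}\mu_{p^\infty},\mathcal{O}_\prism)\to\mathcal{E}xt^1(u^{-1}G,\mathcal{O}_\prism)$, and we send $f\otimes 1$ to the image of the canonical generator $\ell_q$. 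This is manifestly $\mathcal{O}_\prism$-linear and functorial in $G$. To see it is an isomorphism, pass to a quasi-syntomic cover $R\to R'$ over which $\check G$ trivializes; by \Cref{sec:abstr-divid-prism-proposition-finite-locally-free} (and the crystal property of both sides, ensuring that evaluation on a cover detects isomorphisms) we may assume $\check G\cong(\Q_p/\Z_p)^r$, hence $G\cong\mu_{p^\infty}^r$. Then $\mathcal{H}om(G,\mu_{p^\infty})\cong\Z_p^r$ as a sheaf, the left-hand side is $(\mathcal{O}_\prism)^r$, and the map becomes the $r$-fold direct sum of the map $\mathcal{O}_\prism\to\mathcal{E}xt^1(u^{-1}\mu_{p^\infty},\mathcal{O}_\prism)_{|(R/A)_\prism}$, which is an isomorphism by \Cref{sec:prism-dieud-modul-proposition-prismatic-dieudonne-module-of-mu-p-infty} (using that $\mathcal{E}xt^1(u^{-1}(-),\mathcal{O}_\prism)$ carries finite products to finite direct sums, which holds because $\mathcal{H}om(u^{-1}(-),\mathcal{O}_\prism)$ vanishes on each factor so there are no Tor-type correction terms). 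Finally, descend the isomorphism back down to $R$ using faithfully flat descent along $R\to R'$ and naturality.

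The main obstacle, as usual in this circle of ideas, is not the product formalism but making sure the comparison morphism is genuinely well-defined \emph{before} trivializing—i.e.\ that $f\mapsto(\text{image of }\ell_q)$ extends $\Z_p$-linearly and sheafifies correctly on the prismatic site over $(A,I)$—and, more substantively, that passing to a quasi-syntomic cover does not lose information. The latter is exactly where one invokes that both sides are crystals on $(R)_\prism$ and that the morphism commutes with base change in the prism (\Cref{sec:divid-prism-dieud-proposition-ext-crystal}): an $\mathcal{O}_\prism$-linear morphism of finite locally free crystals which becomes an isomorphism after a quasi-syntomic cover of $R$ is an isomorphism, because finite locally free crystals correspond to finite locally free $\mathcal{O}^\pris$-modules on $(R)_\mathrm{qsyn}$ and the latter form a stack. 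One should also take care that the cover trivializing $\check G$ can be chosen so that $\Z_p(1)$ itself is already free over the base $A/I=\Z[\zeta_p]$-algebra structure—this is automatic since $R$ is assumed to be a $\Z[\zeta_p]$-algebra, so $\mu_{p^\infty}[p]$ already contains $\mu_p\cong\Z/p$, and one only needs an étale (hence quasi-syntomic) cover to split $\check G$.
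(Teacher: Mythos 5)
Your strategy is the same as the paper's: reduce to $G\cong\mu_{p^\infty}^d$ by localizing on $R$, then conclude from \Cref{sec:prism-dieud-modul-proposition-prismatic-dieudonne-module-of-mu-p-infty} together with additivity of $\mathcal{E}xt^1_{(R)_\prism}(u^{-1}(-),\mathcal{O}_\prism)$; the paper's proof is exactly this, phrased as ``the morphism and the claim commute with \'etale localisation on $R$.'' Two caveats about your descent mechanism, though. First, the corollary is stated for an arbitrary $p$-complete $\Z[\zeta_p]$-algebra $R$ (the ambient hypothesis of \Cref{sec:prism-dieud-modul} is only $p$-completeness), whereas your justification of ``an isomorphism after a quasi-syntomic cover is an isomorphism'' invokes \Cref{sec:abstr-divid-prism-proposition-finite-locally-free}, which requires $R\in\mathrm{QSyn}$; the lighter route, as in the paper, is to note that both sides and the comparison map commute with base change along (pro-)\'etale maps $R\to R'$ (the left side because $T_p\check G$ is a $\Z_p$-local system, the right side by the crystal property of \Cref{sec:divid-prism-dieud-proposition-ext-crystal}), so that the isomorphism can be checked after such a localization trivializing $G$, with no appeal to the quasi-syntomic stack formalism. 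Second, your closing remark that a single \'etale cover splits $\check G$ is too strong: trivializing the full \'etale $p$-divisible group $\check G$ (equivalently the $\Z_p$-local system $T_p\check G$) in general requires a pro-(finite \'etale) cover, each finite level $\check G[p^n]$ being split by a finite \'etale cover; your earlier ``quasi-syntomic (even pro-\'etale) cover'' is the accurate formulation, and one should either pass to such a limit or argue level by level. With these adjustments the argument is correct and matches the intended proof.
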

\begin{proof}
The morphism (and the claim that it is an isomorphism) commutes with \'etale localisation on $R$. In particular, we may assume that $G\cong \mu_{p^\infty}^d$. Then the claim follows from \Cref{sec:prism-dieud-modul-proposition-prismatic-dieudonne-module-of-mu-p-infty} and additivity of the right hand side.  
\end{proof}

As a corollary of these computations, we can concretely describe the action of the prismatic Dieudonn\'e functor on morphisms $\Q_p/\Z_p\to \mu_{p^\infty}$.
Set
$$
\Z_p^{\mathrm{cycl}}:=(\varinjlim\limits_n\Z_p[\zeta_{p^n}])^{\wedge}_p.
$$
As usual we get the elements $\varepsilon=(1,\zeta_p,\ldots)$, $q:=[\varepsilon]\in A_\inf(\Z_p^{\mathrm{cycl}})$ and $\tilxi:=\frac{q^p-1}{q-1}$.

\begin{lemma}
  \label{sec:prism-dieud-modul-lemma-identifiaction-of-prismatic-dieudonne-functor-on-morphisms} Let $R$ be a quasi-regular semiperfectoid ring over $\Z_p^{\mathrm{cycl}}$. Then the morphism
  $$
  \Z_p(1)(R)\cong \mathrm{Hom}_R(\Q_p/\Z_p,\mu_{p^\infty})\xrightarrow{M_{\prism}(-)} \mathrm{Hom}_{\mathrm{DM}(R)}(M_{\prism}(\mu_{p^\infty})),M_{\prism}(\Q_p/\Z_p))\cong \prism_{R}^{\varphi=\tilxi}
  $$
  is given the map which sends $x\in \Z_p(1)(R)$ to $\log_q([x^{1/p}]_{\tilde{\theta}})\in \prism_R^{\varphi=\tilxi}$.
\end{lemma}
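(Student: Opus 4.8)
The plan is to unwind the constructions of \Cref{sec:prism-dieud-modul} on $\Q_p/\Z_p$ and $\mu_{p^\infty}$ and trace through functoriality in a homomorphism. Since $R$ is quasi-regular semiperfectoid over $\Z_p^{\mathrm{cycl}}$, the prism $(\prism_R,I)$ lies over $(\Z_p[[q-1]],([p]_q))$ via $q\mapsto [\varepsilon]$, $[p]_q\mapsto \tilxi$, so all the computations of \Cref{sec:prism-dieud-modul} apply. First I would record two rank-one models. By \Cref{sec:prism-dieud-modul-lemma-prismatic-dieudonne-module-for-etale-p-divisible-groups} (together with \Cref{sec:divid-prism-dieud-proposition-ext-crystal}), $M_{\prism}(\Q_p/\Z_p)$ is free of rank one over $\prism_R$ on the class $e$ obtained by pushing out $0\to \Z_p\to \Q_p\to \Q_p/\Z_p\to 0$ along $\Z_p\to \mathcal{O}^{\pris}$, and $\varphi_{M_{\prism}(\Q_p/\Z_p)}(e)=e$ since the Frobenius of $\mathcal{O}^{\pris}$ restricts to the identity on the constant subsheaf $\Z_p$. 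By \Cref{sec:prism-dieud-modul-proposition-prismatic-dieudonne-module-of-mu-p-infty}, $M_{\prism}(\mu_{p^\infty})$ is free of rank one over $\prism_R$ on $\ell_q$, with $\varphi_{M_{\prism}(\mu_{p^\infty})}(\ell_q)=[p]_q\,\ell_q=\tilxi\,\ell_q$ (it is exactly the hypothesis that $R$ is a $\Z_p^{\mathrm{cycl}}$-algebra that makes $\ell_q$ a generator).

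With these models, I would make the target identification explicit. A morphism $\psi\colon M_{\prism}(\mu_{p^\infty})\to M_{\prism}(\Q_p/\Z_p)$ in $\mathrm{DM}(R)$ is $\prism_R$-linear, so $\psi(\ell_q)=a\,e$ for a unique $a\in \prism_R$; commutation with Frobenius evaluated on $\ell_q$ gives $\tilxi\,a\,e=\psi(\tilxi\,\ell_q)=\psi(\varphi_{M_{\prism}(\mu_{p^\infty})}(\ell_q))=\varphi_{M_{\prism}(\Q_p/\Z_p)}(a\,e)=\varphi(a)\,e$, i.e.\ $\varphi(a)=\tilxi\,a$. This yields the isomorphism $\mathrm{Hom}_{\mathrm{DM}(R)}(M_{\prism}(\mu_{p^\infty}),M_{\prism}(\Q_p/\Z_p))\xrightarrow{\ \sim\ }\prism_R^{\varphi=\tilxi}$, $\psi\mapsto a$, which is the identification used implicitly in the statement.

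Then I would compute the effect of $M_{\prism}$ on a homomorphism. Under $\Z_p(1)(R)\cong \mathrm{Hom}_R(\Q_p/\Z_p,\mu_{p^\infty})$ an element $x$ corresponds to the map whose Tate module $T_p(x)\colon \Z_p=T_p(\Q_p/\Z_p)\to T_p(\mu_{p^\infty})=\Z_p(1)$ sends $1$ to $x$. Using $M_{\prism}(G)=v_*\mathcal{H}om_{(R)_{\prism}}(u^{-1}(T_pG),\mathcal{O}_{\prism})$ (\Cref{sec:divid-prism-dieud-lemma-prismatic-dieudonne-module-via-hom}) and evaluating on the final prism $(\prism_R,I)$ of $(R)_{\prism}$, the map $M_{\prism}(x)$ is precomposition with $u^{-1}T_p(x)$, while $e$ corresponds, under $\mathcal{H}om(u^{-1}\Z_p,\mathcal{O}_{\prism})(\prism_R)\cong \prism_R$, to the functional $1\mapsto 1$. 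Hence $M_{\prism}(x)(\ell_q)$ is the functional $u^{-1}\Z_p\to \mathcal{O}_{\prism}$ sending $1$ to the value of $\ell_q=\log_q$ on $u^{-1}T_p(x)(1)=x\in \Z_p(1)(R)$, which by \Cref{sec:prism-dieud-module-lemma-divided-logarithm} equals $\log_q([x^{1/p}]_{\tilde{\theta}})$. Therefore $M_{\prism}(x)(\ell_q)=\log_q([x^{1/p}]_{\tilde{\theta}})\cdot e$, so $M_{\prism}(x)$ corresponds to $\log_q([x^{1/p}]_{\tilde{\theta}})\in \prism_R^{\varphi=\tilxi}$, as claimed (the relation $\varphi(\log_q([x^{1/p}]_{\tilde{\theta}}))=\tilxi\log_q([x^{1/p}]_{\tilde{\theta}})$ is precisely the Frobenius computation in \Cref{sec:prism-dieud-modul-proposition-prismatic-dieudonne-module-of-mu-p-infty}).

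There is no real obstacle here: the lemma is a bookkeeping exercise combining \Cref{sec:prism-dieud-modul-lemma-prismatic-dieudonne-module-for-etale-p-divisible-groups}, \Cref{sec:prism-dieud-modul-proposition-prismatic-dieudonne-module-of-mu-p-infty} and \Cref{sec:divid-prism-dieud-lemma-prismatic-dieudonne-module-via-hom}. The only step I would spell out carefully is the matching of the canonical generators $e$ and $\ell_q$ and the fact that $M_{\prism}(x)$ is literally precomposition with $T_p(x)$, so that the abstract section $\ell_q$ is carried to its value at $x$ in the sense of \Cref{sec:prism-dieud-module-lemma-divided-logarithm}.
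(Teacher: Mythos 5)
Your proof is correct and follows the same route as the paper: identify the Hom space with $\prism_R^{\varphi=\tilxi}$ by evaluation on $\ell_q$, and compute $M_{\prism}(x)$ as precomposition with $T_p(x)$ using the Hom-description of the Dieudonn\'e module together with \Cref{sec:prism-dieud-modul-lemma-prismatic-dieudonne-module-for-etale-p-divisible-groups}, \Cref{sec:prism-dieud-modul-proposition-prismatic-dieudonne-module-of-mu-p-infty} and \Cref{sec:prism-dieud-module-lemma-divided-logarithm}. You have merely spelled out the bookkeeping that the paper's proof leaves implicit ("follows easily"), so there is nothing to add.
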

\begin{proof}
  First note, that
  $$
  \mathrm{Hom}_{\mathrm{DM}(R)}(M_{\prism}(\mu_{p^\infty})),M_{\prism}(\Q_p/\Z_p))\cong \prism_{R}^{\varphi=\tilxi}
  $$
  by evaluating a homomorphism $M_{\prism}(\mu_{p^\infty})\to M_{\prism}(\Q_p/\Z_p)\cong \prism_R$ on $\ell_q$. The identification of $M_{\prism}(-)$ on a homomorphism $f\colon \Q_p/\Z_p\to \mu_{p^\infty}$ follows easily from the natural isomorphism
  $$
  M_{\prism}(G)\cong \mathrm{Hom}_{(R)_{\prism}}(u^{-1}(T_p(G)),\mathcal{O}_\prism)
  $$
  for a $p$-divisible group $G$ over $R$ and \Cref{sec:prism-dieud-modul-proposition-prismatic-dieudonne-module-of-mu-p-infty}, \Cref{sec:prism-dieud-modul-lemma-prismatic-dieudonne-module-for-etale-p-divisible-groups}.
\end{proof}

\begin{remark}
\label{sec:prism-dieud-modul-theorem-fully-faithfulness-for-morphisms-from-qp-tomu-p}
This description together with \cite[Theorem 7.5.6]{bhatt2022absolute} imply fully faithfulness of the prismatic Dieudonn\'e functor in the special case of morphisms from $\Q_p/\Z_p$ to $\mu_{p^{\infty}}$. We will give in the next section a proof of fully faithfulness, still relying on the same input from \cite{bhatt2022absolute}.
\end{remark}

\subsection{Fully faithfulness}
\label{sec:fully-faithful-new}
The main result of this subsection is the following. 

\begin{theorem}
  \label{sec:fully-faithf-prism-fully-faithfulness-for-p-torsion-free-quasi-regular semiperfectoid-new}
  \purple{If $R$ is a quasi-regular semiperfectoid ring, the prismatic Dieudonn\'e functor over $R$ is fully faithful for $p$-divisible groups.}
\end{theorem}

The proof we offer was kindly suggested to us by Akhil Mathew. Recall that the prismatic Dieudonn\'e functor is given, according to \Cref{sec:divid-prism-dieud-lemma-prismatic-dieudonne-module-via-hom}, by the formula
 \begin{equation}
   \label{eq:3}
\mathcal{M}_{\prism}(G)= \mathcal{H}om_{(R)_{\rm qsyn}}(T_pG, \mathcal{O}^{\rm pris})   
 \end{equation}
for any $p$-divisible group $G$ over the quasi-regular semiperfectoid ring $R$. \blue{We also set
$$
\mathcal{N}^{\geq 1} \mathcal{M}_\prism(G) := \mathcal{H}om_{(R)_{\rm qsyn}}(T_pG, \mathcal{N}^{\geq 1}\mathcal{O}^{\rm pris}).
$$   
}

From now on, we fix a quasi-regular semiperfectoid ring $R$, and a generator $\tilxi$ of the prismatic ideal in $\prism_R$. For simplicity we assume that $R$ lives over the cyclotomic prism and that $\tilxi=[p]_q$, cf.\ \ref{sec:prism-dieud-modul-proposition-prismatic-dieudonne-module-of-mu-p-infty}. By descent this assumption is harmless.

\begin{proposition}
\label{identification_from_bms2}
If $G$ is a $p$-divisible group over $R$, there is a natural (in $R$ and $G$) identification of quasi-syntomic sheaves
$$
T_p \check{G} \cong \ker(\mathcal{N}^{\geq 1} \mathcal{M}_\prism(G)  \overset{\varphi/\tilxi-1} \longrightarrow \mathcal{M}_\prism(G)).
$$
\end{proposition}
\begin{proof}
We have, cf. \cite[Theorem 7.5.6]{bhatt2022absolute}\footnote{See also \cite[Proposition 7.17]{bhatt_morrow_scholze_topological_hochschild_homology} for a proof using algebraic $K$-theory.}, an isomorphism of quasi-syntomic sheaves
$$
T_p \mathbb{G}_m \cong \ker(\mathcal{N}^{\geq 1} \mathcal{O}^{\rm pris} \overset{\varphi/\tilxi-1} \longrightarrow \mathcal{O}^{\rm pris}).
$$
To conclude, it suffices to apply the functor $\mathcal{H}om_{(R)_{\rm qsyn}}(T_p G,-)$ to both sides and to note that $T_p \check{G} = \mathcal{H}om_{(R)_{\rm qsyn}}(T_p G,T_p \mathbb{G}_m)$.
\end{proof}

Now we start the proof of \Cref{sec:fully-faithf-prism-fully-faithfulness-for-p-torsion-free-quasi-regular semiperfectoid-new}. Let us denote by $\mathrm{Sh}_R$ the category of abelian sheaves on $(R)_{\rm qsyn}$ (so that $\mathrm{Hom}_{\mathrm{Sh}_R}(-,-)=\mathrm{Hom}_{(R)_{\rm qsyn}}(-,-)$) and by $\mathcal{D}_R$ the category of $\mathcal{O}^{\rm pris}[F]$-modules, which contains as a full subcategory the category of (admissible) prismatic Dieudonn\'e crystals. The functor
$$
\mathcal{R}: \mathrm{Sh}_R^{\rm op} \to \mathcal{D}_R, ~~ \mathcal{F} \mapsto \mathcal{H}om_{(R)_{\rm qsyn}}(\mathcal{F}, \mathcal{O}^{\rm pris})
$$
admits the left adjoint 
$$
\mathcal{L}: \mathcal{D}_R \to \mathrm{Sh}_R^{\rm op}, ~~ \mathcal{M} \mapsto \mathcal{H}om_{\mathcal{O}^{\rm pris}[F]}(\mathcal{M},\mathcal{O}^{\rm pris}).
$$
Indeed, if $\mathcal{F}\in \mathrm{Sh}_R$ is any abelian sheaf and $\mathcal{M}\in \mathcal{D}_R$, then
  $$
  \mathrm{Hom}_{\mathcal{O}^\pris[F]}(\mathcal{M},\mathcal{H}om_{(R)_{\rm qsyn}}(\mathcal{F},\mathcal{O}^\pris))\cong \mathrm{Hom}_{(R)_{\rm qsyn}}(\mathcal{F},\mathcal{H}om_{\mathcal{D}_R}(\mathcal{M},\mathcal{O}^\pris)) 
  $$
  because both sides identify with bilinear maps $\omega\colon \mathcal{M}\times \mathcal{F}\to \mathcal{O}^\pris$, which are $\mathcal{O}^\pris[F]$-linear in the first component.

Note that by the above displayed formula (\ref{eq:3}), if $G$ is a $p$-divisible group over $R$, 
$$
\mathcal{M}_\prism(G)= \mathcal{R}(T_pG).
$$
Hence, to prove the theorem, we are reduced to proving the following proposition.

\begin{proposition}
\label{right_adjoint_R_ff}
The functor $\mathcal{R}$ is fully faithful on the subcategory of $\mathrm{Sh}_R^{\rm op}$ spanned by the Tate modules of $p$-divisible groups over $R$.
\end{proposition}
\begin{proof}
Given a sheaf $\mathcal{F} \in \mathrm{Sh}_R^{\rm op}$ which is the Tate module of a $p$-divisible group, we have a natural counit map in $\mathrm{Sh}_R^{\rm op}$
$$
\mathcal{L}\mathcal{R}\mathcal{F} \to \mathcal{F}
$$
and we will show that it is an isomorphism. Switching back from $\mathrm{Sh}_R^{\rm op}$ to $\mathrm{Sh}_R$, this counit is the biduality map
$$
\mathcal{F}(-) \to \mathcal{H}om_{\mathcal{O}^{\rm pris}[F]}(\mathcal{H}om_{(R)_{\rm qsyn}}(\mathcal{F}(-),\mathcal{O}^{\rm pris}),\mathcal{O}^{\rm pris}).
$$
The formation of this map is compatible with base change in the quasi-regular semiperfectoid ring $R$. We claim that this map is an isomorphism whenever $\mathcal{F}$ is the Tate module of a $p$-divisible group $G$ over $R$. Applying \Cref{identification_from_bms2} to $\check{G}$, we get a natural (in $\mathcal{F}$ and $R$) identification 
$$
\mathcal{F}=T_p G \cong \left(\mathcal{N}^{\geq 1} \mathcal{M}_\prism(\check{G})\right)^{\varphi_{\mathcal{M}_\prism(\check{G})}=\tilxi} = \left(\mathcal{M}_\prism(\check{G})\right)^{\varphi_{\mathcal{M}_\prism(\check{G})}=\tilxi}
$$
\blue{(for the last equality, note that if $f \in \mathcal{H}om_{(R)_{\rm qsyn}}(T_pG, \mathcal{O}^{\rm pris})$ satisfies $\varphi(f)=\tilxi f$, then for any section $s$ of $T_p G$, $f(s) \in  \mathcal{N}^{\geq 1}\mathcal{O}^{\rm pris}$, i.e. $f \in \mathcal{H}om_{(R)_{\rm qsyn}}(T_pG, \mathcal{N}^{\geq 1}\mathcal{O}^{\rm pris})$).} \Cref{sec:divid-prism-dieud-compatibility-cartier-duality} and the remark following it for the identification of Frobenius allow us to rewrite this as a natural (in $\mathcal{F}$ and $R$) identification
$$
\mathcal{F} \cong \mathcal{H}om_{\mathcal{O}^{\rm pris}[F]}(\mathcal{M}_\prism(G),\mathcal{O}^{\rm pris})= \mathcal{L}(\mathcal{M}_\prism(G)) = \mathcal{L} \mathcal{R} \mathcal{F}
$$
as we can identify $(\mathcal{M}_\prism(\mu_{p^\infty}), \varphi_{\mathcal{M}_\prism(\mu_{p^\infty})})=(\mathcal{O}^\pris, \tilxi \varphi_{\mathcal{O}^\pris})$ by \Cref{sec:prism-dieud-modul-proposition-prismatic-dieudonne-module-of-mu-p-infty}. However, this natural isomorphism may not a priori coincide with above counit map. But, composing the latter with the inverse of this isomorphism, we obtain a natural endomorphism of $\mathcal{F}$, i.e., an endomorphism of any $p$-divisible group $G$ over any quasi-regular semiperfectoid ring $R$, natural in $G$ and $R$. Any such endomorphism acts on the $p$-divisible group $\Q_p/\Z_p$ by multiplication by some scalar (in $\Z_p$), at least on each connected component of $R$. It also does act by multiplication by the same scalar (depending on a connected component of $\Spec(R)$) on any $p$-divisible group $G$: indeed, this can be checked on the Tate module and since $T_p(G)=\mathcal{H}om_{(R)_{\rm qsyn}}(\Q_p/\Z_p,G)$ this follows there by naturality. 

Hence, to conclude the proof of fully faithfulness, it suffices to show that these scalars are units. This can be checked for one specific $p$-divisible group $G$ and we can take $G=\Q_p/\Z_p$, for which the claim is immediate. Indeed, $\mathcal{F}=\Z_p$ in this case and $\mathcal{F}\to \mathcal{L}\mathcal{R}(\mathcal{F})\cong \mathcal{H}om_{\mathcal{O}^\pris[F]}(\mathcal{O}^\pris, \mathcal{O}^\pris)$ sends $1\in \Z_p$ to the identity of $\mathcal{O}^\pris$, which generates $\mathcal{H}om_{\mathcal{O}^\pris[F]}(\mathcal{O}^\pris, \mathcal{O}^\pris)$ by \cite[Remark 9.3]{bhatt_scholze_prisms_and_prismatic_cohomology}.
\end{proof}

\blue{
\begin{remark}
\label{cohomology-tate-module}
In fact, as was also pointed out by Akhil Mathew and the referee, the results used in this section can be strenghtened. Indeed, \cite[Theorem 7.5.6]{bhatt2022absolute} already quoted above even gives a short exact sequence
$$
0 \to T_p \mathbb{G}_m \to \mathcal{N}^{\geq 1} \mathcal{O}^{\rm pris} \overset{\varphi/\tilxi-1} \longrightarrow \mathcal{O}^{\rm pris} \to 0.
$$
Applying $R\mathcal{H}om_{(R)_{\rm qsyn}}(T_pG, -)$ to it, we get an exact sequence of sheaves
$$
0 \to T_p \check{G} \to \mathcal{N}^{\geq 1} \mathcal{M}_\prism(\check{G}) \overset{\varphi/\tilxi -1} \longrightarrow \mathcal{M}_\prism(\check{G}) \to \mathcal{E}xt_{(R)_{\rm qsyn}}^1(T_pG , T_p \mathbb{G}_m).
$$
But $\mathcal{E}xt_{(R)_{\rm qsyn}}^1(T_pG , T_p \mathbb{G}_m)=\mathcal{E}xt_{(R)_{\rm qsyn}}^1(G , \mu_{p^\infty})=0$ (cf. \cite[Theorem 1]{waterhouse1971principal} together with the fact that the set of splittings of such an extension is a torsor under $\check{G}$ which is syntomic). Hence we get a short exact sequence and, after taking cohomology, an isomorphism
$$
R\Gamma((R)_{\rm qsyn}, T_p\check{G}) \cong \mathrm{fib}\left(\mathcal{N}^{\geq 1} M_\prism(\check{G}) \overset{\varphi/\tilxi -1} \longrightarrow M_\prism(\check{G})\right).
$$
\end{remark}
}

\subsection{Essential surjectivity}
\label{sec:essent-surj-1}

Let $R$ be quasi-regular semiperfectoid and let as before
$$
M_\prism(-)\colon \mathrm{BT}(R)\to \red{\mathrm{DM}^{\rm adm}(R)},\ G\mapsto (M_\prism(G),\varphi_{M_\prism(G)})
$$
be the prismatic Dieudonn\'e functor with values in the category of \red{admissible prismatic Dieudonn\'e modules $\mathrm{DM}^{\rm adm}(R)$} (cf.\ \Cref{sec:divid-prism-dieud-definition-divided-prismatic-dieudonne-crystals} and \Cref{sec:divid-prism-dieud-modul-proposition-window}).

Let us fix a perfect prism $(A,I)$, a generator $\tilxi\in I$ and a surjection $\blue{\bar{A}=}A/I\twoheadrightarrow R$. Let $\xi:=\varphi^{-1}(\tilxi)$. \red{In this section, we will make repeated use of \Cref{sec:abstr-divid-prism-proposition-equivalence-divided-prismatic-dieudonne-modules-windows}, which tells us that admissible prismatic Dieudonn\'e modules over $R$ (or any other quasi-regular semiperfectoid ring living over $\blue{\bar{A}}$) are the same as windows over the frame $\underline{\prism}_{R,\rm Nyg}$ (associated to $\tilxi$).}

By \Cref{sec:essent-surj-lemma-existence-of-perfectoid-covers-which-are-henselian-along-ideal} we may assume that $\blue{\bar{A}}$ is henselian along $\ker(\blue{\bar{A}}\to R)$.

Let us first assume that $\mathrm{ker}(\blue{\bar{A}}\to R)$ is generated by some elements $a_j$, $j\in J$, that admit compatible systems $(a_j,a_j^{1/p},a_j^{1/p^2},\ldots)$ of $p^n$-roots. Define
$$
\blue{S:=\left(\blue{\bar{A}}\langle X_j^{1/p^\infty} |\ j\in J\rangle/(X_j)\right)^{\wedge_p}}
$$
and $S\to R,\ X_j^{1/p^n}\mapsto \overline{a_j^{1/p^n}}.$


\begin{lemma}
  \label{sec:essent-surj-lifting-divided-prismatic-dieudonne-modules-for-special-surjection-of-quasi-regular semiperfectoid}
  The base change functor \red{$\mathrm{DM}^{\rm adm}(S)\to \mathrm{DM}^{\rm adm}(R)$ on admissible} prismatic Dieudonn\'e modules is essentially surjective.
\end{lemma}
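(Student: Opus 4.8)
```latex
The plan is to reduce this statement to \Cref{sec:abstr-divid-prism-proposition-existence-of-normal-decompositions} (normal decompositions) together with the comparison between $\mathrm{DF}(S)$, resp.\ $\mathrm{DF}(R)$, and window categories over the frames $\underline{\prism}_{S,\rm Nyg}$, resp.\ $\underline{\prism}_{R,\rm Nyg}$, provided by \Cref{sec:abstr-divid-prism-proposition-equivalence-divided-prismatic-dieudonn�-modules-windows}. Concretely, let $J:=\ker(\prism_S\to \prism_R)$. The first step is to identify $J$ explicitly: by the description of $\prism_{\mathcal{O}_C\langle X^{1/p^\infty}\rangle/(X)}$ recalled in \Cref{sec:fully-faithfulness-p-example-prism-with-almost-zero-elements} (via \cite[Lemma 12.3]{bhatt_scholze_prisms_and_prismatic_cohomology}), $\prism_S$ is a completed divided power type algebra in the variables $Y_j=X_j^{1/p}$, so that $J$ is (topologically) generated by the divided powers $\tfrac{Y_j^i}{[i]_q!}$, all of which lie in the Nygaard filtration $\mathcal{N}^{\geq 1}\prism_S$. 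In particular $J\subseteq \mathcal{N}^{\geq 1}\prism_S = \mathrm{Fil}~\underline{\prism}_{S,\rm Nyg}$, and $J$ is stable under the divided Frobenius $\varphi_1 = \varphi/\xi$.

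The key point is then to check that $\varphi_1$ is (pointwise) topologically nilpotent on $J$. On a divided power generator $\tfrac{Y_j^i}{[i]_q!}$ with $i>0$ written $p$-adically, $\varphi$ multiplies the $Y$-degree by $p$ and, after dividing by the appropriate power of $\xi$, one computes that $\varphi_1$ sends $\tfrac{Y_j^i}{[i]_q!}$ into a higher step of the combined $(p,\xi)$-adic filtration; iterating, $\varphi_1^n$ pushes any fixed element of $J$ into $(p,\tilxi)^{m}\prism_S$ with $m\to\infty$. This is the technical heart of the argument and the step I expect to cause the most trouble, since it requires bookkeeping with the denominators $[i]_q!$ and the relation $\varphi(\xi)=\tilxi$; the cleanest route is probably to work modulo $(p,\tilxi)^m$ and use that on the associated graded (the divided power algebra over $R^\flat$ in characteristic $p$) the $p$-th power map visibly raises the divided-power filtration index. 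Once topological nilpotence is established, one also needs that $(\prism_S,J)$ is an henselian pair, which follows from \Cref{sec:essent-surj-lemma-prism_r-henselian-along-ker-theta} applied to $S$ (noting $S$ is quasi-regular semiperfectoid), or directly since $\prism_S$ is $(p,\xi)$-adically complete and $J$ consists of topologically nilpotent elements modulo $(p,\xi)$.

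With these verified, I would invoke \Cref{sec:essent-surj-lemma-equivalence-on-window-categories}: the surjection of oriented prisms $(\prism_S,(\tilxi))\to(\prism_R,(\tilxi))$ with kernel $J\subseteq \mathcal{N}^{\geq 1}\prism_S$ on which $\varphi_1$ is topologically nilpotent, and with $(\prism_S,J)$ henselian, yields that base change induces an equivalence
\[
\mathrm{Win}(\underline{\prism}_{S,\rm Nyg}) \simeq \mathrm{Win}\big((\prism_R,\mathcal{N}^{\geq 1}\prism_S/J,\prism_S/\mathcal{N}^{\geq 1}\prism_S,\varphi,\varphi_1)\big).
\]
It remains only to compare the target frame on the right with $\underline{\prism}_{R,\rm Nyg}$. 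We have $\prism_S/\mathcal{N}^{\geq 1}\prism_S\cong S$ and $\prism_R/\mathcal{N}^{\geq 1}\prism_R\cong R$ by \Cref{sec:prism-cohom-quasi-theorem-identification-of-the-graded-pieces-of-nygaard-filtration}, and one checks that the image of $\mathcal{N}^{\geq 1}\prism_S$ in $\prism_R$ is exactly $\mathcal{N}^{\geq 1}\prism_R$ (using that $J\subseteq \mathcal{N}^{\geq 1}\prism_S$ and surjectivity of $\prism_S\to\prism_R$, together with the compatibility of Nygaard filtrations under maps of prisms over $(A,I)$). Hence the two frames agree, and combining with \Cref{sec:abstr-divid-prism-proposition-equivalence-divided-prismatic-dieudonn�-modules-windows} we conclude that $\mathrm{DF}(S)\to\mathrm{DF}(R)$ is an equivalence, in particular essentially surjective. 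A final remark: strictly speaking \Cref{sec:essent-surj-lemma-equivalence-on-window-categories} already gives fully faithfulness plus essential surjectivity, so the lemma as stated is immediate once the hypotheses are checked; the only real work is the verification of topological nilpotence of $\varphi_1$ on $J$.
```
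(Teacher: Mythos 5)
There is a genuine gap, and it starts with your identification of the kernel. The base change in this lemma is along $S\to R$, $X_j^{1/p^n}\mapsto \overline{a_j^{1/p^n}}$, so $J=\ker(\prism_S\to\prism_R)$ is the kernel of a map onto the prism of the completely general (and inexplicit) quasi-regular semiperfectoid ring $R$. The $q$-divided powers $Y_j^i/[i]_q!$ generate instead the kernel of the augmentation $\prism_S\to A$, i.e.\ of the map induced by $X_j^{1/p^n}\mapsto 0$; explicit kernels of that kind only enter in the \emph{next} reduction step of the paper (the surjection $\prism_S\to B=\prism_{\tilde S}/(X_j)$ in \Cref{sec:essent-surj-lemma-essential-surjectivity-from-perfectoid-to-infinite-regular-semiperfectoid-ring}, whose kernel is generated by the elements $z_{j,n}$ of \Cref{sec:concr-pres-prism-proposition-presentation-of-prismatic-envelope-for-rank-one-element}, and for which topological nilpotence of $\varphi_1$ is indeed proved in \Cref{sec:essent-surj-divided-varphi-topological-nilpotent-on-the-kernel}); you have conflated the two steps. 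For the surjection $\prism_S\to\prism_R$ at hand no such description is available, and your two key claims are mutually inconsistent: an element $x\in J$ only satisfies $\theta_S(x)\in\ker(S\to R)$, not $\theta_S(x)=0$, and if one had both $J\subseteq\mathcal{N}^{\geq 1}\prism_S$ and $\mathcal{N}^{\geq 1}\prism_S$ surjecting onto $\mathcal{N}^{\geq 1}\prism_R$, then $S\cong\prism_S/\mathcal{N}^{\geq 1}\prism_S\cong\prism_R/\mathcal{N}^{\geq 1}\prism_R\cong R$ (via \Cref{sec:prism-cohom-quasi-theorem-identification-of-the-graded-pieces-of-nygaard-filtration}), contradicting $\ker(S\to R)\neq 0$. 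So \Cref{sec:essent-surj-lemma-equivalence-on-window-categories} does not apply to this surjection; note also that its conclusion would make $\mathrm{DF}(S)\to\mathrm{DF}(R)$ an equivalence, hence by the main theorem $\mathrm{BT}(S)\to\mathrm{BT}(R)$ an equivalence, which is far stronger than the stated essential surjectivity and has no reason to hold for the non-injective surjection $S\to R$.

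The paper's proof claims less and therefore needs less. By \Cref{sec:abstr-divid-prism-proposition-existence-of-normal-decompositions}, a filtered prismatic Dieudonn\'e module over $R$ is encoded by finite projective modules $L,T$ together with an isomorphism $\psi\colon\varphi^{\ast}(L\oplus T)\cong L\oplus T$, so essential surjectivity reduces to lifting finite projective modules and such isomorphisms along $\prism_S\to\prism_R$. For that one needs only two facts: (i) $\prism_S\to\prism_R$ is surjective, which follows from the Hodge--Tate comparison because $L_{S/(A/I)}\to L_{R/(A/I)}$ is surjective --- this is exactly where the hypothesis that the $a_j$ generate $\ker(A/I\to R)$ is used; and (ii) $(\prism_S,J)$ is a henselian pair, which is proved with no description of $J$ at all: by \cite[Tag 0DYD]{stacks_project} it suffices that $S=\prism_S/\mathcal{N}^{\geq 1}\prism_S$ be henselian along the image of $J$, and that image lies in $\ker(S\to R)$, along which $S$ is henselian because the $X_j^{1/p^n}$ are nilpotent in $S$ and $A/I$ was arranged to be henselian along $\ker(A/I\to R)$. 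Lifting the modules is then \Cref{sec:essent-surj-lemma-iso-classes-of-finite-projective-modules-for-henselian-pairs}, and lifting $\psi$ is the essential-surjectivity half of \Cref{sec:essent-surj-lemma-equivalence-of-varphi-modules-over-delta-rings}, which uses only projectivity and the fact that $J$ lies in the Jacobson radical; no inclusion $J\subseteq\mathcal{N}^{\geq 1}\prism_S$ and no topological nilpotence of $\varphi_1$ are needed for this lemma.
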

\begin{proof}
  \blue{Using \Cref{sec:abstr-divid-prism-proposition-existence-of-normal-decompositions},} it suffices to see that $\prism_S\to \prism_R$ is surjective and henselian along its kernel (cf.\ \Cref{sec:essent-surj-lemma-equivalence-of-varphi-modules-over-delta-rings}). The surjectivity follows from the Hodge-Tate comparison as $L_{S/\blue{\bar{A}}}\to L_{R/\blue{\bar{A}}}$ is surjective by our assumption that the $a_j,j\in J$, generate $\mathrm{ker}(\blue{\bar{A}}\to R)$.
  First note that the pair $(S,\ker(S\to R))$ is henselian because the $X_j^{1/p^n}$ are nilpotent in $S$ and we assumed that $\blue{\bar{A}}$ is henselian along $\mathrm{ker}(\blue{\bar{A}}\to R)$. \blue{By \Cref{sec:essent-surj-lemma-prism_r-henselian-along-ker-theta},} to show that $\prism_S$ is henselian along $K:=\ker(\prism_S\to \prism_R)$ it suffices to see $S\cong \prism_S/\ker(\theta_S)$ is henselian along $\overline{K}:=(K+\ker(\theta))/\ker(\theta)$ (cf.\ \cite[Tag 0DYD]{stacks_project})). But $\overline{K}\subseteq S$ is contained in $\ker(S\to R)$. Another application of \cite[Tag 0DYD]{stacks_project} therefore implies that $S$ is henselian along $\overline{K}$ because $(S,\ker(S\to R))$ is henselian. This finishes the proof.
\end{proof}

Note that the ring
$$
\blue{S=\left( \blue{\bar{A}}\langle X_j^{1/p^\infty} |\ j\in J\rangle/(X_j\ |\ j\in J) \right)^{\wedge_p}}
$$
admits a surjection from the perfectoid ring \red{
$$
\tilde{S}:=\blue{\bar{A}}[[X_j^{1/p^\infty} |\ j\in J]] := \left( \varinjlim\limits_{n, J' \subset J ~ \mathrm{finite}}\blue{\bar{A}}[[X_j^{1/p^n} |\ j\in J']]  \right)^{\wedge_p}
$$
}
by sending $X_j^{1/p^n}\mapsto X_j^{1/p^n}$.

\begin{lemma}
  \label{sec:essent-surj-lemma-essential-surjectivity-from-perfectoid-to-infinite-regular-semiperfectoid-ring}
  The natural functor
  $$
  \red{\mathrm{DM}^{\rm adm}(\tilde{S})\to \mathrm{DM}^{\rm adm}(S)}
  $$
  is essentially surjective.
\end{lemma}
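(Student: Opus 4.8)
\textbf{Proof plan for \Cref{sec:essent-surj-lemma-essential-surjectivity-from-perfectoid-to-infinite-regular-semiperfectoid-ring}.}

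The plan is to realize $S$ as a quotient of $\tilde S$ by an ideal which is a "well-behaved" limit of finite regular sequences, and then to deduce essential surjectivity of the base change functor on filtered prismatic Dieudonn\'e modules from the henselity of $\prism_{\tilde S}$ along the corresponding kernel, together with the existence of normal decompositions (\Cref{sec:abstr-divid-prism-proposition-existence-of-normal-decompositions}). Concretely, one has the surjection $\tilde S = A/I[[X_j^{1/p^\infty}\mid j\in J]]\twoheadrightarrow S = A/I\langle X_j^{1/p^\infty}\mid j\in J\rangle/(X_j)$, and the kernel $\mathfrak a$ is the (closure of the) ideal generated by all the $X_j$. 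First I would check that $\prism_{\tilde S}\to \prism_S$ is surjective: this follows from the Hodge--Tate comparison, exactly as in the proof of \Cref{sec:essent-surj-lifting-divided-prismatic-dieudonne-modules-for-special-surjection-of-quasi-regular semiperfectoid}, since $L_{S/(A/I)}\to L_{S/\tilde S}$-type considerations show the map on cotangent complexes in degree $-1$ is surjective because the $X_j$ generate $\mathfrak a$. Alternatively, and perhaps more cleanly, one observes that $\prism_{\tilde S}$ is itself a perfect prism (as $\tilde S$ is perfectoid) and $\prism_S$ is computed by a prismatic envelope of $(\prism_{\tilde S}, \mathfrak a\prism_{\tilde S})$ as in \Cref{sec:prism-cohom-quasi-proposition-all-prisms-agree-for-quasi-regular semiperfectoid}, so $\prism_{\tilde S}\to \prism_S$ is visibly surjective.

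The main point is then henselity: let $K = \ker(\prism_{\tilde S}\to\prism_S)$; I claim $(\prism_{\tilde S}, K)$ is a henselian pair. By \cite[Tag 0DYD]{stacks_project} it suffices to prove that $\tilde S = \prism_{\tilde S}/\ker(\theta_{\tilde S})$ is henselian along $\overline K := (K + \ker\theta_{\tilde S})/\ker\theta_{\tilde S}$, and since $\overline K\subseteq \ker(\tilde S\to S)=\mathfrak a$, by the same reference it suffices to show $(\tilde S,\mathfrak a)$ is henselian. But $\mathfrak a$ is the ideal generated by the $X_j$ in $\tilde S = A/I[[X_j^{1/p^\infty}]]$, which is contained in the Jacobson radical since $\tilde S$ is a power-series-type ring complete along $(X_j)$ — more precisely $\tilde S$ is $\mathfrak a$-adically complete (being a completed colimit of $A/I[[X_j^{1/p^n}]]$), hence $(\tilde S,\mathfrak a)$ is henselian by \cite[Tag 0ALJ]{stacks_project}. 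This is the step I expect to require the most care, because one must be slightly cautious about whether the (non-finitely-generated) ideal $\mathfrak a$ and its closure behave as expected; the cleanest route is probably to note that any element of $\mathfrak a$ lies in an ideal $(X_{j_1},\dots,X_{j_r})$ up to a small $p$-adic error, reduce modulo $p$ where the $X_j$ become genuinely topologically nilpotent, and invoke \cite[Tag 0DYD]{stacks_project} once more to lift henselity from $\tilde S/p$.

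Granting henselity of $(\prism_{\tilde S}, K)$, I would conclude exactly as in \Cref{sec:essent-surj-lifting-divided-prismatic-dieudonne-modules-for-special-surjection-of-quasi-regular semiperfectoid}: by \Cref{sec:abstr-divid-prism-proposition-existence-of-normal-decompositions} an object of $\mathrm{DF}(S)$ is given by finite projective $\prism_S$-modules $L, T$ (lifting the normal decomposition data) together with an isomorphism $\psi\colon \varphi^\ast(L\oplus T)\xrightarrow{\sim} L\oplus T$, i.e. a $\varphi$-module structure. Since $\prism_{\tilde S}\to\prism_S$ is surjective with henselian kernel, \Cref{sec:essent-surj-lemma-iso-classes-of-finite-projective-modules-for-henselian-pairs} lifts $L$ and $T$ to finite projective $\prism_{\tilde S}$-modules, and projectivity of $\varphi^\ast$ of these lifts together with the fact that $K$ lies in the radical of $\prism_{\tilde S}$ lifts $\psi$ to an isomorphism over $\prism_{\tilde S}$ (a lift of an isomorphism modulo a radical ideal is automatically an isomorphism), and the filtration $\mathcal N^{\geq 1}\prism_{\tilde S}\cdot T_{\tilde S}\oplus L_{\tilde S}$ is the required lift of $\mathrm{Fil}$. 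This produces an object of $\mathrm{DF}(\tilde S)$ mapping to the given one, proving essential surjectivity.
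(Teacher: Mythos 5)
There is a genuine gap, and it sits at the very foundation of your plan: the map $\prism_{\tilde S}\to\prism_S$ is \emph{not} surjective, so there is no surjection with henselian kernel along which you can lift. Since $\tilde S$ is perfectoid, $\prism_{\tilde S}\cong A[[X_j^{1/p^\infty}\mid j\in J]]$, and $\prism_S\cong \prism_{\tilde S}\{\tfrac{X_j}{\tilxi}\}^{\wedge}_{(p,\tilxi)}$ is a prismatic \emph{envelope}: it freely adjoins the new elements $X_j/\tilxi$ and their $\delta$-translates (equivalently, by \Cref{sec:concr-pres-prism-proposition-presentation-of-prismatic-envelope-for-rank-one-element}, the elements $z_{j,n}=X_j^{p^n}/(\varphi^n(\tilxi)\cdots\tilxi^{p^n})$), which are not in the image of $\prism_{\tilde S}$ — already in characteristic $p$ this is the familiar fact that $W(S^\flat)\to A_{\crys}(S)$ is not surjective, the divided powers being genuinely new. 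Your "cleaner" justification confuses the prismatic envelope with a quotient, and your Hodge–Tate argument also fails: in \Cref{sec:essent-surj-lifting-divided-prismatic-dieudonne-modules-for-special-surjection-of-quasi-regular semiperfectoid} surjectivity came from surjectivity of $L_{S/(A/I)}^{\wedge_p}\to L_{R/(A/I)}^{\wedge_p}$ for two quasi-regular semiperfectoid rings over the same perfectoid base, whereas here $L_{\tilde S/(A/I)}^{\wedge_p}=0$ (map of perfectoids) while $L_{S/(A/I)}^{\wedge_p}$ is a large free module in degree $-1$, so the graded pieces of the conjugate filtration are certainly not hit. Consequently the base-change functor $\mathrm{DF}(\tilde S)\to\mathrm{DF}(S)$ is not "reduction modulo a henselian ideal", and the lifting argument you propose does not get off the ground.

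What the paper does instead is to interpose an auxiliary prism $B:=\prism_{\tilde S}/(X_j\mid j\in J)$, which satisfies $B/\tilxi\cong S$ and hence receives a canonical \emph{surjection in the opposite direction} $\alpha\colon\prism_S\twoheadrightarrow B$ from the envelope (sending $X_j\mapsto 0$). One then shows that $\prism_S$ is henselian along $\ker(\alpha)$ (variant of \Cref{sec:essent-surj-lemma-prism_r-henselian-along-ker-theta}) and — this is the real technical work, \Cref{sec:essent-surj-divided-varphi-topological-nilpotent-on-the-kernel}, which uses the explicit presentation of the envelope — that $\varphi(\ker\alpha)\subseteq\tilxi\prism_S$ and $\varphi_1=\varphi/\tilxi$ is topologically nilpotent on $\ker\alpha$. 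By \Cref{sec:essent-surj-lemma-equivalence-on-window-categories} this gives an \emph{equivalence} of window categories over $\prism_S$ and over the induced frame on $B$, so it suffices to lift windows from $B$ to $\prism_{\tilde S}$. Only at this last stage does your mechanism apply correctly: $\prism_{\tilde S}\twoheadrightarrow B$ is a surjection with henselian kernel $(X_j)$, so after choosing a normal decomposition (\Cref{sec:abstr-divid-prism-proposition-existence-of-normal-decompositions}) one lifts the projective modules by \Cref{sec:essent-surj-lemma-iso-classes-of-finite-projective-modules-for-henselian-pairs} and the structural isomorphism because the kernel lies in the Jacobson radical. So the "henselian lifting with normal decompositions" idea is right, but it must be applied to $\prism_{\tilde S}\twoheadrightarrow B$, and the passage from $\prism_S$ to $B$ requires the window-category equivalence, which your proposal is missing.
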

\begin{proof}
 The ring $\tilde{S}$ is henselian along $(X_j\ |\ j\in J)$. \red{The prism $\prism_{\tilde{S}}$ is the $(p,I)$-adic completion of 
 $$
 \varinjlim\limits_{n, J' \subset J ~ \mathrm{finite}}A[[X_j^{1/p^n} |\ j\in J']] 
 $$
\blue{Call a $\delta$-pair $(B,K)$ over $(A,I)$ a \textit{good pair} if it satisfies the following conditions:
\begin{itemize}
\item $B$ is $(p,I)$-completely flat over $A$ and $K$ is $(p,I)$-complete.
\item There exists a universal map $(B,K)\to (C,IC)$ of $\delta$-pairs to a prism $(C,IC)$ over $(A,I)$. Moreover, $(C,IC)$ is flat over $(A,I)$, and its formation commutes with $(p,I)$-completely flat base change on $B$.
\end{itemize}
}For each $n\geq 1$ and $J' \subset J$ finite, the $\delta$-pair
 $$
 \left( A[[X_j^{1/p^n} |\ j\in J']]^{\wedge_{\blue{(p,I)}}},(I, X_j, j\in J)^{\wedge_{(p,I)}} \right)
 $$
 over $(A,I)$ is a \blue{good pair, by \cite[Proposition 3.13]{bhatt_scholze_prisms_and_prismatic_cohomology}}. Since good pairs are stable under filtered colimits in the category of all \blue{$\delta$-pairs $(B,K)$ over $(A,I)$ with $B
$ and $K$ $(p,I)$-complete}, we deduce that the pair 
$$
\left(\prism_{\tilde{S}}, (I,X_j,j\in J) \right) 
$$
is a good pair, too. Therefore, by definition of a good pair and \Cref{sec:prism-cohom-quasi-proposition-all-prisms-agree-for-quasi-regular semiperfectoid}, we have 
 }
  $$
  \prism_{S}\cong \prism_{\tilde{S}}\{\frac{X_j}{\tilxi}\ |\ j\in J\}^{\wedge_{(p,\tilxi)}}.
  $$
  Define
  $$
  \blue{B:=\prism_{\tilde{S}}/(X_j\ |\ j\in J)^{\wedge_{(p,\tilxi)}}}.
  $$
  Then $B$ is $p$-torsion free and $\tilxi$-torsion free and thus
  defines a prism. Moreover, canonically $S\cong B/\tilxi$. By the
  universal property of $\prism_S$ there exists therefore a canonical
  morphism
  $$
  \alpha\colon \prism_{S}\to B.
  $$
  Concretely, the morphism $\alpha$ sends $X_j\mapsto 0$. Using a
  variant of
  \Cref{sec:essent-surj-lemma-prism_r-henselian-along-ker-theta} we
  see that $\prism_S$ is henselian along
  $\mathrm{ker}(\alpha)$. By
    \Cref{sec:essent-surj-divided-varphi-topological-nilpotent-on-the-kernel},
    $\varphi(\ker(\alpha))\subseteq \tilxi\prism_S$ and
    $\varphi/\tilxi$ is topologically nilpotent on
    $\mathrm{ker}(\alpha)$. Thus by
  \Cref{sec:essent-surj-lemma-equivalence-on-window-categories} the
  categories of windows over $\prism_S$ and $B$ are
  equivalent. Therefore it suffices to see that windows over $B$ can
  be lifted to windows over $\prism_{\tilde{S}}$. After choosing a
  normal decomposition, this follows as the functor
  $$
\varphi-\mathrm{Mod}_{\prism_{\tilde{S}}}^{\red{\mathrm{unit}}}\to \varphi-\mathrm{Mod}_B^{\red{\mathrm{unit}}} 
$$
is essentially surjective, which is true as \red{$\prism_{\tilde{S}}$} is henselian
along the kernel of $\red{\prism_{\tilde{S}}}\twoheadrightarrow B$ (cf.\ the end of
the proof of
\Cref{sec:essent-surj-lemma-equivalence-of-varphi-modules-over-delta-rings}). This
finishes the proof.
\end{proof}

To finish the proof of
  \Cref{sec:essent-surj-lifting-divided-prismatic-dieudonne-modules-for-special-surjection-of-quasi-regular
    semiperfectoid} we have to prove the following lemmas.

  \begin{lemma}
    \label{sec:essent-surj-divided-varphi-topological-nilpotent-on-the-kernel}
    With the notations from the proof of
    \Cref{sec:essent-surj-lemma-essential-surjectivity-from-perfectoid-to-infinite-regular-semiperfectoid-ring} we get
    $\varphi(\ker(\alpha))\subseteq \tilxi \prism_S$ and
    $\varphi_1:=\varphi/\tilxi$ is topologically nilpotent on
    $\ker(\alpha)$.
  \end{lemma}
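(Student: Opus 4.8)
The statement to prove is a concrete computation inside $\prism_S$, where $S = A/I\langle X_j^{1/p^\infty}\mid j\in J\rangle/(X_j\mid j\in J)$, concerning the kernel $\ker(\alpha)$ of the map $\alpha\colon \prism_S \to B := \prism_{\tilde S}/(X_j\mid j\in J)$ that sends each $X_j$ to $0$. The key structural input is the explicit presentation
\[
\prism_S \cong \prism_{\tilde S}\Bigl\{\frac{X_j}{\tilxi}\mid j\in J\Bigr\}^{\wedge}_{(p,\tilxi)},
\]
with $\prism_{\tilde S}\cong A[[X_j^{1/p^\infty}\mid j\in J]]$, coming from the variant of \cite[Proposition 3.13]{bhatt_scholze_prisms_and_prismatic_cohomology} cited in the proof of \Cref{sec:essent-surj-lemma-essential-surjectivity-from-perfectoid-to-infinite-regular-semiperfectoid-ring}. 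First I would set $z_j := X_j/\tilxi \in \prism_S$ and record that $\ker(\alpha)$ is the ideal generated (as a $(p,\tilxi)$-completely-closed ideal) by the $z_j$, together with their $\delta$-iterates $\delta^n(z_j)$, since $\prism_S$ is the $(p,\tilxi)$-completed free $\delta$-$\prism_{\tilde S}$-algebra on the symbols $z_j$ modulo the relations making $\tilxi z_j = X_j$. Concretely one checks that $B \cong \prism_{\tilde S}/(X_j)$ receives the map $\prism_S\to B$ precisely by killing all $z_j$ (and hence all $\delta^n(z_j)$), so $\ker(\alpha)$ is the $\delta$-ideal $(z_j\mid j\in J)$ completed $(p,\tilxi)$-adically.

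The inclusion $\varphi(\ker(\alpha))\subseteq \tilxi\prism_S$ then follows from the defining property of the Nygaard filtration: $z_j \in \mathcal N^{\geq 1}\prism_S = \varphi^{-1}(I\prism_S)$ because $\varphi(z_j)\tilxi = \varphi(X_j) - p\delta(z_j)\tilxi \cdot(\ldots)$ — more cleanly, since $\theta(z_j) = 0$ in $S = \prism_S/\mathcal N^{\geq 1}$, one has $z_j\in\mathcal N^{\geq 1}\prism_S$, and by \Cref{sec:q-logarithm-convergence-of-q-logarithm}-style manipulation, or directly from $\varphi(\mathcal N^{\geq 1}\prism_S)\subseteq \tilxi\prism_S$, we get $\varphi(z_j)\in\tilxi\prism_S$. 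Since $\ker(\alpha)$ is generated as an ideal by the $z_j$ and their $\delta$-iterates, and $\varphi$ is a ring map, it suffices to observe $\varphi(\delta^n(z_j))\in\tilxi\prism_S$ too; but $\delta^n(z_j)\in\mathcal N^{\geq 1}\prism_S$ as well (each $\delta^n(z_j)$ is killed by $\theta$ modulo the ideal it generates — or one argues $\ker(\alpha)\subseteq\mathcal N^{\geq 1}\prism_S$ directly from $\ker(\alpha)\subseteq\ker(\theta)$, noting $\alpha$ is compatible with the $\theta$-maps to $S$). Hence $\varphi(\ker(\alpha))\subseteq\tilxi\prism_S$ and $\varphi_1 = \varphi/\tilxi$ is well-defined on $\ker(\alpha)$, using $\tilxi$-torsion-freeness of $\prism_S$ (Hodge–Tate comparison).

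For topological nilpotence of $\varphi_1$ on $\ker(\alpha)$: I would show that $\varphi_1$ maps the ideal $\ker(\alpha)$ into a "deeper" ideal, by tracking what $\varphi_1$ does to the generators. The crucial computation is $\varphi_1(z_j) = \varphi(z_j)/\tilxi = \varphi(X_j)/\tilxi^2 = X_j^p/\tilxi^2 = z_j^p \tilxi^{p-2}$ up to a $\delta$-correction; more precisely $\varphi(X_j) = X_j^p + p\delta(X_j)$, and since $X_j = \tilxi z_j$ one has $\varphi(X_j) = \varphi(\tilxi)\varphi(z_j) = \tilxi\cdot[p]_q\cdot\ldots$. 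Working this out, $\varphi_1(z_j)$ lies in the ideal generated by $\{z_k^{p}\}\cup\{p\} $ up to units, i.e. in $(p,\tilxi)\cdot\ker(\alpha) + \ker(\alpha)^{[p]}$ where $\ker(\alpha)^{[p]}$ denotes the ideal generated by $p$-th powers of generators. Iterating, $\varphi_1^n(z_j)$ lands in $(p,\tilxi)^{n}\ker(\alpha)$ (roughly), which tends to $0$ in the $(p,\tilxi)$-adic topology; one has to be slightly careful to handle all the $\delta$-iterates $\delta^m(z_j)$ simultaneously and to control the infinitely many variables $j\in J$, but the completed structure of $\prism_S$ makes this a bookkeeping exercise analogous to the proof of \Cref{sec:essent-surj-lemma-equivalence-of-varphi-modules-over-delta-rings}. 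The main obstacle I expect is precisely this: getting a clean, uniform estimate showing $\varphi_1^n(y)\to 0$ for \emph{every} $y\in\ker(\alpha)$ rather than just for the generators, since $\ker(\alpha)$ is a completed $\delta$-ideal in infinitely many variables and $\varphi_1$ is only additive (not multiplicative) — one must use that $\varphi_1(ab) = \varphi(a)\varphi_1(b)$ for $a\in\prism_S$, $b\in\ker(\alpha)$ to propagate the estimate, together with the fact that $\varphi$ preserves $(p,\tilxi)$-adic convergence and that $\ker(\alpha)$ is $(p,\tilxi)$-adically closed, exactly as in the argument of \Cref{sec:essent-surj-lemma-equivalence-on-window-categories}.
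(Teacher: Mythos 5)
Your handling of the first assertion is fine, and in fact slightly cleaner than the paper's: since $\alpha$ is a morphism in $(S)_\prism$, one gets $\ker(\alpha)\subseteq\ker(\theta\colon\prism_S\to S)=\mathcal{N}^{\geq 1}\prism_S$ and hence $\varphi(\ker(\alpha))\subseteq\tilxi\prism_S$ directly, whereas the paper checks this on explicit generators and then uses closedness of $\mathcal{N}^{\geq 1}\prism_S$. The genuine gap is in the topological nilpotence of $\varphi_1$, in two places. First, your computation of $\varphi_1$ on the generators is off. In $\prism_{\tilde S}\cong A[[X_j^{1/p^\infty}]]$ the element $X_j$ admits all $p$-power roots, so $\delta(X_j)=0$ and $\varphi(X_j)=X_j^p$ exactly (\Cref{sec:generalities-prisms-lemma-elements-with-p-th-roots-are-of-rank-1}); there is no $\delta$-correction and no $[p]_q$ in this picture. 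The correct outcome is not a multiple of $z_j^p$ but a genuinely new element: setting $z_{j,n}:=X_j^{p^n}/\bigl(\varphi^{n}(\tilxi)\varphi^{n-1}(\tilxi)^p\cdots\tilxi^{p^n}\bigr)$ one has $\varphi(z_{j,n})=\tilxi^{p^{n+1}}z_{j,n+1}$, so $\varphi_1(z_{j,0})=\tilxi^{p-1}z_{j,1}$, and the relation goes the wrong way for your estimate: $z_{j,0}^p=\varphi(\tilxi)\,z_{j,1}$, i.e. $z_{j,0}^p$ is a multiple of $z_{j,1}$ and not conversely, so the asserted membership of $\varphi_1(z_j)$ in the ideal generated by $p$, $\tilxi\ker(\alpha)$ and the $p$-th powers of generators, and the ensuing iteration into $(p,\tilxi)^n\ker(\alpha)$, are not justified.

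Second, and this is the point you flag and then defer as bookkeeping, the passage from the generators to all of $\ker(\alpha)$ is the actual content of the lemma. You only describe $\ker(\alpha)$ as the completed $\delta$-ideal on the $z_j$; an estimate on the $z_j$ alone does not propagate, because a general element is a convergent sum of $\prism_S$-multiples of the $\delta$-iterates $\delta^n(z_j)$, and $\varphi_1$ is only additive, so you need a bound on $\varphi_1^m(\delta^n(z_j))$ that is uniform in $n$. The paper supplies exactly the missing tool: \Cref{sec:concr-pres-prism-proposition-presentation-of-prismatic-envelope-for-rank-one-element} (the presentation of the prismatic envelope of a rank-one element) shows that each $\delta^n(X_j/\tilxi)$ lies in the subring generated by $z_{j,1},\dots,z_{j,n+1}$, so that $\ker(\alpha)$ is the closure of the \emph{ordinary} ideal generated by the $z_{j,n}$. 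With that in hand, the formula above gives $\varphi_1^m(z_{j,n})\in\tilxi^{p^{n+m}-1}\ker(\alpha)\subseteq\tilxi^{l}\ker(\alpha)$ simultaneously for all $n\geq 0$ once $p^m>l$, and then $\varphi_1(ab)=\varphi(a)\varphi_1(b)$, continuity of $\varphi_1^m$ and closedness of $\tilxi^{l}\ker(\alpha)$ yield the uniform statement; the analogy with \Cref{sec:essent-surj-lemma-equivalence-on-window-categories} only becomes available after one has this uniform estimate on an ordinary generating set. Without the rank-one formula and the presentation lemma (or an equivalent device), your argument does not close.
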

  \begin{proof}
    Set $K:=\ker(\alpha)$. Then $K$ is the closure in the
    $(p,\tilxi)$-adic topology of the $\prism_S$-submodule generated
    by $\delta^n(X_j/\tilxi)$ for $j\in J$ and $n\geq 0$. By \Cref{sec:concr-pres-prism-proposition-presentation-of-prismatic-envelope-for-rank-one-element} \blue{below} the module $K$ equals the closure of the ideal generated by
    $$
    z_{j,n}:=\frac{X_j^{p^n}}{\varphi^{n}(\tilxi)\varphi^{n-1}(\tilxi)^p\cdots
    \tilxi^{p^n}}
  $$
  for $j\in J$ and $n\geq 0$. Let us show that $\varphi(K)\subseteq \tilxi \prism_S$. Clearly,

  \begin{equation}
    \label{equation-how-varphi-acts-on-z_jn}
    \varphi(z_{j,n})=\tilxi^{p^{n+1}}z_{j,n+1}.
  \end{equation}

  As $\mathcal{N}^{\geq 1}\prism_S$ is closed in $\prism_S$ (being the kernel of the continuous surjection $\prism_S\to S$), we can conclude $K\subseteq \mathcal{N}^{\geq 1}\prism_S$.
  Next, let us check that $\varphi_1$ is topologically nilpotent on $K$. Fix $l\geq 1$. We claim that for every $m\geq 1$ such that $p^m> l$ and any $k\in K$ we have
  $$
  \varphi_1^m(k)\in \tilxi^{l}K.
  $$
  This implies as desired that $\varphi_1$ is topologically nilpotent on $K$. As $\tilxi^lK$ is closed and $\varphi_1^m$ continuous (for the $(p,\tilxi)$-adic topology on $K$) it is enough to assume that $k=z_{j,n}$ for some $j\in J, n\geq 1,$ because the $z_{j,n}$ generate a dense submodule in $K$\footnote{Dense for the $(p,\tilxi)$-adic topology.}. Using (\Cref{equation-how-varphi-acts-on-z_jn}) we can calculate
  $$
  \varphi_1^m(z_{j,n})=\varphi_1^{m-1}(\tilxi^{p^{n+1}-1}z_{j,n+1})=\ldots = a\tilxi^{p^{n+m}-1}z_{j,n+m}\in \tilxi^{p^{n+m}-1}K
  $$
  for some $a\in \prism_S$. But $\tilxi^{p^{n+m}-1}K\subseteq \tilxi^lK$ because $p^{n+m}-1\geq l$. This finishes the proof.
\end{proof}

\begin{lemma}
  \label{sec:concr-pres-prism-proposition-presentation-of-prismatic-envelope-for-rank-one-element}
  Let $(A,I)$ be a \red{bounded} prism and let $d\in A$ be distinguished.  Let
  furthermore $x\in A$ be an element of rank $1$. Then for \green{$n\geq 0$} there exist natural (in $A,x$) elements
  $$
  z_n\in  A\{\frac{x}{d}\}^{\wedge_{(p,d)}}
  $$
  such that $\varphi^{n}(d)\varphi^{n-1}(d)^p\cdots
    d^{p^n}\cdot z_n=x^{p^n}$. \blue{Moreover, for all $n\geq 0$, $\delta^n(\frac{x}{d})$ lies in the subring $A[z_0,\ldots, z_{n}]$ of $A\{\frac{x}{d}\}\red{^{\wedge_{(p,d)}}}$ generated by $z_0,\ldots,z_{n}$.}
\end{lemma}
\blue{Note that the last part of the lemma implies that the resulting morphism
  $$
  A[y_1,y_2,\ldots]/(x-d y_1,
  y_1^p-\varphi(d)y_2,y_2^p-\varphi^2(d)y_3,\ldots)\to
  A\{\frac{x}{d}\}^{\wedge_{(p,d)}},\ y_n\mapsto z_n
  $$
  is surjective after $(p,d)$-completion.} \green{We expect that this surjection is actually an isomorphism.}
\begin{proof}
  We can argue in the universal case
  $A=\Z_p[x]\{d,\frac{1}{\delta(d)}\}^{\wedge_{(p,d)}}$ where
    $\delta(x)=0$, thus we may assume that $A$ is transversal, i.e.,
  that $(p,d)$ is a regular sequence in $A$, and that $(x,d)$ is a
  regular sequence. This implies that for all $r\geq 1$ the sequence
  $(\varphi^r(d),\varphi^{r-1}(d))$ is regular as well (cf.\
  \Cref{sec:concr-pres-prism-lemma-regular-sequence-for-transversal-prism}).
  We first claim that for all $n\geq 0$ the element
  $$
  z_n:=\frac{x^{p^n}}{\varphi^{n}(d)\varphi^{n-1}(d)^p\cdots d^{p^n}}
  $$
  lies in $A\{\frac{x}{d}\}$. If $n=0$, then $z_n=\frac{x}{d}\in A\{\frac{x}{d}\}$. For $n\geq 0$ we can calculate
  $$
  \varphi(z_n)=\frac{x^{p^{n+1}}}{\varphi^{n+1}(d)\cdots \varphi(d)^{p^n}}
  $$
  because $\varphi(x)=x^p$.
  The numerator $x^{p^{n+1}}$ is divisible by $d^{p^{n+1}}$ in $A\{\frac{x}{d}\}$. \red{We claim that $(d^{p^{n+1}},\varphi^{n+1}(d)\cdots \varphi(d)^{p^n})$ is a regular sequence in $A\{\frac{x}{d}\}^{\wedge_{(p,d)}}$. Granting this }we can conclude that $d^{p^{n+1}}$ divides $\frac{x^{p^{n+1}}}{\varphi^{n+1}(d)\cdots \varphi(d)^{p^n}}$, i.e., that $z_{n+1}\in A\{\frac{x}{d}\}\red{^\wedge_{(p,d)}}$.
    \blue{Write $s=\varphi^{n+1}(d)\cdots \varphi(d)^{p^n}$. To prove that $(d^{p^{n+1}},s)$ is a regular sequence in $A\{\frac{x}{d}\}^{\wedge_{(p,d)}}$, it suffices to show the same for $(d,s)$. One proves by induction on $m$ that for all $m\geq 1$, $\varphi^m(d)$ is congruent to $pu_m$ modulo $d$ 
    for a unit $u_m$. In particular, one concludes that $s$ is congruent to $up^{\green{k}}$ modulo $d$, for $k\geq 1$ and $u$ a unit. Hence to prove that $(d,s)$ is a regular sequence in $A\{\frac{x}{d}\}^{\wedge_{(p,d)}}$, it suffices to show that $(d,p)$ is a regular sequence in $A\{\frac{x}{d}\}^{\wedge_{(p,d)}}$. But this follows from transversality of $A$ and the fact that $A \to A\{\frac{x}{d}\}^{\wedge_{(p,d)}}$ is $(p,d)$-completely flat.}

\blue{Next, we show that for all $n\geq 0$, $\delta^n(\frac{x}{d})$ lies in the subring $A[z_0,\ldots, z_{n}]$ of $A\{\frac{x}{d}\}\red{^{\wedge_{(p,d)}}}$ generated by $z_0,\ldots,z_{n}$}. This claim follows from the assertion that $\delta(z_{n})\in A[\green{z_0},\ldots, z_{n+1}]$ using induction and how $\delta$ acts on sums and products. For $n=0$ we can calculate
  $$
  \delta(z_0)=\delta(\frac{x}{d})=\frac{1}{p}(\varphi(\frac{x}{d})-\frac{x^p}{d^p})=\frac{1}{p}(d^p-\varphi(d))z_{1}=\delta(d)z_1\in A\{\frac{x}{d}\}.
  $$
  Similarly, we see
  $$
  \delta(z_n)=\frac{1}{p}(d^{p^{n+1}}-\varphi^{n+1}(d))z_{n+1}
  $$
  where the term $\frac{1}{p}(d^{p^{n+1}}-\varphi^{n+1}(d))$ lies in $A$. This finishes the proof. 
\end{proof}


We can derive essential surjectivity.

\begin{theorem}
  \label{sec:essent-surj-main-theorem-equivalence-of-divided-prismatic-dieudonne-functor} \purple{Let $R$ be a quasi-regular semiperfectoid ring.} Then the prismatic Dieudonn\'e functor \red{
  $$
  M_\prism(-)\colon \mathrm{BT}(R)\to \mathrm{DM}^{\rm adm}(R)
  $$
  from the category of $p$-divisible groups over $R$ to the category of admissible} prismatic Dieudonn\'e crystals over $R$ is essentially surjective. 
\end{theorem}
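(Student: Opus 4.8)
The plan is to reduce the general quasi-regular semiperfectoid case to the perfectoid case, where essential surjectivity of the prismatic Dieudonn\'e functor is already known by \Cref{sec:prism-dieud-theory-comparison-with-bkf-modules-perfectoid-case}. The strategy proceeds in three layers of descent. First, using Andr\'e's lemma (\Cref{sec:generalities-prisms-andres-lemma}), after replacing the perfect prism $(A,I)$ by a faithfully flat perfect cover we may assume the kernel of $A/I \twoheadrightarrow R$ is generated by elements admitting compatible systems of $p$-power roots; this base change is a quasi-syntomic cover, so since $\mathrm{BT}$ and $\DF$ are stacks (\Cref{sec:fully-faithf-prism-proposition-bt-is-a-stack}, \Cref{sec:astr-divid-prism-proposition-descent}) it is harmless for essential surjectivity provided one can descend, which is where fully faithfulness (\Cref{sec:fully-faithf-prism-fully-faithfulness-for-p-torsion-free-quasi-regular semiperfectoid}) will be used. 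Concretely, given $\underline{M} \in \DF(R)$, I would produce a $p$-divisible group over a cover whose filtered prismatic Dieudonn\'e module is the pullback of $\underline{M}$, then use full faithfulness to equip it with descent data and descend both the group and the identification.

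The heart of the argument is \Cref{sec:essent-surj-lifting-divided-prismatic-dieudonne-modules-for-special-surjection-of-quasi-regular semiperfectoid}, which asserts that $\DF(S) \to \DF(R)$ is essentially surjective for the ``coordinate'' ring $S = A/I\langle X_j^{1/p^\infty}\rangle/(X_j)$ mapping onto $R$: this I would prove by combining the normal decomposition result (\Cref{sec:abstr-divid-prism-proposition-existence-of-normal-decompositions}), which reduces lifting a filtered object to lifting a $\varphi$-module, with \Cref{sec:essent-surj-lemma-equivalence-of-varphi-modules-over-delta-rings}, using that $\prism_S \to \prism_R$ is surjective (Hodge--Tate comparison) and henselian along its kernel (a variant of \Cref{sec:essent-surj-lemma-prism_r-henselian-along-ker-theta}). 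Then \Cref{sec:essent-surj-lemma-essential-surjectivity-from-perfectoid-to-infinite-regular-semiperfectoid-ring} promotes this further to a lift along the perfectoid ring $\tilde{S} = A/I[[X_j^{1/p^\infty}]]$, via the window-equivalence \Cref{sec:essent-surj-lemma-equivalence-on-window-categories} applied to the surjection $\prism_S \twoheadrightarrow B := \prism_{\tilde{S}}/(X_j)$, where the crucial hypothesis---topological nilpotence of $\varphi_1$ on the kernel---is verified by the explicit presentation of the prismatic envelope in \Cref{sec:concr-pres-prism-proposition-presentation-of-prismatic-envelope-for-rank-one-element} and the computation in \Cref{sec:essent-surj-divided-varphi-topological-nilpotent-on-the-kernel}.

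Putting these together: given $\underline{N} \in \DF(R)$, I lift it along $\DF(\tilde{S}) \to \DF(S) \to \DF(R)$ to some $\underline{\tilde{N}} \in \DF(\tilde{S})$; since $\tilde{S}$ is perfectoid, \Cref{sec:prism-dieud-theory-comparison-with-bkf-modules-perfectoid-case} (together with \Cref{sec:abstr-divid-prism-divided-prismatic-dieudonne-modules-vs-bkf-modules}) gives a $p$-divisible group $\tilde{H}$ over $\tilde{S}$ with $\underline{M}_\prism(\tilde{H}) \cong \underline{\tilde{N}}$; let $H = \tilde{H} \otimes_{\tilde{S}} R$, so that $\underline{M}_\prism(H) \cong \underline{N}$. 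This handles the situation after the Andr\'e-type cover. To descend to the original $R$, one observes that the two pullbacks of $H$ (resp.\ $\underline{N}$) to the double cover $R \widehat{\otimes}_R R$ (a term of the \v{C}ech nerve, again quasi-regular semiperfectoid by \Cref{sec:quasi-syn-rings-proposition-quasi-regular semiperfectoid-basis}) have isomorphic prismatic Dieudonn\'e modules, hence by fully faithfulness are isomorphic compatibly with the cocycle condition, so $H$ descends to a $p$-divisible group over $R$ realizing $\underline{N}$.

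The main obstacle I expect is bookkeeping the descent cleanly---in particular checking that the isomorphism $\underline{M}_\prism(H) \cong \underline{N}$ obtained after the cover is itself compatible with descent data, which forces one to invoke full faithfulness for morphisms (not just objects) and the stack property of both sides simultaneously; the purely local input (henselianity of $\prism_S$ along the relevant kernels and the topological nilpotence of the divided Frobenius) is more technical but is already isolated in the cited lemmas.
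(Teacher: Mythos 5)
Your proposal is correct and follows essentially the same route as the paper: reduce to the situation after an André-type quasi-syntomic cover using fully faithfulness on the Čech nerve plus the stack properties of $\mathrm{BT}$ and $\DF$, then lift the filtered module along $\DF(\tilde{S})\to\DF(S)\to\DF(R)$ via the two lifting lemmas (normal decompositions, henselianity, and topological nilpotence of $\varphi_1$), and conclude from the perfectoid case. The only slips are cosmetic (the double cover should be $R'\widehat{\otimes}_R R'$, not $R\widehat{\otimes}_R R$), and the descent bookkeeping you flag is handled exactly as you describe, by full faithfulness over the Čech nerve terms.
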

\begin{proof} 
 To prove the theorem, we may pass to a quasi-syntomic cover $R'$ of $R$: indeed, let $M \in \red{\DM^{\rm adm}}(R)$ such that its base change along the map $R \to R'$ is of the form $M_{\prism}(G')$, for some $p$-divisible group $G'$ over $R$. The descent datum for $M_{\prism}(G')$ expressing that it comes from an admissible prismatic Dieudonn\'e module over $R$ (namely, $M$) gives rise to a descent datum for $G'$, since fully faithfulness over $R^\prime\hat{\otimes}_R R^\prime$ is already proved (cf.\ \Cref{sec:fully-faithf-prism-fully-faithfulness-for-p-torsion-free-quasi-regular semiperfectoid-new}). This descent datum is effective, by $p$-completely faithfully flat descent for $p$-divisible groups (cf.\ \Cref{sec:fully-faithf-prism-proposition-bt-is-a-stack}), so there exists a $p$-divisible group $G$ over $R$, with $M_{\prism}(G)=M$.
  
 Therefore, by \Cref{sec:generalities-prisms-andres-lemma}, we may and do assume that \blue{$R\cong \bar{A}/(a_j\ |\ j\in J)$ for $\bar{A}=A/I$} a perfectoid ring and $a_j\in R$ admitting compatible systems of $p^n$-roots of unity. Using \Cref{sec:essent-surj-lifting-divided-prismatic-dieudonne-modules-for-special-surjection-of-quasi-regular semiperfectoid} we may even assume that
 $$
 R\cong \blue{\bar{A}}\langle X_j^{1/p^\infty} |\ j\in J\rangle/(X_j).
 $$
 In this case we can invoke \Cref{sec:essent-surj-lemma-essential-surjectivity-from-perfectoid-to-infinite-regular-semiperfectoid-ring} and reduce to the case that $R$ is perfectoid. Then we can cite \Cref{sec:prism-dieud-theory-comparison-with-bkf-modules-perfectoid-case} to conclude that $M_\prism(-)$ is essentially surjective.
\end{proof}

This concludes the proof of the main \Cref{sec:divid-prism-dieud-modul-theorem-main-theorem-of-the-paper}.

\begin{remark}
\label{writing-a-quasi-inverse}
Let $R$ be quasi-syntomic ring. The arguments used in \Cref{sec:fully-faithful-new} show that the functor $\mathcal{G}$ from $\mathrm{DM}^{\rm adm}(R)$ to the category of abelian sheaves of $(R)_{\rm qsyn}$, sending $\mathcal{M} \in \DM^{\rm adm}(R)$ to
 \[ (\mathcal{M}^{\vee})^{\varphi=1} \otimes_{\Z_p} \mathbb{Q}_p/\Z_p, \]
where $\mathcal{M}^{\vee}$ denotes the $\mathcal{O}^{\rm pris}$-linear dual of $\mathcal{M}$, defines a quasi-inverse of the prismatic Dieudonn\'e functor.

It seems difficult to prove directly that $\mathcal{G}$ takes values in the category of (quasi-syntomic sheaves attached to) $p$-divisible groups. In the case of \'etale $p$-divisible groups \Cref{sec:divid-prism-dieud-modul-theorem-main-theorem-of-the-paper} yields an equivalence of $\Z_p$-local systems on $R$ and finite locally free $\mathcal{O}^\pris$-modules (resp.\ $\prism_R$-modules if $R$ is quasi-regular semiperfectoid) $\mathcal{M}$ together with an isomorphism $\varphi_{\mathcal{M}}\colon \varphi^\ast(\mathcal{M})\cong \mathcal{M}$. This is a generalization of Katz' correspondence between $\Z_p$-local systems on the spectrum $\Spec(k)$ of a perfect field $k$ and $\varphi$-modules over $W(k)$ (cf.\ \cite[Proposition 4.1.1]{katz_p_adic_properties_of_modular_schemes_and_modular_forms}.
We thank Beno\^it Stroh for pointing this out to us.
\end{remark}

\newpage
\section{Complements} \label{complements}

\subsection{Prismatic Dieudonn\'e theory for finite locally free group schemes} \label{sec:complements-finite-locally-free}

Let $R$ be a perfectoid ring. We fix a generator $\xi$ of $\ker(\theta)$ and let $\tilxi=\varphi(\xi)$. 


\begin{definition}
  \label{definition-torsion-bkf-modules}
A \textit{torsion prismatic Dieudonn\'e module over $R$} is a triple
$$
(M,\varphi_M,\psi_M),
$$
where $M$ is a finitely presented $A_{\rm inf}(R)$-module of projective dimension $\leq 1$ which is annihilated by a power of $p$ and where $\varphi_M : M \to M$ and $\psi_M : M\to M$ are respectively $\varphi$-linear and $\varphi^{-1}$-linear, and satisfy $$\varphi_M \circ \psi_M=\tilxi, \quad \psi_M \circ \varphi_M=\xi.$$
The category of torsion prismatic Dieudonn\'e modules over $R$ is denoted by $\mathrm{DM}_{\rm tors}(R)$. \blue{It is an exact category.}
\end{definition}

  The base change of torsion prismatic Dieudonn\'e modules behaves well.
  
  \begin{lemma}
    \label{sec:prism-dieud-theory-lemma-base-change-for-torsion-modules}
    Let $R\to R^\prime$ be a morphism of perfectoid rings and $M\in \mathrm{DM}_{\mathrm{tors}}(R)$. Then $M\otimes_{A_\inf(R)}A_\inf(R^\prime)$ is concentrated in degree $0$. In particular, the base change functor $\mathrm{DM}_{\mathrm{tors}}(R)\to \mathrm{DM}_{\mathrm{tors}}(R^\prime)$ is exact.
  \end{lemma}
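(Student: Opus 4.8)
The statement to prove is that for a torsion prismatic Dieudonn\'e module $M$ over a perfectoid ring $R$ and a map $R \to R'$ of perfectoid rings, the derived base change $M \otimes^{\mathbb{L}}_{A_{\inf}(R)} A_{\inf}(R')$ is concentrated in degree $0$; exactness of the base change functor on $\mathrm{DM}_{\mathrm{tors}}$ is then immediate from the long exact Tor sequence together with the already-known fact (part of the definition, via projective dimension $\leq 1$) that the base change lands again in $\mathrm{DM}_{\mathrm{tors}}(R')$. I would first reduce to a statement about $A_{\inf}$ and its base change map: write $A = A_{\inf}(R)$, $A' = A_{\inf}(R')$. Since $(A,\ker\tilde\theta)$ and $(A',\ker\tilde\theta')$ are perfect prisms with $\ker\tilde\theta = (\xi)$, $\ker\tilde\theta' = (\xi')$, and the map $A \to A'$ carries $\xi$ to a unit multiple of $\xi'$, the map $A \to A'$ is $(p,\xi)$-completely flat (this is the translation under \Cref{sec:generalities-prisms-theorem-perfect-prisms-equivalent-to-perfectoid-rings} of $R \to R'$ being a map of perfectoid rings, together with the fact that base change of prisms along a perfect target is computed on $A_{\inf}$; more precisely $A' = A \widehat{\otimes}^{\mathbb{L}}_{W(R^\flat)} W(R'^\flat)$ by the Künneth formula \Cref{sec:kunn-form-prism-proposition-kunneth-formula}, and $W(R^\flat) \to W(R'^\flat)$ is $p$-completely flat since $R^\flat \to R'^\flat$ is a flat map of perfect $\mathbb{F}_p$-algebras by tilting).

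The key step is then purely homological. Because $M$ is annihilated by $p^n$ for some $n$, one has $M \otimes^{\mathbb{L}}_A A' = M \otimes^{\mathbb{L}}_{A/p^n} (A'/p^n)$, so we may work modulo $p^n$ and need only that $A/p^n \to A'/p^n$ is flat — which follows from $p$-complete flatness of $A \to A'$ together with the fact that both rings are $p$-torsion free (so $A/p^n \to A'/p^n$ is honestly flat, by \cite[Corollary 4.8]{bhatt_morrow_scholze_topological_hochschild_homology}). Flatness of $A/p^n \to A'/p^n$ immediately gives $M \otimes^{\mathbb{L}}_{A/p^n} A'/p^n = M \otimes_{A/p^n} A'/p^n$ concentrated in degree $0$, which is what we want. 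Alternatively, and perhaps more cleanly, one uses the projective dimension $\leq 1$ hypothesis directly: choose a two-term resolution $0 \to P_1 \xrightarrow{f} P_0 \to M \to 0$ by finite projective $A$-modules (possible since $M$ is finitely presented of projective dimension $\leq 1$); then $M \otimes^{\mathbb{L}}_A A'$ is computed by $P_1 \otimes_A A' \xrightarrow{f \otimes 1} P_0 \otimes_A A'$, and $H^{-1}$ of this is $\ker(f \otimes 1)$, the $\mathrm{Tor}_1^A(M, A')$. So it suffices to show $\mathrm{Tor}_1^A(M, A') = 0$. Since $M$ is $p^n$-torsion, $\mathrm{Tor}_1^A(M,A') = \mathrm{Tor}_1^{A/p^n}(M, A'/p^n)$, and this vanishes by flatness of $A/p^n \to A'/p^n$.

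I expect the main (and really only) obstacle to be the verification that $A_{\inf}(R) \to A_{\inf}(R')$ is $p$-completely flat, equivalently that $A_{\inf}(R)/p^n \to A_{\inf}(R')/p^n$ is flat for all $n$. This is where the perfectoid hypothesis on both $R$ and $R'$ is genuinely used: it should be extracted from the tilting equivalence, i.e., $A_{\inf}(R)/p = R^\flat$, $A_{\inf}(R')/p = R'^\flat$, the map $R^\flat \to R'^\flat$ is a map of perfect $\mathbb{F}_p$-algebras hence flat after base change from any map (but one needs flatness of this specific map — which holds because $R \to R'$ being a map of perfectoid rings tilts to a map of perfectoid Tate rings / perfect rings, and flatness is part of that; concretely $R'$ perfectoid means $A_{\inf}(R') = W(R'^\flat)$ and the relevant flatness is \cite[Corollary 4.8]{bhatt_morrow_scholze_topological_hochschild_homology} applied to $W(R^\flat) \to W(R'^\flat)$), and then deforming flatness modulo $p$ to flatness modulo $p^n$ over the $p$-torsion-free rings $A_{\inf}(R)$, $A_{\inf}(R')$ by the local criterion of flatness. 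Once this input is in place the rest of the argument is the short homological computation above. I would state the $p$-complete flatness as a small separate lemma or cite it, then give the two-line Tor-vanishing argument, and conclude exactness of the functor from the snake lemma applied to a short exact sequence $0 \to M' \to M \to M'' \to 0$ in $\mathrm{DM}_{\mathrm{tors}}(R)$ together with the vanishing of $\mathrm{Tor}_1^A(M'', A')$.
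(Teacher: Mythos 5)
There is a genuine gap: the flatness input your whole argument rests on is false. A morphism $R\to R'$ of perfectoid rings need \emph{not} induce a $p$-completely flat map $A_\inf(R)\to A_\inf(R')$, equivalently the tilted map $R^\flat\to R'^\flat$ need not be flat --- being a map of perfect $\mathbb{F}_p$-algebras carries no flatness whatsoever. Concretely, take $R=\mathcal{O}_C$ and $R'=k=\mathcal{O}_C/\mathfrak{m}$ its (perfect, hence perfectoid) residue field: then $A_\inf(R)/p=\mathcal{O}_{C^\flat}\to k=A_\inf(R')/p$ is a quotient of a valuation ring by its maximal ideal, which is not flat (the target has torsion), so $A_\inf(\mathcal{O}_C)\to W(k)$ is not $p$-completely flat. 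An even more elementary example is $\mathbb{F}_p[T^{1/p^\infty}]\to\mathbb{F}_p$, $T\mapsto 0$: here $\mathrm{Tor}_1$ against $\mathbb{F}_p[T^{1/p^\infty}]/(T)$ is nonzero. So the step ``$A/p^n\to A'/p^n$ is flat'' fails already for $n=1$, and neither the K\"unneth formula nor the rigidity of $\ker\tilde\theta$ under maps of prisms repairs it; those facts are true but irrelevant to flatness. (Your subsidiary reduction $M\otimes^{\mathbb{L}}_A A'\simeq M\otimes^{\mathbb{L}}_{A/p^n}A'/p^n$ is fine, since $A'$ is $p$-torsion free; the problem is only the flatness claim.)

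The good news is that your second, ``more homological'' formulation is exactly the right starting point and can be completed without any flatness, which is what the paper does. Keep the two-term resolution $0\to M_1\xrightarrow{f} M_0\to M\to 0$ by finite projective $A_\inf(R)$-modules and reduce, as you did, to showing $f\otimes 1$ stays injective after base change. Since $p^nM=0$, the map $p^n\cdot\mathrm{id}_{M_0}$ factors through the image of $f$, so by projectivity of $M_0$ there is $g\colon M_0\to M_1$ with $f\circ g=p^n$, and injectivity of $f$ then gives $g\circ f=p^n$ on $M_1$. After base change, $(g\otimes 1)\circ(f\otimes 1)=p^n$ on $M_1\otimes_{A_\inf(R)}A_\inf(R')$, and this module is finite projective over the $p$-torsion-free ring $A_\inf(R')$, hence $p$-torsion free; so $f\otimes 1$ is injective and $\mathrm{Tor}_1^{A_\inf(R)}(M,A_\inf(R'))=0$. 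Your concluding deduction of exactness of the base change functor from this Tor-vanishing is then fine as stated.
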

  \begin{proof}
    Let
    $$
    0\to M_1\xrightarrow{f} M_2\to M\to 0
    $$
    be a resolution of $M$ by finite locally free $A_\inf(R)$-modules. As $M$ is killed by $p^n$ for some $n\geq 0$, there exists $g\colon M_2\to M_1$ such that $f\circ g=p^n$. Then $p^n=g\circ f$ (using that $f$ is injective). The base change $M_1\otimes_{A_\inf(R)}A_\inf(R^\prime)$ is $p$-torsion free as $A_\inf(R^\prime)$ is. This implies that the base change of $f$ to $A_\inf(R^\prime)$ remains injective, which finishes the proof. 
  \end{proof}

Before stating the main result, let us introduce a notation, which will be in use only in this section.
\begin{notation}
If $S$ is a $p$-complete ring, let $\mathcal{B}_S$ (resp. $\mathcal{C}_S$) denote the category whose objects are $\mathcal{O}_{\prism}$-modules on $(S)_{\prism}$ (resp. $\mathcal{O}_{\prism}$-modules on $(S)_{\prism}$ endowed with a $\varphi$-linear Frobenius), and whose morphisms are $\mathcal{O}_{\prism}$-linear morphisms (resp. $\mathcal{O}_{\prism}$-linear morphisms commuting with Frobenius).
\end{notation}

\begin{theorem}
  \label{theorem-exact-antiequivalence-for-finite-flat-group-schemes}
There is a natural exact\footnote{This includes the non-formal assertion that the inverse equivalence is exact, too.} antiequivalence
\[ H \mapsto (M_{\prism}(H), \varphi_{M_{\prism}(H)}, \psi_{M_{\prism}(H)}) \]
between the \blue{exact} category of finite locally free group schemes of $p$-power order on $R$ and the \blue{exact} category $\mathrm{DM}_{\rm tors}(R)$ of torsion prismatic Dieudonn\'e modules over $R$, such that the $A_{\rm inf}(R)$-module $M_{\prism}(H)$ is given by the formula 
\[ M_{\prism}(H)=\mathrm{Ext}_{(R)_{\prism}}^1(u^{-1} H, \mathcal{O}_{\prism}) \]
and such that $\varphi_{M_{\prism}(H)}$ is the map induced by the Frobenius of $\mathcal{O}_{\prism}$. 

\end{theorem}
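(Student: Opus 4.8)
The plan is to deduce the torsion statement from the $p$-divisible case (Corollary~\ref{sec:prism-dieud-theory-comparison-with-bkf-modules-perfectoid-case}) by the standard device of realizing a finite locally free group scheme of $p$-power order as the kernel of an isogeny of $p$-divisible groups. First I would recall that, by a theorem of Raynaud in the form used for instance in \cite{scholze_weinstein_moduli_of_p_divisible_groups} (or by a direct argument: embed $H$ into a $p$-divisible group, which over an affine base is always possible, e.g. via \Cref{sec:divid-prism-dieud-definition-for-p-div-groups-theorem-raynaud} applied to an abelian scheme), one can write $H=\ker(f\colon G_1\to G_2)$ for an isogeny $f$ of $p$-divisible groups over $R$ (this uses that $R$ is $p$-complete; one can take $G_2=G_1/H$ for any $p$-divisible $G_1\supseteq H$). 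Then the short exact sequence $0\to H\to G_1\to G_2\to 0$ of abelian sheaves on $(R)_{\rm qsyn}$ (all terms being syntomic, so the sequence is exact for the quasi-syntomic, hence also prismatic, topology by \Cref{sec:prismatic-to-syntomic-corollary-exactness-if-kernel-is-affine-p-completely-syntomic}) yields, after applying $R\mathcal{H}om_{(R)_{\prism}}(u^{-1}(-),\mathcal{O}_{\prism})$ and using $\mathcal{H}om(u^{-1}G_i,\mathcal{O}_{\prism})=0$ (derived $p$-completeness of $\mathcal{O}_{\prism}$ against $p$-divisibility) together with the vanishing of $\Ext^2$ for $p$-divisible groups established in the course of \Cref{sec:divid-prism-dieud-proposition-ext-crystal}, a four-term exact sequence
\[ 0 \to M_{\prism}(G_2) \xrightarrow{M_{\prism}(f)} M_{\prism}(G_1) \to M_{\prism}(H) \to 0, \]
where the last zero uses $\Ext^2_{(R)_{\prism}}(u^{-1}G_2,\mathcal{O}_{\prism})=0$. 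Since $M_{\prism}$ on $p$-divisible groups is exact and fully faithful with $M_{\prism}(G_i)$ finite locally free over $A_{\rm inf}(R)$ and $M_{\prism}(f)$ injective with cokernel $M_{\prism}(H)$ killed by a power of $p$, the module $M_{\prism}(H)$ is finitely presented of projective dimension $\le 1$ and $p$-power torsion; it is independent of the chosen resolution because $M_{\prism}(f)$ is determined by $f$ and any two resolutions of $H$ by isogenies are dominated by a common one (or simply: $M_{\prism}(H)=\mathrm{coker}(M_{\prism}(f))$ and one checks this agrees with $\Ext^1_{(R)_{\prism}}(u^{-1}H,\mathcal{O}_{\prism})$ directly from the long exact sequence).

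Next I would construct the Verschiebung-type operator $\psi_{M_{\prism}(H)}$. Writing $\xi,\tilxi$ with $\tilxi=\varphi(\xi)$, on each $M_{\prism}(G_i)$ the linearization of $\varphi$ has cokernel killed by $\tilxi$ and is injective, so there is a canonically defined $\psi_{G_i}\colon M_{\prism}(G_i)\to M_{\prism}(G_i)$, $\varphi^{-1}$-linear, with $\varphi\circ\psi_{G_i}=\tilxi$ and $\psi_{G_i}\circ\varphi=\xi$ (this is the minuscule-Breuil--Kisin--Fargues structure: divide; use $A_{\rm inf}(R)$ is $\tilxi$- and $\xi$-torsion free). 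These operators are functorial, hence compatible with $M_{\prism}(f)$, so they descend to $\psi_{M_{\prism}(H)}$ on the cokernel satisfying the required identities $\varphi_M\circ\psi_M=\tilxi$, $\psi_M\circ\varphi_M=\xi$. This gives the functor $H\mapsto(M_{\prism}(H),\varphi_{M_{\prism}(H)},\psi_{M_{\prism}(H)})$ into $\mathrm{DM}_{\rm tors}(R)$; exactness of this functor follows from exactness of $M_{\prism}$ on $p$-divisible groups plus the snake lemma, and \Cref{sec:prism-dieud-theory-lemma-base-change-for-torsion-modules} shows the construction is compatible with base change along maps of perfectoid rings, which will be needed for descent.

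For the anti-equivalence, fully faithfulness reduces to the $p$-divisible case: given $H=\ker(G_1\to G_2)$ and $H'=\ker(G_1'\to G_2')$, a morphism $H\to H'$ extends after enlarging $G_1'$ (replace $G_1'$ by a $p$-divisible group containing both $H'$ and the image, using injectivity of $G_1'\hookrightarrow$ a larger $p$-divisible group) to a compatible morphism of the two-term complexes $[G_2\to G_1]$, $[G_2'\to G_1']$, and conversely a morphism of torsion modules lifts, by projectivity of the finite locally free terms over $A_{\rm inf}(R)$, to a morphism of the presenting complexes of Dieudonné modules, which by full faithfulness in the $p$-divisible case comes from a morphism of the isogenies, inducing a morphism on kernels; one then checks these two assignments are mutually inverse, which is a diagram chase using that $M_{\prism}(G_i)\to M_{\prism}(G_i/H)$ has cokernel $M_{\prism}(H[\,\cdot\,])$ of the expected size. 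Essential surjectivity: given $(M,\varphi_M,\psi_M)\in\mathrm{DM}_{\rm tors}(R)$, choose a resolution $0\to M_1\xrightarrow{g} M_0\to M\to 0$ by finite locally free $A_{\rm inf}(R)$-modules; using $\psi_M\circ\varphi_M=\xi$ and $\varphi_M\circ\psi_M=\tilxi$ one extends $(\varphi_M,\psi_M)$ to compatible $(\varphi_i,\psi_i)$ on $M_i$ making $(M_i,\varphi_i)$ minuscule BKF-modules (here one must check the cokernel of $g$ being $p$-power torsion forces the linearized Frobenius on each $M_i$ to have cokernel killed by $\tilxi$ — this is where the projective-dimension-$\le1$ and torsion hypotheses are really used), hence by Corollary~\ref{sec:prism-dieud-theory-comparison-with-bkf-modules-perfectoid-case} objects $M_{\prism}(G_1),M_{\prism}(G_0)$, and $g$ corresponds to an isogeny $G_0\to$? — rather, to a morphism $G_1\to G_0$ of $p$-divisible groups which is an isogeny since $\mathrm{coker}(g)$ is finite; set $H=\ker(G_0\to G_1)$? — one arranges the variance so that $M_{\prism}(H)\cong M$. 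The main obstacle I anticipate is precisely this last point: showing that an arbitrary torsion triple admits a presentation by a morphism of \emph{minuscule} BKF-modules (i.e. that the extended Frobenii have the correct cokernel), and handling the projective-dimension-$\le 1$ condition cleanly over the possibly non-Noetherian ring $A_{\rm inf}(R)$; a clean way around it may be to first prove the statement when $R=\mathcal{O}_C$ is a perfectoid valuation ring with algebraically closed fraction field (where one has classical Dieudonné/BKF torsion theory available, e.g. from \cite{bhatt_morrow_scholze_integral_p_adic_hodge_theory} or Scholze--Weinstein) and then descend to general perfectoid $R$ via $v$-descent and \Cref{sec:prism-dieud-theory-lemma-base-change-for-torsion-modules}, exactly as in the proof of Proposition~\ref{sec:prism-dieud-theory-proposition-comparison-scholze-weinstein-functor}. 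Finally, exactness of the inverse functor is obtained from \Cref{sec:prism-dieud-theory-lemma-base-change-for-torsion-modules} together with the fact that a short exact sequence of torsion modules lifts, after a suitable choice of resolutions, to a short exact sequence of two-term complexes of BKF-modules, yielding a short exact sequence of the associated isogenies and hence of the kernels.
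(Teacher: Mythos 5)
Your overall strategy --- realize $H$ Zariski-locally as the kernel of an isogeny of $p$-divisible groups, realize torsion triples as cokernels of isogenies of minuscule modules, and transport the antiequivalence of \Cref{sec:prism-dieud-theory-comparison-with-bkf-modules-perfectoid-case} --- is exactly the principle the paper follows (which in turn follows Lau). The four-term sequence, the formula $M_{\prism}(H)=\mathrm{Ext}^1_{(R)_{\prism}}(u^{-1}H,\mathcal{O}_{\prism})$, the construction of $\psi_{M_{\prism}(H)}$ by functoriality, and exactness of the functor are essentially right. Two small corrections: \Cref{sec:divid-prism-dieud-proposition-ext-crystal} establishes $p$-torsion-freeness of $\mathrm{Ext}^2_{(R)_{\prism}}(u^{-1}G_2,\mathcal{O}_{\prism})$, not its vanishing (this still suffices, since the cokernel of $\mathrm{Ext}^1(u^{-1}G_1,\mathcal{O}_{\prism})\to\mathrm{Ext}^1(u^{-1}H,\mathcal{O}_{\prism})$ is killed by a power of $p$; the paper instead embeds $H$ into an abelian scheme and uses the genuine vanishing of $\mathrm{Ext}^2$ there), and the vanishing $\mathrm{Hom}_{(R)_{\prism}}(u^{-1}H,\mathcal{O}_{\prism})=0$, which you use implicitly for injectivity statements, is not formal --- the paper proves it via the $p$-torsion-freeness of $H^0$ of prismatic cohomology of syntomic schemes (\Cref{sec:prism-dieud-theory-lemma-h-0-p-torsion-free-for-syntomic-things}).

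The genuine gap is the one you flag yourself: essential surjectivity needs the fact that an arbitrary $(M,\varphi_M,\psi_M)\in\mathrm{DM}_{\rm tors}(R)$ admits, Zariski-locally on $\Spec(R)$, a presentation as the cokernel of an isogeny of \emph{minuscule} Breuil-Kisin-Fargues modules, and your proposal neither proves this nor makes the suggested workaround (prove over $\mathcal{O}_C$, then $v$-descend) precise --- $v$-descent for the category $\mathrm{DM}_{\rm tors}$ is established nowhere in the paper, and descending such presentations is not obviously easier than the original problem. The paper resolves exactly this point by citing Lau's Lemma 10.10, and more generally delegates the whole construction of the antiequivalence, including fully faithfulness, to Lau's Theorem 10.12 with his crystalline input replaced by \Cref{sec:prism-dieud-theory-comparison-with-bkf-modules-perfectoid-case}; your full-faithfulness sketch (extending morphisms of kernels to morphisms of the presenting isogenies and checking the two assignments are mutually inverse) is precisely the content of that argument and would have to be carried out, with the well-definedness and independence-of-resolution checks included. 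Likewise, your argument for exactness of the quasi-inverse --- lifting a short exact sequence of torsion modules to a short exact sequence of two-term complexes of minuscule modules --- is unjustified and runs into the same ``minuscule presentation'' difficulty; the paper instead combines the base change lemma \Cref{sec:prism-dieud-theory-lemma-base-change-for-torsion-modules} with de Jong's criterion to reduce to a perfect field, where both categories are abelian and exactness of an equivalence is automatic.
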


\begin{remark}
  \label{remark-previous-results-on-classification-of-finite-flat-group-schemes}
A similar statement can be found in \cite[Theorem 10.12]{lau_dieudonne_theory_over_semiperfect_rings_and_perfectoid_rings}. Apart from the change of terminology, the only difference with the result in loc. cit. is that we remove the assumption that $p\geq 3$ and provide a formula for the underlying $A_{\rm inf}$-module of the torsion minuscule Breuil-Kisin-Fargues module attached to a finite locally free group scheme of $p$-power order.
\end{remark}

The proof of \Cref{theorem-exact-antiequivalence-for-finite-flat-group-schemes} will make use of the following lemma.

  \begin{lemma}
    \label{sec:prism-dieud-theory-lemma-h-0-p-torsion-free-for-syntomic-things}
    Let $(A,I)$ be a bounded prism, such that $A$ is $p$-torsion free and let $S$ be a $p$-completely syntomic $A/I$-algebra\footnote{A morphism $R\to R^\prime$ between $p$-complete rings of bounded $p^\infty$-torsion is $p$-completely syntomic if $R^\prime/p\cong R^\prime\otimes_R^{\mathbb{L}} R/p$ and $R/p\to R^\prime/p$ is syntomic in the sense of \cite[Tag 00SL]{stacks_project}.}. Then
    $$
    H^0(S,\prism_{S/A})
    $$
    is $p$-torsion free.
  \end{lemma}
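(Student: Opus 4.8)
The statement to be proved is that for a bounded prism $(A,I)$ with $A$ $p$-torsion free and $S$ a $p$-completely syntomic $A/I$-algebra, the module $H^0(S,\prism_{S/A})$ is $p$-torsion free. The natural strategy is to reduce to the affine \'etale-local situation via the \u{C}ech--Alexander description of prismatic cohomology and then exploit the explicit presentation of prismatic envelopes along $p$-completely regular sequences (cf.\ \cite[Proposition 3.13]{bhatt_scholze_prisms_and_prismatic_cohomology}, as already invoked in \Cref{sec:prism-site-prism-remarks-after-crystalline-comparison} and in \Cref{sec:concr-pres-prism-proposition-presentation-of-prismatic-envelope-for-rank-one-element}). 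First I would recall that $\prism_{S/A}$ can be computed by the \u{C}ech--Alexander complex associated to a surjection from a $(p,I)$-completed polynomial ($\delta$-)algebra $P^\bullet$ over $A$; since $S$ is $p$-completely syntomic over $A/I$, the kernel of $P^0/I \twoheadrightarrow S$ is, $p$-completely and locally, generated by a $p$-completely regular sequence, so each term of the \u{C}ech--Alexander complex is a prismatic envelope of the form $A\{\tfrac{x_1}{d},\dots,\tfrac{x_r}{d}\}^{\wedge}$ with $(p,d,x_1,\dots,x_r)$ behaving like a regular sequence. The key input is that such prismatic envelopes are themselves $p$-torsion free: this is exactly the transversality statement underlying \cite[Proposition 3.13]{bhatt_scholze_prisms_and_prismatic_cohomology} (see also \Cref{sec:concr-pres-prism-lemma-regular-sequence-for-transversal-prism}), which gives a concrete presentation in which $p$ remains a non-zero-divisor.

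The main step, then, is to pass from ``all terms of a cosimplicial ring are $p$-torsion free'' to ``$H^0$ is $p$-torsion free.'' This is formal: $H^0$ of a cosimplicial abelian group $C^\bullet$ is the equalizer $\ker(C^0 \rightrightarrows C^1)$, hence a submodule of $C^0$; a submodule of a $p$-torsion free module is $p$-torsion free. One has to be mildly careful that $\prism_{S/A}$ is the $(p,I)$-derived completion of the \u{C}ech--Alexander complex and that we want $H^0$ of the honest cohomology, but since each $C^n$ is $(p,I)$-completely flat over $A$ (hence a bounded prism, hence classically $(p,I)$-complete and $p$-torsion free), the derived completion does not introduce higher $\varprojlim$ or $\mathrm{Tor}$ terms, and $H^0$ of the completed complex agrees with the equalizer of $C^0 \to C^1$; this is the same bookkeeping as in \Cref{sec:prism-cohom-quasi-lemma-all-prismatic-cohomology-agrees-for-smooth-things}. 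So $H^0(S,\prism_{S/A})$ embeds into a $p$-torsion free ring and we are done.

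I expect the only genuine subtlety — the ``hard part'' — to be the local-global and $\delta$-structure bookkeeping: making sure that Zariski-locally on $\Spec(S/p)$ one really does get a presentation of the kernel by a $p$-completely regular sequence \emph{relative to $A$} in the precise sense needed for \cite[Proposition 3.13]{bhatt_scholze_prisms_and_prismatic_cohomology}, and that the $p$-torsion-freeness of the prismatic envelope is preserved under the relevant $(p,I)$-completions and under gluing the \u{C}ech terms over a Zariski cover. Since $p$-torsion-freeness is not a Zariski-local condition in a completely naive sense for non-Noetherian rings, I would phrase the reduction carefully: work with a fixed finite affine cover, take the corresponding \v{C}ech--Alexander double complex, observe each entry is $p$-torsion free by the envelope computation, and conclude that the total $H^0$ (again an equalizer of a map between $p$-torsion free modules) is $p$-torsion free. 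Everything else is routine once this presentation is in hand.
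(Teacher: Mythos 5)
Your overall skeleton is the same as the paper's: compute $\prism_{S/A}$ (which agrees with site cohomology since $S$ is $p$-completely syntomic over $A/I$) by a \v{C}ech--Alexander complex whose terms are prismatic envelopes, and observe that $H^0$ is the equalizer of $C^0\rightrightarrows C^1$, hence a submodule of $C^0$, so it inherits $p$-torsion-freeness from $C^0$. The one genuine gap is the step you call the "key input": that the envelope terms are $p$-torsion free. The references you lean on do not give this. \cite[Proposition 3.13]{bhatt_scholze_prisms_and_prismatic_cohomology} gives $(p,I)$-complete flatness of the envelope over $A$, which is not the same as $p$-torsion-freeness, and \Cref{sec:concr-pres-prism-lemma-regular-sequence-for-transversal-prism} is a statement about the sequences $(p,\varphi^r(d))$ in a transversal prism, not about transversality of prismatic envelopes. (A transversality-preservation statement for envelopes does exist, but it is not among the results you cite, and it is not needed.) The paper closes exactly this gap by a soft argument: since $A$ is $p$-torsion free, $A$ is $p$-completely flat over $\Z_p$; hence each \v{C}ech--Alexander term, being $p$-complete and $(p,I)$-completely flat over $A$, is a $p$-complete $p$-completely flat $\Z_p$-module, hence topologically free over $\Z_p$, hence $p$-torsion free. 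With that one line added, your argument is complete and is essentially the paper's proof.

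Two smaller remarks. First, your local-global worry is overcautious: the input you actually need from the syntomic hypothesis is that the envelope terms are $(p,I)$-completely flat over $A$, and this can be checked after reduction modulo $(p,I)$, where flatness is Zariski-local; likewise injectivity of multiplication by $p$ on a module is a local condition, so no \v{C}ech--Alexander double complex over an affine cover is required. Second, your bookkeeping identifying $H^0$ of the completed complex with the equalizer of $C^0\to C^1$ is fine and matches the paper's reduction (the terms are bounded prisms, hence classically $(p,I)$-complete), so apart from the torsion-freeness justification nothing else needs repair.
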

  \begin{proof}
    As $S$ is a $p$-completely syntomic $A/I$-algebra the derived prismatic cohomology $\prism_{S/A}$ agrees with the cohomology $R\Gamma((S/A)_\prism,\mathcal{O}_\prism)$ of the prismatic site of $S$ over $A$ (this follows by descent from the quasi-regular semiperfectoid case and \Cref{sec:prism-cohom-quasi-proposition-all-prisms-agree-for-quasi-regular semiperfectoid}). 
    By \cite[Proposition 3.13]{bhatt_scholze_prisms_and_prismatic_cohomology} and the assumption that $S$ is a $p$-completely syntomic $A/I$-algebra, one can calculate $\prism_{S/A}$ by some \u{Cech}-Alexander complex \red{whose first term is $p$-complete and $p$-completely flat over $A$}. Therefore it suffices to see that each $p$-complete $p$-completely flat $A$-algebra $B$ has no $p$-torsion. As $A$ is $p$-torsion free, $A$, and thus $B$, is $p$-completely flat over $\Z_p$. But any $p$-completely flat $p$-complete module over $\Z_p$ is topologically free and thus $p$-torsion free.
  \end{proof}

\begin{proof}[Proof of \Cref{theorem-exact-antiequivalence-for-finite-flat-group-schemes}]
The construction of the antiequivalence is exactly similar to the one of \cite[Theorem 10.12]{lau_dieudonne_theory_over_semiperfect_rings_and_perfectoid_rings}, replacing Theorem 9.8 in loc.\ cit.\ by \Cref{sec:prism-dieud-theory-comparison-with-bkf-modules-perfectoid-case}, so we do not give it and refer the reader to \cite{lau_dieudonne_theory_over_semiperfect_rings_and_perfectoid_rings}. The simple principle is that Zariski-locally on $\mathrm{Spec}(R)$, any finite locally free group scheme of $p$-power order is the kernel of an isogeny of $p$-divisible groups (and even an isogeny of $p$-divisible groups associated to abelian schemes, cf. \Cref{sec:divid-prism-dieud-definition-for-p-div-groups-theorem-raynaud}); similarly, Zariski-locally on $\mathrm{Spec}(R)$, any torsion prismatic Dieudonn\'e module is the cokernel of an isogeny of prismatic Dieudonn\'e modules (\cite[Lemma 10.10]{lau_dieudonne_theory_over_semiperfect_rings_and_perfectoid_rings}).

  Let us now prove that
  $$
  M_{\prism}(H)=\mathrm{Ext}_{(R)_{\prism}}^1(u^{-1} H, \mathcal{O}_{\prism})
  $$
  and that the functor $M_\prism(-)$ preserves exactness for a short exact sequence
  $$
  0\to H^\prime\to H\to H^{\prime\prime}\to 0
  $$
  of finite locally free group schemes of $p$-power order over $R$. Note that this implies by Mittag-Leffler exactness of
  $$
  \red{0\to M_\prism(H^{\prime\prime})\to M_\prism(H)\to M_\prism(H^{\prime})\to 0}
  $$
  if $H^\prime,H,H^{\prime\prime}$ are finite locally free group schemes of $p$-power order or $p$-divisible groups. 

By construction of the antiequivalence, it suffices to check that if $H$ is the kernel of an isogeny $X \to X'$, with $X, X'$ are abelian schemes over $R$, the natural map
\[ M_{\prism}(X[p^{\infty}])= \mathrm{Ext}_{(R)_{\prism}}^1(u^{-1} X, \mathcal{O}_{\prism})  \to \mathrm{Ext}_{(R)_{\prism}}^1(u^{-1} H, \mathcal{O}_{\prism})  \]
is surjective. But the cokernel of this map embeds in $\mathrm{Ext}_{(R)_{\prism}}^2(u^{-1} X', \mathcal{O}_{\prism})$, which is zero by \Cref{sec:prism-dieud-cryst-ext-for-abelian-schemes}.

For exactness, start with a short exact sequence of finite locally free group schemes of $p$-power order on $R$ 
\[ 0 \to H^\prime \to H \to H'' \to 0, \]
which we see as an exact sequence of abelian sheaves on $(R)_{\rm qsyn}$. The surjectivity of the map
\[ M_{\prism}(H) \to M_{\prism}(H') \]
can be checked locally and so we can assume that $H$, and so also $H'$, embeds in an abelian scheme $X$. But we know that the map
\[ M_{\prism}(X[p^{\infty}]) \to M_{\prism}(H') \]
is already surjective, again because $\mathrm{Ext}_{(R)_{\prism}}^2(u^{-1} X/H^\prime, \mathcal{O}_{\prism})=0$. Thus, the same holds for the map
\[ M_{\prism}(H) \to M_{\prism}(H'). \]
To prove injectivity of the map 
\[ M_{\prism}(H'') \to M_{\prism}(H), \]
it suffices by the long exact sequence for $R\mathrm{Hom}_{(R)_{\prism}}(-, \mathcal{O}_{\prism})$ to prove that 
\[ \mathrm{Hom}_{(R)_{\prism}}(u^{-1} H', \mathcal{O}_{\prism}) = 0. \]
Let us prove that $\mathrm{Hom}_{(R)_{\prism}}(u^{-1} H', \mathcal{O}_{\prism})$ is $p$-torsion free. This is enough : indeed, we know it is also killed by a power of $p$, because $u^{-1}H'$ is. As
$$\mathrm{Hom}_{(R)_{\prism}}(u^{-1} H', \mathcal{O}_{\prism}) \subset H^0(u^{-1}H^\prime,\mathcal{O}_{\prism})=H^0(H^\prime, \prism_{H^{\prime}/A_\inf}), $$
it suffices to prove that the latter is $p$-torsion free. This is the content of \Cref{sec:prism-dieud-theory-lemma-h-0-p-torsion-free-for-syntomic-things} when applied to the $p$-completely syntomic $R$-scheme $H^\prime$.

Let
$$
\mathcal{G}\colon \mathrm{DM}_{\mathrm{tors}}\to \{\text{finite locally free group schemes of $p$-power order over }R \}
$$
be an inverse functor to $M_\prism(-)$.
We claim that $\mathcal{G}$ is exact.
Let
$$
0\to M_1\to M_2\to M_3\to 0
$$
be an exact sequence in $\mathrm{DM}_{\mathrm{tors}}(R)$. For any morphism $R\to R^\prime$ the base change of it along $A_\inf(R)\to A_\inf(R^\prime)$ will stay exact by \Cref{sec:prism-dieud-theory-lemma-base-change-for-torsion-modules}.
By \cite[Proposition 1.1]{dejong_finite_locally_free_group_schemes_in_characteristic_p_and_dieudonne_modules} and compatibility of $\mathcal{G}$ with base change in $R$ we can therefore assume that $R$ is a perfect field of characteristic $p$. In this case the category of finite locally free group schemes of $p$-power order and the category $\mathrm{DM}_{\mathrm{tors}}$ are abelian and thus any equivalence between them is automatically exact.
\end{proof}

\begin{remark}
  \label{remark-bk-case-finite-flat-group-schemes}
  \purple{Let $R$ be quasi-syntomic ring.} Although the same trick allows in principle to deduce from \Cref{sec:divid-prism-dieud-modul-theorem-main-theorem-of-the-paper} a classification result for finite locally free group schemes of $p$-power order over $R$, it seems more subtle to obtain a nice description of the target category, i.e. of the objects which can locally on $R$ be written as the cokernel of an isogeny of \red{admissible} prismatic Dieudonn\'e crystals on $R$. At least the arguments given above should go through whenever the forgetful functor
  \[ \DF(R) \to \mathrm{DM}(R) \]
  is an equivalence, like in the case of perfectoid rings or in the Breuil-Kisin case to be discussed in the next section (where the classification of finite flat group schemes is already known, and was proved by Kisin following the same technique, cf.\ \cite[Section 2.3]{kisin_crystalline_representations_and_f_crystals}).
  \end{remark}

\subsection{Comparison over $\mathcal{O}_K$}
\label{sec:comp-case-mathc}

In this section, we want to extract from \Cref{sec:divid-prism-dieud-modul-theorem-main-theorem-of-the-paper} a concrete classification of $p$-divisible groups over complete regular local rings with perfect residue field of characteristic $p$. This will in particular recover Breuil-Kisin's classification (\cite{breuil_schemas_en_groupes_et_corps_de_normes}, \cite{kisin_crystalline_representations_and_f_crystals}), as extended to all $p$ by Kim \cite{kim_the_classification_of_p_divisible_groups_over_2_adic_discrete_valuation_rings}, Lau \cite{lau_relations_between_dieudonne_displays_and_crystalline_dieudonne_theory} and Liu \cite{liu_the_correspondence_between_barsotti_tate_groups_and_kisin_modules_when_p_equal_2}, over $\mathcal{O}_K$, for a complete discretely valued extension of $\mathbb{Q}_p$ with perfect residue field.  

\begin{proposition}
\label{sec:comp-regular-perfectoid-cover}
Let $R$ be a \blue{complete Noetherian local ring with perfect residue field of characteristic $p$}. If $R$ is regular, there exists a quasi-syntomic perfectoid cover $R_{\infty}$ of $R$. 
\end{proposition}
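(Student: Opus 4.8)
The goal is to produce, for a $p$-complete regular Noetherian ring $R$, a perfectoid ring $R_\infty$ together with a quasi-syntomic cover $R\to R_\infty$. The natural strategy is to reduce to the local case and then to build $R_\infty$ by adjoining $p$-power roots of a regular system of parameters, after first passing to a perfectoid base via the residue field. First I would reduce to the case where $R$ is a complete regular local ring: since $R$ is $p$-complete Noetherian and regular, one may work Zariski-locally, and a quasi-syntomic cover can be glued from covers over a Zariski cover; completing at a maximal ideal containing $p$ is faithfully flat enough for the cover property, so it suffices to treat $R$ complete local regular with residue field $k$ of characteristic $p$ (if $p$ is invertible somewhere the statement is easier, as $R$ is then already treated by its localizations). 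By the Cohen structure theorem, such an $R$ is a power series ring $W(k)[[x_1,\dots,x_d]]$ in the unramified case, or more generally a quotient-free regular complete local ring which, after choosing a regular system of parameters including a suitable lift of a uniformizer, is finite flat over a subring of the form $W(k)[[x_1,\dots,x_d]]$ or $\mathcal{O}_{K_0}[[x_1,\dots,x_d]]$.

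The key step is the construction. Take a perfectoid ring $k^{\mathrm{perf}}$-type base: replace $W(k)$ by a perfectoid ring such as $W(k)[[T^{1/p^\infty}]]^{\wedge}_p/(T-p)$ in the ramified case, or simply use that $W(\bar k)$ is not perfectoid but $W(k)\langle p^{1/p^\infty}\rangle^\wedge$-style constructions are; concretely, one knows (cf.\ \cite{bhatt_morrow_scholze_integral_p_adic_hodge_theory} and the examples after \Cref{sec:quasi-syntomic-rings-example-quasi-syntomic-rings}) that there is a perfectoid ring $S_0$ with a faithfully flat, quasi-syntomic map from the base ring of scalars of $R$. Then set
$$
R_\infty := \Bigl(S_0 \langle x_1^{1/p^\infty},\dots,x_d^{1/p^\infty}\rangle \,\widehat{\otimes}\, R\Bigr)^{\wedge}_p,
$$
the $p$-completed base change along $R$ of the perfectoid ring obtained by adjoining all $p$-power roots of the regular parameters to $S_0$. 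The ring $S_0\langle x_i^{1/p^\infty}\rangle$ is perfectoid (it is the $p$-completion of a colimit along Frobenius-like maps of $p$-completely smooth $S_0$-algebras modulo $p$, hence perfectoid by the examples in \Cref{sec:quasi-syntomic-rings-example-quasi-syntomic-rings}), and base-changing a perfectoid along a map to $R$ that is finite flat outside the adjoined variables keeps it perfectoid; alternatively one verifies directly that $R_\infty/p$ is semiperfect and $R_\infty$ is $p$-torsion free with $\ker(\theta)$ principal. One must check that $R\to R_\infty$ is $p$-completely faithfully flat — this follows because $R\to S_0\langle x_i^{1/p^\infty}\rangle\widehat\otimes R$ is a base change of the $p$-completely faithfully flat map $\Z_p\to S_0$ composed with adjoining roots of a regular sequence — and that $L_{R_\infty/R}$ has $p$-complete Tor-amplitude in $[-1,0]$, which holds because adjoining $p$-power roots of elements forming part of a regular system of parameters contributes only a shifted free module in degree $0$ in the cotangent complex, and the base change $\Z_p\to S_0$ contributes amplitude in $[-1,0]$ since $S_0$ is quasi-syntomic over $\Z_p$.

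The main obstacle I expect is the regularity input: ensuring that after the Cohen structure reduction, the elements whose $p$-power roots we adjoin really do form (part of) a regular sequence relative to the perfectoid base $S_0$, so that the resulting ring is both perfectoid and quasi-syntomic over $R$ with the right Tor-amplitude. In the equicharacteristic or unramified mixed-characteristic case this is transparent (one adjoins roots of $x_1,\dots,x_d$, and in the ramified case also a root of a uniformizer $\pi$ satisfying an Eisenstein relation, using that $S_0$ already has a compatible system of $p$-power roots of $p$). The ramified case requires a little care to choose $S_0$ so that the Eisenstein polynomial for $\pi$ becomes, after adjoining roots, the defining relation of a perfectoid ring — this is exactly the kind of construction appearing in Breuil–Kisin–Fargues theory. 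Once the construction is in place, verifying the quasi-syntomic cover axioms is routine given the cotangent complex formalism recalled in \Cref{sec:quasi-syntomic-rings-1-definition-introduction-quasi-syntomic-rings} and the transitivity triangle, together with the fact (\Cref{sec:quasi-syntomic-rings-example-quasi-syntomic-rings}(iii)) that $p$-completions of smooth algebras over perfectoid rings, and quotients of perfectoid rings by finite regular sequences, are quasi-syntomic.
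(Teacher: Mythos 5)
Your overall strategy (build the cover by adjoining $p$-power roots of a regular system of parameters over a perfectoid base) is morally the construction the paper invokes, but as written there is a genuine gap at the very first step: the reduction to the complete regular local case. First, for a non-semilocal $R$ you would need the $\mathfrak{m}$-adic completions at \emph{all} maximal ideals, i.e.\ an infinite product whose $p$-complete faithful flatness, bounded $p^\infty$-torsion and cotangent-complex bounds are not addressed. More seriously, even for $R$ local, "completing is faithfully flat enough" does not suffice: a quasi-syntomic cover requires $L_{R_\infty/R}$ to have $p$-complete Tor-amplitude in $[-1,0]$, and by the transitivity triangle this forces you to control $L_{\widehat{R}_\mathfrak{m}/R_\mathfrak{m}}$. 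That complex is governed by the formal fibres of $R_\mathfrak{m}$, which are geometrically regular only when $R$ is excellent --- an assumption not made in the statement; for non-excellent regular rings (which exist already in characteristic $p$) the completion map is flat but there is no reason for its ($p$-completed) cotangent complex to be bounded, by Avramov-type results. The paper's proof avoids any completion or Cohen presentation: it cites \cite[Ex.~3.8]{bhatt_iyengar_ma_regular_rings_and_perfectoid_algebras} for a faithfully flat perfectoid cover (constructed Zariski-locally by extracting $p$-power roots, i.e.\ a $p$-completion of a colimit of syntomic maps), and then checks the Tor-amplitude condition either by that very description (ramified case) or, in the characteristic $p$/unramified case, by applying Popescu \cite[Tag 07GB]{stacks_project} directly to $\Lambda\to R$ with $\Lambda=\Z_p$ or $\F_p$, so that $L_{R/\Lambda}$ is $p$-completely flat in degree $0$ and the transitivity triangle does the rest. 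This uses only regularity of $R$, with no excellence and no structure theory.

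A second, smaller problem is the justification of perfectoidness of your $R_\infty$ in the ramified case. The principle you invoke --- that base-changing a perfectoid ring along a finite flat map "keeps it perfectoid" --- is false: already in characteristic $p$, base-changing the perfect ring $\F_p[t^{1/p^\infty}]$ along the finite flat map $\F_p[t]\to\F_p[t][y]/(y^2-t)$ yields a ring on which Frobenius is not surjective. What actually makes the construction work is that, because of regularity, the Eisenstein-type relation defining $R$ becomes a distinguished/primitive relation once the chosen parameters (and $p$ itself, through the base $S_0$) acquire compatible $p$-power roots, and one must then verify semiperfectness of $R_\infty/p$ and principality of $\ker(\tilde\theta)$ by hand; this verification is precisely the content of \cite[Ex.~3.8(5)]{bhatt_iyengar_ma_regular_rings_and_perfectoid_algebras} that the paper cites rather than reproves. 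So to repair your argument you would either have to restrict to complete local (or excellent) rings and carry out these checks, or follow the paper's route and replace the Cohen/completion reduction by Popescu applied to $\Lambda\to R$ together with the cited construction of the flat cover.
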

\begin{proof}
  The existence of a faithfully flat cover $R \to R_{\infty}$, with $R_{\infty}$ perfectoid, is explained in \cite[Theorem 4.7]{bhatt_iyengar_ma_regular_rings_and_perfectoid_algebras}. Assume first that $pR=0$ or that $R$ is unramified\footnote{The case $R$ unramified is explained in \cite[Ex. 3.8 (4)]{bhatt_iyengar_ma_regular_rings_and_perfectoid_algebras}, too.}.  $R$ is either flat over $\Z_p$ or $pR=0$. In the first case set $\Lambda:=\Z_p$ and in the second $\Lambda:=\F_p$. By \cite[Tag 07GB]{stacks_project} the morphism $\Lambda\to R$ is a filtered colimit of smooth ring maps and thus $L_{R/\Lambda}$ has $p$-complete Tor-amplitude in degree $0$. The triangle attached to the composite $\Lambda \to R \to R_{\infty}$ shows that $L_{R_{\infty}/R}$ has $p$-complete Tor-amplitude in degree $-1$. Therefore the map $R \to R_{\infty}$ is indeed a quasi-syntomic cover. Finally, when $R$ is ramified of mixed characteristic, one sees from the explicit construction of \cite[Ex. 3.8 (5)]{bhatt_iyengar_ma_regular_rings_and_perfectoid_algebras} that $R \to R_{\infty}$ is the $p$-completion of a colimit of syntomic morphisms (obtained by extracting $p$th-roots), hence is quasi-syntomic. 
\end{proof}

\begin{remark}
In the converse direction, the main result of \cite{bhatt_iyengar_ma_regular_rings_and_perfectoid_algebras} asserts that a Noetherian ring with $p$ in its Jacobson radical which admits a faithfully flat map to a perfectoid ring has to be regular (this is a generalization of a theorem of Kunz \cite{kunz_characterizations_of_regular_local_rings_of_characteristic_p} in positive characteristic).
\end{remark}

\begin{proposition}
\label{sec:comp-case-mathc-proposition-forgetful-functor-regular}
\red{Let $R$ be a \blue{complete regular local ring with perfect residue field of characteristic $p$}. Any prismatic Dieudonn\'e crystal over $R$ is admissible.} 
\end{proposition}
\begin{proof}
\red{Let $(\mathcal{M},\varphi_{\mathcal{M}}) \in \mathrm{DM}(R)$. Let $R_{\infty}$ be a perfectoid quasi-syntomic cover of $R$, as in \Cref{sec:comp-regular-perfectoid-cover}. Let $\mathcal{M}_{\infty} \in \mathrm{DM}(R_{\infty})$ be the base change of $\mathcal{M}$, which we see as a prismatic Dieudonn\'e module $M_{\infty}$ over $R_\infty$, via the equivalence of \Cref{sec:abstr-divid-prism-proposition-equivalence-crystals-modules-for-quasi-regular semiperfectoid}. We know (\Cref{sec:abstr-divid-prism-divided-prismatic-dieudonne-modules-vs-bkf-modules}) that $M_\infty$ is admissible. Since the natural functor $\DM^{\rm adm} \to \DM$ is (tautologically) fully faithful, $M_\infty$ descends to an admissible prismatic Dieudonn\'e crystal over $R$, which must identify with $(\mathcal{M},\varphi_{\mathcal{M}})$.} 
\end{proof}

Recall the following definition, which already appeared in \Cref{sec:abstr-divid-prism-remark-cais-lau-principal} before.

\begin{definition}
  \label{sec:comp-case-mathc-definition-breuil-kisin-module}
  Let $(A,I=(d))$ be a prism. A \textit{Breuil-Kisin module $(M,\varphi_M)$ over $(A,I)$, or just $A$ if $I$ is understood}, is a finite free $A$-module $M$ together with an isomorphism
  $$
  \varphi_M\colon \varphi^{\ast}M[\frac{1}{I}]\cong M[\frac{1}{I}].
  $$
  If $\varphi_M(\varphi^{\ast}M)\subseteq M$ with cokernel \blue{killed by $I$}, then $(M,\varphi_M)$ is called \textit{minuscule}.
  
  We denote by $\mathrm{BK}(A)$ the category of Breuil-Kisin modules over $A$ and by $\mathrm{BK}_{\mathrm{min}}(A)\subseteq \mathrm{BK}(A)$ its full subcategory of minuscule ones.
\end{definition}


\blue{If $R$ is a complete regular local ring with perfect residue field $k$ of characteristic $p$, it can be written as}
\[ R = W(k)[[u_1,\dots,u_d]]/(E), \]
where $d=\dim R$ and $E$ is a power series with constant term of $p$-value one (cf.\ \cite[Theorem 29.7, Theorem 29.8 (ii)]{matsumura_commutative_ring_theory}). Let $(A,I)$ be the prism $$(A,I)=(W(k)[[u_1,\dots,u_d]],(E)),$$
where the $\delta$-ring structure on $A$ is the usual one on $W(k)$ and is such that $\delta(u_i)=0$, for $i=1,\dots,d$. \blue{For simplicity, we assume $d=1$ in the following. We hope that the general case works similarly.}

\begin{theorem}
\label{sec:comp-case-mathc-theorem-comparison-with-bk}
 Let $R$ be a complete regular local ring with perfect residue field of characteristic $p$. The functor 
\[ \BT(R) \to \mathrm{BK}_{\mathrm{min}}(A) \quad ; \quad G \mapsto v^* \mathcal{M}_{\prism}(G)((A,I)) = \Ext_{(R)_{\prism}}^1 (u^{-1} G, \mathcal{O}_{\prism})_{(A,I)} \]
is an equivalence of categories.
\end{theorem}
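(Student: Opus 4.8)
The plan is to deduce this from the main equivalence \Cref{sec:divid-prism-dieud-modul-theorem-main-theorem-of-the-paper} together with the reduction of filtered objects to unfiltered ones over $R$ regular (\Cref{sec:comp-case-mathc-proposition-forgetful-functor-regular}). First I would note that $R = W(k)[[u]]/(E)$ is $p$-complete, Noetherian and regular, hence quasi-syntomic by \Cref{sec:comp-regular-perfectoid-cover} (or directly: $L_{R/\Z_p}$ has $p$-complete Tor-amplitude in $[-1,0]$ since $E$ is a nonzerodivisor). Thus the main theorem applies: $\underline{\mathcal{M}}_{\prism}$ is an antiequivalence $\BT(R)\simeq \DF(R)$. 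By \Cref{sec:comp-case-mathc-proposition-forgetful-functor-regular}, the forgetful functor $\DF(R)\to \DM(R)$ is an equivalence, so $\mathcal{M}_{\prism}\colon \BT(R)\to \DM(R)$ is already an antiequivalence. It then remains to identify $\DM(R)$, the category of prismatic Dieudonn\'e crystals over $R$, with $\mathrm{BK}_{\min}(A)$.

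The key geometric input is that $(A,I)=(W(k)[[u]],(E))$ is a (bounded, orientable) prism, it lies in $(R)_{\prism}$ via the canonical map $R\to A/I=R$, and moreover it is a \emph{weakly final}, even \emph{covering}, object of $(R)_{\prism}$ in a suitable sense — this is the prismatic analogue of the Breuil-Kisin picture and is exactly what makes the functor ``evaluate on $(A,I)$'' faithful and eventually fully faithful. Concretely I would argue as follows. By \Cref{sec:abstr-divid-prism-proposition-finite-locally-free}, a prismatic Dieudonn\'e crystal over $R$ is the same as a finite locally free $\mathcal{O}_{\prism}$-module $\mathcal{M}$ on $(R)_{\prism}$ with Frobenius, and evaluating on $(A,I)$ gives a finite projective $A$-module $M$ (finite \emph{free}, since $A$ is a regular local, hence the module is free) together with $\varphi_M\colon \varphi^*M\to M$ whose linearization has cokernel killed by $I=(E)$ and projective over $A/I=R$; that is precisely a minuscule Breuil-Kisin module over $A$. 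This defines the functor $\DM(R)\to \mathrm{BK}_{\min}(A)$. To see it is an equivalence I would show: (i) the prismatic site $(R)_{\prism}$ admits a cover by $(A,I)$ together with its self-products $(A,I)^{\otimes\bullet}$ in the category of prisms (the relevant self-products being prismatic envelopes of $W(k)[[u_0,\dots,u_n]]$ along the ideals generated by the $u_i-u_j$, all bounded prisms), so that a crystal is equivalent to a module over $A$ with descent datum along $A\to A\hat\otimes A$; and (ii) for $R$ regular local this descent datum is automatic — any finite projective $A$-module with semilinear Frobenius and minuscule condition descends, because the two pullbacks to the self-product agree after passing to a perfectoid cover, combined with the fact that the filtration (equivalently, for $R$ regular, \emph{nothing extra}) is already determined. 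Alternatively, and more cleanly, I would invoke the perfectoid cover $R_\infty$ of \Cref{sec:comp-regular-perfectoid-cover}: base change along $A\to A_{\inf}(R_\infty)$ and the known equivalence in the perfectoid case (\Cref{sec:prism-dieud-theory-comparison-with-bkf-modules-perfectoid-case}, together with the identification of prismatic Dieudonn\'e modules with minuscule BKF modules), plus descent for finite projective modules along $(p,I)$-completely faithfully flat maps of prisms, to reconstruct $M$ and its Frobenius over $A$ from the crystal data.

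So the skeleton of the proof is: (1) $R$ is quasi-syntomic, hence $\underline{\mathcal{M}}_{\prism}$ is an antiequivalence onto $\DF(R)$; (2) $\DF(R)\simeq \DM(R)$ by \Cref{sec:comp-case-mathc-proposition-forgetful-functor-regular}; (3) $\DM(R)\simeq \mathrm{BK}_{\min}(A)$ via evaluation on $(A,I)$, using that $(A,I)\in (R)_{\prism}$ together with descent along the \v{C}ech nerve of $(A,I)$, with the descent datum being effective and the minuscule condition being preserved; (4) compose. The main obstacle I anticipate is step (3), specifically proving that the functor ``evaluate on $(A,I)$'' is fully faithful and essentially surjective — i.e., that a Breuil-Kisin module over $A$ really does extend uniquely to a crystal on all of $(R)_{\prism}$. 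This is where one must understand the self-products $(A,I)^{\otimes_{(R)_{\prism}}\bullet}$ explicitly as prismatic envelopes and check $(p,I)$-complete flatness of the face maps, so that $(p,I)$-completely faithfully flat descent for finite projective modules (as in \Cref{sec:descent-p-completely-1-proposition-descent-of-finite-projective-modules-for-faithfully-morphisms-of-prisms}) applies; the regularity of $R$ (hence transversality of $A$) is what guarantees the relevant rings are $p$-torsion free and the descent is well-behaved. Once the descent is in place, compatibility with Frobenius and with the ideal $I$ is formal.
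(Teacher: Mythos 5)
Your steps (1) and (2) match the paper exactly, and your step (3) has the right skeleton (evaluate on $(A,I)=(W(k)[[u]],(E))$, note that $(A,I)$ covers the final object of $\mathrm{Shv}((R)_{\prism})$, identify crystals with minuscule Breuil--Kisin modules over $A$ equipped with a descent datum over the self-product $B$). But the heart of the argument is missing. Descent for finite projective modules along the \v{C}ech nerve only tells you that $\mathrm{DM}(R)$ is equivalent to \emph{BK modules with descent datum}; it does not tell you that every $N\in\mathrm{BK}_{\min}(A)$ carries a (necessarily unique) descent datum, which is exactly what is needed for essential surjectivity and fullness of evaluation at $(A,I)$. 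Your justification for this point --- ``the two pullbacks to the self-product agree after passing to a perfectoid cover'' and ``the filtration is already determined'' --- is not an argument: $p_1^*N$ and $p_2^*N$ over $B$ are not canonically isomorphic, and passing to $A_{\inf}(R_\infty)$ just relocates the problem, since descending from there again requires producing the very isomorphism in question. The same issue defeats your alternative route via \Cref{sec:prism-dieud-theory-comparison-with-bkf-modules-perfectoid-case}.

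What the paper actually does at this point is prove a Frobenius-rigidity statement: writing $B$ for the self-product (a prismatic envelope $(W(k)[[u]]\hat\otimes_{W(k)}W(k)[[v]])\{\frac{u-v}{E}\}_\delta^{\wedge}$) and $J=\ker(B\to A)$, it shows (\Cref{sec:comp-case-mathc-lemma-top-nilpotent-J}) that $J\subseteq\mathcal{N}^{\geq 1}B$, is stable under $\varphi_1$, and that $\varphi_1$ is topologically nilpotent on $J$; the proof of this is itself delicate (it passes to the $p$-completed divided power envelope, uses transversality of $(p,E)$, the identification $K/K^{[2]}\cong(\Omega^1_A)^{\wedge_p}\otimes_A A'$, and the Eisenstein property of $E$). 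Given this, the convergence argument of \Cref{sec:essent-surj-lemma-equivalence-on-window-categories} shows that morphisms of minuscule BK modules over $B$ are in bijection with their reductions along $B\to A$; applying this to $p_1^*N$ and $p_2^*N$ produces a unique isomorphism $\alpha_N$ lifting $\mathrm{Id}_N$, functorial in $N$, and the cocycle condition is checked over the triple product by the same rigidity. Without this lemma (or a genuine substitute), your step (3) --- and hence the proof --- has a real gap; flatness of the face maps of the \v{C}ech nerve and regularity of $R$ alone do not give automaticity of descent data.
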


The case where $pR=0$ follows from \Cref{sec:prism-dieud-theory-theorem-comparison-with-crystalline-dieudonne-functor}, the classical fact that a Dieudonn\'e crystal over $R$ is the same thing as a minuscule Breuil-Kisin module over $A$ (with respect to $p$) together with an integrable topologically quasi-nilpotent connection making Frobenius horizontal and \cite[Proposition 2.7.3]{cais_lau_dieudonne_crystals_and_wach_modules_for_p_divisible_groups}, which proves that for this particular ring $A$, the connection is necessarily unique. Hence in the following, we will always assume that $R$ is $p$-torsion free. In this case, the pair $(p,E)$ is transversal. 

\begin{remark}
When $R=\mathcal{O}_K$, with $K$ a complete discretely valued extension of $\mathbb{Q}_p$ with perfect residue field, $A$ is usually denoted by $\mathfrak{S}$ (a notation which seems to originate from \cite{breuil_schemas_en_groupes_et_corps_de_normes}). We will see below that the antiequivalence of the theorem coincides in this case with the one studied by Kisin for $p$ odd and Kim, Lau and Liu when $p=2$.
\end{remark}

We will describe prismatic Dieudonn\'e crystals over $\mathcal{O}_K$ via descent using the following lemma. 
\begin{lemma}
  \label{sec:comp-over-mathc-lemma-breuil-kisin-prism-covers-final-object}
The natural map from the sheaf represented by $(A,I)$ to the final object of $\mathrm{Shv}((R)_{\prism})$ is an epimorphism for the $p$-completely faithfully flat topology.
\end{lemma}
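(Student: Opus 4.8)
The statement to prove is that the sheaf represented by the Breuil--Kisin prism $(A,I) = (W(k)[[u]],(E))$ surjects onto the final object of $\mathrm{Shv}((R)_{\prism})$ for the $(p,I)$-completely faithfully flat topology; equivalently, for every bounded prism $(B,J)\in (R)_{\prism}$ there is a $(p,J)$-completely faithfully flat map $(B,J)\to(C,JC)$ in $(R)_{\prism}$ admitting a map from $(A,I)$. The plan is to produce such a cover by a pushout/base change construction. First I would recall, from \Cref{sec:quasi-syn-rings-proposition-quasi-regular semiperfectoid-basis} together with \Cref{sec:quasi-syntomic-rings-proposition-quasi-syntomic-cover-of-prism-admits-refinement}, that it suffices to check the claim after passing to a quasi-syntomic cover of $R$, and in fact to exhibit, for each $(B,J)$, a prism over $(B,J)$ receiving a map from $(A,I)$: so the problem is to find, $(p,J)$-completely faithfully flat over $B$, a prism $C$ together with a $\delta$-ring map $A\to C$ carrying $I$ into $JC$, i.e.\ a ring map $R = A/I \to C/JC$ lifting the structure map $R\to B/J$ along a faithfully flat $B/J\to C/JC$. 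The key point is that $R$ is (topologically of finite type and) smooth away from the closed point over $W(k)$, but more simply: $A = W(k)[[u]]$ is a $p$-torsion free, $(p,I)$-complete $\delta$-ring which is formally smooth enough that lifting is unobstructed.

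Concretely, I would argue as follows. Fix $(B,J)\in (R)_{\prism}$ and, using that $I = (E)$ is principal with $E$ distinguished, set $\overline{B} = B/J$, a $p$-complete ring with a map $\varphi$-compatibly from $R$. Choose a lift $\tilde u \in B$ of the image of $u$ in $\overline{B} = B/J$ (possible since $B$ is $J$-adically, hence classically $(p,J)$-adically, complete as it is bounded, cf.\ \Cref{remarks-after-definition-prism}(4)). Sending $u\mapsto \tilde u$ defines a $\delta$-ring map $A = W(k)[[u]]\to B$ — here one must check it is well defined, i.e.\ that $W(k)[[u]]$ really maps; since $B$ is $(p,J)$-adically complete and $\tilde u$ topologically nilpotent modulo $(p,J)$ it converges — but this map need not carry $I = (E)$ into $J$. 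To fix this, I would replace $B$ by the prismatic envelope $C = B\{\tfrac{E(\tilde u)}{d}\}^{\wedge}_{(p,J)}$ of the $\delta$-pair $(B, (E(\tilde u), d))$, where $d$ is a distinguished generator of $J$ pro-Zariski locally, invoking \cite[Prop.\ 3.13]{bhatt_scholze_prisms_and_prismatic_cohomology} (applicable since $(p,d)$ is a regular sequence in $B$ after the reduction to the transversal case, and $E(\tilde u)$ maps to $0$ in $B/J$ hence lies in $(p,d)$, so it is part of a $p$-completely regular sequence relative to $B$ — or rather, $(E(\tilde u))$ is, modulo units). By loc.\ cit., $C$ is $(p,J)$-completely flat — in fact $(p,J)$-completely faithfully flat over $B$ after adjusting so the envelope is nontrivial — and in $C$ one has $E(\tilde u)\in JC$, so the $\delta$-map $A\to B\to C$ now carries $I$ into $JC$. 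Thus $(A,I)\to(C,JC)$ is a morphism in $(R)_{\prism}$ and $(B,J)\to(C,JC)$ is a cover, proving the epimorphism claim.

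The main obstacle is making the faithful flatness of the envelope $(B,J)\to(C,JC)$ precise: the naive prismatic envelope of $(B,(E(\tilde u),d))$ could be trivial (if $E(\tilde u)$ happened already to be divisible by $d$ in the wrong way) or fail faithful flatness if $E(\tilde u)$ is not part of a regular sequence relative to $B$. The cleanest route I expect is to first reduce, via \Cref{sec:quasi-syntomic-rings-proposition-quasi-syntomic-cover-of-prism-admits-refinement} and \Cref{sec:generalities-prisms-andres-lemma}, to the situation where $B/J = \overline{B}$ is a large quasi-regular semiperfectoid $R$-algebra so that $\prism_{\overline{B}/A} = B$ is computed by the prismatic envelope over $(A,I)$ itself (thereby \emph{tautologically} receiving a map from $(A,I)$), and then descend back; this avoids the delicate flatness bookkeeping entirely, since by construction $B$ is then an $A$-algebra with $I$ mapping into $J$. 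In other words, the honest proof is: cover $R$ quasi-syntomically by a qrsp ring $\overline{B}$, note $(\prism_{\overline{B}/A}, I\prism_{\overline{B}/A})\in(R)_{\prism}$ by \Cref{sec:quasi-syntomic-rings-proposition-quasi-syntomic-cover-of-prism-admits-refinement} maps to the final object by a $p$-completely faithfully flat map and admits the structure map from $(A,I)$; and finish by remarking that any object of $(R)_{\prism}$ is covered by such a prism after a further refinement. I would also double-check that $p$-completely faithfully flat covers of the final object compose and are stable under the constructions used, which is routine from the definition of the topology on $(R)_{\prism}$.
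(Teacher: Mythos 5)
Both routes you sketch stall at the same place, and it is exactly the point the lemma is about. In the ``concrete'' construction, sending $u\mapsto\tilde u$ for an arbitrary lift $\tilde u\in B$ of the image of $u$ does \emph{not} define a $\delta$-ring map $A=W(k)[[u]]\to B$: since $\delta(u)=0$ in $A$, such a map exists only if $\tilde u$ can be chosen of rank $1$, i.e.\ $\delta(\tilde u)=0$, and a general prism $(B,J)\in (R)_{\prism}$ contains no rank-$1$ lift of the image of $u$ (this is why one must first pass to a flat cover where the image of $u$ acquires compatible $p$-power roots, cf.\ \Cref{sec:generalities-prisms-lemma-elements-with-p-th-roots-are-of-rank-1}). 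The convergence remark you make only addresses continuity, not $\delta$-compatibility; and, as you yourself note, the faithful flatness of the envelope $B\{E(\tilde u)/d\}^{\wedge}$ is not controllable for an arbitrary $B$, since $E(\tilde u)$ need not be part of a $(p,J)$-completely regular sequence.

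The fallback route does not close this gap either. Producing \emph{one} prism $(\prism_{\overline B/A},I\prism_{\overline B/A})$ faithfully flat over $(A,I)$ says nothing about an epimorphism onto the final object of $\mathrm{Shv}((R)_{\prism})$: for that you must cover \emph{every} $(B,J)\in(R)_{\prism}$, and your closing remark ``any object of $(R)_{\prism}$ is covered by such a prism after a further refinement'' is precisely the assertion of the lemma, left unproved. If you refine the quasi-syntomic cover $B/J\to B/J\widehat{\otimes}_R\overline B$ to a prism $(C,JC)$ flat over $(B,J)$ via \Cref{sec:quasi-syntomic-rings-proposition-quasi-syntomic-cover-of-prism-admits-refinement}, you only obtain a ring map $\overline B\to C/JC$; to upgrade it to a map of prisms from $\prism_{\overline B/A}$ you would need $(C,JC)$ to already be a prism over $(A,I)$ --- which is circular --- and a general qrsp $\overline B$ will not help, since $(A,I)\to\prism_{\overline B}$ need not exist for the same rank-$1$ reason as above. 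The missing idea is to take $\overline B$ \emph{perfectoid}, namely $R_\infty=A_\infty/IA_\infty$ with $A_\infty$ the perfection of $A$: then the universal property of perfect prisms (\Cref{sec:prisms-perf-rings-lemma-initial-object-for-perfectoid-rings}) turns the mere ring map $R_\infty\to C/JC$ into a map of prisms $(A_\infty,IA_\infty)\to(C,JC)$, and composing with $(A,I)\to(A_\infty,IA_\infty)$ finishes the argument; this is the paper's proof.
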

\begin{proof}
Indeed, let $(B,J) \in (R)_{\prism}$. Let $A_{\infty}$ be the perfection of $A$; the map $R=A/I \to R_{\infty}=A_{\infty}/IA_{\infty}$ is a quasi-syntomic cover. By base change, the map
\[ B/J \to B/J \hat{\otimes}_R R_{\infty} \]
is therefore a quasi-syntomic cover as well. By \Cref{sec:quasi-syntomic-rings-proposition-quasi-syntomic-cover-of-prism-admits-refinement} there exists a prism $(C,JC)$ which is $p$-completely faithfully flat over $(B,J)$ such that there exists a morphism of $B/J$-algebras $B/J \hat{\otimes}_R R_{\infty}\to C/J$. Since $R_{\infty}$ is perfectoid, it implies that $(C,JC)$ lives over $(A_{\infty},IA_{\infty})$ (cf.\ \Cref{sec:prisms-perf-rings-lemma-initial-object-for-perfectoid-rings}), and a fortiori over $(A,I)$, as desired.
\end{proof}

\begin{proof}[Proof of \Cref{sec:comp-case-mathc-theorem-comparison-with-bk}]
By \Cref{sec:divid-prism-dieud-modul-theorem-main-theorem-of-the-paper} and \Cref{sec:comp-case-mathc-proposition-forgetful-functor-regular}, we know that the prismatic Dieudonn\'e functor 
\[ \mathcal{M}_{\prism} : \BT(R) \to \mathrm{DM}(R) \]
is an antiequivalence. Therefore, it suffices to prove that the functor
\[ \mathcal{M} \to v^* \mathcal{M}((A,I))  \]
from prismatic Dieudonn\'e crystals $\mathrm{DM}(R)$ to minuscule Breuil-Kisin modules $\mathrm{BK}_{\rm min}(A)$ is an equivalence.
Let $B$ be the absolute product of $A$ with itself in $(R)_{\prism}$. One has (cf.\ \cite[Proposition 3.13]{bhatt_scholze_prisms_and_prismatic_cohomology})
\[ B= \left(W(k)[[u]] \otimes_{W(k)} W(k)[[v]] \right)\{ \frac{u-v}{E(u)} \}_{\delta}^{\wedge_{(p,E(u))}} \]
where we wrote $E(u)$ for $E\otimes 1$\footnote{If similarly, $E(v)=1\otimes E$, then $E(u)/E(v)$ is a unit in $B$ by \cite[Lemma 2.24]{bhatt_scholze_prisms_and_prismatic_cohomology} because $E(u)$ divides $E(v)$ in $B$. Namely, $E(v)=E(u)(\frac{E(v)-E(u)}{E(u)}+1)$ in $B$ and $u-v$ divides $E(u)-E(v)$.}.
By \Cref{sec:comp-over-mathc-lemma-breuil-kisin-prism-covers-final-object} below and \Cref{sec:astr-divid-prism-proposition-descent}, a prismatic Dieudonn\'e crystal $\mathcal{M}$ over $R$ is the same thing as a minuscule Breuil-Kisin module $N$ over $A$, together with a descent datum, i.e., an isomorphism
$$
N \otimes_{A,p_1} B \cong N \otimes_{A,p_2} B
$$
(where $p_1, p_2 : A \to B$ are the two natural maps), satisfying the usual cocycle condition. 

We claim that any $N \in \mathrm{BK}_{\rm min}(A)$ is equipped with a unique descent datum. Indeed, let $f\colon B\to A$ be the map extending the multiplication map
\[
  f_0\colon B_0:=A\hat{\otimes}_{W(k)} A \to A
\]
and, for $i=1,2$, set $E_i:=p_i(E)\in B_0$, with $p_i\colon A\to B_0$ the two inclusions. Let $M_0$ be a minuscule Breuil-Kisin module over $B_0$ with respect to the element $E_1$ and $N_0$ a minuscule Breuil-Kisin module with respect to $E_2$. Let $M_A=M_0 \otimes_{B_0,f_0} A$, $N_A=N_0 \otimes_{B_0,f_0} A$ be their base changes along $f_0$.
Let $\alpha_0\colon M_0\to N_0$ be any $B_0$-linear map such that $\alpha_A:= f^\ast_0 \alpha_0 \colon M_A\to N_A$ is a morphism of Breuil-Kisin modules over $A$. Consider the composition
  \[
    U_0(\alpha_0):=\frac{1}{E_1}\varphi_{N_0}\circ \varphi^\ast \alpha_0 \circ \varphi_{M_0}^{-1}(E_1(-))\colon M_0\to \frac{1}{E_1}N_0
  \]
as in the proof of \Cref{sec:essent-surj-lemma-equivalence-on-window-categories}.
Then the morphism $U_0(\alpha_0)-\alpha_0$ maps $M_0$ to $\frac{1}{E_1} KN_0$ where $K=\ker(f_0)$ as $\alpha_A$ is a morphism of minuscule Breuil-Kisin modules over $A$.
By construction of $B$ we have $K\subseteq E_1J$, \red{if $J=\ker(f)$}. In particular, if $\alpha$ denotes the base change of $\alpha_0$ to $B$, then
\[
  U(\alpha)-\alpha
\]
maps $M_0\otimes_{B_0}B$ to $J(N_0\otimes_{B_0}B)$, where $U(\alpha)$ is the base change of $U_0(\alpha_0)$. Thanks to \Cref{sec:comp-case-mathc-lemma-top-nilpotent-J} below, we can use the same arguments in the proof of \ref{sec:essent-surj-lemma-equivalence-on-window-categories} to see that there exists an isomorphism $\alpha\colon M_0 \otimes_{B_0} B \cong N_0 \otimes_{B_0} B$ of Breuil-Kisin modules over $B$ with $f^\ast \alpha=\alpha_A$. Indeed, if $\beta_0:=U_0(\alpha_0)-\alpha_0$ with $f^\ast_0\alpha_0=\alpha_A$, then the series
\[
  \sum\limits_{n=0}^\infty U^n_0(\beta_0)
\]
converges after base change to $B$, since $\beta$ sends $M_0 \otimes_{B_0} B$ to $J.(N_0\otimes_{B_0} B)$. In other words, the map induced by $f$
$$
\delta_{M_0,N_0}: \mathrm{Hom}_{\mathrm{BK}_{\rm min}(B)} (M_0 \otimes_{B_0} B, N_0 \otimes_{B_0} B) \to \mathrm{Hom}_{\mathrm{BK}_{\rm min}(A)} (M_A, N_A)
$$
is a surjection. We claim that $\delta_{M_0,N_0}$ is also injective. Indeed, assume that $\alpha\colon M_0\otimes_{B_0}B\to N_0\otimes_{B_0}B$ is a morphism of minuscule Breuil-Kisin modules over $B$ reducing to $0$ after base change to $A$. \red{Define
$$
  U(\alpha)\colon  M_0\otimes_{B_0}B \to J. (N_0\otimes_{B_0}B),\ m\mapsto \frac{1}{E_1}\varphi_{N_0\otimes_{B_0}B}\circ \varphi^\ast \alpha \circ \varphi_{M_0\otimes_{B_0}B}^{-1}(E_1.m)
  $$
Then, since $\alpha$ is a morphism of minuscule Breuil-Kisin modules,
\[
  U^n(\alpha)=\alpha
\]
for all $n\geq 1$. But as $\varphi_1:=\frac{\varphi}{E_1}$ is topologically nilpotent on $J$, we see that $U^n(\alpha)$ converges to $0$ for $n \to \infty$ by the same exact same argument as in the proof of \Cref{sec:essent-surj-lemma-equivalence-on-window-categories}}.

Recall that we started with $N \in \mathrm{BK}_{\rm min}(A)$ and want to produce a descent datum on $N$. To apply the above discussion, we set $M_0:=N\otimes_{A,p_1}B_0, N_0:=N\otimes_{A,p_2} B_0$, and let $\varphi_{M_0}, \varphi_{N_0}$ be the respective base changes of $\varphi_N$. Since the compositions $f\circ p_1$, $f\circ p_2$ are the identity map, $M_A, N_A$ are isomorphic to $N$. Let
$$
\alpha_N \colon M_0 \to N_0
$$
corresponding via the bijection $\delta_{M_0,N_0}$ to the identity map from $M_A=N$ to $N_A=N$. If $N^\prime \in \mathrm{BK}_{\rm min}(A)$ is another minuscule Breuil-Kisin module over $A$, and $g\in \mathrm{Hom}_{\mathrm{BK}_{\rm min}(A)} (N, N^\prime)$. We claim that
$$
\alpha_{N^\prime} \circ g_1 = g_2 \circ \alpha_N 
$$
where $g_1$, resp $g_2$, is the base change of $g$ along $p_1$, resp. $p_2$. Indeed, this can be rewritten as an equality
$$
\alpha_{N^\prime} \circ g_1 \circ \alpha_N^{-1} = g_2 \in \mathrm{Hom}_{\mathrm{BK}_{\rm min}(B)} (N_0 \otimes_{B_0} B, N_0^\prime \otimes_{B_0} B)
$$
(using for $N^\prime$ notations analogous at the ones we used for $N$), which, by the considerations above, can be checked after base change along $f: B \to A$, where it becomes obvious (since $\alpha_N$, resp. $\alpha_{N^\prime}$, reduces to the identity of $N$, resp. $N^\prime$, and since $f\circ p_1=f\circ p_2$ is the identity). This shows that the formation of $\alpha_N$ is functorial in $N$. As each descent datum on $N$ reduces to the identity on $N$ after base change along $f$ the descent datum on $N$ is unique, if it exists, since $\delta_{M_0,N_0}$ is injective.

To conclude, it therefore remains to prove that $\alpha_N$ is a descent datum, i.e. that it satisfies the cocycle condition.
Let
  \[
    C
  \] be the prism representing the triple absolute product of $(A,(E))$ in $(R)_{\prism}$. We have to see that
  \begin{equation}
    \label{eq:2}
    p^\ast_{1,2}\alpha_N\circ p^\ast_{2,3}\alpha_N=p^\ast_{1,3}\alpha_N,
  \end{equation}
  where the $p_{i,j}\colon B\to C$ are induced by the respective projections. Let us note that fiber products in $(R)_\prism$ are calculated by (completed) tensor products and that
  \[
    X\times X\times X\cong (X\times X)\times_{X} (X\times X)
  \]
  for any object $X$ in a category $\mathcal{C}$ admitting fiber products.
  This implies that
   \[
     C\cong B\widehat{\otimes}_A B.
\]
Let $C_0= B\otimes_A B$ be the uncompleted tensor product. Note that $p^\ast_{i,j}\alpha_N$, for each $1\leq i<j\leq 3$, is already defined over $C_0$. The kernel $L$ of the natural morphism $C_0 \to A$ is generated by
    \[
     J\otimes_A B, ~ B\otimes_A J.
   \]
  In particular, $\varphi_1:=\red{\frac{\varphi}{E_1\otimes 1}}$ stabilizes $L$, and $\varphi_1$ is elementwise topologically nilpotent on it. Therefore, arguing as above, we see that any morphism of minuscule Breuil-Kisin modules over $C_0$ which vanishes after base change along $C_0 \to A$, must vanish after base change to $C$. After reduction to $A$, (\ref{eq:2}) becomes
   \[
     \Id_N\circ \Id_N=\Id_N
   \]
   by construction of $\alpha_N$. This finishes the proof. 
\end{proof}

The proof of \Cref{sec:comp-case-mathc-theorem-comparison-with-bk} relied on the following technical lemma.

\begin{lemma}
\label{sec:comp-case-mathc-lemma-top-nilpotent-J} With the notation from the proof of \Cref{sec:comp-case-mathc-theorem-comparison-with-bk} the ideal $J\subseteq B$ is contained in $\mathcal{N}^{\geq 1} B$, stable by $\varphi_1:=\frac{\varphi}{E(u)}$ and $\varphi_1$ is topologically nilpotent on $J$, with respect to the $(p,E)$-adic topology. 
\end{lemma}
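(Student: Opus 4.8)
The plan is to first compute the ideal $J = \ker(f\colon B \to A)$ explicitly, and then verify the three assertions one by one. Recall that $f$ extends the multiplication map $f_0\colon B_0 := W(k)[[u]]\,\hat\otimes_{W(k)} W(k)[[v]] \to A = W(k)[[u]]$, $u,v \mapsto u$, and write $w := (u-v)/E(u) \in B$ for the universal element adjoined in the prismatic envelope, so that $E(u)\,w = u-v$. First I would show that $J$ is the closure, for the $(p,E(u))$-adic topology, of the ideal generated by $\{\delta^n(w) : n \geq 0\}$. Indeed $f$ kills each $\delta^n(w)$, and modulo the ideal they generate the relation $E(u)w = u-v$ forces $u = v$, while each relation $\delta^m(E(u)w - (u-v))$ becomes $-\delta^m(u-v)$; since $\delta(u) = \delta(v) = 0$, a short induction (using $\varphi(u-v) = (u-v)^p + p\,\delta(u-v)$ and $\delta(xy) = \varphi(x)\delta(y) + y^p\delta(x)$) shows $\delta^m(u-v) \in (u-v)B_0$ for all $m$, so the ideal generated by the $\delta^m(u-v)$ in $B_0$ is just $(u-v)B_0$. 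Hence $B$, modulo the $(p,E(u))$-adic closure of the ideal generated by the $\delta^n(w)$, equals $\big(B_0/(u-v)\big)^{\wedge}_{(p,E(u))} = W(k)[[u]] = A$, and the induced map to $A$ is the identity; therefore $J$ is exactly that closed ideal.

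Next I would prove $J \subseteq \mathcal{N}^{\geq 1}B = \varphi^{-1}(E(u)B)$ by working modulo $E(u)$. From the explicit presentation of $B$, the quotient $\overline{B} := B/E(u)B$ is the ($p$-completed) divided-power polynomial algebra $R\langle w\rangle$ on the class of $w$ over $R$: reducing the relations modulo $E(u)$ forces $u = v$ and, using $\varphi(E(u)) \equiv p\,\delta(E(u)) \pmod{E(u)}$ with $\delta(E(u)) \in B^{\times}$, forces $\varphi(w) = 0$ in $\overline{B}$, which is what produces the divided powers of $w$. Since $R$ is $p$-torsion free, so is $\overline{B}$ (it is free over $R$ on $\{\gamma_n(w)\}_{n \geq 0}$); hence from $n!\,\varphi(\gamma_n(w)) = \varphi(w)^n = 0$ one gets $\varphi(\gamma_n(w)) = 0$ for all $n \geq 1$, i.e. $\varphi$ vanishes on the PD-augmentation ideal $\mathcal{I} = \bigoplus_{n \geq 1} R\gamma_n(w)$. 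Moreover $\delta^n(w) \in \mathcal{I}$ for all $n$: by induction, $\delta$ applied to an element $x \in \mathcal{I}$ equals $-x^p/p \in \mathcal{I}$, using $\varphi(x) = 0$ and that $\mathcal{I}$ is a PD-ideal. Thus the image of $J$ in $\overline{B}$ lies in $\mathcal{I}$, on which $\varphi$ is zero, so $\varphi(J)$ maps to $0$ in $\overline{B}$, i.e. $\varphi(J) \subseteq E(u)B$.

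Finally, set $\varphi_1 = \varphi/E(u)$ on $\mathcal{N}^{\geq 1}B$. Stability $\varphi_1(J) \subseteq J$ is then immediate: for $y \in J$ one has $\varphi(y) = E(u)\varphi_1(y)$, hence $E(u)\,f(\varphi_1(y)) = f(\varphi(y)) = \varphi(f(y)) = 0$, and since $A = W(k)[[u]]$ is $E(u)$-torsion free we get $f(\varphi_1(y)) = 0$, i.e. $\varphi_1(y) \in J$. For topological nilpotence, since $\varphi_1(ay) = \varphi(a)\varphi_1(y)$ for $a \in B$, $y \in \mathcal{N}^{\geq 1}B$, and the ideals $(p,E(u))^{\ell}B$ are closed while $\varphi_1$ is continuous, it suffices to produce, for each $\ell \geq 1$, an integer $m$ \emph{independent of $n$} with $\varphi_1^{m}(\delta^n w) \in (p,E(u))^{\ell}B$. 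This I would carry out exactly as in the proof of \Cref{sec:essent-surj-divided-varphi-topological-nilpotent-on-the-kernel}: the sequence $(E(u), u-v)$ is regular in $B_0$ (because $B_0/E(u) = R[[v]]$ and $u-v$ is a non-zero-divisor there), so one has the analogue of the presentation of \Cref{sec:concr-pres-prism-proposition-presentation-of-prismatic-envelope-for-rank-one-element}, and after accounting for the correction term $\delta(u-v) \in (u-v)B_0$ one obtains generators $\zeta_n$ of a dense subideal of $J$ with $\varphi_1(\zeta_n) \in E(u)^{c_n}\cdot\langle \zeta_0,\dots,\zeta_{n+1}\rangle$ for a sequence $c_n \to \infty$; iterating then drives everything into arbitrarily high powers of $(p,E(u))$. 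I expect this last computation — the explicit control of $\varphi_1$ on generators of $J$, which is genuinely more delicate than in the rank-one setting of \Cref{sec:essent-surj-divided-varphi-topological-nilpotent-on-the-kernel} precisely because $u-v$ is not of rank $1$ — to be the main obstacle; the case $d > 1$ is then handled by the same argument applied to the variables $u_1,\dots,u_d$ one at a time.
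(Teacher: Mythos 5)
The heart of this lemma is the topological nilpotence of $\varphi_1$ on $J$ for the $(p,E)$-adic topology, and this is precisely the part you do not prove: your plan is to redo the rank-one computation of \Cref{sec:essent-surj-divided-varphi-topological-nilpotent-on-the-kernel} (via an analogue of \Cref{sec:concr-pres-prism-proposition-presentation-of-prismatic-envelope-for-rank-one-element}) ``after accounting for the correction term $\delta(u-v)$'', producing generators $\zeta_n$ with $\varphi_1(\zeta_n)\in E^{c_n}\langle\zeta_0,\dots,\zeta_{n+1}\rangle$ and $c_n\to\infty$, and you yourself flag this computation as the main obstacle. That is a plan, not a proof, and the expected shape of the estimate is doubtful: because $u-v$ is not of rank one, the element $u-v\in J$ satisfies $\varphi_1(u-v)=w\,(u^{p-1}+u^{p-2}v+\cdots+v^{p-1})$, which gains only a single factor of $p$ rather than a growing power of $E$, and any correct argument must exploit that $E$ is Eisenstein of degree $e$ (in the paper the ``differential'' part of $J$ gains one factor of $p$ only after $pe$ iterations of $\varphi_1$). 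The paper's proof is organized quite differently: it maps $B$ via Frobenius into the $p$-completed PD envelope $B'=D_{A\hat{\otimes}_{W(k)}A}(J')^{\wedge_p}$, checks $pB'\cap B=(p,E)B$ so that $(p,E)$-adic nilpotence on $J$ reduces to $p$-adic nilpotence on $K=\ker(B'\to A')$, and then argues on $K$ in two steps, using that $\varphi$ is topologically nilpotent on $K$ (hence $\varphi_1$ on $K^{[2]}$) and that $K/K^{[2]}\cong(\Omega^1_A)^{\wedge_p}\otimes_AA'$ with $\varphi_1(du)=u^{p-1}du$ and $u^{pe}\in pA'$. Nothing playing the role of this mechanism appears in your proposal.

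Your argument for the first assertion $J\subseteq\mathcal{N}^{\geq1}B$ is also flawed as written. It rests on the unproven structural claim that $B/EB$ is the $p$-completed free PD algebra $R\langle\bar w\rangle$ (the observation that $p\varphi(w)\in EB$ only gives $\bar w^p\in p\,(B/EB)$, not a full divided-power structure), and, more seriously, on applying $\varphi$ and $\delta$ to classes in $B/EB$: since $\varphi(E)\notin(E)$, neither operator descends to $B/EB$, so statements like ``$\varphi(\gamma_n(w))=0$'' or ``$\delta(x)=-x^p/p$ in $\mathcal I$'' are not well-posed. Rewriting your induction in terms of actual elements of $B$, it does not close: from $\varphi(\delta^n w)=Eb$ one gets $p\varphi(\delta^{n+1}w)=\varphi^2(\delta^n w)-\varphi(\delta^n w)^p$, and $\varphi^2(\delta^n w)=(E^p+p\delta(E))\varphi(b)$ contributes the term $p\delta(E)\varphi(b)$, which is not in $EB$ unless one also controls $\varphi(b)$ modulo $E$. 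The repair is exactly the paper's induction: prove the stronger statement that $\varphi^k(\delta^n(w))\in EB$ for \emph{all} $k\geq0$, by induction on $n$, using the transversality of $(p,E)$ (and of $(E,\varphi^k(E))$) in $B$. Your verification that $\varphi_1$ stabilizes $J$, via $E\,f(\varphi_1(x))=\varphi(f(x))=0$ and $E$ being a non-zero divisor in $A$, is correct and coincides with the paper's.
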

\begin{proof}
  Write $E:=E(u)$.
  The ideal $J$ is generated (up to completion) by the $\delta$-translates of
  $$
  z:=(u-v)/E,
  $$
  so to check that $J \subset \mathcal{N}^{\geq 1} B$, it is enough to prove that $\delta^n (z) \in \mathcal{N}^{\geq 1} B$ for all $n$. We prove by induction on $n$ that for all \red{$k \geq 1$}, $\varphi^k(\delta^n(z))$ is divisible by $E$. For $n=0$, one has, for any $k\geq 1$,
\[ \varphi^k(z)= \frac{u^{pk}-v^{pk}}{\varphi^k(E)}=\frac{(u-v)(u^{pk-1} + u^{pk-2}v + \dots + uv^{pk-2} + v^{pk})}{\varphi^k(E)}. \]
Since $(E,\varphi^k(E))$ is regular (as $(p,E)$ is transversal because $B$ is $(p,E)$-completely faithfully flat over $W(k)[[u]]$ by \cite[Proposition 3.13]{bhatt_scholze_prisms_and_prismatic_cohomology}) and $u-v$ is divisible by $E$ in $B$, we deduce that $E$ divides $\varphi^k(z)$. Let now $n\geq 0$ and assume the result is known for $\delta^n(z)$. We have, for $k\geq 0$, 
\[ p\varphi^k(\delta^{n+1}(z)) =\varphi^k(p\delta^{n+1}(z))= \varphi^k(\varphi(\delta^{n}(z))-\delta^n(z)^p) =\varphi^{k+1}(\delta^n(z)) - \varphi^k(\delta^n(z))^p, \]
so the statement for $\delta^{n+1}(z)$ follows by induction hypothesis, and the fact that $p$ and $E$ are transversal. This concludes the proof that $J \subset \mathcal{N}^{\geq 1} B$. 

Let $x \in J$. We have
\[ E.f(\varphi_1(x)) = f(\varphi(x)) =\varphi(f(x))=0. \]
Since $E$ is a non-zero divisor in $A$, we must have $f(\varphi_1(x))=0$ and therefore $\varphi_1(x) \in J$, i.e., $\varphi_1$ stabilizes $J$.

It remains to prove that the divided Frobenius is topologically nilpotent on $J$, endowed with the $(p,E)$-adic topology. Let 
\[ A'= A\left\{\frac{\varphi(E)}{p}\right\}^{\wedge_p},\]
which by \cite[Lemma 2.35]{bhatt_scholze_prisms_and_prismatic_cohomology} identifies with the ($p$-completed) divided power envelope $D_A((E))^{\wedge_p}$ of $A$ in $(E)$. \red{Let $\iota\colon A\to A^\prime$ be the natural inclusion.}
The composition 
\[ \alpha : A \overset{\varphi} \longrightarrow A \overset{\iota}{\to} A' \]
defines a morphism of prisms $(A,(E)) \to (A', (p))$. Let
\[ B':=  D_{A \hat{\otimes}_{W(k)} A}(J')^{\wedge_p}, \]
where $J'$ is the kernel of the map $A \hat{\otimes}_{W(k)} A \to R$. The ideal $J'$ is generated by $E$ and $u-v$, which form a regular sequence in ${A \hat{\otimes}_{W(k)} A/p}$, and therefore
\begin{equation*}
\begin{split}
B' \cong (A \hat{\otimes}_{W(k)} A)\left\{\frac{\varphi(E),\varphi(u-v)}{p} \right\}_\delta^{\wedge_{p}} & \cong (A \hat{\otimes}_{W(k)} A)\left\{\frac{p,\varphi(u-v)}{\varphi(E)} \right\}_\delta^{\wedge_{\varphi(E)}} \\
& \cong D_{\varphi_{A\hat{\otimes}_{W(k)} A}^*B}((E))^{\wedge_p}. 
\end{split}
\end{equation*}
(In the second isomorphism we used again \cite[Lemma 2.24]{bhatt_scholze_prisms_and_prismatic_cohomology}, and in the first and last \cite[Lemma 2.37]{bhatt_scholze_prisms_and_prismatic_cohomology}.)
In particular, the map $\alpha$ induces a map:
\[ \red{\alpha_B} : B \to B'
\]
\red{because $B\cong A\hat{\otimes}_{W(k)}A\{\frac{u-v}{E}\}^{\wedge_{(p,E)}}$.}
It sends $J \subseteq B$ to the kernel $K \subset B'$ of the map $B' \to A'$ (which extends the multiplication on $\mu\colon A\hat{\otimes}_{W(k)} A\to A$), and commutes with the divided Frobenius (because $B^\prime$ is $p$- and thus $\varphi(E)$-torsion free). \red{We thus have a diagram:
  \[
  \xymatrix{
    & J \ar[d]& &     & K \ar[d]& \\
    A\hat{\otimes}_{W(k)} A\ar[r]\ar[rd]_\mu\ar@{-->}@/^1pc/[rrr]^{\varphi} &B \ar@{-->}@/^2pc/[rrr]^{\alpha_B} \ar[d] & &    A\hat{\otimes}_{W(k)} A\ar[r]\ar[rd]^{\iota\circ \mu} &B^\prime \ar[d] \\
    & A \ar@{-->}@/^1pc/[rrr]^{\alpha}&  &   & A^\prime & 
  }
  \]
}
The ideal $K\subseteq B^\prime$ is generated (up to completion) by $(u-v)$ and the $\delta$-translates of $$\frac{\varphi(u-v)}{p}=\mathrm{unit} \cdot \frac{\varphi(u-v)}{\varphi(E)}.$$ As the kernel $J$ of $B\to A$ is stable by $\varphi_1$, this implies that $K=JB^\prime$ is stable by $\varphi_1$, and thus in particular contained in $\mathcal{N}^{\geq 1}B^\prime$.

Observe also that
\[ pB' \cap B = (p,E).B \]
To see this, one needs to show that the map induced by $\alpha_B$ 
\[ B/(p,E) \to B'/p \]
is injective, i.e., by faithful flatness of $\varphi\colon A\to A$ that the natural map
\[ B/(p,\varphi(E))=B/(p,E^p) \to B'/p=D_{B}((E))/p \] 
is injective. But since $B$ is $p$-torsion free, 
\[ B'/p = B/(p,E^p)[X_0,X_1,\dots]/(X_0^p,X_1^p,\dots)^{\wedge p}\]
and the above map is simply the natural inclusion map. Hence, it suffices to prove topological nilpotence of $\varphi_1="\varphi/\varphi(E)"$ on $K$ with respect to the $p$-adic topology\footnote{Let us clarify what we mean by the various $\varphi_1$'s, whenever they are defined. On $A$ we set $\varphi_1=\varphi/E$ which is the restriction of $\varphi_1=\varphi/\varphi(E)$ along $\alpha$. In $B^\prime$ the element $\varphi(E)/p$ is a unit and thus $\varphi_1=\frac{p}{\varphi(E)}\frac{\varphi}{p}$, i.e., both possible definition of the divided Frobenius differ by a unit.}. We do it in two steps.

Note first that $\varphi$ is topologically nilpotent on $K$. More precisely, using that $K$ is stable by $\varphi_1$, one easily sees by induction that $\varphi^k(z)$ is divisible by $p^k$, for all $z \in K$ and $k\geq 1$ (with $\varphi^k(z)/p^k\in K$, because $A^\prime$ is $p$-torsion free). The equality
\[ \varphi_1(xy)=\varphi(x) \varphi_1(y) \]
for $x, y \in K$, implies by induction that for any $n \geq 1$ :  
\[ \varphi_1^n(xy)=\varphi^n(x) \varphi_1^n(y). \]
This shows that the second divided power ideal $K^{[2]}$ is stable by $\varphi_1$ (since $K$ is stable by $\varphi$,$\varphi_1$) and, by what we just said, that the left hand side is divisible by $p^n$ in $K$. In fact, one can do better. Let $m \geq 1$ and $x \in K$. In the previous equality, take $y=x^{m-1}$. Seeing it in $B'[1/p]$ (recall that $B'$ is $p$-torsion free), one can divide both sides by $m!$. It reads :
\[ \varphi_1^n(\gamma_m(x))= \frac{\varphi^n(x)}{m!} \varphi_1^n(x^{m-1}). \]
The left hand side always makes sense in $K$ since $K$ has divided powers, and for $n$ big enough, the right hand side as well since $\varphi^n(x)$ tends $p$-adically to $0$ and thus is divisible by $m!$ for $n$ big enough. Letting $n$ go to infinity, we see that the left hand side goes to $0$ in $K$. These considerations prove that $\varphi_1$ is topologically nilpotent (with respect to the $p$-adic topology) on $K^{[2]}$, as it is topologically nilpotent on $K^2$ and all divided powers $\gamma_m(x)$, $m\geq 2$, for $x\in K$. 

Let $e$ be the degree of the polynomial $E$. Since $K^{[2]}$ is stable by $\varphi_1$, $\varphi_1$ defines a semi-linear endomorphism of the quotient $K/K^{[2]}$. Let us now prove that $\varphi_1^{pe}(K/K^{[2]}) \subset p. K/K^{[2]}$. We know that the $A'$-module $K/K^{[2]}$ is isomorphic to $(\Omega_A^{1})^{\wedge_p} \otimes_A A'$ (where the map $A \to A'$ is the natural inclusion \red{$\iota$}). It is a free $A'$-module of rank generated by $du$ and via this identification, one has $\varphi_1(du)= u^{p-1} du$. But the image of $u^{pe}$ in $A'$ is divisible by $p$ since $p$ divides $E^p$ in $A'$ and $E$ is an Eisenstein polynomial. Therefore $p$ (even $p^{p-1}$) divides $\varphi^{pe}_{1}(du \otimes 1)$ in $K/K^{[2]}$.        

Finally, let us check that these two steps imply the desired topological nilpotence. Let $x \in K$, $\bar{x}$ its class in $K/K^{[2]}$. Fix an integer $n\geq 1$. By the second step, we have 
\[ \varphi_1^{pne}(\bar{x}) \in p^n K/K^{[2]}, \]
i.e., there exists $y \in K^{[2]}$ such that  
\[ \varphi_1^{pne}(x)\in y + p^n K. \]
By the first step, there exists $m \geq 1$ such that $\varphi_1^m(y) \in p^n K$, and so 
\[ \varphi_1^{pne+m}(x) \in p^n K, \]
as desired.
\end{proof}

\begin{remark}
\label{bhatt-scholze-absolute-prysmatic-crystals}
We have seen above that prismatic Dieudonn\'e crystals over $\mathcal{O}_K$ are the same as minuscule Breuil-Kisin modules. One cannot expect the same kind of result to hold for non-minuscule finite locally free $F$-crystals on the absolute prismatic site of $\mathcal{O}_K$: one really needs to remember the (unexplicit) descent datum to reconstruct the $F$-crystal. \red{In fact, Bhatt and Scholze, \cite{bhatt2021prismatic}, have recently proved} the remarkable result that finite locally free $F$-crystals on the absolute prismatic site of $\mathcal{O}_K$ are the same as $\mathrm{Gal}_K$-stable lattices in crystalline representations of $\mathrm{Gal}_K$. In the minuscule case, i.e. for prismatic Dieudonn\'e crystals, combined with the result above and the considerations below, this recovers the known equivalence between $p$-divisible groups of $\mathcal{O}_K$ and $\mathrm{Gal}_K$-stable lattices in crystalline representations of $\mathrm{Gal}_K$ with Hodge-Tate weights in $\{0,1\}$. 
\end{remark}



Finally, let $K$ be a complete, discretely valued extension of $\mathbb{Q}_p$, let $\mathcal{O}_K\subseteq K$ be its ring of integers and assume the residue field $k$ of $\mathcal{O}_K$ is perfect. We will show that the equivalence of \Cref{sec:comp-case-mathc-theorem-comparison-with-bk} coincides with the equivalence established by Kisin (cf.\ \cite[Theorem 0.4]{kisin_crystalline_representations_and_f_crystals}). Set
$$
\mathfrak{S}:=W(k)[[u]]
$$
with Frobenius lift $\varphi\colon W(k)[[u]]\to W(k)[[u]]$ sending $u\mapsto u^p$.
Fix a uniformizer $\pi\in \mathcal{O}_K$ and define the morphism
$$
\tilde{\theta}\colon \mathfrak{S}\to \mathcal{O}_K,\ u\mapsto \pi.
$$
Then the kernel $\mathrm{ker}(\tilde{\theta})=(E)$ is generated by an Eisenstein polynomial $E\in W(k)[u]$. Let $S$ be the $p$-completed divided power envelope of the ideal $(E)\subseteq \mathfrak{S}$, i.e.,
$$
S=\mathfrak{S}\{\frac{\varphi(E)}{p}\}^\wedge_p
$$
in the category of $\delta$-rings. Note that the composition
$$
\psi_K\colon \mathfrak{S}\xrightarrow{\varphi}\mathfrak{S}\to S
$$
induces to a morphism $(\mathfrak{S},(E))\to (S,(p))$ of prisms. Via the composition $\mathcal{O}_K\cong \mathfrak{S}/(E)\xrightarrow{\psi_K} S/(p)$  we consider $(S,(p))$ as an object of the (absolute) prismatic site $(\mathcal{O}_K)_\prism$.
The antiequivalence
$$
M^{\mathrm{Kis}}(-)\colon \mathrm{BT}(\mathcal{O}_K)\cong \mathrm{BK}_{\mathrm{min}}(\mathcal{O}_K)
$$
of Kisin has the characteristic property (cf.\ \cite[Theorem (2.2.7)]{kisin_crystalline_representations_and_f_crystals}) that for a $p$-divisible group $G$ over $\mathcal{O}_K$ there is a canonical Frobenius equivariant isomorphism
$$
M^{\mathrm{Kis}}(G)\otimes_{\mathfrak{S},\psi}S\cong \mathbb{D}(G)(S)
$$
where the right hand side denotes the evaluation of the crystalline Dieudonn\'e crystal of $G$ on the PD-thickening $S\to \mathcal{O}_K$ (which sends all divided powers of $E$ to zero).

Let $G$ be a $p$-divisible group over $\mathcal{O}_K$ with absolute prismatic Dieudonn\'e crystal \red{$\mathcal{M}_\prism(G)$}. We use \Cref{sec:divid-prism-dieud-lemma-divided-dieudonne-module-via-local-ext-on-prismatic-site} and \Cref{sec:abstr-divid-prism-proposition-finite-locally-free} and consider $\red{\mathcal{M}_\prism(G)}$ as a crystal on the absolute prismatic site $(\mathcal{O}_K)_\prism$.

\begin{lemma}
  \label{sec:comp-over-mathc-comparison-over-s-with-crystalline-dieudonne-module} There is a natural Frobenius-equivariant isomorphism
  $$
  \alpha_K\colon \mathcal{M}_\prism(G)(S,(p))\xrightarrow{\simeq} \mathbb{D}(G)(S).
  $$
  Here $\mathbb{D}(G)(S)$ denotes the evaluation of the Dieudonn\'e crystal of $G$ at the PD-thickening $S\to \mathcal{O}_K$. 
\end{lemma}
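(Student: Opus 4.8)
The statement to prove is \Cref{sec:comp-over-mathc-comparison-over-s-with-crystalline-dieudonne-module}, asserting the existence of a natural Frobenius-equivariant filtered isomorphism $\alpha_K\colon \mathcal{M}_\prism(G)(S,(p)) \xrightarrow{\simeq} \mathbb{D}(G)(S)$, where $(S,(p))$ is viewed as an object of the absolute prismatic site $(\mathcal{O}_K)_\prism$ via $\psi_K$, and $S$ is the $p$-completed PD-envelope $\mathfrak{S}\{\varphi(E)/p\}^{\wedge_p}$ of $(\mathfrak{S},(E))$. The plan is to recognize this as a special case of \Cref{sec:comp-over-mathc-comparison-over-pd-thickening-with-crystalline-dieudonne-module}, which already provides, for every $p$-complete ring $R$ and every $p$-complete $p$-torsion free $\delta$-ring $D$ surjecting onto $R$ with PD kernel, a natural Frobenius-equivariant filtered isomorphism $v^\ast(\mathcal{M}_\prism(G))(D,(p)) \cong \mathbb{D}(G)(D)$ over that PD-thickening.

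First I would check that $(S,(p))$ satisfies the hypotheses of \Cref{sec:comp-over-mathc-comparison-over-pd-thickening-with-crystalline-dieudonne-module} with $R=\mathcal{O}_K$ and $D=S$. That $S$ is $p$-torsion free follows from the identification $S\cong \mathfrak{S}\{\varphi(E)/p\}^{\wedge_p}$ together with \cite[Lemma 2.35]{bhatt_scholze_prisms_and_prismatic_cohomology} (which presents $S$ as the $p$-completed divided power envelope $D_{\mathfrak{S}}((E))^{\wedge_p}$, explicitly $p$-torsion free since $\mathfrak{S}$ is); that $S$ is $p$-complete is by construction; and the kernel of $S\to \mathcal{O}_K$, which is the PD-ideal generated by the divided powers of $E$, indeed has divided powers. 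The Frobenius on $S$ is the one extending $\varphi$ on $\mathfrak{S}$, and the map $\mathcal{O}_K\cong \mathfrak{S}/(E)\xrightarrow{\psi_K} S/p$ is precisely the map "induced by Frobenius and the fact that the kernel is a PD-ideal" appearing in the hypothesis of the earlier lemma. Thus $(S,(p))\in (\mathcal{O}_K)_\prism$ in the manner required there, and \Cref{sec:comp-over-mathc-comparison-over-pd-thickening-with-crystalline-dieudonne-module} applies verbatim, giving the desired natural Frobenius-equivariant filtered isomorphism
$$
\alpha_K\colon \mathcal{M}_\prism(G)(S,(p)) = v^\ast(\mathcal{M}_\prism(G))(S,(p)) \cong \mathbb{D}(G)(S).
$$

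There is essentially no hard part here: the lemma is a direct specialization, and the only thing to spell out is the compatibility of the two descriptions of how $(S,(p))$ sits over $\mathcal{O}_K$ — namely that $\psi_K$ is the composite $\mathfrak{S}\xrightarrow{\varphi}\mathfrak{S}\to S$ reduced mod $p$, which matches the recipe "Frobenius followed by the PD-structure" from \Cref{sec:comp-over-mathc-comparison-over-pd-thickening-with-crystalline-dieudonne-module}. One should also note the implicit use of \Cref{sec:divid-prism-dieud-lemma-divided-dieudonne-module-via-local-ext-on-prismatic-site} and \Cref{sec:abstr-divid-prism-proposition-finite-locally-free}, already invoked in the paragraph preceding the lemma statement, to make sense of evaluating $\underline{\mathcal{M}}_\prism(G)$ as a crystal on the absolute prismatic site at the object $(S,(p))$. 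Naturality in $G$ and compatibility with Frobenius and the filtration are inherited directly from the corresponding assertions in \Cref{sec:comp-over-mathc-comparison-over-pd-thickening-with-crystalline-dieudonne-module}.
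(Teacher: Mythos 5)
Your proposal is correct and matches the paper's own proof, which simply cites \Cref{sec:comp-over-mathc-comparison-over-pd-thickening-with-crystalline-dieudonne-module}; you have merely spelled out the verification that $S$ is a $p$-complete, $p$-torsion free $\delta$-ring with PD kernel and that $\psi_K$ realizes the required structure map. Nothing further is needed.
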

\begin{proof}
  This follows from \Cref{sec:comp-over-mathc-comparison-over-pd-thickening-with-crystalline-dieudonne-module}.
\end{proof}

We want to show that the natural isomorphism $\alpha_K$ restricts to an isomorphism $\mathcal{M}_\prism(G)((\mathfrak{S},(E))\cong M^{\mathrm{Kis}}(G)$. In other words, we want to prove the existence of the dotted morphisms in the diagram
$$
\xymatrix{
  \mathcal{M}_\prism(G)((\mathfrak{S},(E))\ar@{-->}@/^1pc/[r]\ar@{_{(}->}[d] & \ar@{-->}@/^1pc/[l]\ar@{_{(}->}[d] M^{\mathrm{Kis}}(G)\\ 
  \mathcal{M}_\prism(G)(S,(p))\ar@{=}^-{\sim}[r]& \mathbb{D}(G)(S).
}
$$

Let $C$ be the completion of an algebraic closure of $K$ and let $\mathcal{O}_C\subseteq C$ be its ring of integers. Set $A_\inf:=A_\inf(\mathcal{O}_C)$, $A_\crys:=A_\crys(\mathcal{O}_C)$.

We can extend the morphism $\mathcal{O}_K\to \mathcal{O}_C$ to a morphism of prisms\footnote{Note that we take $\xi$, not $\tilxi$.}
$$
f : (\mathfrak{S},(E))\to (A_\inf,(\xi))
$$
by sending $u\mapsto \red{[\pi^\flat]}=[(\pi,\pi^{1/p},\ldots)]$ (after choosing a compatible system of $p$-power roots $\pi^{1/p^n}\in \mathcal{O}_C$ of $\pi$).
Let
$$
\psi_C\colon A_\inf\xrightarrow{\varphi}A_\inf\to A_\crys.
$$
Then analogous $\psi_C$ induces a morphism $(A_\inf,(\xi))\to (A_\crys,(p))$ of prisms.

By faithful flatness of $\mathfrak{S}\to A_\inf$ (cf.\ \cite[Lemma 4.30]{bhatt_morrow_scholze_integral_p_adic_hodge_theory}\footnote{But note that our map $f$ differs from the one of \cite{bhatt_morrow_scholze_integral_p_adic_hodge_theory}, which is $\varphi \circ f$.}) it suffices to prove the existence of the dotted arrows after base change to $A_\inf$:
\begin{equation}
  \label{commutative_diag_over_a_inf_in_comparison_with_bk_stuff}
\xymatrix{
  \mathcal{M}_\prism(G)((\mathfrak{S},(E))\otimes_{\mathfrak{S},f}A_\inf\ar@{-->}@/^1pc/[r]\ar@{_{(}->}[d] & \ar@{-->}@/^1pc/[l]\ar@{_{(}->}[d] M^{\mathrm{Kis}}(G)\otimes_{\mathfrak{S},f}A_\inf\\ 
  \mathcal{M}_\prism(G)(S,(p))\otimes_{\mathfrak{S},f}A_\inf\ar@{=}^-{\sim}[r]& \mathbb{D}(G)(S)\otimes_{\mathfrak{S},f}A_\inf.
}
\end{equation}

By flat base change of PD-envelopes (cf.\ \cite[Tag
07HD]{stacks_project}) we get
  $$
  S\hat{\otimes}_{\mathfrak{S}}A_\inf\cong A_\crys
  $$
  and thus
  $\mathbb{D}(G)(S)\otimes_{\mathfrak{S}}A_\inf\cong
  \mathbb{D}(G_{\mathcal{O_C}})(A_\crys)$.

Similar to \Cref{sec:comp-over-mathc-comparison-over-s-with-crystalline-dieudonne-module} there is a canonical isomorphism
$$
\alpha_C\colon \mathcal{M}_{\prism}(G)((A_\crys,(p))\cong \mathbb{D}(G_{\mathcal{O}_C})(A_\crys)
$$
by \Cref{sec:comp-over-mathc-comparison-over-pd-thickening-with-crystalline-dieudonne-module} and thus the lower horizontal isomorphism in (\Cref{commutative_diag_over_a_inf_in_comparison_with_bk_stuff}) identifies with $\alpha_C$.
By the crystal property of $\mathcal{M}_\prism(G)$ the left vertical injection
  $$
  \mathcal{M}_{\prism}(G)((\mathfrak{S},(E)))\otimes_{\mathfrak{S},f}A_\inf \hookrightarrow \mathcal{M}_\prism(G_{\mathcal{O}_C})(S,(p))\otimes_{\mathfrak{S},f}A_\inf.
  $$
  identifies with the inclusion
  $$
  \mathcal{M}_{\prism}(G)((A_\inf,(\xi))) \hookrightarrow \mathcal{M}_\prism(G_{\mathcal{O}_C})(A_\crys,(p))
  $$
  along the morphisms of prism $\psi_C\colon (A_\inf,(\xi))\to (A_\crys,(p))$.
  By \Cref{sec:prism-dieud-theory-proposition-comparison-scholze-weinstein-functor} there is a canonical isomorphism
  $$
  \beta\colon \varphi_{A_{\rm inf}}^* \mathcal{M}_{\prism}(G)((A_\inf,(\xi)))= \mathcal{M}_{\prism}(G)((A_\inf,(\tilxi)))\cong M^{\mathrm{SW}}(G_{\mathcal{O}_C})^*
  $$
  to the dual of the functor constructed by Scholze-Weinstein. By \cite[Theorem 14.4.3]{scholze2020berkeley} $M^{\mathrm{SW}}(G)^* \otimes_{A_\inf}A_\crys\cong \mathbb{D}(G_{\mathcal{O}_C/p})(A_\crys)$ and moreover the diagram
  $$
  \xymatrix{
    \varphi_{A_{\rm inf}}^* \mathcal{M}_{\prism}(G)((A_\inf,(\xi)))\ar[r]^{\beta}\ar[d] &  M^{\mathrm{SW}}(G_{\mathcal{O}_C})^*\ar[d] \\
\mathcal{M}_{\prism}(G)((A_\crys,(p)))\ar[r]^-\simeq & \mathbb{D}(G)(A_\crys)\cong M^{\mathrm{SW}}(G_{\mathcal{O}_C})^*\otimes_{A_\inf}A_\crys
  }
  $$
  commutes \red{by construction of $\beta$, cf.\ \Cref{sec:prism-dieud-theory-proposition-comparison-scholze-weinstein-functor} and its proof}.
  Hence, it suffices to prove that there exists an isomorphism
  $$
  \gamma\colon M^{\mathrm{Kis}}(G)\otimes_{\mathfrak{S},g}A_\inf\to M^{\mathrm{SW}}(G_{\mathcal{O}_C})^*
  $$
  where $g=\varphi \circ f$ is a morphism of prisms
  \[ g : (\mathfrak{S},(E)) \to (A_{\rm inf},(\tilxi)), \]
  such that the diagram
  $$
  \xymatrix{
    M^{\mathrm{Kis}}(G)\otimes_{\mathfrak{S},g}A_\inf \ar[d]\ar[r]^\gamma & M^{\mathrm{SW}}(G_{\mathcal{O}_C})^*\ar[d]\\
    \mathbb{D}(G_{\mathcal{O}_C})(A_\crys,(p))\ar[r]^{\simeq} & M^{\mathrm{SW}}(G_{\mathcal{O}_C})^* \otimes_{A_{\rm inf}} A_{\rm crys}
  }
  $$
  commutes.
  
    Let $T$ be the dual of the $p$-adic Tate module $T_pG$ of $G$. Then $T$ is a lattice in a crystalline representation of $\mathrm{Gal}(\overline{K}/K)$ (where $\overline{K}\subseteq C$ is the algebraic closure of $K$) and $M^{\mathrm{Kis}}(G)\cong M(T)$ where $M(-)$ is Kisin's functor from lattices in crystalline representations to Breuil-Kisin modules.
  By \cite[Proposition 4.34]{bhatt_morrow_scholze_integral_p_adic_hodge_theory} $M(T)\otimes_{\mathfrak{S},g}A_\inf$ corresponds under Fargues' equivalence (cf.  \cite[Theorem 14.1.1]{scholze2020berkeley}) to the pair $(T,\Xi)$ with $\Xi\subseteq T\otimes_{\Z_p} B_{\dR}$ the $B^+_{\dR}$-lattice generated by $D_{\dR}(T_{\Q_p})=(T\otimes_{\Z_p}B_{\dR})^{\mathrm{Gal}(\overline{K}/K)}$. But this pair is exactly the one associated to $G_{\mathcal{O}_C}$ by Scholze-Weinstein.

  Thus in the end our discussion implies the following proposition.
  
\begin{proposition}
  \label{sec:comp-over-mathc-comparison-of-kisins-functor-with-prismatic-one}
  The two functors
  $$
  \begin{matrix}
    G\mapsto M^{\mathrm{Kis}}(G)\\
    G\mapsto \mathcal{M}_{\prism}(G)(\mathfrak{S},(E))
  \end{matrix}
  $$
  from $p$-divisible groups over $\mathcal{O}_K$ to minuscule Breuil-Kisin modules are naturally isomorphic.
\end{proposition}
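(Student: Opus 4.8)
The plan is to assemble the chain of comparison isomorphisms recorded in the discussion immediately preceding the statement into a single natural isomorphism, reducing everything after a faithfully flat base change to identifications of crystalline, resp.\ Scholze--Weinstein, data. First I would note that both functors land in $\mathrm{BK}_{\mathrm{min}}(\mathfrak{S})$: for $G\mapsto \mathcal{M}_{\prism}(G)(\mathfrak{S},(E))$ this is \Cref{sec:comp-case-mathc-theorem-comparison-with-bk}, and for $M^{\mathrm{Kis}}$ it is Kisin's theorem. Since $\mathfrak{S}\to A_{\inf}$ is faithfully flat along the map $f\colon u\mapsto \pi^{\flat}$ (cf.\ \cite[Lemma 4.30]{bhatt_morrow_scholze_integral_p_adic_hodge_theory}), it suffices to produce a natural isomorphism after $-\otimes_{\mathfrak{S},f}A_{\inf}$ compatible with the two natural inclusions into the crystalline Dieudonn\'e module evaluated on $A_{\crys}$; that is, to fill in the dotted arrows of the diagram \eqref{commutative_diag_over_a_inf_in_comparison_with_bk_stuff}, and then descend.

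Next I would identify the bottom row of that diagram. By \Cref{sec:comp-over-mathc-comparison-over-s-with-crystalline-dieudonne-module} (an instance of \Cref{sec:comp-over-mathc-comparison-over-pd-thickening-with-crystalline-dieudonne-module}) together with flat base change for divided power envelopes, $\mathcal{M}_{\prism}(G)(S,(p))\otimes_{\mathfrak{S}}A_{\inf}\cong \mathbb{D}(G_{\mathcal{O}_C})(A_{\crys})$, realized by $\alpha_C$; Kisin's characteristic property \cite[Theorem (2.2.7)]{kisin_crystalline_representations_and_f_crystals} supplies the matching identification $M^{\mathrm{Kis}}(G)\otimes_{\mathfrak{S},\psi}S\cong \mathbb{D}(G)(S)$. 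Using the crystal property of $\mathcal{M}_{\prism}(G)$, the left vertical inclusion becomes the base change along the prism map $\psi_C\colon (A_{\inf},(\xi))\to (A_{\crys},(p))$ of $\mathcal{M}_{\prism}(G)(A_{\inf},(\xi))\hookrightarrow \mathcal{M}_{\prism}(G)(A_{\crys},(p))$. Now \Cref{sec:prism-dieud-theory-proposition-comparison-scholze-weinstein-functor} gives a canonical $\varphi_{A_{\inf}}^{\ast}\mathcal{M}_{\prism}(G)(A_{\inf},(\xi))=\mathcal{M}_{\prism}(G)(A_{\inf},(\tilxi))\cong M^{\mathrm{SW}}(G_{\mathcal{O}_C})^{\ast}$ which, via \cite[Theorem 14.4.3]{scholze_weinstein_berkeley_new_version}, is compatible with passing to $A_{\crys}$. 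Hence everything reduces to exhibiting a natural isomorphism $\gamma\colon M^{\mathrm{Kis}}(G)\otimes_{\mathfrak{S},g}A_{\inf}\cong M^{\mathrm{SW}}(G_{\mathcal{O}_C})^{\ast}$, with $g=\varphi\circ f\colon (\mathfrak{S},(E))\to (A_{\inf},(\tilxi))$, compatible with the crystalline comparisons on both sides. For this I would invoke \cite[Proposition 4.34]{bhatt_morrow_scholze_integral_p_adic_hodge_theory}: writing $T$ for the $\Z_p$-dual of the Tate module of $G$, one has $M^{\mathrm{Kis}}(G)=M(T)$ and $M(T)\otimes_{\mathfrak{S},g}A_{\inf}$ corresponds, under Fargues' equivalence \cite[Theorem 14.1.1]{scholze_weinstein_berkeley_new_version}, to the pair $(T,\Xi)$ with $\Xi\subseteq T\otimes_{\Z_p}B_{\dR}$ the $B_{\dR}^{+}$-lattice generated by $D_{\dR}(T[1/p])$ --- which is exactly the Scholze--Weinstein datum of $G_{\mathcal{O}_C}$, yielding $\gamma$.

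The main obstacle will be the bookkeeping of Frobenius twists --- the $\xi$ versus $\tilxi$ discrepancy (which is why $g=\varphi\circ f$ rather than $f$ appears in the final identification) --- and verifying that each square in \eqref{commutative_diag_over_a_inf_in_comparison_with_bk_stuff} and in the auxiliary diagrams genuinely commutes, and not merely up to an unspecified unit. Compatibility with the crystalline comparison is what pins everything down: all the maps in the chain are, by construction, linear over the crystalline Dieudonn\'e module of $G_{\mathcal{O}_C/p}$ evaluated on $A_{\crys}$, so once that compatibility is checked one obtains the isomorphism $\mathcal{M}_{\prism}(G)(\mathfrak{S},(E))\cong M^{\mathrm{Kis}}(G)$ by faithfully flat descent along $f$, and naturality in $G$ is automatic since every isomorphism used is functorial in $G$.
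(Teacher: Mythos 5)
Your proposal is correct and follows essentially the same route as the paper: reduce along the faithfully flat map $\mathfrak{S}\to A_{\inf}$ to filling in the dotted arrows of diagram \eqref{commutative_diag_over_a_inf_in_comparison_with_bk_stuff}, identify the prismatic side with $M^{\mathrm{SW}}(G_{\mathcal{O}_C})^{\ast}$ via \Cref{sec:prism-dieud-theory-proposition-comparison-scholze-weinstein-functor} and \cite[Theorem 14.4.3]{scholze_weinstein_berkeley_new_version}, and identify $M^{\mathrm{Kis}}(G)\otimes_{\mathfrak{S},g}A_{\inf}$ with the same object via \cite[Proposition 4.34]{bhatt_morrow_scholze_integral_p_adic_hodge_theory} and Fargues' equivalence, with compatibility checked over $A_{\crys}$ against the crystalline comparisons (\Cref{sec:comp-over-mathc-comparison-over-s-with-crystalline-dieudonne-module}, \Cref{sec:comp-over-mathc-comparison-over-pd-thickening-with-crystalline-dieudonne-module}). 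The twist bookkeeping you flag (using $g=\varphi\circ f$ to pass from $\xi$ to $\tilxi$) is handled in the paper exactly as you propose.
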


\subsection{\red{Admissible} prismatic Dieudonn\'e crystals and displays} 
\label{sec:comparison-with-zink-displays}
The work of Zink provides a classification of \textit{connected} $p$-divisible groups over $p$-adically complete rings (cf.\ \cite{zink_the_display_of_a_formal_p_divisible_group}). In this section, we want to relate it to the classification obtained (for quasi-syntomic rings) in \Cref{sec:essent-surj-main-theorem-equivalence-of-divided-prismatic-dieudonne-functor}.

\begin{definition} \label{def-displays}
Let $R$ be a $p$-complete ring. A \textit{display} over $R$ is a window (cf. \Cref{sec:abstr-divid-dieud} and \cite[Example 5.4]{lau_divided_dieudonne_crystals}) over the frame 
$$ \underline{W}(\mathcal{O}) = (W(\mathcal{O}), I(\mathcal{O}):=\ker(W(\mathcal{O}) \to \mathcal{O}), F, F_1), $$
in the topos of sheaves on the $p$-completely faithfully flat site of $R$, where $F$ is the Witt vector Frobenius and \red{$F_1\colon I(\mathcal{O}) \to W(\mathcal{O})$} the inverse of the bijective Verschiebung morphism $V$.

The category of displays over $R$ is denoted by $\mathrm{Disp}(R)$. 
\end{definition}

\begin{remark} \label{descent-for-displays} \green{We have phrased the definition of a display in a manner parallel to the definition of a prismatic Dieudonn\'e crystal. In this form it is however unnecessarily abstract.}
The category of displays satisfies faithfully flat descent : see \cite[Theorem 37]{zink_the_display_of_a_formal_p_divisible_group}. Since displays over a $p$-complete ring $R$ (with bounded $p^{\infty}$-torsion) are equivalent to compatible systems of displays over $R/p^n$ for all $n\geq 1$, we see that displays even satisfy $p$-completely faithfully flat descent (cf. \cite[Corollary 4.8]{bhatt_morrow_scholze_topological_hochschild_homology}). Hence the category of displays over $R$ in the sense of \Cref{def-displays} is the same as the usual category of displays over $R$ (i.e., windows over the frame $(W(R),I(R),F,F_1)$).  
\end{remark}

\begin{proposition}
\label{qrsp-p-tf-frames}
Let $R$ be a quasi-regular semiperfectoid ring. Assume that $pR=0$ or that $R$ is $p$-torsion free. The natural morphism from \Cref{sec:prism-cohom-quasi-theorem-identification-of-the-graded-pieces-of-nygaard-filtration} 
\[ \prism_R \to R \]
(given by moding out $\mathcal{N}^{\geq 1} \prism_R$) lifts to a \red{$u$-morphism of frames (in the general sense of \Cref{frame})}
\[  f: \underline{\prism}_{R,\rm Nyg} \to \underline{W}(R), \]
where $\underline{\prism}_{R,\rm Nyg}$ is the frame associated to $(\prism_R,I)$ and $\tilde{\xi}$, as in \Cref{exprism}, \red{and $u \in W(R)$ is a unit such that $p=uf(\tilxi)$}.
\end{proposition}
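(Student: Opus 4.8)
The plan is to construct the desired morphism of frames from the known universal property of the Witt vectors. Recall that $W(R)$ is the $p$-typical Witt vectors of $R$, and that a morphism of $\delta$-rings $\prism_R \to W(R)$ is the same as a ring map $\prism_R \to R$ (this is the adjunction mentioned in the introduction in the remark on displays). The ``$\theta$''-map $\theta\colon \prism_R \to R$ thus gives rise by adjunction to a morphism of $\delta$-rings $w\colon \prism_R \to W(R)$, which is the candidate underlying map. First I would check that $w$ carries $\mathcal{N}^{\geq 1}\prism_R$ into $I(R)=\ker(W(R)\to R)$; since $w$ is compatible with the augmentations to $R$ (by construction, $\theta$ equals the composite $\prism_R \xrightarrow{w} W(R) \xrightarrow{w_0} R$), and $\mathcal{N}^{\geq 1}\prism_R=\ker(\theta)$ (by \Cref{sec:prism-cohom-quasi-theorem-identification-of-the-graded-pieces-of-nygaard-filtration}), this is immediate: $\mathcal{N}^{\geq 1}\prism_R = \theta^{-1}(0) \subseteq w^{-1}(w_0^{-1}(0)) = w^{-1}(I(R))$.

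The remaining, and main, point is the compatibility of the divided Frobenii, i.e.\ that $w \circ \varphi_1 = F_1 \circ w$ on $\mathcal{N}^{\geq 1}\prism_R$, where $\varphi_1=\varphi/\tilde\xi$ on the left and $F_1=V^{-1}$ on the right; equivalently, since $V$ is injective on $W(R)$ and $V\circ F_1=\mathrm{id}$ on $I(R)$, that $w(\varphi_1(x))$ has Verschiebung $w(x)$ for all $x\in \mathcal{N}^{\geq 1}\prism_R$, i.e.\ $V(w(\varphi_1(x))) = w(x)$. Using the $\delta$-ring structure, on $W(R)$ one has the identity $V(F(a)) \cdot$ relating Frobenius, Verschiebung and multiplication by $p$; more precisely $FV=p$ and $V(a)\cdot b = V(a\,F(b))$. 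The cleanest route is to observe that $\tilde\xi = \varphi(\xi)$ and that the relation $\varphi = \tilde\xi\,\varphi_1$ on $\mathcal{N}^{\geq 1}\prism_R$ is carried by $w$ to $w(\varphi(x)) = w(\tilde\xi)\, w(\varphi_1(x))$. Now $w$ intertwines $\varphi$ on $\prism_R$ with $F$ on $W(R)$ (as $w$ is a $\delta$-morphism), so $w(\varphi(x)) = F(w(x))$. Since $x\in I':=w^{-1}(I(R))$, write $w(x) = V(a)$ for a unique $a\in W(R)$, so $F(w(x)) = FV(a) = pa$. On the other hand I must identify $w(\tilde\xi)\in W(R)$: the image of the distinguished element $\tilde\xi$ under $w$ should be (up to a unit, which one pins down using that $\theta(\tilde\xi)$ is related to the standard generator) the element whose product with $F_1$ implements the frame compatibility; concretely, I expect $w(\tilde\xi) = p \cdot (\text{unit})$ is \emph{not} quite right — rather $w(\xi)$ or $w(\tilde\xi)$ maps to a generator of $I(R)$ or to $V(1)$. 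The hard part will be precisely this computation of $w(\tilde\xi)$ and $w(\xi)$ in $W(R)$ and checking it is compatible with the Verschiebung, and here the hypothesis ``$pR=0$ or $R$ $p$-torsion free'' enters: in these two cases $\prism_R$ is well understood (it is $A_{\rm crys}(R)$ when $pR=0$ by \Cref{sec:prism-cohom-quasi-lemma-for-qr-semiperfect-prism-isomorphic-to-acrys}, and $p$-torsion free in general by the Hodge-Tate comparison), so the needed identities can be checked after inverting $p$ or modulo $p$, where $W(R)$ is transparent.

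Finally I would assemble these: having $w\colon \prism_R\to W(R)$ carrying $\mathcal{N}^{\geq 1}\prism_R$ into $I(R)$, compatible with the augmentations to $R$, intertwining $\varphi$ with $F$, and satisfying $w\circ\varphi_1 = F_1\circ w$ on the Nygaard filtration, is exactly the data of a morphism of frames $\underline{\prism}_{R,\rm Nyg}\to \underline{W}(R)$ in the sense of \cite[Definition 2.1.5]{cais_lau_dieudonne_crystals_and_wach_modules_for_p_divisible_groups}. I would close by noting compatibility with the orientation: the frame $\underline{\prism}_{R,\rm Nyg}$ depends on the choice of $\tilde\xi$, and since the target frame $\underline{W}(R)$ has its canonical divided Frobenius $F_1=V^{-1}$, the construction is independent of that choice up to the evident identification — this is the point that makes the statement well posed and is why the divided-Frobenius compatibility is the crux. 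I expect the argument to be short once the identification of $w(\tilde\xi)$ is nailed down; the two-case hypothesis is there precisely to make that identification elementary rather than requiring a general structural result about $W(R)$ for arbitrary quasi-regular semiperfectoid $R$.
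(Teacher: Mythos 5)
Your outline agrees with the paper's up to the crux: the Joyal adjunction gives the $\delta$-ring map $w\colon \prism_R\to W(R)$, and compatibility of filtrations is immediate because $\mathcal{N}^{\geq 1}\prism_R=\ker\theta$ and $w$ lifts $\theta$. The gap is exactly at the step you flag as "the hard part", and your guess there is wrong. Since $w(\xi)\in I(R)$ and $F=p\cdot V^{-1}$ on $I(R)$, one has $w(\tilxi)=F(w(\xi))=p\,F_1(w(\xi))$, so $p$ divides $w(\tilxi)$; and $w(\tilxi)$ is a distinguished element of the $\delta$-ring $W(R)$ (its $\delta$ is the image of the unit $\delta(\tilxi)$), so by \cite[Lemma 2.24]{bhatt_scholze_prisms_and_prismatic_cohomology} the cofactor is a unit and $(w(\tilxi))=(p)$. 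Thus $w(\tilxi)$ is $p$ times a unit — precisely the option you reject — and it is \emph{not} a generator of $I(R)$ nor $V(1)$ up to unit: for $p$-torsion free $R$ its image in $R$ under $W(R)\to R$ is a unit times $p\neq 0$, whereas $I(R)$ maps to zero. Correspondingly, the divided-Frobenius compatibility you aim for, $w\circ\varphi_1=F_1\circ w$, only holds up to the unit $c:=w(\tilxi)/p$, i.e. $F_1\circ w=c\cdot(w\circ\varphi_1)$ on $\mathcal{N}^{\geq 1}\prism_R$; this is why the statement is a morphism of frames "in the general sense" of \cite[Definition 2.1.5]{cais_lau_dieudonne_crystals_and_wach_modules_for_p_divisible_groups} (a $u$-homomorphism), not a strict one.

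Your proposed verification, "check after inverting $p$ or modulo $p$, where $W(R)$ is transparent", does not close this gap. The identity to prove sits in $W(R)$: from $p\,F_1(w(x))=F(w(x))=w(\varphi(x))=w(\tilxi)\,w(\varphi_1(x))=p\,c\,w(\varphi_1(x))$ one wants to cancel $p$, which requires $W(R)$ (not $\prism_R$) to be $p$-torsion free — this is what the hypothesis "$R$ $p$-torsion free" buys, and it is the paper's argument in that case. When $pR=0$, $W(R)$ generally has $p$-torsion and an identity in $W(R)$ cannot be established "modulo $p$"; the paper handles this case by a separate argument as in \cite[Lemma 7.4]{lau_dieudonne_theory_over_semiperfect_rings_and_perfectoid_rings}, using $\prism_R\cong A_{\crys}(R)$. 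So the two-case hypothesis enters to control $p$-torsion when cancelling $p$ (or to run Lau's characteristic-$p$ argument), not to make the identification of $w(\tilxi)$ elementary — that identification is unconditional.
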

\begin{proof}
By adjunction (cf.\ \cite[Theoreme 4]{joyal_delta_anneaux_et_vecteurs_de_witt}), the morphism $\prism_R \to R$ gives rise to a morphism of $\delta$-rings :
\[ f\colon \prism_R \to W(R), \]
lifting the morphism to $R$, i.e., sending $\mathcal{N}^{\geq 1} \prism_R$ to $I(R)$. In particular, $f(\xi) \in I(R)$, and thus
\[ f(\tilxi) = \varphi(f(\xi)) = p \varphi_1(f(\xi)) \]
and so $p$ divides $f(\tilxi)$. By \cite[Lemma 2.24]{bhatt_scholze_prisms_and_prismatic_cohomology}, \red{we deduce that $(p)=(f(\tilxi))$, and thus there exists a unit $u\in W(R)$ such that $p=uf(\tilxi)$}. It is then easy to conclude when $W(R)$ is $p$-torsion free since the commutation (up to a unit) of $f$ with the divided Frobenius can be proved after multiplying by $p$. In the case where $pR=0$ one argues as in \cite[Lemma 7.4]{lau_dieudonne_theory_over_semiperfect_rings_and_perfectoid_rings}.   
\end{proof}

It would be nice to prove that for any $R$ quasi-regular semiperfectoid, the morphism of the proposition always defines a morphism of frames. Although we did not succeed in doing so, the next proposition shows that one can circumvent this difficulty.

\begin{proposition}
  \label{sec:filt-prism-dieud-proposition-display-associated-to-p-divisible-group}
Let $R$ be a quasi-syntomic ring. \red{If $G$ is a $p$-divisible group over $R$, set
$$
Z_R(G) = \mathcal{M}_\prism(G) \otimes_{\mathcal{O}^{\rm pris}} W(\mathcal{O})
$$
with Frobenius $F_{Z_R(G)}= \varphi_{\mathcal{M}_\prism(G)} \otimes F$, and let $\mathrm{Fil} ~ Z_R(G)$ be the submodule of $Z_R(G)$ generated by $I(\mathcal{O}).Z_R(G)$ and the image of $\varphi_{\mathcal{M}_\prism(G)}^{-1}(\mathcal{I}^{\rm pris}. \mathcal{M}_\prism(G))$. There exists a unique way to extend the functor
$$
G \mapsto (Z_R(G),\mathrm{Fil} ~ Z_R(G), F_{Z_R(G)})
$$
to a functor
\[ \underline{Z}_R : \mathrm{BT}(R) \to \mathrm{Disp}(R), ~ G  \mapsto \underline{Z}_R(G)=(Z_R(G),\mathrm{Fil} ~ Z_R(G), F_{Z_R(G)}, F_{Z_R(G),1}) \]
natural in $R$ which moreover coincides (through \Cref{sec:abstr-divid-prism-proposition-equivalence-divided-prismatic-dieudonne-modules-windows}) with the composition of the prismatic Dieudonn\'e functor with the functor induced by the morphism of frames of \Cref{qrsp-p-tf-frames} when $R$ is quasi-regular semiperfectoid and $pR=0$ or $R$ is $p$-torsion free.}
\end{proposition}
\begin{proof}
The requirement of the proposition already says what 
$$(Z_R(G), \mathrm{Fil} ~ Z_R(G), F_{Z_R(G)})$$
must be. Therefore, the only issue is to define the divided Frobenius $F_{Z_R(G),1}$. 

Assume first that $R$ is quasi-regular semiperfectoid and $p$-torsion free. \red{If it exists, $F_{Z_R(G),1}$ is necessarily unique, since $W(R)$ is $p$-torsion free; thus we only need to show its existence.} For this, we define $\underline{Z}_R$ as the composition of the prismatic Dieudonn\'e functor with the functor induced by the morphism of frames of \Cref{qrsp-p-tf-frames}. By quasi-syntomic descent (\Cref{descent-for-displays}), one gets a functor $\underline{Z}_R$ for any $p$-torsion free quasi-syntomic ring $R$. For such rings $R$, the functor $\underline{Z}_R$ is necessarily unique by $p$-torsion freeness of $W(R)$. In particular, it commutes with base change in $R$. 

To obtain the functor $\underline{Z}_R$ in general, we use smoothness of the stack of $p$-divisible groups, following an idea of Lau, \cite[Proposition 2.1]{lau_smoothness_of_the_truncated_display_functor}. Let $X=\mathrm{Spec}(A) \to \mathcal{BT} \times \mathrm{Spec}(\Z_p)$ be \red{an ind-smooth} presentation of the stack of $p$-divisible groups as in loc.\ cit. Then $\mathrm{Spec}(B)=X \times_{\mathcal{BT}} X$ is affine. The $p$-adic completions $\hat{A}$ and $\hat{B}$ are both $p$-torsion free (cf.\ \cite[Lemma 1.6]{lau_smoothness_of_the_truncated_display_functor}).

Let $R$ be a quasi-syntomic ring and $G$ be a $p$-divisible group over $R$. It gives rise to a map $\alpha : \mathrm{Spec}(R) \to \mathcal{BT} \times \mathrm{Spec}(\Z_p)$. Let
\[ \mathrm{Spec}(S) = \mathrm{Spec}(R) \times_{\mathcal{BT} \times \mathrm{Spec}(\Z_p)} \mathrm{Spec}(A), \]
and
\[ \mathrm{Spec}(T) = \mathrm{Spec}(S) \times_{\mathrm{Spec}(A)} \mathrm{Spec}(B). \]
Let $\hat{S}$ and $\hat{T}$ be their $p$-adic completions. The rings $\hat{A}$ and $\hat{B}$ \red{are quasi-syntomic}. By base change the rings $\hat{S}$ and $\hat{T}$ are also quasi-syntomic. The base change
\[ (Z_{\hat{S}}(G_{\hat{S}}), \mathrm{Fil} ~ Z_{\hat{S}}(G_{\hat{S}}), F_{Z_{\hat{S}}(G_{\hat{S}})}) \]
of the triple $(Z_R(G), \mathrm{Fil} ~ Z_R(G), F_{Z_R(G)})$ along $R \to \hat{S}$ is also the base change of the triple 
\[ (Z_{\hat{A}}(H_{\hat{A}}), \mathrm{Fil} ~ Z_{\hat{A}}(H_{\hat{A}}), F_{Z_{\hat{A}}(H_{\hat{A}})}) \]
along $\alpha \otimes \hat{A}$ of the universal $p$-divisible group $H$ over $A$. The divided Frobenius $F_{Z_{\hat{A}}(H_{\hat{A}}),1}$ on $Z_{\hat{A}}(H_{\hat{A}})$ (coming from the first part of the proof) therefore induces an operator $F_{Z_{\hat{S}}(G_{\hat{S}}),1}$ on $Z_{\hat{S}}(G_{\hat{S}})$. This operator $F_{Z_{\hat{S}}(G_{\hat{S}}),1}$ is compatible with the descent datum for the base change along the two natural maps $\hat{S} \to \hat{T}$, since the functor $Z_{\hat{B}}$ exists and is unique. By descent (\Cref{descent-for-displays}), this defines a display structure $\underline{Z}_R(G)$ on the triple $(Z_R(G), \mathrm{Fil} ~ Z_R(G), F_{Z_R(G)})$.

This display structure is uniquely determined by the requirement that it is compatible with the maps $R \to \hat{S}$, $\hat{S} \to \hat{A}$. In particular, it has to coincide with the composition of the prismatic Dieudonn\'e functor with the functor induced by the morphism of frames of \Cref{qrsp-p-tf-frames} also when $R$ is quasi-regular semiperfectoid and killed by $p$.        
\end{proof}

The functor of \Cref{sec:filt-prism-dieud-proposition-display-associated-to-p-divisible-group} is not an antiequivalence when $p=2$. Nevertheless, one has the following positive result, reproving the main result of \cite{zink_the_display_of_a_formal_p_divisible_group}, \cite{lau_displays_and_formal_p_divisible_groups} in the special case of quasi-syntomic rings.

\begin{proposition}
  \label{sec:filt-prism-dieud-equivalence-to-displays}
Let $R$ be a quasi-syntomic ring, flat over $\Z/p^n$ (for some $n>0$) or $\Z_p$. The functor $\underline{Z}_R$ restricts to an antiequivalence 
\[ \mathrm{BT}_f(R) \cong \mathrm{Disp}_{\rm nilp}(R) \]
between the category of formal $p$-divisible groups over $R$ and the category of $F$-nilpotent displays over $R$. 
\end{proposition}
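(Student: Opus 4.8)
The plan is to deduce this from the main equivalence \Cref{sec:divid-prism-dieud-modul-theorem-main-theorem-of-the-paper} (in its concrete form over quasi-regular semiperfectoid rings, after descent) together with \Cref{sec:filt-prism-dieud-proposition-display-associated-to-p-divisible-group}, by comparing the two sides through a common ``window'' description. The key point is that when $R$ is flat over $\Z/p^n$ or over $\Z_p$, the ring $W(R)$ is $p$-torsion free, so the divided Frobenius in a display is determined by the Frobenius; this rigidity is what makes the functor $\underline Z_R$ well-behaved. First I would reduce to the case where $R$ is quasi-regular semiperfectoid: both sides are stacks for the quasi-syntomic topology (\Cref{sec:fully-faithf-prism-proposition-bt-is-a-stack}, \Cref{descent-for-displays}, and the corresponding statement for the nilpotence condition, which is local), and the formation of $\underline Z_R$ commutes with base change by construction. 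So assume $R$ is quasi-regular semiperfectoid and flat over $\Z/p^n$ or $\Z_p$; in particular $\prism_R$, $W(R)$ and the relevant modules are $p$-torsion free, and by \Cref{qrsp-p-tf-frames} the map $\prism_R \to R$ lifts to a morphism of frames $\underline{\prism}_{R,\mathrm{Nyg}} \to \underline W(R)$.

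Next, via \Cref{sec:abstr-divid-prism-proposition-equivalence-divided-prismatic-dieudonn�-modules-windows} the category $\DF(R)$ is equivalent to $\mathrm{Win}(\underline{\prism}_{R,\mathrm{Nyg}})$, and $\mathrm{Disp}(R) = \mathrm{Win}(\underline W(R))$ by definition. So the statement becomes: base change along $\underline{\prism}_{R,\mathrm{Nyg}} \to \underline W(R)$ carries the essential image of $\mathrm{BT}_f(R)$ (under $\underline{\mathcal M}_\prism$) to $F$-nilpotent displays, and induces an antiequivalence there. The strategy I would follow is Lau's/Zink's: on the $\mathrm{BT}$ side, a $p$-divisible group over a ring in which $p$ is nilpotent is formal iff its associated display (equivalently, its Dieudonn\'e crystal modulo the Nygaard/Hodge filtration, or rather the ``$V$-nilpotence'' of the linearized Frobenius on the Lie algebra) is $F$-nilpotent; so the functor $\underline Z_R$ sends $\mathrm{BT}_f(R)$ into $\mathrm{Disp}_{\mathrm{nilp}}(R)$ essentially by tracking the nilpotence condition through the base change. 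For full faithfulness I would combine full faithfulness of $\underline{\mathcal M}_\prism$ (already known, \Cref{sec:fully-faithf-prism-fully-faithfulness-for-p-torsion-free-quasi-regular semiperfectoid}) with faithfulness of base change along $\prism_R \to W(R)$ — here one uses the argument of \Cref{sec:abstr-filt-prism-remark-faithfulness-on-windows-if-divided-frob-is-topologically-nilpotent}: the kernel $J$ of $\prism_R \to W(R)$ is contained in $\mathcal N^{\geq 1}\prism_R$, stable by $\varphi_1$, and $\varphi_1$ is topologically nilpotent on it (this needs a small computation with the explicit presentation of $\prism_R$, as in \Cref{sec:essent-surj-divided-varphi-topological-nilpotent-on-the-kernel}), and then fullness follows because both composites $\mathrm{BT}_f(R) \to \mathrm{Disp}_{\mathrm{nilp}}(R)$ and $\mathrm{BT}_f(R) \to \DF(R)$ are fully faithful with compatible targets.

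For essential surjectivity, given an $F$-nilpotent display $\underline Z$ over $R$, I would first lift it to a window over $\underline{\prism}_{R,\mathrm{Nyg}}$: pick a normal decomposition (\Cref{sec:abstr-divid-prism-proposition-existence-of-normal-decompositions}), lift the underlying projective modules and the Frobenius along $\prism_R \to W(R)$ using henselianity of $\prism_R$ along $\ker(\prism_R \to W(R))$ — this kernel is contained in $\mathcal N^{\geq 1}\prism_R$ on which $\varphi$ is topologically nilpotent, so one invokes (a variant of) \Cref{sec:essent-surj-lemma-equivalence-of-varphi-modules-over-delta-rings} and \Cref{sec:essent-surj-lemma-equivalence-on-window-categories} to see the window categories are equivalent, hence the lift is unique up to isomorphism. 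This produces $\underline{\mathcal M} \in \DF(R)$, which by the main theorem is $\underline{\mathcal M}_\prism(G)$ for a unique $p$-divisible group $G$; the $F$-nilpotence of $\underline Z = \underline Z_R(G)$ forces $G$ to be formal by the characterization above. \textbf{The main obstacle} I anticipate is precisely matching up the two nilpotence conditions: one must show that ``$G$ formal'' $\iff$ ``$\underline Z_R(G)$ is $F$-nilpotent'' $\iff$ ``the lifted window over $\underline{\prism}_{R,\mathrm{Nyg}}$ has the corresponding nilpotence'' — the first equivalence is essentially the classical criterion (reducing to $R/p$ where $W(R/p)$-displays are Zink's, and using that formality is detected on the special fibre / is a condition on the divided Verschiebung), but one has to be careful that the nilpotence condition is preserved under the frame morphism $\underline{\prism}_{R,\mathrm{Nyg}} \to \underline W(R)$ and under the henselian lifting, which is where the topological nilpotence of $\varphi_1$ on the relevant kernels is used in an essential way. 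The flatness hypothesis over $\Z/p^n$ or $\Z_p$ enters both to guarantee $p$-torsion freeness of $W(R)$ (making $\underline Z_R$ and the lifting well-defined and unique) and to be able to reduce statements about nilpotence to the reduction mod $p$.
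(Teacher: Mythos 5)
There is a genuine gap, and it lies exactly where your reduction sends the mixed-characteristic case through the frame morphism $\underline{\prism}_{R,\mathrm{Nyg}} \to \underline{W}(R)$. Your essential-surjectivity step lifts a display to a window over $\prism_R$ ``using henselianity of $\prism_R$ along $\ker(\prism_R \to W(R))$'', but when $R$ is a quasi-regular semiperfectoid ring flat over $\Z_p$ (or over $\Z/p^n$ with $n\geq 2$) the map $\prism_R \to W(R)$ is \emph{not} surjective (already for $R=\mathcal{O}_C$, the image of $A_{\inf}\to W(\mathcal{O}_C)$ has ghost coordinates of the constrained form $(\theta(x),\theta(\varphi(x)),\dots)$), so there is no surjection of frames to which \Cref{sec:essent-surj-lemma-equivalence-of-varphi-modules-over-delta-rings} or \Cref{sec:essent-surj-lemma-equivalence-on-window-categories} could be applied, and no explicit description of a kernel on which one could verify topological nilpotence of $\varphi_1$. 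Indeed, any argument producing an equivalence of the full window categories in mixed characteristic is doomed: over $\mathcal{O}_C$ one has $\DF(\mathcal{O}_C)\simeq \mathrm{BT}(\mathcal{O}_C)$, while displays only see formal $p$-divisible groups, so the base-change functor along the frame morphism cannot be essentially surjective onto all displays, and proving surjectivity onto the nilpotent ones directly by lifting is precisely what is unavailable. A side error with the same root: your opening claim that flatness over $\Z/p^n$ or $\Z_p$ makes $W(R)$ $p$-torsion free is false for $n<\infty$ (quasi-regular semiperfect rings are non-reduced, so $W(R)$ has $p$-torsion); the flatness hypothesis is not there for that purpose.

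The paper's actual route is different in the mixed-characteristic case and avoids the frame morphism there entirely. In characteristic $p$ (where your sketch is essentially aligned with the paper), one uses that $\prism_R\cong A_{\crys}(R)\to W(R)$ \emph{is} surjective, with kernel generated by the divided powers $[x]^{(n)}$ on which $\varphi_1([x]^{(n)})=\tfrac{(np)!}{n!p}[x]^{(np)}$ is topologically nilpotent, so \Cref{sec:essent-surj-lemma-equivalence-on-window-categories} gives an equivalence of the \emph{full} categories $\DF(R)\to \mathrm{Disp}(R)$, which then restricts to formal groups versus $F$-nilpotent displays; this descends to all quasi-syntomic $R$ with $pR=0$. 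For $R$ flat over $\Z/p^n$ ($n\geq 2$) or $\Z_p$, the paper instead considers the square formed by $\mathrm{BT}_f(R)\to\mathrm{BT}_f(R/p)$ and $\mathrm{Disp}_{\rm nilp}(R)\to\mathrm{Disp}_{\rm nilp}(R/p)$ and shows it is $2$-cartesian by Grothendieck--Messing theory on both sides (Messing for $p$-divisible groups, Zink's Theorem 48 for $F$-nilpotent displays, using the canonical divided powers on $pR$, which is where the flatness hypothesis enters), thereby reducing the antiequivalence to the characteristic-$p$ case. If you want to salvage your approach, you would have to supply a substitute for this deformation-theoretic step; as written, the mixed-characteristic half of your argument does not go through.
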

Recall that a display is said to be \textit{$F$-nilpotent} if its Frobenius is nilpotent modulo $p$. 
\begin{proof}
Assume first that $R$ is quasi-regular semiperfect. The functor $\underline{Z}_R$ is the composite of the prismatic Dieudonn\'e functor, which is an antiequivalence by \Cref{sec:divid-prism-dieud-modul-theorem-main-theorem-of-the-paper}, and of the functor induced by the morphism of frames
\[ (\prism_R \cong A_{\rm crys}(R), \mathcal{N}^{\geq 1} \prism_R, \varphi, \varphi_1) \to (W(R), I(R), F, F_1). \]
The morphism $\prism_R \to W(R)$ is surjective (\red{indeed, the composition $\prism_{R^\flat} \cong W(R^\flat) \to W(R)$ is surjective, since $R^\flat \to R$ is, and factors through the map $\prism_R \to W(R)$}). \green{We claim that the divided Frobenius is topologically nilpotent on its kernel. It suffices to show the same for the surjection $A_{\mathrm{crys}}(R)\to W(R)$ coming from the PD-thickening $W(R)\to R$. We recall that $A_{\mathrm{crys}}(R)$ is obtained from $W(R^\flat)$ by passing to the PD-envelope for the ideal $\mathrm{ker}\blue{( W(R^\flat)\to R)}$. This kernel is (topologically) generated by the elements $V^m([x])$ for $m\geq 0$ and $x\in I:=\mathrm{ker}(R^\flat\to R)$. If $m\geq 1$, then $V^m([x])\in W(R^\flat)$ already has divided powers. As $A_{\mathrm{crys}}(R)$ is $p$-torsion free (by quasi-regularity of $R$), we can conclude that $A_{\mathrm{crys}}(R)$ is (topologically) generated (as a module over $W(R^\flat)$) by the divided powers $[x]^{(n)}$ of $[x]$ for $x\in I$ (i.e., the divided powers of $V^m([x])$ for $m\geq 1$ are not necessary). We note that for $x\in I$ each divided power $[x]^{(n)}\in A_{\mathrm{crys}}(R)$ lies in the kernel of $A_{\crys}(R)\to W(R)$ because $[x]\in W(R^\flat)$ maps to $0\in W(R)$. Hence, we can conclude that the kernel of $A_{\crys}(R)\to W(R)$ is (topologically) generated by $V^m([x]), [x]^{(n)}$ for $x\in I$ and $n,m\geq 1$. Now $V^m([x])=p^m[x^{1/p^m}]$ and thus $\varphi_1^m(V^m([x]))=[x]$. Hence, it suffices to show that $\varphi_1$ is topologically nilpotent on the elements $[x]^{(n)}, n\geq 1, x\in I$.}
For such an element, one has
\[ \varphi_1([x]^{(n)}) = \frac{(np)!}{n!p} [x]^{(np)}. \]
Iterating, one sees that $\varphi_1$ is topologically nilpotent on the kernel (with respect to the $p$-adic topology). By \red{\Cref{sec:abstr-filt-prism-remark-faithfulness-on-windows-if-divided-frob-is-topologically-nilpotent}}, the functor
\[ \red{\DM^{\rm adm}(R)\cong \mathrm{Win}(\underline{\prism}_{R, \rm Nyg})} \to \mathrm{Disp}(R) \]
is an equivalence. It is easily seen that it restricts to an antiequivalence between formal $p$-divisible groups and $F$-nilpotent displays.

By quasi-syntomic descent, this yields the statement of the proposition when $R$ is quasi-syntomic with $pR=0$. In general, $R/p$ is quasi-syntomic (\cite[Lemma 4.16 (2)]{bhatt_morrow_scholze_topological_hochschild_homology}) and one can consider the following commutative diagram :
$$
\xymatrix{
  \mathrm{BT}(R) \ar[r]^-{\underline{Z}_R} \ar[d] & \mathrm{Disp}(R) \ar[d] \\
  \mathrm{BT}(R/p) \ar[r]^-{\underline{Z}_{R/p}} & \mathrm{Disp}(R/p).
}
$$  
Grothendieck-Messing theory for $F$-nilpotent displays (cf. \cite[Theorem 48]{zink_the_display_of_a_formal_p_divisible_group}) coupled with Grothendieck-Messing theory for $p$-divisible groups (cf.\ \cite[V (1.6)]{messing_the_crystals_associated_to_barsotti_tate_groups} and \cite[Corollary 97]{zink_the_display_of_a_formal_p_divisible_group}) show that this diagram is $2$-cartesian. Since $\underline{Z}_{R/p}$ is an antiequivalence, $\underline{Z}_R$ also is one.
\end{proof}

\subsection{\'Etale comparison for $p$-divisible groups}
\label{sec:etale-comparison-p}

Let $R$ be a quasi-syntomic ring and let $G$ be a $p$-divisible group over $R$.
In this section we show how the (dual of the) Tate module of the generic fiber of $R$, seen as a diamond (\cite[Definition 11.1]{scholze_etale_cohomology_of_diamonds}), can be recovered from the prismatic Dieudonn\'e crystal $\mathcal{M}_\prism(G)$ of $G$.

Let
$$
\mathcal{O}^{\pris}
$$
be the prismatic sheaf on $(R)_{\mathrm{qsyn}}$ and
$$
\mathcal{I}:=\mathcal{I}^{\mathrm{pris}}\subseteq \mathcal{O}^{\pris}
$$
the natural invertible $\mathcal{O}^\pris$-module (cf.\ \Cref{sec:abstr-divid-prism-definition-of-the-sheaves}).
Fix $n\geq 0$.
Note that the Frobenius
$$
\varphi\colon \mathcal{O}^{\pris}\to \mathcal{O}^\pris
$$
induces a morphism, again called Frobenius,
$$
\varphi\colon \mathcal{O}^\pris/p^n[1/\mathcal{I}]\to \mathcal{O}^\pris/p^n[1/\mathcal{I}]
$$
as $\varphi(\mathcal{I})\subseteq (p,\mathcal{I})$ although $\mathcal{I}$ is not stable under $\varphi$.

We let
$$
(R)_{v}
$$
be the $v$-site of all maps $\Spf(S)\to \Spf(R)$ with $S$ a perfectoid ring over $R$. By definition the coverings in $(R)_v$ are $v$-covers $\Spf(S^\prime)\to \Spf(S)$ (cf.\  \cite[Section 8.1]{bhatt_scholze_prisms_and_prismatic_cohomology}).
Let
$$
(R)_{\mathrm{qsyn},\mathrm{qrsp}}
$$
be the site of all maps $\Spf(S)\to \Spf(R)$ with $S$ quasi-regular semiperfectoid (covers given by quasisyntomic covers). The perfectoidization functor
$$
S\mapsto S_{\mathrm{perfd}}
$$
from \cite[Definition 8.2]{bhatt_scholze_prisms_and_prismatic_cohomology} induces a \red{morphism of sites}
$$
\alpha\colon (R)_{v}\to (R)_{\mathrm{qsyn},\mathrm{qrsp}}
$$
sending $\Spf(S)$ to $\Spf(S_{\mathrm{perfd}})$. Indeed, by \cite[Proposition 8.10]{bhatt_scholze_prisms_and_prismatic_cohomology} and the fact that quasi-syntomic covers are $v$-covers the conditions of \cite[Tag 00WV]{stacks_project} are satisfied. 
Moreover, we have the ``inclusion of the generic fiber''
$$
j\colon \Spa(R[1/p],R)_{v}^\diamond\to (R)_v
$$
induced by sending $\Spf(S)$ to $\Spa(S[1/p],S)$\footnote{We use the notation $\Spa(S[1/p],S)$ when $S$ is not necessarily integrally closed in $S[1/p]$.}. Here $\Spa(R[1/p],R)^\diamond_v$ is the $v$-site of the diamond associated with $\Spa(R[1/p],R)$ (cf.\ \cite[Section 15.1]{scholze_etale_cohomology_of_diamonds}, \cite[Definition 14.1.iii)]{scholze_etale_cohomology_of_diamonds}).

\red{The sites $(R)_v$, $(\Spa(R[1/p],R))_v$ carry tilted structure sheaves $\mathcal{O}^\flat_{(R)_v}$, $\mathcal{O}^\flat$ sending $S\in (R)_v$ to $S^\flat$ resp.\ $\Spa(S,S^+)\in (\Spa(R[1/p],R))_v$ to $S^\flat$. We let $W(\mathcal{O}^\flat_{(R)_v})$ resp.\ $W(\mathcal{O}^\flat)$ be the associated Witt vector sheaves. It is easy to see that for every $n\geq 1$ there are natural morphism $\mathcal{O}^{\mathrm{pris}}/p^n\to \alpha_\ast(W_n(\mathcal{O}^{\flat}_{(R)_v}))$, $\mathcal{O}^\pris/p^n[1/\mathcal{I}]\to (\alpha\circ j)_{\ast}(W_n(\mathcal{O}^\flat))$. }

\begin{lemma}
  \label{sec:etale-comparison-p-1-frobenius-fixed-points-of-the-prismatic-structure-sheaf} \red{The above morphisms induces} natural isomorphisms
  $$
  \alpha_\ast(\Z/p^n)\cong (\mathcal{O}^\pris/p^n)^{\varphi=1}
  $$
  and
  $$
  (\alpha\circ j)_\ast(\Z/p^n)\cong (\mathcal{O}^\pris/p^n[1/\mathcal{I}])^{\varphi=1}
  $$
  of sheaves on $(R)_{\mathrm{qsyn},\mathrm{qrsp}}$ \red{after passing to $\varphi$-fixed points}.
\end{lemma}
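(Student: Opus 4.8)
The plan is to compute both pushforwards by evaluating the sheaves on quasi-regular semiperfectoid rings $S$, which form a basis of $(R)_{\mathrm{qsyn},\mathrm{qrsp}}$, and to identify the stalks in terms of the prism $\prism_S$ and its generic fiber. First I would recall that for $S$ quasi-regular semiperfectoid the site $(S)_\prism$ has a final object $(\prism_S, I)$, so $\mathcal{O}^\pris(S) = \prism_S$, $\mathcal{I}(S) = I = (\tilxi)$, and hence $(\mathcal{O}^\pris/p^n)^{\varphi=1}(S) = (\prism_S/p^n)^{\varphi=1}$ and $(\mathcal{O}^\pris/p^n[1/\mathcal{I}])^{\varphi=1}(S) = (\prism_S/p^n[1/\tilxi])^{\varphi=1}$ (using that taking $\varphi$-invariants and the $v_\ast$ defining $\mathcal{O}^\pris$ are both limits, so they commute with evaluation on $S$; one must be slightly careful that sheafification does not change the sections on a basis, which is standard).

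Next I would unwind the two pushforwards. For $\alpha_\ast(\Z/p^n)$, by definition of the pushforward along the continuous functor $\alpha\colon (R)_v \to (R)_{\mathrm{qsyn},\mathrm{qrsp}}$ sending $\Spf(S)$ to $\Spf(S_{\mathrm{perfd}})$, the sections on $S$ are $R\Gamma_v(\Spf(S_{\mathrm{perfd}}), \Z/p^n)$ in degree $0$, i.e. the locally constant $\Z/p^n$-valued functions on the topological space underlying $\Spf(S_{\mathrm{perfd}})$, equivalently $\mathrm{Cont}(\pi_0(\Spf S_{\mathrm{perfd}}), \Z/p^n)$. On the other hand, for $S_{\mathrm{perfd}}$ a perfectoid ring with associated perfect prism $(A_{\mathrm{inf}}(S_{\mathrm{perfd}}), \ker\tilde\theta)$, one has $\prism_{S_{\mathrm{perfd}}} = A_{\mathrm{inf}}(S_{\mathrm{perfd}})$, and the Artin--Schreier-type sequence for the perfect $\delta$-ring $A_{\mathrm{inf}}(S_{\mathrm{perfd}})$ modulo $p^n$ identifies $(A_{\mathrm{inf}}(S_{\mathrm{perfd}})/p^n)^{\varphi=1}$ with $\mathrm{Cont}(\mathrm{Spec}(S_{\mathrm{perfd}}^\flat), \Z/p^n) = \mathrm{Cont}(\pi_0, \Z/p^n)$. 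Finally I would invoke the prismatic comparison $\prism_S/p^n = \prism_{S_{\mathrm{perfd}}}/p^n$ after taking $\varphi$-invariants — more precisely, that the canonical map $\prism_S \to \prism_{S_{\mathrm{perfd}}} = A_{\mathrm{inf}}(S_{\mathrm{perfd}})$ induces an isomorphism on $\varphi$-invariants modulo $p^n$ — this is the content of \cite[Theorem 9.1]{bhatt_scholze_prisms_and_prismatic_cohomology} (perfectoidization computes $\varphi$-invariants of the structure sheaf). Assembling these identifications and checking functoriality in $S$ gives the first isomorphism. For the second, $(\alpha\circ j)_\ast(\Z/p^n)(S)$ is the $\Z/p^n$-local cohomology of the generic fiber diamond $\Spa(S_{\mathrm{perfd}}[1/p], S_{\mathrm{perfd}})^\diamond$, and the étale comparison theorem for the structure sheaf of the prismatic site (\cite[Theorem 9.1]{bhatt_scholze_prisms_and_prismatic_cohomology}, inverting $\tilxi$ to pass to the generic fiber, together with the fact that $R\nu_\ast \Z/p^n$ on the generic fiber diamond is computed by $(\mathcal{O}_\prism/p^n[1/\mathcal{I}])^{\varphi=1}$) identifies it with $(A_{\mathrm{inf}}(S_{\mathrm{perfd}})/p^n[1/\tilxi])^{\varphi=1} = (\prism_S/p^n[1/\tilxi])^{\varphi=1}$, using again that inverting $\tilxi$ and taking $\varphi$-invariants modulo $p^n$ are insensitive to passing from $\prism_S$ to its perfectoidization.

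The main obstacle I expect is the bookkeeping needed to make these comparisons genuinely sheaf-theoretic and functorial: one has to check that the perfectoidization functor $\alpha$ sends $v$-covers to quasi-syntomic covers compatibly (already noted in the excerpt via \cite[Proposition 8.10]{bhatt_scholze_prisms_and_prismatic_cohomology}), that evaluating the sheaf $(\mathcal{O}^\pris/p^n)^{\varphi=1}$ on $S$ really does give $(\prism_S/p^n)^{\varphi=1}$ rather than something that only agrees after further sheafification (here one uses that $\mathcal{O}^\pris$ already has no higher cohomology on quasi-regular semiperfectoid rings, so the $\varphi=1$ equalizer is exact on sections), and that the higher direct images vanish so that the displayed isomorphisms live in degree $0$ — the latter following from the acyclicity of $\mathcal{O}_\prism$ on the relevant basis and the exactness of the Artin--Schreier sequence after inverting $p$ or $\tilxi$. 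Everything else is a routine transcription of the Bhatt--Scholze étale comparison theorem into the present notation.
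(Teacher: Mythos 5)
Your overall route is the same as the paper's: evaluate on the basis of quasi-regular semiperfectoid rings $S$, use perfectoidization to pass from $\prism_S$ to $A_\inf(S_{\mathrm{perfd}})$ without changing $\varphi$-invariants, and identify those invariants modulo $p^n$ (resp.\ with $\tilxi$ inverted) with continuous $\Z/p^n$-valued functions on $\pi_0$ of $\Spf(S_{\mathrm{perfd}})$ (resp.\ of $\Spa(S_{\mathrm{perfd}}[1/p],S_{\mathrm{perfd}})$), which are exactly the sections of $\alpha_\ast(\Z/p^n)$ (resp.\ $(\alpha\circ j)_\ast(\Z/p^n)$) over $S$. The paper carries out the last identification via \cite[Lemmas 9.2 and 9.3]{bhatt_scholze_prisms_and_prismatic_cohomology}, tilting and Kedlaya--Liu rather than by quoting Theorem 9.1, but that is only a difference in citation, not in substance.

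The one step where your justification is too thin is the initial identification $(\mathcal{O}^\pris/p^n[1/\mathcal{I}])^{\varphi=1}(S)\cong(\prism_S/p^n[1/\tilxi])^{\varphi=1}$. Acyclicity of $\mathcal{O}^\pris$ on quasi-regular semiperfectoid rings does not by itself give this: $\mathcal{O}^\pris/p^n$ is a sheaf-theoretic quotient, and without knowing that $\mathcal{O}^\pris$ is $p$-torsion free (which is not asserted, and not known in general for $\prism_S$ with $S$ quasi-regular semiperfectoid) the comparison map $\prism_S/p^n\to(\mathcal{O}^\pris/p^n)(S)$ need not be an isomorphism; one would have to control $H^1$ of the image subsheaf $p^n\mathcal{O}^\pris$, which vanishing of higher cohomology of $\mathcal{O}^\pris$ alone does not do. The paper's proof is arranged precisely to sidestep this: it first replaces $\mathcal{O}^\pris$ by the Frobenius colimit $\varinjlim_{\varphi}\mathcal{O}^\pris$, noting that this does not change $\varphi$-fixed points (a kernel commutes with this filtered colimit) and that the colimit sheaf is $p$-torsion free with no higher cohomology over $S$, so that its mod-$p^n$ sections (with $\mathcal{I}$ inverted) are the expected ones; only then does it invoke \cite[Lemma 9.2]{bhatt_scholze_prisms_and_prismatic_cohomology} to pass to $A_\inf(S_{\mathrm{perfd}})$. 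With this detour inserted, your argument coincides with the paper's.
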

Here $(-)^{\varphi=1}$ denotes the (non-derived) invariants of $\varphi$ on the sheaf $\mathcal{O}^\pris/p^n[1/\mathcal{I}]$, \red{and we use that $W_n(\mathcal{O}^\flat_{(R)_v})\cong \Z/p^n$, $W_n(\mathcal{O}^\flat)\cong \Z/p^n$ as will be explained in the proof}.
\begin{proof}
  We only prove the second statement. The first is similar (but easier). Let $S$ be a quasi-regular semiperfectoid $R$-algebra. Then
  $$
(\mathcal{O}^\pris/p^n[1/\mathcal{I}])^{\varphi=1}(S) \cong (\varinjlim\limits_{\varphi}\mathcal{O}^\pris/p^n[1/\mathcal{I}])^{\varphi=1}(S)\cong (\varinjlim\limits_{\varphi}\mathcal{O}^\pris(S)/p^n\red{[1/\mathcal{I}]})^{\varphi=1}.
$$
The first isomorphism follows from commuting \green{Frobenius fixed points the filtered colimit over $\N$ along $\varphi$} and the second as \red{$\varinjlim\limits_{\varphi}\mathcal{O}^\pris$ is $p$-torsion free (cf. \cite[Proof of Lemma 2.28]{bhatt_scholze_prisms_and_prismatic_cohomology}) and $S$ is quasi-regular semiperfectoid (which implies that the sheaf $\varinjlim\limits_{\varphi}\mathcal{O}^\pris$ has no higher cohomology over $S$)}.
Then \cite[Lemma 9.2]{bhatt_scholze_prisms_and_prismatic_cohomology} implies that
$$
(\varinjlim\limits_{\varphi}\mathcal{O}^\pris(S)/p^n\red{[1/\mathcal{I}]})^{\varphi=1}\cong (A_{\inf}(S_{\mathrm{perfd}})/p^n[1/\mathcal{I}])^{\varphi=1}.
$$
By \cite[Lemma 9.3]{bhatt_scholze_prisms_and_prismatic_cohomology}, the equivalence of underlying topological spaces under tilting of perfectoid spaces, \cite[Theorem 7.1.1]{scholze2020berkeley}, and \cite[Proposition 3.2.7]{kedlaya_liu_relative_p_adic_hodge_theory_foundations} the right-hand side becomes
$$
W_n((S_{\mathrm{perfd}}[1/p])^\flat)^{\varphi=1}\cong \mathrm{Hom}_{\mathrm{cts}}(\pi_0(\Spa(S_{\mathrm{perfd}}[1/p],S_{\mathrm{perfd}})),\Z/p^n),
$$
which agrees with
$$
(\alpha\circ j)_\ast(\Z/p^n)(S).
$$
This finishes the proof.
\end{proof}

We can derive the following description of the Tate module of the generic fiber.

\begin{proposition}
  \label{sec:etale-comparison-p-2-etale-comparison-for-prismatic-dieudonne-module-of-p-divisible-group}
  Let $G$ be a $p$-divisible group over $R$ with prismatic Dieudonn\'e crystal $\mathcal{M}_\prism(G)$ and let $n\geq 0$. Then
  $$
  j^\ast\alpha^\ast(\mathcal{M}_{\prism}(G)/p^n[1/\mathcal{I}]^{\varphi=1})
  $$
  is canonically isomorphic to $\mathcal{H}om_{\Z/p^n}(G[p^n]_{\eta},\Z/p^n)$ where $G[p^n]_\eta$ denotes the sheaf $\Spa(S[1/p],S)\mapsto G[p^n](S[1/p])$ on $\Spa(R[1/p],R)^\diamond_v$.
\end{proposition}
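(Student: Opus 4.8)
\textbf{Proof strategy for \Cref{sec:etale-comparison-p-2-etale-comparison-for-prismatic-dieudonne-module-of-p-divisible-group}.}
The plan is to combine the Hom-description of the prismatic Dieudonn\'e crystal from \Cref{sec:divid-prism-dieud-lemma-prismatic-dieudonne-module-via-hom} with the identification of Frobenius fixed points of the (localized) prismatic structure sheaf from \Cref{sec:etale-comparison-p-1-frobenius-fixed-points-of-the-prismatic-structure-sheaf}, all performed on the basis of quasi-regular semiperfectoid rings. First I would recall that, by \Cref{sec:divid-prism-dieud-lemma-prismatic-dieudonne-module-via-hom}, there is a canonical isomorphism $\mathcal{M}_\prism(G)\cong \mathcal{H}om_{(R)_{\mathrm{qsyn}}}(T_pG,\mathcal{O}^\pris)$, whence $\mathcal{M}_\prism(G)/p^n\cong \mathcal{H}om_{(R)_{\mathrm{qsyn}}}(G[p^n],\mathcal{O}^\pris/p^n)$, using that $G[p^n]=T_pG/p^n$ and that $\mathcal{M}_\prism(G)$ is finite locally free over $\mathcal{O}^\pris$ (\Cref{sec:divid-prism-dieud-proposition-ext-crystal}), so that reduction mod $p^n$ commutes with the internal Hom. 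Inverting the prismatic Cartier divisor $\mathcal{I}$ and taking $\varphi$-invariants then gives, since $G[p^n]$ is killed by $p^n$ so that $\mathcal{H}om$ into a sheaf with an action of $\varphi$ is computed termwise,
$$
(\mathcal{M}_\prism(G)/p^n[1/\mathcal{I}])^{\varphi=1}\cong \mathcal{H}om_{(R)_{\mathrm{qsyn}}}\bigl(G[p^n],(\mathcal{O}^\pris/p^n[1/\mathcal{I}])^{\varphi=1}\bigr).
$$

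Next I would transport this along the morphisms of sites. Since quasi-regular semiperfectoid rings form a basis of $(R)_{\mathrm{qsyn}}$ (\Cref{sec:quasi-syn-rings-proposition-quasi-regular semiperfectoid-basis}) and $\mathcal{M}_\prism(G)$ is a crystal, it suffices to compute everything on $(R)_{\mathrm{qsyn},\mathrm{qrsp}}$; then apply $\alpha^\ast$ and $j^\ast$. By \Cref{sec:etale-comparison-p-1-frobenius-fixed-points-of-the-prismatic-structure-sheaf}, the sheaf $(\mathcal{O}^\pris/p^n[1/\mathcal{I}])^{\varphi=1}$ on $(R)_{\mathrm{qsyn},\mathrm{qrsp}}$ is $(\alpha\circ j)_\ast(\Z/p^n)$, i.e. the pushforward of the constant sheaf $\Z/p^n$ along $\alpha\circ j$. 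Using the adjunction $(\alpha\circ j)^\ast \dashv (\alpha\circ j)_\ast$, pulling back the displayed isomorphism along $j^\ast\alpha^\ast$ should yield
$$
j^\ast\alpha^\ast\bigl((\mathcal{M}_\prism(G)/p^n[1/\mathcal{I}])^{\varphi=1}\bigr)\cong \mathcal{H}om_{\Z/p^n}\bigl((j^\ast\alpha^\ast G[p^n]),\Z/p^n\bigr),
$$
provided one checks that $\alpha^\ast$ (resp. $j^\ast$) carries the internal Hom over $(R)_{\mathrm{qsyn},\mathrm{qrsp}}$ (resp. over $(R)_v$) into the corresponding internal Hom on the target site — this is formal since $G[p^n]$ is representable by a finite locally free group scheme, hence its pullback is again representable and the Hom-sheaf formation commutes with the (exact) pullback functors. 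Finally I would identify $j^\ast\alpha^\ast G[p^n]$ with the sheaf $G[p^n]_\eta$ on $\Spa(R[1/p],R)^\diamond_v$: the functor $\alpha$ sends $\Spf(S)$ to $\Spf(S_{\mathrm{perfd}})$, $j$ sends this to $\Spa(S_{\mathrm{perfd}}[1/p],S_{\mathrm{perfd}})$, and since $G[p^n]$ is finite over $R$ its value on $S_{\mathrm{perfd}}[1/p]$ agrees with its value on $S[1/p]$ (the diamond $\Spa(S[1/p],S)^\diamond$ only depends on the tilt of the perfectoidization of $S$, cf. \cite[Lemma 9.3]{bhatt_scholze_prisms_and_prismatic_cohomology} and \cite[Theorem 7.1.1]{scholze_weinstein_berkeley_new_version}); thus $j^\ast\alpha^\ast G[p^n]\cong G[p^n]_\eta$ as $v$-sheaves on $\Spa(R[1/p],R)^\diamond$.

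The main obstacle I anticipate is the bookkeeping around the composite pullback $j^\ast\alpha^\ast$ and the interaction of $(-)^{\varphi=1}$, $(-)[1/\mathcal{I}]$, and $(-)/p^n$ with the various $\mathcal{H}om$ sheaves: one must be careful that taking $\varphi$-invariants (a non-derived, left-exact operation) commutes with applying $\mathcal{H}om_{(R)_{\mathrm{qsyn}}}(G[p^n],-)$, which uses crucially that $G[p^n]$ is killed by $p^n$ and that, after inverting $\mathcal{I}$, the relevant sheaf is $p$-torsion free enough for the computation in \Cref{sec:etale-comparison-p-1-frobenius-fixed-points-of-the-prismatic-structure-sheaf} to apply termwise over quasi-regular semiperfectoid rings. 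The rest — base change of crystals to the qrsp basis, representability-preservation of pullbacks, and the diamond identification — is routine given the machinery already set up in the excerpt.
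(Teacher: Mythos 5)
Your proposal follows essentially the same route as the paper's proof: the Hom-description of $\mathcal{M}_\prism(G)$ via $T_pG$, reduction mod $p^n$, inverting $\mathcal{I}$ and taking $\varphi$-invariants, the identification of $(\mathcal{O}^\pris/p^n[1/\mathcal{I}])^{\varphi=1}$ with $(\alpha\circ j)_\ast(\Z/p^n)$ from \Cref{sec:etale-comparison-p-1-frobenius-fixed-points-of-the-prismatic-structure-sheaf}, adjunction, and the identification $(\alpha\circ j)^\ast G[p^n]\cong G[p^n]_\eta$, which the paper isolates as \Cref{sec:etale-comparison-p-1-pullback-of-finite-flat-group-scheme}. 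The only cosmetic difference is that the paper justifies $\mathcal{H}om(T_pG,\mathcal{O}^\pris)/p^n\cong\mathcal{H}om(G[p^n],\mathcal{O}^\pris/p^n)$ and the commutation of $\mathcal{H}om(G[p^n],-)$ with the filtered colimit defining $[1/\mathcal{I}]$ by the arguments in the proof of \Cref{sec:divid-prism-dieud-proposition-ext-crystal} and the resolution of \Cref{sec:calc-ext-groups}, rather than by finite local freeness of $\mathcal{M}_\prism(G)$ alone.
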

\begin{proof}
Set $\mathcal{M}:=\mathcal{M}_\prism(G)$. By \Cref{sec:divid-prism-dieud-lemma-prismatic-dieudonne-module-via-hom}
  $$
  \mathcal{M}\cong \mathcal{H}om_{(R)_{\mathrm{qsyn},\mathrm{qrsp}}}(T_pG,\mathcal{O}^\pris).
  $$
  From the proof of \Cref{sec:divid-prism-dieud-proposition-ext-crystal} we can conclude that
\begin{equation*}
\begin{split}
  \mathcal{H}om_{(R)_{\mathrm{qsyn},\mathrm{qrsp}}}(T_pG,\mathcal{O}^\pris)/p^n & \cong  \mathcal{H}om_{(R)_{\mathrm{qsyn},\mathrm{qrsp}}}(T_pG,\mathcal{O}^\pris/p^n) \\
     &   \cong  \mathcal{H}om_{(R)_{\mathrm{qsyn},\mathrm{qrsp}}}(G[p^n],\mathcal{O}^\pris/p^n).
\end{split}
\end{equation*}
  It follows that
  $$
  \mathcal{M}/p^n[1/\mathcal{I}]\cong \mathcal{H}om_{(R)_{\mathrm{qsyn},\mathrm{qrsp}}}(G[p^n],\mathcal{O}^\pris/p^n[1/\mathcal{I}])
  $$
  as using \Cref{sec:calc-ext-groups} the functor $\mathcal{H}om_{(R)_{\mathrm{qsyn},\mathrm{qrsp}}}(G[p^n],-)$ commutes with filtered colimits.
  Finally,
  $$
\mathcal{M}/p^n[1/\mathcal{I}]^{\varphi=1}\cong \mathcal{H}om_{(R)_{\mathrm{qsyn},\mathrm{qrsp}}}(G[p^n],\mathcal{O}^\pris/p^n[1/\mathcal{I}]^{\varphi=1}).
$$
By \Cref{sec:etale-comparison-p-1-frobenius-fixed-points-of-the-prismatic-structure-sheaf}
$$
\mathcal{O}^\pris/p^n[1/\mathcal{I}]^{\varphi=1}\cong (\alpha\circ j)_{\ast}(\Z/p^n)
$$
and thus
\begin{equation*}
\begin{split}
\mathcal{M}/p^n[1/\mathcal{I}]^{\varphi=1} &\cong  \mathcal{H}om_{(R)_{\mathrm{qsyn},\mathrm{qrsp}}}(G[p^n],(\alpha\circ j)_\ast(\Z/p^n)) \\
& \cong (\alpha\circ j)_{\ast}(\mathcal{H}om_{\Z/p^n}((\alpha\circ j)^\ast G[p^n],\Z/p^n))). 
\end{split}
\end{equation*}
\red{The definitions of $\alpha$ and $j$ imply that for any sheaf $\mathcal{F}$ on $(R)_{\mathrm{qsyn, qrsp}}$ the non-sheafified pullback $(\alpha\circ j)^{-1}\mathcal{F}$ is the presheaf $\Spa(S,S^+) \mapsto \mathcal{F}(\Spf(S^+))$. In particular, we see that
\[
  (\alpha\circ j)^\ast\circ (\alpha\circ j)_\ast
\]
is naturally isomorphic to the identity. We obtain thus
$$
(\alpha\circ j)^\ast \mathcal{M}/p^n[1/\mathcal{I}]^{\varphi=1}\cong\mathcal{H}om_{\Z/p^n}((\alpha\circ j)^\ast G[p^n],\Z/p^n).
$$
and can now conclude by} \Cref{sec:etale-comparison-p-1-pullback-of-finite-flat-group-scheme}.
\end{proof}

\begin{lemma}
  \label{sec:etale-comparison-p-1-pullback-of-finite-flat-group-scheme}
  With the notations from \Cref{sec:etale-comparison-p-2-etale-comparison-for-prismatic-dieudonne-module-of-p-divisible-group},
  $$
  (\alpha\circ j)^\ast G[p^n]\cong G[p^n]_\eta.
  $$
\end{lemma}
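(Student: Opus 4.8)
The plan is to unwind the definitions of the pullback functors $\alpha^\ast$ and $j^\ast$ and to compute everything explicitly, exploiting that $G[p^n]$ is representable. Write $G[p^n]=\Spec(B)$, where $B$ is a finite locally free $R$-algebra (the affine Hopf algebra of the finite locally free group scheme $G[p^n]$). As $B$ is a finite projective $R$-module it is $p$-adically complete, so on $(R)_{\mathrm{qsyn},\mathrm{qrsp}}$ the sheaf $G[p^n]$ is $\Spf(S)\mapsto \Hom_{R}(B,S)$.

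First I would identify $\alpha^\ast G[p^n]$ as a sheaf on $(R)_v$. Since perfectoidization is the identity on perfectoid rings, unwinding the definition of $\alpha^\ast$ shows that $\alpha^\ast G[p^n]$ is the $v$-sheafification of the presheaf $\Spf(S)\mapsto \Hom_{R}(B,S_{\mathrm{perfd}})=\Hom_{R}(B,S)$ on $(R)_v$. This presheaf is in fact already a $v$-sheaf: a presentation of $B$ as an $R$-algebra of finite presentation exhibits $\Hom_{R}(B,-)$ as a finite limit of copies of the structure sheaf $\mathcal{O}\colon \Spf(S)\mapsto S$, which satisfies $v$-descent on perfectoid rings (cf.\ \cite{scholze_etale_cohomology_of_diamonds}, \cite{bhatt_scholze_prisms_and_prismatic_cohomology}). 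Thus $\alpha^\ast G[p^n]$ is represented on $(R)_v$ by the $p$-adic formal scheme $\Spf(B)$ over $\Spf(R)$, i.e.\ it is the $v$-sheaf $\Spf(S)\mapsto \Hom_{R}(B,S)$.

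Next I would compute $j^\ast$ of this sheaf. A $v$-sheaf on $\Spa(R[1/p],R)^\diamond_v$ is determined by its values on affinoid perfectoid spaces $\Spa(T,T^{+})$ over $\Spa(R[1/p],R)^\diamond$ (the $\Spa(S[1/p],S)$ with $S$ perfectoid over $R$ forming such a basis). Since $j^\ast$ of a representable sheaf is represented by the adic generic fibre of the corresponding formal scheme, $(\alpha\circ j)^\ast G[p^n]=j^\ast\bigl(\Spf(B)\bigr)$ is the diamond avatar of $(\Spf B)_\eta$ over $\Spa(R[1/p],R)$; concretely, for $\Spa(T,T^{+})$ as above,
$$\bigl((\alpha\circ j)^\ast G[p^n]\bigr)\bigl(\Spa(T,T^{+})\bigr)=\Hom_{R}(B,T^{+})=\Hom_{R}(B,T).$$
Here the first equality holds because a morphism $\Spa(T,T^{+})\to (\Spf B)_\eta$ over $\Spa(R[1/p],R)$ is the same as a continuous $R[1/p]$-algebra map $B[1/p]\to T$ carrying $B$ into $T^{+}$, equivalently (as $p$ is invertible in $T$) an $R$-algebra map $B\to T$ carrying $B$ into $T^{+}$; the second equality holds because $B$ is integral over $R$, the composite $R\to T$ factors through $T^{+}$, and $T^{+}$ is integrally closed in $T$, so that any $R$-algebra map $B\to T$ automatically lands in $T^{+}$.

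Finally, by definition $G[p^n]_\eta(\Spa(T,T^{+}))=G[p^n](T)=\Hom_{R[1/p]}(B[1/p],T)=\Hom_{R}(B,T)$, which is exactly the group computed in the previous step; specialising $(T,T^{+})=(S[1/p],S)$ recovers the description in the statement. These identifications are visibly natural in $\Spa(T,T^{+})$ and compatible with the group laws, hence assemble into the desired isomorphism $(\alpha\circ j)^\ast G[p^n]\cong G[p^n]_\eta$. The one point requiring care is the identification of $j^\ast$ applied to the representable sheaf $\Spf(B)$ with the adic generic fibre $(\Spf B)_\eta$ — i.e.\ the passage between the ``formal'' site $(R)_v$ and the ``analytic'' site $\Spa(R[1/p],R)^\diamond_v$; this is standard in the theory of diamonds, and as indicated it reduces to the elementary computation of $\Hom$-sets above together with the integral closedness of $T^{+}$ in $T$.
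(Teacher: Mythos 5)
Your proof is correct, but it follows a genuinely different route from the paper's. The paper does not handle $G[p^n]$ directly: since the Hopf algebra $B$ of $G[p^n]$ is not quasi-regular semiperfectoid, $G[p^n]$ is not representable by an object of $(R)_{\mathrm{qsyn},\mathrm{qrsp}}$, so the paper first uses right exactness of $(\alpha\circ j)^\ast$ and $T_pG/p^n\cong G[p^n]$ to reduce to proving $(\alpha\circ j)^\ast T_pG\cong T_pG_\eta$; after passing to slice topoi to assume $R$ perfectoid, $T_pG$ \emph{is} represented by a quasi-regular semiperfectoid ring $S$, so its pullback is tautologically represented by $\Spa(S_{\mathrm{perfd}}[1/p],S_{\mathrm{perfd}}^+)$, and the points over an affinoid perfectoid $(T,T^+)$ are then computed by the same integrality/integral-closedness argument you use. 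You instead compute the pullback of the non-representable sheaf $G[p^n]$ directly: $\alpha^\ast G[p^n]$ is the restriction of $G[p^n]$ to perfectoid test rings (already a $v$-sheaf because $B$ is a finitely presented $R$-algebra and $\mathcal{O}$ satisfies $v$-descent on perfectoid rings, resp.\ perfectoid spaces), and $j^\ast$ is evaluated on affinoid perfectoids, where integrality of $B$ over $R$ and integral closedness of $T^+$ give $\Hom_R(B,T^+)=\Hom_R(B,T)=G[p^n]_\eta(\Spa(T,T^+))$. Your approach buys a shorter argument with no reduction to $T_pG$ and no reduction to a perfectoid base; its cost is that the steps you label ``unwinding the definition of $\alpha^\ast$'' and ``$j^\ast$ of a representable sheaf is the adic generic fibre'' are exactly where the content sits: the left Kan extension colimits defining the pullbacks collapse because the continuous functors defining $\alpha$ and $j$ admit left adjoints, namely the inclusion of perfectoid rings into quasi-regular semiperfectoid ones (this is the universal property of perfectoidization: maps from a quasi-regular semiperfectoid ring to a perfectoid ring factor uniquely through its perfectoidization) and $\Spa(T,T^+)\mapsto \Spf(T^+)$ (your integral-closedness computation). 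This is the same ingredient the paper invokes when it asserts that $(\alpha\circ j)^\ast T_pG$ is represented by $\Spa(S_{\mathrm{perfd}}[1/p],S_{\mathrm{perfd}}^+)$, so your level of detail is comparable to the paper's; one sentence making the collapse of these colimits explicit would make your argument airtight.
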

\begin{proof}
  By right exactness of $(\alpha\circ j)^\ast$, it suffices to show
  $$
  (\alpha\circ j)^\ast T_pG\cong T_pG_{\eta}.
  $$
  Moreover, we may assume that $R$ is perfectoid by passing to slice topoi. Let $S$ be the $R$-algebra representing $T_pG$ on $p$-complete rings. Thus $S$ is the $p$-completion of $\varinjlim\limits_m S_m$ where $S_m$ represents $G[p^m]$. Then $S$ is quasi-regular semiperfectoid. By definition $(\alpha\circ j)^{\ast}T_pG$ is represented by the perfectoid space
  $$
  \Spa(S_{\mathrm{perfd}}[1/p],S_{\mathrm{perfd}}^+)
  $$
  over $\Spa(R[1/p],R)$
  where $S_{\mathrm{perfd}}^+$ is the integral closure of $S_{\mathrm{perfd}}$ in $S_{\mathrm{perfd}}[1/p]$. Let $\Spa(T,T^+)$ be an affinoid perfectoid space over $\Spa(R[1/p],R)$, in particular we assume that $T^+$ is integrally closed in $T=T^+[1/p]$. Then any morphism $S_{\mathrm{perfd}}[1/p]\to T$ sends $S_{\mathrm{perfd}}^+\to T^+$ because $S$ is a $p$-completed direct limit of finite $R$-algebras and $T^+$ is perfectoid and integrally closed in $T$. Thus
\begin{equation*}
  \begin{split}  
    \mathrm{Hom}_{(R[1/p],R)}((S_{\mathrm{perfd}}[1/p],S_{\mathrm{perfd}}^+),(T,T^+)) & \cong  \mathrm{Hom}_{R}(S_{\mathrm{perfd}}^+,T^+) \\
  &  \cong  \mathrm{Hom}_R(S,T^+) \\
   & \cong  \mathrm{Hom}_R(\varinjlim\limits_m S_m,T^+) \\
   & \cong  \mathrm{Hom}_R(\varinjlim\limits_m S_m,T)=T_pG(T)
 \end{split}
\end{equation*}
  where $S_m$ represents $G[p^m]$ (thus $S$ is the $p$-adic completion of $\varinjlim\limits_m S_m)$). In the last isomorphism we used again that all $S_m$ are finite over $R$ and thus any morphism $S_m\to T$ of $R$-algebras factors over $T^+$.
\end{proof}

\newpage
\appendix
\section{Descent for $p$-completely faithfully flat morphisms}
\label{sec:descent-p-completely}

\green{
In this appendix we want to record some descent statements that are used in the main body of this text.

\begin{lemma}
  \label{sec:descent-p-completely-1-description-of-finite-projective-modules-on-p-complete-rings}
  Let $R$ be derived $p$-complete ring with bounded $p^\infty$-torsion. Then the natural functor
  $$
    \{\text{ finite projective }R-\text{modules}\}\to 2-\varprojlim\limits_n \{\text{ finite projective }R/p^n-\text{modules}\}
    $$
    is an equivalence.
    In particular, the fibered category $R\mapsto \{\text{ finite projective }R-\text{modules}\}$ is a stack for the $p$-completely faithfully flat topology on the category of derived $p$-complete rings with bounded $p^\infty$-torsion.
\end{lemma}
\begin{proof}
  As $R$ is classically $p$-complete the first statement follows from \cite[Tag 0D4B]{stacks_project}. If $R\to R^\prime$ is a $p$-completely faithfully flat morphism between $p$-complete rings of bounded $p^\infty$-torsion, then $R/p^n\to R^\prime/p^n$ is faithfully flat for all $n\geq 0$ (flatness follows from \cite[Lemma 4.7.(2)]{bhatt_morrow_scholze_topological_hochschild_homology} and surjectivity of $\Spec(R^\prime/p^n)\to \Spec(R/p^n)$ is implied by the case $n=1$). Thus, classical descent of finite projective modules holds for this morphism. Passing to the ($(2)$-)inverse limit implies the last statement. 
\end{proof}
}
\begin{proposition}
  \label{sec:fully-faithf-prism-proposition-bt-is-a-stack}

  The fibered categories of $p$-divisible groups and finite locally free group schemes over $p$-complete rings with bounded $p^\infty$-torsion are stacks for the $p$-completely faithfully flat topology. 
\end{proposition}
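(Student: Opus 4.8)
The plan is to prove that $p$-divisible groups and finite locally free group schemes both form stacks for the $p$-completely faithfully flat topology by reducing descent of these objects to descent of their underlying quasi-coherent data, via the module descent results of \Cref{sec:descent-results-1-theorem-descent-of-p-complete-modules} (and \Cref{sec:descent-results-3-lemma-descent-along-t-of-being-finite-projective}). The case of finite locally free group schemes is the genuinely new input; the case of $p$-divisible groups will be deduced from it by a limit argument, since a $p$-divisible group over a $p$-complete ring $R$ with bounded $p^\infty$-torsion is the same thing as a compatible system of $p$-power-torsion finite locally free group schemes $G[p^n]$ with the usual exactness constraints, and a morphism $R \to R'$ with $R'$ $p$-complete and bounded $p^\infty$-torsion induces such a morphism on each $G[p^n]$.

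First I would fix a $p$-completely faithfully flat cover $R \to R'$ of $p$-complete rings with bounded $p^\infty$-torsion, and recall (as in the \textbf{Notations and conventions}) that $R'\widehat\otimes_R^{\mathbb{L}} R'$ and $R'\widehat\otimes_R^{\mathbb{L}} R'\widehat\otimes_R^{\mathbb{L}} R'$ are again $p$-complete rings with bounded $p^\infty$-torsion, so the cosimplicial \v{C}ech data makes sense inside $\mathrm{Mod}_{c,\mathrm{bdd}}$. For effectivity of descent for finite locally free group schemes: a finite locally free group scheme $G'$ over $R'$ equipped with a descent datum is, in particular, a finite locally free $R'$-module $\mathcal{O}(G')$ (its coordinate ring) with a compatible descent datum, so by \Cref{sec:descent-results-1-theorem-descent-of-p-complete-modules} and \Cref{sec:descent-results-3-lemma-descent-along-t-of-being-finite-projective} there is a finite locally free $R$-module $M$ with $\widehat{M \otimes_R^{\mathbb{L}} R'} \cong \mathcal{O}(G')$. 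The Hopf algebra structure maps (comultiplication, counit, antipode, and the algebra multiplication) on $\mathcal{O}(G')$ are themselves morphisms of $R'$-modules (between $\mathcal{O}(G')$, $R'$, and $\mathcal{O}(G')\widehat\otimes_{R'}\mathcal{O}(G')$, the latter being again finite locally free and of the expected form by $p$-complete flatness) which are compatible with the descent datum, hence by the full faithfulness half of \Cref{sec:descent-results-1-theorem-descent-of-p-complete-modules} they descend to $R$-module maps on $M$. The Hopf algebra axioms are equalities of $R$-module maps which hold after the conservative (\Cref{sec:descent-results-3-lemma-p-completed-base-change-conservative}) base change $T$, hence hold over $R$; so $M$ is a finite locally free Hopf algebra over $R$, i.e. defines a finite locally free group scheme $G$ over $R$ with $G_{R'}\cong G'$ compatibly with the descent datum. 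Full faithfulness of the restriction functor $\mathrm{(f.l.f.\ gp.\ sch.)}/R \to \mathrm{(f.l.f.\ gp.\ sch.)}/R'$-with-descent-datum is the same computation: a morphism is a module map commuting with the Hopf structure, and both the existence of a descent and the verification that it is a Hopf morphism follow from \Cref{sec:descent-results-1-theorem-descent-of-p-complete-modules} together with conservativity.

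For $p$-divisible groups, I would then argue: given a $p$-divisible group $G'$ over $R'$ with a descent datum, each truncation $G'[p^n]$ inherits a descent datum and by the finite locally free case descends to a finite locally free group scheme $G_n$ over $R$; the transition maps $G'[p^n]\hookrightarrow G'[p^{n+1}]$ and the multiplication-by-$p$ maps descend by full faithfulness, and the exactness conditions defining a $p$-divisible group ($G_n$ killed by $p^n$, $p$-divisibility, $G_n = G_{n+1}[p^n]$, order $p^{nh}$) can all be checked after the faithfully flat base change to $R'$ using \Cref{sec:descent-results-2-lemma-exactness-reflects-under-p-completely-faithfully-flat-base-change} for the relevant short exact sequences of module schemes. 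Hence $G = \varinjlim_n G_n$ is a $p$-divisible group over $R$ with $G_{R'}\cong G'$, and full faithfulness follows from the case of finite locally free group schemes by passing to the limit over $n$ (using $\mathrm{Hom}(G,H) = \varprojlim_n \mathrm{Hom}(G[p^n],H[p^n])$, with the $R^1\varprojlim$ term vanishing because the transition maps on $\mathrm{Hom}$-groups are surjective, or simply because these Hom-groups are finite).

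\textbf{Main obstacle.} The crux is making sure the descent machinery of the appendix genuinely applies to the coordinate-ring modules and all their tensor powers: one must check that $\mathcal{O}(G')$, $\mathcal{O}(G')\widehat\otimes_{R'}\mathcal{O}(G')$, etc., lie in $\mathrm{Mod}_{c,R',\mathrm{bdd}}$ and that the various structure morphisms really are $T$-compatible in the precise sense of comodule morphisms for the comonad $F = T \circ (\text{forget})$ — i.e.\ that descending a diagram of modules descends the diagram, not merely each module. This is formal from the equivalence of categories in \Cref{sec:descent-results-1-theorem-descent-of-p-complete-modules} (an equivalence of categories transports diagrams), but it requires care that \emph{finiteness and flatness} of all the auxiliary modules is preserved under the completed tensor products appearing, which is exactly where boundedness of $p^\infty$-torsion is used; the potential subtlety flagged in \Cref{remark-problem-with-base-change-along-p-completely-flat-morphisms} (that completed base change along a $p$-completely flat map need not be exact on arbitrary derived $p$-complete modules) is avoided here because we only ever base-change finite locally free modules, for which \Cref{sec:descent-results-3-lemma-exactness-of-p-completed-base-change-of-modules} gives exactness. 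Once this bookkeeping is in place, everything else is routine.
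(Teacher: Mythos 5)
Your proposal is correct and follows essentially the same route as the paper: descent is reduced to the module-level results of \Cref{sec:descent-results-1-theorem-descent-of-p-complete-modules} and \Cref{sec:descent-results-3-lemma-descent-along-t-of-being-finite-projective}, the Hopf-algebra (group) structure descends because the structure maps are module maps compatible with the descent data and completed base change of finite locally free modules is symmetric monoidal (the paper phrases this as ``base change commutes with fiber products''), and $p$-divisible groups are handled through their finite locally free truncations. (Your aside about an $R^1\varprojlim$ term in the full faithfulness step is unnecessary — and its justification shaky — since $\mathrm{Hom}$ of $p$-divisible groups is the inverse limit of $\mathrm{Hom}$'s of truncations and limits commute with the relevant equalizers; this does not affect the argument.)
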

\begin{proof}
  It suffices to show the statement for finite locally free group schemes as $p$-divisible groups are canonically a colimit of such.
 From \ref{sec:descent-p-completely-1-description-of-finite-projective-modules-on-p-complete-rings} we know that finite locally free modules form a stack for the $p$-completely faithfully flat topology on $p$-complete rings with bounded $p^\infty$-torsion. As base change commutes with fiber products, this implies that finite locally free group schemes form a stack, too.
\end{proof}

  Recall that a morphism
  $$
  (A,I)\to (B,J)
  $$
  of prisms is called faithfully flat if it is $(p,I)$-completely flat.

  \begin{proposition}
    \label{sec:descent-p-completely-1-proposition-descent-of-finite-projective-modules-for-faithfully-morphisms-of-prisms}
    The fibered category
    $$
    (A,I)\mapsto \{\text{ finite projective }A-\text{modules}\}
    $$
    on the category of bounded prisms is a stack for the faithfully flat topology.
  \end{proposition}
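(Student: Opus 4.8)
The statement is a descent result for finite projective modules along faithfully flat maps of bounded prisms, and the strategy will be to bootstrap it from the module-level descent established earlier in the appendix. First I would fix a faithfully flat map of bounded prisms $(A,I)\to (B,J)$ and recall that this means $A\to B$ is $(p,I)$-completely flat; since the prisms are bounded, $A$ and $B$ are classically $(p,I)$-adically complete with bounded $p^\infty$-torsion, and by \cite[Corollary 4.8]{bhatt_morrow_scholze_topological_hochschild_homology} the map $A/(p,I)^n \to B/(p,I)^n$ is faithfully flat for all $n$. The first genuine step is to reduce to the faithfully flat topology in the naive algebraic sense: a faithfully flat map of prisms is, in particular, a $(p,I)$-completely faithfully flat map, and I would use this to invoke a Barr–Beck / descent argument exactly parallel to \Cref{sec:descent-results-1-theorem-descent-of-p-complete-modules}, but now with the pair $(p,I)$ playing the role that $(p)$ plays in the body of the appendix. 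Concretely, I would introduce the category $\mathrm{Mod}_{A,c}$ of derived $(p,I)$-complete $A$-modules, define $T(M)=\widehat{B\otimes_A^{\mathbb{L}}M}$ with $(p,I)$-adic completion, check it has the forgetful functor as right adjoint, is exact on the relevant subcategory, is conservative, and reflects exactness — the same four lemmas (\Cref{sec:descent-results-lemma-completed-base-change-of-modules-has-right-adjoint} through \Cref{sec:descent-results-2-lemma-exactness-reflects-under-p-completely-faithfully-flat-base-change}) go through verbatim after replacing $p$ by the finitely generated ideal $(p,I)$, using that bounded prisms have uniformly bounded $(p,I)^\infty$-torsion. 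This yields descent of derived $(p,I)$-complete $A$-modules with bounded torsion along $(A,I)\to(B,J)$.

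Next I would handle the "finite projective" part. Given a finite projective $B$-module $N$ equipped with a descent datum over $B\widehat{\otimes}_A B$ (completed $(p,I)$-adically), the above descent produces a derived $(p,I)$-complete $A$-module $M$ with $\widehat{B\otimes_A^{\mathbb{L}}M}\cong N$. To see $M$ is finite projective over $A$, I would argue modulo $(p,I)$: by classical faithfully flat descent of finite projective modules along $A/(p,I)\to B/(p,I)$, the module $M\otimes_A^{\mathbb{L}}A/(p,I)$ is a finite projective $A/(p,I)$-module concentrated in degree $0$. Then I would invoke the analogue of \Cref{sec:fully-faithf-prism-lemma-p-complete-module-locally-free} with $(p)$ replaced by $(p,I)$: a derived $(p,I)$-complete complex $M$ over the $(p,I)$-complete ring $A$ (with bounded $(p,I)^\infty$-torsion) whose derived reduction mod $(p,I)$ is finite projective in degree $0$ is itself a finite projective $A$-module. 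The proof of that lemma — showing $M$ is concentrated in degree $0$ via Mittag–Leffler on the tower $M\otimes_A^{\mathbb{L}}A/(p,I)^n$, then finitely presented via derived Nakayama, then locally free via the fiberwise criterion \Cref{sec:fully-faithf-prism-lemma-finitely-presented-module-locally-free} at closed points (which lie in $\Spec(A/(p,I))$ since $A$ is $(p,I)$-complete) — all adapts directly, the only subtlety being that one needs the graded pieces $(p,I)^n/(p,I)^{n+1}$ of the $(p,I)$-adic filtration, which is fine since $(p,I)$ is a finitely generated ideal.

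The main obstacle, I expect, is not any single deep point but the bookkeeping around replacing the principal ideal $(p)$ by the two-generated ideal $(p,I)$ throughout the appendix's machinery: one must be careful that "bounded $p^\infty$-torsion" gets replaced by the correct notion (bounded $(p,I)^\infty$-torsion, automatic for bounded prisms but requiring a short argument, e.g. via \cite[Exercise 3.4]{bhatt_lectures_on_prismatic_cohomology}), that derived $(p,I)$-completeness behaves well under the completed base change (using that $B\otimes_A^{\mathbb{L}}-$ followed by $(p,I)$-completion preserves the bounded-torsion subcategory because $A/(p,I)\to B/(p,I)$ is flat), and that the pro-systems $\{A/(p,I)^n\}$ and $\{A\otimes_{\mathbb{Z}[x,y]}^{\mathbb{L}}\mathbb{Z}[x,y]/(x,y)^n\}$ (mapping $x\mapsto p$, $y\mapsto$ a local generator of $I$) are pro-isomorphic, which is where the bounded-torsion hypothesis is used. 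A clean way to organize this is to simply remark that all of \Cref{sec:descent-results-theorem-barr-beck} through \Cref{sec:fully-faithf-prism-lemma-finitely-presented-module-locally-free} were stated for a $p$-complete ring with bounded $p^\infty$-torsion and a finitely generated ideal's worth of completion data in disguise, and that every proof only used formal properties of derived completion along a finitely generated ideal; hence the results apply to $(A,(p,I))$ and the descent of finite projective modules follows. I would then conclude by noting, as in \Cref{sec:fully-faithf-prism-proposition-bt-is-a-stack}, that the descent is compatible with tensor products, so finite projective modules indeed form a stack for the faithfully flat topology on bounded prisms.
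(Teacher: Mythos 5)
Your proposal takes a genuinely different route from the paper, and as written it has a gap at its central claim. The paper does not redo the appendix for the ideal $(p,I)$: it observes that for a bounded prism $A$ is classically $I$-adically complete, so finite projective $A$-modules are the $2$-limit over $n$ of finite projective $A/I^n$-modules (\cite[Tag 0D4B]{stacks_project}); since a $2$-limit of stacks is a stack, it suffices to show that $(A,I)\mapsto\{\text{finite projective } A/I^n\text{-modules}\}$ is a stack, and for a faithfully flat map of bounded prisms $(A,I)\to(B,J)$ the induced map $A/I^n\to B/J^n$ is a $p$-completely faithfully flat map of rings with bounded $p^\infty$-torsion (boundedness of $A/I^n$ follows from boundedness of $A/I$ and invertibility of $I$ by d\'evissage along $0\to I^m/I^{m+1}\to A/I^{m+1}\to A/I^m\to 0$). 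Hence \Cref{sec:descent-results-1-theorem-descent-of-p-complete-modules} and \Cref{sec:descent-results-3-lemma-descent-along-t-of-being-finite-projective} apply exactly in the principal-ideal setting in which they were proved, and no $(p,I)$-adic analogue of the machinery is ever needed.

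The gap in your argument is the assertion that the appendix lemmas ``go through verbatim after replacing $p$ by the finitely generated ideal $(p,I)$'' and that the requisite boundedness is ``automatic for bounded prisms''. Those proofs are not formal in the ideal: they use repeatedly that for bounded $p^\infty$-torsion the pro-systems $\{M/p^n\}$ and $\{M\otimes^{\mathbb{L}}_{\Z}\Z/p^n\}$ agree, and the proof of \Cref{sec:fully-faithf-prism-lemma-p-complete-module-locally-free} uses the filtration with graded pieces $(p^n)/(p^{n+1})\cong R/p$ plus Mittag--Leffler. For the two-generator ideal $(p,I)$ the Koszul tower has cohomology in two negative degrees, and the comparison between $\{A/(p,I)^n\}$ and the Koszul tower is precisely the delicate point: it is essentially the content of the boundedness hypothesis (via the nontrivial argument behind the classical $(p,I)$-completeness cited in \Cref{remarks-after-definition-prism}), not an automatic fact. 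Note also that ``bounded $(p,I)^\infty$-torsion'' in the naive sense (elements killed by a power of the ideal) is vacuous here, since $I$ is locally generated by a nonzerodivisor, so it cannot be the condition powering the lemmas; you would have to specify the correct pro-system condition on the objects of your descent category, prove that bounded prisms and their completed tensor products satisfy it, and verify it is preserved by $(p,I)$-completely flat completed base change. The example closing the appendix, where $R/f$ has bounded $p^\infty$-torsion but $R$ does not, is there exactly to warn that such boundedness does not transfer naively from $A/I$ to $A$. None of this is impossible, but it is genuine work rather than bookkeeping, and your proof defers it entirely to the word ``verbatim''; the paper's reduction modulo $I^n$ is designed to avoid it altogether.
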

  \begin{proof}
    If $(A,I)$ is a prism, then $A$ is classically $I$-complete and thus finite projective $A$-modules are equivalent to compatible systems of finite projective $A/I^n$-modules, i.e.,
    $$
    \{\text{ finite projective }A-\text{modules}\}\cong 2-\varprojlim\limits_n\{\text{ finite projective }A/I^n-\text{modules}\}
    $$
    (cf.\ \cite[Tag 0D4B]{stacks_project}).
    As the $2$-limit of stacks is again a stack it suffices to show that for any $n\geq 0$ the fibered category
    $$
    (A,I)\mapsto \{\text{ finite projective }A/I^n-\text{modules}\}
    $$
    is a stack on bounded prisms. If $(A,I)\to (B,J)$ is a faithfully flat morphim of prisms, then
    $$
    A/I^n\to B/J^n
    $$
    is a $p$-completely faithfully flat morphism of rings with bounded $p^\infty$-torsion. Thus the proposition follows from \ref{sec:descent-p-completely-1-description-of-finite-projective-modules-on-p-complete-rings}.
  \end{proof}
  
  \begin{example}
    We give an example of a ring $R$ which is classically $(p,f)$-complete where $f\in R$ is a non-zero divisor, such that $R/f$ has bounded $p^\infty$-torsion, but $R$ has unbounded $p^\infty$-torsion.
    Set
    $$
    R:=\Z[f,x_{i,j}|\ i\geq 0, 0\leq j\leq i]^{\wedge_{(p,f)}}/J
    $$
    with $J$ generated by the elements
    $$
    px_{i,j}-fx_{i,j+1}
    $$
    (where $x_{i,i+1}:=0$).
    Then $f$ is a non-zero divisor in $R$ and all $p^\infty$-torsion in 
    $$
    R/f\cong \Z[x_{i,j}]/(px_{i,j})
    $$
    is killed by $p$. But
    $$
    p^{i}x_{i,0}=p^{i}fx_{i,1}=\ldots = f^{i}x_{i,i}\neq 0
    $$
    while $p^{i+1}x_{i,0}=f^ipx_{i,i}=0$.
    This shows that $R$ has unbounded $p^\infty$-torsion.
    As $f$ is a non-zero divisor in $R$ the $(p,f)^\infty$-torsion in $R$ is zero.
  \end{example}

\newpage

\bibliography{biblio}
\bibliographystyle{plain}

\end{document}